\definecolor{rouge}{rgb}{0.85,0.1,.4}
\definecolor{bleu}{rgb}{0.1,0.2,0.9}
\definecolor{violet}{rgb}{0.7,0,0.8}
\numberwithin{equation}{section}
\def\P{\mathscr P}
\def\Z{\mathbb Z}
\def\N{\mathbb N}
\def\R{\mathbb R}
\def\C{\mathbb C}
\def\F{\mathscr{F}}
\def\E{\mathscr E}
\def\I{\underline{\mathbf{I}}}
\def\g{\mathfrak{g}}  
\def\b{\mathfrak{b}}  
\def\n{\mathfrak{n}}
\def\h{\mathfrak{h}}  
\def\B{\mathscr{B}}   
\def\sl{\mathfrak{sl}}
\def\sp{\mathfrak{sp}}
\def\eps{\varepsilon}
\def\c#1{\check{#1}}  
\def\s#1{#1^{\sharp}} 
\def\sub#1{\underline{#1}}
\def\d{{\rm d}} 
\def\ch{{\rm Ch}}
\def\hc{{\rm hc}}
\def\sym{\beta}
\def\ev{\mathrm{ev}}
\def\Cl{{\rm Cl}}
\def\m{\boldsymbol{\sub{\mu}}}
\newcommand{\ad}{\mathrm{ad}}
\newcommand{\tr}{\mathrm{tr}}
\def\he{\mathrm{ht}}
\newcommand{\wt}{{\rm wt}}
\newcommand{\Exterior}{\mathchoice{{\textstyle\bigwedge}}%
    {{\bigwedge}}%
    {{\textstyle\wedge}}%
    {{\scriptstyle\wedge}}}
\begin{document}

\title{Kostant principal filtration and paths in weight lattices
\thanks{The first named author is supported in part by
Directorate General of Resources for Science, Technology,
and Higher Education, Ministry of Research, Technology, and Higher Education of Indonesia.
The second named author is supported in part by the ANR Project GeoLie Grant number ANR-15-CE40-0012, 
and in part by the Labex CEMPI (ANR-11-LABX- 0007-01)}
}

\author{Nilamsari Kusumastuti        \and
        Anne Moreau}


\institute{Nilamsari Kusumastuti \at
              Department of Mathematics, Tanjungpura University, Pontianak 78124, Indonesia \\
              \email{nilamsari@math.untan.ac.id, nilamsari.kusumastuti@math.univ-poitiers.fr}           
           \and
           Anne Moreau \at
              Laboratoire Paul Painlevé (UMR 8524 CNRS)
Université de Lille, Lille 59655, France\\
\email{anne.moreau@univ-lille.fr}
}


\maketitle

\begin{abstract}
There are several interesting filtrations on the Cartan subalgebra of a complex simple Lie algebra coming from very different contexts: one is {\em the principal filtration} coming from the Langlands dual, one is coming from the Clifford algebra associated with a non-degenerate invariant bilinear form, one is coming from the symmetric algebra and the Chevalley projection, and two other ones are coming from the enveloping algebra and Harish-Chandra projections. It is now known that all these filtrations coincide. This results from a combination of works of several authors (Rohr, Joseph, Alekseev and the second named author), and was essentially conjectured by Kostant. 
In this paper, we establish a {\em direct} correspondence between the enveloping filtration and the symmetric filtration for a simple Lie algebra of type $A$ or $C$. 
Our proof is very different from Rohr and Joseph approaches. 
The idea is to use an explicit description of the symmetric and enveloping 
invariants in term of combinatorial objects, 
called {\em weighted paths}, in the crystal graph of the standard representation. 
\keywords{principal filtration \and  Chevalley and Harish-Chandra projections \and weighted paths}
\end{abstract}

\section{Introduction}
In this paper, we are interested in several increasing filtrations, 
that are described below, on the Cartan subalgebra $\h$ 
of a complex simple Lie algebra $\g$ with triangular decomposition, 
$\g = \n_- \oplus \h \oplus \n_+,$ 
that come from different contexts. 
It is now known that all these filtrations coincide. This results from 
a combination of several works  
\cite{Rohr08,AlekMor12,Jos11b,Jos11a}. 
The remarkable connexion between the principal filtration (\S\ref{sub:principal}) 
and the filtration coming from 
the Clifford algebra (cf.~\S\ref{sub:Clifford}) was essentially  
conjectured by Kostant. 

The aim of this paper is to provide another proof, in type $A$ and type $C$, of 
the existing relation between the symmetric filtration (cf.~\S\ref{sub:symmetric})
and the enveloping filtration (cf.~\S\ref{sub:enveloping}) 
using special paths in 
the weight lattice of the standard
representation. 

\subsection{The principal filtration}
\label{sub:principal}
Let $\Delta \subset \h^*$ be the root system of $(\g,\h)$, 
$\Pi = \{ \beta_1,\ldots,\beta_r \}$ the system of simple roots 
with respect to $\b = \h \oplus \n_+$ 
and $\Delta_+$ the corresponding set of positive roots. 
The root system is realized in an Euclidean space $\R^N$
with standard basis $\mathcal{E}=(\eps_{1},\ldots,\eps_{N})$.
For $\alpha \in \Delta$, we denote by $\c{\alpha}$ its coroot. 
We fix a Chevalley basis $\{e_\alpha,e_{-\alpha}, \c{\beta}_i 
\; ; \; \alpha \in \Delta_+, i=1,\ldots,r\}$
of $\g$, where $e_\alpha$ is a nonzero $\alpha$-root vector. 
Let $\varpi_1,\ldots,\varpi_r$ 
be the fundamental weights, 
and $\c{\varpi}_1,\ldots,\c{\varpi}_r$ the fundamental 
co-weights, associated with $\beta_1,\ldots,\beta_r$, respectively. 

Let $B_\g$ be an invariant non-degenerate 
bilinear form on $\g \times \g$, 
and $B_\g^\sharp \colon \h^* \to \h$ the induced 
isomorphism. 
For $x\in \h^*$ we denote by $\s{x}$ its image by $B_\g^\sharp$. 
Let $(e,h,f)$ be a principal $\sl_2$-triple of $\g$ corresponding to the 
above triangular decomposition, that is, 
$e=\sum_{i=1}^r e_{\beta_i}, 
\, h=2 \sum_{i=1}^r \c{\varpi}_i,  
\, f=\sum_{i=1}^r c_i e_{-\beta_i},$
where $c_i$ is a nonzero complex number such that $h=[e,f]$. 
The elements $e,h,f$ are {\rm regular} elements of $\g$, which means that their centralizer 
in $\g$ have minimal dimension~$r$. 

One defines an increasing filtration $(\F^{(m)}\h)_{m\ge 0}$ of $\h$ by: 
$$\F^{(m)}\h:= \{x \in \h \; | \ (\ad\, e)^{m+1}x = 0\}.$$ 
Notice that the dimension of the spaces $\F^{(m)}\h$ 
jumps at the exponents $m=m_1,\ldots,m_r$ 
of $\g$.  
We can also describe the filtration $(\F^{(m)}\h)_m$ as follows. 
The algebra $\mathfrak{s}:= \C e \oplus \C h \oplus \C f \cong \sl_2$ 
acts on $\g$ by the adjoint action. 
Let $\g=\bigoplus_{i=1}^r V_i$ 
be the decomposition of $\g$ into simple $\mathfrak{s}$-modules. 
We have $\dim V_i=2 m_i+1$  
and $\dim V_i \cap \h=1$ for any $i=1,\ldots,r$. 
Then 
$$\F^{(m)}\h= \bigoplus_{j, \, m_j \leqslant m} V_j \cap \h.$$ 
If the exponents $m_1,\ldots,m_r$ are pairwise distinct, 
we have for $i=1,\ldots,r$, 
$\F^{(m_i)}\h= \bigoplus_{j=1}^{i} V_j \cap \h.$
In most cases, the exponents $m_1,\ldots,m_r$ 
are all distinct. The exception is the case of the $D_r$ 
series, for even $r$ and $r\ge 4$, 
when there are two coincident exponents (equal to $r -1$). 

Let $\c{\g}$ be the Langlands dual of $\g$ which is the 
simple Lie algebra defined by 
the dual root system $\c{\Delta}= \{\c{\alpha} \; ; 
\; \alpha \in \Delta\}$. One may identify a 
Cartan subalgebra $\c{\h}$ of $\c{\g}$ 
with $\h^*$. 
Let $(\c{e},\c{h},\c{f})$ be the corresponding 
principal $\sl_2$-triple of $\c{\g}$, 
and let $\rho$ be the half-sum of positive roots. 
Note that 
$$ \rho = \sum_{i=1}^r \varpi_i = \frac{1}{2}\c{h}.$$
Similarly, the principal $\sl_2$-triple $(\c{e},\c{h},\c{f})$ 
 defines an increasing filtration $(\F^{(m)} \c{\h})_m$ 
of $\c{\h}$.  
Since $\c{\g}$ has the same exponents as $\g$ 
the dimension of $\F^{(m)}\c{\h}$ 
jumps at the exponents $m=m_1,\ldots,m_r$,  too.
The {\em principal filtration} of 
$\h$ is the filtration $(\c{\F}^{(m)}\h)_m$, where 
\begin{align*} 
\c{\F}^{(m)}\h:=\left(B_\g^\sharp(\F^{(m)}\c{\h})\right)_m.
\end{align*}
Since $[\c{e},[\c{e},\c{h}]]=0$, we have $\rho \in \F^{(1)}\c{\h}$ 
and $\s{\rho}=B_\g^\sharp(\rho) \in \c{\F}^{(1)}\h$. 

\subsection{The symmetric filtration}
\label{sub:symmetric}
Let $S(\g)$ be the symmetric algebra of $\g$, 
and $\ch \colon S(\g) \to S(\h)$ the {Chevalley projection map} 
with respect to decomposition $S(\g) =S(\h) \oplus (\n_- + \n_+)S(\g)$. 
For any finite-dimensional simple $\g$-module $V$, 
consider the map, 
\begin{align*} 
\ch \otimes 1 \colon S(\g) \otimes V \to  S(\h) \otimes V.
\end{align*}
We have  $(\ch \otimes 1)((S(\g) \otimes V)^\g) \subset S(\h) \otimes V_0$,   
where $(S(\g) \otimes V)^\g$ is the subspace of invariants 
under the diagonal action of $\g$, and 
$V_0=\{v \in V \; |\; h.v=0\; \text{ for all } h \in \h\}$ is 
the zero-weight space of $V$.  
In the case where $V=\g$ is the adjoint representation, the free $S(\g)^\g$-module 
$(S(\g) \otimes \g)^\g$ is generated by the differentials $\d p_1,\ldots,\d p_r$ 
of homogeneous generators $p_1,\ldots,p_r$ of $S(\g)^\g$. 
Such homogeneous generators are of degrees $m_1+1,\ldots,m_r+1$, 
respectively, if we order them by increasing degrees. 
In this case, note that $(\ch \otimes 1)((S(\g) \otimes \g)^\g) \subset S(\h) \otimes \h$,
since the zero weight subspace of the adjoint representation $\g$ is $\h$.

Let 
$\ev_\rho \colon S(\h) \to  \C$ denotes the evaluation map at $\rho$, that is, 
$\ev_\rho(x)  = \langle \rho,x \rangle$ for all $x\in S(\h) \cong \C[\h^*] $, 
where $\langle \cdot , \cdot \rangle$ is the pairing between $\h^*$ 
and $\h$.
We define the {\em symmetric filtration} of $\h$ by setting for all $m \in \Z_{\geqslant 0}$, 
\begin{align*} 
\F_S ^{(m)} \h := (\ev_\rho \otimes 1) \circ (\ch \otimes 1) ( (S^m(\g) \otimes \g)^\g),
\end{align*}
where $(S^m(\g))_m$ is the standard filtration on $S(\g)$ induced 
by the degree of elements. 
This is indeed a filtration of $\h$ since 
the elements $\d p_1 (\rho) ,\ldots,\d p_r (\rho)$ are linearly 
independent, $\rho$ being regular \cite{Kos63}.  
In fact, Rohr proved \cite{Rohr08} that 
the principal filtration and the symmetric filtration coincide\footnote{
This fact is not hard to establish, but Rohr obtained a 
more precise result \cite{Rohr08}, and explicitly described  
an orthogonal basis with respect to $B_\g$ in $\h$ from the algebra 
of invariants $(S(\g)\otimes \g)^\g$.}. 

\subsection{The enveloping filtration(s)}
\label{sub:enveloping}
Let $U(\g)$ be the universal enveloping algebra of $\g$. 
The Harish-Chandra map  $\hc \colon U(\g) \to U(\h) = S(\h)$ 
is projection 
with respect to the decomposition 
$U(\g) = U(\h) \oplus (\n_- U(\g) + U(\g)\n_+)$.
It is well-known that its restriction to 
$U(\g)^\h$ 
is a morphism of associative algebras. 
Khoroshkin, Nazarov and Vinberg established the following 
triangular decomposition \cite{KNV11}: 
$$U(\g) \otimes V = (S(\h)\otimes V) \oplus \left(\rho_L(\n_-) (U(\g) \otimes V) +  
\rho_R(\n_+) (U(\g) \otimes V) \right),$$
where $\rho_L$ and $\rho_R$ are the two commuting $\g$-actions on $U(\g) \otimes V$ 
given by 
$\rho_L(x) (a \otimes b) = x a \otimes b$ 
and  $\rho_R(x) (a\otimes b) = -a x \otimes b + a \otimes 
x.b$ for $x \in\g$, 
respectively. 
They also showed that the image by the 
{\em generalized Harish-Chandra projection map} 
$\widetilde{\hc} \colon 
U(\g) \otimes V  \to S(\h) \otimes V$ 
with respect to the above triangular decomposition
of the invariant part $(U(\g) \otimes V)^\g$ 
for the diagonal action $\rho = \rho_L +\rho_R$ 
is the space of invariant under all the {\em Zhelobenko operators} (cf.~\cite[Theorem 1]{KNV11}). 

We now consider again the special case where $V=\g$ is the adjoint representation. 
Then we get that $(\hc \otimes 1) )( (U^m(\g) \otimes \g)^\g) 
\subset (S(\h) \otimes \g)^\g \subset S(\h) \otimes \h$ since, as noted before,
$\h$ is the zero weight subspace of the adjoint representation $\g$.
We define  the {\em enveloping filtration} 
and the {\em generalized enveloping filtration} of $\h$, respectively, by setting 
for all $m \in \Z_{\geqslant 0}$:
\begin{align*} 
&\F_U ^{(m)} \h  :=( (\ev_\rho \otimes 1) \circ (\hc \otimes 1) )( (U^m(\g) \otimes \g)^\g),\\ \nonumber
&\widetilde{\F}_{U} ^{(m)} \h  := ((\ev_\rho \otimes 1) \circ \widetilde{\hc}) 
( (U^m(\g) \otimes \g)^\g),
\end{align*}
where $(U^{m}(\g))_m$ is the standard filtration of $U(\g)$.    
It is not a priori clear that the sets $\F_U^{(m)} \h$, $m \geqslant  0$, exhaust $\h$  
since we do not know a priori whether 
$( (\ev_\rho \otimes 1) \circ (\hc \otimes 1)) ( (U(\g) \otimes \g)^\g)$ 
is equal to $\h$. Similarly, 
we do not know a priori whether 
$ ((\ev_\rho \otimes 1) \circ \widetilde{\hc}) ( (U(\g) \otimes \g)^\g)$ 
is equal to $\h$. 
This is indeed the case.  
A stronger result is in fact true. 

\begin{theorem}[{\cite{Jos11b,Jos11a}}] 
\label{theorem:Joseph2}
For any $m\in \Z_{\geqslant 0}$, we have:
$\F_U ^{(m)} \h 
=\c{\F}^{(m)} \h$ and $\widetilde{\F}_U ^{(m)} \h 
=\c{\F}^{(m)} \h.$ 
\end{theorem}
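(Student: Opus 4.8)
The plan is to deduce Theorem~\ref{theorem:Joseph2} by combining the known identification of the principal filtration with the symmetric filtration (Rohr's theorem, cited in \S\ref{sub:symmetric}) with a comparison between the symmetric filtration $(\F_S^{(m)}\h)_m$ and the two enveloping filtrations $(\F_U^{(m)}\h)_m$ and $(\widetilde\F_U^{(m)}\h)_m$. So the real content to establish is the chain of equalities $\F_S^{(m)}\h=\F_U^{(m)}\h=\widetilde\F_U^{(m)}\h$ for all $m\ge 0$; once this is in hand, Rohr's result $\F_S^{(m)}\h=\c\F^{(m)}\h$ closes the argument. The comparison of $\widetilde\F_U$ with $\F_U$ should be the easier half: since the generalized Harish-Chandra projection $\widetilde\hc$ refines the ordinary one $\hc\otimes 1$ and both agree on the $U(\h)$-component once restricted to invariants (by the Khoroshkin--Nazarov--Vinberg triangular decomposition and the fact that on $(U(\g)\otimes\g)^\g$ the output lands in $S(\h)\otimes\h$), one checks that $\widetilde\hc$ and $\hc\otimes 1$ coincide on $(U(\g)\otimes\g)^\g$, hence the two filtrations are literally equal term by term.

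The heart of the matter is therefore $\F_S^{(m)}\h=\F_U^{(m)}\h$. Here the standard device is to pass to associated graded objects. First I would filter $U(\g)$ by the PBW filtration $(U^m(\g))_m$, so that $\gr U(\g)\cong S(\g)$ as graded $\g$-modules, and observe that the Harish-Chandra projection is compatible with this filtration: the symbol of $\hc(a)$ for $a\in U^m(\g)$ is $\ch$ of the symbol of $a$, up to lower-order terms. More precisely, one shows that the leading term of $\hc\otimes 1$ applied to an element of $(U^m(\g)\otimes\g)^\g$ is, modulo $U^{m-1}$, exactly $(\ch\otimes 1)$ applied to the leading symbol, which is a genuine element of $(S^m(\g)\otimes\g)^\g$. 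Evaluating at $\rho$ then gives the inclusion $\F_U^{(m)}\h\subseteq \F_S^{(m)}\h$ immediately, since any element of $\F_U^{(m)}\h$ is obtained from some invariant of order $\le m$ whose symbol is an invariant in $S(\g)\otimes\g$ of degree $\le m$.

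For the reverse inclusion $\F_S^{(m)}\h\subseteq\F_U^{(m)}\h$, the point is that the symbol map $(U^m(\g)\otimes\g)^\g\to(S^m(\g)\otimes\g)^\g$ is surjective onto the degree-$m$ graded piece. This follows because $(S(\g)\otimes\g)^\g$ is a free $S(\g)^\g$-module on the differentials $\d p_1,\dots,\d p_r$, and each generator $\d p_i$ lifts to an invariant in $U(\g)\otimes\g$ of the same order (one may use the symmetrization map $S(\g)\to U(\g)$, which is $\g$-equivariant and filtration-preserving, applied coefficientwise). Given an element of $(S^m(\g)\otimes\g)^\g$, lift it to order $m$ in $U(\g)\otimes\g$; its Harish-Chandra projection evaluated at $\rho$ produces the same vector in $\h$ as the symmetric side up to a contribution from the strictly lower-order part of the lift, and by downward induction on $m$ those lower-order contributions already lie in $\F_U^{(m-1)}\h\subseteq\F_U^{(m)}\h$. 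Hence $\F_S^{(m)}\h\subseteq\F_U^{(m)}\h$, and combined with the previous paragraph we get equality.

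The step I expect to be the main obstacle is controlling the interaction between $\ev_\rho$ and the filtration in the inductive argument: the Harish-Chandra projection $\hc$ is not filtered-split in a naive way (it is only a projection along $\n_-U(\g)+U(\g)\n_+$, not a graded map), so one must argue carefully that the discrepancy between $\hc$ of a PBW-symbol lift and $\ch$ of its symbol, after evaluation at $\rho$, lands in the span of the lower filtration pieces $\F_U^{(m')}\h$ with $m'<m$ — rather than merely in $S^{m-1}(\h)$ before evaluation. Making this precise requires knowing that the ``$\rho$-shift'' intrinsic to the Harish-Chandra homomorphism only produces terms of strictly lower degree in the $S(\h)$-coefficient, which is exactly the well-known compatibility of $\hc$ with the filtration whose associated graded is $\ch$; once that is invoked, the induction goes through cleanly. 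It is also worth remarking that the desired conclusion could alternatively be read off directly from the combinatorial (weighted-path) descriptions of the symmetric and enveloping invariants in types $A$ and $C$ developed later in the paper, which is in fact the approach announced in the introduction; but the filtration/symbol argument above already suffices to recover Theorem~\ref{theorem:Joseph2} in full generality.
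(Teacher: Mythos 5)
There is a genuine gap, and it sits exactly where you flag it. First, note that the paper does not prove Theorem~\ref{theorem:Joseph2} at all: it is quoted from Joseph's work, whose proof goes through the Khoroshkin--Nazarov--Vinberg description of $(U(\g)\otimes\g)^\g$ via Zhelobenko operators. Your skeleton (Rohr's $\F_S^{(m)}\h=\c{\F}^{(m)}\h$ plus a direct comparison $\F_S^{(m)}\h=\F_U^{(m)}\h$) is precisely the route the present paper takes to \emph{re-derive} Joseph's theorem, but only in types $A$ and $C$, and the equality $\F_S^{(m)}\h=\F_U^{(m)}\h$ is its main theorem, obtained by a long combinatorial analysis of weighted paths. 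Your claim that this equality follows in general from a soft symbol/associated-graded argument is where the proof breaks. Writing an invariant $u\in(U^m(\g)\otimes\g)^\g$ as $(\sym\otimes 1)(\sigma(u))+u'$ with $u'\in(U^{m-1}(\g)\otimes\g)^\g$, the term $\ev_\rho\circ(\hc\otimes 1)(u')$ does land in $\F_U^{(m-1)}\h$; but the discrepancy $(\hc\otimes 1)\bigl((\sym\otimes1)(\sigma(u))\bigr)-(\ch\otimes 1)(\sigma(u))$ is merely an element of $S^{m-1}(\h)\otimes\h$, not the image of any \emph{invariant} of lower order under $\hc\otimes1$ or of lower degree under $\ch\otimes1$. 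After applying $\ev_\rho$ it is just some vector of $\h$, and nothing forces it into $\F_U^{(m-1)}\h$ or $\F_S^{(m-1)}\h$: the ``compatibility of $\hc$ with the filtration whose associated graded is $\ch$'' controls the degree in the $S(\h)$-coefficient, which $\ev_\rho$ destroys, and says nothing about membership in the filtration subspaces of $\h$, which jump only at the exponents. Comparing $\ev_\rho\bigl((\hc\otimes1)(\sym\otimes1)(\d p_{m_i})\bigr)$ with $\ev_\rho\bigl((\ch\otimes1)(\d p_{m_i})\bigr)$ modulo the lower filtration pieces is exactly the hard content (Theorems~\ref{theorem:main1} and~\ref{theorem:main2} here, Joseph's theorem in general), not a formal consequence of PBW.

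A structural reason the argument cannot be repaired as stated: it uses nothing specific to the adjoint representation. For any finite-dimensional simple $V$ one has the same ingredients (Chevalley and Harish-Chandra projections to $S(\h)\otimes V_0$, equivariant symmetrization, freeness of $(S(\g)\otimes V)^\g$ over $S(\g)^\g$ and of $(U(\g)\otimes V)^\g$ over $Z(\g)$), so your proof would yield the analogous equality for every $V$ --- which is false by Alekseev's counter-example recalled in the introduction. Finally, your preliminary step is also unjustified: $\widetilde{\hc}$ and $\hc\otimes1$ are projections along different complements ($\rho_R(\n_+)$ involves the action on the $\g$-factor, not right multiplication alone), and they do not obviously coincide on $(U(\g)\otimes\g)^\g$; in the paper the equality $\widetilde{\F}_U^{(m)}\h=\F_U^{(m)}\h$ is itself a nontrivial result, obtained via the Clifford filtration in \cite{AlekMor12} (Theorem~\ref{theorem:Clifford2}), not by identifying the two projections.
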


Joseph's theorem is a highly non-trivial result, 
especially in the non-simply laced cases. Its proof is deeply based on the 
description by Khoroshkin, Nazarov and Vinberg 
of the invariant spaces in term of Zhelobenko operators. 

\subsection{The Clifford filtration}
\label{sub:Clifford} 
Let $\Cl(\g)$ be the Clifford algebra over $\g$ associated with 
the bilinear form $B_\g$. 
Consider the decomposition of the Clifford algebra
$\Cl(\g) = \Cl(\h) \oplus (\n_-  \Cl(\g) + \Cl(\g) \n_+),$ 
 where $\g$ is viewed as a subalgebra of $\Cl(\g)$ using the 
canonical injection $\g \hookrightarrow \Cl(\g)$.  
The corresponding projection 
$\hc_{\rm odd}
\colon \Cl(\g) \to \Cl(\h)$ is called the 
{\em odd Harish-Chandra projection}. 
By a non-trivial result of Bazlov \cite[\S5.6]{Baz09} and Kostant 
(private communication), 
the odd Harish-Chandra projection $\hc_{\rm odd}$ 
maps the invariant algebra $\Cl(\g)^\g \cong \Cl(P(\g))$ 
onto $\h \subset \Cl(\h)$. Here $P(\g)$ is the space of {primitive invariants}. 
The {\em Clifford filtration} of $\h$ is defined by setting for all $m \in \Z_{\geqslant 0}$:
\begin{align*}
\F_{\Cl} ^{(m)} \h :=  \hc_{\rm odd} (q(P^{(2m+1)}(\g))),
\end{align*}
where $(P^{2m+1}(\g))_m$ is the natural filtration on $P(\g)$ induced 
from the degrees of generators, and $q : \Exterior \g \to \Cl(\g)$ is the quantisation map. 
It is a well-defined filtration on $\h$ thanks to the above quoted result of 
Bazlov--Kostant.  
Moreover, we have the following fact. 

\begin{theorem}[{\cite{AlekMor12}}]  \label{theorem:Clifford2}
For any $m \in \Z_{\geqslant 0}$, we have:
$\F_{U} ^{(m)} \h= \F_{\Cl} ^{(m)} \h$ and $\widetilde{\F}_{U} ^{(m)} \h= \F_{\Cl} ^{(m)} \h.$
\end{theorem}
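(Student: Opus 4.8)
The plan is to reduce the statement to a single identity comparing the two constructions on the primitive generators of the invariant algebras, feeding in Theorem~\ref{theorem:Joseph2} and Rohr's theorem \cite{Rohr08} for the bookkeeping. First, it suffices to prove only one of the two equalities, say $\F_U^{(m)}\h=\F_{\Cl}^{(m)}\h$ for all $m$, since the other then follows from $\widetilde{\F}_U^{(m)}\h=\c{\F}^{(m)}\h=\F_U^{(m)}\h$ by Theorem~\ref{theorem:Joseph2}. Second, both $(\F_U^{(m)}\h)_m$ and $(\F_{\Cl}^{(m)}\h)_m$ are exhaustive increasing filtrations of $\h$ jumping exactly at the exponents $m_1,\dots,m_r$ with the same jump dimensions: for $\F_U$ this is Theorem~\ref{theorem:Joseph2} together with the properties of the principal filtration, and for $\F_{\Cl}$ it follows from the surjectivity of $\hc_{\rm odd}$ on $\Cl(\g)^\g\cong\Cl(P(\g))$ (Bazlov--Kostant, \cite[\S5.6]{Baz09}) together with $\dim P^{2m_i+1}(\g)=1$. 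Writing $w_1,\dots,w_r$ for primitive generators of $(\Exterior\g)^\g=\Exterior(P(\g))$, with $w_i$ of degree $2m_i+1$, it is therefore enough to show that each $\hc_{\rm odd}(q(w_i))$ lies in $\F_U^{(m_i)}\h$; a dimension count then forces equality of the two filtrations in every degree.

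To compare the ``fermionic'' side $(\Exterior\g,\Cl(\g),\hc_{\rm odd})$ with the ``bosonic'' one $(S(\g),U(\g),\hc)$, I would use Kostant's transgression: the primitive invariant $w_i$ transgresses to the fundamental invariant $p_i\in S^{m_i+1}(\g)^\g$, and this canonically attaches to the line $\C w_i$ a distinguished element of the line $\C\,\d p_i\subseteq (S^{m_i}(\g)\otimes\g)^\g$ (concretely: contract $w_i$ once against the canonical element of $(\g\otimes\g)^\g$ determined by $B_\g$ and pass to the symmetric algebra through the transgression). Lifting $\d p_i$ through the PBW symmetrisation produces an invariant $\widehat{\d p_i}\in(U^{m_i}(\g)\otimes\g)^\g$. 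Since $(U(\g)\otimes\g)^\g$ is a free module over $Z(\g)=U(\g)^\g$ on $\widehat{\d p_1},\dots,\widehat{\d p_r}$ (which follows from \cite{Kos63}) and $\ev_\rho\circ\hc$ sends $Z(\g)$ to scalars, one obtains $\F_U^{(m)}\h=\Span\{(\ev_\rho\otimes 1)\circ(\hc\otimes 1)(\widehat{\d p_i})\; : \; m_i\le m\}$; by Rohr's theorem \cite{Rohr08} this equals $\Span\{\d p_i(\rho)\; : \; m_i\le m\}$, the matching step of the symmetric filtration.

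Everything then comes down to the identity
\begin{equation*}
\hc_{\rm odd}\bigl(q(w_i)\bigr)\;=\;c_i\,\bigl((\ev_\rho\otimes 1)\circ(\hc\otimes 1)\bigr)\bigl(\widehat{\d p_i}\bigr)\in\h,\qquad c_i\in\C^\times ,
\end{equation*}
for each $i$. The point I expect to be the main obstacle is that a ``$\rho$-shift'' enters both sides, but for different reasons. On the right it is inserted by hand through $\ev_\rho$. On the left it is produced intrinsically: when one puts $q(w_i)$ into normal form in $\Cl(\g)$ (so that $\n_-$-factors stand on the left and $\n_+$-factors on the right), the Cartan corrections --- which arise precisely because the structure constants of $\g$, hence the coroots $\c{\alpha}=[e_\alpha,e_{-\alpha}]$, are built into the $\g$-invariant $w_i$ --- accumulate, after summing over $\alpha\in\Delta_+$, to a multiple of $\sum_{\alpha\in\Delta_+}\c{\alpha}=2\c{\rho}$. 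This content is not visible from the leading symbols alone: the answer $\hc_{\rm odd}(q(w_i))$ already sits in degree $1$ of $\Exterior\h$, far below the degree $2m_i+1$ of the symbol $w_i$, so it is a genuinely sub-leading effect of the reordering in $\Cl(\g)$ and passing to associated graded objects loses it. The matching is most delicate in the non-simply-laced cases, where $\rho\in\h^*$ and $\c{\rho}\in\h$ are genuinely different objects related through $B_\g^\sharp$, exactly as in Joseph's theorem.

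For the identity itself I see two routes. The hands-on route is a rank reduction: writing $w_i$ in terms of $\mathfrak{s}$-primitive vectors for the principal $\sl_2$-triple $(e,h,f)$, one reduces the computation of $\hc_{\rm odd}(q(w_i))$ to Clifford and enveloping algebras of $\sl_2$-strings, or to an explicit generating-function identity in $S(\h)$ involving $\rho$. The conceptual route is to work inside $U(\g)\otimes\Cl(\g)$ with Kostant's cubic Dirac operator $\mathcal{D}\in(U(\g)\otimes\Cl(\g))^\g$: the differential $[\mathcal{D},\,\cdot\,]$ relates the $U(\g)$-side and $\Cl(\g)$-side invariants, and the solution of the Vogan conjecture identifies the relevant cohomology with data in $S(\h)$ whose argument is shifted exactly by $\rho$ --- precisely the shift that occurs in the symmetric and enveloping filtrations. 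Either way, combining the displayed identity with the reductions above gives $\F_U^{(m)}\h=\F_{\Cl}^{(m)}\h$ for all $m$, and then $\widetilde{\F}_U^{(m)}\h=\F_{\Cl}^{(m)}\h$ by Theorem~\ref{theorem:Joseph2}.
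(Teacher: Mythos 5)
You should first note that the paper contains no proof of this statement at all: Theorem~\ref{theorem:Clifford2} is quoted from \cite{AlekMor12} and used as a black box (notably to get exhaustiveness of the enveloping filtration in the proof that Theorems~\ref{theorem:main1} and \ref{theorem:main2} imply Theorem~\ref{theorem:main}). So the only possible comparison is with the cited proof, and against that standard your proposal has a genuine gap. The bookkeeping part (reduce to $\F_U^{(m)}\h=\F_{\Cl}^{(m)}\h$, use the Bazlov--Kostant surjectivity for the dimensions of $\F_{\Cl}$, and a dimension count) is fine, but the whole mathematical content is concentrated in the displayed identity $\hc_{\rm odd}(q(w_i))=c_i\,\bigl((\ev_\rho\otimes 1)\circ(\hc\otimes 1)\bigr)$ applied to the symmetrised $\d p_i$ (or even just the weaker membership $\hc_{\rm odd}(q(w_i))\in\F_U^{(m_i)}\h$), and this is exactly what you do not prove. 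The two ``routes'' you offer are programs rather than arguments: the assertion that the normal-ordering corrections in $\Cl(\g)$ ``accumulate to a multiple of $2\c{\rho}$'' is a heuristic with no control over the sub-leading terms you yourself identify as the difficulty, and the cubic Dirac operator/Vogan-conjecture route is only named, not carried out. That identity is precisely the hard core of \cite{AlekMor12} (whose actual argument does work inside $U(\g)\otimes\Cl(\g)$, so your second route is the right spirit, but spirit is not a proof).

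Two further structural problems. First, the step ``by Rohr's theorem this equals ${\rm span}\{\d p_i(\rho)\,:\,m_i\le m\}$'' is misattributed: Rohr proves $\F_S^{(m)}\h=\c{\F}^{(m)}\h$; identifying $\F_U^{(m)}\h$ with the span of the $\d p_i(\rho)$ is the content of $\F_S=\F_U$ (this paper's Theorem~\ref{theorem:main}) or of Joseph plus Rohr combined, not of Rohr alone. Second, your scaffolding invokes Theorem~\ref{theorem:Joseph2} twice (to dispose of $\widetilde{\F}_U$ and to get the jump dimensions of $\F_U$). The proof in \cite{AlekMor12} is independent of Joseph's theorem, and that independence is what the present paper exploits: Theorem~\ref{theorem:Clifford2} is combined with Theorem~\ref{theorem:main} and Rohr's result precisely to give \emph{another} proof of Joseph's theorem in types $A$ and $C$. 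A proof of Theorem~\ref{theorem:Clifford2} that presupposes Theorem~\ref{theorem:Joseph2} would make that application circular, so even if your key identity were established, the argument as structured could not serve the role the theorem plays here.
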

Kostant observed that $\hc_{\rm odd} (q(\phi)) = B_\g^\sharp(\rho)$, with $\phi \colon (x,y,z) \mapsto B_\g (x , [y,z]) \in \Exterior^3\g$ the invariant differential  Cartan 3-form of $\g$, and 
conjectured 
that the images of the higher 
generators $q(\phi_i)$, for $i=2,\ldots,r$, by the odd Harish-Chandra projection 
can be described using the principal 
filtration. 
Next theorem positively answers his conjecture:
\begin{theorem}[{\cite{Baz03,Jos11b,Jos11a,AlekMor12}}] \label{theorem:Kostant}  
For any $m\in \Z_{\geqslant 0}$, we have 
$ \F_{\Cl}^{(m)}\h=  \c{\F}^{(m)}\h. $ 
\end{theorem}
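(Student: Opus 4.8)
The plan is to obtain Theorem~\ref{theorem:Kostant} as a formal consequence of the two results recalled just above. Fix $m\in\Z_{\geqslant 0}$. Theorem~\ref{theorem:Clifford2} identifies the Clifford filtration with the enveloping filtration, $\F_{\Cl}^{(m)}\h=\F_U^{(m)}\h$ (and equally with the generalized enveloping filtration, $\F_{\Cl}^{(m)}\h=\widetilde{\F}_U^{(m)}\h$). Theorem~\ref{theorem:Joseph2} identifies the enveloping filtration with the principal filtration, $\F_U^{(m)}\h=\c{\F}^{(m)}\h$. Chaining the two equalities yields $\F_{\Cl}^{(m)}\h=\c{\F}^{(m)}\h$, and since $m$ was arbitrary this is exactly the assertion; in other words, the statement is a transitivity corollary of the two preceding theorems, which is why its attribution collects the references that establish those.

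Before carrying this out I would record the base case as a sanity check, since it is the one piece Kostant settled by hand. The degree-$3$ primitive invariant $\phi\colon(x,y,z)\mapsto B_\g(x,[y,z])$ spans $P^{(3)}(\g)$, so $\F_{\Cl}^{(1)}\h=\C\,\hc_{\rm odd}(q(\phi))=\C\,B_\g^\sharp(\rho)=\C\,\s{\rho}$; on the dual side $\rho=\frac{1}{2}\c{h}\in\F^{(1)}\c{\h}$ forces $\s{\rho}=B_\g^\sharp(\rho)\in\c{\F}^{(1)}\h$, and since the first exponent is $m_1=1$ both one-dimensional pieces coincide. The higher generators $q(\phi_i)$ for $i=2,\ldots,r$, together with the small-rank cases, are what Bazlov~\cite{Baz03} handled by direct computation, before the uniform statement became available through \cite{Jos11b,Jos11a,AlekMor12}.

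The main point to stress is that there is \emph{no} obstacle in the deduction itself: once Theorems~\ref{theorem:Joseph2} and~\ref{theorem:Clifford2} are in hand, the proof is a two-line transitivity argument. The difficulty is entirely displaced into those two inputs --- Joseph's identification rests on the Khoroshkin--Nazarov--Vinberg description of $(U(\g)\otimes\g)^\g$ by Zhelobenko operators, and the Clifford identification on the Bazlov--Kostant surjectivity of $\hc_{\rm odd}$ onto $\h$ together with a compatibility of the quantisation map $q$ with the relevant filtrations. A self-contained argument bypassing these would instead have to relate the odd Harish-Chandra projections $\hc_{\rm odd}(q(\phi_i))$ of the higher primitive invariants directly to the principal $\sl_2$-triple $(\c{e},\c{h},\c{f})$ of the Langlands dual $\c{\g}$, and the genuinely hard part there is the non-simply-laced types, where the exponents of $\g$ and the structure of $\c{\g}$ interact in a way not visible from the Clifford side; this is the difficulty that the Zhelobenko machinery resolves and that the present paper sidesteps, in types $A$ and $C$, by the weighted-path method.
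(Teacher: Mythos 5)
Your deduction is exactly how the paper obtains this statement: it is the transitivity corollary of Theorem~\ref{theorem:Joseph2} ($\F_U^{(m)}\h=\c{\F}^{(m)}\h$) and Theorem~\ref{theorem:Clifford2} ($\F_{\Cl}^{(m)}\h=\F_U^{(m)}\h$), with the cited works carrying all the actual difficulty. The added base-case check for $m=1$ and the remarks on where the hard content lives are consistent with the paper's discussion, so nothing further is needed.
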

The conjecture was proved in type $A$  
by Bazlov \cite{Baz03} in 2003, and then in full generality 
combining the works of Joseph 
\cite{Jos11b,Jos11a}, together with the work of Alekseev and the second 
author \cite{AlekMor12} (see Theorems \ref{theorem:Joseph2} and~\ref{theorem:Clifford2}).

\subsection{Main results and strategy}
In this paper we establish a {\em direct} connection between 
the symmetric filtration and the enveloping filtration
for $\g$ of type $A$ or $C$. 
\begin{theorem} \label{theorem:main} 
Assume that $\g$ is $\sl_{r+1}$ or $\sp_{2r}$. 
Then for any $m\in \Z_{\geqslant 0}$, 
$\F_S^{(m)} \h  = \F_U^{(m)} \h .$ 
\end{theorem}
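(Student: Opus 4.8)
\emph{Step 1: two families of generators.} The plan is to deduce Theorem~\ref{theorem:main} from a triangularity relation, for the ordering of the exponents, between two explicit families of generators of the invariant modules $(S(\g)\otimes\g)^\g$ and $(U(\g)\otimes\g)^\g$, both built from paths in the weight lattice of the standard representation $V$ ($V=\C^{r+1}$ when $\g=\sl_{r+1}$, $V=\C^{2r}$ when $\g=\sp_{2r}$). Index the weight basis of $V$ by the vertices of the crystal graph of $V$ --- a chain in type $A$, a chain with a turning point in type $C$ --- and let $X$ denote a generic element of $\g\subset\mathrm{End}(V)$. Recall that $S(\g)^\g$ is generated by the invariants $p_k:=\tr_V(X^{k})$ (over all $k$ in type $A$, over the even $k$ in type $C$), that $(S(\g)\otimes\g)^\g$ is freely generated over $S(\g)^\g$ by the differentials $\d p_k$, and that under $B_\g$ one has $\d p_k\leftrightarrow(\deg p_k)\,X^{m_k}=\sum_{a,b}(X^{m_k})_{ab}\otimes E_{ab}$ (traceless part in type $A$, symplectic symmetrisation in type $C$), so $\d p_k$ has filtration degree $m_k$. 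On the enveloping side I use the generators obtained by reading the matrix power inside $U(\g)$, namely $q_k:=\sum_{a,b}(X^{m_k})^{U}_{ab}\otimes E_{ab}\in(U(\g)\otimes\g)^\g$, where $(X^{m})^{U}_{ab}:=\sum_{i_1,\dots,i_{m-1}}X_{a i_1}X_{i_1 i_2}\cdots X_{i_{m-1}b}$ with the product taken in $U(\g)$. By the Poincar\'e--Birkhoff--Witt theorem the associated graded of $(U(\g)\otimes\g)^\g$ is $(S(\g)\otimes\g)^\g$ and $\gr q_k=\d p_k$, so $q_k$ again has filtration degree $m_k$ and, together with the centre $U(\g)^\g$, the $q_k$ generate $(U(\g)\otimes\g)^\g$; using moreover that $\hc$ is multiplicative on the $\h$-invariant part of $U(\g)$, one obtains
$$\F_S^{(m)}\h=\Span\{u_k\ ;\ m_k\leqslant m\}\quad\text{and}\quad\F_U^{(m)}\h=\Span\{w_k\ ;\ m_k\leqslant m\},$$
where $u_k:=(\ev_\rho\otimes1)(\ch\otimes1)(\d p_k)$ and $w_k:=(\ev_\rho\otimes1)(\hc\otimes1)(q_k)$. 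It then suffices to compare the two bases $(u_k)$ and $(w_k)$.

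\emph{Step 2: the symmetric side.} Expanding the matrix powers, each entry $(X^{m})_{ab}$ in $S(\g)$, resp.\ $(X^{m})^{U}_{ab}$ in $U(\g)$, is a sum over directed paths $\pi\colon\eps_a\to\eps_{i_1}\to\cdots\to\eps_b$ of length $m$ in the crystal graph of $V$, the summand attached to $\pi$ being the product --- in $S(\g)$, resp.\ in $U(\g)$ --- of the entries $X_{i_{\ell-1}i_\ell}$ read along $\pi$. In the symmetric setting $\ch\otimes1$ annihilates the summand of every non-constant path, because a product containing a Chevalley root vector lies in $(\n_-+\n_+)S(\g)$; only the constant path at $\eps_a$ survives (forcing $a=b$), contributing $E_{aa}^{m}$. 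Evaluating at $\rho$ therefore gives the power-sum vector $u_k=(\deg p_k)\sum_a\langle\rho,\eps_a\rangle^{m_k}H_a$, where $H_a$ is the image of $E_{aa}$ in $\h$. Since $\rho$ is regular \cite{Kos63} the scalars $\langle\rho,\eps_a\rangle$ are pairwise distinct on the relevant index set, while the only linear relations among the $H_a$ are $\sum_a H_a=0$ in type $A$, resp.\ $H_a+H_{\bar a}=0$ in type $C$; a Vandermonde argument then shows that $u_1,\dots,u_r$ are linearly independent --- recovering that the $\F_S^{(m)}\h$ exhaust $\h$ with jumps exactly at the exponents --- and, crucially for the comparison, that \emph{every} vector of the form $\sum_a g(\langle\rho,\eps_a\rangle)\,H_a$ with $g$ a one-variable polynomial of degree $<m_k$ lies in $\Span\{u_j\ ;\ m_j<m_k\}$.

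\emph{Step 3: the enveloping side, and conclusion.} On the enveloping side $\hc\otimes1$ behaves differently: a \emph{closed} path that is not constant can now contribute, because bringing $X_{a i_1}\cdots X_{i_{m-1}a}$ into Poincar\'e--Birkhoff--Witt order produces Cartan elements out of the commutators $[X_{cd},X_{dc}]=E_{cc}-E_{dd}$ and their symplectic analogues. Tracking these commutators expresses $\hc\big((X^{m_k})^{U}_{aa}\big)$ as a sum over \emph{weighted closed paths} based at $\eps_a$ --- a weighted path being a closed path together with the scalar produced by a maximal sequence of contractions, its contribution being that scalar times the product of the $E_{cc}$ over the visited vertices --- and, since $\hc$ kills nonzero weight spaces, $(\hc\otimes1)(q_k)=\sum_a\hc\big((X^{m_k})^{U}_{aa}\big)\otimes H_a$, so $w_k=(\deg p_k)\sum_a\,\ev_\rho\!\big(\hc((X^{m_k})^{U}_{aa})\big)\,H_a$. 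The crux is to establish a \emph{uniform} formula: there is a one-variable polynomial $Q_k$ of degree exactly $m_k$ with nonzero leading coefficient such that $\ev_\rho\!\big(\hc((X^{m_k})^{U}_{aa})\big)=Q_k(\langle\rho,\eps_a\rangle)$ for every vertex $a$. This is a phenomenon of Capelli / Perelomov--Popov type: after specialising at $\rho$, the sum over weighted closed paths based at $\eps_a$ can be resummed because the values $\langle\rho,\eps_c\rangle$ run along the crystal chain in arithmetic progression (with a single jump at the turning point in type $C$), so the number of vertices lying ``downstream'' of a given one is an affine function of its $\rho$-value; this collapses the sum to a polynomial in $\langle\rho,\eps_a\rangle$ alone, from which its degree $m_k$ and the nonvanishing of its top coefficient are read off. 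Granting this, $w_k=c_k\,u_k+v_k$ with $c_k$ the (nonzero) top coefficient of $Q_k$ and $v_k=\sum_a g(\langle\rho,\eps_a\rangle)H_a$ for a polynomial $g$ of degree $<m_k$, so $v_k\in\Span\{u_j\ ;\ m_j<m_k\}$ by Step~2. Hence the transition matrix from $(u_k)$ to $(w_k)$ is triangular with nonzero diagonal for the ordering of the exponents, whence $\Span\{w_k\ ;\ m_k\leqslant m\}=\Span\{u_k\ ;\ m_k\leqslant m\}$ for every $m$, that is, $\F_U^{(m)}\h=\F_S^{(m)}\h$, which is Theorem~\ref{theorem:main}. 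The main obstacle is precisely the uniform formula for $Q_k$ together with the nonvanishing of its leading coefficient: in type $A$ this reduces to a telescoping identity for lattice paths, close to the classical Perelomov--Popov computation; in type $C$ the same strategy applies but with considerably heavier bookkeeping, since the symplectic form forces one to work with a constrained set of matrix entries and the turning point of the crystal of $\C^{2r}$ changes which paths close up and with what contraction coefficients.
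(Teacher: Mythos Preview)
Your strategy coincides with the paper's: compute the images of generating families under $(\ev_\rho\otimes1)\circ(\ch\otimes1)$ and $(\ev_\rho\otimes1)\circ(\hc\otimes1)$, show the coefficients in a fixed basis of $\h$ are polynomials in a single integer parameter of controlled degree, and conclude by a Vandermonde/triangularity argument. You work in the weight basis $(H_a)$ with parameter $\langle\rho,\eps_a\rangle$; the paper works in the basis $(\s{\varpi}_k)$ with parameter $k$; these differ only by Abel summation and the affine bijection $a\leftrightarrow\langle\rho,\eps_a\rangle$. Your ordered-product generator $q_k$ is in fact the element the paper actually computes with throughout (cf.\ the formulas for $\widehat{\d p}_{m,k}$ in Lemma~\ref{lem:dec3}(2) and Lemma~\ref{Lem:formulas}), so Step~1 introduces no real difference.

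The substance, however, lies entirely in what you defer with ``granting this''. That $\ev_\rho\hc\big((X^{m_k})^U_{aa}\big)$ depends polynomially on $a$ with the asserted degree bound is precisely the content of Theorem~\ref{theorem:main2}, to which the whole of Sections~\ref{sec:main2-A} and~\ref{sec:main2-C} are devoted: the weighted-path formalism, the equivalence classes, the cutting operation $(\sub{\mu},\sub{\alpha})^{\# p}$, and in type~$C$ the further $\star$-paths and admissible-triple bookkeeping arising from the turning point. A Perelomov--Popov allusion does not replace this work.

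There is also a sharper issue. You claim $Q_k$ has degree \emph{exactly} $m_k$ with nonzero top coefficient, and deduce triangularity directly. The paper does \emph{not} establish the analogous exactness: Theorem~\ref{theorem:main2} gives only $\deg\hat Q_m\le m-1$, and the Proposition that derives Theorem~\ref{theorem:main} then invokes the external Theorem~\ref{theorem:Clifford2} (from~\cite{AlekMor12}) to force linear independence of the $\ev_\rho(\widehat{\d p}_{m_j})$. Your implicit reasoning --- that only the constant path contributes at top degree --- is already incorrect for $m=2$ in type~$A$: the paths $a\to b\to a$ with $b<a$ produce, after $\ev_\rho\circ\hc$, a term quadratic in $\langle\rho,\eps_a\rangle$, so the leading coefficient is a genuine sum that must be shown nonzero. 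If you can supply a closed-form computation proving this nonvanishing for all $m$, you would make the argument independent of~\cite{AlekMor12} and thereby strengthen the paper's result; as it stands it is an unproven assertion, and without it you need the same external input the paper uses.
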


Together with Rohr's result (cf.~\ref{sub:symmetric})   
and Theorem~\ref{theorem:Clifford2},  
Theorem \ref{theorem:main} gives another proof of Joseph's theorem, 
and so another proof of Kostant's conjecture for the types $A$ 
and~$C$.
Since the symmetric filtration and 
the enveloping filtration are similarly defined, 
our approach is quite natural. 
It is surprising that their equality was first established only indirectly.
We hope that our method can be adapted to other contexts, 
for instance to affine Lie algebras in types $A$ and $C$ or to 
other particular representations different from the adjoint representation.  
Remember that the symmetric filtration and 
the enveloping filtration are both defined for the special case 
where $V=\g$ is the adjoint representation. 
One may ask if an analogue result 
can be extended to some other 
finite-dimensional simple $\g$-module $V$.   
It is known that the equality does not hold for all 
simple $\g$-module $V$ (a counter-example was found by Anton 
Alekseev), but it would be interesting 
to know which are the representations for which it does;  
see also \cite[\S3.3]{Jos11a} for related topics. 
Alekseev's counter-example 
suggests that it is a hard problem, and general arguments 
cannot be applied.

We summarize in Figure \ref{fig:connection} how the connections between 
all filtrations described above were established by different researchers.

\begin{figure}[h]
\begin{center}
\tikzset{
    >=stealth',
    punkt/.style={
           rectangle,
           rounded corners,
           draw=black, very thick,
           text width=6.5em,
           minimum height=2em,
           text centered},
    pil/.style={
           ->,
           thick,
           shorten <=2pt,
           shorten >=2pt,}
}
\begin{tikzpicture}[node distance=1cm, auto,]
 \node[punkt] (principal) {$\c{\F}^{(m)}\h$};
 \node[punkt, inner sep=5pt,below=0.5cm of principal]
 (clifford) {${\F}_{\Cl}^{(m)}(\h)$};
 \node[above=of principal] (dummy) {};
 \node[punkt,right=of dummy] (enveloping){${\F}_U^{(m)}(\h)$}
   edge[pil,draw=red,<->,bend left=45]  node[left=0.3cm,above] {\small {\color{red}\cite{Jos11b,Jos11a}}}(principal.east)
  edge[pil,draw=blue, <->,bend left=45] node[right=1mm] {} (clifford.east);
  \node[punkt,right=0.5mm of enveloping] (gen_enveloping){$\widetilde{\F}_U^{(m)}(\h)$}
   edge[pil,draw=red, <->,bend left=45]  node[left=0.5cm,above] {}(principal.east)
   edge[pil,draw=blue, <->,bend left=45] node[right=2mm] {\small {\color{blue}\cite{AlekMor12}}} (clifford.east);
 \node[punkt, left=of dummy] (symmetric) {${\F}_S^{(m)}(\h)$}
   edge[pil, draw=brown, <->,bend right=45] node[left=1mm] {\small {\color{brown}\cite{Rohr08}}} (principal.west)
   edge[pil,<->, bend left=45] node[auto] {Theorem \ref{theorem:main}, $\g=\sl_{r+1}$ or $\g=\sp_{2r}$} (enveloping);
 \node[below=1.5cm of symmetric] (gbelow){Kostant conjecture};
 \node[above=0.75mm of clifford] (dummy2) {};
 \path[<->,thick] (principal) edge (clifford);
 \path[->,thick] (gbelow) edge (dummy2);
\end{tikzpicture}
 \caption{Connections between several filtrations on the Cartan subalgebra $\h$.}
 \label{fig:connection}
\end{center}
\end{figure}

We outline below our strategy to prove Theorem \ref{theorem:main}. 
Assume that $\g=\sl_{r+1}$ or $\g=\sp_{2r}$. 
Then the exponents are pairwise distincts in both cases. 
One can choose homogeneous generators of $S(\g)^\g$ as follows. 
Set $n=r+1$ if $\g=\sl_{r+1}$ and $n=2r$ if $\g=\sp_{2r}$, 
and 
let $(\pi,\C^{n})$ be the standard representation of $\g$. It is 
the finite-dimensional irreducible representation with highest weight  
$\varpi_1$. Set $p_m := \dfrac{1}{(m+1)!} \tr \circ \pi^{m+1}$ for $m \in \Z_{\geqslant 0}$. 
Identifying $\g$ with $\g^*$ through the 
inner product $B_\g$, 
the elements $p_{m_1},\ldots,p_{m_r}$ are homogeneous generators of 
$\C[\g]^\g \cong S(\g)^\g$ of degree $m_1+1,\ldots,m_r+1$, respectively. 
Their differentials, $\d p_1,\ldots,\d p_m$, 
are homogeneous free generators of the free module $(S(\g)\otimes \g)^\g$ 
over $S(\g)^\g$. 
Moreover $(\sym \otimes 1)(\d p_{m_1}),
\ldots,(\sym \otimes 1)(\d p_{m_r})$ 
are homogeneous free generators of the free module $(U(\g)\otimes \g)^\g$ 
over $Z(\g)\cong U(\g)^\g$ (cf.~Proposition~\ref{Pro:gen}), 
where 
$\sym \colon S(\g) \to U(\g)$
is the symmetrization map. 

Fix $m \in \Z_{> 0}$. 
Since ${\d p}_{m}$ 
and $(\beta \otimes 1)({\d p}_m)$ are $\g$-invariant, 
the elements $\overline{\d p}_m := (\ch \otimes 1) (\d p_m)$ and  
$\widehat{\d p}_m := ((\hc \otimes 1) \circ (\beta \otimes 1))(\d p_m)$ 
lie in $S(\h) \otimes \h$. 
Define the elements $\overline{\d p}_{m,k} $ and 
$\widehat{\d p}_{m,k}$ of $S(\h)$ for $k\in\{1,\ldots,r\}$ by writing 
$\overline{\d p}_{m}$ and $\widehat{\d p}_{m} $ as: 
\begin{align*} \overline{\d p}_{m}  
= \dfrac{1}{m!}  \sum_{k=1}^{r}  \overline{\d p}_{m,k}
\otimes  \s{\varpi_k},  \qquad   
\widehat{\d p}_{m} = \dfrac{1}{m!}  \sum_{k=1}^{r}  
\widehat{\d p}_{m,k} \otimes \s{\varpi_k} .& 
\end{align*}

Our main results are the following: 

\begin{theorem} \label{theorem:main1} 
Assume that $\g$ is $\sl_{r+1}$ or $\sp_{2r}$, and  
let $m \in \Z_{> 0}$.  
For some polynomial $\bar{Q}_m \in \C[X]$ of degree $m-1$ 
we have $\ev_\rho(\overline{{\rm d} p}_{m,k})= \bar{Q}_m(k)$ 
for $k=1,\ldots,r$ so that 
$$\ev_\rho (\overline{{\rm d} p}_{m}  ) 
= \dfrac{1}{m!} \sum_{k=1}^{r}  \bar{Q}_m(k) 
 \s{\varpi_k}.$$ 
Moreover $\bar{Q}_1=1$ if $\g=\sl_{r+1}$,  
and $\bar{Q}_1=2$ if $\g=\mathfrak{sp}_{2r}$. 
\end{theorem}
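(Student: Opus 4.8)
The plan is to compute $\ev_\rho(\overline{\d p}_{m,k})$ directly from the description of $p_m = \frac{1}{(m+1)!}\tr\circ\pi^{m+1}$ in terms of the standard representation, and to exploit the fact that for type $A$ and type $C$ the crystal graph of $(\pi,\C^n)$ has a very transparent combinatorial structure: its vertices are indexed by the weights $\eps_1,\ldots,\eps_n$ (in type $A$) respectively $\pm\eps_1,\ldots,\pm\eps_r$ (in type $C$), linearly ordered along the principal direction, with arrows given by the simple root operators. First I would write the differential $\d p_m$ explicitly: identifying $\g$ with $\g^*$ via $B_\g$, one has $\d p_m(x) = \frac{1}{m!}\,\pi(x)^m$ viewed as an element of $\g\otimes\g$ (trace-form pairing), so that $\overline{\d p}_m = (\ch\otimes 1)(\d p_m)$ picks out, in the first tensor factor, the ''diagonal'' part in $S(\h)$ obtained by following closed paths in the crystal graph, and in the second factor the corresponding coweight. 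Concretely, $\overline{\d p}_{m,k}$ is a sum over paths in the crystal graph of monomials in the $\eps_i$'s recording the weights of the vertices visited; summing the contributions that land on the coweight $\s{\varpi_k}$ gives $\overline{\d p}_{m,k}$.

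Next I would apply $\ev_\rho$. Since $\rho = \sum_i\varpi_i$, evaluating a monomial $\eps_{i_1}\cdots\eps_{i_m}\in S(\h)$ at $\rho$ amounts to evaluating each $\langle\rho,\eps_{i_j}\rangle$, and for type $A$ and type $C$ these pairings are (up to normalization) the coordinates $\rho_i = \langle\rho,\eps_i\rangle$, which form an arithmetic progression: in type $A$, $\rho_i = \frac{n+1}{2}-i$; in type $C$, $\rho_i = r+1-i$. The key point is then that $\ev_\rho(\overline{\d p}_{m,k})$ becomes a sum, over the (finitely many) combinatorial paths of length $m$ through the crystal graph relevant to the coweight $\varpi_k$, of products of these linear-in-$i$ quantities; because the crystal graph is essentially a path (a totally ordered chain with the obvious edges), the number of such closed combinatorial paths starting at a given height is itself polynomial in the height, and the weighted count is a polynomial in $k$. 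Establishing that this polynomial has degree exactly $m-1$ is the crux: an upper bound of $m-1$ comes from the fact that one coordinate is pinned by the choice of $\varpi_k$ and the remaining $m-1$ steps each contribute a factor that is at most linear in $k$, while non-degeneracy of the leading term (so that the degree is not smaller) should follow from a positivity/cancellation-free argument — the dominant paths all traverse the graph monotonically in one direction, so there is no cancellation in the top-degree term.

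The last step is the normalization for $m=1$. Here $\d p_1$ is, up to scalar, the identity element of $\g\otimes\g$ under $B_\g$, i.e.\ the Casimir-type element $\sum_a x_a\otimes x^a$; its Chevalley projection is $\sum_{i} \s{\varpi_i}\otimes\check\varpi_i'$-type diagonal term in $S(\h)\otimes\h$, and one reads off that $\overline{\d p}_{1,k}$ is a constant. Evaluating at $\rho$ and comparing with the definition $\overline{\d p}_1 = \frac{1}{1!}\sum_k \overline{\d p}_{1,k}\otimes\s{\varpi_k}$, the constant $\bar Q_1$ records the ratio between the $B_\g$-normalization on $\h$ and the ''trace form'' $\tr\circ\pi^2$: for $\sl_{r+1}$ with the standard normalization this ratio is $1$, while for $\sp_{2r}$ the long roots have squared length twice as large in the standard model, giving $\bar Q_1 = 2$. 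This is a short explicit computation once the identification of $\d p_1$ with the inverse form is made precise.

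The main obstacle I anticipate is not the evaluation itself but the clean bookkeeping of \emph{which} closed paths in the crystal graph contribute to $\overline{\d p}_{m,k}$ for a fixed $k$, and proving the degree-exactly-$(m-1)$ statement uniformly in both types; the type-$C$ case, where the graph folds back on itself at the ''$0$-weight'' and the weights $\pm\eps_i$ pair up, requires care to make sure no unexpected cancellation drops the degree. I would handle this by setting up, once and for all, a generating-function identity for $\sum_m \ev_\rho(\overline{\d p}_{m,k})\,t^m$ in terms of the transfer matrix of the crystal graph evaluated at $\rho$, from which the polynomiality in $k$ and the degree count become transparent, deferring the full combinatorial details of the weighted-path description to the next section.
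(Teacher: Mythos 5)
Your overall strategy (evaluate at $\rho$ using the chain structure of the crystal of the standard representation and the arithmetic‑progression values $\langle\rho,\eps_i\rangle$) is in the spirit of the paper, but the proposal has a real gap exactly at the crux, the statement that $\deg\bar Q_m=m-1$. First, your degree bookkeeping is wrong: the factors that actually occur are $\langle\rho,\s{\varpi}_{\alpha^{(i)}}\rangle$ with $\langle\rho,\c{\varpi}_j\rangle=\tfrac{j}{2}(r-j+1)$ (type $A$) and $\langle\rho,\s{\varpi}_j\rangle=\tfrac{j}{2}(2r-j+1)$ (type $C$), which are \emph{quadratic} in $j$, so "each step contributes a factor at most linear in $k$" is not true, and a naive product bound gives far more than $m-1$. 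The paper's mechanism is different and rests on two explicit cancellations. Since $S(\g)$ is commutative and $\ch$ kills $(\n_-+\n_+)S(\g)$, only the zero‑height contributions survive (loops at a single weight $\mu\in P(\delta)_k$; see Lemma \ref{Lem:formulas} and \eqref{eq:formulas}), so no transfer‑matrix or closed‑path analysis is needed here at all. For each such $\mu$ the inner sum over $\sub{\alpha}\in(\Pi_\mu)^m$ collapses by the binomial theorem to $\bigl(\langle\rho,\c{\varpi}_k-\c{\varpi}_{k-1}\rangle\bigr)^m$, i.e.\ the $m$-th power of a quantity \emph{linear} in $k$ ($\tfrac r2-k+1$, resp.\ $r-k+1$); this gives $T_m(k)$ of degree exactly $m$ (Lemma \ref{Lem2:main1}, Lemma \ref{lem:symC}). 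Then the signed sum over $P(\delta)_k$ in \eqref{eq:main1} is a finite difference $T_m(k)-T_m(k+1)$ (plus barred terms in type $C$), whose leading term $m(-k)^{m-1}$, resp.\ $2m(-k)^{m-1}$, is visibly nonzero — see \eqref{eq:Qi-sl} and \eqref{eq:Qi-sp}. So the drop from degree $m$ to exactly $m-1$ is \emph{caused} by a cancellation (a discrete derivative), and your proposed "positivity/cancellation‑free" argument for the leading term cannot work as stated; you would need to exhibit these two collapses (or an equivalent identity) explicitly.

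Second, the type‑$C$ "folding" is not just a matter of care: you must use that $p_m=0$ for even $m$, so only odd $m$ occur, and then the four weights $\delta_k,\delta_{k+1},\overline{\delta}_k,\overline{\delta}_{k+1}$ in $P(\delta)_k$ contribute with the barred terms carrying a sign $(-1)^m$, so for odd $m$ they reinforce the unbarred ones (overall factor $2$) instead of cancelling; your proposal is silent on this parity point, which is precisely where an "unexpected cancellation" would otherwise threaten the degree claim. The $m=1$ normalization you sketch is consistent with the paper ($\bar Q_1=1$ for $\sl_{r+1}$, $\bar Q_1=2$ for $\sp_{2r}$), but as written the proposal defers the decisive combinatorial identity to an unproved generating‑function step, so the degree statement — the heart of the theorem — is not established.
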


\begin{remark}
\label{rem:Qi-sym}
For $m>1$, the polynomial $\bar{Q}_m$ is explicitly 
described by formula \eqref{eq:Qi-sl} for $\sl_{r+1}$, 
and by formula \eqref{eq:Qi-sp} for $\sp_{2r}$.  
\end{remark}

\begin{theorem} \label{theorem:main2} 
Assume that $\g$ is $\sl_{r+1}$ or $\sp_{2r}$, 
and let $m \in \Z_{> 0}$.  
For some polynomial $\hat{Q}_m \in \C[X]$ of degree 
at most $m-1$ 
we have $\ev_\rho(\widehat{{\rm d}  p}_{m,k})= \hat{Q}_m(k)$ 
for $k=1,\ldots,r$ so that 
$$\ev_\rho (\widehat{{\rm d} p}_{m}  ) 
= \dfrac{1}{m!} \sum_{k=1}^{r}   \hat{Q}_m(k) 
 \s{\varpi_k}.$$ 
Moreover $\hat{Q}_1=1$ if $\g=\sl_{r+1}$,  
and $\hat{Q}_1=2$ if $\g=\mathfrak{sp}_{2r}$.  
\end{theorem}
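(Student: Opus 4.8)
The plan is to compute $\widehat{\d p}_m = ((\hc\otimes 1)\circ(\beta\otimes 1))(\d p_m)$ explicitly using the combinatorial description of the invariants $\d p_m = \d\big(\tfrac{1}{(m+1)!}\tr\circ\pi^{m+1}\big)$ as a sum over \emph{weighted paths} in the crystal graph of the standard representation $(\pi,\C^n)$. Recall that $\tr\circ\pi^{m+1}$, viewed as an element of $S^{m+1}(\g)\subset S^{m+1}(\g^*)$ via $B_\g$, is (up to the scalar) the sum over closed walks of length $m+1$ in the crystal graph $\Gamma$ of $\C^n$: each such walk contributes a product of matrix coefficients $e_{\alpha_1}\cdots e_{\alpha_{m+1}}$ of the Chevalley generators, with diagonal steps contributing Cartan elements. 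Differentiating turns one of the $m+1$ factors into the $\g$-slot, so $\d p_m$ becomes a sum, over open walks of length $m$ in $\Gamma$ together with a marked "return arrow", of (monomial in $\n_\pm\oplus\h$-coordinates)$\,\otimes\,$(the marked arrow viewed in $\g$). Applying $\beta\otimes 1$ (symmetrization) and then $\hc\otimes 1$: the Harish-Chandra projection kills every monomial that genuinely involves $\n_-$ or $\n_+$ after symmetrization, so only the \emph{closed} walks supported entirely on the diagonal of the crystal — i.e. the constant walks at a fixed vertex $i$ of $\Gamma$, decorated appropriately — survive, together with correction terms coming from the non-commutativity in $U(\g)$ (the difference between $\beta$ and the naive PBW ordering). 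This is exactly the mechanism already used to prove Theorem \ref{theorem:main1} for $\overline{\d p}_m = (\ch\otimes 1)(\d p_m)$, where on the symmetric side there are \emph{no} correction terms.

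The key steps, in order, are: (i) write down the weighted-path expansion of $\d p_m\in S(\g)\otimes\g$, organizing the sum by which crystal vertex the marked $\g$-arrow sits at and which $\s{\varpi_k}$-component it contributes to after projection to $S(\h)\otimes\h$; (ii) apply the symmetrization map $\beta$ to each path-monomial and expand it in the PBW basis, tracking the "defect" terms that lower the degree — by the KNV triangular decomposition \cite{KNV11} and the fact that $\hc$ is an algebra map on $U(\g)^\h$, a path-monomial survives $\hc$ only if its total $\n_+$-content equals its total $\n_-$-content \emph{in every way the symmetrizer can reorder it}; (iii) evaluate the resulting polynomial in $S(\h)$ at $\rho$, using $\langle\rho,\c\alpha\rangle=1$ for all simple $\alpha$ (equivalently $\rho=\sum\varpi_i$), which turns each surviving term into a value depending only on the vertex index $k$; (iv) check that the dependence on $k$ is polynomial of degree $\le m-1$ — the degree bound comes from the fact that a closed diagonal walk of combinatorial length $m$ in $\Gamma$ visits at most $m$ vertices, so the number of available "slots" contributing factors linear in $k$ is $m-1$ after fixing the marked arrow; and (v) compute the $m=1$ case by hand: $\d p_1$ is (a scalar multiple of) the identity-type invariant, $\beta$ does nothing in degree $1$, and one reads off $\hat Q_1=1$ for $\sl_{r+1}$ and $\hat Q_1=2$ for $\sp_{2r}$ from the lengths $n=r+1$ versus $n=2r$ of the standard representation, exactly as on the symmetric side.

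The main obstacle I expect is step (ii): controlling the symmetrization correction terms. Unlike the Chevalley projection on $S(\g)$, where $\ch$ simply deletes all monomials meeting $\n_-+\n_+$ and the path combinatorics is clean, on $U(\g)$ the map $\beta$ produces lower-order terms when it reorders a product that mixes $e_\alpha$, $e_{-\alpha}$ and $\c\alpha$, and these land in $U(\h)$ after $\hc$; one must show that, after evaluating at $\rho$, their total contribution still assembles into a polynomial $\hat Q_m(k)$ of degree $\le m-1$ (note the bound is only "$\le m-1$", not "exactly $m-1$" as in Theorem \ref{theorem:main1}, which is a hint that these corrections can cancel the top-degree term). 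The cleanest route is probably to pass through the generalized Harish-Chandra projection $\widetilde{\hc}$ and the Zhelobenko-operator description \cite{KNV11,Jos11b,Jos11a}, or alternatively to use a Duflo-type comparison between $\hc\circ\beta$ and $\ch$ (a shift by $\rho$ in the relevant variables), reducing $\widehat{\d p}_m$ to $\overline{\d p}_m$ plus manifestly lower-degree error, after which Theorem \ref{theorem:main1} finishes the job. Either way, the combinatorial bookkeeping of paths is the technical heart, and confirming that the final answer is a genuine polynomial in $k$ — rather than merely a function of $k$ — is the delicate point.
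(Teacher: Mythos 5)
Your plan reproduces the paper's starting point — the weighted-path expansion of $\widehat{\d p}_{m,k}$ (this is Lemma \ref{Lem:formulas}), evaluation at $\rho$, and the final telescoping over $P(\delta)_k$ — but it stops exactly where the actual work of Theorem \ref{theorem:main2} begins, and the argument you offer for the crucial step is not valid. After $\hc\otimes 1$ it is not only the constant (diagonal) walks plus some reordering corrections that survive: the surviving terms are indexed by \emph{all} closed weighted paths of length $m$ based at $\delta_k$ (resp.\ $\bar\delta_k$), each contributing its weight $\wt(\sub{\mu},\sub{\alpha})$, and the \emph{number} of such paths itself depends on $k$. So your degree count (``a walk of length $m$ visits at most $m$ vertices, hence $m-1$ slots linear in $k$'') does not address the real issue, which is to show that the $k$-dependent sum of these weights is a polynomial in $k$ at all. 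The paper achieves this through three ingredients absent from your plan: (a) the vanishing theorem (Theorem \ref{corollary:cut}, and Theorem \ref{corollary:cut-type-C} in type $C$) showing that any path rising above its starting weight has weight zero, which confines the sum to $[\![\delta_{r+1},\delta_k]\!]$; (b) the vertex-cutting recursion (Lemma \ref{Lem0:cut}, Proposition \ref{Pro:Conc1}) computing each weight as an explicit polynomial $A_{\I}$ in the heights of the path; and (c) an inductive Faulhaber--Bernoulli summation over equivalence classes of paths (Lemmas \ref{Lem:Newton}, \ref{Lem:T_m}, \ref{Lem:weight2}, \ref{Lem:T}; Lemma \ref{Lem:T_mC} and Proposition \ref{Pro:sumC} in type $C$) proving that the total is a polynomial $\hat T_m(k)$ of degree at most $m$. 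Only then does the degree bound $m-1$ appear, as the telescoping difference $\hat Q_m(X)=\hat T_m(X)-\hat T_m(X+1)$ (formulas \eqref{eq:hatQi-sl} and \eqref{eq:hatQi-sp}), not from any slot count.

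The two escape routes you propose for your step (ii) do not close this gap. Routing through $\widetilde{\hc}$ and the Zhelobenko operators is not carried out and would reimport the Khoroshkin--Nazarov--Vinberg/Joseph machinery that the paper's direct approach is designed to avoid. And there is no ``Duflo-type'' identity expressing $(\hc\otimes 1)\circ(\beta\otimes 1)$ on $(U(\g)\otimes\g)^\g$ as $\ch\otimes 1$ up to a $\rho$-shift plus manifestly lower-degree error: the Harish-Chandra $\rho$-shift concerns $Z(\g)\to S(\h)^W$, not this module, and if such a comparison existed Theorem \ref{theorem:main} would be nearly immediate — the absence of any closed formula for $\hat Q_m$ (in contrast with $\bar Q_m$, compare \eqref{eq:Qi-sl} with \eqref{eq:hatQi-sl}) shows the correction terms are genuinely unstructured. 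Finally, your outline does not anticipate the non-simply-laced phenomena in $\sp_{2r}$, where cutting a vertex produces additional ``star'' paths (Lemmas \ref{Lem:star+}, \ref{Lem:star0}) and the equivalence relation must be defined on paths rather than on height sequences; this is where most of Section \ref{sec:main2-C} is spent. As it stands, the proposal is a correct description of the first step of the paper's strategy, together with an acknowledged but unresolved obstacle, and therefore does not constitute a proof.
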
 

\begin{remark}
For $m>1$, there is no nice general description of the polynomial $\hat{Q}_m$ 
in term of $r$ and $m$ 
as for the symmetric case. 
The polynomials $\hat{Q}_m$'s are defined inductively by formula \eqref{eq:hatQi-sl} 
for $\sl_{r+1}$, and by formula \eqref{eq:hatQi-sp} for $\sp_{2r}$.  
\end{remark}

We claim that Theorem \ref{theorem:main1} and Theorem \ref{theorem:main2} 
are sufficient to prove the main theorem: 

\begin{proposition}
Theorem \ref{theorem:main1} and Theorem \ref{theorem:main2} 
imply Theorem \ref{theorem:main}.  
\end{proposition}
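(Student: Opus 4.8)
The plan is to extract from Theorems~\ref{theorem:main1} and~\ref{theorem:main2} a comparison of the two flags of subspaces $\F_S^{(m)}\h$ and $\F_U^{(m)}\h$ purely in terms of the auxiliary polynomials $\bar Q_m$ and $\hat Q_m$, and then to show by a dimension/triangularity argument that these flags coincide. First I would record that, by the discussion preceding the statement, $(\d p_{m_1},\dots,\d p_{m_r})$ freely generate $(S(\g)\otimes\g)^\g$ over $S(\g)^\g$ and $((\sym\otimes 1)(\d p_{m_i}))_{i}$ freely generate $(U(\g)\otimes\g)^\g$ over $Z(\g)$; consequently, after applying $\ev_\rho\circ\ch\otimes 1$ (resp.\ $\ev_\rho\circ\hc\otimes 1$), the filtered pieces are spanned by the vectors $v_m^S := \sum_{k=1}^r \bar Q_m(k)\,\s{\varpi_k}$ for the symmetric side, and $v_m^U := \sum_{k=1}^r \hat Q_m(k)\,\s{\varpi_k}$ for the enveloping side, where $m$ runs over the exponents $m_1<\dots<m_r$ (here I use that the constant functions $Z(\g)\to\C$, resp.\ $S(\g)^\g\to\C$, act through $\ev_\rho$ by scalars, so higher-degree contributions from the generators of the invariant ring do not enlarge the span beyond what the differentials $\d p_{m_i}$ give). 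Thus $\F_S^{(m)}\h = \Span\{v_{m_i}^S : m_i\le m\}$ and $\F_U^{(m)}\h = \Span\{v_{m_i}^U : m_i\le m\}$.

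Next I would exploit the degree bounds: $\bar Q_m$ has degree exactly $m-1$, and $\hat Q_m$ has degree \emph{at most} $m-1$, with both normalisations agreeing in degree $m=1$ ($\bar Q_1=\hat Q_1=1$ for $\sl_{r+1}$, $=2$ for $\sp_{2r}$). Since the exponents are pairwise distinct in types $A$ and $C$, the family $(\bar Q_{m_1},\dots,\bar Q_{m_r})$ is a family of polynomials of strictly increasing degrees $m_1-1<\dots<m_r-1$, hence linearly independent; evaluating at $k=1,\dots,r$ and using that $r$ points determine polynomials of degree $<r$ (note $m_r-1 = $ the top exponent, which for $A_r$ equals $r-1<r$ and for $C_r$ equals $2r-1$, so for $C$ one must be slightly more careful — see below), the vectors $(v_{m_i}^S)_i$ are linearly independent in $\h$, recovering that $\F_S^{(\bullet)}\h$ is a genuine full flag, as already asserted via Rohr. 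The key point is triangularity: because $\deg\hat Q_m\le m-1$, each $\hat Q_{m_i}$ is a $\C$-linear combination of $\bar Q_{m_1},\dots,\bar Q_{m_i}$ (as the $\bar Q$'s of degrees $0,1,\dots,m_i-1$ span all polynomials of degree $\le m_i-1$, provided one first checks the $\bar Q_m$ for \emph{all} $m$, not just exponents, form such a basis — which Theorem~\ref{theorem:main1} supplies since it is stated for every $m\in\Z_{>0}$). Hence $v_{m_i}^U \in \Span\{v_{m_1}^S,\dots,v_{m_i}^S\} = \F_S^{(m_i)}\h$, giving $\F_U^{(m)}\h\subseteq\F_S^{(m)}\h$ for all $m$.

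For the reverse inclusion I would compare dimensions: both $\F_S^{(m)}\h$ and $\F_U^{(m)}\h$ jump precisely at the exponents, and at $m=m_i$ the symmetric side has dimension $i$ by the linear independence just established. To conclude $\dim\F_U^{(m_i)}\h = i$ as well it suffices that the leading coefficient of $\hat Q_{m_i}$ (in degree $m_i-1$) be nonzero for each exponent $m_i$; this is where Theorem~\ref{theorem:main2}'s inductive formulas \eqref{eq:hatQi-sl}/\eqref{eq:hatQi-sp} must be invoked to see the top coefficient does not vanish — equivalently, that the triangular change-of-basis matrix from $(\hat Q_{m_i})$ to $(\bar Q_{m_i})$ has nonzero diagonal. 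Granting this, the flags have equal dimension at every level and one contains the other, so they are equal, which is Theorem~\ref{theorem:main}. The main obstacle I anticipate is precisely this last nonvanishing-of-the-leading-term claim for $\hat Q_{m_i}$: unlike $\bar Q_m$, the polynomial $\hat Q_m$ has no closed form, so one cannot read off its degree directly and must instead argue either (a) from the recursion that the degree-$(m-1)$ coefficient is a nonzero rational multiple of that of $\bar Q_m$, or (b) abstractly, that $\widehat{\hc}$ respects the associated graded with respect to the PBW/standard filtration and hence $\hat Q_m$ and $\bar Q_m$ have the same top-degree part (the symmetrization map being filtration-preserving with $\gr\sym=\id$). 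Route (b) is cleaner and I would pursue it: it reduces the whole Proposition to the statement that $\F_S^{(m)}\h$ is a full flag (Rohr) together with the elementary observation that passing to $U(\g)$ via $\sym$ cannot change the top-degree behaviour, so the triangularity is automatic and strict on the diagonal.
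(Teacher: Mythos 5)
The decisive gap is exactly the point you flag at the end: nothing in Theorem~\ref{theorem:main1} or Theorem~\ref{theorem:main2} controls the coefficient of degree $m_i-1$ in $\hat Q_{m_i}$, and neither of your two proposed rescues works. Route (b) is fallacious: the statements $\gr\sym=\id$ and "$\hc$ is compatible with the standard filtration" only give $\widehat{\d p}_m-\overline{\d p}_m\in S^{\leqslant m-1}(\h)\otimes\h$, i.e.\ an estimate on the $S(\h)$-degree; after applying $\ev_\rho$ that grading is destroyed, and a term of lower filtration degree contributes an arbitrary vector of $\h$, hence can change every coefficient of the polynomial $k\mapsto\ev_\rho(\widehat{\d p}_{m,k})$, including the top one. (If the top-degree parts of $\hat Q_m$ and $\bar Q_m$ agreed for such soft reasons, even the degree bound of Theorem~\ref{theorem:main2} would be automatic, whereas it is the hard combinatorial content of Sections~\ref{sec:main2-A}--\ref{sec:main2-C}; the remark after Theorem~\ref{theorem:main2} stresses that $\hat Q_m$ has no closed form.) Route (a) is not available either, since \eqref{eq:hatQi-sl} and \eqref{eq:hatQi-sp} only bound $\deg\hat T_m\leqslant m$. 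The paper closes this hole by an external input you never invoke: Theorem~\ref{theorem:Clifford2} (together with the Bazlov--Kostant surjectivity built into the definition of $\F_{\Cl}$) gives $\F_U^{(m_r)}\h=\h$, hence the vectors $\ev_\rho(\widehat{\d p}_{m_1}),\ldots,\ev_\rho(\widehat{\d p}_{m_r})$ are linearly independent and $\dim\F_U^{(m_j)}\h=j$ for every $j$, with no information on leading coefficients needed. Without this (or some substitute proof that the enveloping filtration exhausts $\h$) your dimension count on the enveloping side is unsupported.

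Your forward inclusion also breaks in type $C$, precisely at the point you parenthetically promise to address "below" but never do. Since $p_m=0$ for even $m$ when $\g=\sp_{2r}$, the available $\bar Q_m$ correspond only to the odd exponents and have degrees $0,2,\ldots,2r-2$ (they are in fact even polynomials in $r+\tfrac12-X$), so they do not span all polynomials of degree $\leqslant m_i-1$ and the claimed expansion of $\hat Q_{m_i}$ in the $\bar Q$'s is unjustified; moreover, once $m_i-1\geqslant r$ the degree bound on $\hat Q_{m_i}$ imposes no condition at all on the vector $\sum_k\hat Q_{m_i}(k)\,\s{\varpi_k}$, because evaluation at the $r$ points $k=1,\ldots,r$ does not separate degrees $\geqslant r$. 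The paper argues differently: it works with the subspaces $W_{m_j}\subset\h$ spanned by the monomial vectors $w_l=\sum_k k^l\,\s{\varpi}_k$, deduces $\hat V_j\subset W_{m_j}$ from Theorem~\ref{theorem:main2}, compares both flags to $W_{m_j}$ via the maximal-index argument based on $\deg\bar Q_{m_q}=m_q-1$, and then concludes with the Clifford input; note that even this identification $\bar V_j=W_{m_j}$ is delicate exactly in type $C$, for the same evaluation/degree reason you raised, so this is not a detail one can wave away. In summary, your proposal is missing the one ingredient the paper's proof actually turns on (Theorem~\ref{theorem:Clifford2}), and the triangularity-of-polynomials substitute you build in its place fails in type $C$ and, for the strict diagonal, in type $A$ as well.
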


\begin{proof}
Assume that $\g$ is $\sl_{r+1}$ or $\sp_{2r}$. 
First, the elements $\ev_\rho (\overline{{\rm d} p}_{m_1}  ), 
\ldots,\ev_\rho (\overline{{\rm d} p}_{m_r}  )$ are linearly independent 
in $\h$ since $\rho$ is regular \cite{Kos63}. 
Thus, for any $j \in \{1,\ldots,r\}$, 
the vector space generated by  $\ev_\rho (\overline{{\rm d} p}_{m_1}  ), 
\ldots,\ev_\rho (\overline{{\rm d} p}_{m_j} )$ has dimension $j$. 
We denote it by $ \bar{V}_j$. 
Set for $j \in \Z_{\geqslant 0}$, 
$w_j= \sum_{k=1}^{r}   k^{j}
\s{\varpi}_k$, and $W_j := {\rm span}_{\C}\left\lbrace w_0,\ldots,w_{j-1} \right\rbrace.$
We have $\dim W_j \leqslant  j$. 

We first show that for all $j \in \{1,\ldots,r\}$, $\bar{V}_j = W_{m_j}$.
By Theorem \ref{theorem:main1}, the inclusion $ \bar{V}_j \subset W_{m_j}$ holds. 
Assume that there is $w \in W_{m_j} \setminus  \bar{V}_j$. 
Write $w$ in the basis $\ev_\rho (\overline{{\rm d} p}_{m_1}),\ldots, 
\ev_\rho (\overline{{\rm d} p}_{m_r})$: 
$$w = \sum_{i=1}^r a_i \, \ev_\rho (\overline{{\rm d} p}_{m_i}),$$
and let $q$ be the maximal integer $i \in \{1,\ldots,r\}$ such that $a_i \not=0$. 
Since $w \not \in  \bar{V}_j$, $q > j$. But then $w$ is in $W_{m_q}$ and not in $W_{m_{q}-1}$ 
by Theorem \ref{theorem:main1}, which contradicts the fact that $w \in W_{m_j}$ 
because $m_j \leqslant m_q -1$. We have shown the expected equality of vector spaces. 

Let us now denote by $\hat{V}_j$ the vector space generated
 by  $\ev_\rho (\widehat{{\rm d} p}_{m_1}  ), 
\ldots,\ev_\rho (\widehat{{\rm d} p}_{m_j} )$ for $j \in\{1,\ldots,r\}$. 
Our aim is to show that $\hat{V}_j =  \bar{V}_j$  for all $j \in\{1,\ldots,r\}$. 
By the first step, it suffices to establish 
that $\hat{V}_j = W_{m_j}$ for all $j \in\{1,\ldots,r\}$. 
According to Theorem \ref{theorem:main2}, we have the inclusion 
$\hat{V}_j \subset W_{m_j}$ for all $j \in\{1,\ldots,r\}$, 
and $\dim W_{m_j}$ has dimension $j$ by the first step. 
Theorem \ref{theorem:Clifford2} implies that 
$ \hat{V}_r =\h$, and so the elements $\ev_\rho (\widehat{{\rm d} p}_{m_1}  ), 
\ldots,\ev_\rho (\widehat{{\rm d} p}_{m_r} )$ are linearly independent. 
In particular, each $\hat{V}_j$ has dimension $j$, hence the expected 
equality $\hat{V}_j = W_{m_j}$  for all $j \in\{1,\ldots,r\}$. 
This finishes the proof.  \qed
\end{proof}

The rest of the paper is organized as follows. 
Section~\ref{sec:gen} is about generalities on invariants coming from representations 
of simple Lie algebras. 
We introduce in this section our central notion of weighted paths. 
Section~\ref{sec:main2-A} is devoted to the proofs 
of Theorem \ref{theorem:main1} and Theorem \ref{theorem:main2}
in the case where $\g=\sl_{r+1}$.  The proofs of Theorem \ref{theorem:main1} 
and Theorem \ref{theorem:main2} for $\g=\sp_{2r}$ are performed in 
Section~\ref{sec:main2-C}. 

\section{General setting and weighted paths in crystal graph}
\label{sec:gen}

We first set up notation and 
standard facts on representations of simple Lie algebras and 
related invariant polynomials. 
Let
$Q = \sum\limits_{i=1}^r \Z\beta_i$ 
be the root lattice
 and set 
$Q_+ :=  \sum\limits_{i=1}^r \Z_{\geqslant 0}\beta_i.$ 
We define a partial order on $\h^*$ by:
$ \mu \succcurlyeq \lambda \; \Longleftrightarrow \; \mu - \lambda \in Q_+.$
For $(\lambda,\mu) \in (\h^*)^2$, 
we denote by $[\![\lambda,\mu ]\!]$ 
the set of $\nu \in \h^*$ such that 
$\lambda \preccurlyeq \nu \preccurlyeq \mu$. 
Let also 
$P = \sum\limits_{i=1}^r \Z \varpi_i$ and $P_+ : = \sum\limits_{i=1}^r \Z_{\geqslant 0} \varpi_i$
be the weight lattice and the set of integral dominant weights, respectively. 
For $\lambda \in P_+$, we write $(\pi_\lambda,V(\lambda))$ 
for the finite-dimensional 
irreducible representation of $\g$ with highest weight 
$\lambda$ and for $\mu \in \h^*$, $V(\lambda)_\mu$ stands 
for 
the $\mu$-weight space of $V(\lambda)$. 
The set of nonzero weights of $V(\lambda)$ 
will be denoted by $P(\lambda)$.
Fix $m \in \Z_{> 0}$, 
and let 
$$p_m^{(\lambda)}= \frac{1}{(m+1)!} \tr \circ \pi_\lambda^{m+1}.$$
Thus $p_m^{(\lambda)}$  
is a $\g$-invariant element 
of $\C[\g] \cong S(\g^*)$ 
of degree $m+1$ (see, for example, \cite[Lemma~31.2.3]{TauvelYu}). 

Let $\B=(b_1,\ldots,b_d)$ be a basis of $\g$, and  
$\B^*$
its dual basis so that
\begin{align}\label{eq:pm}
p_{m}^{(\lambda)}
=\frac{1}{(m+1)!} \sum\limits_{1 \leqslant i_1,\ldots,i_{m+1} \leqslant d } 
\tr ( \pi_\lambda(b_{i_1}) 
\circ \cdots 
\circ \pi_\lambda(b_{i_{m+1}}) )  b_{i_1}^*  \ldots b_{i_{m+1}}^*.
\end{align}
Identifying $\g$ with $\g^*$ through $B_\g$, 
$p_{m}^{(\lambda)}$ becomes an element of 
$\C[\g^*]^\g \cong S(\g)^\g$ of degree $m+1$. 
Moreover, its differential ${\rm d} p_m^{(\lambda)}$, 
defined by 
$  {\rm d} p_m^{(\lambda)} = \sum_{k=1}^d \dfrac{\partial p_m^{(\lambda)}}{\partial b_k^*} \otimes b_k^*,$ 
is an element of $(S(\g) \otimes \g)^\g$. 

Recall that  $\sym \colon S(\g) \to U(\g)$ is the symmetrization map. 
Let $Z(\g)\cong U(\g)^\g$ be the center of the enveloping algebra. 
The first part of the following proposition is due to Kostant \cite{Kos63}. 
The second part is probably well-known. For the convenience of the reader, 
we give a proof. 

\begin{proposition} \label{Pro:gen} 
Assume that $p_{m_1}^{(\lambda)},\ldots,p_{m_r}^{(\lambda)}$ 
are homogeneous generators of $S(\g)^\g$.
Then ${\rm d} p_{m_1}^{(\lambda)},\ldots,{\rm d} p_{m_r}^{(\lambda)}$ 
are free homogeneous generators of the free module $(S(\g)\otimes \g)^\g$ 
over $S(\g)^\g$, and 
$(\sym \otimes 1)({\rm d} p_{m_1}^{(\lambda)}),
\ldots,(\sym \otimes 1)({\rm d} p_{m_r}^{(\lambda)})$ 
are free homogeneous generators of the free module $(U(\g)\otimes \g)^\g$ 
over $Z(\g)$. 
\end{proposition}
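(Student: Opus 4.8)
The first assertion is exactly Kostant's theorem from \cite{Kos63}: once $p_{m_1}^{(\lambda)},\ldots,p_{m_r}^{(\lambda)}$ are homogeneous generators of $S(\g)^\g$, their differentials freely generate $(S(\g)\otimes\g)^\g$ over $S(\g)^\g$, the Jacobian criterion being satisfied because the $p_{m_i}^{(\lambda)}$ have independent differentials on the regular locus. So the plan is to concentrate on the second assertion, passing from the symmetric side to the enveloping side via the symmetrization map $\sym$.

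The strategy is a standard associated-graded / filtration argument. First I would recall that $\sym\colon S(\g)\to U(\g)$ is a $\g$-module isomorphism (it intertwines the adjoint actions), hence $\sym\otimes 1$ restricts to a linear isomorphism $(S(\g)\otimes\g)^\g \xrightarrow{\ \sim\ } (U(\g)\otimes\g)^\g$; in particular the $(\sym\otimes 1)({\rm d}p_{m_i}^{(\lambda)})$ are linearly independent over $\C$. Next, equip $U(\g)$ with its standard filtration $(U^m(\g))_m$, so that $\gr U(\g)\cong S(\g)$ canonically, and equip $U(\g)\otimes\g$ with the induced filtration (degree in the $U(\g)$-factor). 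The map $\sym$ is filtered and its associated graded is the identity of $S(\g)$; likewise $\gr\, Z(\g)=S(\g)^\g$ and $\gr\bigl((U(\g)\otimes\g)^\g\bigr)=(S(\g)\otimes\g)^\g$ — the last identity holds because taking $\g$-invariants is exact on the (semisimple) category of finite-dimensional-in-each-degree $\g$-modules, so it commutes with $\gr$. Now ${\rm d}p_{m_i}^{(\lambda)}$ is homogeneous of degree $m_i$ in the $S(\g)$-factor, and $(\sym\otimes 1)({\rm d}p_{m_i}^{(\lambda)})$ lies in $U^{m_i}(\g)\otimes\g$ with symbol equal to ${\rm d}p_{m_i}^{(\lambda)}$; similarly the generators of $Z(\g)$ obtained from $p_{m_i}^{(\lambda)}$ have symbols $p_{m_i}^{(\lambda)}$.

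To conclude, let $M\subset (U(\g)\otimes\g)^\g$ be the $Z(\g)$-submodule generated by the $(\sym\otimes 1)({\rm d}p_{m_i}^{(\lambda)})$. Filtering $M$ by the induced filtration, the symbols of these generators over $\gr Z(\g)=S(\g)^\g$ span, by the first assertion, all of $\gr\bigl((U(\g)\otimes\g)^\g\bigr)=(S(\g)\otimes\g)^\g$, whence $\gr M = \gr\bigl((U(\g)\otimes\g)^\g\bigr)$ and therefore $M=(U(\g)\otimes\g)^\g$; so the elements generate. For freeness, suppose $\sum_i z_i\,(\sym\otimes 1)({\rm d}p_{m_i}^{(\lambda)})=0$ with $z_i\in Z(\g)$ not all zero; take symbols of top filtration degree to get a nontrivial $S(\g)^\g$-relation $\sum_i \overline{z_i}\,{\rm d}p_{m_i}^{(\lambda)}=0$ among the (necessarily distinct-degree selected) generators, contradicting the freeness already established on the symmetric side. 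Hence the $(\sym\otimes 1)({\rm d}p_{m_i}^{(\lambda)})$ are free generators, and since $\sym\otimes 1$ is degree-preserving on symbols they are homogeneous in the appropriate graded sense. The one point requiring a little care — the main (mild) obstacle — is the compatibility $\gr\bigl((U(\g)\otimes\g)^\g\bigr)=(\gr(U(\g)\otimes\g))^\g$, i.e.\ that $\g$-invariants commute with passing to the associated graded; this follows from complete reducibility of $\g$ acting on each filtration piece, but it should be stated explicitly since it is what makes the symbol arguments above legitimate.
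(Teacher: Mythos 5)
Your proposal is correct and follows essentially the same route as the paper: the first assertion is quoted from Kostant, and the second is proved by the same associated-graded argument (adapted from Dixmier) — $\sym\otimes 1$ identifies the invariant spaces, the graded of the $Z(\g)$-submodule generated by the $(\sym\otimes 1)({\rm d}p_{m_i}^{(\lambda)})$ is all of $(S(\g)\otimes\g)^\g$, forcing equality, and the graded of the relation module embeds into the (zero) relation module on the symmetric side. Your extra remark on $\g$-invariants commuting with $\gr$ via complete reducibility is a fair explicit justification of a point the paper uses implicitly.
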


\begin{proof} 
It suffices to prove the second part. 
Our arguments is adapted from Diximer's \cite{Dixmier}. 
Denoting $\hat{\beta}=\sym \otimes 1$, we first observe that 
 $\hat{\beta}((S(\g)\otimes \g)^{\g})$ is equal to $(U(\g)\otimes \g)^{\g}$,
since $\hat{\beta}$ is an isomorphism which commutes
with the diagonal action of $\g$. Then it sends invariants to invariants.
Let $M$ be the sub-$Z(\g)$-module of $(U(\g)\otimes \g)^{\g}$ generated by 
$\hat{\beta}(\d p_{m_1}),\ldots,\hat{\beta}(\d p _{m_r})$. 
Then the graded module associated with the filtration induced 
on $M$ is a sub-$S(\g)^{\g}$-module of $(S(\g)\otimes \g)^{\g}$,
since $S(\g)^{\g}$ is the graded space associated with 
the filtration induced on $Z(\g)$. 
It contains $\d p_{m_1}\ldots, \d p _{m_r}$, and so it is equal to 
$(S(\g)\otimes \g)^{\g}$. 
On the other side, the graded space of $(U(\g)\otimes \g)^{\g}$ is contained in $(S(\g)\otimes \g)^{\g}$.
Hence we get that 
$M = (U(\g)\otimes \g)^{\g}$. 

For the freeness, let $K$ be the module of relations between 
$\hat{\beta}(\d p_{m_1}),\ldots,\hat{\beta}(\d p _{m_r})$ on $Z(\g)$, so that 
$K \subset Z(\g)^r.$
The filtration on $Z(\g)$ induces a filtration on $K$ and the graded 
module associated with this 
filtration is contained in the module of relations on $S(\g)^{\g}$ between 
$\d p_{m_1}\ldots, \d p _{m_r}$, which is zero. Hence $K=0$.\qed
\end{proof}

\begin{example} \label{ex:generators}
In Table \ref{Tab:Mehta}, we give examples 
where $p_{m_1}^{(\lambda)},\ldots,
p_{m_r}^{(\lambda)}$ are homogeneous generators of $S(\g)^\g$ 
following  
\cite{Mehta88}. 
 {\begin{table}[h]
\caption{Examples of weights $\lambda$ 
for which the conditions of Proposition 
\ref{Pro:gen} hold.} \label{Tab:Mehta}  
 \small
 \begin{center}
\begin{tabular}{|l|l|l|l|l|}
\hline
Type & $\lambda$ & $\dim V(\lambda)$ & $N$ & decomposition in 
the basis $\mathcal{E}$ \\
\hline 
$A_r$ & $\varpi_1$ & $r+1$ & $r+1$ & 
$\varpi_1=\eps_1-\frac{1}{r+1} (\eps_1+\cdots+\eps_{r+1})$ \\
$B_r$ & $\varpi_1$ & $2r+1$ & $r$ & 
$\varpi_1=\eps_1$ \\
$C_r$ & $\varpi_1$ & $2r$ & $r$ & 
$\varpi_1=\eps_1$ \\
$D_r$, odd $r$ & $\varpi_1$ & $2r$ & $r$ & 
$\varpi_r=\eps_1$ \\
$G_2$ & $\varpi_1$ & $7$ & $3$ & $\varpi_1=-\eps_2+\eps_3$\\
$F_4$ & $\varpi_4$ & $26$ &  $4$ & $\varpi_4=\eps_1$\\
$E_6$ & $\varpi_1$ & $27$ & $8$ & 
$\varpi_1=\frac{2}{3}(\eps_8-\eps_7 -\eps_6)$\\
 \hline
\end{tabular}
 \end{center}
 \end{table}}
\end{example}
\vspace{-2em}
We fix from now 
on $\delta \in P_+$ 
such that each nonzero weight space $V(\delta)_\mu$, for 
$\mu \in P(\delta)$, has dimension one. 
This happens for example if $\lambda$ is a {\em minuscule} weight, that is,  
if $V(\lambda)$ is not trivial and if $P(\lambda) = W.\lambda$, where $W.\lambda$
denotes the orbit of the highest weight $\lambda$ 
under the action of the Weyl group $W=W(\g,\h)$. 
The minuscule weights are given in Table \ref{Tab:minus}. 
 {\begin{table}[h]\small
\caption{Minuscule weights}  \label{Tab:minus}
 \begin{center}
\begin{tabular}{|l|l|l|l|}
\hline
Type & minuscule weights & $N$ & decomposition in 
the basis $\mathcal{E}$ \\
\hline 
$A_r$ & $\varpi_1,\ldots,\varpi_r$ & $r+1$ & 
$\varpi_i = \eps_1+\cdots +\eps_i - \frac{i}{r+1}(\eps_1+\cdots +\eps_{r+1})$ \\
$B_r$ & $\varpi_r$ & $r$ & 
$\varpi_r=\frac{1}{2}(\eps_1+\cdots +\eps_r)$ \\
$C_r$ & $\varpi_1$ & $r$ & 
$\varpi_1 =\eps_1$ \\
$D_r$ & $\varpi_1,\varpi_{r-1},\varpi_r$ & $r$ & 
$\varpi_1=\eps_1, \varpi_{n+t} = \frac{1}{2}(\eps_1+\cdots + \eps_n) 
+ t\eps_t$, $t \in \{-1,0\}$\\
$E_6$ & $\varpi_1,\varpi_{6}$ & $8$ & 
$\varpi_1=\frac{2}{3}(\eps_8-\eps_7 -\eps_6), 
\varpi_6=\frac{1}{3}(\eps_8-\eps_7 -\eps_6) +\eps_5$ \\
 $E_7$ & $\varpi_7$ & $8$ & 
 $\varpi_7=\eps_6 +\frac{1}{2}(\eps_8-\eps_7)$\\
 \hline
\end{tabular}
 \end{center}
 \end{table}}

Choose for any $\mu \in P(\delta)$ a nonzero vector $v_\mu \in V(\delta)_\mu$. 
By our assumption, 
the set $\{v_\mu \; | \; \mu \in P(\delta)\}$ forms a basis of 
$V(\delta)$.
For $(\lambda,\mu) \in P(\delta)^2$ and $b \in \B$, 
define the scalar $a_{\lambda,\mu}^{(b)}$ by: 
\begin{align} \label{eq:tr}
\pi_\delta(b)v_\mu = 
\sum\limits_{\lambda \in P(\delta) } a_{\lambda,\mu}^{(b)} v_\lambda.
\end{align}
Next lemma immediately follows from \eqref{eq:pm}.

\begin{lemma} \label{lem:dec2}
For any $m >0$, we have: 
\begin{align*}
p_{m}^{(\delta)} 
=  \frac{1}{(m+1)!}\sum\limits_{1 \leqslant 
i_1, \ldots , i_{m+1} \leqslant d}  
\sum\limits_{( \mu_{j_1}, \ldots , \mu_{j_{m+1}}) \atop \in\, P(\delta)^{m+1} } 
 a_{\mu_{j_{1}},\mu_{j_2}}^{(b_{i_1})} \ldots a_{\mu_{j_{m+1}},\mu_{j_{1}}}^{(b_{i_{m+1}})} 
 b_{i_1}^* \ldots b_{i_{m+1}}^*. 
\end{align*}
\end{lemma}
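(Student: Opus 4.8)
The plan is to unfold the trace in \eqref{eq:pm} using the basis $\{v_\mu \; | \; \mu \in P(\delta)\}$ of $V(\delta)$ and the matrix coefficients $a_{\lambda,\mu}^{(b)}$ defined in \eqref{eq:tr}. First I would recall that for any endomorphisms $T_1,\ldots,T_{m+1}$ of a finite-dimensional vector space with chosen basis $(v_j)_{j}$, one has $\tr(T_1 \circ \cdots \circ T_{m+1}) = \sum_{j_1,\ldots,j_{m+1}} (T_1)_{j_1 j_2} (T_2)_{j_2 j_3} \cdots (T_{m+1})_{j_{m+1} j_1}$, where $(T)_{kl}$ denotes the coefficient of $v_k$ in $T v_l$. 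This is just the standard expansion of a trace of a product of matrices, and requires no special hypothesis beyond finite dimensionality.

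Next I would apply this identity with $T_s = \pi_\delta(b_{i_s})$ for $s = 1,\ldots,m+1$. By the very definition \eqref{eq:tr}, the $(\lambda,\mu)$-entry of the matrix of $\pi_\delta(b)$ in the basis $(v_\mu)_{\mu \in P(\delta)}$ is exactly $a_{\lambda,\mu}^{(b)}$. Hence
$$\tr\big(\pi_\delta(b_{i_1}) \circ \cdots \circ \pi_\delta(b_{i_{m+1}})\big)
= \sum_{(\mu_{j_1},\ldots,\mu_{j_{m+1}}) \in P(\delta)^{m+1}}
a_{\mu_{j_1},\mu_{j_2}}^{(b_{i_1})} \, a_{\mu_{j_2},\mu_{j_3}}^{(b_{i_2})} \cdots a_{\mu_{j_{m+1}},\mu_{j_1}}^{(b_{i_{m+1}})}.$$
Substituting this into \eqref{eq:pm} and keeping the scalar factor $\frac{1}{(m+1)!}$ and the monomial $b_{i_1}^* \cdots b_{i_{m+1}}^*$ untouched yields precisely the claimed formula after interchanging the (finite) sums over the indices $i_1,\ldots,i_{m+1}$ and over the weight tuples.

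There is essentially no obstacle here: the only point deserving a word is that the weights $\mu_{j_s}$ range over all of $P(\delta)$ and the indexing in the statement is purely notational (the subscripts $j_1,\ldots,j_{m+1}$ labelling which weight occupies which slot in the cyclic product). One should just double-check that the cyclic pattern of indices in the product of the $a$'s matches the cyclic trace — the last factor $a_{\mu_{j_{m+1}},\mu_{j_1}}^{(b_{i_{m+1}})}$ closes the loop back to $\mu_{j_1}$ — and that the degree of $p_m^{(\delta)}$ as stated ($m+1$) is consistent, which it is since there are $m+1$ factors $b_{i_s}^*$. No convergence or reordering subtleties arise because all sums are finite.
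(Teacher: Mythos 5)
Your proof is correct and follows exactly the route the paper intends: the paper simply remarks that the lemma ``immediately follows from \eqref{eq:pm}'', which amounts to expanding each trace $\tr(\pi_\delta(b_{i_1})\circ\cdots\circ\pi_\delta(b_{i_{m+1}}))$ in the weight basis $\{v_\mu\}_{\mu\in P(\delta)}$ via the matrix coefficients $a_{\lambda,\mu}^{(b)}$ of \eqref{eq:tr}, precisely as you do. Your cyclic indexing and bookkeeping of the factor $\frac{1}{(m+1)!}$ and the monomials $b_{i_1}^*\cdots b_{i_{m+1}}^*$ match the statement, so nothing is missing.
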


Assume now that $\B=\{e_\alpha, \, \c{\beta}_i 
\; | \; \alpha \in \Delta , i=1,\ldots,r\}$ 
is the Chevalley basis of $\g$ (in a fixed order). 
Identifying $\g$ with $\g^*$ through the 
inner product $B_\g$, we get that  
$\B^*=\{c_\alpha e_{-\alpha}, \, \s{\varpi_i}
\; | \; \alpha \in \Delta, i=1,\ldots,r\},$
with $c_\alpha \not=0$ for $\alpha \in \Delta$.   
We denote by $\overline{\d p}_m^{(\delta)}$  
and $\widehat{\d p}_m^{(\delta)}$ the images of 
$\d p_m^{(\delta)}$ by $\ch \otimes 1$ and $(\hc \otimes 1)\circ \hat{\beta}$, 
respectively, where $\hat{\beta}$ is $\beta \otimes 1$ as in the proof of Proposition 
\ref{Pro:gen}.
When there will be no ambiguity about $\delta$, we will simply denote 
by $p_m$, $\d p_m$, $\overline{\d p}_m$, $\widehat{\d p}_m$ 
the corresponding elements. 

Let $m>0$. 
Since ${\d p}_m^{(\delta)}$ 
and $ \hat{\beta}({\d p}_m^{(\delta)})$ 
are $\g$-invariant, 
$\overline{\d p}_m^{(\delta)}$  and 
$\widehat{\d p}_m^{(\delta)}$ lie in $S(\h) \otimes \h$. 
Define the elements $\overline{\d p}_{m,k}^{(\delta)} $ and 
$\widehat{\d p}_{m,k}^{(\delta)}$ of $S(\h)$, for $k\in\{1,\ldots,r\}$, by: 
\begin{align*} \overline{\d p}_{m}^{(\delta)}  
&= \frac{1}{m!}  \sum_{k=1}^{r}  \overline{\d p}_{m,k}^{(\delta)}
\otimes  \s{\varpi_k},  &&  
\widehat{\d p}_{m}^{(\delta)} = \frac{1}{m!}  \sum_{k=1}^{r}  
\widehat{\d p}_{m,k}^{(\delta)} \otimes \s{\varpi_k} .& 
\end{align*}
For $k\in\{1,\ldots,r\}$, 
we set 
$$
P(\delta)_k := \{\mu \in P(\delta) \; |\; \langle \mu, \c{\beta}_k \rangle 
\not=0\}.
$$

\begin{lemma} \label{lem:dec3}
We have  
\begin{enumerate}
\item 
$ 
\overline{{\rm d} p}_{m,k}^{(\delta)} 
=
\sum\limits_{\mu \in P(\delta)_k } 
\sum\limits_{1\le i_1, \ldots , i_{m} \leqslant r } 
\langle \mu, \c{\beta}_{i_1} \rangle   \ldots  \langle \mu, \c{\beta}_{i_m} \rangle  \langle \mu, \c{\beta}_k \rangle \, 
\s{\varpi_{i_1}}   \ldots \s{\varpi_{i_m}},
$
\item 
$
\widehat{{\rm d} p}_{m,k}^{(\delta)} = 
 \sum\limits_{\mu_{j_1}, \ldots , \mu_{j_{m}} \in P(\delta) \atop  \mu_{j_1}\in \, P(\delta)_k } 
\sum\limits_{1\le i_1, \ldots , i_{m} \leqslant r }
  a_{\mu_{j_1},\mu_{j_2}}^{(b_{i_1})} \ldots 
   a_{\mu_{j_{m}},\mu_{j_{1}}}^{(b_{i_m})}  \langle \mu_{j_1}, \c{\beta}_k \rangle
   \hc(b_{i_1}^* \ldots b_{i_{m}}^*). 
$
\end{enumerate}
\end{lemma}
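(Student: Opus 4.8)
The plan is to prove both identities by a direct computation, starting from the explicit expansion of $p_m^{(\delta)}$ in Lemma~\ref{lem:dec2} and the definition ${\rm d}p_m^{(\delta)}=\sum_{k}\bigl(\partial p_m^{(\delta)}/\partial b_k^*\bigr)\otimes b_k^*$ of the differential. First I would carry out the differentiation: every monomial $b_{i_1}^*\cdots b_{i_{m+1}}^*$ of $p_m^{(\delta)}$ carries a coefficient invariant under cyclic permutation of its indices (it is a trace), so all $m+1$ positions contribute equally, one factorial cancels, and
\[
{\rm d}p_m^{(\delta)}=\frac{1}{m!}\sum_{j=1}^{d}\sum_{1\le i_1,\dots,i_m\le d}\Bigl(\sum_{\mu_{j_0},\dots,\mu_{j_m}\in P(\delta)}a_{\mu_{j_0},\mu_{j_1}}^{(b_j)}\,a_{\mu_{j_1},\mu_{j_2}}^{(b_{i_1})}\cdots a_{\mu_{j_m},\mu_{j_0}}^{(b_{i_m})}\Bigr)\,b_{i_1}^*\cdots b_{i_m}^*\otimes b_j^*.
\]
Next, since $\overline{{\rm d}p}_m^{(\delta)}$ and $\widehat{{\rm d}p}_m^{(\delta)}$ lie in $S(\h)\otimes\h$ by $\g$-invariance, the components $\overline{{\rm d}p}_{m,k}^{(\delta)}$ and $\widehat{{\rm d}p}_{m,k}^{(\delta)}$ are read off by keeping, in the second tensor slot, only the Cartan basis element $b_j=\c{\beta}_k$ (a root vector has no diagonal matrix coefficient on the weight basis $(v_\mu)_{\mu\in P(\delta)}$, hence contributes nothing there). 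For that choice $a_{\mu_{j_0},\mu_{j_1}}^{(\c{\beta}_k)}=\langle\mu_{j_1},\c{\beta}_k\rangle\,\delta_{\mu_{j_0},\mu_{j_1}}$, which collapses $\mu_{j_0}=\mu_{j_1}$, produces the scalar $\langle\mu_{j_1},\c{\beta}_k\rangle$, and makes the term vanish unless $\mu_{j_1}\in P(\delta)_k$.

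For part~(1) I would then apply $\ch$ to the remaining $S(\g)$-factor. In the commutative algebra $S(\g)$ the Chevalley projection annihilates every monomial with a root-vector factor and fixes monomials in $S(\h)$, so only the terms with $b_{i_1}=\c{\beta}_{i_1},\dots,b_{i_m}=\c{\beta}_{i_m}$ survive; for those the diagonal action of the $\c{\beta}_{i_s}$ collapses all internal weights to a single $\mu$, the cyclic product of matrix coefficients becomes $\langle\mu,\c{\beta}_{i_1}\rangle\cdots\langle\mu,\c{\beta}_{i_m}\rangle$, and one already carries the factor $\langle\mu,\c{\beta}_k\rangle$. Reading off the coefficient of $\otimes\s{\varpi_k}$ and restricting the $\mu$-sum to $P(\delta)_k$ gives formula~(1); this step is routine.

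For part~(2) I would instead apply $(\hc\otimes1)\circ(\beta\otimes1)$. The analysis of the second tensor slot is the same, so I again obtain $\langle\mu_{j_1},\c{\beta}_k\rangle$ and the restriction $\mu_{j_1}\in P(\delta)_k$; what remains is to evaluate $\hc\bigl(\beta(b_{i_1}^*\cdots b_{i_m}^*)\bigr)$ on products of arbitrary basis vectors. Unlike in $S(\g)$, the Harish-Chandra projection does not annihilate all products containing root vectors, since PBW reordering feeds brackets $[e_\alpha,e_{-\alpha}]\in\h$ back into the Cartan part; this is precisely what will make the polynomials $\hat{Q}_m$ differ from $\bar{Q}_m$. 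The point I would establish is that the symmetrization $\beta$ may be removed from $\hc\bigl(\beta(b_{i_1}^*\cdots b_{i_m}^*)\bigr)$ once one has summed over the internal weights $\mu_{j_2},\dots,\mu_{j_m}\in P(\delta)$ and over $b_{i_1},\dots,b_{i_m}\in\B$ against the cyclic weight $a_{\mu_{j_1},\mu_{j_2}}^{(b_{i_1})}\cdots a_{\mu_{j_m},\mu_{j_1}}^{(b_{i_m})}$. I would prove this by re-indexing the symmetric-group sum hidden in $\beta$, using that $\sum_\mu a_{\lambda,\mu}^{(b)}a_{\mu,\nu}^{(b')}$ composes like matrix multiplication, and controlling the commutator corrections produced by the reorderings through the $\g$-invariance of the Casimir tensor $\sum_{b\in\B}b\otimes b^*$. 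Carrying this out yields formula~(2).

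The main obstacle is this last step of part~(2): making the symmetrization map $\beta$, the Harish-Chandra projection $\hc$, and the non-commutativity of $U(\g)$ cooperate so that the compact ordered-product expression $\hc(b_{i_1}^*\cdots b_{i_m}^*)$ records $\widehat{{\rm d}p}_{m,k}^{(\delta)}$ exactly. Part~(1) and the bookkeeping of the Cartan tensor slot are straightforward by comparison; all the content lies in the passage from $S(\g)$ to $U(\g)$, where $\hc$ is no longer \emph{diagonal} on PBW monomials, and it is this phenomenon that the weighted-path machinery of the paper is designed to handle.
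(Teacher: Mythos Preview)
Your plan for part~(1) and for isolating the $\otimes\,\s{\varpi_k}$ component is correct and matches the paper. The gap is in part~(2): the assertion that the symmetrization $\beta$ may be dropped after summing is simply false. Take $\g=\sl_2$, $\delta=\varpi_1$, $m=2$. By Cayley--Hamilton $p_2=\tfrac1{6}\tr(\pi(\cdot)^3)\equiv0$, so ${\rm d}p_2=0$ and $\widehat{{\rm d}p}_2=(\hc\otimes1)\hat{\beta}({\rm d}p_2)=0$. Yet the ordered-product element $\tfrac12\sum_k\tr(C^2\,\pi(b_k))\otimes b_k^*$, with $C=\sum_i\pi(b_i)\otimes b_i^*\in\mathrm{End}(V)\otimes U(\g)$, equals $-\tfrac12\bigl(h\otimes\tfrac h2+e\otimes f+f\otimes e\bigr)$, and its Harish-Chandra projection is $-\tfrac12\,h\otimes\tfrac h2\neq0$. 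The commutator corrections you hope to control via invariance of the Casimir do not cancel; they leave a genuine lower-order invariant behind.

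The paper's own proof makes the same unjustified move, asserting $\hat{\beta}({\rm d}p_m^{(\delta)})=\tfrac1{m!}\sum_k\tr(C^m\pi(b_k))\otimes b_k^*$ without comment, so part~(2) of the lemma as literally stated shares this gap. What \emph{is} true, and suffices for everything downstream, is that the ordered-product element is itself $\g$-invariant (because $C$ is) and agrees with $\hat{\beta}({\rm d}p_m^{(\delta)})$ modulo $(U^{m-1}(\g)\otimes\g)^\g$, both having image ${\rm d}p_m^{(\delta)}$ in the associated graded. Hence their images under $(\ev_\rho\otimes1)\circ(\hc\otimes1)$ differ only by an element of $\F_U^{(m-1)}\h$; the spans $\hat{V}_j$ used in the proof of Theorem~\ref{theorem:main} are unchanged, and Theorem~\ref{theorem:main2} survives with a possibly different but still degree~$\leqslant m-1$ polynomial $\hat{Q}_m$. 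The correct repair is therefore not to argue that $\beta$ can be removed, but to take the ordered-product invariant as the working generator from the outset and note that this does not alter the filtration.
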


\begin{proof}
First of all, by Lemma \ref{lem:dec2}, we have  
\begin{align*}
{\d p}_{m}^{(\delta)} 
= \frac{1}{m!}
\sum\limits_{k=1}^d  
 \sum\limits_{\mu_{j_1}, \ldots , \mu_{j_{m+1}}\in P (\delta) \atop \mu_{j_1} \in\, P(\delta)_k } 
\sum\limits_{1\le i_1, \ldots , i_{m} \leqslant d  } 
a_{\mu_{j_1},\mu_{j_2}}^{(b_{i_1})} \ldots a_{\mu_{j_{m}},\mu_{j_{m+1}}}^{(b_{i_m})} a_{\mu_{j_{m+1}}, \mu_{j_{1}}}^{(b_{k})} 
b_{i_1}^* \ldots b_{i_{m}}^* \otimes b_{k}^*.
\end{align*}

(1) For $\lambda,\mu \in P(\delta)$ and 
$i \in \{ 1,\ldots,r \}$, 
\begin{align*} 
a_{\lambda,\mu}^{(\c{\beta}_i)} = \begin{cases} 
\langle \mu , \c{\beta}_i \rangle & \textrm{ if } \lambda = \mu,\\ 
0 & \textrm{ if }\lambda \neq \mu.
\end{cases}
\end{align*}
Since ${\d p}_m^{(\delta)} $ is $\g$-invariant, 
its image by $\ch \otimes 1$ 
belongs to $S(\h) \otimes \h$, and we have:   
\begin{align*} 
&\overline{\d p}_m^{(\delta)}=(\ch \otimes 1) (\d p_m^{(\delta)}) 
= \frac{1}{m!} \sum\limits_{k=1}^r  
\sum\limits_{\mu \in P(\delta) } 
\sum\limits_{1\le i_1, \ldots , i_{m} \leqslant r } 
\langle \mu, \c{\beta}_{i_1} \rangle   \ldots  \langle \mu, \c{\beta}_{i_m} \rangle  \langle \mu, \c{\beta}_k \rangle \, 
\s{\varpi_{i_1}}   \ldots \s{\varpi_{i_m}}  
 \otimes \s{\varpi_{k}}, &
\end{align*}
whence the statement. 

(2) The image of $\d p_m^{(\delta)}$ 
by the map $\hat{\sym}$ is the following 
element of $(U(\g)\otimes \g)^\g$: 
\begin{align*}
&\hat{\sym}({\d p}_m^{(\delta)}) &\\ 
& \quad = \frac{1}{m!}
\sum\limits_{k=1}^d  
\sum\limits_{1\le i_1, \ldots , i_{m} \leqslant d  } 
 \sum\limits_{\mu_{j_1}, \ldots , \mu_{j_{m+1}}\in P(\delta) 
 \atop \mu_{j_1}\in\, P(\delta)_k } 
a_{\mu_{j_1},\mu_{j_2}}^{(b_{i_1})} \ldots a_{\mu_{j_{m}},\mu_{j_{m+1}}}^{(b_{i_m})} a_{\mu_{j_{m+1}}, \mu_{j_{1}}}^{(b_{k})} 
b_{i_1}^* \ldots b_{i_{m}}^* \otimes b_{k}^*.
\end{align*}
Since $(\sym \otimes 1) ({\d p}_m^{(\delta)}) $ is $\g$-invariant, 
its image by $\hc \otimes 1$ 
belongs to $S(\h) \otimes \h$ and we have:  
\begin{align*} 
&\widehat{\d p}_m^{(\delta)}=
\frac{1}{m!}\sum_{k=1}^r  
\sum\limits_{1\le i_1, \ldots ,i_{m} \leqslant d} 
 \sum\limits_{\mu_{j_1}, \ldots , \mu_{j_{m}}\in P(\delta) 
 \atop \mu_{j_1} \in \, P(\delta)_k} 
  a_{\mu_{j_1},\mu_{j_2}}^{(b_{i_1})} \ldots 
   a_{\mu_{j_{m}},\mu_{j_{1}}}^{(b_{i_m})}  \langle \mu_{j_1}, \c{\beta}_k \rangle 
   \hc(b_{i_1}^* \ldots b_{i_{m}}^*) \otimes  \s{\varpi_{k}},&
\end{align*}
whence the statement. \qed
\end{proof}

Recall that the {\em crystal graph} $\mathscr{C}(\delta)$ of the integral dominant weight $\delta$ 
is a graph 
containing $\# P(\delta)$ vertices and whose arrows are labeled by the simple roots $\beta_i, i \in {1,\dots, r}$. Moreover an arrow \xymatrix{\delta_i \ar[r]^{\beta_j} & \delta_k} exists when $\delta_k = \delta_i - \beta_j$.

\begin{example} 
\label{ex:crystal_classical}
Following the notation of Appendices \ref{sec:rootTypeA} and~\ref{sec:rootTypeC}, 
we represent the crystal graph $\mathscr{C}(\delta)$ of $\delta=\varpi_1$ for $\g=\sl_{r+1}$ 
and $\g=\sp_{2r}$ in Figure~\ref{fig:crystal}.

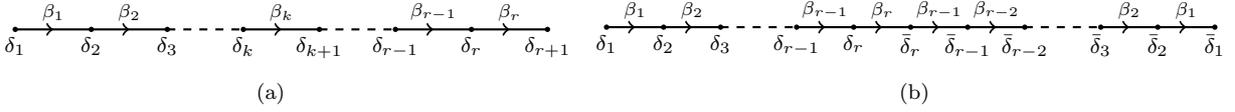
\begin{figure}[h]
\begin{subfigure}{.45\textwidth}
\begin{center}
\begin{tikzpicture}[scale=0.5]
 \draw[ thick,
        decoration={markings, mark=at position 0.5 with {\arrow{>}}},
        postaction={decorate}
        ]
        (0,0) -- (2,0)node(xline)[midway,above] {\scriptsize{$\beta_1$}} ;
     \draw[ thick,
        decoration={markings, mark=at position 0.5 with {\arrow{>}}},
        postaction={decorate}
        ]
        (2,0) -- (4,0)node(xline)[midway,above] {\scriptsize{$\beta_2$}};   
     \draw[ thick,
        decoration={markings, mark=at position 0.5 with {\arrow{>}}},
        postaction={decorate}
        ]
        (6,0) -- (8,0)node(xline)[midway,above] {\scriptsize{$\beta_k$}}; 
     \draw[ thick,
        decoration={markings, mark=at position 0.5 with {\arrow{>}}},
        postaction={decorate}
        ]
        (10,0) -- (12,0)node(xline)[midway,above] {\scriptsize{$\beta_{r-1}$}};
      \draw[ thick,
        decoration={markings, mark=at position 0.5 with {\arrow{>}}},
        postaction={decorate}
        ]
        (12,0) -- (14,0)node(xline)[midway,above] {\scriptsize{$\beta_r$}};
     \draw[thick, dashed] (4,0) -- (6,0);
      \draw[thick, dashed] (8,0) -- (10,0);
    \fill (0,0) circle (0.07)node(xline)[below] {$\delta_1$}; 
    \fill (2,0) circle (0.07)node(xline)[below] {$\delta_2$};
    \fill (4,0) circle (0.07)node(xline)[below] {$\delta_3$};
    \fill (6,0) circle (0.07)node(xline)[below] {$\delta_k$}; 
    \fill (8,0) circle (0.07)node(xline)[below] {$\delta_{k+1}$};
    \fill (10,0) circle (0.07)node(xline)[below] {$\delta_{r-1}$};
    \fill (12,0) circle (0.07)node(xline)[below] {$\delta_r$}; 
    \fill (14,0) circle (0.07)node(xline)[below] {$\delta_{r+1}$};
\end{tikzpicture}
 \caption{} \label{fig:crystalA} 
\end{center}
\end{subfigure}
\begin{subfigure}{.6\textwidth}
\begin{center}
\begin{tikzpicture}[scale=0.5]
 \draw[ thick,
        decoration={markings, mark=at position 0.5 with {\arrow{>}}},
        postaction={decorate}
        ]
        (0,0) -- (1.5,0)node(xline)[midway,above] {\scriptsize{$\beta_1$}} ;
     \draw[ thick,
        decoration={markings, mark=at position 0.5 with {\arrow{>}}},
        postaction={decorate}
        ]
        (1.5,0) -- (3,0)node(xline)[midway,above] {\scriptsize{$\beta_2$}};   
     \draw[ thick,
        decoration={markings, mark=at position 0.5 with {\arrow{>}}},
        postaction={decorate}
        ]
        (5,0) -- (6.5,0)node(xline)[midway,above] {\scriptsize{$\beta_{r-1}$}}; 
     \draw[ thick,
        decoration={markings, mark=at position 0.5 with {\arrow{>}}},
        postaction={decorate}
        ]
        (6.5,0) -- (8,0)node(xline)[midway,above] {\scriptsize{$\beta_{r}$}};
      \draw[ thick,
        decoration={markings, mark=at position 0.5 with {\arrow{>}}},
        postaction={decorate}
        ]
        (8,0) -- (9.5,0)node(xline)[midway,above] {\scriptsize{$\beta_{r-1}$}};
        \draw[ thick,
        decoration={markings, mark=at position 0.5 with {\arrow{>}}},
        postaction={decorate}
        ]
        (9.5,0) -- (11,0)node(xline)[midway,above] {\scriptsize{$\beta_{r-2}$}};
      \draw[ thick,
        decoration={markings, mark=at position 0.5 with {\arrow{>}}},
        postaction={decorate}
        ]
        (13,0) -- (14.5,0)node(xline)[midway,above] {\scriptsize{$\beta_{2}$}};
       \draw[ thick,
        decoration={markings, mark=at position 0.5 with {\arrow{>}}},
        postaction={decorate}
        ]
        (14.5,0) -- (16,0)node(xline)[midway,above] {\scriptsize{$\beta_{1}$}};
     \draw[thick, dashed] (3,0) -- (5,0);
      \draw[thick, dashed] (11,0) -- (13,0);
    \fill (0,0) circle (0.07)node(xline)[below] {\footnotesize{$\delta_1$}}; 
    \fill (1.5,0) circle (0.07)node(xline)[below] {\footnotesize{$\delta_2$}};
    \fill (3,0) circle (0.07)node(xline)[below] {\footnotesize{$\delta_3$}};
    \fill (5,0) circle (0.07)node(xline)[below] {\footnotesize{$\delta_{r-1}$}}; 
    \fill (6.5,0) circle (0.07)node(xline)[below] {\footnotesize{$\delta_{r}$}};
    \fill (8,0) circle (0.07)node(xline)[below] {\footnotesize{$\bar{\delta}_{r}$}};
    \fill (9.5,0) circle (0.07)node(xline)[below] {\footnotesize{$\bar{\delta}_{r-1}$}}; 
    \fill (11,0) circle (0.07)node(xline)[below] {\footnotesize{$\bar{\delta}_{r-2}$}};
    \fill (13,0) circle (0.07)node(xline)[below] {\footnotesize{$\bar{\delta}_{3}$}};
    \fill (14.5,0) circle (0.07)node(xline)[below] {\footnotesize{$\bar{\delta}_{2}$}}; 
    \fill (16,0) circle (0.07)node(xline)[below] {\footnotesize{$\bar{\delta}_{1}$}};
\end{tikzpicture}
 \label{fig:cryst-C}  \caption{}
\end{center}
\end{subfigure}
\caption{the Crystal graph $\mathscr{C}(\delta)$ of $\delta=\varpi_1$ for $\g=\mathfrak{sl}_{r+1}$(a) and $\g=\mathfrak{sp}_{2r}$(b).}
\label{fig:crystal}
\end{figure}
\end{example} 

Let $m \in \Z_{> 0}$ and  $(\mu,\nu) \in P(\delta)^2$. 
We denote by $\P_{m}(\mu,\nu)$ the set of $\sub{\mu}=(\mu^{(1)},\ldots,\mu^{(m+1)})$ in $P(\delta)$ 
such that $\mu^{(1)} = \mu$, $\mu^{(m+1)}= \nu$ 
and for all $i=1,\ldots,m$, $\mu^{(i)}-\mu^{(i+1)} \in \Delta \cup\{0\}$. 
The elements of $\P_{m}(\mu,\nu)$ are called the 
{\em paths of length $m$ starting at $\mu$ and ending at $\nu$}. 
When $\mu = \nu$, we write ${\P}_{m}(\mu)$ for ${\P}_{m}(\mu,\mu)$.

\begin{remark}  
\label{Rem:difference}
For $\g=\sl_{r+1}$ or $\g=\mathfrak{sp}_{2r}$ with $\delta=\varpi_1$ 
(cf.~Example \ref{ex:crystal_classical}), 
the difference of two different weights is always a root. 
So the condition $\mu^{(i)}-\mu^{(i+1)} \in \Delta \cup\{0\}$ 
is automatically satisfied. 
Note that, furthermore, in these cases each root $\alpha \in \Delta$ 
can be written as a difference of two weights. 
\end{remark}

Such paths have been considered in a more general situation in \cite{LLP10} 
in connection with Kashiwara's crystal basis theory \cite{Kas95}.
Recall that the {\em height} of a positive roots 
$\alpha$ is  $\he(\alpha) := \sum_{i=1}^r n_i,$
if $\alpha = \sum_{i=1}^r n_i \, \beta_i \in~\Delta_+$. 
For $\alpha \in - \Delta_+$, we define its height by 
$\he(\alpha):=-\he(-\alpha)$. 
We adopt the convention that $\he(0) :=0$.  
The height of a path $\sub{\mu}$ is defined by $\he(\sub{\mu})  :=  ( \he(\sub{\mu})_1, \ldots,  \he(\sub{\mu})_m) \in \Z^m$,
where $\he(\sub{\mu})_i := \he(\mu^{(i)}-\mu^{(i+1)})$.
For $\sub{\mu} \in \P_m(\mu)$,  
we have $\sum_{i=1}^m \he(\sub{\mu})_i =0$.

\begin{definition}[weighted path]   
\label{definition:weighted_path}
Let $\hat{\P}_m(\mu,\nu)$ be the set of pairs $(\sub{\mu},\sub{\alpha})$ 
where $\sub{\mu} \in \P_m(\mu,\nu)$ 
and $\sub{\alpha} := (\alpha^{(1)},\ldots,\alpha^{(m)})$ is a sequence of roots 
satisfying for any $ j \in \{1,\ldots,m \}$ the following conditions:   
\begin{enumerate} 
\item if $\he(\sub{\mu})_j \not= 0$ then $\alpha^{(j)} = \mu^{(j)} - \mu^{(j+1)}$,  
\item if $\he(\sub{\mu})_j = 0$ then $\alpha^{(j)} \in \Pi$ 
and $\langle \mu^{(j)}, \c{\alpha}^{(j)} \rangle \not= 0$. 
\end{enumerate} 
We call the elements of $\hat{\P}_{m}(\mu,\nu)$ the 
{\em weighted paths of length $m$ starting at $\mu$ and ending at $\nu$}. 
When $\mu = \nu$, we write $\hat{\P}_{m}(\mu)$ for $\hat{\P}_{m}(\mu,\mu)$.  
We denote by ${\bf 1}_\mu$ the trivial path $(\mu,\varnothing)$. 
It has by convention length $0$. 
\end{definition}

We represent a weighted path $(\sub{\mu},\sub{\alpha}) \in  \hat{\P}_m(\mu,\nu)$ 
by a colored and oriented graph as follows.   
The vertices are the weights $\mu^{(1)},\ldots,\mu^{(m+1)}$ and the 
oriented arrow 
from $\mu^{(j)}$ to $\mu^{(j+1)}$ for $j \in \{1,\ldots, m\}$ is labeled by the root $\alpha^{(j)}$.    
\begin{example} 
\noindent \label{Ex:weighted-path}
\begin{enumerate}
\item 
 Assume that $\g = \sl_6$ and $\delta = \varpi_1$. We represent in Figure \ref{fig:weighted-path} the weighted path 
 $(\sub{\mu},\sub{\alpha})$ of length 6 starting and ending at $\delta_3$
with $\sub{\mu} 
 = (\delta_3, \delta_4, \delta_6, \delta_5,\delta_5,\delta_5,\delta_3)$
and $\sub{\alpha} = (\beta_3, \beta_4 + \beta_5, -\beta_5, \beta_5, \beta_4, -\beta_3 - \beta_4)$. 
Then $ \he(\sub{\mu}) = (1, 2, -1, 0, 0, -2)$ and  $\sum_{i=1}^6 \he(\sub{\mu})_i =0.$  
\begin{figure}[h]
\begin{subfigure}{.4\textwidth}
\begin{center}
\begin{tikzpicture}[scale=0.6]
 \draw[thick, draw=blue, decoration={markings, mark=at position 0.5 with {\arrow{>}}},
        postaction={decorate}  ]
        (6,-1.5) to (9,-1.5); \node[text=blue] at (7.5,-1.1){\scriptsize{$\alpha^{(1)}$}} ;
 \draw[thick, draw=blue, decoration={markings, mark=at position 0.5 with {\arrow{>}}},
        postaction={decorate}  ]
        (9,-1.5) to (15,-1.5); \node[text=blue] at (12,-1.1){\scriptsize{$\alpha^{(2)}$}} ;
 \draw[thick, draw=blue, decoration={markings, mark=at position 0.5 with {\arrow{>}}},
        postaction={decorate}  ]
        (15,-1.65) to (12,-1.65); \node[text=blue] at (13.5,-2.1){\scriptsize{$\alpha^{(3)}$}} ;
 \draw[thick, draw=blue, decoration={markings, mark=at position 0.5 with {\arrow{>}}},
        postaction={decorate}  ]
        (12,-1.65) to (6,-1.65); \node[text=blue] at (9,-2.1){\scriptsize{$\alpha^{(6)}$}} ;
the loops!
 \draw[thick, draw=blue] (12,-1.65) to [out=350,in=50] (12.8,-2.45);
   \draw[thick, draw=blue, decoration={markings, mark=at position 0.625 with {\arrow{>}}},
        postaction={decorate}] (12.8,-2.45) to [out=220,in=280] (12,-1.65);
         \node[text=blue] at (13.1,-2.65){\footnotesize{$\beta_5$}} ;
  \draw[thick, draw=blue, decoration={markings, mark=at position 0.625 with {\arrow{<}}},
        postaction={decorate}] (12,-1.65) to [out=190,in=130] (11.2,-2.45);
   \draw[thick, draw=blue] (11.2,-2.45) to [out=320,in=260] (12,-1.65);
         \node[text=blue] at (11,-2.65){\footnotesize{$\beta_4$}} ;
  \fill[draw=blue] (6,-1.5) circle (0.05);
  \fill[draw=blue] (9,-1.5) circle (0.05);
  \fill[draw=blue] (15,-1.5) circle (0.05);
  \fill[draw=blue] (12,-1.65) circle (0.05);
  
      \draw[ thick,
        decoration={markings, mark=at position 0.5 with {\arrow{>}}},
        postaction={decorate}
        ]
        (6,0) -- (9,0);
        \draw[ thick,
        decoration={markings, mark=at position 0.5 with {\arrow{>}}},
        postaction={decorate}
        ]
        (9,0) -- (12,0);
      \draw[ thick,
        decoration={markings, mark=at position 0.5 with {\arrow{>}}},
        postaction={decorate}
        ]
        (12,0) -- (15,0);
    \fill (6,0) circle (0.07)node(xline)[below] {$\delta_3$};
    \fill (9,0) circle (0.07)node(xline)[below] {$\delta_{4}$}; 
    \fill (12,0) circle (0.07)node(xline)[below] {${\delta}_{5}$};
    \fill (15,0) circle (0.07)node(xline)[below] {${\delta}_{6}$}; 
\end{tikzpicture}
 \caption{An example of weighted path in $\mathfrak{sl}_{6}$. }
  \label{fig:weighted-path}
\end{center}
\end{subfigure}
\begin{subfigure}{.6\textwidth}
\begin{center}
\begin{tikzpicture} [scale=0.6]
 \draw[thick, draw=blue, decoration={markings, mark=at position 0.5 with {\arrow{>}}},
        postaction={decorate}  ]
        (0,-1.5) to (6,-1.5); \node[text=blue] at (3,-1.15){\scriptsize{$\alpha^{(1)}$}} ;
 \draw[thick, draw=blue, decoration={markings, mark=at position 0.5 with {\arrow{>}}},
        postaction={decorate}  ]
        (6,-1.5) to (8,-1.5); \node[text=blue] at (7,-1.15){\scriptsize{$\alpha^{(2)}$}} ;
 \draw[thick, draw=blue, decoration={markings, mark=at position 0.5 with {\arrow{>}}},
        postaction={decorate}  ]
        (8,-1.5) to (14,-1.5); \node[text=blue] at (11,-1.15){\scriptsize{$\alpha^{(4)}$}} ;
 \draw[thick, draw=blue, decoration={markings, mark=at position 0.5 with {\arrow{>}}},
        postaction={decorate}  ]
        (14,-1.65) to (10,-1.65); \node[text=blue] at (12,-2.1){\scriptsize{$\alpha^{(6)}$}} ;
 \draw[thick, draw=blue, decoration={markings, mark=at position 0.5 with {\arrow{>}}},
        postaction={decorate}  ]
        (10,-1.65) to (0,-1.65); \node[text=blue] at (5,-2.1){\scriptsize{$\alpha^{(7)}$}} ;
\draw[ thick, draw=blue,
        decoration={markings, mark=at position 0.625 with {\arrow{<}}},
        postaction={decorate}
        ]
        (14.5,-1.5) ellipse (0.5cm and 0.3cm);\node[text=blue] at (14.5,-0.9){\footnotesize{$\beta_1$}} ;
\draw[thick, draw=blue] (8,-1.5) to [out=155,in=180] (8,-0.7);
\draw[thick, draw=blue, decoration={markings, mark=at position 0.625 with {\arrow{>}}},
        postaction={decorate}] (8,-0.7) to [out=360,in=25] (8,-1.5);
        \node[text=blue] at (8.6,-0.9){\footnotesize{$\beta_3$}} ;
  \fill[draw=blue] (0,-1.5) circle (0.05);
  \fill[draw=blue] (6,-1.5) circle (0.05);
  \fill[draw=blue] (8,-1.5) circle (0.05);
  \fill[draw=blue] (14,-1.5) circle (0.05);
  \fill[draw=blue] (10,-1.65) circle (0.05);
  
   \draw[ thick,
        decoration={markings, mark=at position 0.5 with {\arrow{>}}},
        postaction={decorate}
        ]
        (0,0) -- (2,0); 
     \draw[ thick,
        decoration={markings, mark=at position 0.5 with {\arrow{>}}},
        postaction={decorate}
        ]
        (2,0) -- (4,0);
      \draw[ thick,
        decoration={markings, mark=at position 0.5 with {\arrow{>}}},
        postaction={decorate}
        ]
        (4,0) -- (6,0);
        \draw[ thick,
        decoration={markings, mark=at position 0.5 with {\arrow{>}}},
        postaction={decorate}
        ]
        (6,0) -- (8,0);
      \draw[ thick,
        decoration={markings, mark=at position 0.5 with {\arrow{>}}},
        postaction={decorate}
        ]
        (8,0) -- (10,0);
       \draw[ thick,
        decoration={markings, mark=at position 0.5 with {\arrow{>}}},
        postaction={decorate}
        ]
        (10,0) -- (12,0);
      \draw[ thick,
        decoration={markings, mark=at position 0.5 with {\arrow{>}}},
        postaction={decorate}
        ]
        (12,0) -- (14,0);
    \fill (0,0) circle (0.07)node(xline)[below] {$\delta_1$}; 
    \fill (2,0) circle (0.07)node(xline)[below] {$\delta_2$};
    \fill (4,0) circle (0.07)node(xline)[below] {$\delta_3$};
    \fill (6,0) circle (0.07)node(xline)[below] {$\delta_{4}$}; 
    \fill (8,0) circle (0.07)node(xline)[below] {$\bar{\delta}_{4}$};
    \fill (10,0) circle (0.07)node(xline)[below] {$\bar{\delta}_{3}$}; 
    \fill (12,0) circle (0.07)node(xline)[below] {$\bar{\delta}_{2}$};
    \fill (14,0) circle (0.07)node(xline)[below] {$\bar{\delta}_{1}$};
\end{tikzpicture}
 \caption{An example of weighted path in $\mathfrak{sp}_{8}$. }
  \label{fig:weighted-pathC}
\end{center}
\end{subfigure}
\caption{Examples of weighted paths}
\end{figure}
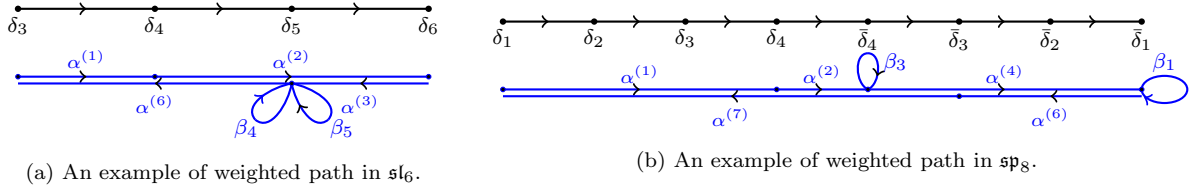

\item Assume that $\g = \sp_{8}$ and $\delta=\varpi_1$.
We represent in Figure \ref{fig:weighted-pathC} 
the weighted path $(\sub{\mu},\sub{\alpha})$ of length $7$ 
starting and ending at $\delta_1$ 
with $\sub{\mu} = (\delta_1,\delta_4,\overline{\delta}_4,\overline{\delta}_4,\overline{\delta}_1, \overline{\delta}_1,
\overline{\delta}_3, \delta_1)$ and $\sub{\alpha} = (\eps_1-\eps_4,2\eps_4, \beta_3, \eps_1-\eps_4, \beta_1, \eps_3-\eps_1, -\eps_1-\eps_3)$.
Then $\he(\sub{\mu})=(3,1,0,3,0,-2,-5)$.
\end{enumerate}
\end{example}
For $\beta \in \Pi$, we write $\varpi_\beta$ 
the fundamental weight corresponding to $\beta$. 
Thus $\varpi_{\beta_i} = \varpi_i$, $i=1,\ldots,r$,  
but it will be convenient to have both notations.  
Pick $\mu,\nu \in P(\delta)$. 
Let $m\in\Z_{> 0}$ and 
$(\sub{\mu},\sub{\alpha}) \in \hat{\P}_m(\mu,\nu)$.  
For $j \in \{1,\ldots,m \}$, the element 
$b_{(\sub{\mu},\sub{\alpha}),j}$ 
of $\B$ is defined by: 
\begin{enumerate}  
\item if $\he(\sub{\mu})_j \not= 0$, 
set $b_{(\sub{\mu},\sub{\alpha}),j} := c_{\alpha^{(j)}} e_{-\alpha^{(j)}}$ so that 
$b_{(\sub{\mu},\sub{\alpha}),j}^* := e_{\alpha^{(j)}}$, 
\item if $\he(\sub{\mu})_j = 0$, set 
$b_{(\sub{\mu},\sub{\alpha}),j} := \c{\alpha}^{(j)}$ 
so that $b_{(\sub{\mu},\sub{\alpha}),j}^*=\s{\varpi_{\alpha^{(j)}}}$. 
\end{enumerate}
The element $b_{\sub{\mu},\sub{\alpha}}^*$ of $U(\g)$ is defined by 
\begin{align}\label{eq:vector}
b_{\sub{\mu},\sub{\alpha}}^* := 
a_{\sub{\mu},\sub{\alpha}}
b_{(\sub{\mu},\sub{\alpha}),1}^* \ldots 
b_{(\sub{\mu},\sub{\alpha}),m}^*,
\end{align} 
where 
$a_{\sub{\mu},\sub{\alpha}} := 
a^{(b_{(\sub{\mu},\sub{\alpha}),m})}_{\mu^{(m+1)},\mu^{(m)}}
\ldots 
a^{(b_{(\sub{\mu},\sub{\alpha}),1})}_{\mu^{(2)},\mu^{(1)}}$. 
In the case where $\mu=\nu$, we observe that 
$b_{\sub{\mu},\sub{\alpha}}^*$ belongs to $U(\g)^\h$. In this case,
we define the {\em weight} of $(\sub{\mu},\sub{\alpha})$ 
to be the complex number, 
\begin{align} \label{eq:weight}
\wt(\sub{\mu},\sub{\alpha})
:=(\ev_\rho \circ \hc)( {b}_{\sub{\mu},\sub{\alpha}}^*).
\end{align}
We adopt the convention that $\wt({\bf 1}_\mu) = 1$. 

For example, for the weighted path of Example~\ref{Ex:weighted-path}(1) we have  
$$ 
 a_{\sub{\mu},\sub{\alpha}} =
 \langle \delta_5,\c{\beta}_4\rangle \langle \delta_5,\c{\beta}_5 \rangle = -1 \, \text{ and } \,
 b_{\sub{\mu},\sub{\alpha}}^* = - e_{\eps_3-\eps_4}e_{\eps_4-\eps_6}
 e_{\eps_6-\eps_5}\c{\varpi_5} \c{\varpi_4}e_{\eps_5-\eps_3}, 
$$
and for the weighted path of Example~\ref{Ex:weighted-path}(2) we have 
$$ a_{\sub{\mu},\sub{\alpha}} = \langle \overline{\delta}_1, \c{\beta}_1 \rangle \langle \overline{\delta}_4, \c{\beta}_3 \rangle = -2
\, \text{ and } \, 
b_{\sub{\mu},\sub{\alpha}}^* = -2 e_{\eps_1-\eps_4} e_{2\eps_4}\c{\varpi_3}e_{\eps_1-\eps_4} \c{\varpi_1}e_{\eps_3-\eps_1}e_{-\eps_1-\eps_3}. $$

\begin{lemma} \label{Lem:formulas} 
Let $m\in \Z_{> 0}$ 
and $k \in \{1,\ldots, r\}$. 
We have: 
$$
\overline{{\rm d} p}_{m,k} =  \sum_{\mu \in P(\delta)_k} \, \sum_{(\sub{\mu},\sub{\alpha}) 
\in \hat{\P}_m(\mu)\atop \he(\sub{\mu}) = \sub{0} }  \langle \mu, \c{\beta}_k \rangle 
b_{\sub{\mu},\sub{\alpha}}^* \quad \text{ and } \quad 
\widehat{{\rm d}  p}_{m,k} 
=   \sum_{\mu \in P(\delta)_k} \, \sum_{(\sub{\mu},\sub{\alpha}) \in \hat{\P}_m(\mu)}  \langle \mu, \c{\beta}_k \rangle 
\hc({b}_{\sub{\mu},\sub{\alpha}}^*).$$ 	
Therefore, we have: 
\begin{align*}
\ev_\rho(\overline{{\rm d}  p}_{m,k} ) 
& =  \sum_{\mu \in P(\delta)_k} \, \sum_{(\sub{\mu},\sub{\alpha}) 
\in \hat{\P}_m(\mu)\atop \he(\sub{\mu}) = \sub{0} }  
\wt(\sub{\mu},\sub{\alpha}) \langle \mu, \c{\beta}_k \rangle, \\
\ev_\rho( \widehat{{\rm d}  p}_{m,k}) 
&=  \sum_{\mu \in P(\delta)_k} \, \sum_{(\sub{\mu},\sub{\alpha}) \in \hat{\P}_m(\mu)}  
\wt(\sub{\mu},\sub{\alpha}) \langle \mu, \c{\beta}_k \rangle. 
\end{align*}	
\end{lemma}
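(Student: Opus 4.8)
The plan is to unpack the formulas for $\overline{{\rm d} p}_{m,k}^{(\delta)}$ and $\widehat{{\rm d} p}_{m,k}^{(\delta)}$ given in Lemma \ref{lem:dec3} and reorganize the index sums so that they are labelled by weighted paths rather than by tuples of basis elements and weights. First I would treat the symmetric case. In Lemma \ref{lem:dec3}(1), each summand is indexed by a weight $\mu \in P(\delta)_k$ together with a tuple $(i_1,\ldots,i_m) \in \{1,\ldots,r\}^m$, and contributes $\langle \mu, \c\beta_{i_1}\rangle \cdots \langle \mu, \c\beta_{i_m}\rangle \langle \mu, \c\beta_k\rangle \, \s{\varpi_{i_1}} \cdots \s{\varpi_{i_m}}$. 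Since the basis elements appearing are the $\c\beta_i$'s, each factor $\langle \mu, \c\beta_{i_j}\rangle$ is precisely the scalar $a^{(\c\beta_{i_j})}_{\mu,\mu}$ of \eqref{eq:tr}, and a nonzero contribution forces $\langle \mu, \c\alpha^{(j)}\rangle \ne 0$ with $\alpha^{(j)} = \beta_{i_j} \in \Pi$. By Remark \ref{Rem:difference} a constant path ($\mu^{(j)} = \mu$ for all $j$) together with such a choice of simple roots $\alpha^{(j)}$ satisfying condition (2) of Definition \ref{definition:weighted_path} is exactly a weighted path $(\sub\mu,\sub\alpha) \in \hat{\P}_m(\mu)$ with $\he(\sub\mu) = \sub 0$; and under this correspondence $b^*_{(\sub\mu,\sub\alpha),j} = \s{\varpi_{\alpha^{(j)}}}$, $a_{\sub\mu,\sub\alpha} = \prod_j \langle \mu, \c\alpha^{(j)}\rangle$, so that $b^*_{\sub\mu,\sub\alpha}$ of \eqref{eq:vector} matches the summand term for term. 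This establishes the first displayed identity.

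Next I would do the general (enveloping) case, which is strictly more involved because the basis elements $b_{i_j}$ now range over the whole Chevalley basis, not just the $\c\beta_i$'s. In Lemma \ref{lem:dec3}(2) the generic summand is indexed by a tuple of weights $(\mu_{j_1},\ldots,\mu_{j_m})$ with $\mu_{j_1}\in P(\delta)_k$ and a tuple $(i_1,\ldots,i_m)$, weighted by the product of scalars $a^{(b_{i_1})}_{\mu_{j_1},\mu_{j_2}}\cdots a^{(b_{i_m})}_{\mu_{j_m},\mu_{j_1}} \langle \mu_{j_1}, \c\beta_k\rangle$ times $\hc(b^*_{i_1}\cdots b^*_{i_m})$. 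I would argue that a summand is nonzero only if, for each $j$, either $b_{i_j}$ is some $e_{-\alpha^{(j)}}$ (up to the constant $c_{\alpha^{(j)}}$) shifting $\mu_{j+1}$ to $\mu_j = \mu_{j+1} + \alpha^{(j)}$, i.e. $\mu^{(j)} - \mu^{(j+1)} = \alpha^{(j)} \in \Delta$, in which case $\he(\sub\mu)_j \ne 0$; or $b_{i_j}$ is a $\c\beta_{i_j}$, which forces $\mu_{j} = \mu_{j+1}$, $\he(\sub\mu)_j = 0$, and a nonzero scalar $\langle \mu^{(j)}, \c\alpha^{(j)}\rangle$. (The case of a root vector $e_\alpha$ with $\alpha$ positive is handled symmetrically by reading the arrow as $\mu^{(j)} - \mu^{(j+1)} = \alpha$.) Here I would invoke Remark \ref{Rem:difference} to note that in types $A$ and $C$ with $\delta = \varpi_1$ any difference of two distinct weights is automatically a root, so the admissibility condition in Definition \ref{definition:weighted_path} imposes no extra constraint. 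This identifies each nonzero-contributing index datum with a weighted path $(\sub\mu,\sub\alpha) \in \hat{\P}_m(\mu)$ (with $\mu = \mu^{(1)} \in P(\delta)_k$), and again $b^*_{\sub\mu,\sub\alpha} = a_{\sub\mu,\sub\alpha} b^*_{(\sub\mu,\sub\alpha),1}\cdots b^*_{(\sub\mu,\sub\alpha),m}$ reproduces the product of scalars times $b^*_{i_1}\cdots b^*_{i_m}$, whence $\widehat{{\rm d} p}_{m,k} = \sum_{\mu \in P(\delta)_k}\sum_{(\sub\mu,\sub\alpha)\in\hat\P_m(\mu)} \langle \mu,\c\beta_k\rangle \hc(b^*_{\sub\mu,\sub\alpha})$.

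Finally, the two evaluation formulas follow by applying $\ev_\rho$ to both sides and using the definition \eqref{eq:weight} of $\wt(\sub\mu,\sub\alpha) = (\ev_\rho \circ \hc)(b^*_{\sub\mu,\sub\alpha})$; in the symmetric case one notes that for a height-zero path $b^*_{\sub\mu,\sub\alpha}$ is already a product of elements $\s{\varpi_{\alpha^{(j)}}} \in S(\h)$, so $\hc$ acts trivially and $\ev_\rho(b^*_{\sub\mu,\sub\alpha}) = \wt(\sub\mu,\sub\alpha)$ directly. The main obstacle is the bookkeeping in the general case: one must verify carefully that the correspondence between index tuples $\bigl((\mu_{j_l})_l, (i_l)_l\bigr)$ contributing nonzero terms and weighted paths is a bijection, in particular that different admissible choices of $\sub\alpha$ over a fixed height-zero segment of $\sub\mu$ correspond to genuinely different terms and that no term is double-counted — this is where conditions (1) and (2) of Definition \ref{definition:weighted_path}, together with the structure of the crystal graph in Example \ref{ex:crystal_classical}, do the essential work.
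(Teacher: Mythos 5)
Your proposal is correct and follows essentially the same route as the paper's proof: both unpack Lemma \ref{lem:dec3}, use the vanishing criteria for the coefficients $a^{(e_\alpha)}_{\lambda,\mu}$ and $a^{(\c{\beta}_i)}_{\lambda,\mu}$ to identify the surviving index tuples with weighted paths (constant, height-zero paths in the Chevalley case, where $\hc$ acts trivially), and then apply $\ev_\rho$ together with the definition \eqref{eq:weight} of $\wt$. The bijection bookkeeping you defer is precisely what the paper spells out, via the order-reversing assignment $\mu^{(1)}:=\mu_{j_1}$, $\mu^{(2)}:=\mu_{j_m},\ldots$ and $b_{(\sub{\mu},\sub{\alpha}),j}=b_{i_{m-j+1}}$, which reproduces $a_{\sub{\mu},\sub{\alpha}}$ and $b^*_{\sub{\mu},\sub{\alpha}}$ term by term.
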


\begin{proof} 
Recall that 
for $\alpha \in \Delta$ and $\mu \in P(\delta)$, 
$$\pi_\delta(e_\alpha) v_\mu \in V(\delta)_{\mu+\alpha} = \C
v_{\mu+\alpha},$$
with the convention that $v_{\mu+\alpha}=0$ if 
$\mu +\alpha \not\in P(\delta)$. 
The equality  $V(\delta)_{\mu+\alpha} = \C
v_{\mu+\alpha}$ holds because of our assumption 
that all weight spaces have dimension one. 
Therefore, 
$$a_{\mu,\nu}^{(e_{\alpha})} \not= 0 
\; \Longrightarrow \; \big(\mu =\nu+\alpha \quad \text{and} 
\quad \nu+\alpha \in P(\delta)\big).$$ 
For $i\in\{1,\ldots,r\}$ and $\mu\in P(\delta)$, 
$$\pi_\delta(\c{\beta}_i) v_ \mu = \langle \mu,\c{\beta}_i \rangle 
v_\mu.$$
As a result,  
$$a_{\mu,\nu}^{(\c{\beta}_i)} \not= 0 
\iff \big( \nu =\mu  \quad \text{and} 
\quad \langle \mu,\c{\beta}_i \rangle \not= 0 \big).$$
\\
According to Lemma \ref{lem:dec3}, 
we have: 
\begin{align*} 
\widehat{\d p}_{m,k} = 
 \sum\limits_{(\mu_{j_1}, \ldots , \mu_{j_{m}}) \atop  \in \, P(\delta)^{m} } 
\sum\limits_{1\le i_1, \ldots , i_{m} \leqslant r }
  a_{\mu_{j_{1}},\mu_{j_2}}^{(b_{i_1})} \ldots 
   a_{\mu_{j_{m}},\mu_{j_{1}}}^{(b_{i_m})} \langle \mu_{j_1}, \c{\beta}_k \rangle
   \hc(b_{i_1}^* \ldots b_{i_{m}}^*), 
\end{align*} 
and it is enough to sum over 
$\mu_{\sub{j}}:=(\mu_{j_1}, \ldots , \mu_{j_{m}})   \in P(\delta)^{m}$ 
and $\sub{i}:=(i_1,\ldots,i_m) \in \{1,\ldots,r\}^m$ 
such that 
$$a_{\mu_{j_1},\mu_{j_2}}^{(b_{i_1})} \ldots 
 a_{\mu_{j_{m}},\mu_{j_{1}}}^{(b_{i_m})} \not=0 \quad 
 \text{ and }
 \quad \langle \mu_{j_1}, \c{\beta}_k \rangle \not= 0.$$
Fix such $(\sub{i},\mu_{\sub{j}})$ and set for $ j \in \{1,\ldots,m\}$, 
\begin{align*} 
&
\mu^{(1)}  := \mu_{j_1} , \, \mu^{(2)}  := \mu_{j_m} , \, \mu^{(3)}  := \mu_{j_{m-1}} \quad \ldots, \quad \mu^{(m)} := \mu_{j_2}, 
\quad 
\mu^{(m+1)}:=\mu_{j_1}, & \\
& \alpha^{(j)} := 
\begin{cases} \alpha & \text{if} \quad b_{i_j}= c_{\alpha}e_{-\alpha}, 
\quad \alpha \in \Delta, \\ 
 \beta & \text{if}\quad b_{i_j} = \c{\beta},\quad \beta \in \Pi, 
 \end{cases}& 
\end{align*}
and 
$\mu := \mu_{j_1}.$
Then $\mu \in P(\delta)_k$ and 
$(\sub{\mu},\sub{\alpha}) \in \hat{\P}_m(\mu)$. 
Moreover, following~\eqref{eq:vector}, 
we get: 
\begin{align*}  
b_{(\sub{\mu},\sub{\alpha}),j} = b_{i_{ m-j+1}} \quad \text{ for }\quad  j=1,\ldots,m,\\
a_{\sub{\mu},\sub{\alpha}} = a_{\mu^{(m+1)},\mu^{(m)}}^{(b_{(\sub{\mu}, \sub{\alpha}),m})} \ldots 
  a_{\mu^{(3)},\mu^{(2)}}^{(b_{(\sub{\mu}, \sub{\alpha}),2})}  a_{\mu^{(2)},\mu^{(1)}}^{(b_{(\sub{\mu}, \sub{\alpha}),1})} \quad \text{ and }\quad 
b_{\sub{\mu},\sub{\alpha}}^* & = a_{\sub{\mu},\sub{\alpha}} b_{(\sub{\mu}, \sub{\alpha}),1}^* \ldots b_{(\sub{\mu}, \sub{\alpha}),m-1}^* 
b_{(\sub{\mu}, \sub{\alpha}),m}^*, 
\end{align*}
whence the expected formula for $\widehat{\d p}_{m,k}$:
$$\widehat{{\rm d}  p}_{m,k}  
=   \sum_{\mu \in P(\delta)_k} \, \sum_{(\sub{\mu},\sub{\alpha}) \in \hat{\P}_m(\mu)}  \langle \mu, \c{\beta}_k \rangle 
\hc({b}_{\sub{\mu},\sub{\alpha}}^*).$$
Then the formula for $\ev_\rho(\widehat{\d p}_{m,k} )$ is obvious 
by~\eqref{eq:weight}.

Let us now turn to $\overline{\d p}_{m,k}^{(\delta)} $ 
and $\ev_\rho(\overline{\d p}_{m,k} )$. 
By Lemma \ref{lem:dec3}, we have:
\begin{align*} 
\overline{\d p}_{m,k}^{(\delta)} 
=
\sum\limits_{\mu \in P(\delta)_k } 
\sum\limits_{1\le i_1, \ldots , i_{m} \leqslant r } 
\langle \mu, \c{\beta}_{i_1} \rangle   \ldots  \langle \mu, \c{\beta}_{i_m} \rangle  \langle \mu, \c{\beta}_k \rangle\, 
\s{\varpi_{i_1}}   \ldots \s{\varpi_{i_m}}  . 
\end{align*} 
To each $\sub{i}:=(i_1,\ldots,i_m) \in \{1,\ldots,r\}^m$ 
such that 
$\langle \mu, \c{\beta}_{i_1} \rangle   \ldots  \langle \mu, \c{\beta}_{i_m} \rangle  \not=0$
we attach the weighted paths 
$(\sub{\mu},\sub{\alpha}) \in \hat{\P}_m(\mu)$ 
with 
$$\mu^{(1)}=\cdots=\mu^{(m)}=\mu 
\quad \text{ and }
\quad \alpha^{(j)} = \c{\beta}_{i_j} \quad \text{for} 
\quad  j=1,\ldots,m.$$ 
Since all weighted paths $(\sub{\mu},\sub{\alpha}) \in \hat{\P}_m(\mu)$ 
with $\he(\sub{\mu})=\sub{0}$ are of this form, we get 
the desired statement. Indeed, for such paths, 
$b_{\sub{\mu},\sub{\alpha}}^* \in U(\h)=S(\h)$ 
thus $\hc (b_{\sub{\mu},\sub{\alpha}}^*)=b_{\sub{\mu},\sub{\alpha}}^*$.

Note that for paths $(\sub{\mu},\sub{\alpha}) \in \hat{\P}_m(\mu)$ 
such that $\he(\sub{\mu}) = \sub{0}$, we have 
$$\wt(\sub{\mu},\sub{\alpha}) = \langle \mu, \c{\alpha}^{(1)} \rangle 
\ldots \langle \mu, \c{\alpha}^{(m)} \rangle 
\langle \rho, \s{\varpi_{\alpha^{(1)}}} \rangle 
\ldots \langle \rho,\s{\varpi_{\alpha^{(m)}}} \rangle,$$
and the $\alpha^{(j)}$'s run through the set 
$\Pi_\mu:=\{\beta \in \Pi \; |\; \langle \mu ,\c{\beta} \rangle \not=0 \}.$  
Hence we get, 
\begin{align} \label{eq:formulas}
\ev_\rho(\overline{\d p}_{m,k} )
=   \sum_{\mu \in P(\delta)_k} 
\sum_{ \sub{\alpha}\in (\Pi_\mu)^m }  
\langle \mu, \c{\alpha}^{(1)} \rangle 
\ldots \langle \mu, \c{\alpha}^{(m)} \rangle 
\langle \rho,  \s{\varpi_{\alpha^{(1)}}} \rangle 
\ldots \langle \rho, \s{\varpi_{\alpha^{(m)}}} \rangle \langle \mu, \c{\beta}_k \rangle .
\end{align} \qed
\end{proof}

We now explore some useful operations on the set of weighted paths. 
Let $(m,n) \in (\Z_{> 0})^2$, $(\lambda,\mu,\nu) \in P(\delta)^3$,   
$(\sub{\mu} ,\sub{\alpha}) \in \hat{\P}_{m}(\lambda,\mu)$, and 
$(\sub{\mu}',\sub{\alpha}') \in \hat{\P}_{n}(\mu,\nu)$. 
We define the sequences 
$\sub{\mu} \star \sub{\mu}'$ and $\sub{\alpha} \star \sub{\alpha}'$ by: 
\begin{align*}
\sub{\mu} \star \sub{\mu}' : = (\mu^{(1)},\ldots,\mu^{(m+1)} = 
\mu'^{(1)},\ldots,\mu'^{(n+1)}) , 
\quad \sub{\alpha} \star \sub{\alpha}' : = (\alpha^{(1)},\ldots,\alpha^{(m)} , \alpha'^{(1)},\ldots,\alpha'^{(n)}).
\end{align*}
The pair $(\sub{\mu} ,\sub{\alpha}) \star (\sub{\mu}',\sub{\alpha}') 
:=(\sub{\mu} \star \sub{\mu}',\sub{\alpha} \star \sub{\alpha}')$ 
defines a weighted path of $\hat{\P}_{m+n}(\lambda,\nu)$ that  
we call the {\em concatenation of 
the paths $(\sub{\mu},\sub{\alpha})$ and $(\sub{\mu}',\sub{\alpha}')$}. 

\begin{lemma} 
Let $(\sub{\mu} ,\sub{\alpha}) \in \hat{\P}_{m}(\lambda,\mu)$ 
and $(\sub{\mu}',\sub{\alpha}')\in \hat{\P}_{n}(\mu,\nu)$. 
\begin{enumerate}
\item We have $b_{(\sub{\mu} ,\sub{\alpha}) \star (\sub{\mu}',\sub{\alpha}')}^* 
= b_{\sub{\mu} ,\sub{\alpha}}^* \, b_{\sub{\mu}' ,\sub{\alpha}'}^*.$
\item If $\lambda = \mu =\nu$ then 
$\hc({b}_{(\sub{\mu} ,\sub{\alpha}) \star (\sub{\mu}',\sub{\alpha}')}^*)
= \hc({b}_{\sub{\mu} ,\sub{\alpha}}^*) \hc({b}_{\sub{\mu}' ,\sub{\alpha}'}^*)$ 
and $\wt(\sub{\mu} \star \sub{\mu}',\sub{\alpha} \star \sub{\alpha}') 
= \wt (\sub{\mu} ,\sub{\alpha}) \, \wt (\sub{\mu}' ,\sub{\alpha}').$
\end{enumerate}
\end{lemma}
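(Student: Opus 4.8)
The plan is to derive both assertions from a single structural observation — that every piece of data entering $b_{\sub{\mu},\sub{\alpha}}^*$ (the elements $b_{(\sub{\mu},\sub{\alpha}),j}$, their duals $b_{(\sub{\mu},\sub{\alpha}),j}^*$, and the scalar $a_{\sub{\mu},\sub{\alpha}}$) is built position by position, so that concatenation merely juxtaposes these pieces — combined with the two multiplicativity facts already recorded in the paper: $\hc$ restricts to an algebra morphism on $U(\g)^\h$ (see \S\ref{sub:enveloping}), and $\ev_\rho$ is an algebra morphism on $S(\h)$. First I would treat (1). The key observation is that for $1\le j\le m$ one has $\he(\sub{\mu}\star\sub{\mu}')_j=\he(\sub{\mu})_j$ and $(\sub{\mu}\star\sub{\mu}')^{(j)}=\mu^{(j)}$, while for $1\le j\le n$ one has $\he(\sub{\mu}\star\sub{\mu}')_{m+j}=\he(\sub{\mu}')_j$ and $(\sub{\mu}\star\sub{\mu}')^{(m+j)}=\mu'^{(j)}$; the two descriptions agree at the junction index $j=m+1$ precisely because $\mu^{(m+1)}=\mu'^{(1)}$. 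Since $b_{(\sub{\mu},\sub{\alpha}),j}$ and $b_{(\sub{\mu},\sub{\alpha}),j}^*$ depend only on the pair $(\he(\sub{\mu})_j,\alpha^{(j)})$, it follows that the ordered product $b_{(\sub{\mu}\star\sub{\mu}',\sub{\alpha}\star\sub{\alpha}'),1}^*\cdots b_{(\sub{\mu}\star\sub{\mu}',\sub{\alpha}\star\sub{\alpha}'),m+n}^*$ splits, in that order, into the product of the two corresponding ordered products attached to $(\sub{\mu},\sub{\alpha})$ and $(\sub{\mu}',\sub{\alpha}')$. The scalar $a_{\sub{\mu}\star\sub{\mu}',\sub{\alpha}\star\sub{\alpha}'}$ is a product of $m+n$ of the coefficients introduced in \eqref{eq:tr}, indexed by $j=1,\dots,m+n$; being scalars these commute, and splitting the range at $j=m$ yields $a_{\sub{\mu},\sub{\alpha}}\,a_{\sub{\mu}',\sub{\alpha}'}$. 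Feeding both factorizations into \eqref{eq:vector}, and using centrality of scalars to move $a_{\sub{\mu}',\sub{\alpha}'}$ in front of the second block, gives $b_{(\sub{\mu},\sub{\alpha})\star(\sub{\mu}',\sub{\alpha}')}^*=b_{\sub{\mu},\sub{\alpha}}^*\,b_{\sub{\mu}',\sub{\alpha}'}^*$.

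For (2), under the hypothesis $\lambda=\mu=\nu$ the three weighted paths $(\sub{\mu},\sub{\alpha})\in\hat{\P}_m(\mu)$, $(\sub{\mu}',\sub{\alpha}')\in\hat{\P}_n(\mu)$ and $(\sub{\mu},\sub{\alpha})\star(\sub{\mu}',\sub{\alpha}')\in\hat{\P}_{m+n}(\mu)$ are all closed, so by the observation made just before \eqref{eq:weight} their associated elements $b^*$ all lie in $U(\g)^\h$. Applying the algebra morphism $\hc|_{U(\g)^\h}$ to the identity of (1) then gives $\hc(b_{(\sub{\mu},\sub{\alpha})\star(\sub{\mu}',\sub{\alpha}')}^*)=\hc(b_{\sub{\mu},\sub{\alpha}}^*)\,\hc(b_{\sub{\mu}',\sub{\alpha}'}^*)$, which is the first half of (2). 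Applying next the algebra morphism $\ev_\rho\colon S(\h)\to\C$ to this identity and recalling the definition \eqref{eq:weight} of $\wt$, we obtain $\wt(\sub{\mu}\star\sub{\mu}',\sub{\alpha}\star\sub{\alpha}')=\ev_\rho(\hc(b_{\sub{\mu},\sub{\alpha}}^*))\,\ev_\rho(\hc(b_{\sub{\mu}',\sub{\alpha}'}^*))=\wt(\sub{\mu},\sub{\alpha})\,\wt(\sub{\mu}',\sub{\alpha}')$.

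I do not anticipate a genuine obstacle here — the argument is essentially bookkeeping. The only points that require a moment's care are the matching of the two blocks of data at the junction vertex (which works precisely because the middle weight $\mu$ is shared by the two paths) and the reversed ordering in the definition of $a_{\sub{\mu},\sub{\alpha}}$ (harmless, since those coefficients are scalars and so commute freely). All the substantive input — multiplicativity of $\hc$ on $U(\g)^\h$, multiplicativity of $\ev_\rho$ on $S(\h)$, and $\h$-invariance of $b^*$ for closed weighted paths — is already available from the earlier part of the paper.
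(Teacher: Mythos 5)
Your proof is correct and follows essentially the same route as the paper's: part (1) is the positionwise bookkeeping the paper dismisses as "clear," and part (2) is exactly the paper's argument that $b_{\sub{\mu},\sub{\alpha}}^*$, $b_{\sub{\mu}',\sub{\alpha}'}^*$ lie in $U(\g)^\h$ when $\lambda=\mu=\nu$ and that $\hc$ (and then $\ev_\rho$) is multiplicative there. No gaps; the only addition over the paper is the explicit check at the junction vertex and the remark that the reversed ordering in $a_{\sub{\mu},\sub{\alpha}}$ is harmless, both of which are fine.
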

\begin{proof} 
Part (1) is clear. The restrictions to $U(\g)^{\h}$ of $\hc$ and of $\ev_\rho \otimes \hc$ are morphisms of algebras,
and $b_{\sub{\mu},\sub{\alpha}}^*, b_{\sub{\mu}',\sub{\alpha}'}^*$ belongs to $U(\g)^\h$ if $\lambda=\mu=\nu$. Hence Part(2) follows. \qed
\end{proof}

Let $m \in \Z_{> 1}$, $(\mu,\nu) \in P(\delta)^2$ 
and $i \in \{2,\ldots,m\}$. 
Let $(\sub{\mu},\sub{\alpha}) \in \hat{\P}_m(\lambda,\mu)$ and assume that 
$(\sub{\mu},\sub{\alpha})$ is a weighted path with no ramification {\protect\footnotemark}. We 
define the weighted path $(\sub{\mu},\sub{\alpha})^{\# i}
:=(\sub{\mu}^{\# i},\sub{\alpha}^{\# i})$ which obtained from $(\sub{\mu},\sub{\alpha})$ by 
^^ ^^ cutting the vertex $\mu^{(i)}$'' as follows:

\footnotetext{By path with no ramification, it means that any vertex $\mu^{(i)}$ is related by an arrow to at most two other vertices.}

\begin{enumerate} 
\item 
If $\he(\sub{\mu})_{i-1}\not=0$, 
$\he(\sub{\mu})_i \not=0$ and $\alpha^{(i-1)} 
+\alpha^{(i)} \not=0$, then we set: 
\begin{align*}
\sub{\mu}^{\# i} := (\mu^{(1)},\ldots, \mu^{(i-1)},
\mu^{(i+1)},\ldots,\mu^{(m+1)}), \quad  
\sub{\alpha}^{\# i}  :=  (\alpha^{(1)},\ldots,\alpha^{(i-1)} 
+\alpha^{(i)},\ldots, \alpha^{(m)}).
\end{align*} 
In this case, $(\sub{\mu},\sub{\alpha})^{\# i} \in \hat{\P}_{m-1}(\lambda,\mu)$.  	
\item If $\he(\sub{\mu})_{i-1}\not=0$, 
$\he(\sub{\mu})_i \not=0$ 
and $\alpha^{(i-1)}+\alpha^{(i)}=0$, then we set: 
\begin{align*}\sub{\mu}^{\# i} :=  (\mu^{(1)},\ldots, 
\mu^{(i-1)}=\mu^{(i+1)},\ldots,\mu^{(m+1)}), \quad 
\sub{\alpha}^{\# i}:=  (\alpha^{(1)},\ldots,\alpha^{(i-2)} 
, \alpha^{(i+1)},\ldots, \alpha^{(m)}).
\end{align*} 
We have $(\sub{\mu},\sub{\alpha})^{\# i} \in \hat{\P}_{m-2}(\lambda,\mu)$. 
\item If $\he(\sub{\mu})_i =0$, then we set: 
\begin{align*}
\sub{\mu}^{\# i} :=  (\mu^{(1)},\ldots, \mu^{(i-1)},
\mu^{(i+1)},\ldots,\mu^{(m+1)}), \quad 
\sub{\alpha}^{\# i} :=  (\alpha^{(1)},\ldots,\alpha^{(i-1)}, 
\alpha^{(i+1)},\ldots, \alpha^{(m)}).
\end{align*} 
Thus $(\sub{\mu},\sub{\alpha})^{\# i} \in
\hat{\P}_{m-1}(\lambda,\mu)$.  
\end{enumerate}
Our definition cannot be applied for a vertex $\mu^{(i)}$ 
such that $\he(\sub{\mu})_{i-1}=0$ and $\he(\sub{\mu})_i \not=0$. 
So we cannot cut such a  
vertex $\mu^{(i)}$. 
In such situation $\mu^{(i-1)}=\mu^{(i)}$, so we can 
cut the vertex $\mu^{(i-1)}$ instead. 
We illustrate in Figure \ref{fig:delate} the operation 
of ^^ ^^ cutting the vertex $\mu^{(3)}$'' 
in the three above situations.  
\begin{figure}[h]
\begin{center}
\begin{tikzpicture} [scale=0.8]
\node at (-1,0) {(1)};
 \draw[thick, decoration={markings, mark=at position 0.5 with {\arrow{>}}},
        postaction={decorate}  ]
        (0,0) to (2,0); \node at (1,0.4){{\footnotesize $\alpha^{(1)}$}} ;
 \draw[thick, decoration={markings, mark=at position 0.5 with {\arrow{>}}},
        postaction={decorate}  ]
        (2,0) to (6,0); \node at (4,0.4){{\footnotesize $\alpha^{(2)}$}} ;
 \draw[thick, decoration={markings, mark=at position 0.5 with {\arrow{>}}},
        postaction={decorate}  ]
        (6,-0.15) to (4,-0.15); \node at (5,-0.55){{\footnotesize $\alpha^{(3)}$}} ;
\draw[thick, decoration={markings, mark=at position 0.5 with {\arrow{>}}},
        postaction={decorate}  ]
        (4,-0.15) to (0,-0.15); \node at (2,-0.55){{\footnotesize $\alpha^{(4)}$}} ;
\fill (0,0) circle (0.07)node(xline)[above] {{\small $\mu^{(1)}$}};     
\fill (2,0) circle (0.07)node(xline)[above] {{\small $\mu^{(2)}$}};
\fill (6,0) circle (0.07)node(xline)[above] {{\small $\mu^{(3)}$}};
\fill (4,-0.15) circle (0.07)node(xline)[below] {{\small $\mu^{(4)}$}}; 

\draw[ ->, dashed, draw=blue]
        (7,-0.15) to (8,-0.15);\node[text=blue] at (7.5,0.25){$\# 3$} ;

 \draw[thick, decoration={markings, mark=at position 0.5 with {\arrow{>}}},
        postaction={decorate}  ]
        (9,0) to (11,0); \node at (10,0.4){{\footnotesize $\alpha^{(1)}$}} ;
 \draw[thick, decoration={markings, mark=at position 0.5 with {\arrow{>}}},
        postaction={decorate}  ]
        (11,0) to (13,0); \node at (12.3,0.4){{\footnotesize $\alpha^{(2)}+\alpha^{(3)}$}} ;
\draw[thick, decoration={markings, mark=at position 0.5 with {\arrow{>}}},
        postaction={decorate}  ]
        (13,-0.15) to (9,-0.15); \node at (11,-0.55){{\footnotesize $\alpha^{(4)}$}} ;
\fill (9,0) circle (0.07)node(xline)[above] {{\small $\mu^{(1)}$}};     
\fill (11,0) circle (0.07)node(xline)[above] {{\small $\mu^{(2)}$}};
\fill (13,0) circle (0.07); \node at (13.5,0) {{\small $\mu^{(4)}$}};
 \draw[thick, decoration={markings, mark=at position 0.5 with {\arrow{>}}},
        postaction={decorate}  ]
        (0,-1.85) to (4,-1.85); \node at (2,-1.55){{\footnotesize $\alpha^{(1)}$}} ;
 \draw[thick, decoration={markings, mark=at position 0.5 with {\arrow{>}}},
        postaction={decorate}  ]
        (4,-1.85) to (6,-1.85); \node at (5,-1.55){{\footnotesize $\alpha^{(2)}$}} ;
 \draw[thick, decoration={markings, mark=at position 0.5 with {\arrow{>}}},
        postaction={decorate}  ]
        (6,-2) to (2,-2); \node at (4,-2.3){{\footnotesize $\alpha^{(3)}$}} ;
 \draw[thick, decoration={markings, mark=at position 0.5 with {\arrow{>}}},
        postaction={decorate}  ]
        (2,-2) to (0,-2); \node at (1,-2.3){{\footnotesize $\alpha^{(4)}$}} ;
\fill (0,-1.85) circle (0.07)node(xline)[above] {{\small $\mu^{(1)}$}};     
\fill (4,-1.85) circle (0.07)node(xline)[above] {{\small $\mu^{(2)}$}};
\fill (6,-1.85) circle (0.07)node(xline)[above] {{\small $\mu^{(3)}$}};
\fill (2,-2) circle (0.07)node(xline)[below] {{\small $\mu^{(4)}$}}; 

\draw[ ->, dashed, draw=blue]
        (7,-2) to (8,-2);\node[text=blue] at (7.5,-1.6){$\# 3$} ;

 \draw[thick, decoration={markings, mark=at position 0.5 with {\arrow{>}}},
        postaction={decorate}  ]
        (9,-1.85) to (13,-1.85); \node at (11,-1.55){{\footnotesize $\alpha^{(1)}$}} ;
 \draw[thick, decoration={markings, mark=at position 0.5 with {\arrow{>}}},
        postaction={decorate}  ]
        (13,-2) to (11,-2); \node at (12.3,-2.3){{\footnotesize $\alpha^{(2)}+\alpha^{(3)}$}} ;
\draw[thick, decoration={markings, mark=at position 0.5 with {\arrow{>}}},
        postaction={decorate}  ]
        (11,-2) to (9,-2); \node at (10,-2.3){{\footnotesize $\alpha^{(4)}$}} ;
\fill (9,-1.85) circle (0.07)node(xline)[above] {{\small $\mu^{(1)}$}};     
\fill (13,-1.85) circle (0.07); \node at (13.5,-1.85) {{\small $\mu^{(2)}$}};
\fill (11,-2) circle (0.07)node(xline)[below] {{\small $\mu^{(4)}$}};

\node at (-1,-3.7) {(2)};
 \draw[thick, decoration={markings, mark=at position 0.5 with {\arrow{>}}},
        postaction={decorate}  ]
        (0,-3.7) to (2,-3.7); \node at (1,-3.4){{\footnotesize $\alpha^{(1)}$}} ;
 \draw[thick, decoration={markings, mark=at position 0.5 with {\arrow{>}}},
        postaction={decorate}  ]
        (2,-3.7) to (6,-3.7); \node at (4,-3.4){{\footnotesize $\alpha^{(2)}$}} ;
 \draw[thick, decoration={markings, mark=at position 0.5 with {\arrow{>}}},
        postaction={decorate}  ]
        (6,-3.85) to (2,-3.85); \node at (4,-4.15){{\footnotesize $\alpha^{(3)}$}} ;
 \draw[thick, decoration={markings, mark=at position 0.5 with {\arrow{>}}},
        postaction={decorate}  ]
        (2,-3.85) to (0,-3.85); \node at (1,-4.15){{\footnotesize $\alpha^{(4)}$}} ;
\fill (0,-3.7) circle (0.07)node(xline)[above] {{\small $\mu^{(1)}$}};     
\fill (2,-3.7) circle (0.07)node(xline)[above] {{\small $\mu^{(2)}$}};
\fill (6,-3.7) circle (0.07)node(xline)[above] {{\small $\mu^{(3)}$}};
\fill (2,-3.85) circle (0.07)node(xline)[below] {{\small $\mu^{(4)}$}}; 

\draw[ ->, dashed, draw=blue]
        (7,-3.85) to (8,-3.85);\node[text=blue] at (7.5,-3.45){$\# 3$} ;

 \draw[thick, decoration={markings, mark=at position 0.5 with {\arrow{>}}},
        postaction={decorate}  ]
        (9,-3.7) to (11,-3.7); \node at (10,-3.4){{\footnotesize $\alpha^{(1)}$}} ;
 \draw[thick, decoration={markings, mark=at position 0.5 with {\arrow{>}}},
        postaction={decorate}  ]
        (11,-3.85) to (9,-3.85); \node at (10,-4.15){{\footnotesize $\alpha^{(4)}$}} ;
\fill (9,-3.7) circle (0.07)node(xline)[above] {{\small $\mu^{(1)}$}};     
\fill (11,-3.7) circle (0.07); \node at (12,-3.7) {{\small $\mu^{(2)}=\mu^{(4)}$}};

\node at (-1,-5.4) {(3)};
 \draw[thick, decoration={markings, mark=at position 0.5 with {\arrow{>}}},
        postaction={decorate}  ]
        (0,-5.4) to (2,-5.4); \node at (1,-5.1){{\footnotesize $\alpha^{(1)}$}} ;
 \draw[thick, decoration={markings, mark=at position 0.5 with {\arrow{>}}},
        postaction={decorate}  ]
        (2,-5.4) to (4,-5.4); \node at (3,-5.1){{\footnotesize $\alpha^{(2)}$}} ;
 \draw[thick, decoration={markings, mark=at position 0.325 with {\arrow{<}}},
        postaction={decorate}  ]
        (4.5,-5.4) ellipse (0.5cm and 0.3cm); \node at (5.4,-5.2){{\footnotesize $\alpha^{(3)}$}} ;
 \draw[thick, decoration={markings, mark=at position 0.5 with {\arrow{>}}},
        postaction={decorate}  ]
        (4,-5.55) to (0,-5.55); \node at (2,-5.85){{\footnotesize $\alpha^{(4)}$}} ;
\fill (0,-5.4) circle (0.07)node(xline)[above] {{\small $\mu^{(1)}$}};     
\fill (2,-5.4) circle (0.07)node(xline)[above] {{\small $\mu^{(2)}$}};
\fill (4,-5.4) circle (0.07);
\node at (3.95,-5.95) {{\small $\mu^{(3)}=\mu^{(4)}$}};

\draw[ ->, dashed, draw=blue]
        (7,-5.55) to (8,-5.55);\node[text=blue] at (7.5,-5.15){$\# 3$} ;

 \draw[thick, decoration={markings, mark=at position 0.5 with {\arrow{>}}},
        postaction={decorate}  ]
        (9,-5.4) to (11,-5.4); \node at (10,-5.1){{\footnotesize $\alpha^{(1)}$}} ;
 \draw[thick, decoration={markings, mark=at position 0.5 with {\arrow{>}}},
        postaction={decorate}  ]
        (11,-5.4) to (13,-5.4); \node at (12,-5.1){{\footnotesize $\alpha^{(2)}$}} ;
 \draw[thick, decoration={markings, mark=at position 0.5 with {\arrow{>}}},
        postaction={decorate}  ]
        (13,-5.55) to (9,-5.55); \node at (11,-5.85){{\footnotesize $\alpha^{(4)}$}} ;
\fill (9,-5.4) circle (0.07)node(xline)[above] {{\small $\mu^{(1)}$}};
\fill (11,-5.4) circle (0.07)node(xline)[above] {{\small $\mu^{(2)}$}};
\fill (13,-5.4) circle (0.07); \node at (13.5,-5.4) {{\small $\mu^{(4)}$}};
\end{tikzpicture}
 \caption{\label{fig:delate} Cutting the vertex $\mu^{(3)}$} 
\end{center}
\end{figure}
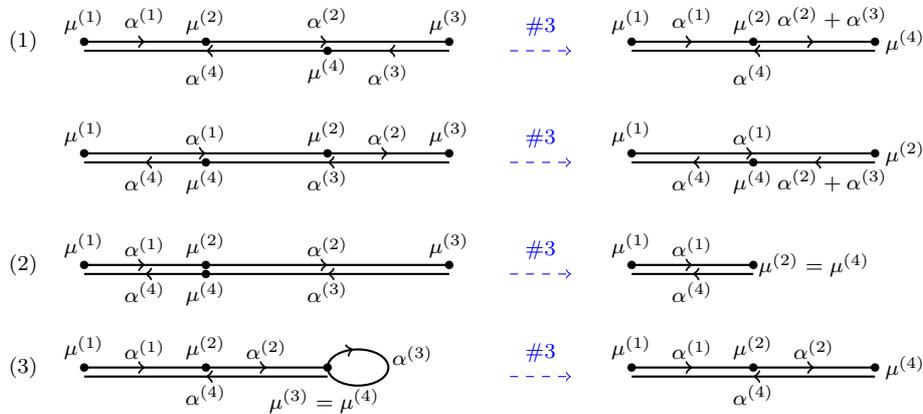

Let $m \in \Z_{> 1}$ and  $(\mu,\nu) \in P(\delta)^2$. 
For $\sub{i} := (i_1,\ldots,i_m )\in \Z^m $,  
set 
\begin{align*}
\hat{\P}_m(\mu,\nu)_{\sub{i}} 
:=\{ (\sub{\mu},\sub{\alpha}) \in \hat{\P}_m(\mu,\nu) \ | \ 
\he(\sub{\mu}) = \sub{i} \ \}.
\end{align*}
The elements of $\hat{\P}_m(\mu,\nu)_{\sub{i}}$ share 
the same vertices 
and only the labels of the ^^ ^^ loops'' (i.e.~the labels $\alpha^{(j)}$ 
such that $\mu^{(j)}=\mu^{(j+1)}$) may differ. 
We obtain the following partition of $\hat{\P}_m(\mu,\nu)$:
$$\hat{\P}_m(\mu,\nu) = \bigsqcup_{\sub{i} \in \Z^m } \hat{\P}_m(\mu,\nu)_{\sub{i}}.$$
The set $\hat{\P}_m(\mu,\nu)_{\sub{i}}$ is empty for almost all $\sub{i}$. 
We endow the set $\Z^m$ with the lexicographical order~$\preccurlyeq$. 
The zero element is $\sub{0} = (0,\ldots,0)$. 
We denote by $\Z^m_{\succcurlyeq \sub{0}}$ (respectively, $\Z^m_{\succ \sub{0}}$) the set 
of elements in $\Z^m$ greater 
(respectively, strictly greater) than $\sub{0}$ 
for the lexicographical order.  

Let $\sub{i} \in {\Z}^m_{\succ \sub{0}}$ such that $\sum_{j=1}^m i_j =0$.  
Let $q(\sub{i})$ be the smallest integer $q$ of $\{1,\ldots,m\}$ 
such that $\sub{i}_q < 0$, 
and let $p(\sub{i})$ be the largest 
integer $p$ of $\{ 1,\ldots, q(\sub{i})\}$ 
such that $\sub{i}_{p-1} >0$. 
Thus $\sub{i}$ is as follows: 
\begin{align*}
\sub{i}=(\underbrace{i_1,\ldots,i_{p(\sub{i})-2},}_{\geqslant 0}\underbrace{ i_{p(\sub{i})-1}}_{> 0}, 0,\ldots,0,
\underbrace{i_{q(\sub{i})}}_{<0},i_{q(\sub{i})+1},\ldots,i_m).
\end{align*}
For a path $\sub{\mu}$ of height $\sub{i}$, we will say that 
{\em $\mu^{(p(\sub{i}))}$ is the position of the first turning back}.
Note that $p(\sub{i})$ is always strictly greater than $1$ 
for such $\sub{i}$. 

\begin{lemma} \label{Lem:a} 
Let $m \in \Z_{> 1}$, $(\mu,\nu) \in P(\delta)^2$, 
$i \in \{2,\ldots,m\}$ 
and $(\sub{\mu},\sub{\alpha}) \in \hat{\P}_m(\lambda,\mu)$. 
\begin{enumerate}
\item If $\he(\sub{\mu})_{i-1}\not=0$, 
$\he(\sub{\mu})_i \not=0$ 
and $\alpha^{(i-1)}+\alpha^{(i)}=0$, then 
$
a_{\sub{\mu},\sub{\alpha}} = (c_{\alpha^{(i-1)}})^2 
a_{(\sub{\mu},\sub{\alpha})^{\# i}}.
$
\item If $\he(\sub{\mu})_i =0$, then
$a_{\sub{\mu},\sub{\alpha}} = \langle \mu^{(i)}, \c{\alpha}^{(i)} \rangle 
a_{(\sub{\mu},\sub{\alpha})^{\# i}}.$
\end{enumerate}
\end{lemma}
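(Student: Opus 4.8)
The plan is to track explicitly, in each of the two cases, how the scalar $a_{\sub{\mu},\sub{\alpha}}$ — which by definition is the product $a^{(b_{(\sub{\mu},\sub{\alpha}),m})}_{\mu^{(m+1)},\mu^{(m)}} \cdots a^{(b_{(\sub{\mu},\sub{\alpha}),1})}_{\mu^{(2)},\mu^{(1)}}$ of matrix coefficients of the action on the basis $(v_\mu)$ — changes when we perform the operation $\# i$ of cutting the vertex $\mu^{(i)}$. The key point is that all factors of $a_{\sub{\mu},\sub{\alpha}}$ indexed by $j \notin \{i-1,i\}$ are \emph{unchanged} by $\# i$: indeed the operation leaves the corresponding vertices, basis elements $b_{(\sub{\mu},\sub{\alpha}),j}$ and labels $\alpha^{(j)}$ untouched (in case (1) one relabels the segment around the deleted loop but the matrix coefficients involved are at the \emph{boundary} vertices, which are identified). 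So in both cases the identity reduces to comparing the product of the one or two ``local'' factors at positions $i-1,i$ of $a_{\sub{\mu},\sub{\alpha}}$ with the single local factor (or constant $1$) that replaces them in $a_{(\sub{\mu},\sub{\alpha})^{\# i}}$.

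For case (2), where $\he(\sub{\mu})_i = 0$, we have $\mu^{(i)} = \mu^{(i+1)}$, $\alpha^{(i)} \in \Pi$, and $b_{(\sub{\mu},\sub{\alpha}),i} = \c{\alpha}^{(i)}$. By the formulas recalled at the start of the proof of Lemma \ref{Lem:formulas}, the matrix coefficient $a^{(\c{\alpha}^{(i)})}_{\mu^{(i+1)},\mu^{(i)}} = a^{(\c{\alpha}^{(i)})}_{\mu^{(i)},\mu^{(i)}} = \langle \mu^{(i)}, \c{\alpha}^{(i)} \rangle$. In the cut path $(\sub{\mu},\sub{\alpha})^{\# i}$ this loop factor is simply removed and no new factor appears (the arrow from $\mu^{(i-1)}$ now goes directly to $\mu^{(i+1)} = \mu^{(i)}$, so the factor at old position $i-1$ is literally the same matrix coefficient between the same two weights). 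Hence $a_{\sub{\mu},\sub{\alpha}} = \langle \mu^{(i)}, \c{\alpha}^{(i)} \rangle \, a_{(\sub{\mu},\sub{\alpha})^{\# i}}$, as claimed.

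For case (1), where $\he(\sub{\mu})_{i-1} \ne 0$, $\he(\sub{\mu})_i \ne 0$ and $\alpha^{(i-1)} + \alpha^{(i)} = 0$: here $\alpha := \alpha^{(i-1)} \in \Delta$, $\alpha^{(i)} = -\alpha$, $\mu^{(i-1)} = \mu^{(i+1)}$ and $\mu^{(i)} = \mu^{(i-1)} - \alpha^{(i-1)}$. The two local factors are $a^{(c_{-\alpha} e_\alpha)}_{\mu^{(i+1)},\mu^{(i)}}$ and $a^{(c_\alpha e_{-\alpha})}_{\mu^{(i)},\mu^{(i-1)}}$, coming with $b_{(\sub{\mu},\sub{\alpha}),i} = c_{-\alpha} e_\alpha$ and $b_{(\sub{\mu},\sub{\alpha}),i-1} = c_\alpha e_{-\alpha}$ (recall $b_{(\sub{\mu},\sub{\alpha}),j} = c_{\alpha^{(j)}} e_{-\alpha^{(j)}}$ for a non-loop step). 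Since all weight spaces are one-dimensional and $\pi_\delta(e_{\pm\alpha})$ shifts weights by $\pm\alpha$, the product of these two factors is $c_\alpha c_{-\alpha}$ times the composite coefficient of $\pi_\delta(e_\alpha)\pi_\delta(e_{-\alpha})$ acting $v_{\mu^{(i-1)}} \mapsto v_{\mu^{(i-1)}}$ — a scalar which, by $\sl_2$-representation theory applied to the $\alpha$-string through $\mu^{(i-1)}$, is a nonzero number that one checks equals the corresponding structure constant; the cleanest route is to observe directly that the cut path has a non-loop step $b_{(\sub{\mu}^{\# i},\sub{\alpha}^{\# i}),i-1}$ contributing nothing new (the path goes $\mu^{(i-1)} \to \cdots$ with the old $\alpha^{(i-2)}$ step unchanged and the loop $\mu^{(i-1)} \xrightarrow{\alpha}\mu^{(i)}\xrightarrow{-\alpha}\mu^{(i-1)}$ removed), so that $a_{\sub{\mu},\sub{\alpha}}/a_{(\sub{\mu},\sub{\alpha})^{\# i}}$ is exactly this product of two local factors. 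Normalising the Chevalley basis and the vectors $v_\mu$ so that the relevant coefficients are $\pm 1$, or rather absorbing the normalisation into the $c$'s, this product is $(c_\alpha)^2 = (c_{\alpha^{(i-1)}})^2$. I expect the main obstacle to be precisely this bookkeeping: pinning down the scalar by which $\pi_\delta(e_\alpha)\pi_\delta(e_{-\alpha})$ acts on the line $V(\delta)_{\mu^{(i-1)}}$ and seeing that, together with the $c_{\pm\alpha}$ factors and the chosen normalisation of the $v_\mu$, it collapses to $(c_{\alpha^{(i-1)}})^2$ with no stray weight-dependent factor — this uses crucially that $\delta$ is minuscule (or at least has only one-dimensional weight spaces) so the $\alpha$-strings have length at most two and no multiplicities intervene.
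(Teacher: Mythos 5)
Your proposal is correct and follows essentially the same route as the paper: all factors of $a_{\sub{\mu},\sub{\alpha}}$ away from positions $i-1,i$ are untouched by the cut, case (2) is just the removal of the loop factor $\langle \mu^{(i)},\c{\alpha}^{(i)}\rangle$, and in case (1) one computes the scalar by which $\pi_\delta(e_{\alpha^{(i-1)}})\pi_\delta(e_{-\alpha^{(i-1)}})$ acts on the line $V(\delta)_{\mu^{(i-1)}}$. The bookkeeping you flag is settled exactly as you anticipate: since $\mu^{(i-1)}+\alpha^{(i-1)}\notin P(\delta)$ the composite equals $\pi_\delta(\c{\alpha}^{(i-1)})$, whose eigenvalue $\langle \mu^{(i-1)},\c{\alpha}^{(i-1)}\rangle$ equals $1$, and $c_{\alpha}=c_{-\alpha}$ then yields the factor $(c_{\alpha^{(i-1)}})^2$ with no weight-dependent remainder.
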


\begin{proof}
\begin{enumerate}
 \item If $\alpha^{(i-1)}+\alpha^{(i)}=0$ then $\alpha^{(i-1)} = -\alpha^{(i)}$ and $\mu^{(i-1)}=\mu^{(i+1)}.$ We have, 
\begin{align*}
a_{\sub{\mu},\sub{\alpha}} &= 
a_{\mu^{(m+1)},\mu^{(m)}}^{(b_{(\sub{\mu},\sub{\alpha}),m})} 
\ldots a_{\mu^{(i+1)},\mu^{(i)}}^{(c_{\alpha^{(i)}}e_{-\alpha^{(i)}})} 
a_{\mu^{(i)},\mu^{(i-1)}}^{(c_{\alpha^{(i-1)}} e_{-\alpha^{(i-1)}})} 
\ldots   a_{\mu^{(2)},\mu^{(1)}}^{(b_{(\sub{\mu},\sub{\alpha}),1})} \\ 
a_{(\sub{\mu},\sub{\alpha})^{\#i}} &= 
a_{\mu^{(m+1)},\mu^{(m)}}^{(b_{(\sub{\mu},\sub{\alpha}),m})} 
\ldots a_{\mu^{(i+2)},\mu^{(i+1)}}^{(b_{(\sub{\mu},\sub{\alpha}),i+1})}
a_{\mu^{(i-1)},\mu^{(i-2)}}^{(b_{(\sub{\mu},\sub{\alpha}),i-2})} 
\ldots   a_{\mu^{(2)},\mu^{(1)}}^{(b_{(\sub{\mu},\sub{\alpha}),1})}.
\end{align*}
On the other hand, 
\begin{align*} 
a_{\mu^{(i+1)},\mu^{(i)}}^{(e_{-\alpha^{(i)}})} 
a_{\mu^{(i)},\mu^{(i-1)}}^{(e_{-\alpha^{(i-1)}})}  v_{\mu^{(i+1)}} 
& = \pi_\delta(e_{-\alpha^{(i)}}) \pi_\delta(e_{-\alpha^{(i-1)}} ) v_{\mu^{(i-1)}}
 = \pi_{\delta}([e_{-\alpha^{(i)}},e_{-\alpha^{(i-1)}}]) 
v_{\mu^{(i-1)}} \\
& = \pi_\delta(\c{\alpha}^{(i-1)} ) v_{\mu^{(i-1)}} 
= \langle \mu^{(i-1)}, \c{\alpha}^{(i-1)} \rangle v_{\mu^{(i+1)}},
\end{align*}
since $\mu^{(i-1)} - \alpha^{(i)} \notin P(\delta)$.
Therefore,
\begin{align*}
a_{\sub{\mu},\sub{\alpha}} &= 
a_{\mu^{(m+1)},\mu^{(m)}}^{(b_{(\sub{\mu},\sub{\alpha}),m})} 
\ldots a_{\mu^{(i+1)},\mu^{(i)}}^{(c_{\alpha^{(i)}}e_{-\alpha^{(i)}})} 
a_{\mu^{(i)},\mu^{(i-1)}}^{(c_{\alpha^{(i-1)}}e_{-\alpha^{(i-1)}})} 
\ldots   a_{\mu^{(2)},\mu^{(1)}}^{(b_{(\sub{\mu},\sub{\alpha}),1})} \\  
&= (c_{\alpha^{(i-1)}})^2 \, \langle \mu^{(i-1)}, \c{\alpha}^{(i-1)} \rangle
a_{(\sub{\mu},\sub{\alpha})^{\#i}} 
= (c_{\alpha^{(i-1)}})^2 \, a_{(\sub{\mu},\sub{\alpha})^{\#i}},
\end{align*}
since $\langle \mu^{(i-1)}, \c{\alpha}^{(i-1)} \rangle = 1.$
\item If $\he(\sub{\mu})_i =0$, we have $\mu^{(i)}=\mu^{(i+1)}$ and, 
\begin{align*}
a_{\sub{\mu},\sub{\alpha}} &= 
a_{\mu^{(m+1)},\mu^{(m)}}^{(b_{(\sub{\mu},\sub{\alpha}),m})} 
\ldots  
a_{\mu^{(i+1)},\mu^{(i)}}^{(\c{\alpha}^{(i)})}  
\ldots   a_{\mu^{(2)},\mu^{(1)}}^{(b_{(\sub{\mu},\sub{\alpha}),1})} \\ 
a_{(\sub{\mu},\sub{\alpha})^{\#i}} &= 
a_{\mu^{(m+1)},\mu^{(m)}}^{(b_{(\sub{\mu},\sub{\alpha}),m})} 
\ldots a_{\mu^{(i+2)},\mu^{(i+1)}}^{(b_{(\sub{\mu},\sub{\alpha}),i+1})}
a_{\mu^{(i)},\mu^{(i-1)}}^{(b_{(\sub{\mu},\sub{\alpha}),i-1})} 
\ldots   a_{\mu^{(2)},\mu^{(1)}}^{(b_{(\sub{\mu},\sub{\alpha}),1})} .
\end{align*}
On the other hand, 
\begin{align*} 
a_{\mu^{(i+1)},\mu^{(i)}}^{(\c{\alpha}^{(i)})}  v_{\mu^{(i+1)}} 
= \pi_\delta(\c{\alpha}^{(i)}) v_{\mu^{(i)}}  
= \langle \mu^{(i)}, \c{\alpha}^{(i)} \rangle v_{\mu^{(i)}},
\end{align*}
whence the statement. \qed
\end{enumerate} 
\end{proof}

\section{The proofs for type $A$}\label{sec:main2-A}

This section is devoted to the proofs of Theorem~\ref{theorem:main1} and Theorem~\ref{theorem:main2} for $\g=\sl_{r+1}$.
Throughout this section, it is assumed 
that $\g=\sl_{r+1}, r \geqslant 2$ and $\delta=\varpi_1$. 
We retain all relative notation from previous section and Appendix \ref{sec:rootTypeA}. 
In particular, $P(\delta) = \{\delta_1, \cdots, \delta_{r+1} \}$. 
Moreover, 
we have (cf.~Figure \ref{fig:crystal}), 
$P(\delta)_k=\{\delta_{k},\delta_{k+1}\}$ for all $k \in\{ 1, \ldots, r\}$  
and 
$\Pi_{\delta_k}=\{\beta_{k-1},\beta_k\}$ for $k=2,\ldots,r$, 
$\Pi_{\delta_1}=\{\beta_1\}$, $\Pi_{\delta_{r+1}}=\{\beta_{r}\}.$

According to Lemma \ref{Lem:formulas} and \eqref{eq:formulas}, 
we get  
\begin{align} \label{eq:main1}
\ev_\rho(\overline{\d p}_{m,k} ) = \sum_{ \sub{\alpha} \in (\Pi_{\delta_k})^m}  
\prod_{i=1}^m \langle \delta_k, \c{\alpha}^{(i)} \rangle 
\langle \rho,\c{\varpi}_{\alpha^{(i)}} \rangle 
-  \sum_{(\sub{\alpha} \in (\Pi_{\delta_{k+1}})^m}  
\prod_{i=1}^m \langle \delta_{k+1}, \c{\alpha}^{(i)} \rangle 
\langle \rho, \c{\varpi}_{\alpha^{(i)}}  \rangle & 
\end{align}
since $\s{\varpi_i}=\c{\varpi_i}$ 
for all $i=1,\ldots,r$, $\g$ being simply laced.

\begin{lemma}
\noindent \label{Lem1:main1}
\begin{enumerate}
\item For any $j \in\{1,\ldots,r+1\}$, 
$\langle \rho, \eps_j \rangle =\frac{r}{2} - j+1.$

\item For $k \in \{1,\ldots,r+1\}$, 
$\langle \rho, \c{\varpi}_k \rangle = \frac{k}{2}(r-k+1)$ and    
$\langle \rho, \c{\varpi}_k -\c{\varpi}_{k-1} \rangle
= \frac{r}{2} -k +1,$ 
where by convention $\varpi_0=\varpi_{r+1}=0$. 
\end{enumerate}
\end{lemma}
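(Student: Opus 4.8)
The plan is to reduce both statements to the explicit realization of the $A_r$ root system recorded in Appendix~\ref{sec:rootTypeA}, in which $\mathcal{E}=(\eps_1,\ldots,\eps_{r+1})$ is an orthonormal basis of $\R^{r+1}$, $\beta_i=\eps_i-\eps_{i+1}$, and $\varpi_i=\eps_1+\cdots+\eps_i-\frac{i}{r+1}(\eps_1+\cdots+\eps_{r+1})$ for $i=1,\ldots,r$. Since $B_\g$ is proportional to the trace form and $\g$ is simply laced, the pairing $\langle\cdot,\cdot\rangle$ is the restriction of the standard inner product and $\s{\varpi_k}=\c{\varpi}_k$ is identified with $\varpi_k$; I will use this throughout without further comment.

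For part~(1), I would first compute $\rho=\sum_{i=1}^r\varpi_i$ in the basis $\mathcal{E}$. Summing the expansions of the $\varpi_i$ and using $\sum_{i=1}^r i=\frac{r(r+1)}{2}$ gives
$$\rho=\sum_{i=1}^r(\eps_1+\cdots+\eps_i)-\frac{r}{2}(\eps_1+\cdots+\eps_{r+1}).$$
The coefficient of $\eps_j$ in the first sum is $\#\{\,i:j\le i\le r\,\}=r-j+1$ (to be read as $0$ when $j=r+1$), so the coefficient of $\eps_j$ in $\rho$ is $(r-j+1)-\frac{r}{2}=\frac{r}{2}-j+1$ for every $j\in\{1,\ldots,r+1\}$. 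Pairing against the orthonormal vector $\eps_j$ yields $\langle\rho,\eps_j\rangle=\frac{r}{2}-j+1$.

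For part~(2), I would first record the auxiliary identity
$$\langle\rho,\eps_1+\cdots+\eps_{r+1}\rangle=\sum_{j=1}^{r+1}\left(\frac{r}{2}-j+1\right)=0,$$
which is immediate from part~(1). Next, directly from the formulas for the $\varpi_i$ — and with the convention $\varpi_0=\varpi_{r+1}=0$, whose consistency at both ends one checks — one has $\varpi_k-\varpi_{k-1}=\eps_k-\frac{1}{r+1}(\eps_1+\cdots+\eps_{r+1})$ for $k=1,\ldots,r+1$. Combining the identification $\c{\varpi}_k\leftrightarrow\varpi_k$ with the auxiliary identity then gives $\langle\rho,\c{\varpi}_k-\c{\varpi}_{k-1}\rangle=\langle\rho,\eps_k\rangle=\frac{r}{2}-k+1$, and summing this telescoping relation from $j=1$ to $k$ produces
$$\langle\rho,\c{\varpi}_k\rangle=\sum_{j=1}^k\left(\frac{r}{2}-j+1\right)=\frac{kr}{2}+k-\frac{k(k+1)}{2}=\frac{k}{2}(r-k+1).$$

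The argument is entirely elementary, so I do not expect any genuine obstacle. The only point needing a little care is the bookkeeping at the boundary indices $j=r+1$ and $k=r+1$, where one must check that the conventions $\varpi_0=\varpi_{r+1}=0$ are compatible with the uniform closed-form expressions used above; this is routine.
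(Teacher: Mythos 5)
Your proof is correct: the coefficient extraction giving $\rho=\sum_{j}(\tfrac{r}{2}-j+1)\eps_j$, the vanishing of $\langle\rho,\eps_1+\cdots+\eps_{r+1}\rangle$, and the telescoping to $\langle\rho,\c{\varpi}_k\rangle=\tfrac{k}{2}(r-k+1)$ all check out, including the boundary cases $k=1$ and $k=r+1$ under the convention $\varpi_0=\varpi_{r+1}=0$. The paper gives no proof (the verifications are explicitly left to the reader), and your computation is exactly the routine verification from the explicit $A_r$ data in Appendix~\ref{sec:rootTypeA} that the authors intend.
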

The proof of the lemma is easy. The verifications are left to the reader.

\begin{lemma}\label{Lem2:main1}
For some polynomial $T_m \in \C[X]$ of degree $m$,  
$$
\sum_{ \sub{\alpha} \in (\Pi_{\delta_k})^m}  
\prod_{i=1}^m \langle \delta_k, \c{\alpha}^{(i)} \rangle 
\langle \rho,\c{\varpi}_{\alpha^{(i)}} \rangle =T_m(k), 
\qquad \forall \, k =1,\ldots,r+1.$$
Moreover, the leading term of $T_m$ is $(-X)^m$. 
\end{lemma}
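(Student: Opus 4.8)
The plan is to analyse the sum $S_m(k) := \sum_{\sub{\alpha} \in (\Pi_{\delta_k})^m} \prod_{i=1}^m \langle \delta_k, \c{\alpha}^{(i)} \rangle \langle \rho, \c{\varpi}_{\alpha^{(i)}} \rangle$ by first noticing that it factorises. Since the terms in the product are indexed independently over $i=1,\ldots,m$, we have
\[
S_m(k) = \Bigl( \sum_{\beta \in \Pi_{\delta_k}} \langle \delta_k, \c{\beta} \rangle \langle \rho, \c{\varpi}_\beta \rangle \Bigr)^{\!m}.
\]
So the whole problem reduces to understanding the single ``one-step'' quantity $L(k) := \sum_{\beta \in \Pi_{\delta_k}} \langle \delta_k, \c{\beta} \rangle \langle \rho, \c{\varpi}_\beta \rangle$ as a function of $k$, and then $S_m(k) = L(k)^m$, so I just need $L$ to be a polynomial in $k$ of degree $1$ with leading coefficient $-1$.

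The computation of $L(k)$ splits into the interior case and the two boundary cases, using the explicit description of $\Pi_{\delta_k}$ recalled just before the lemma: $\Pi_{\delta_1} = \{\beta_1\}$, $\Pi_{\delta_{r+1}} = \{\beta_r\}$, and $\Pi_{\delta_k} = \{\beta_{k-1}, \beta_k\}$ for $2 \le k \le r$. For the interior case I would use $\langle \delta_k, \c{\beta}_{k-1}\rangle = -1$ and $\langle \delta_k, \c{\beta}_k \rangle = 1$ (read off from the crystal graph / the weights $\delta_k$ of the standard representation in Appendix~\ref{sec:rootTypeA}), together with Lemma~\ref{Lem1:main1}(2), which gives $\langle \rho, \c{\varpi}_{k-1}\rangle = \tfrac{k-1}{2}(r-k+2)$ and $\langle \rho, \c{\varpi}_k \rangle = \tfrac{k}{2}(r-k+1)$. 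Then
\[
L(k) = -\langle \rho, \c{\varpi}_{k-1}\rangle + \langle \rho, \c{\varpi}_k \rangle = \langle \rho, \c{\varpi}_k - \c{\varpi}_{k-1} \rangle = \tfrac{r}{2} - k + 1,
\]
again by Lemma~\ref{Lem1:main1}(2). This is visibly affine in $k$ with leading coefficient $-1$. For $k=1$ one has $L(1) = \langle \delta_1, \c{\beta}_1\rangle \langle \rho, \c{\varpi}_1\rangle = \langle \rho, \c{\varpi}_1\rangle = \tfrac{r}{2}$, which is exactly the value $\tfrac{r}{2} - k + 1$ at $k=1$ (consistent with the convention $\varpi_0 = 0$); similarly $L(r+1) = -\langle \rho, \c{\varpi}_r\rangle = -\tfrac{r}{2}$, matching the formula at $k=r+1$ (convention $\varpi_{r+1}=0$). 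So the single polynomial $T_1(X) := \tfrac{r}{2} - X + 1$ works uniformly, and setting $T_m := T_1^m$ gives a polynomial of degree $m$ with leading term $(-X)^m$, as claimed.

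The only mild subtlety — and the step I would be most careful about — is checking that the boundary values $k=1$ and $k=r+1$ are genuinely produced by the \emph{same} polynomial as the interior ones, rather than being exceptions; this is where the $\varpi_0 = \varpi_{r+1} = 0$ conventions in Lemma~\ref{Lem1:main1} do the work, and it is worth spelling out that $\langle \delta_1, \c\beta_1\rangle = 1$ and $\langle \delta_{r+1}, \c\beta_r \rangle = -1$ so the signs come out right. Everything else is the elementary evaluation of the pairings in Lemma~\ref{Lem1:main1}, which the paper has already stated, so the proof is short once the factorisation $S_m(k) = L(k)^m$ is observed.
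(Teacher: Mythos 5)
Your proof is correct and follows essentially the same route as the paper: the paper's binomial-expansion step is just the expanded form of your factorisation $S_m(k)=L(k)^m$, and both arguments reduce to $\langle\rho,\c{\varpi}_k-\c{\varpi}_{k-1}\rangle=\tfrac r2-k+1$ via Lemma~\ref{Lem1:main1}, with the same boundary checks at $k=1$ and $k=r+1$ yielding $T_m(X)=(\tfrac r2-X+1)^m$.
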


\begin{proof} 
Assume first that $k\in \{2,\ldots,r\}$. 
Then 
by Lemma \ref{Lem1:main1}, 
\begin{align*}
\sum_{ \sub{\alpha} \in (\Pi_{\delta_k})^m}  
\prod_{i=1}^m \langle \delta_k, \c{\alpha}^{(i)} \rangle 
\langle \rho,\c{\varpi}_{\alpha^{(i)}} \rangle 
&= \sum_{i=0}^m \begin{pmatrix} m \\
i 
\end{pmatrix} (-1)^{i} 
\langle \rho,\c{\varpi}_{k-1} \rangle^{i} 
\langle \rho,\c{\varpi}_{k} \rangle^{m-i}
=(- \langle \rho,\c{\varpi}_{k-1} \rangle 
+\langle \rho,\c{\varpi}_{k} \rangle)^{m} \\
&= (\langle \rho,\c{\varpi}_{k} - \c{\varpi}_{k-1} \rangle )^{m}  
= \left(\frac{r}{2} -k +1 \right)^m .
\end{align*}

If $k=1$, then 
by Lemma \ref{Lem1:main1},  
 \begin{align*}
\sum_{ \sub{\alpha} \in (\Pi_{\delta_1})^m}  
\prod_{i=1}^m \langle \delta_1, \c{\alpha}^{(i)} \rangle 
\langle \rho,\c{\varpi}_{\alpha^{(i)}} \rangle 
&= \langle \rho,\c{\varpi}_{1} \rangle^{m} 
=\left(\frac{r}{2} \right)^m=
 \left(\frac{r}{2} -k +1\right)^m.
\end{align*}

If $k=r+1$, then 
by Lemma \ref{Lem1:main1},  
 \begin{align*}
\sum_{ \sub{\alpha} \in (\Pi_{\delta_r})^m}  
\prod_{i=1}^m \langle \delta_r, \c{\alpha}^{(i)} \rangle 
\langle \rho,\c{\varpi}_{\alpha^{(i)}} \rangle 
&= (-1)^m \langle \rho,\c{\varpi}_{r} \rangle^{m} 
=(-1)^m \left( \frac{r}{2}  \right)^m 
= \left(\frac{r}{2} - k +1\right)^m.
\end{align*}

Hence, setting $T_m (X):= (\frac{r}{2} -X +1)^m$ 
we get the statement.  \qed
\end{proof}

We are now in a position to prove Theorem \ref{theorem:main1}  for $\g=\sl_{r+1}$.
\begin{proof}
[Proof of Theorem \ref{theorem:main1} for $\g=\sl_{r+1}$] 

Let $m \in \{1,\ldots,r\}$. 
By Lemma \ref{Lem2:main1} and \eqref{eq:main1}, 
we have for any $k \in \{1,\ldots,r\}$, 
\begin{align*}
\ev_\rho(\overline{\d p}_{m,k} ) & =
T_m(k) - T_m(k+1) = 
 m(-k)^{m-1} +\sum_{i=0}^{m-2} 
\begin{pmatrix} 
m \\ i 
\end{pmatrix} (-k)^{i}  \left(\left( \frac{r}{2}+1\right)^{m-i} - \left(\frac{r}{2}\right)^{m-i}\right).
 \end{align*}
 Hence, by setting
\begin{align} \label{eq:Qi-sl}
\bar{Q}_m(X):=m(-X)^{m-1} +\sum_{i=0}^{m-2} 
\begin{pmatrix} m \\
i 
\end{pmatrix} (-X)^{i} \left(\left(\frac{r}{2}+1 \right)^{m-i} -\left(\frac{r}{2}\right)^{m-i} \right),
\end{align}
 we get 
\begin{align*}
\ev_\rho (\overline{\d p}_{m}  ) 
=\ev_\rho \left(\frac{1}{m!}\sum_{k=1}^{r} \overline{\d p}_{m,k} \otimes \s{\varpi_k} \right)
=\frac{1}{m!}\sum_{k=1}^{r}\ev_\rho (\overline{\d p}_{m,k}) \check{\varpi}_k 
=\frac{1}{m!} \sum_{k=1}^{r}  \bar{Q}_m(k) 
\c{\varpi}_k. 
\end{align*} 
Moreover, $\bar{Q}_1= 1$ and $\bar{Q}_m(X)$ is a polynomial of degree $m-1$.\qed
\end{proof}

The rest of the section is devoted to the proof of Theorem \ref{theorem:main2} for $\g=\sl_{r+1}$. 
We first establish some reduction results 
in order to show Theorem \ref{corollary:cut}. 

\begin{lemma}     \label{Lem2:cut-bis}
Let $m \in \Z_{> 0}$, $(\mu,\nu) \in P(\delta)^2$,  $\gamma \in \Delta_+$ 
and $(\sub{\mu},\sub{\alpha}) \in \hat{\P}_m(\mu,\nu)$ such that $\he(\sub{\mu})_i \geqslant 0$ for any $i$.  
Assume that $\gamma = \mu' -\nu'$, with $\mu',\nu' \in P(\delta)$, 
and that $\nu' \prec \mu^{(i)}$ for all 
$i\in\{1,\ldots,m+1\}$.
Note that $\mu' \succ \nu'$ since $\gamma \in \Delta_+$. 
\begin{enumerate}
\item Let $i\in\{1,\ldots,m+1\}$. If $\he(\sub{\mu})_i >0$ then 
either $\alpha^{(i)} - \gamma \not \in \Delta$ or 
$\alpha^{(i)} - \gamma \in - \Delta_+$. 
Moreover, if $\alpha^{(i)} - \gamma \in - \Delta_+$, then 
$(\sub{\mu},\sub{\alpha})$ and $\gamma' := \gamma -\alpha^{(i)}$ 
still satisfy the above conditions 
with $\gamma'$ in place of $\gamma$. 
\item For all $u \in U(\g)$, we have 
$\hc(b_{\sub{\mu},\sub{\alpha}}^*  e_{-\gamma} u) =0.$
\end{enumerate}
\end{lemma}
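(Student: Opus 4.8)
The plan is to carry out everything explicitly in the standard realization of $\g=\sl_{r+1}$. There $P(\delta)=\{\delta_1,\dots,\delta_{r+1}\}$, $\delta_p-\delta_q=\eps_p-\eps_q$ for $p\neq q$, so that $\delta_p\succ\delta_q$ if and only if $p<q$, and $\delta_p-\delta_q=\delta_{p'}-\delta_{q'}$ forces $(p,q)=(p',q')$. Write $\mu^{(j)}=\delta_{a_j}$ for the vertices of the path and $\mu'=\delta_c$, $\nu'=\delta_d$, so that $c<d$ and $\gamma=\eps_c-\eps_d$; the standing hypothesis $\nu'\prec\mu^{(j)}$ then reads $a_j<d$ for every $j$.

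For part~(1) I would expand $\alpha^{(i)}-\gamma=\eps_{a_i}-\eps_{a_{i+1}}-\eps_c+\eps_d$ and ask when it lies in $\Delta=\{\,\eps_s-\eps_t\mid s\neq t\,\}$. Using $a_i\neq a_{i+1}$ (because $\he(\sub{\mu})_i>0$), $a_{i+1}<d$ and $c<d$, the only way the expression can be a root is $a_i=c$, and then $\alpha^{(i)}-\gamma=\eps_d-\eps_{a_{i+1}}\in-\Delta_+$; this is the stated dichotomy. In that case one further gets $\gamma'=\gamma-\alpha^{(i)}=\eps_{a_{i+1}}-\eps_d=\mu^{(i+1)}-\nu'$, which lies in $\Delta_+$, is the difference of the two weights $\mu^{(i+1)},\nu'\in P(\delta)$, and still satisfies $\nu'\prec\mu^{(j)}$ for all $j$; as the path is untouched, $(\sub{\mu},\sub{\alpha})$ together with $\gamma'$ meets the hypotheses of the lemma.

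For part~(2) I would induct on $m$. Since $b_{\sub{\mu},\sub{\alpha}}^*$ is the scalar $a_{\sub{\mu},\sub{\alpha}}$ times the product $x_1\cdots x_m$ with $x_j:=b_{(\sub{\mu},\sub{\alpha}),j}^*$, it suffices to treat $x_1\cdots x_m$, and the hypothesis $\he(\sub{\mu})_j\geqslant 0$ forces each $x_j$ to be a positive root vector $e_{\alpha^{(j)}}$ (if $\he(\sub{\mu})_j>0$) or an element of $\h$ (if $\he(\sub{\mu})_j=0$). The base case $m=0$ is immediate because $e_{-\gamma}u\in\n_-U(\g)\subset\ker\hc$. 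For the inductive step I push $e_{-\gamma}$ to the left past $x_m$: if $x_m\in\h$ then $x_me_{-\gamma}=e_{-\gamma}(x_m-\gamma(x_m))$; if $x_m=e_{\alpha^{(m)}}$ then $x_me_{-\gamma}=e_{-\gamma}e_{\alpha^{(m)}}+[e_{\alpha^{(m)}},e_{-\gamma}]$, where $\alpha^{(m)}=\gamma$ cannot happen (it would force $\mu^{(m)}=\mu'$ and $\mu^{(m+1)}=\nu'$, contrary to $\nu'\prec\mu^{(m+1)}$), so by part~(1) the bracket is $0$ or a multiple of $e_{-\gamma'}$ with $\gamma'=\gamma-\alpha^{(m)}\in\Delta_+$. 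Thus $x_1\cdots x_m\,e_{-\gamma}\,u$ becomes a linear combination of terms $(x_1\cdots x_{m-1})\,e_{-\delta}\,u'$ with $\delta\in\{\gamma,\gamma'\}$ and $u'\in U(\g)$. Finally $x_1\cdots x_{m-1}$ is precisely the analogous product for the truncated path $(\sub{\mu}',\sub{\alpha}')\in\hat{\P}_{m-1}(\mu,\mu^{(m)})$, which again has all heights $\geqslant 0$ and satisfies the hypotheses relative to $\gamma$ (trivially) and relative to $\gamma'$ (by the second assertion of part~(1), since $\gamma'=\mu^{(m+1)}-\nu'$ and $\nu'\prec\mu^{(j)}$ for $j\leqslant m$); the induction hypothesis annihilates each term, whence $\hc(b_{\sub{\mu},\sub{\alpha}}^*e_{-\gamma}u)=0$.

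The only genuine obstacle is the point in the inductive step where the commutator $[e_{\alpha^{(m)}},e_{-\gamma}]$ could a priori be a Cartan element rather than a negative root vector: that is exactly the case $\alpha^{(m)}=\gamma$, and it is precisely this configuration that the hypothesis $\nu'\prec\mu^{(i)}$ for all $i$ — which forces the lower weight $\nu'$ in $\gamma=\mu'-\nu'$ to lie strictly below every vertex of the path — rules out. The remaining work is bookkeeping: confirming that the dichotomy in part~(1) has no gap (no positive-root bracket can survive), and that replacing the path by its truncation and $\gamma$ by $\gamma'$ preserves every hypothesis, so that the induction closes.
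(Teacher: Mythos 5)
Your proof is correct and follows essentially the same route as the paper: part (1) by the explicit $\eps$-coordinate analysis showing the only possible cancellation is $a_i=c$ (the paper's $j_i=j$), giving a negative root and a new $\gamma'$ still satisfying the hypotheses, and part (2) by induction on $m$, stripping the last factor $b^*_{(\sub{\mu},\sub{\alpha}),m}$ and commuting $e_{-\gamma}$ past it, with the Cartan-element danger $\alpha^{(m)}=\gamma$ ruled out exactly as in the paper (via $a_{m+1}<d$, i.e.\ $k_i<k$). Your explicit treatment of that excluded case and the base case $m=0$ are only cosmetic differences from the paper's argument.
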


\begin{proof}
(1) Write $\gamma= \eps_j-\eps_k$, with $j < k$. 
The hypothesis says that for all $i \in \{1,\ldots,m\}$ 
such that $\he(\sub{\mu})_i>0$ then 
$\alpha^{(i)} = \eps_{j_i} -\eps_{k_{i}}$ 
with $j_i < k_{i} < k$. 
Hence 
$$\alpha^{(i)} - \gamma=  \eps_{j_i} - \eps_{k_{i}} - \eps_j + \eps_k$$
is a root if and only if $j_i = j$. 
If it is so, then 
$\alpha^{(i)} - \gamma=\eps_k - \eps_{k_{i}}$ is 
a negative root since $k > k_{i}$.  
Moreover, $\gamma':=\eps_{k_{i}} - \eps_k$ still verifies 
the condition of the lemma. 

\noindent
(2) We prove the assertion by induction on $m$. 
Set $a := a_{\mu^{(m)},\mu^{(m+1})}^{(b_{(\sub{\mu},\sub{\alpha}),m})}.$

\noindent
Assume $m=1$. 
If $\he(\sub{\mu})_1 >0$ then by Part 1 either 
$ \alpha^{(1)} - \gamma \not\in \Delta$,  and 
$$\hc(b_{\sub{\mu},\sub{\alpha}}^* e_{-\gamma} u )
=\hc(a e_{\alpha^{(1)}}  e_{-\gamma} u )
=\hc( a e_{-\gamma}e_{\alpha^{(1)}} u) =0,$$
or  $\alpha^{(1)} - \gamma= - \gamma'$, with $\gamma' \in \Delta_+$, 
and 
\begin{align*}\hc(b_{\sub{\mu},\sub{\alpha}}^* e_{-\gamma} u )
=\hc(a e_{\alpha^{(1)}}  e_{-\gamma} u )
=\hc( a e_{-\gamma}e_{\alpha^{(1)}} u) 
+ \hc( a a' e_{- \gamma'} u) =0,
\end{align*}
where $a' \in \C$. 

If $\he(\sub{\mu})_1 =0$, then 
\begin{align*}
\hc(b_{\sub{\mu},\sub{\alpha}}^* e_{-\gamma} u ) 
 = \hc(a \c{\varpi}_{\alpha^{(1)}} e_{-\gamma} u ) 
 = \hc(a e_{-\gamma} \c{\varpi}_{\alpha^{(1)}} u 
- a \langle \gamma, \c{\varpi}_{\alpha^{(1)}}  \rangle e_{-\gamma} u) =0.
\end{align*} 
In both cases, we obtain the statement. 

\noindent
Let $m \geqslant 2$ and assume the statement true 
for any $m' \in \{1,\ldots,m-1\}$. Write 
$(\sub{\mu},\sub{\alpha}) = (\sub{\mu}',\sub{\alpha}') \star ((\mu^{(m)},\mu^{(m+1)}), \alpha^{(m)})$, 
where $(\sub{\mu}',\sub{\alpha}')$ has length $m-1$. 
Note that the weighted $(\sub{\mu}',\sub{\alpha}')$ 
and $\gamma$ satisfy the 
conditions of the lemma. 

There are two cases: 

\noindent
$\ast$ $\he(\sub{\mu})_m > 0$. 
By Part 1 either $\alpha^{(m)} -\gamma \not \in\Delta$, then 
by induction hypothesis, we get  
\begin{align*}\hc(b_{\sub{\mu},\sub{\alpha}}^* e_{-\gamma} u )  
= \hc(a b_{\sub{\mu}',\sub{\alpha}'}^* e_{\alpha^{(m)}}  e_{-\gamma} u )  
= \hc(a b_{\sub{\mu}',\sub{\alpha}'}^* e_{-\gamma} e_{\alpha^{(m)}} u )
=0,
\end{align*}
or $\alpha^{(m)} -\gamma = -\gamma'$ with $\gamma' \in \Delta_+$, 
and by induction, 
\begin{align*}
& \hc(b_{\sub{\mu},\sub{\alpha}}^* e_{-\gamma} u )  
= \hc(a b_{\sub{\mu}',\sub{\alpha}'}^* e_{\alpha^{(m)}}  e_{-\gamma} u ) 
= \hc(a b_{\sub{\mu}',\sub{\alpha}'}^* e_{-\gamma} e_{\alpha^{(m)}} u ) 
+ \hc(aa' b_{\sub{\mu}',\sub{\alpha}'}^* e_{-\gamma'} u )  
=0,& 
\end{align*}
where $a' \in \C$, since the path $(\sub{\mu}',\sub{\alpha}')$ 
and $\gamma'$ still satisfy the 
conditions of the lemma by (1). 

\noindent
$\ast$ If $\he(\sub{\mu})_m = 0$, then 
by induction hypothesis, we get
\begin{align*} 
\hc(b_{\sub{\mu},\sub{\alpha}}^* e_{-\gamma} u ) 
& = \hc(a b_{\sub{\mu}',\sub{\alpha}'}^* \c{\varpi}_{\alpha^{(m)}} 
e_{-\gamma} u) = \hc(a b_{\sub{\mu}',\sub{\alpha}'}^* 
e_{-\gamma} \c{\varpi}_{\alpha^{(m)}} u   
-  a \langle \gamma, \c{\varpi}_{\alpha^{(m)}} \rangle 
b_{\sub{\mu}',\sub{\alpha}'}^* e_{-\gamma} u )=0,
 \end{align*} 
 whence the statement. \qed
\end{proof}

\begin{lemma}     \label{Lem0:cut}
Let $\mu \in P(\delta)$, $m \in \Z_{> 1}$,  
$\sub{i} \in {\Z}^m_{\succ \sub{0}}$ and 
$(\sub{\mu},\sub{\alpha}) \in \hat{\P}_m(\mu)_{\sub{i}}$.   
Set $p:=p(\sub{i})$ and $q:=q(\sub{i})$.   
\begin{enumerate}
\item Assume $p=q$ and $\alpha^{(p-1)}+\alpha^{(p)} \neq  0$. 
Then $\wt(\sub{\mu},\sub{\alpha})^{\# p}.$
\item Assume $p  = q$ and $\alpha^{(p-1)}+\alpha^{(p)} = 0$.  
\begin{enumerate} 
\item If $ i_1=\cdots=i_{p-2}=0$, or if $p=2$, then 
$\wt(\sub{\mu},\sub{\alpha})= \he(\c{\alpha}^{(p-1)}) 
\,\wt(\sub{\mu},\sub{\alpha})^{\# p}.$
\item Otherwise, 
$\wt(\sub{\mu},\sub{\alpha})=
(\he(\c{\alpha}^{(p-1)}) +1) 
\wt(\sub{\mu},\sub{\alpha})^{\# p}.$
\end{enumerate}
\item Assume $p < q$. 
Then $i_p=0$ and $\alpha^{(p)} \in \Pi_{\mu^{(p)}}
=\{\beta \in \Pi \; | \; 
\langle \mu^{(p)},\c{\beta} \rangle\not=0\}$.  
\begin{enumerate} 
\item If $\langle \mu^{(p)},\c{\alpha}^{(p)} \rangle =1$, then 
$\wt(\sub{\mu},\sub{\alpha})=   
\langle \rho, \c{\varpi}_{\alpha^{(p)}} \rangle 
\wt(\sub{\mu},\sub{\alpha})^{\# p}.$
\item If $\langle \mu^{(p)},\c{\alpha}^{(p)} \rangle =-1$, then  
$\wt(\sub{\mu},\sub{\alpha})=  
(-\langle \rho, \c{\varpi}_{\alpha^{(p)}}  \rangle 
+ 1)
\wt(\sub{\mu},\sub{\alpha})^{\# p}.$
\end{enumerate}
\end{enumerate}
\end{lemma}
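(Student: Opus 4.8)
The plan is to evaluate $\wt(\sub\mu,\sub\alpha)=(\ev_\rho\circ\hc)(b_{\sub\mu,\sub\alpha}^*)$ directly from the factorization $b_{\sub\mu,\sub\alpha}^*=a_{\sub\mu,\sub\alpha}\,b_{(\sub\mu,\sub\alpha),1}^*\cdots b_{(\sub\mu,\sub\alpha),m}^*$ of~\eqref{eq:vector}, by working with the letters around the index $p=p(\sub i)$, reordering them inside $U(\g)$, and discarding whatever $\hc$ annihilates. The structural input that makes this work is that $\mu^{(p)}$ is the first turning back: the definitions of $p(\sub i)$ and $q(\sub i)$ force $i_1,\dots,i_{p-1}\geqslant 0$ with $i_{p-1}>0$, hence the prefix $(\sub\mu',\sub\alpha'):=\big((\mu^{(1)},\dots,\mu^{(p-1)}),(\alpha^{(1)},\dots,\alpha^{(p-2)})\big)$ is a weighted path all of whose heights are $\geqslant 0$, the product $b_{(\sub\mu,\sub\alpha),1}^*\cdots b_{(\sub\mu,\sub\alpha),p-2}^*$ is a scalar multiple of $b_{\sub\mu',\sub\alpha'}^*$, and $\mu^{(p)}\prec\mu^{(i)}$ for all $i\leqslant p-1$. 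This is exactly the situation of Lemma~\ref{Lem2:cut-bis}(2): any term of the shape $b_{\sub\mu',\sub\alpha'}^*\,e_{-\gamma}\,u$ with $\gamma=\mu''-\mu^{(p)}\in\Delta_+$ dies under $\hc$.

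The first step is to handle the scalar $a_{\sub\mu,\sub\alpha}$. In cases~(2) and~(3), Lemma~\ref{Lem:a} gives $a_{\sub\mu,\sub\alpha}=(c_{\alpha^{(p-1)}})^2\,a_{(\sub\mu,\sub\alpha)^{\#p}}$ and $a_{\sub\mu,\sub\alpha}=\langle\mu^{(p)},\c\alpha^{(p)}\rangle\,a_{(\sub\mu,\sub\alpha)^{\#p}}$. Case~(1) is not covered by Lemma~\ref{Lem:a}, so there I would compute in the minuscule module $V(\delta)$: writing $e_{-\gamma'}$, $\gamma':=\alpha^{(p-1)}+\alpha^{(p)}$, through the bracket $[e_{-\alpha^{(p-1)}},e_{\gamma}]$ and reading off $v$-coefficients yields $N_{\alpha^{(p-1)},-\gamma}\,a_{\sub\mu,\sub\alpha}=a_{(\sub\mu,\sub\alpha)^{\#p}}$, where $N$ denotes the Chevalley structure constants and the scalars $c_\alpha$ enter only in combinations absorbed by the normalisations fixed in the appendices. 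If $a_{\sub\mu,\sub\alpha}=0$ the identity is vacuous, the same vanishing prefix factor forcing $a_{(\sub\mu,\sub\alpha)^{\#p}}=0$; so I assume $a_{(\sub\mu,\sub\alpha)^{\#p}}\neq0$.

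The second step is the reordering, case by case. In case~(1): $e_{\alpha^{(p-1)}}e_{-\gamma}=e_{-\gamma}e_{\alpha^{(p-1)}}+N_{\alpha^{(p-1)},-\gamma}\,e_{\gamma'}$ with $\gamma'$ a nonzero root by Remark~\ref{Rem:difference}; the first summand is killed by Lemma~\ref{Lem2:cut-bis}(2), the second equals $b_{(\sub\mu,\sub\alpha)^{\#p}}^*$ after the scalar cancellation above, so $\wt(\sub\mu,\sub\alpha)=\wt\big((\sub\mu,\sub\alpha)^{\#p}\big)$. In case~(2): $e_{\alpha^{(p-1)}}e_{-\alpha^{(p-1)}}=e_{-\alpha^{(p-1)}}e_{\alpha^{(p-1)}}+\c\alpha^{(p-1)}$; the first summand dies, and pushing the Cartan letter $\c\alpha^{(p-1)}$ through the suffix (of weight $\mu^{(p-1)}-\mu^{(1)}$) and using multiplicativity of $\hc$ on $U(\g)^\h$ and of $\ev_\rho$ on $S(\h)$ gives $\wt(\sub\mu,\sub\alpha)=(c_{\alpha^{(p-1)}})^2\big(\langle\rho,\c\alpha^{(p-1)}\rangle+\langle\mu^{(p-1)}-\mu^{(1)},\c\alpha^{(p-1)}\rangle\big)\wt\big((\sub\mu,\sub\alpha)^{\#p}\big)$. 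In case~(3): the letter $b_{(\sub\mu,\sub\alpha),p}^*$ already lies in $\h$; pushing it through the prefix (of weight $\mu^{(1)}-\mu^{(p)}$) gives $\wt(\sub\mu,\sub\alpha)=\langle\mu^{(p)},\c\alpha^{(p)}\rangle\big(\langle\rho,\c\varpi_{\alpha^{(p)}}\rangle-\langle\mu^{(1)}-\mu^{(p)},\c\varpi_{\alpha^{(p)}}\rangle\big)\wt\big((\sub\mu,\sub\alpha)^{\#p}\big)$.

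It remains to match these prefactors with the statement, which I would do using $\langle\rho,\c\alpha\rangle=\he(\c\alpha)$, the appendix normalisation (so that the $c_\alpha$ contribute trivially), and the explicit crystal $\mathscr{C}(\delta)$ of $\delta=\varpi_1$: if $i_1=\cdots=i_{p-2}=0$ or $p=2$ then $\mu^{(p-1)}=\mu^{(1)}$ and the correction in case~(2) vanishes, whereas otherwise a direct computation in $\mathscr{C}(\delta)$ gives $\langle\mu^{(1)}-\mu^{(p-1)},\c\alpha^{(p-1)}\rangle=-1$, which produces the $+1$ of case~(2)(b); likewise $\langle\mu^{(1)}-\mu^{(p)},\c\varpi_{\alpha^{(p)}}\rangle$ is $0$ or $1$ according as $\langle\mu^{(p)},\c\alpha^{(p)}\rangle$ is $1$ or $-1$, giving cases~(3)(a) and~(3)(b). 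The hardest part, I expect, is precisely this bookkeeping — keeping the structure constants $N_{\alpha,\beta}$, the scalars $c_\alpha$ and the weight-lattice pairings coherent so that the corrections in cases~(2)(b) and~(3)(b) come out with the right integer values — together with checking, in each case, that the discarded summand genuinely lies in $\ker\hc$ through Lemma~\ref{Lem2:cut-bis}, which is where the first-turning-back property of $p(\sub i)$ is indispensable.
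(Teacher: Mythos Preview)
Your argument is correct and follows the same strategy as the paper: factor the word around position $p$, reorder via the bracket, kill the term starting with $e_{\alpha^{(p)}}$ using Lemma~\ref{Lem2:cut-bis}(2), and read off the surviving scalar. The only cosmetic difference is that where the paper locates the single non-commuting letter in the prefix (the unique $\alpha^{(s)}$ with $\langle\alpha^{(s)},\c\alpha^{(p-1)}\rangle\neq0$ in case~(2)(b), or $\alpha^{(p-1)}$ alone in case~(3)), you package the same commutator calculus as a single pairing with the total weight $\mu^{(1)}-\mu^{(p-1)}$ or $\mu^{(1)}-\mu^{(p)}$; since all other letters in the prefix have support disjoint from $\alpha^{(p)}$ (respectively pair to zero with $\c\alpha^{(p-1)}$), the two computations coincide, and in type~$A$ the structure constants $n_{\alpha,\beta}$, $c_\alpha$, $a^{(e)}_{\lambda,\mu}$ are all $1$ so your scalar bookkeeping collapses exactly as you anticipate.
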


\begin{proof}
(1) Note that $\alpha^{(p-1)}+\alpha^{(p)} \neq  0$ implies $i_{p-1} + i_p \neq 0$. Write 
$$(\sub{\mu},\sub{\alpha}) = (\sub{\mu}',\sub{\alpha}') \star 
((\mu^{(p-1)},\mu^{(p)}),\alpha^{(p-1)}) 
\star ((\mu^{(p)},\mu^{(p+1)}),\alpha^{(p)}) 
\star (\sub{\mu}'',\sub{\alpha}''),$$
where $(\sub{\mu}',\sub{\alpha}')$  
and $ (\sub{\mu}'',\sub{\alpha}'')$ have length $p-2$ and $m - p$, respectively.  

\begin{center}
\begin{tikzpicture} [scale=0.7]
 \draw[dashed,
        decoration={markings, mark=at position 0.5 with {\arrow{>}}},
        postaction={decorate}
        ]
        (0,0) -- (3,0);\node at (1.5,0.3) {{\scriptsize $\sub{\alpha}'$}}; 
  \draw[ thick,
        decoration={markings, mark=at position 0.5 with {\arrow{>}}},
        postaction={decorate}
        ]
        (3,0) -- (7,0); \node at (5,0.3) {{\scriptsize ${\alpha}^{(p-1)}$}};
  \draw[ thick,
        decoration={markings, mark=at position 0.5 with {\arrow{>}}},
        postaction={decorate}
        ]
        (7,-0.15) -- (5,-0.15); \node at (6,-0.45) {{\scriptsize ${\alpha}^{(p)}$}};
   \draw[ dashed,
        decoration={markings, mark=at position 0.5 with {\arrow{>}}},
        postaction={decorate}
        ]
        (5,-0.15) -- (2,-0.15);\node at (3.5,-0.45) {{\scriptsize $\sub{\alpha}''$}};
 \fill (3,0) circle (0.07)node(xline)[above] {{\footnotesize $\mu^{(p-1)}$}}; 
    \fill (7,0) circle (0.07); \node at (8.3,0) {{\footnotesize $\mu^{(p)} = \mu^{(q)}$}};
   \fill (5,-0.15) circle (0.07)node(xline)[below] {{\footnotesize $\mu^{(p+1)}$}}; 
\end{tikzpicture}
\end{center}
By \S\ref{sec:rootTypeA},  
$a_{\mu^{(i+1)},\mu^{(i)}}^{(e_{-\alpha^{(i)}})}$,  
$a_{\mu^{(i)},\mu^{(i-1)}}^{(e_{-\alpha^{(i-1)}})}$, 
$a_{\mu^{(i+1)},\mu^{(i-1)}}^{(e_{-\alpha^{(i-1)}-\alpha^{(i)}})}$ 
and $n_{\alpha^{(p-1)},\alpha^{(p)}}$ 
are all equal to $1$, and so 
\begin{align*}
\hc(b_{(\sub{\mu},\sub{\alpha})}^*) & = 
 \hc(b_{(\sub{\mu}',\sub{\alpha}')}^* 
 e_{\alpha^{(p-1)}} e_{\alpha^{(p)}} 
 b_{(\sub{\mu}'',\sub{\alpha}'')}^*) 
 \\
& = 
\hc(b_{(\sub{\mu}',\sub{\alpha}')}^* e_{\alpha^{(p)}} 
e_{\alpha^{(p-1)}} 
 b_{(\sub{\mu}'',\sub{\alpha}'')}^* )
+ \hc(n_{\alpha^{(p-1)},\alpha^{(p)}} 
b_{(\sub{\mu}',\sub{\alpha}')}^*  e_{\alpha^{(p-1)}+\alpha^{(p)}}
b_{(\sub{\mu}'',\sub{\alpha}'')}^* ) = \hc(b_{(\sub{\mu},\sub{\alpha})^{\# p}}^*), 
\end{align*} 
since the weighted path $(\sub{\mu}',\sub{\alpha}')$ 
 and the positive root $\gamma=-\alpha^{(p)}$ verify the conditions of 
 Lemma~\ref{Lem2:cut-bis}, and so 
 $\hc(b_{(\sub{\mu}',\sub{\alpha}')}^* e_{\alpha^{(p)}} 
 e_{\alpha^{(p-1)}} 
 b_{(\sub{\mu}'',\sub{\alpha}'')}^*)=0.$ 
 Hence $$\wt(\sub{\mu},\sub{\alpha})
= \wt(\sub{\mu},\sub{\alpha})^{\# p}.$$
 
\noindent
(2) (a) Assume first that $i_{1} = \cdots =i_{p-2}=0$ 
and $p \not = 2$ (that is, $p>2$). Write 
$$(\sub{\mu},\sub{\alpha}) = (\sub{\mu}',\sub{\alpha}') \star 
((\mu^{(p-1)},\mu^{(p)}),\alpha^{(p-1)}) \star 
((\mu^{(p)},\mu^{(p+1)}),\alpha^{(p)}) 
\star (\sub{\mu}'',\sub{\alpha}'')$$
as in Part 1. 
Here $(\sub{\mu}',\sub{\alpha}')$ is a concatenation of loops. 

\begin{center}
\begin{tikzpicture} [scale=0.85]
 \draw[thick,
        decoration={markings, mark=at position 0.8 with {\arrow{>}}},
        postaction={decorate}
        ]
        (0,0) -- (5,0);\node at (4,0.3) {{\scriptsize ${\alpha}^{(p-1)}$}}; 
  \draw[ thick,
        decoration={markings, mark=at position 0.5 with {\arrow{>}}},
        postaction={decorate}
        ]
        (5,-0.15) -- (0,-0.15); \node at (2.5,-0.45) {{\scriptsize ${\alpha}^{(p)}$}};
 \fill (0,0) circle (0.07);\node at (1.6,0.3) {{\footnotesize $\mu^{(1)}=\ldots=\mu^{(p-1)}$}}; 
  \fill (5,0) circle (0.07);\node at (6,0) {{\footnotesize $\mu^{(p)}=\mu^{(q)}$}};
    \fill (0,-0.15) circle (0.07);\node at (0.5,-0.45) {{\footnotesize $\mu^{(p+1)}$}}; 
\draw[thick] (0,0) to [out=270,in=350] (-0.4,-0.8);
\draw[thick, decoration={markings, mark=at position 0.625 with {\arrow{>}}},
        postaction={decorate}] (-0.4,-0.8) to [out=150,in=180] (0,0);
\node at (-0.6,-1) {{\scriptsize ${\alpha}^{(1)}$}};

\draw[thick] (0,0) to [out=250,in=325] (-0.8,-0.4);
\draw[thick, decoration={markings, mark=at position 0.5 with {\arrow{>}}},
        postaction={decorate}] (-0.8,-0.4) to [out=120,in=150] (0,0);        
\node at (-1.05,-0.55) {{\scriptsize ${\alpha}^{(2)}$}};

\draw[thick] (0,0) to [out=210,in=230] (-0.6,0.6);
\draw[thick, decoration={markings, mark=at position 0.625 with {\arrow{>}}},
        postaction={decorate}] (-0.6,0.6) to [out=30,in=90] (0,0);
\node at (-0.8,0.8) {{\scriptsize ${\alpha}^{(p-2)}$}};
\draw[thick, dotted] (-0.65,-0.15) to  (-0.65,0.25);
\end{tikzpicture}
\end{center}
\noindent
In particular ${b}_{\sub{\mu}',\sub{\alpha}'}^*$ is in $S(\h)$. 
Note that $a_{\mu^{(p+1)},\mu^{(p)}}^{(e_{-\alpha^{(p)}})}  = 
a_{\mu^{(p)},\mu^{(p-1)}}^{(e_{-\alpha^{(p-1)}})}
=1$, and so
\begin{align*}  
\hc({b}_{\sub{\mu},\sub{\alpha}}^*) = 
\hc(b_{(\sub{\mu}',\sub{\alpha}')}^* 
 e_{\alpha^{(p-1)}} e_{-\alpha^{(p-1)}} 
 b_{(\sub{\mu}'',\sub{\alpha}'')}^*)
 = \hc(b_{(\sub{\mu}',\sub{\alpha}')}^* 
 \c{\alpha}^{(p-1)} b_{(\sub{\mu}'',\sub{\alpha}'')}^*) 
 = \c{\alpha}^{(p-1)} \hc(b_{(\sub{\mu},\sub{\alpha})^{\# p}}^*),
 \end{align*}  
since $\alpha^{(p)}=-\alpha^{(p-1)}.$
Since 
$$\ev_\rho(\c{\alpha}^{(p-1)}) = \langle \rho, \c{\alpha}^{(p-1)} \rangle 
= \he(\c{\alpha}^{(p-1)}),$$
we get the expected equality:  
$$\wt(\sub{\mu},\sub{\alpha})=
\he(\c{\alpha}^{(p-1)})\,\wt(\sub{\mu},\sub{\alpha})^{\# p}.$$
If $p=2$, we have 
\begin{align*}  
b_{\sub{\mu},\sub{\alpha}}^* =
e_{\alpha^{(1)}} e_{-\alpha^{(1)}} b_{\sub{\mu}'',\sub{\alpha}''}^*
=(e_{-\alpha^{(1)}} e_{\alpha^{(1)}} + \c{\alpha}^{(1)}) 
b_{\sub{\mu}'',\sub{\alpha}''}^*, 
 \end{align*} 
where $(\sub{\mu}'',\sub{\alpha}'')$ is a weighted path of length 
$m-2$. Then we conclude as in the first situation. 

\noindent
(b) Assume that we are not in one of the situations of (a). 
Write 
$$(\sub{\mu},\sub{\alpha}) = (\sub{\mu}',\sub{\alpha}') \star 
((\mu^{(p-1)},\mu^{(p)}),\alpha^{(p-1)}) \star ((\mu^{(p)},\mu^{(p+1)}),
\alpha^{(p)}) \star (\sub{\mu}'',\sub{\alpha}'')$$
as in Part 1.
\begin{center}
\begin{tikzpicture}
 \draw[dashed,
        decoration={markings, mark=at position 0.5 with {\arrow{>}}},
        postaction={decorate}
        ]
        (0,0) -- (3,0);\node at (1.5,0.3) {{\footnotesize $\sub{\alpha}'$}}; 
  \draw[ thick,
        decoration={markings, mark=at position 0.5 with {\arrow{>}}},
        postaction={decorate}
        ]
        (3,0) -- (6,0); \node at (4.5,0.3) {{\scriptsize ${\alpha}^{(p-1)}$}};
  \draw[ thick,
        decoration={markings, mark=at position 0.5 with {\arrow{>}}},
        postaction={decorate}
        ]
        (6,-0.15) -- (3,-0.15); \node at (5,-0.45) {{\scriptsize ${\alpha}^{(p)}=-\alpha^{(p-1)}$}};
   \draw[ dashed,
        decoration={markings, mark=at position 0.5 with {\arrow{>}}},
        postaction={decorate}
        ]
        (3,-0.15) -- (0,-0.15);\node at (1.5,-0.45) {{\footnotesize $\sub{\alpha}''$}};
 \fill (3,0) circle (0.07)node(xline)[above] {{\small $\mu^{(p-1)}$}}; 
    \fill (6,0) circle (0.07); \node at (7,0) {{\small $\mu^{(p)} = \mu^{(q)}$}};
   \fill (3,-0.15) circle (0.07) node(xline)[below] {{\small $\mu^{(p+1)}$}}; 
\end{tikzpicture}
\end{center}

Note that 
$a_{\mu^{(p)},\mu^{(p-1)}}^{(-e_{\alpha^{(p-1)}})} 
a_{\mu^{(p+1)},\mu^{(p)}}^{(-e_{\alpha^{(p)}})}=1$.  
We have
\begin{align} \label{eq:b3}
\hc(b_{(\sub{\mu},\sub{\alpha})}^*) & = 
\hc( 
b_{(\sub{\mu}',\sub{\alpha}')}^* 
e_{\alpha^{(p-1)}} e_{\alpha^{(p)}}  b_{(\sub{\mu}'',\sub{\alpha}'')}^*)
= \hc(
 b_{(\sub{\mu}',\sub{\alpha}')}^* \c{\alpha}^{(p-1)} 
 b_{(\sub{\mu}'',\sub{\alpha}'')}^*), 
 \end{align}
 since $\alpha^{(p)}=-\alpha^{(p-1)}.$  
Let $\alpha^{(s)} \in \sub{\alpha}'$ such that $i_s >0$ and $\langle \alpha^{(s)}, \c\alpha^{(p-1)} \rangle \not= 0$. 
Observe that
$$\langle \alpha^{(s)}, \c{\alpha}^{(p-1)} \rangle = \langle \mu^{(s)} - \mu^{(s+1)}, \c{\alpha}^{(p-1)}  \rangle \not= 0$$ 
if and only if $\mu^{(s+1)} = \mu^{(p-1)}$, thus $\alpha^{(s)}$ is unique (see Figure~\ref{fig:beta}). 

For all other roots $\alpha^{(t)} \in \sub{\alpha}'$ with $t \not= s$, if $i_t>0$ then 
$\langle \alpha^{(t)}, \c{\alpha}^{(p-1)} \rangle =0$, and so
$$b_{(\sub{\mu},\sub{\alpha}),t}^* \ \c{\alpha}^{(p-1)} 
= \c{\alpha}^{(p-1)} \ b_{(\sub{\mu},\sub{\alpha}),t}^*.$$ 
Otherwise,  
$b_{(\sub{\mu},\sub{\alpha}),t}^*= \s\varpi_{(\alpha^{(t)})} \in \h$ thus we also get
$b_{(\sub{\mu},\sub{\alpha}),t}^* \ \c{\alpha}^{(p-1)} = \c{\alpha}^{(p-1)} \ b_{(\sub{\mu},\sub{\alpha}),t}^*$. We see that  
$\c{\alpha}^{(p-1)}$ commutes with all roots in $\sub{\alpha}'$, except with $\alpha^{(s)}$.

Write $$(\sub{\mu}',\sub{\alpha}') = (\sub{\mu}'_1,\sub{\alpha}'_1) 
\star ((\mu^{(s)},\mu^{(s+1)}),\alpha^{(s)}) 
\star (\sub{\mu}'_2,\sub{\alpha}'_2),$$ 
where $(\sub{\mu}'_2,\sub{\alpha}'_2)$ is a concatenation of loops 
and $ (\sub{\mu}'_1,\sub{\alpha}'_1)$ has length $s-1$. 
Note that the weighted path $(\sub{\mu}'_1,\sub{\alpha}'_1)$ may be trivial.  
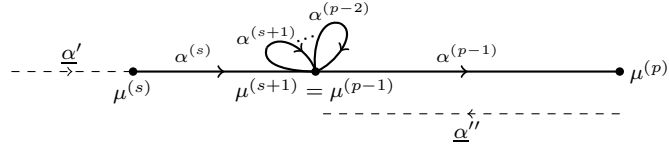
\begin{figure}[h]
\centering
  \begin{tikzpicture} [scale=0.8]

\draw[dashed,
        decoration={markings, mark=at position 0.5 with {\arrow{>}}},
        postaction={decorate}
        ]
        (0,0) -- (2,0);\node at (1,0.3) {{\footnotesize $\sub{\alpha}'$}}; 
  \draw[ thick,
        decoration={markings, mark=at position 0.5 with {\arrow{>}}},
        postaction={decorate}
        ]
        (2,0) -- (5,0); \node at (3,0.3) {{\scriptsize ${\alpha}^{(s)}$}};
   \draw[ thick,
        decoration={markings, mark=at position 0.5 with {\arrow{>}}},
        postaction={decorate}
        ]
        (5,0) to (10,0);\node at (7.5,0.3) {{\scriptsize ${\alpha}^{(p-1)}$}};
   \draw[dashed,
        decoration={markings, mark=at position 0.5 with {\arrow{>}}},
        postaction={decorate}
        ]
        (10,-0.7) -- (5,-0.7);\node at (7.5,-1.0) {{\footnotesize $\sub{\alpha}''$}};
 \fill (2,0) circle (0.07)node(xline)[below] {{\small $\mu^{(s)}$}}; 
    \fill (5,0) circle (0.07); \node at (5,-0.3) {{\small $\mu^{(s+1)} = \mu^{(p-1)}$}};
   \fill (10,0) circle (0.07); \node at (10.5,0) {{\small $\mu^{(p)}$}};
\draw[thick] (5,0) to [out=180,in=250] (4.2,0.4);
 \draw[thick, decoration={markings, mark=at position 0.625 with {\arrow{>}}},
         postaction={decorate}] (4.2,0.4) to [out=50,in=105] (5,0);
 \node at (4.2,0.6) {{\scriptsize ${\alpha}^{(s+1)}$}};
 
 \draw[thick] (5,0) to [out=100,in=165] (5.4,0.8);
 \draw[thick, decoration={markings, mark=at position 0.5 with {\arrow{>}}},
         postaction={decorate}] (5.4,0.8) to [out=335,in=20] (5,0);        
 \node at (5.4,1) {{\scriptsize ${\alpha}^{(p-2)}$}};
\draw[dotted, thick] (4.7,0.5) to (5,0.6);
\end{tikzpicture}
  \caption{Path in case (2) (b) }
  \label{fig:beta}
\end{figure}
\noindent
Note that $a_{\mu^{(s+1)},\mu^{(s)}}^{(-e_{\alpha^{(s)}})}=1$. We get 
\begin{align*} 
& b_{(\sub{\mu}',\sub{\alpha}')}^* \c{\alpha}^{(p-1)}
  b_{(\sub{\mu}'',\sub{\alpha}'')}^* =
  b_{(\sub{\mu}'_1,\sub{\alpha}'_1)}^*  e_{\alpha^{(s)}} 
  b_{(\sub{\mu}'_2,\sub{\alpha}'_2)}^* \c{\alpha}^{(p-1)}
  b_{(\sub{\mu}'',\sub{\alpha}'')}^* & \\
& \quad = \c{\alpha}^{(p-1)}
  b_{(\sub{\mu}'_1,\sub{\alpha}'_1)}^*  
  e_{\alpha^{(s)}}b_{(\sub{\mu}'_2,\sub{\alpha}'_2)}^*
  b_{(\sub{\mu}'',\sub{\alpha}'')}^*
  - \langle \alpha^{(s)}, \c\alpha^{(p-1)} \rangle
  b_{(\sub{\mu}'_1,\sub{\alpha}'_1)}^*  
  e_{\alpha^{(s)}}b_{(\sub{\mu}'_2,\sub{\alpha}'_2)}^*
  b_{(\sub{\mu}'',\sub{\alpha}'')}^*
  = (\c{\alpha}^{(p-1)}+1 )
  b_{(\sub{\mu},\sub{\alpha})^{\#p}}^*,
  \end{align*}
since $\c\alpha^{(p-1)}$ commutes with all roots of $\sub{\alpha}'_1$ and $\sub{\alpha}'_2$,
$\langle \alpha^{(s)}, \c\alpha^{(p-1)} \rangle = -1 $ and $$(\sub{\mu}'_1,\sub{\alpha}'_1) \star ((\mu^{(s)},\mu^{(s+1)}),
\alpha^{(s)}) 
\star (\sub{\mu}'_2,\sub{\alpha}'_2) \star (\sub{\mu}'',\sub{\alpha}'') = (\sub{\mu},\sub{\alpha})^{\# p}.$$ 
Hence,
$$\wt(\sub{\mu},\sub{\alpha})= 
(\he(\c{\alpha}^{(p-1)}) +1)  
\wt(\sub{\mu},\sub{\alpha})^{\# p}.$$
(3) Write 
$$(\sub{\mu},\sub{\alpha}) = (\sub{\mu}',\sub{\alpha}') 
 \star ((\mu^{(p-1)},\mu^{(p)}),\alpha^{(p-1)})  \star 
((\mu^{(p)},\mu^{(p+1)}),\alpha^{(p)}) 
\star (\sub{\mu}'',\sub{\alpha}''),$$
where $(\sub{\mu}',\sub{\alpha}')$  
and $(\sub{\mu}'',\sub{\alpha}'')$ 
have length $p - 2$ and $m - p$, respectively.  

\begin{center} 
\begin{tikzpicture}[scale=0.7]
 \draw[dashed,
        decoration={markings, mark=at position 0.5 with {\arrow{>}}},
        postaction={decorate}
        ]
        (0,0) -- (4,0);\node at (1.5,0.3) {{\footnotesize $\sub{\alpha}'$}}; 
  \draw[ thick,
        decoration={markings, mark=at position 0.5 with {\arrow{>}}},
        postaction={decorate}
        ]
        (4,0) -- (7,0); \node at (6,0.3) {{\footnotesize ${\alpha}^{(p-1)}$}};
\draw[thick] (7,0) to [out=90,in=150] (7.6,0.6);
 \draw[thick, decoration={markings, mark=at position 0.5 with {\arrow{>}}},
         postaction={decorate}] (7.6,0.6) to [out=310,in=330] (7,0);        
 \node at (7.4,0.8) {{\scriptsize ${\alpha}^{(p)}$}};
   \draw[ dashed,
        decoration={markings, mark=at position 0.5 with {\arrow{>}}},
        postaction={decorate}
        ]
        (7,-0.2) -- (3,-0.2);\node at (5,-0.5) {{\footnotesize $\sub{\alpha}''$}};
 \fill (4,0) circle (0.07)node(xline)[above] {{\small $\mu^{(p-1)}$}}; 
    \fill (7,0) circle (0.07); \node at (8.2,-0.2) {{\small $\mu^{(p)} = \mu^{(p+1)}$}}; 
\end{tikzpicture}
\end{center} 
Let ${\rm supp} (\alpha)$ be the {\em support} of $\alpha \in \Delta$, that is, the set of 
$\beta \in \Pi$ such that $\langle \alpha, \c{\varpi}_\beta \rangle \not= 0$.
We have
\begin{align*}
b_{(\sub{\mu},\sub{\alpha})}^*  & =  
a_{\mu^{(p-1)},\mu^{(p)}}^{(-e_{\alpha^{(p-1)}})}
\langle \mu^{(p)}, \c{\alpha}^{(p)} \rangle 
 b_{(\sub{\mu}',\sub{\alpha}')}^*   e_{\alpha^{(p-1)}} 
\c{\varpi}_{\alpha^{(p)}} b_{(\sub{\mu}'',\sub{\alpha}'')}^* 
=\langle \mu^{(p)}, \c{\alpha}^{(p)} \rangle 
(\c{\varpi}_{\alpha^{(p)}} -  
\langle \alpha^{(p-1)} , \c{\varpi}_{\alpha^{(p)}} \rangle ) 
b_{(\sub{\mu},\sub{\alpha})^{\# p}}^*,
\end{align*}
\noindent
since
$ \c{\varpi}_{\alpha^{(p)}}$ 
commutes with all roots in $\sub{\alpha}'$. 
Indeed, the support of $\alpha'^{(j)}$, for 
$j=1,\ldots,p-2$, does not contain the simple root 
$\alpha^{(p)}$. 
  
If $\langle \mu^{(p)}, \c{\alpha}^{(p)} \rangle =1$ 
then $\langle \alpha^{(p-1)}, 
\c{\varpi}_{\alpha^{(p)}} \rangle =0,$ 
and so 
$$\wt(\sub{\mu},\sub{\alpha})=  
\langle \rho , \c{\varpi}_{\alpha^{(p)}} \rangle 
\wt (\sub{\mu},\sub{\alpha})^{\# p}.$$

If $\langle \mu^{(p)}, \c{\alpha}^{(p)}\rangle =-1$ 
then 
$\langle \alpha^{(p-1)} , \c{\varpi}_{\alpha^{(p)}} \rangle =1,$ 
and so
$$\wt(\sub{\mu},\sub{\alpha})= 
(-\langle \rho, \c{\varpi}_{\alpha^{(p)}}  \rangle +1 )
\wt(\sub{\mu},\sub{\alpha})^{\# p}.$$ \qed
\end{proof}

As a direct consequence of Lemma \ref{Lem0:cut}, 
we get the following result. 

\begin{proposition} \label{Pro0:cut}
Let $\mu \in P(\delta)$, $m \in \Z_{>1}$,  
$\sub{i} \in {\Z}^m_{\succ \sub{0}}$ and 
$(\sub{\mu},\sub{\alpha}) \in \hat{\P}_m(\mu)_{\sub{i}}$.  
In particular, $\sum_{j=1}^m i_j =0$. 
Set $p:=p(\sub{i})$ and $q:=q(\sub{i})$. 
Then for some scalar $K_{\sub{\mu},\sub{\alpha}}$, we have:  
\begin{align*}  
\wt(\sub{\mu},\sub{\alpha})
= K_{\sub{\mu},\sub{\alpha}} \wt(\sub{\mu},\sub{\alpha})^{\# p}.
\end{align*}
In particular, $\wt(\sub{\mu},\sub{\alpha})=0$ 
if $ \wt(\sub{\mu},\sub{\alpha})^{\# p}=0$. 
\end{proposition}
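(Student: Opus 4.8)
The plan is to read the result straight off Lemma~\ref{Lem0:cut}, after checking that its three parts cover, exhaustively and mutually exclusively, every configuration that can occur at the vertex $\mu^{(p)}$ sitting at the position of the first turning back.

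First I would unwind the definitions of $p:=p(\sub{i})$ and $q:=q(\sub{i})$. Since $\sub{i}\in\Z^m_{\succ\sub{0}}$ and $\sum_{j=1}^m i_j=0$, the sequence $\sub{i}$ is neither identically zero nor non-negative, so $q(\sub{i})$ is well defined; by maximality of $p$ one then has $i_{p-1}>0$, $i_p=i_{p+1}=\cdots=i_{q-1}=0$ and $i_q<0$, whence $1<p\leqslant q$ (the strict inequality $p>1$ having already been noted, and $p\leqslant q\leqslant m$ so that the cut at $\mu^{(p)}$ makes sense). Consequently exactly one of the following holds: (i)~$p<q$, in which case $\he(\sub{\mu})_p=i_p=0$; (ii)~$p=q$, in which case $\he(\sub{\mu})_{p-1}=i_{p-1}\neq0$ and $\he(\sub{\mu})_p=i_q\neq0$.

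In case (i) the vertex $\mu^{(p)}$ is cut according to situation~(3) of the cutting procedure, and Lemma~\ref{Lem0:cut}(3) gives $\wt(\sub{\mu},\sub{\alpha})=K_{\sub{\mu},\sub{\alpha}}\,\wt(\sub{\mu},\sub{\alpha})^{\# p}$ with $K_{\sub{\mu},\sub{\alpha}}=\langle\rho,\c{\varpi}_{\alpha^{(p)}}\rangle$ or $K_{\sub{\mu},\sub{\alpha}}=-\langle\rho,\c{\varpi}_{\alpha^{(p)}}\rangle+1$, according to the sign of $\langle\mu^{(p)},\c{\alpha}^{(p)}\rangle$. In case (ii) I would split further on whether $\alpha^{(p-1)}+\alpha^{(p)}$ vanishes: if it does not, Lemma~\ref{Lem0:cut}(1) gives $K_{\sub{\mu},\sub{\alpha}}=1$; if it does, Lemma~\ref{Lem0:cut}(2) gives $K_{\sub{\mu},\sub{\alpha}}=\he(\c{\alpha}^{(p-1)})$ when $i_1=\cdots=i_{p-2}=0$ (or $p=2$), and $K_{\sub{\mu},\sub{\alpha}}=\he(\c{\alpha}^{(p-1)})+1$ otherwise. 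Taking $K_{\sub{\mu},\sub{\alpha}}$ to be the scalar produced by the relevant case yields the stated factorisation, and the last assertion ($\wt(\sub{\mu},\sub{\alpha})=0$ whenever $\wt(\sub{\mu},\sub{\alpha})^{\# p}=0$) is then immediate.

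The only genuine work — and the step I expect to be the main, if minor, obstacle — is the bookkeeping of the second paragraph: verifying that the hypotheses of each of the three parts of Lemma~\ref{Lem0:cut} hold exactly in the corresponding configuration (i) or (ii), so that the cut $(\sub{\mu},\sub{\alpha})\mapsto(\sub{\mu},\sub{\alpha})^{\# p}$ is legitimate and no case is overlooked. Everything past that is a transcription of Lemma~\ref{Lem0:cut}.
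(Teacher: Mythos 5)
Your proposal is correct and is exactly the paper's route: the paper states the proposition "as a direct consequence of Lemma~\ref{Lem0:cut}", and your case analysis (if $p<q$ then $i_p=0$ so part (3) applies; if $p=q$ then $i_{p-1}\neq0$, $i_p\neq0$ and parts (1) or (2) apply according to whether $\alpha^{(p-1)}+\alpha^{(p)}$ vanishes) is precisely the intended reading, with $K_{\sub{\mu},\sub{\alpha}}$ the scalar produced in each case.
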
 

Next theorem constitutes an important step towards the proof of Theorem \ref{theorem:main2}.

\begin{theorem}   \label{corollary:cut}
Let $m \in \Z_{> 0}$, $\mu \in P(\delta)$ 
and $(\sub{\mu},\sub{\alpha}) \in \hat{\P}_m(\mu)$. 
Assume that for some $i \in \{ 1,\ldots, m\}$, 
$\mu^{(i)} \succ  \mu$. 
Then $\wt(\sub{\mu},\sub{\alpha}) = 0$. 
\end{theorem}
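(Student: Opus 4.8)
The plan is to proceed by induction on $m$ and to exploit the "cutting" operations together with Proposition~\ref{Pro0:cut}. The hypothesis is that some vertex $\mu^{(i)}$ lies strictly above the base point $\mu$ in the partial order $\succcurlyeq$; equivalently, $\he(\sub{\mu})$ is not $\geqslant 0$ coordinate-wise, so there is a genuine "turning back", and therefore $\sub{i}:=\he(\sub{\mu})$ satisfies $\sub{i}\succ\sub{0}$ for the lexicographic order unless $\he(\sub{\mu})=\sub{0}$ (which is excluded here since a path with $\he(\sub{\mu})=\sub{0}$ never leaves $\mu$). Actually one must be a little careful: if the first nonzero entry of $\he(\sub{\mu})$ were negative, then $\mu^{(2)}\succ\mu$ would fail immediately; instead the relevant statement is that, since $\sum_j \he(\sub{\mu})_j = 0$ and some partial sum is positive, we are exactly in the situation where $p(\sub{i})$ and $q(\sub{i})$ are defined, with $p(\sub{i})>1$, and $\mu^{(p(\sub{i}))}$ is the position of the first turning back. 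The first step is thus to reduce to the case where $\he(\sub{\mu})_i\geqslant 0$ for all $i$ smaller than the first turning-back index, which is automatic from the definition of $p(\sub{i})$.

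First I would treat the base case. For $m=1$ a path $\hat{\P}_1(\mu)$ has $\mu^{(1)}=\mu^{(2)}=\mu$, so $\he(\sub{\mu})_1=0$ and no vertex can be strictly above $\mu$; the hypothesis is vacuous and there is nothing to prove. For the inductive step, given $(\sub{\mu},\sub{\alpha})\in\hat{\P}_m(\mu)$ with some $\mu^{(i)}\succ\mu$, set $\sub{i}=\he(\sub{\mu})$, $p=p(\sub{i})$, $q=q(\sub{i})$. By Proposition~\ref{Pro0:cut} there is a scalar $K_{\sub{\mu},\sub{\alpha}}$ with $\wt(\sub{\mu},\sub{\alpha})=K_{\sub{\mu},\sub{\alpha}}\,\wt(\sub{\mu},\sub{\alpha})^{\#p}$, and the cut path $(\sub{\mu},\sub{\alpha})^{\#p}$ is again a closed weighted path at $\mu$, of strictly smaller length (length $m-1$ in cases (1),(3) and the first subcase of (2), and $m-2$ in case (2)(b) — more precisely, whichever cutting case of Lemma~\ref{Lem0:cut} applies). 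So it suffices to show that $(\sub{\mu},\sub{\alpha})^{\#p}$ still has a vertex strictly above $\mu$, and then invoke the induction hypothesis to get $\wt(\sub{\mu},\sub{\alpha})^{\#p}=0$, hence $\wt(\sub{\mu},\sub{\alpha})=0$.

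The crux is therefore the combinatorial claim: \emph{cutting the vertex $\mu^{(p)}$, the position of the first turning back, does not destroy all vertices that are $\succ\mu$.} I would argue as follows. Since $p$ is the first turning-back position, the vertices $\mu^{(1)}=\mu,\mu^{(2)},\ldots,\mu^{(p-1)},\mu^{(p)}$ form a weakly ascending chain (the partial sums $i_1+\cdots+i_{j-1}$ for $j\leqslant p$ are $\geqslant 0$, reaching a value $>0$ at $j=p$ because $i_{p-1}>0$), so $\mu^{(p)}\succ\mu$; moreover the cut at $\mu^{(p)}$ only removes the single vertex $\mu^{(p)}$ (and in case (2) also merges $\mu^{(p-1)}=\mu^{(p+1)}$), leaving $\mu^{(p-1)}$ intact. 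Now either $\mu^{(p-1)}\succ\mu$ — in which case the cut path retains a vertex above $\mu$ and we are done by induction — or $\mu^{(p-1)}=\mu$. In the latter case $\he(\sub{\mu})_{p-1}=\mu^{(p-1)}-\mu^{(p)}=\mu-\mu^{(p)}$, which would be a \emph{negative} height, contradicting $i_{p-1}>0$; so in fact $\mu^{(p-1)}\succ\mu$ always (indeed $\mu^{(p-1)}\succcurlyeq\mu$ and if they were equal $i_{p-1}$ could not be positive). Wait — I should double-check the degenerate subcase $p=2$: then $\mu^{(p-1)}=\mu^{(1)}=\mu$, and indeed $i_1=\he(\mu^{(1)}-\mu^{(2)})$ would be negative, again contradicting $i_{p-1}=i_1>0$. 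So $p\geqslant 3$ whenever the hypothesis holds with $\mu^{(p-1)}=\mu$ impossible; equivalently $\mu^{(p-1)}\succ\mu$. Hence the induction closes. The main obstacle is making the bookkeeping in case (2)(b) rigorous: there the cut is of length $m-2$ and removes two consecutive vertices $\mu^{(p)}$ while identifying $\mu^{(p-1)}$ with $\mu^{(p+1)}$, so one must check that the \emph{surviving} weight $\mu^{(p-1)}$ (which equals the new identified vertex) is still $\succ\mu$ — which is exactly the statement just established — and that the remaining portion of the path is genuinely a weighted path in the sense of Definition~\ref{definition:weighted_path}, which is guaranteed by the constructions in the paragraph following Lemma~\ref{Lem:formulas} and in Lemma~\ref{Lem0:cut}.
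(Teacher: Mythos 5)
Your overall strategy (induction on $m$, reduction via Proposition~\ref{Pro0:cut}, and the claim that cutting at $\mu^{(p)}$ preserves the hypothesis) is the same as the paper's, but your justification of the key step is built on an inversion of the order: since $\he(\sub{\mu})_j=\he(\mu^{(j)}-\mu^{(j+1)})$, a positive height means $\mu^{(j)}\succ\mu^{(j+1)}$, i.e.\ the path \emph{descends} while the heights are nonnegative. Consequently, when $\sub{i}\succ\sub{0}$, the prefix $\mu^{(1)},\ldots,\mu^{(q)}$ satisfies $\mu^{(j)}\preccurlyeq\mu$ (all partial sums $i_1+\cdots+i_{j-1}$ are $\geqslant 0$), so your claims ``$\mu^{(p)}\succ\mu$'' and ``$\mu^{(p-1)}\succ\mu$ always'' are false — those vertices satisfy the opposite inequalities, and the contradiction you derive in the case $\mu^{(p-1)}=\mu$ is spurious (that case genuinely occurs, e.g.\ when $i_1=\cdots=i_{p-2}=0$). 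The correct reason the cut path still satisfies the hypothesis is different: any vertex $\mu^{(j)}\succ\mu$ must have a negative partial sum, hence $j>q\geqslant p$, so it is untouched by cutting the vertex $\mu^{(p)}$ (and, in the case $\alpha^{(p-1)}+\alpha^{(p)}=0$, it is also distinct from the identified vertex $\mu^{(p+1)}=\mu^{(p-1)}\preccurlyeq\mu$). Without this, the inductive step is not established.

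A second genuine gap: the hypothesis does \emph{not} force $\he(\sub{\mu})\succ\sub{0}$ for the lexicographic order, contrary to what you assert. A path may rise above $\mu$ before its first positive height (already for $m=2$: $\sub{i}=(i_1,i_2)$ with $i_1<0$ is exactly the situation $\mu^{(2)}\succ\mu$), and then $p(\sub{i}),q(\sub{i})$ are undefined and Proposition~\ref{Pro0:cut} is not applicable. The paper disposes of this branch separately: if $\sub{i}\in\Z^m_{\prec\sub{0}}$, then $b_{\sub{\mu},\sub{\alpha}}^*$ lies in $\n_-U(\g)$ up to commuting Cartan factors, so $\hc(b_{\sub{\mu},\sub{\alpha}}^*)=0$ and hence $\wt(\sub{\mu},\sub{\alpha})=0$ directly; this also furnishes the base case $m=2$ (your base case $m=1$ is vacuous and does not cover it). You need both this lex-negative branch and the corrected argument above for the reduction to close the induction.
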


Thanks to Theorem \ref{corollary:cut}, it will enough 
in many situations to consider only weighted paths 
$(\sub{\mu},\sub{\alpha}) \in  \hat{\P}_m(\mu)$ 
such that $\mu^{(i)} \preccurlyeq \mu$ for any $i \in \{1,\ldots,m\}$. 

\begin{proof}[Proof of Theorem \ref{corollary:cut}]  Let $(\sub{\mu},\sub{\alpha})$ be as in the theorem and set $\sub{i} := \he(\sub{\mu})$.
First of all, we observe that if $\sub{i} \in {\Z}^m_{\prec \sub{0}}$, 
then $\hc({b}_{\sub{\mu},\sub{\alpha}}^*)=0$ and so 
the statement is clear. 

We prove the statement by induction on $m$. 
Necessarily, $m \geqslant 2$. 
If $m=2$, then the hypothesis 
implies that $\sub{i} \in {\Z}^m_{\prec \sub{0}}$ and so the statement 
is true. 

Assume $m \geqslant 3$ and 
that for all weighted paths $(\sub{\mu'},\sub{\alpha'}) \in \hat{\P}_{m'}(\mu)$, 
with $m' < m$, such that for some $i' \in \{ 1,\ldots, m'\}$, 
$\mu'^{(i')} \succcurlyeq  \mu$, we have 
$\wt(\sub{\mu'},\sub{\alpha'}) = 0$. 
If $\sub{i} \in {\Z}^m_{\prec \sub{0}}$ 
the statement is true. 
So we can assume that  
$\sub{i} \in {\Z}^m_{\succcurlyeq \sub{0}}$. 
Then necessarily $\sub{i} \in {\Z}^m_{\succ \sub{0}}$. 
Let $p,q$ be as in Proposition \ref{Pro0:cut}.  
We observe that the weighted path $(\sub{\mu},\sub{\alpha})^{\# p}$ 
satisfies the hypothesis of the theorem and it is not empty.  
Hence by our induction hypothesis and Proposition \ref{Pro0:cut}, 
we get the statement.  
Notice that for $m=3$, $\sub{i}$ is necessarily of the form 
$(i_1,i_2,i_3)$ with $i_1 >0$, $i_2 <0$, $i_3>0$ and 
$i_1+i_2+i_3=0$ so $(\sub{\mu},\sub{\alpha})^{\# p}$ has length 
$2,$ and we can indeed apply the induction hypothesis. \qed
\end{proof}

For $\eps \in \{ -1,0,1\}$, denote by $\eps \Z_{\succcurlyeq \sub{0}}$ the set $\{ \eps n \ | \ n \in \Z_{\succcurlyeq \sub{0}}\}$.
Denote by $\P_m$ (respectively, $\hat{\P}_m$)
the union of all sets $\P_m(\mu)$ 
(respectively, $\hat{\P}_m(\mu)$) for $\mu$ running through $P(\delta)$.

\begin{definition}     \label{definition:equivalenceA} We define an equivalence relation $\sim$  
on  $\hat{\P}_m$ by induction on $m$ as follows. 
\begin{enumerate}
\item If $m=1$, there is only one equivalence 
class represented by the trivial path of length 0. 
\item For $m=2$, we say that two paths $(\sub{\mu},\sub{\alpha})$ and $(\sub{\mu'},\sub{\alpha'})$ in $\hat{\P}_m$,
with $\he(\sub{\mu}) = \sub{i}$ and $\he(\sub{\mu}')=\sub{i}'$,
are {\em equivalent}, 
if there is $(\eps_1, \eps_2) \in \{ 0,1 \}^2$ such that 
$\sub{i} \in \eps_1\Z_{\succcurlyeq \sub{0}} \times \eps_2\Z_{\succcurlyeq \sub{0}}  $ and 
$\sub{i}' \in \eps_1\Z_{\succcurlyeq \sub{0}} \times \eps_2\Z_{\succcurlyeq \sub{0}} $.
\item For $m \geqslant 3$, 
we say that we say that two paths $(\sub{\mu},\sub{\alpha})$ and $(\sub{\mu'},\sub{\alpha'})$ in $\hat{\P}_m$,
with $\he(\sub{\mu}) = \sub{i}$ and $\he(\sub{\mu}')=\sub{i}'$, are equivalent, 
if the following conditions are satisfied: 
\begin{enumerate}
\item 
there is $(\eps_1,\ldots,\eps_m) \in \{ -1,0,1\}^m$ such that 
$\sub{i} \in \prod_{i=1}^m \eps_i \Z_{\succcurlyeq \sub{0}}$ and 
$\sub{i}' \in \prod_{i=1}^m \eps_i \Z_{\succcurlyeq \sub{0}}$, 
\item the paths $(\sub{\mu},\sub{\alpha})^{\# p(\sub{i})}$ and 
$(\sub{\mu'},\sub{\alpha'})^{\# p(\sub{i}')}$ are equivalent.
\end{enumerate}
\end{enumerate}
For $(\sub{\mu},\sub{\alpha}) \in \hat{\P}_m$, denote 
by $[(\sub{\mu},\sub{\alpha})]$ the equivalence class 
of $(\sub{\mu},\sub{\alpha})$ in $\hat{\P}_m$ with respect to $\sim$, 
and denote by $\E_m$ the set of equivalence classes. 
\end{definition}

We observe that the equivalence class of $(\sub{\mu},\sub{\alpha}) \in \hat{\P}_m$ 
only depends on the sequence $\he(\sub{\mu})$. 
Hence, by abuse of notation we will often write $[\he(\sub{\mu})]$ 
for the class of $(\sub{\mu},\sub{\alpha})$ 
(this will be not anymore the case in type $C$). 

\begin{example} \noindent
\begin{enumerate}
\item Assume $m=2$. 
We have only two equivalence classes: 
$[0,0]$ and $[1,-1].$ 

\noindent
We represent below weighted paths whose heights are in $[0,0]$ and $[1,-1]$ 
respectively: 

\begin{center} 
\begin{tikzpicture}[scale=0.7]
\fill (0,0) circle (0.07)node(xline)[below] {{\small $\mu^{(1)}=\mu^{(2)}$}}; 
\draw[thick] (0,0) to [out=210,in=230] (-0.6,0.6);
 \draw[thick, decoration={markings, mark=at position 0.5 with {\arrow{>}}},
         postaction={decorate}] (-0.6,0.6) to [out=30,in=90] (0,0);        
\node at (-0.4,0.8) {{\scriptsize $i_1$}};
\draw[thick] (0,0) to [out=90,in=150] (0.6,0.6);
 \draw[thick, decoration={markings, mark=at position 0.5 with {\arrow{>}}},
         postaction={decorate}] (0.6,0.6) to [out=310,in=330] (0,0);        
\node at (0.6,0.8) {{\scriptsize $i_2$}};
\draw[thick,
        decoration={markings, mark=at position 0.5 with {\arrow{>}}},
        postaction={decorate}
        ]
        (4,0) -- (6,0);\node at (5,0.3) {{\scriptsize $i_1$}};
  \draw[ thick,
        decoration={markings, mark=at position 0.5 with {\arrow{>}}},
        postaction={decorate}
        ]
        (6,-0.15) -- (4,-0.15); \node at (5,-0.45) {{\scriptsize $i_2$}};
 \node at (3.5,0) {{\small $\mu^{(1)}$}};
 \node at (6.5,0) {{\small $\mu^{(2)}$}};
 \fill (4,0) circle (0.07); 
    \fill (6,0) circle (0.07); 
\end{tikzpicture}
\end{center} 

\item Assume $m=3$. 
We have six equivalence classes: 
\begin{align*}
[0,0,0], \quad [0,1,-1], \quad [2,-1,-1], \quad [1,-1,0], \quad [1,0,-1],  \quad [1,1,-2].
\end{align*} 
We represent below weighted paths whose heights 
are in the above respective classes. 
\begin{center} 
\begin{tikzpicture} [scale=0.8]
\fill (0,0) circle (0.07);\node at (0,-0.65) {{\footnotesize $\mu^{(1)}=\mu^{(2)} =\mu^{(3)}$}}; 
\draw[thick] (0,0) to [out=250,in=325] (-0.7,-0.4);
 \draw[thick, decoration={markings, mark=at position 0.5 with {\arrow{>}}},
         postaction={decorate}] (-0.7,-0.4) to [out=120,in=150] (0,0);        
\node at (-0.5,0.15) {{\scriptsize $i_1$}};
\draw[thick] (0,0) to [out=155,in=180] (0,0.8);
 \draw[thick, decoration={markings, mark=at position 0.5 with {\arrow{>}}},
         postaction={decorate}] (0,0.8) to [out=360,in=25] (0,0);        
\node at (0.4,0.6) {{\scriptsize $i_2$}};
\draw[thick] (0,0) to [out=30,in=60] (0.7,-0.4);
 \draw[thick, decoration={markings, mark=at position 0.5 with {\arrow{>}}},
         postaction={decorate}] (0.7,-0.4) to [out=215,in=290] (0,0);        
\node at (0.5,0.15) {{\scriptsize $i_3$}};
 \fill (3,0) circle (0.07);
  \node at (3,-0.4) {{\footnotesize $\mu^{(1)}=\mu^{(2)}$}};
\draw[thick] (3,0) to [out=210,in=230] (2.4,0.6);
 \draw[thick, decoration={markings, mark=at position 0.5 with {\arrow{>}}},
         postaction={decorate}] (2.4,0.6) to [out=30,in=90] (3,0);        
\node at (2.6,0.8) {{\scriptsize $i_1$}};
\draw[thick,
        decoration={markings, mark=at position 0.5 with {\arrow{>}}},
        postaction={decorate}
        ]
        (3,0) -- (5,0);\node at (4,0.3) {{\scriptsize $i_2$}};
  \draw[ thick,
        decoration={markings, mark=at position 0.5 with {\arrow{>}}},
        postaction={decorate}
        ]
        (5,-0.15) -- (3,-0.15); \node at (4.5,-0.35) {{\scriptsize $i_3$}};
 \node at (5.5,0) {{\footnotesize $\mu^{(3)}$}}; 
    \fill (5,0) circle (0.07); 
 \fill (8,0) circle (0.07);
  \node at (7.5,0) {{\footnotesize $\mu^{(1)}$}};
\draw[thick,
        decoration={markings, mark=at position 0.5 with {\arrow{>}}},
        postaction={decorate}
        ]
        (8,0) -- (11,0);\node at (9.5,0.3) {{\scriptsize $i_1$}};
\fill (11,0) circle (0.07);
 \node at (11.5,0) {{\footnotesize $\mu^{(2)}$}};
 \draw[ thick,
        decoration={markings, mark=at position 0.5 with {\arrow{>}}},
        postaction={decorate}
        ]
        (11,-0.15) -- (9.5,-0.15); \node at (10.3,-0.4) {{\scriptsize $i_2$}};
\fill (9.5,-0.15) circle (0.07);
 \node at (9.5,-0.5) {{\footnotesize $\mu^{(3)}$}};
 \draw[ thick,
        decoration={markings, mark=at position 0.5 with {\arrow{>}}},
        postaction={decorate}
        ]
        (9.5,-0.15) -- (8,-0.15); \node at (8.8,-0.4) {{\scriptsize $i_3$}};
\fill (-1,-2) circle (0.07);\node at (-0.7,-1.7) {{\footnotesize $\mu^{(1)}$}};
\draw[thick,
        decoration={markings, mark=at position 0.5 with {\arrow{>}}},
        postaction={decorate}
        ]
        (-1,-2) -- (1,-2);\node at (0.2,-1.7) {{\scriptsize $i_1$}};
\fill (1,-2) circle (0.07); \node at (1.5,-2) {{\footnotesize $\mu^{(2)}$}}; 
  \draw[ thick,
        decoration={markings, mark=at position 0.5 with {\arrow{>}}},
        postaction={decorate}
        ]
        (1,-2.15) -- (-1,-2.15); \node at (0,-2.35) {{\scriptsize $i_2$}};
 \node at (-0.9,-2.4) {{\footnotesize $\mu^{(3)}$}}; 
    \fill (-1,-2.15) circle (0.07);
\draw[thick, decoration={markings, mark=at position 0.5 with {\arrow{<}}},
         postaction={decorate}] (-1,-2.15) to [out=210,in=230] (-1.6,-1.4);
 \draw[thick] (-1.6,-1.4) to [out=30,in=90] (-1,-2.15);        
\node at (-1.6,-1.2) {{\scriptsize $i_3$}};
 \fill (3,-2) circle (0.07);
  \node at (2.5,-2) {{\footnotesize $\mu^{(1)}$}};
  \draw[thick,
        decoration={markings, mark=at position 0.5 with {\arrow{>}}},
        postaction={decorate}
        ]
        (3,-2) -- (5,-2);\node at (4,-1.7) {{\scriptsize $i_1$}};
\fill (5,-2) circle (0.07); 
 \node at (5.4,-2.2) {{\footnotesize $\mu^{(2)}$}}; 
\draw[thick] (5,-2) to [out=90,in=150] (5.6,-1.4);
 \draw[thick, decoration={markings, mark=at position 0.5 with {\arrow{>}}},
         postaction={decorate}] (5.6,-1.4) to [out=310,in=330] (5,-2);        
\node at (5.6,-1.2) {{\scriptsize $i_2$}}; 
  \draw[ thick,
        decoration={markings, mark=at position 0.5 with {\arrow{>}}},
        postaction={decorate}
        ]
        (5,-2.15) -- (3,-2.15); \node at (4,-2.35) {{\scriptsize $i_3$}};
 \fill (8,-2) circle (0.07);
  \node at (7.5,-2) {{\footnotesize $\mu^{(1)}$}};
\draw[thick,
        decoration={markings, mark=at position 0.5 with {\arrow{>}}},
        postaction={decorate}
        ]
        (8,-2) -- (9.5,-2);\node at (8.8,-1.7) {{\scriptsize $i_1$}};
\fill (9.5,-2) circle (0.07);
 \node at (9.5,-1.7) {{\footnotesize $\mu^{(2)}$}};
 \draw[ thick,
        decoration={markings, mark=at position 0.5 with {\arrow{>}}},
        postaction={decorate}
        ]
        (9.5,-2) -- (11,-2); \node at (10.3,-1.7) {{\scriptsize $i_2$}};
\fill (11,-2) circle (0.07);
 \node at (11.5,-2) {{\footnotesize $\mu^{(3)}$}};
 \draw[ thick,
        decoration={markings, mark=at position 0.5 with {\arrow{>}}},
        postaction={decorate}
        ]
        (11,-2.15) -- (8,-2.15); \node at (9.5,-2.4) {{\scriptsize $i_3$}};
 \end{tikzpicture}
\end{center} 

\item Assume $m = 4$. 
The following four weighted paths have pairwise 
not equivalent heights. 
The heights of $\I_1$ and $\I_2$ (also $\I_3$ and $\I_4$) 
satisfy condition (a) of Definition \ref{definition:equivalenceA} 
but not the condition (b), so they are not equivalent. Here, 
$\I_i$ refers as the equivalent class of the corresponding path.
\begin{center} 
\begin{tikzpicture}[scale=0.8]
\draw[thick] (0,0) to (0.8,0);
\node at (1,0) {{\scriptsize $i_1$}};
\draw[thick,->](1.2,0) to (2,0);
\draw[thick] (2,-0.2) to (1.2,-0.2);
\node at (1,-0.2) {{\scriptsize $i_2$}};
\draw[thick,->](0.8,-0.2) to (0,-0.2);       
\draw[thick] (0,-0.4) to (1.3,-0.4);
\node at (1.5,-0.4) {{\scriptsize $i_3$}};
\draw[thick,->](1.7,-0.4) to (3,-0.4);
\draw[thick] (3,-0.6) to (1.7,-0.6);
\node at (1.5,-0.6) {{\scriptsize $i_4$}};
\draw[thick,->](1.3,-0.6) to (0,-0.6);        
\node at (1.5,-1.2) {{\small $\I_1$}};
\draw[thick] (4,0) to (4.8,0);
\node at (5,0) {{\scriptsize $i_1$}};
\draw[thick,->](5.2,0) to (6,0);
\draw[thick] (6,-0.2) to (5.7,-0.2);
\node at (5.5,-0.2) {{\scriptsize $i_2$}};
\draw[thick,->](5.3,-0.2) to (5,-0.2);       
\draw[thick] (5,-0.4) to (5.8,-0.4);
\node at (6,-0.4) {{\scriptsize $i_3$}};
\draw[thick,->](6.2,-0.4) to (7,-0.4);
\draw[thick] (7,-0.6) to (5.7,-0.6);
\node at (5.5,-0.6) {{\scriptsize $i_4$}};
\draw[thick,->](5.3,-0.6) to (4,-0.6);        
\node at (5.5,-1.2) {{\small $\I_2$}};
\draw[thick] (-1,-2) to (-0.2,-2);
\node at (0,-2) {{\scriptsize $i_1$}};
\draw[thick,->](0.2,-2) to (1,-2);
\draw[thick] (1,-2) to (1.8,-2);
\node at (2,-2) {{\scriptsize $i_2$}};
\draw[thick,->](2.2,-2) to (3,-2);       
\draw[thick] (3,-2.2) to (1.7,-2.2);
\node at (1.5,-2.2) {{\scriptsize $i_3$}};
\draw[thick,->](1.3,-2.2) to (0,-2.2);
\draw[thick] (0,-2.2) to (-0.3,-2.2);
\node at (-0.5,-2.2) {{\scriptsize $i_4$}};
\draw[thick,->](-0.7,-2.2) to (-1,-2.2);        
\node at (1,-2.8) {{\small $\I_3$}};
\draw[thick] (4,-2) to (4.8,-2);
\node at (5,-2) {{\scriptsize $i_1$}};
\draw[thick,->](5.2,-2) to (6,-2);
\draw[thick] (6,-2) to (6.8,-2);
\node at (7,-2) {{\scriptsize $i_2$}};
\draw[thick,->](7.2,-2) to (8,-2);       
\draw[thick] (8,-2.2) to (7.7,-2.2);
\node at (7.5,-2.2) {{\scriptsize $i_3$}};
\draw[thick,->](7.3,-2.2) to (7,-2.2);
\draw[thick] (7,-2.2) to (5.7,-2.2);
\node at (5.5,-2.2) {{\scriptsize $i_4$}};
\draw[thick,->](5.3,-2.2) to (4,-2.2);        
\node at (6,-2.8) {{\small $\I_4$}};
 \end{tikzpicture}
\end{center}

The equivalence classes in ${\Z}^4$ are
\begin {align*}
 & [0,0,0,0],\ [0,0,1,-1],\ [0, 1, 0,-1],\ [0,1,-1,0],\ [1,-1,0,0],[1,0,-1,0],\\
 & [1,0,0,-1],\ [2,-1,-1,0],\ [2,-1,0,-1], \ [1,1,0,-2], \ [1,0,1,-2], \ [1,1,-2,0],\\
 & [0,2,-1,-1], \ [0,1,1,-2], \ [2,0,-1,-1],\ [1,1,-1,-1], \ [1,-1,1,-1], \ [1,1,1,-3],\\
 & [1,2,-1,2], \ [2,1,-2,-1], \ [2,-1,1,-2], \ [3,-1,-1,-1].
\end {align*}
They are indeed the only pairwise non-equivalent 
classes. The verifications are left to the reader. 
\end{enumerate}
\end{example}

Let $m \in \Z_{> 0}$ and $\I \in  \E_m$.  
The number $n$ of zero values of $\sub{i}:=\he(\sub{\mu})$ 
does not depend on 
$(\sub{\mu'},\sub{\alpha'})$ in $\I$.  
We will say that $\I$ {\em has $n$ zeroes}.  
The positions of the zeroes only depend on $\I$. 
If $n =0$, we will say that {\em $\I$ has no zero}.   
By definition, the position $p(\sub{i})$ of the first turning 
back does not depend on $\sub{i} \in \I$.  
Similarly, the integer $q(\sub{i})$ does not depend on $\sub{i} \in \I$. 
Denote by $p(\I)$ and $q(\I)$ these integers. 
Furthermore, the class of $(\sub{\mu},\sub{\alpha})^{\# p(\sub{i})}$ only 
depends on $\I$. 
Denote by $\I^{\# }$ this equivalence class. 
For $m' \in \Z_{>0}$, denote by $\ell(\I'):=m'$the length of $\I'$ for some equivalence class 
$\I' \in \E_{m'}$ . 
We have $\ell(\I)=m$, $\ell(\I^\#)=m-1$ if $i_p+i_{p-1}\not=0$, 
$\ell(\I^\#)=m-2$ if $i_p+i_{p-1}=0$, etc. 

We begin by studying the elements of $\E_m$ without zero. 
If $\I$ has no zero, note that, necessarily, $p(\I) = q(\I)$ 
and $\I^\# $ has no zero, too.  
\begin{remark} \label{Rem:height}
If $\I$ has no zero then for any 
weighted path $(\sub{\mu},\sub{\alpha}) \in \hat{\P}_m(\mu)$, 
$\mu \in P(\delta)$, such that $\he(\sub{\mu}) \in \I$, 
we have 
$\he(\c{\alpha}^{(j)}) =i_j$ for all $j=1,\ldots,m,$ 
since $\g$ has type $A$, where $\he(\sub{\mu}) = (i_1, \ldots, i_m)$.
\end{remark}
Remark \ref{Rem:height} will be used repeatedly in the sequel.

\begin{lemma}     \label{Lem:weight1} 
Let $\I \in {\E}_m$ without zero, and set $p:=p(\I)$. 
\begin{enumerate} 
\item There is a polynomial $A_{\I} \in  \C[X_1,\ldots,X_{m-1}]$ 
of total degree $\leqslant \lfloor \frac{m}{2}\rfloor $ 
such that 
for all $\sub{i} =(i_1,\ldots,i_m) \in \I$ and   
for all $(\sub{\mu},\sub{\alpha}) \in \hat{\P}_m$ 
with $\he(\sub{\mu}) = \sub{i}$, 
$$\wt(\sub{\mu},\sub{\alpha}) = A_{\I} (i_1, \ldots,i_{m-1}).$$
(Here, $\lfloor x\rfloor$ denote the largest integer $\leqslant x$.)
Moreover, $A_{\I}$ is a sum of monomials of the 
form $X_{j_1}\ldots X_{j_l}$, $1 \leqslant j_1 < \cdots < j_{l} < m$. 
\item The polynomial $A_{\I}$ is defined by 
induction as follows.  We have $A_{[1,-1]}(X_1) = X_1$, 
$A_{[2,-1,-1]}(X_1,X_2) = X_1+ X_2$, 
$A_{[1,1,-2]}(X_1,X_2) = X_1$, 
and for $m\ge 4$, 
\begin{enumerate}
\item 
if $i_{p-1}+i_p \not= 0$, then 
$$
A_{\I} (X_1,\ldots,X_{m-1}) 
= A_{\I^\# }(X_1,\ldots,X_{p-2},X_{p-1}+X_{p},\ldots,X_{m-1}),
$$
\item 
if $i_{p-1}+i_p = 0$ and $p=2$, then 
$$
A_{\I} (X_1,\ldots,X_{m-1}) = 
X_{p-1}A_{\I^\# }(X_1,\ldots,X_{p-2},X_{p+1},\ldots,X_{m-1}), 
$$
\item 
if $i_{p-1}+i_p = 0$ and $p >2$, then 
$$
A_{\I} (X_1,\ldots,X_{m-1}) =  
(X_{p-1}+1) A_{\I^\# }(X_1,\ldots,X_{p-2},X_{p+1},\ldots,X_{m-1}) . 
$$
\end{enumerate}
\end{enumerate}
\end{lemma}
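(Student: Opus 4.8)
The plan is to prove both parts simultaneously by induction on $m$, using the cutting operation $(\sub{\mu},\sub{\alpha}) \mapsto (\sub{\mu},\sub{\alpha})^{\#p}$ and the computation of $\wt(\sub{\mu},\sub{\alpha})$ in terms of $\wt(\sub{\mu},\sub{\alpha})^{\#p}$ provided by Lemma \ref{Lem0:cut}. First I would check that a class $\I$ with no zero forces $p(\I)=q(\I)$ and that $\I^\#$ again has no zero, so that the induction stays inside the family of zero-free classes; the base cases $m=2$ and $m=3$ (i.e. $[1,-1]$, $[2,-1,-1]$, $[1,1,-2]$) are handled directly from the definition of $\wt$ and Lemma \ref{Lem0:cut}, giving the stated polynomials $A_{[1,-1]}(X_1)=X_1$, $A_{[2,-1,-1]}(X_1,X_2)=X_1+X_2$, $A_{[1,1,-2]}(X_1,X_2)=X_1$. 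The key point in the base case $[1,-1]$ is Remark \ref{Rem:height}: in type $A$ one has $\he(\c\alpha^{(1)})=i_1$, so $\wt$ of a length-$2$ ``there and back'' path equals $\langle\rho,\c\alpha^{(1)}\rangle=\he(\c\alpha^{(1)})=i_1$.

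For the inductive step with $m\ge 4$, I would set $p:=p(\I)$ and distinguish the three cases of Lemma \ref{Lem0:cut}(1)--(2), which for a zero-free class are exactly: (a) $i_{p-1}+i_p\ne 0$, (b) $i_{p-1}+i_p=0$ with $p=2$, and (c) $i_{p-1}+i_p=0$ with $p>2$. In case (a), Lemma \ref{Lem0:cut}(1) gives $\wt(\sub{\mu},\sub{\alpha})=\wt(\sub{\mu},\sub{\alpha})^{\#p}$, and since $\he\big((\sub{\mu})^{\#p}\big)=(i_1,\dots,i_{p-2},i_{p-1}+i_p,i_{p+1},\dots,i_m)$, the induction hypothesis applied to $\I^\#\in\E_{m-1}$ yields
$$
A_{\I}(X_1,\dots,X_{m-1})=A_{\I^\#}(X_1,\dots,X_{p-2},X_{p-1}+X_p,\dots,X_{m-1}).
$$
In cases (b) and (c), Lemma \ref{Lem0:cut}(2) together with Remark \ref{Rem:height} ($\he(\c\alpha^{(p-1)})=i_{p-1}$) gives respectively $\wt=i_{p-1}\,\wt^{\#p}$ and $\wt=(i_{p-1}+1)\,\wt^{\#p}$; here $\I^\#\in\E_{m-2}$ has height sequence obtained by deleting the two entries $i_{p-1},i_p$, so one substitutes $A_{\I^\#}(X_1,\dots,X_{p-2},X_{p+1},\dots,X_{m-1})$ and multiplies by $X_{p-1}$ (resp. $X_{p-1}+1$). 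This proves part (2), and, reading the recursion, also establishes that $A_\I$ is a polynomial depending only on $i_1,\dots,i_{m-1}$ (the entry $i_m$ never appears, since it is determined by $\sum i_j=0$), which proves the first assertion of part (1).

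It remains to track the two structural claims in part (1): that $A_\I$ is a $\Z$-linear combination of squarefree monomials $X_{j_1}\cdots X_{j_l}$ with strictly increasing indices, and that its total degree is at most $\lfloor m/2\rfloor$. Both follow by induction from the recursion, but this bookkeeping is the main obstacle and needs care. For the squarefree/increasing-index shape: in case (a) the substitution $X_{p-1}+X_p$ for a single variable $X_{p-1}$ of $A_{\I^\#}$ cannot create a repeated variable because $A_{\I^\#}$ is squarefree and the two ``new'' variables $X_{p-1},X_p$ replace a single old one; in cases (b),(c) one multiplies by $X_{p-1}$ (resp. $1+X_{p-1}$), and I must check that $X_{p-1}$ does not already occur in $A_{\I^\#}(\dots,X_{p-2},X_{p+1},\dots)$ — this is where one uses that $A_{\I^\#}$ only involves the variables surviving the cut, i.e. indices $\le p-2$ or $\ge p+1$ (after re-indexing), so $X_{p-1}$ is genuinely new; the increasing-index condition is preserved under all three substitutions by inspection. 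For the degree bound $\lfloor m/2\rfloor$: in case (a), $\deg A_\I=\deg A_{\I^\#}\le\lfloor(m-1)/2\rfloor\le\lfloor m/2\rfloor$; in cases (b),(c), $\deg A_\I\le\deg A_{\I^\#}+1\le\lfloor(m-2)/2\rfloor+1=\lfloor m/2\rfloor$. The base cases satisfy the bound with equality or strict inequality as needed. I would write this out as a clean triple induction (existence of the polynomial, its monomial shape, its degree), checking each recursion case against each of the three claims; the only subtlety worth flagging in the writeup is the verification that the newly multiplied variable in cases (b),(c) is disjoint from the support of $A_{\I^\#}$, which is exactly what keeps $A_\I$ squarefree.
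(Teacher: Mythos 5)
Your proposal is correct and follows essentially the same route as the paper's proof: induction on $m$ with base cases $[1,-1]$, $[2,-1,-1]$, $[1,1,-2]$, then the three inductive cases governed by Lemma~\ref{Lem0:cut}(1), (2)(a), (2)(b) together with Remark~\ref{Rem:height}, yielding exactly the stated recursion for $A_{\I}$ and the degree bound $\lfloor m/2\rfloor$. The only difference is that you spell out the bookkeeping for the squarefree, increasing-index monomial shape (checking that $X_{p-1}$ is disjoint from the support of $A_{\I^\#}$ after the cut), which the paper asserts but does not detail; this is a harmless refinement, not a different argument.
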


\begin{proof} 
We prove all the statements together 
by induction on $m$. 

Assume $m=2$. 
The only equivalence class in without zero is $[1,-1]$; 
$p([1,-1])=2$ and $i_1+i_2=0$.  
Let $(\sub{\mu},\sub{\alpha}) \in \hat{\P}_2$ with $\he(\sub{\mu}) \sim (1,-1)$. 
By Lemma~\ref{Lem0:cut} (2) (a) we have 
$\wt(\sub{\mu},\sub{\alpha}) = \he(\c{\alpha}^{(1)}) = i_1.$
Hence, $A_{[1,-1]}(X_1) = X_1$ satisfies the conditions of the lemma. 

Assume $m=3$. 
There are two equivalence classes without zero: 
$[2,-1,-1]$ and $[1,1,-2]$.  
By Lemma~\ref{Lem0:cut}~(1), for any $(\sub{\mu},\sub{\alpha}) \in \hat{\P}_3$ 
with $\he(\sub{\mu}) \sim (2,-1,-1)$, we have $\wt(\sub{\mu},\sub{\alpha}) = i_1 + i_2$ 
and for any $(\sub{\mu},\sub{\alpha}) \in \hat{\P}_3$ 
with $\he(\sub{\mu}) \sim (1,1,-2)$, we have $\wt(\sub{\mu},\sub{\alpha}) = i_1$. 
Hence 
$$A_{[2,-1,-1]}(X_1,X_2) = X_1+ X_2 
\quad \text{ and }\quad A_{[1,1,-2]}(X_1,X_2) = X_1$$ 
satisfy the conditions of the lemma.  

Let $m \geqslant 4$ and assume the proposition true for any 
$m' \in \{ 2,\ldots,m-1\}$ and any $\I' \in {\E}_{m'}$. 
Let $\I \in \E_m$ and 
$(\sub{\mu},\sub{\alpha})  \in \I$. 

Assume that $i_{p-1}+i_p \not= 0$. 
Then $\he(\sub{\mu})^{\# p}$ is in $\I^\# $ and 
by Lemma~\ref{Lem0:cut}~(1), 
$\wt(\sub{\mu},\sub{\alpha}) = \wt\left((\sub{\mu},\sub{\alpha})^{\# p}
\right)$. 
By our induction hypothesis, 
there is a polynomial $A_{\I^\# } \in  \C[Y_1,\ldots,Y_{m-2}]$ 
of total degree $\leqslant \lfloor \frac{m-1}{2}\rfloor$
such that  
$$\wt(\sub{\mu},\sub{\alpha}) = \wt\left((\sub{\mu},\sub{\alpha})^{\# p(\I)}
\right)=
A_{\I^\# }(i_1,\ldots,i_{p-1} + i_{p},\ldots,i_{m-1}).$$  
Hence the polynomial 
$$A_{\I} (X_1,\ldots,X_{m-1}) 
:= A_{\I^\# }(X_1,\ldots,X_{p-2},X_{p-1}+X_{p},\ldots,X_{m-1})$$  
satisfies the conditions of the lemma.   

Assume that $i_{p-1}+i_p = 0$. 
Then $\he(\sub{\mu})^{\# p(\I)}$ is in $\I^\# $. 

$\ast$ If $p=2$, then 
by Lemma \ref{Lem0:cut} (2)(a), we have: 
$$\wt(\sub{\mu},\sub{\alpha}) = i_{p-1} \wt((\sub{\mu},\sub{\alpha})^{\# p(\I)}).$$  
By our induction hypothesis, 
there is a polynomial $A_{\I^\# } \in  \C[Y_1,\ldots,Y_{m-3}]$ 
of total degree $\leqslant \lfloor \frac{m-2}{2}\rfloor$ 
such that  
\begin{align*}
\wt(\sub{\mu},\sub{\alpha}) &=i_{p-1}  \wt\left(
(\sub{\mu},\sub{\alpha})^{\# p(\I)}\right) 
= i_{p - 1}  A_{\I^\# }(i_1,\ldots,i_{p - 2}, i_{p + 1},\ldots,i_{m-1}).& 
\end{align*} 
Hence the polynomial 
$$A_{\I} (X_1,\ldots,X_{m-1})
:= X_{p-1} A_{\I^\# }(X_1,\ldots,X_{p-2} , X_{p + 1},\ldots,X_{m-1})
$$ 
satisfies the conditions of the lemma.   

$\ast$ If $p>2$, then 
by Lemma \ref{Lem0:cut}(2)(b), we have: 
$$\wt(\sub{\mu},\sub{\alpha}) = (i_{p-1} + 1) \wt((\sub{\mu},\sub{\alpha})^{\# p(\I)}).$$  
By our induction hypothesis, 
there is a polynomial $A_{\I^\# } \in  \C[Y_1,\ldots,Y_{m-2}]$ 
of total degree $\leqslant \lfloor \frac{m-2}{2}\rfloor$ 
such that  
\begin{align*}
\wt(\sub{\mu},\sub{\alpha}) &=(i_{p-1} + 1) \wt\left(
(\sub{\mu},\sub{\alpha})^{\# p(\I)}\right) 
= (i_{p - 1} + 1) A_{\I^\# }(i_1,\ldots,i_{p - 2}, i_{p + 1},\ldots,i_{m-1}).& 
\end{align*} 
Hence the polynomial 
$$A_{\I} (X_1,\ldots,X_{m-1})
:= (X_{p-1} +1) A_{\I^\# }(X_1,\ldots,X_{p-2} , X_{p + 1},\ldots,X_{m-1})
$$ 
satisfies the conditions of the lemma.   \qed
\end{proof}

The following very classical result  
will be used in the proof of Lemma~\ref{Lem:T_m}.   

\begin{lemma}    \label{Lem:Newton}
Let $d \in \Z_{\geqslant 0}$ and $N\in\Z_{> 0}$. 
There is a polynomial $S_d \in \C[X]$ of degree $d+1$ such that 
$\sum\limits_{i = 1}^N i^{d} =  S_{d}(N).$ 
Namely, if $B_1,B_2,B_3,\ldots$ are the Bernoulli numbers, 
then $S_d$ is given by: 
$S_d (X)=\displaystyle{\frac{1}{d+1}} 
\sum_{j=0}^{d} \begin{pmatrix} d+1 \\ 
j \end{pmatrix} \tilde{B}_j X^{d+1-j},$ 
where $\tilde{B}_0=1$, $\tilde{B}_1=\frac{1}{2}$ 
and $\tilde{B}_j=B_j$ for $j\ge 2$. 
In particular, the leading term of $S_d(X)$ is 
$\displaystyle{\frac{X^{d+1}}{d+1}}$ and $S_d(0)=0$.  
\end{lemma}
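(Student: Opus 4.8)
The plan is to verify the explicit formula $S_d(X)=\frac{1}{d+1}\sum_{j=0}^d\binom{d+1}{j}\tilde B_j X^{d+1-j}$ directly, using the exponential generating function of the Bernoulli numbers, rather than by induction on $d$. First I would recall the defining identity $\frac{t}{e^t-1}=\sum_{j\ge 0}B_j\frac{t^j}{j!}$, which together with the convention $\tilde B_0=1$, $\tilde B_1=\tfrac12$, $\tilde B_j=B_j$ for $j\ge 2$ amounts to $\frac{t e^t}{e^t-1}=\sum_{j\ge 0}\tilde B_j\frac{t^j}{j!}$ (the sign flip in the linear term corresponds to replacing $t/(e^t-1)$ by its value at $-t$, equivalently multiplying by $e^t$). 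Then for fixed $N$ I would compute the generating function $\sum_{i=1}^N e^{it}=\frac{e^t(e^{Nt}-1)}{e^t-1}$, multiply by $t$, and substitute the Bernoulli expansion:
\begin{align*}
\sum_{i=1}^N \sum_{d\ge 0} i^d\frac{t^d}{d!}
= \frac{t e^t}{e^t-1}\cdot\frac{e^{Nt}-1}{t}
= \Bigl(\sum_{j\ge 0}\tilde B_j\frac{t^j}{j!}\Bigr)\Bigl(\sum_{k\ge 1}\frac{N^k t^{k-1}}{k!}\Bigr).
\end{align*}

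Next I would extract the coefficient of $t^d/d!$ on both sides. On the left it is $\sum_{i=1}^N i^d$. On the right, collecting the terms with $j+(k-1)=d$, i.e. $k=d+1-j$, gives $\sum_{j=0}^d \tilde B_j\,\frac{d!}{j!\,(d+1-j)!}\,N^{d+1-j}$, and $\frac{d!}{j!(d+1-j)!}=\frac{1}{d+1}\binom{d+1}{j}$, which is exactly $S_d(N)$. This proves $\sum_{i=1}^N i^d=S_d(N)$ for all positive integers $N$; since both sides are polynomial in $N$ (the right side manifestly, and equality of the polynomial $S_d$ with the polynomial interpolating the partial sums follows because they agree at infinitely many points), the identity holds as polynomials. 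The degree claim is immediate: the $j=0$ term contributes $\frac{1}{d+1}N^{d+1}$ with $\tilde B_0=1$, so the leading term is $X^{d+1}/(d+1)$ and $\deg S_d=d+1$; and $S_d(0)=0$ since every term has $N^{d+1-j}$ with $d+1-j\ge 1$.

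I do not expect any serious obstacle here, as this is a classical result; the only point requiring a little care is the bookkeeping of the convention $\tilde B_1=+\tfrac12$ versus the standard $B_1=-\tfrac12$, which is precisely why the generating function carries the extra factor $e^t$ (so that one sums $i^d$ over $i=1,\dots,N$ rather than $i=0,\dots,N-1$). An acceptable alternative, if one prefers to avoid generating functions, is a short induction on $d$: integrate/telescope the identity $(i+1)^{d+1}-i^{d+1}=\sum_{k=0}^d\binom{d+1}{k}i^k$ summed over $i=1,\dots,N$, solve for $\sum i^d$ in terms of $\sum i^k$ for $k<d$, and check that the resulting recursion is satisfied by the proposed closed form using the recursion $\sum_{j=0}^{n}\binom{n+1}{j}B_j=0$ for the Bernoulli numbers. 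Either route establishes both the existence of $S_d$ and its stated form, which is all that is needed for Lemma~\ref{Lem:T_m}.
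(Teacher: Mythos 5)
Your proof is correct: the generating-function computation is the standard derivation of Faulhaber's formula, and you handle the one delicate point properly — the convention $\tilde B_1=+\tfrac12$ corresponds exactly to the factor $e^t$ in $\frac{te^t}{e^t-1}=\frac{t}{e^t-1}+t$, which is what makes the sum run over $i=1,\dots,N$ rather than $i=0,\dots,N-1$; the coefficient extraction $\frac{d!}{j!\,(d+1-j)!}=\frac{1}{d+1}\binom{d+1}{j}$ then gives precisely the stated $S_d$, and the degree, leading-term and $S_d(0)=0$ claims follow at once. The paper itself offers no proof of this lemma (it is quoted as a "very classical result" used in Lemma~\ref{Lem:T_m}), so there is no argument to compare against; your write-up supplies a complete and standard justification, and the closing remark about extending the identity to a polynomial identity in $N$ is not even needed, since the lemma only asserts equality for $N\in\Z_{>0}$.
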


\begin{lemma}    \label{Lem:T_m} 
Let $m \in \Z_{> 0}$, $d\in\Z_{\geqslant 0}$, and 
$\I \in \E_m$ without zero. Set $p:=p(\I)$. 
Let $\sub{d}=(d_1,\ldots,d_{m-1})$ with $d_1+\cdots+d_{m-1}=d$. 
Then for some polynomial $T_{\sub{d},\I} \in \C[X]$ of degree  
$\leqslant d+m - \deg A_{\I}$, we have 
$\tilde{T}_{\sub{d},{\I}}(k)= T_{\sub{d},\I}(k)$ for all $k \in \{1, \ldots, r+1\}$,
where $$
\tilde{T}_{\sub{d},{\I}}(k):= 
\sum_{(\sub{\mu},\sub{\alpha}) \in \hat{\P}_m(\delta_k),
\atop \sub{\mu} \in [\![ \delta_{r+1}, \delta_k ]\!], 
\, \he(\sub{\mu}) \in  {\I}} i_1^{d_1}\cdots i_{m-1}^{d_{m-1}},$$
if $i_j$ denotes the height $\he({\sub{\mu}})_j$ for $j = 1, \ldots, m-1$. 
In particular, if for some $k \in \{1,\ldots,r+1\}$, the set  
$\{ (\sub{\mu},\sub{\alpha}) \in \hat{\P}_m(\delta_k) \; | \; 
\sub{\mu} \in [\![ \delta_{r+1}, \delta_k]\!] ,\, \he(\sub{\mu}) \in \I \}$ 
is empty, we have $T_{\sub{d},\I}(k) =0$.  
\end{lemma}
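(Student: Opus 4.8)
The plan is to prove Lemma~\ref{Lem:T_m} by induction on $m$, closely mirroring the inductive structure already set up for $A_{\I}$ in Lemma~\ref{Lem:weight1}. First I would record the base cases $m=2$ and $m=3$: for the classes $[1,-1]$, $[2,-1,-1]$, $[1,1,-2]$ one has to sum an explicit monomial in the $i_j$ over all weighted paths $(\sub{\mu},\sub{\alpha}) \in \hat{\P}_m(\delta_k)$ with $\sub{\mu} \in [\![\delta_{r+1},\delta_k]\!]$ and $\he(\sub{\mu})$ in the class. Since $\g$ has type $A$, Remark~\ref{Rem:height} identifies each $i_j$ with $\he(\c{\alpha}^{(j)})$, and for a path of the given shape the vertices $\mu^{(j)}$ range over a sub-interval of $[\![\delta_{r+1},\delta_k]\!]$, so the sum becomes a sum of a product of heights of roots of the form $\delta_a - \delta_b$ over an index set cut out by $1\le b < a \le r+1$ and $a \le k$ (using $\sub{\mu} \preccurlyeq \delta_k$). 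Each such height is $\he(\eps_b - \eps_a) = a - b$, so the sum is a polynomial expression in $k$ once one applies Lemma~\ref{Lem:Newton} to resolve the nested summations over the free indices; the degree bound $\le d + m - \deg A_{\I}$ is then a bookkeeping matter comparing the number of summation variables against $d$ and $\deg A_{\I}$.

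For the inductive step I would fix $\I \in \E_m$ without zero, set $p := p(\I)$, and split according to whether $i_{p-1} + i_p = 0$ or not, exactly as in Lemma~\ref{Lem0:cut} and Lemma~\ref{Lem:weight1}(2). The operation of ``cutting the vertex $\mu^{(p)}$'' gives a bijection between $\{(\sub{\mu},\sub{\alpha}) \in \hat{\P}_m(\delta_k) \; : \; \sub{\mu}\in[\![\delta_{r+1},\delta_k]\!],\ \he(\sub{\mu}) \in \I\}$ and the analogous set for $\I^\#$, after summing over the finitely many admissible choices of $\alpha^{(p-1)}$ (equivalently, of the ``cut-away'' root $\gamma = -\alpha^{(p)}$, which parametrizes where the first turning-back occurs). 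Under this bijection $i_{p-1} + i_p$ becomes the new height in position $p-1$ (case $i_{p-1}+i_p \ne 0$), or a pair $i_{p-1} = -i_p$ is deleted (case $i_{p-1}+i_p = 0$), while the remaining $i_j$ are unchanged. So the monomial $i_1^{d_1}\cdots i_{m-1}^{d_{m-1}}$ pushes forward to a polynomial in the $\I^\#$-heights, times an extra sum over the cut-away root of a monomial $(\text{something})^{d_{p-1}}$, whose value I can again control with Lemma~\ref{Lem:Newton} as a polynomial in $k$. The induction hypothesis supplies a polynomial $T_{\sub{d}',\I^\#}$ for each resulting multi-exponent $\sub{d}'$, and summing finitely many such polynomials (with polynomial-in-$k$ coefficients coming from the Newton sums) produces the desired $T_{\sub{d},\I}$; tracking the degrees through each of the three cases of Lemma~\ref{Lem:weight1}(2) and comparing with $\deg A_{\I}$ gives the bound $\le d + m - \deg A_{\I}$. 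The final sentence of the lemma is then immediate: if the path set is empty for a given $k$, the sum $\tilde T_{\sub{d},\I}(k)$ is $0$, and a polynomial agreeing with $0$ there must take the value $0$ there.

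The main obstacle I anticipate is the combinatorial bookkeeping in the inductive step: one must verify carefully that, in type $A$ with $\delta = \varpi_1$, cutting the first turning-back vertex really is a bijection onto paths for $\I^\#$ that still lie in the interval $[\![\delta_{r+1},\delta_k]\!]$ and still start and end at $\delta_k$, and that the set of admissible cut-away roots $\gamma$ — together with the contribution $\langle\mu^{(p)},\c{\alpha}^{(p)}\rangle$-type factors appearing in Lemma~\ref{Lem0:cut}(3) in the case $p < q$ — assembles into a genuine polynomial in $k$ rather than merely a piecewise one. Here the key technical input is Lemma~\ref{Lem2:cut-bis}, which guarantees that the ``extra'' $\hc$-terms vanish and hence that the weight really does factor through the cut path. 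The subtlety that the constraint $\sub{\mu} \in [\![\delta_{r+1},\delta_k]\!]$ is precisely what lets the nested sums close up into a single polynomial via Lemma~\ref{Lem:Newton} — rather than, say, a polynomial with exceptional behaviour near the ends of the crystal — is the point that will need the most care. The degree estimate itself is then routine once the structure is in place.
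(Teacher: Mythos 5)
Your proposal is, in its operative steps, the paper's own proof: induction on $m$, base case resolved by Lemma~\ref{Lem:Newton}, and in the inductive step the parametrization of the paths with height in $\I$ over the cut paths with height in $\I^{\#}$ by the single ``overshoot'' parameter $i\in\{1,\ldots,r+1-k-i'_1-\cdots\}$, whose summation is carried out by the Newton--Faulhaber polynomials, re-expanded into monomials in the cut heights, fed into the induction hypothesis for $\I^{\#}$, and closed up by comparing $\deg A_{\I}$ with $\deg A_{\I^{\#}}$ via Lemma~\ref{Lem:weight1}.

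Two corrections are needed, however. First, $\tilde{T}_{\sub{d},\I}(k)$ is a purely combinatorial sum of monomials in the heights: no weights $\wt(\sub{\mu},\sub{\alpha})$ and no Harish--Chandra projection occur in it, so Lemma~\ref{Lem2:cut-bis} and the factors $\langle \mu^{(p)},\c{\alpha}^{(p)}\rangle$ from the case $p<q$ of Lemma~\ref{Lem0:cut}, which you single out as the key technical input, play no role here; indeed, for a class $\I$ without zero one automatically has $p(\I)=q(\I)$, and the only trace of the weight machinery in this lemma is the number $\deg A_{\I}$ in the degree bound (with $\deg A_{\I^{\#}}=\deg A_{\I}$ when $i_{p-1}+i_p\not=0$ and $\deg A_{\I^{\#}}=\deg A_{\I}-1$ when $i_{p-1}+i_p=0$, which is exactly what makes the bound $d+m-\deg A_{\I}$ propagate). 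Second, the ``in particular'' clause is not quite a freebie: the inductive construction proves $T_{\sub{d},\I}(k)=\tilde{T}_{\sub{d},\I}(k)$ by parametrizing actual paths, so at a $k$ for which the path set is empty one must verify separately that the constructed polynomial vanishes; the paper does this by noting that either the corresponding set for $\I^{\#}$ is already empty (so $T_{\sub{d}',\I^{\#}}(k)=0$ by the induction hypothesis), or else every cut path forces $i'_1+\cdots+i'_{p-1}=r+1-k$, which makes the constructed expression vanish because the Newton sums are taken over an empty range. Finally, a small bookkeeping point: in each case both $i_{p-1}$ and $i_p$ depend on the overshoot parameter (e.g.\ $i_{p-1}=i'_{p-1}+i$, $i_p=-i$ when $i_{p-1}+i_p>0$), so the fiber sum carries the two exponents $d_{p-1}$ and $d_p$ together, not just $d_{p-1}$.
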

The lemma implies that
$$ \sum_{{\tiny \substack{(\sub{\mu},\sub{\alpha}) \in \hat{\P}_m(\delta_k), 
\\ \sub{\mu} \in [\![ \delta_{r+1}, \delta_k]\!], 
\\ \he(\sub{\mu}) \in \I}}}  i_1^{d_1}\ldots i_{m-1}^{d_{m-1}} {A}_{\I}(i_1, \ldots, i_{m-1})$$
is a polynomial on $k$ of degree $\leqslant  d+m$.
\begin{proof}
We prove the lemma by induction on $m$. 
More precisely, we prove by induction on $m$ the following: 

{\em For all $\I \in \E_m$ without zero 
and all $d \in \Z_{\geqslant 0}$   
with $d_1+\cdots+d_{m-1}=d$. 
Then there exists some polynomial $T_{\sub{d},{\I}} \in \C[X]$ of degree 
 at most $d+m-\deg A_{\I}$ such that 
$T_{\sub{d},{\I}}(k) = \tilde{T}_{\sub{d},{\I}}(k),$ 
for all $k \in \{1, \ldots, r+1\}$.  
In particular, if for some $k \in \{1,\ldots,r+1\}$, the set  
$\{ (\sub{\mu},\sub{\alpha}) \in \hat{\P}_m(\delta_k) \; | \; 
\sub{\mu} \in [\![ \delta_{r+1}, \delta_k ]\!] ,\, \he(\sub{\mu}) \in {\I} \}$ 
is empty, we have $T_{\sub{d},{\I}}(k) =0$.}

The case $m=1$ is empty. 
Assume $m = 2$, 
and let $\I \in \E_2$ without zero. 
The only equivalence class in $\E_2$ without zero 
is $[1,-1]$, so $\I=[1,-1]$ and $p([1,-1])=2$. 
Let  $k \in \{1,\ldots,r\}$. Then the set 
$\{ (\sub{\mu},\sub{\alpha}) \in \hat{\P}_2(\delta_k) \ | \  
\sub{\mu} \in [\![ \delta_{r+1}, \delta_k]\!] ,\, \he(\sub{\mu}) \sim (1,-1) \}$ 
is nonempty. It is empty for $k=r+1$.  
Let $d=d_1 \in \Z_{\geqslant 0}$. 
We get  
$$ \sum_{(\sub{\mu},\sub{\alpha}) 
\in \hat{\P}_2(\delta_k) 
\atop \sub{\mu} \in [\![ \delta_{r+1}, \delta_k]\!], 
\,  \he(\sub{\mu})  \sim (1,-1)} i_1^{d_1} 
=  \sum_{i_1 =1}^{r +1- k}   i_1^{d}  = S_{d}(r+1 -  k). 
$$
By Lemma \ref{Lem:Newton}, the polynomial 
$T_{d,[1,-1]}(X) := S_{d}(r+1 -  X),$ 
has degree $d+1=d+2- \deg A_{[1,-1]}$ 
since $A_{[1,-1]}(X_1)=X_1$. 
Hence, for any $k \in \{1,\ldots,r\}$, 
$$\sum_{(\sub{\mu},\sub{\alpha}) \in \hat{\P}_2(\delta_k) 
\atop \sub{\mu} \in [\![ \delta_{r+1}, \delta_k]\!], 
\, \he(\sub{\mu})  \sim (1,-1) } i_1^{d_1} 
= T_{d,[1,-1]} (k).$$ 
Moreover, the set $\{ (\sub{\mu},\sub{\alpha}) \in \hat{\P}_2(\delta_k) \ | \  
\sub{\mu} \in [\![ \delta_{r+1}, \delta_k]\!] ,\, \he(\sub{\mu}) \sim (1,-1) \}$ is empty if and only if $k = r+1$, but $T_{d,[1,-1]} (r+1) = S_{d}(0) =0$. Therefore the equality still holds.
This proves the claim for $m=2$. 

Assume $m \geqslant 3$ and the claim proven for any $m' \in \{ 1,\ldots,m-1\}.$
Let $\I \in \E_m$ without zero, set $p:=p(\I)$, and 
let $\sub{d}=(d_1,\ldots,d_{p-1})$ with 
$d_1 + \cdots+ d_{p-1} =d$. 
Let $k \in \{1,\ldots,r+1\}$ such that 
the set $\{ (\sub{\mu},\sub{\alpha}) \in \hat{\P}_m(\delta_k), \ | \  
\sub{\mu} \in [\![ \delta_{r+1}, \delta_k]\!] \textrm{ and } 
\he(\sub{\mu})\in \I \}$ 
is nonempty. 
Then the set 
$\{ (\sub{\mu}',\sub{\alpha}') \in \hat{\P}_{m-1}(\delta_k), \ | \  
\sub{\mu}' \in [\![ \delta_{r+1}, \delta_k]\!] \textrm{ and } 
\he(\sub{\mu}') \in \I^\#  \}$ 
is nonempty, too.

$\ast$ 
Assume that $i_{p-1}+i_p >0$. 
Let $\sub{\mu} \in [\![\delta_{r+1},\delta_k ]\!]$ 
such that $\sub{i}:=\he(\sub{\mu}) \in \I$. 
Set $\sub{i}' := \he(\sub{\mu}^{\# p})$, 
\begin{align*}
 \sub{i}' 
 &= (i_1', \ldots, i'_{p-2}, i'_{p-1}, i'_p, \ldots, i'_{m-1})
\end{align*}
where $i'_{p-1} = i_{p-1}+i_p >0$. 
Then $i_{p-1} > i'_{p-1}$, and so $ i_{p-1} = i'_{p-1} + i$
with $i$ runs through $\{1,\ldots,r +1 - k -i'_1 - \cdots - i'_{p-1}\}$.
Hence, there are precisely $r+1-k-(i'_1 +\cdots + i'_{p - 1})$ 
elements $ \sub{i} \in {\Z}^m$ such that $\sub{i} \in {\I}$ 
and $\sub{i}^{\# p} = \he(\sub{\mu}^{\# p})$. The heights  $\sub{i}:=\he(\sub{\mu}) \in \I$
can be expressed in term of $\sub{i}'$ as follows:
\begin{align*}
i_1 = i'_1, \quad \ldots, \quad i_{p-2} = i'_{p-2}, \, i_{p-1} = i'_{p-1}+ i, \qquad i_p = -i, 
\, i_{p+1} = i'_{p}, \quad \ldots,  \quad i_{m-1} = i'_{m-2},
\end{align*}
where $i$ runs through $\{1,\ldots,r +1 - k -i'_1 - \cdots - i'_{p-1}\}$ 
(see Figure \ref{fig:keyA1} 
 for an illustration).

\begin{figure}[h]
\begin{center}
\begin{tikzpicture}[scale=0.8]
    \fill (0,-1) circle (0.05)node(xline)[above] {$\delta_k$};
    \fill (8,-1) circle (0.05)node(xline)[above] {$\mu^{(p)}$};
    \fill (9.5,-1) circle (0.05)node(xline)[above] {$\delta_{r+1}$}; 
\fill[blue] (0,-1) circle (0.05);
 \draw[thick, blue, decoration={markings, mark=at position 0.5 with {\arrow{>}}},
        postaction={decorate}](0,-1) -- (1,-1);
  \node[text=blue] at (0.5,-0.8) {{\scriptsize $i_1$}} ;
 \draw[thick, dotted,blue] (1,-1) -- (2,-1);
  \draw[thick, blue, decoration={markings, mark=at position 0.5 with {\arrow{>}}},
        postaction={decorate}](2,-1) -- (3,-1);
  \node[text=blue] at (2.5,-0.8) {{\scriptsize $i_{p-2}$}} ;
\fill[blue] (3,-1) circle (0.05);
  \draw[thick, blue, decoration={markings, mark=at position 0.5 with {\arrow{>}}},
        postaction={decorate}](3,-1) -- (8,-1);
  \node[text=blue] at (5.5,-0.8) {{\scriptsize $i_{p-1}$}} ;
\fill[blue] (8,-1) circle (0.05);
  \draw[thick, blue, decoration={markings, mark=at position 0.5 with {\arrow{>}}},
        postaction={decorate}](8,-1.15) -- (6,-1.15);
  \node[text=blue] at (7,-1.35) {{\scriptsize $i_{p}$}} ;
  \draw[thick, blue, decoration={markings, mark=at position 0.5 with {\arrow{>}}},
        postaction={decorate}](6,-1.15) -- (4.5,-1.15);
  \node[text=blue] at (5.3,-1.35) {{\scriptsize $i_{p+1}$}} ;
\fill[blue] (6,-1.15) circle (0.05);
 \draw[thick, dotted,blue] (4.5,-1.15) -- (2.5,-1.15);
\fill[red] (0,-2.1) circle (0.05);
 \draw[thick, red, decoration={markings, mark=at position 0.5 with {\arrow{>}}},
        postaction={decorate}](0,-2.1) -- (1,-2.1);
  \node[text=red] at (0.5,-1.85) {{\scriptsize $i'_1$}} ;
 \draw[thick, dotted,red] (1,-2.1) -- (2,-2.1);
  \draw[thick, red, decoration={markings, mark=at position 0.5 with {\arrow{>}}},
        postaction={decorate}](2,-2.1) -- (3,-2.1);
  \node[text=red] at (2.5,-1.85) {{\scriptsize $i'_{p-2}$}} ;
\fill[red] (3,-2.1) circle (0.05);
  \draw[thick, red, decoration={markings, mark=at position 0.5 with {\arrow{>}}},
        postaction={decorate}](3,-2.1) -- (6,-2.1);
  \node[text=red] at (4.6,-1.85) {{\scriptsize $i'_{p-1}=i_{p-1}+i_p$}} ;
\fill[red] (6,-2.1) circle (0.05);
  \draw[thick,dashed, red, decoration={markings, mark=at position 0.5 with {\arrow{>}}},
        postaction={decorate}](6,-2.1) -- (9.5,-2.1);
  \node[text=red] at (7.75,-1.85) {{\scriptsize $i$}} ;
  \draw[thick,dashed, red, decoration={markings, mark=at position 0.5 with {\arrow{>}}},
        postaction={decorate}](9.5,-2.25) -- (6,-2.25);
  \node[text=red] at (8,-2.5) {{\scriptsize $-i$}} ;
  \draw[thick, red, decoration={markings, mark=at position 0.5 with {\arrow{>}}},
        postaction={decorate}](6,-2.25) -- (4.5,-2.25);
  \node[text=red] at (5.3,-2.5) {{\scriptsize $i'_p = i_{p+1}$}} ;
\fill[red] (6,-2.25) circle (0.05);
 \draw[thick, dotted,red] (4.5,-2.25) -- (2.5,-2.25);
 \end{tikzpicture}
 \caption{\label{fig:keyA1}
$\sub{i}= \he(\sub{\mu})$ and $\sub{i}' := \he(\sub{\mu}^{\# p})$ 
for the case $i_{p-1}+i_p >0$} 
\end{center}
\end{figure}
By Lemma \ref{Lem:Newton} we get,   
{\small
\begin{align*}
& \tilde{T}_{\sub{d},\I}(k)  
  =\sum_{{\tiny \substack{(\sub{\mu}',\sub{\alpha}') \in \hat{\P}_{m-1}(\delta_k) 
    \\ \sub{\mu} \in [\![ \delta_{r+1}, \delta_k]\!] \\ \he(\sub{\mu}') \in  \I^\#}}}
  \sum_{{\tiny \substack{1\le i  \leqslant  r +1-k \\- i'_1 - \cdots  - i'_{p-1} }}}
  {\footnotesize (i'_{1})^{d_1} \cdots (i'_{p-2})^{d_{p-2}} (i'_{p-1}+i)^{d_{p-1}}
  ({-i})^{d_p} \cdots (i'_{m-2})^{d_{m-1}}}\\
& \quad = 
  \sum_{{\tiny \substack{(\sub{\mu}',\sub{\alpha}') \in \hat{\P}_{m-1}(\delta_k) 
    \\ \sub{\mu} \in [\![ \delta_{r+1}, \delta_k]\!] \\ \he(\sub{\mu}') \in  \I^\#}}}
  \sum_{{\tiny \substack{1\le i  \leqslant  r +1-k \\- i'_1 - \cdots  - i'_{p-1} }}}
  (i'_1)^{d_1} \cdots (i'_{p-2})^{d_{p-2}}
   \left( \sum_{j=0}^{d_{p-1}} \binom{d_{p-1}}{j}
  (i'_{p-1})^{d_{p-1}- j} i^j \right)
  (-i)^{d_p} \cdots (i'_{m-2})^{d_{m-1}}\\
  & \quad = 
  \sum_{j=0}^{d_{p-1}} \sum_{l=0}^{d_p+j} \frac{1}{d_p+j+1} \binom{d_{p-1}}{j} \binom{d_p+j+1}{l} \tilde{B}_l 
    \sum_{{\tiny \substack{\sub{q} \in \N^{p} 
    \\ |\sub{q}|=d_p+j+1-l}}} \frac{(d_p+j+1-l)!}{q_1!\ldots q_p!}  (-1)^{2d_p+j+1-l-q_p} (r+1-k)^{q_p}
 \tilde{T}_{\sub{d}',{\I}^{\#}}(k), 
\end{align*}}
\smallskip
\noindent
where   
$\sub{d}'=(d_1+q_1, \ldots, d_{p-1}+q_{p-1}-j, d_{p+1}, \ldots, d_{m-1})$,
with $|\sub{d}'| 
= d+1-l-q_p \leqslant d+1-q_p$. 
 
By the induction hypothesis applied to $m-1$ and ${\I}^{\#}$, 
there exists a polynomial 
${T}_{\sub{d}',{\I}^{\#}}$ of degree $\leqslant  d+1-q_p+ (m-1)- \deg A_{{\I}^{\#}} = d-q_p+m- \deg A_{\I}$, since $\deg A_{{\I}^{\#}} = \deg A_{{\I}}$ by Lemma \ref{Lem:weight1}, 
such that ${T}_{\sub{d}',{\I}^{\#}}(k)= \tilde{T}_{\sub{d}',{\I}^{\#}}(k)$
for all $k$ 
such that $\{ (\sub{\mu}',\sub{\alpha}') \in \hat{\P}_{m-1}(\delta_k), \ | \  
\sub{\mu}' \in [\![ \delta_{r+1}, \delta_k]\!],
\he(\sub{\mu}')\in \I^{\#} \}$ 
is nonempty. 
Setting 
\begin{align*} 
{T}_{\sub{d},{\I}}(X) &= 
\sum_{j=0}^{d_{p-1}} \sum_{l=0}^{d_p+j} \frac{1}{d_p+j+1} \binom{d_{p-1}}{j} \binom{d_p+j+1}{l} \tilde{B}_l \\
& \quad 
\times     \sum_{{\tiny \substack{\sub{q} \in \N^{p+1} 
    \\ |\sub{q}|=d_p+j+1-l}}} \frac{(d_p+j+1-l)!}{q_1!\ldots q_p!}  (-1)^{2d_p+j+1-l-q_p} (r+1-k)^{q_p}
 {T}_{\sub{d}',{\I}^{\#}}(X), 
\end{align*}
it is clear by the induction hypothesis applied to $m-1$ and ${\I}^{\#}$ 
that 
$T_{\sub{d}, {\I}}(k) = \tilde{T}_{\sub{d},{\I}}(k)$ and that 
$T_{\sub{d}, {\I}}$ is a polynomial of degree 
$\leqslant d+m- \deg A_{{\I}}$ for all $k$ 
such that $\{ (\sub{\mu},\sub{\alpha}) \in \hat{\P}_m(\delta_k), \ | \  
\sub{\mu} \in [\![ \delta_{r+1}, \delta_k]\!],
\he(\sub{\mu})\in \I \}$ 
is nonempty.

It remains to verify that $T_{\sub{d}, \I} (k) =0$  
when the set
\begin{align} \label{eq:setT}
\{ (\sub{\mu},\sub{\alpha}) \in \hat{\P}_{m}(\delta_k) \; | \;  
\sub{\mu} \in [\![ \delta_{r+1}, \delta_k]\!] \textrm{ and } 
\he(\sub{\mu}) \in \I \}
\end{align}
is empty.
In this case, $\tilde{T}_{\sub{d},\I}(k) =0$ by the definition.
The set~\eqref{eq:setT} is empty if  
 $\{ (\sub{\mu}',\sub{\alpha}') \in \hat{\P}_{m-1}(\delta_k) \; | \;  
\sub{\mu}' \in [\![ \delta_{r+1}, \delta_k]\!] \textrm{ and } 
\he(\sub{\mu}') \in \I^\#  \}=\varnothing$.   
But our induction hypothesis 
says that, in this case, $$T_{\sub{d}, \I} (k) =T_{\sub{d}', \I^\# }(k) = 0,$$
for any $\sub{d}' \in \Z_{\geqslant 0}^{p-1}$.  
Otherwise, this means that the set 
 $\{ (\sub{\mu}',\sub{\alpha}') \in \hat{\P}_{m-1}(\delta_k) \; | \;  
\sub{\mu}' \in [\![ \delta_{r+1}, \delta_k]\!] \textrm{ and } 
\he(\sub{\mu}')~\in~\I^\# ~\}$ is nonempty and so for any $\sub{i}' \in  \I^\# $ 
and 
for any $(\sub{\mu}',\sub{\alpha}') \in \hat{\P}_{m-1}(\delta_k)$ 
such that $\he(\sub{\mu}') = \sub{i}'$, we have 
$i'_1 + \cdots  + i'_{p-1} = r +1-k$. 
In that event, 
$T_{\sub{d}, \I} (k)= 0$ by the construction.

$\ast$ 
Assume that $i_{p-1}+i_p <0$. 
Let $\sub{\mu} \in [\![\delta_{r+1},\delta_k ]\!]$ 
such that $\sub{i}:=\he(\sub{\mu}) \in \I$. 
Set $\sub{i}' := \he(\sub{\mu}^{\# p})$, 
\begin{align*}
 \sub{i}' 
 &= (i_1', \ldots, i'_{p-2}, i'_{p-1}, i'_p, \ldots, i'_{m-1})
\end{align*}
where $i'_{p-1} = i_{p-1}+i_p <0$. 
We have $i_{p} < i'_{p-1} <0$, and so $ i_{p} = i'_{p-1} - i$
with $i$ runs through $\{1,\ldots,r +1 - k -i'_1 - \cdots - i'_{p-1}\}$.
Hence,
the heights  $\sub{i}:=\he(\sub{\mu}) \in \I$
can be expressed in term of $\sub{i}'$ as follows:
\begin{align*}
i_1 = i'_1,\quad  \ldots, \quad i_{p-2} = i'_{p-2},  
\, i_{p-1} = i,\quad  i_{p} = i'_{p-1} - i,  
\, i_{p+1} = i'_{p},\quad  \ldots, \quad  i_{m-1} = i'_{m-2}, 
\end{align*}
where $i$ runs through $\{1,\ldots,r +1 - k -i'_1 - \cdots - i'_{p-1}\}$ 
(see Figure \ref{fig:keyA2} 
 for an illustration).
\begin{figure}[h]
\begin{center}
\begin{tikzpicture}[scale=0.85]
    \fill (0,-1) circle (0.05)node(xline)[above] {$\delta_k$}; 
    \fill (8,-1) circle (0.05)node(xline)[above] {$\mu^{(p)}$};
    \fill (9.5,-1) circle (0.05)node(xline)[above] {$\delta_{r+1}$}; 
\fill[blue] (0,-1) circle (0.05);
 \draw[thick, blue, decoration={markings, mark=at position 0.5 with {\arrow{>}}},
        postaction={decorate}](0,-1) -- (1,-1);
  \node[text=blue] at (0.5,-0.8) {{\scriptsize $i_1$}} ;
\fill[blue] (1,-1) circle (0.05);
 \draw[thick, blue, decoration={markings, mark=at position 0.5 with {\arrow{>}}},
        postaction={decorate}](1,-1) -- (2,-1);
  \node[text=blue] at (1.5,-0.8) {{\scriptsize $i_2$}} ;
 \draw[thick, dotted,blue] (2,-1) -- (5,-1);
  \draw[thick, blue, decoration={markings, mark=at position 0.5 with {\arrow{>}}},
        postaction={decorate}](5,-1) -- (6,-1);
  \node[text=blue] at (5.5,-0.8) {{\scriptsize $i_{p-2}$}} ;
\fill[blue] (6,-1) circle (0.05);
  \draw[thick, blue, decoration={markings, mark=at position 0.5 with {\arrow{>}}},
        postaction={decorate}](6,-1) -- (8,-1);
  \node[text=blue] at (7,-0.8) {{\scriptsize $i_{p-1}$}} ;
\fill[blue] (8,-1) circle (0.05);
  \draw[thick, blue, decoration={markings, mark=at position 0.5 with {\arrow{>}}},
        postaction={decorate}](8,-1.15) -- (3,-1.15);
  \node[text=blue] at (5.5,-1.35) {{\scriptsize $i_{p}$}} ;
  \draw[thick, blue, decoration={markings, mark=at position 0.5 with {\arrow{>}}},
        postaction={decorate}](3,-1.15) -- (2,-1.15);
  \node[text=blue] at (2.5,-1.35) {{\scriptsize $i_{p+1}$}} ;
\fill[blue] (3,-1.15) circle (0.05);
 \draw[thick, dotted,blue] (2,-1.15) -- (1,-1.15);
%
\fill[red] (0,-2.1) circle (0.05);
 \draw[thick, red, decoration={markings, mark=at position 0.5 with {\arrow{>}}},
        postaction={decorate}](0,-2.1) -- (1,-2.1);
  \node[text=red] at (0.5,-1.85) {{\scriptsize $i'_1$}} ;
\fill[red] (1,-2.1) circle (0.05);
 \draw[thick, red, decoration={markings, mark=at position 0.5 with {\arrow{>}}},
        postaction={decorate}](1,-2.1) -- (2,-2.1);
  \node[text=red] at (1.5,-1.85) {{\scriptsize $i'_2$}} ;
 \draw[thick, dotted,red] (2,-2.1) -- (5,-2.1);
  \draw[thick, red, decoration={markings, mark=at position 0.5 with {\arrow{>}}},
        postaction={decorate}](5,-2.1) -- (6,-2.1);
  \node[text=red] at (5.5,-1.85) {{\scriptsize $i'_{p-2}$}} ;
\fill[red] (6,-2.1) circle (0.05);
  \draw[thick, dashed, red, decoration={markings, mark=at position 0.5 with {\arrow{>}}},
        postaction={decorate}](6,-2.1) -- (9.5,-2.1);
\node[text=red] at (8,-1.85) {{\scriptsize $i$}} ;
\draw[thick,dashed, red, decoration={markings, mark=at position 0.5 with {\arrow{>}}},
        postaction={decorate}](9.5,-2.25) -- (6,-2.25);
  \node[text=red] at (8,-2.5) {{\scriptsize $-i$}} ;
\fill[red] (6,-2.25) circle (0.05);
  \draw[thick, red, decoration={markings, mark=at position 0.5 with {\arrow{>}}},
        postaction={decorate}](6,-2.25) -- (3,-2.25);
  \node[text=red] at (4.5,-2.5) {{\scriptsize $i'_{p-1}=i_{p-1}+i_p$}} ;
  \fill[red] (3,-2.25) circle (0.05);
  \draw[thick, red, decoration={markings, mark=at position 0.5 with {\arrow{>}}},
        postaction={decorate}](3,-2.25) -- (2,-2.25);
  \node[text=red] at (2.5,-2.5) {{\scriptsize $i'_p = i_{p+1}$}} ;
fill[red] (3,-2.25) circle (0.05);
 \draw[thick, dotted,red] (2,-2.25) -- (1,-2.25);
 \end{tikzpicture}
 \caption{\label{fig:keyA2}
$\sub{i}= \he(\sub{\mu})$ and $\sub{i}' := \he(\sub{\mu}^{\# p})$ 
for the case $i_{p-1}+i_p <0$} 
\end{center}
\end{figure}

Hence, 
\begin{align*}
\tilde{T}_{\sub{d},{\I}}(k)
=\sum_{{\tiny \substack{(\sub{\mu}',\sub{\alpha}') \in \hat{\P}_{m-1}(\delta_k) 
    \\ \sub{\mu} \in [\![ \delta_{r+1}, \delta_k]\!] \\ \he(\sub{\mu}') \in  \I^\#}}}
  \sum_{{\tiny \substack{1\le i  \leqslant  r +1-k \\- i'_1 - \cdots  - i'_{p-1} }}}
  {\footnotesize (i'_{1})^{d_1} \cdots (i'_{p-2})^{d_{p-2}}i^{d_{p-1}} 
 (i'_{p-1}-i)^{d_{p}}  (i'_{p})^{d_{p+1}}\cdots (i'_{m-2})^{d_{m-1}}}.
\end{align*}
Then we conclude exactly as in the first case. 
To verify that $T_{\sub{d}, \I} (k) =0$  
when the set~\eqref{eq:setT} is empty,
the arguments are the same as for the case $i_{p-1}+i_p >0$ so we omit details. 

$\ast$ Assume that $i_{p-1}+i_p = 0$. 
Since $\I$ has no zero, we necessarily have $m \geqslant 4$. 
Let $\sub{\mu} \in [\![\delta_{r+1},\delta_k ]\!]$ 
such that $\sub{i}:=\he(\sub{\mu}) \in \I$. 
Set $\sub{i}' := \he(\sub{\mu}^{\# p})$, 
\begin{align*}
 \sub{i}' 
 &= (i_1', \ldots, i'_{p-2}, i'_{p-1}, i'_p, \ldots, i'_{m-2})
\end{align*}
where $i'_{p-1} = i_{p+1}$. Set $i_{p-1} = i$ then $ i_{p} = - i$
with $i$ runs through $\{1,\ldots,r +1 - k -i'_1 - \cdots - i'_{p-2}\}$.
Hence,
the heights  $\sub{i}:=\he(\sub{\mu}) \in \I$
can be expressed in term of $\sub{i}'$ as follows:
\begin{align*}
i_1 = i'_1, \quad  \ldots,  \quad i_{p-2} = i'_{p-2}, \, 
i_{p-1} = i, \quad  i_{p} =  - i, \, 
i_{p+1} = i'_{p-1},\quad  \ldots,\quad  i_{m-1} = i'_{m-3}
\end{align*}
where $i$ runs through $\{1,\ldots,r +1 - k -i'_1 - \cdots - i'_{p-2}\}$ 
(see Figure \ref{fig:keyA3} 
 for an illustration).
\begin{figure}[h]
\begin{center}
\begin{tikzpicture} [scale=0.85]
    \fill (0,-1) circle (0.05)node(xline)[below] {$\delta_k$};
        \fill (6,-1) circle (0.05)node(xline)[below] {$\mu^{(p)}$};
    \fill (7.5,-1) circle (0.05)node(xline)[below] {$\delta_{r+1}$}; 
\fill[blue] (0,-1) circle (0.05);
 \draw[thick, blue, decoration={markings, mark=at position 0.5 with {\arrow{>}}},
        postaction={decorate}](0,-1) -- (1,-1);
  \node[text=blue] at (0.5,-0.8) {{\scriptsize $i_1$}} ;
 \draw[thick, dotted,blue] (1,-1) -- (2,-1);
  \draw[thick, blue, decoration={markings, mark=at position 0.5 with {\arrow{>}}},
        postaction={decorate}](2,-1) -- (3,-1);
  \node[text=blue] at (2.5,-0.8) {{\scriptsize $i_{p-2}$}} ;
\fill[blue] (3,-1) circle (0.05);
  \draw[thick, blue, decoration={markings, mark=at position 0.5 with {\arrow{>}}},
        postaction={decorate}](3,-1) -- (6,-1);
  \node[text=blue] at (4.5,-0.8) {{\scriptsize $i_{p-1}$}} ;
\fill[blue] (6,-1) circle (0.05);
  \draw[thick, blue, decoration={markings, mark=at position 0.5 with {\arrow{>}}},
        postaction={decorate}](6,-1.15) -- (3,-1.15);
  \node[text=blue] at (4.5,-1.35) {{\scriptsize $i_{p}$}} ;
  \draw[thick, blue, decoration={markings, mark=at position 0.5 with {\arrow{>}}},
        postaction={decorate}](3,-1.15) -- (2,-1.15);
  \node[text=blue] at (2.5,-1.35) {{\scriptsize $i_{p+1}$}} ;
\fill[blue] (3,-1.15) circle (0.05);
 \draw[thick, dotted,blue] (2,-1.15) -- (1,-1.15);
\fill[red] (0,-2.1) circle (0.05);
 \draw[thick, red, decoration={markings, mark=at position 0.5 with {\arrow{>}}},
        postaction={decorate}](0,-2.1) -- (1,-2.1);
  \node[text=red] at (0.5,-1.85) {{\scriptsize $i'_1$}} ;
 \draw[thick, dotted,red] (1,-2.1) -- (2,-2.1);
  \draw[thick, red, decoration={markings, mark=at position 0.5 with {\arrow{>}}},
        postaction={decorate}](2,-2.1) -- (3,-2.1);
  \node[text=red] at (2.5,-1.85) {{\scriptsize $i'_{p-2}$}} ;
\fill[red] (3,-2.1) circle (0.05);
\draw[thick,dashed, red, decoration={markings, mark=at position 0.5 with {\arrow{>}}},
        postaction={decorate}](3,-2.1) -- (7.5,-2.1);
  \node[text=red] at (5.25,-1.85) {{\scriptsize $i$}} ;
  \draw[thick,dashed, red, decoration={markings, mark=at position 0.5 with {\arrow{>}}},
        postaction={decorate}](7.5,-2.25) -- (3,-2.25);
  \node[text=red] at (5.25,-2.5) {{\scriptsize $-i$}} ;
  \draw[thick, red, decoration={markings, mark=at position 0.5 with {\arrow{>}}},
        postaction={decorate}](3,-2.25) -- (2,-2.25);
  \node[text=red] at (2.5,-2.5) {{\scriptsize $i'_{p-1} = i_{p+1}$}} ;
\fill[red] (3,-2.25) circle (0.05);
 \draw[thick, dotted,red] (2,-2.25) -- (1,-2.25);
 \end{tikzpicture}
 \caption{\label{fig:keyA3}
$\sub{i}= \he(\sub{\mu})$ and $\sub{i}' := \he(\sub{\mu}^{\# p})$ 
for the case $i_{p-1}+i_p =0$} 
\end{center}
\end{figure}

By Lemma \ref{Lem:Newton} we get,   
\begin{align*}
\tilde{T}_{\sub{d},\I}(k) 
&  =\sum_{{\tiny \substack{(\sub{\mu}',\sub{\alpha}') \in \hat{\P}_{m-1}(\delta_k) 
    \\ \sub{\mu} \in [\![ \delta_{r+1}, \delta_k]\!] \\ \he(\sub{\mu}') \in  \I^\#}}}
  \sum_{{\tiny \substack{1\le i  \leqslant  r +1-k \\- i'_1 - \cdots  - i'_{p-2} }}}
  {\footnotesize (i'_{1})^{d_1} \cdots (i'_{p-2})^{d_{p-2}} (i)^{d_{p-1}}
  (-i)^{d_p} (i'_{p})^{d_{p+1}} \cdots (i'_{m-3})^{d_{m-1}}}\\
& = 
\frac{1}{d_{p-1}+d_p+1}\sum_{j=0}^{d_{p-1}+d_p} \binom{d_{p-1}+d_p+1}{j} \tilde{B}_j 
\sum_{{\tiny \substack{\sub{q} \in \N^{p-1} 
    \\ |\sub{q}|=d_{p-1}+d_p+1-j}}} \frac{(d_{p-1}+d_p+1-j)!}{q_1!\ldots q_{p-1}!} \\
& \hspace{0.5cm} \times 
    (-1)^{d_{p-1}+2d_p+1-j-q_{p-1}}
    (r+1-k)^{q_{p-1}}
 \tilde{T}_{\sub{d}',{\I}^{\#}}(k), 
\end{align*}
where 
$\sub{d}'=(d_1+q_1, \ldots, d_{p-2}+q_{p-2}, d_{p+1}, \ldots, d_{m-1})$,
with $|\sub{d}'|= d-d_{p-1} - d_p+ d_{p-1} + d_p
+1-j-q_{p-1} \leqslant d+1-q_{p-1}$. 

Setting 
\begin{align*} 
{T}_{\sub{d},{\I}}(X) &= 
\frac{1}{d_{p-1}+d_p+1}\sum_{j=0}^{d_{p-1}+d_p} \binom{d_{p-1}+d_p+1}{j} \tilde{B}_j 
\sum_{{\tiny \substack{\sub{q} \in \N^{p-1} 
    \\ |\sub{q}|=d_{p-1}+d_p+1-j}}} \frac{(d_{p-1}+d_p+1-j)!}{q_1!\ldots q_{p-1}!} \\
& \quad 
   \times (-1)^{d_{p-1}+2d_p+1-j-q_{p-1}}
    (r+1-k)^{q_{p-1}}
 {T}_{\sub{d}',{\I}^{\#}}(X), 
\end{align*}
it is clear by the induction hypothesis applied to $m-2$ and ${\I}^{\#}$ 
that 
$T_{\sub{d}, {\I}}(k) = \tilde{T}_{\sub{d},{\I}}(k)$ 
and that $T_{\sub{d}, {\I}}$ is a polynomial of degree 
$\leqslant  d+m- \deg A_{{\I}}$ for all $k$ 
such that $\{ (\sub{\mu},\sub{\alpha}) \in \hat{\P}_m(\delta_k), \ | \  
\sub{\mu} \in [\![ \delta_{r+1}, \delta_k]\!],
\he(\sub{\mu})\in \I \}$ 
is nonempty.

It remains to verify that $T_{\sub{d},\I} (k) =0$  
when the Set~\eqref{eq:setT} is empty. 
In this case, $\tilde{T}_{\sub{d},\I} (k)= 0$ by definition.
The set~\eqref{eq:setT} is empty if  
 $\{ (\sub{\mu}',\sub{\alpha}') \in \hat{\P}_{m-1}(\delta_k) \; | \;  
\sub{\mu}' \in [\![ \delta_{r+1}, \delta_k]\!] \textrm{ and } 
\he(\sub{\mu}') \in \I^\#  \}=\varnothing$.   
But our induction hypothesis 
says that, in this case, $$T_{\sub{d},\I} (k) =T_{\sub{d}',
\I^\# }(k) = 0$$
for any $\sub{d'} \in \Z_{\geqslant 0}^{p-1}$. 
Otherwise, this means that for any $\sub{i}' \in  \I^\# $ 
and 
for any $(\sub{\mu}',\sub{\alpha}') \in \hat{\P}_{m-2}(\delta_k)$ 
such that $\he(\sub{\mu}') = \sub{i}'$, we have 
$i'_1 + \cdots  + i'_{p-2} = r +1-k$. 
In that event,  
$T_{\sub{d},\I} (k)= 0$ by the construction. \qed
\end{proof}

We now consider the elements of ${\E}_m$ with zeroes.  
Let $(m,n) \in (\Z_{> 0})^2$, with $n \in\{0,\ldots,m\}$, and $\I \in \E_m$ 
with $n$ zeroes in positions $j_1 <\cdots <j_n$. 
This means that $\mu^{(j_{l})}=\mu^{(j_{l}+1)}$ for $l=1,\ldots,n$. 
Let $\sub{i} \in \I$,  
and let $\tilde{\sub{\imath}}$ be the sequence of ${\Z}^{m-n}$ obtained from $\sub{i}$ 
by removing all zeroes.  
Denote by $\tilde{\I}$ the equivalence class of $\sub{\tilde{i}}$ in 
${\Z}^{m-n}$. 
This class only depends on $\I$ and has no zero. 
 
Let $(\tilde{\sub{\mu}},\tilde{\sub{\alpha}}) \in \hat{\P}_{m-n}$ 
with $\he(\tilde{\sub{\mu}}) \in \tilde{\I}$. 
Thus $(\tilde{\sub{\mu}},\tilde{\sub{\alpha}}) $ has no loop. 
Define weighted paths whose height is in $\I$ 
from $(\tilde{\sub{\mu}},\tilde{\sub{\alpha}})$ as follows. 
Set $\sub{j}:= (j_1,\ldots,j_n)$ 
and let $\sub{\beta}:=(\beta^{(1)},\ldots,\beta^{(n)})$ be in 
$\Pi_{\sub{\mu}^{(\sub{j})}}:=
\Pi_{\mu^{(j_1)}} \times \cdots \times \Pi_{\mu^{(j_n)}}.$ 
Define $(\tilde{\sub{\mu}},\tilde{\sub{\alpha}})_{\sub{j};\sub{\beta}}$ 
to be the weighted path of length $m$ obtained from 
$(\tilde{\sub{\mu}},\tilde{\sub{\alpha}})$ by 
^^ ^^ gluing the loop'' labeled by $\beta^{(l)}$ 
at the vertex $\mu^{(j_l)}$ for $l=1,\ldots,n$. 
Thus for such a $\sub{\beta} \in \Pi_{\sub{\mu}^{(\sub{j})}}$ the height 
of the weighted path $(\tilde{\sub{\mu}},\tilde{\sub{\alpha}})_{\sub{j};\sub{\beta}}$ 
is in $\I$. Moreover, all $(\sub{\mu},\sub{\alpha}) \in \hat{\P}_m$ 
with $\he(\sub{\mu}) \in \I$ are of this form.

\begin{example} \label{Ex:loopA} 
Assume that $r>4$. 
Let $\I \in \E_5$ be the class 
$[1,0,1,0,-2]$. 
Then $\I$ has 2 zeros in positions 2 and 4 and $\tilde{\I} = [1,1,-2]$. 
We represent below the weighted path 
$$(\tilde{\sub{\mu}},\tilde{\sub{\alpha}}) 
=((\delta_2,\delta_4,\delta_5,\delta_2),(\beta_{2}+\beta_3, 
\beta_4,-\beta_2-\beta_3-\beta_4)) 
\in \hat{\P}_{3}(\delta_2)$$  
whose height $(2,1,-3)$ is in $\tilde{\I}$, and 
the four weighted paths 
$(\tilde{\sub{\mu}},\tilde{\sub{\alpha}})_{(2,4);(\beta_4,\beta_5)}$, 
$(\tilde{\sub{\mu}},\tilde{\sub{\alpha}})_{(2,4);(\beta_4,\beta_4)}$, 
$(\tilde{\sub{\mu}},\tilde{\sub{\alpha}})_{(2,4);(\beta_3,\beta_5)}$ 
and $(\tilde{\sub{\mu}},\tilde{\sub{\alpha}})_{(2,4);(\beta_3,\beta_4)}$ 
whose height is in $\I$ 
obtained from it: 

\begin{center} 
\begin{tikzpicture} [scale=0.8]
\fill (8,2) circle (0.07);\node at (7.7,2) {{\small $\delta_2$}}; 
\draw[thick,
        decoration={markings, mark=at position 0.5 with {\arrow{>}}},
        postaction={decorate}
        ]
        (8,2) -- (10,2);\node at (9,2.3) {{\scriptsize $\tilde{\alpha}^{(1)}$}};
\fill (10,2) circle (0.07);\node at (10,2.3) {{\small $\delta_4$}}; 
\draw[thick,
        decoration={markings, mark=at position 0.5 with {\arrow{>}}},
        postaction={decorate}
        ]
        (10,2) -- (11,2);\node at (10.6,2.3) {{\scriptsize $\tilde{\alpha}^{(2)}$}};
\fill (11,2) circle (0.07);\node at (11.3,2) {{\small $\delta_5$}}; 
\draw[thick,
        decoration={markings, mark=at position 0.5 with {\arrow{>}}},
        postaction={decorate}
        ]
        (11,1.85) -- (8,1.85);\node at (9.5,1.6) {{\scriptsize $\tilde{\alpha}^{(3)}$}};
\fill (0,0) circle (0.07);
\draw[thick,
        decoration={markings, mark=at position 0.5 with {\arrow{>}}},
        postaction={decorate}
        ]
        (0,0) -- (2,0);\node at (1,0.3) {{\scriptsize ${\alpha}^{(1)}$}};
\fill (2,0) circle (0.07);
\draw[thick] (2,0) to [out=155,in=180] (2,0.8);
 \draw[thick, decoration={markings, mark=at position 0.5 with {\arrow{>}}},
         postaction={decorate}] (2,0.8) to [out=360,in=25] (2,0);        
\node at (2,1) {{\scriptsize ${\alpha}^{(2)}=\beta_4$}};
\draw[thick,
        decoration={markings, mark=at position 0.5 with {\arrow{>}}},
        postaction={decorate}
        ]
        (2,0) -- (3,0);\node at (2.6,0.3) {{\scriptsize ${\alpha}^{(3)}$}};
\fill (3,0) circle (0.07);
\draw[thick] (3,0) to [out=65,in=90] (3.8,0);
 \draw[thick, decoration={markings, mark=at position 0.5 with {\arrow{>}}},
         postaction={decorate}] (3.8,0) to [out=270,in=295] (3,0);        
\node at (3.7,0.35) {{\scriptsize ${\alpha}^{(4)}=\beta_5$}};
\draw[thick,
        decoration={markings, mark=at position 0.5 with {\arrow{>}}},
        postaction={decorate}
        ]
        (3,-0.15) -- (0,-0.15);\node at (1.5,-0.4) {{\scriptsize ${\alpha}^{(5)}$}};
\fill (5,0) circle (0.07);
\draw[thick,
        decoration={markings, mark=at position 0.5 with {\arrow{>}}},
        postaction={decorate}
        ]
        (5,0) -- (7,0);\node at (6,0.3) {{\scriptsize ${\alpha}^{(1)}$}};
\fill (7,0) circle (0.07);
\draw[thick] (7,0) to [out=155,in=180] (7,0.8);
 \draw[thick, decoration={markings, mark=at position 0.5 with {\arrow{>}}},
         postaction={decorate}] (7,0.8) to [out=360,in=25] (7,0);        
\node at (7,1) {{\scriptsize ${\alpha}^{(2)}=\beta_4$}};
\draw[thick,
        decoration={markings, mark=at position 0.5 with {\arrow{>}}},
        postaction={decorate}
        ]
        (7,0) -- (8,0);\node at (7.6,0.3) {{\scriptsize ${\alpha}^{(3)}$}};
\fill (8,0) circle (0.07);
\draw[thick] (8,0) to [out=65,in=90] (8.8,0);
 \draw[thick, decoration={markings, mark=at position 0.5 with {\arrow{>}}},
         postaction={decorate}] (8.8,0) to [out=270,in=295] (8,0);        
\node at (8.7,0.35) {{\scriptsize ${\alpha}^{(4)}=\beta_4$}};
\draw[thick,
        decoration={markings, mark=at position 0.5 with {\arrow{>}}},
        postaction={decorate}
        ]
        (8,-0.15) -- (5,-0.15);\node at (6.5,-0.4) {{\scriptsize ${\alpha}^{(5)}$}};
\fill (10,0) circle (0.07);
\draw[thick,
        decoration={markings, mark=at position 0.5 with {\arrow{>}}},
        postaction={decorate}
        ]
        (10,0) -- (12,0);\node at (11,0.3) {{\scriptsize ${\alpha}^{(1)}$}};
\fill (12,0) circle (0.07);
\draw[thick] (12,0) to [out=155,in=180] (12,0.8);
 \draw[thick, decoration={markings, mark=at position 0.5 with {\arrow{>}}},
         postaction={decorate}] (12,0.8) to [out=360,in=25] (12,0);        
\node at (12,1) {{\scriptsize ${\alpha}^{(2)}=\beta_3$}};
\draw[thick,
        decoration={markings, mark=at position 0.5 with {\arrow{>}}},
        postaction={decorate}
        ]
        (12,0) -- (13,0);\node at (12.6,0.3) {{\scriptsize ${\alpha}^{(3)}$}};
\fill (13,0) circle (0.07);
\draw[thick] (13,0) to [out=65,in=90] (13.8,0);
 \draw[thick, decoration={markings, mark=at position 0.5 with {\arrow{>}}},
         postaction={decorate}] (13.8,0) to [out=270,in=295] (13,0);        
\node at (13.7,0.35) {{\scriptsize ${\alpha}^{(4)}=\beta_5$}};
\draw[thick,
        decoration={markings, mark=at position 0.5 with {\arrow{>}}},
        postaction={decorate}
        ]
        (13,-0.15) -- (10,-0.15);\node at (11.5,-0.4) {{\scriptsize ${\alpha}^{(5)}$}};
\fill (15,0) circle (0.07);
\draw[thick,
        decoration={markings, mark=at position 0.5 with {\arrow{>}}},
        postaction={decorate}
        ]
        (15,0) -- (17,0);\node at (16,0.3) {{\scriptsize ${\alpha}^{(1)}$}};
\fill (17,0) circle (0.07);
\draw[thick] (17,0) to [out=155,in=180] (17,0.8);
 \draw[thick, decoration={markings, mark=at position 0.5 with {\arrow{>}}},
         postaction={decorate}] (17,0.8) to [out=360,in=25] (17,0);        
\node at (17,1) {{\scriptsize ${\alpha}^{(2)}=\beta_3$}};
\draw[thick,
        decoration={markings, mark=at position 0.5 with {\arrow{>}}},
        postaction={decorate}
        ]
        (17,0) -- (18,0);\node at (17.6,0.3) {{\scriptsize ${\alpha}^{(3)}$}};
\fill (18,0) circle (0.07);
\draw[thick] (18,0) to [out=65,in=90] (18.8,0);
 \draw[thick, decoration={markings, mark=at position 0.5 with {\arrow{>}}},
         postaction={decorate}] (18.8,0) to [out=270,in=295] (18,0);        
\node at (18.7,0.35) {{\scriptsize ${\alpha}^{(4)}=\beta_4$}};
\draw[thick,
        decoration={markings, mark=at position 0.5 with {\arrow{>}}},
        postaction={decorate}
        ]
        (18,-0.15) -- (15,-0.15);\node at (16.5,-0.4) {{\scriptsize ${\alpha}^{(5)}$}};
 \end{tikzpicture}
\end{center} 
\end{example}

\begin{lemma}       \label{Lem:weight2} 
Let $m \in \Z_{> 0} $ 
and $\I \in \E_m$ with $n$ zeroes  
in positions $j_1,\ldots,j_n$ ($n\le m$). 
There is a polynomial $B_{\I}$ 
of degree $n$ such that 
for all $(\tilde{\sub{\mu}},\tilde{\sub{\alpha}}) \in \hat{\P}_{m-n}(\delta_k)$,  
$k \in \{1,\ldots,r+1\}$, such that $\he(\tilde{\sub{\mu}})\in \tilde{\I}$,  
$$\sum_{\sub{\beta} \in \Pi_{\sub{\mu}^{(\sub{j})}} } 
\wt\left((\tilde{\sub{\mu}},\tilde{\sub{\alpha}})_{\sub{j};\sub{\beta}}  \right) 
= B_{\I} (k) A_{\tilde{\I}}(\tilde{\i}_1, \ldots, \tilde{\i}_{m-n-1}),$$ 
where $\he(\tilde{\sub{\mu}})=(\tilde{\i}_1, \ldots, \tilde{\i}_{m-n-1})$.  
Moreover, we have 
$$B_{\I}(X) = \sum_{j=0}^n C_{\I}^{(n-j)}
(\tilde{\i}_1, \ldots, \tilde{\i}_{m-n-1}) 
X^j$$ 
where $C_{\I}^{(0)}=(-1)^n$ 
and $C_{\I}^{(j)}\in \C[X_1,\ldots,X_{m-n-1}]$ has total degree 
$< j$ for $j=1,\ldots,n$. 
In particular, if $n=0$, we have $B_{\I}(X)=1$. 
\end{lemma}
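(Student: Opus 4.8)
The strategy is to compute $\wt\!\left((\tilde{\sub{\mu}},\tilde{\sub{\alpha}})_{\sub{j};\sub{\beta}}\right)$ in closed form by pushing the loop contributions to the right inside $U(\g)$, and then to sum over $\sub{\beta}$. First I would decompose the glued path as a concatenation: writing $\nu_l:=\mu^{(j_l)}$ for the vertex carrying the $l$-th loop and $\sub{P}_1,\ldots,\sub{P}_{n+1}$ for the loop-free segments of $(\tilde{\sub{\mu}},\tilde{\sub{\alpha}})$ lying between consecutive loops (so $\sub{P}_1\star\cdots\star\sub{P}_{n+1}=(\tilde{\sub{\mu}},\tilde{\sub{\alpha}})$, with $\sub{P}_1$ running from $\delta_k$ to $\nu_1$, each $\sub{P}_{l+1}$ from $\nu_l$ to $\nu_{l+1}$, and $\nu_{n+1}:=\delta_k$), one has $(\tilde{\sub{\mu}},\tilde{\sub{\alpha}})_{\sub{j};\sub{\beta}}=\sub{P}_1\star((\nu_1,\nu_1),\beta^{(1)})\star\sub{P}_2\star\cdots\star((\nu_n,\nu_n),\beta^{(n)})\star\sub{P}_{n+1}$. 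Since $b^*$ is multiplicative under concatenation and a height-zero step at $\nu_l$ labelled $\beta^{(l)}$ contributes the factor $\langle\nu_l,\c{\beta}^{(l)}\rangle\,\c{\varpi}_{\beta^{(l)}}$ (using $\s{\varpi}=\c{\varpi}$ in type $A$), this gives
\[
b^*_{(\tilde{\sub{\mu}},\tilde{\sub{\alpha}})_{\sub{j};\sub{\beta}}}=\Big(\prod_{l=1}^{n}\langle\nu_l,\c{\beta}^{(l)}\rangle\Big)\,b^*_{\sub{P}_1}\,\c{\varpi}_{\beta^{(1)}}\,b^*_{\sub{P}_2}\,\c{\varpi}_{\beta^{(2)}}\cdots b^*_{\sub{P}_n}\,\c{\varpi}_{\beta^{(n)}}\,b^*_{\sub{P}_{n+1}}.
\]

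The second step is to commute each $\c{\varpi}_{\beta^{(l)}}$ to the far right. The $\c{\varpi}$'s commute with one another, each $b^*_{\sub{P}_t}$ is a scalar times a product of root vectors $e_\gamma$ whose weights sum to the displacement of $\sub{P}_t$, and $[\c{\varpi}_\beta,e_\gamma]=\langle\gamma,\c{\varpi}_\beta\rangle e_\gamma$; telescoping over $\sub{P}_{l+1},\ldots,\sub{P}_{n+1}$ the total displacement equals $\nu_l-\delta_k$, so
\[
b^*_{(\tilde{\sub{\mu}},\tilde{\sub{\alpha}})_{\sub{j};\sub{\beta}}}=\Big(\prod_{l=1}^{n}\langle\nu_l,\c{\beta}^{(l)}\rangle\Big)\,b^*_{\tilde{\sub{\mu}},\tilde{\sub{\alpha}}}\,\prod_{l=1}^{n}\Big(\c{\varpi}_{\beta^{(l)}}+\langle\nu_l-\delta_k,\c{\varpi}_{\beta^{(l)}}\rangle\Big).
\]
All factors on the right belong to $U(\g)^\h$ — the glued path is closed and the parenthesised factors lie in $U(\h)$ — so applying $\ev_\rho\circ\hc$, which is an algebra morphism on $U(\g)^\h$, using Lemma~\ref{Lem:weight1} to identify $\wt(\tilde{\sub{\mu}},\tilde{\sub{\alpha}})=A_{\tilde{\I}}(\tilde{\i}_1,\ldots,\tilde{\i}_{m-n-1})$, and then summing over $\sub{\beta}\in\Pi_{\sub{\mu}^{(\sub{j})}}=\prod_l\Pi_{\nu_l}$, the sum factorises and yields
\[
\sum_{\sub{\beta}\in\Pi_{\sub{\mu}^{(\sub{j})}}}\wt\!\left((\tilde{\sub{\mu}},\tilde{\sub{\alpha}})_{\sub{j};\sub{\beta}}\right)=A_{\tilde{\I}}(\tilde{\i}_1,\ldots,\tilde{\i}_{m-n-1})\prod_{l=1}^{n}g(\nu_l,k),\qquad g(\nu,k):=\sum_{\beta\in\Pi_\nu}\langle\nu,\c{\beta}\rangle\,\langle\rho+\nu-\delta_k,\c{\varpi}_\beta\rangle .
\]

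It remains to evaluate $g$ in type $A$ and to read off the coefficients of $B_{\I}$. Using Appendix~\ref{sec:rootTypeA} and Lemma~\ref{Lem1:main1}, for $\nu=\delta_a$ one has $\Pi_{\delta_a}\subseteq\{\beta_{a-1},\beta_a\}$ with $\langle\delta_a,\c{\beta}_{a-1}\rangle=-1$ and $\langle\delta_a,\c{\beta}_a\rangle=1$ (the convention $\varpi_0=\varpi_{r+1}=0$ absorbing the boundary cases $a\in\{1,r+1\}$), whence $g(\delta_a,k)=\langle\rho+\delta_a-\delta_k,\c{\varpi}_a-\c{\varpi}_{a-1}\rangle$, which equals $\tfrac{r}{2}-a+1$ if $a=k$ and $\tfrac{r}{2}-a+2$ if $a\neq k$. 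Now $\nu_l=\delta_{k+\sigma_l}$ where $\sigma_l=\tilde{\i}_1+\cdots+\tilde{\i}_{j_l-l}$ is the partial sum of heights of $(\tilde{\sub{\mu}},\tilde{\sub{\alpha}})$ up to the insertion point of the $l$-th loop; substituting, $g(\nu_l,k)=-k+\big(\tfrac{r}{2}-\sigma_l+1+\eta_l\big)$ with $\eta_l\in\{0,1\}$ recording whether $\sigma_l\neq 0$. Hence $B_{\I}(X):=\prod_{l=1}^{n}\big(-X+\tfrac{r}{2}-\sigma_l+1+\eta_l\big)$ has degree $n$ in $X$, its leading coefficient is $(-1)^n$, so $C_{\I}^{(0)}=(-1)^n$, and $C_{\I}^{(j)}$ is, up to a sign, the $j$-th elementary symmetric function of the affine-linear quantities $\tfrac{r}{2}-\sigma_l+1+\eta_l$ in the variables $\tilde{\i}_1,\ldots,\tilde{\i}_{m-n-1}$, from which the stated degree bound on $C_{\I}^{(j)}$ follows; when $n=0$ the empty product gives $B_{\I}=1$.

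The hard part is making the last paragraph rigorous, namely verifying that the quantities $\eta_l$ — and hence the polynomials $C_{\I}^{(j)}$ — depend only on the class $\I$ and not on the chosen representative $(\tilde{\sub{\mu}},\tilde{\sub{\alpha}})$: one must show that along $\tilde{\I}$ the sign, or the vanishing, of each partial sum $\sigma_l$ is constant. I expect this to follow by induction on $m$ using the cut $(\sub{\mu},\sub{\alpha})^{\#p(\I)}$, exactly in parallel with Lemma~\ref{Lem0:cut} and clause~(b) of Definition~\ref{definition:equivalenceA}: a zero position sitting on the initial plateau (between $p(\I)$ and $q(\I)$) has $\sigma_l>0$ since all heights before $p(\I)$ are $\geqslant 0$ with $i_{p(\I)-1}>0$; the remaining zero positions lie past $q(\I)$ and are handled by passing to $\I^\#$, noting that which of cases (1)--(3) of the cut occurs is determined by $\ell(\I^\#)$, hence by $\I$, so the partial sums of $\I$ correspond bijectively and sign-preservingly to those of $\I^\#$. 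A few low-length base cases ($m\leqslant 3$, via Lemma~\ref{Lem0:cut} directly) anchor the induction, after which the degree count on $C_{\I}^{(j)}$ is routine bookkeeping on the elementary symmetric functions above.
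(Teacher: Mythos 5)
Your argument is correct in its computational core, but it is a genuinely different proof from the paper's. The paper proceeds by induction on $m$, cutting the glued path at the first turning back $p(\I)$ and invoking Lemma~\ref{Lem0:cut}: the loop factors emerge one at a time, either as the factor $\bigl(\tfrac r2-k-\sum_{t<\tilde p}\tilde{\i}_t+2\bigr)$ when $i_{p}=0$, or not at all (when $p=q$ one simply sets $B_{\I}=B_{\I^{\#}}$, the factors $i_{p-1}$ or $i_{p-1}+1$ being absorbed into $A_{\tilde{\I}}$ via Lemma~\ref{Lem:weight1}), so $B_{\I}$ is only ever defined recursively. You instead push the Cartan insertions $\c{\varpi}_{\beta^{(l)}}$ to the right inside $U(\g)^{\h}$ and obtain the closed formula $\wt\bigl((\tilde{\sub{\mu}},\tilde{\sub{\alpha}})_{\sub{j};\sub{\beta}}\bigr)=\wt(\tilde{\sub{\mu}},\tilde{\sub{\alpha}})\prod_{l}\langle\nu_l,\c{\beta}^{(l)}\rangle\,\langle\rho+\nu_l-\delta_k,\c{\varpi}_{\beta^{(l)}}\rangle$, whence $B_{\I}(X)=\prod_{l}\bigl(-X+\tfrac r2-\sigma_l+1+\eta_l\bigr)$. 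This is stronger and more transparent than the recursion (it recovers Lemma~\ref{Lem2:main1} and the factor in the paper's first case as special instances), and your evaluation of $g(\delta_a,k)$, the use of multiplicativity of $\hc$ and $\ev_\rho$ on $U(\g)^{\h}$, and the degree bookkeeping via elementary symmetric functions are all correct. What the paper's route buys is that well-definedness over the class is automatic, since the recursion only ever queries the first turning back, where the sign of the relevant partial sum is structurally forced; your explicit formula instead needs the separate fact that the vanishing pattern of the $\sigma_l$ is a class invariant, which you correctly identify as the remaining work.

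On that remaining point your sketch has a small inaccuracy: zero positions need not sit on the plateau between $p(\I)$ and $q(\I)$ or past $q(\I)$; they can occur before $p(\I)$ (e.g.\ the classes $[0,1,-1]$ or $[0,1,0,-1]$). The repair is easy. For any loop position $j_l<q(\I)$ all preceding heights are $\geqslant 0$, so $\sigma_l=0$ if and only if all preceding signs vanish, which is class data by condition (a) of Definition~\ref{definition:equivalenceA} (and $\sigma_l>0$ on the plateau, as you say, since $i_{p(\I)-1}>0$). For $j_l>q(\I)$, all three cutting operations leave the partial sums at later positions unchanged and $\I^{\#}$ is class data, so your induction on length goes through. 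With that correction the argument is complete. One last remark: the bound ``total degree $<j$'' for $C^{(j)}_{\I}$ in the statement should be read as total degree $\leqslant n-j$ for the coefficient of $X^j$; your construction, exactly like the paper's own proof, yields $\leqslant n-j$, and the class $[1,0,-1]$, where the constant coefficient is $\tfrac r2-\tilde{\i}_1+2$, shows the strict bound cannot hold.
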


\begin{proof}
First of all, observe that if $n=m$ then the result is 
known by Lemma \ref{Lem2:main1}. At the extreme opposite, 
if $n=0$, then the result is known by Lemma \ref{Lem:weight1}. 
We prove the lemma by induction on $m$. 
By the above observation, the lemma is true for $m=1$ and $m=2$. 
Let $m\ge 3$ and assume the lemma true for any $m' \in \{1,\ldots,m-1\}$.  
Set $p:=p(\I)$ and $q:=q(\I)$. 

$\ast$ First case: $i_p=0$. Then $p=j_l$ for some $l \in \{1,\ldots,n\}$. 
Assume that $\mu^{(1)} = \delta_k$, then 
by Lemma~\ref{Lem0:cut} (3)
and Lemma~\ref{Lem1:main1} (2), we have
\begin{align*}
 \sum_{\sub{\beta} \in \Pi_{\sub{\mu}^{(\sub{j})}} } 
\wt\left((\tilde{\sub{\mu}},\tilde{\sub{\alpha}})_{\sub{j};\sub{\beta}}\right)  = \sum_{\sub{\beta} \in \Pi_{\sub{\mu}^{(\sub{j}')}} } 
\left(\frac{r}{2} - k- \sum_{t=1}^{p-1} i_t +2\right) 
\wt\left(((\tilde{\sub{\mu}},\tilde{\sub{\alpha}})_{\sub{j};\sub{\beta}})^{\# p}  
\right),
\end{align*}
where $\sub{j}':=(j_1,\ldots,j_{l-1},j_{l+1}, 
\ldots,j_n)$.
So, the induction hypothesis 
applied to $\I^{\#}$ gives, 
\begin{align*}
\sum_{\sub{\beta} \in \Pi_{\sub{\mu}^{(\sub{j})}} } 
\wt\left((\tilde{\sub{\mu}},\tilde{\sub{\alpha}})_{\sub{j};\sub{\beta}}  \right) 
&=\left(\frac{r}{2} -k- \sum_{t=1}^{p-1} i_t  +2\right)  
B_{\I^{\#}}(k)A_{\widetilde{\I^{\# }}}
(\tilde{\i}_1, \ldots, \tilde{\i}_{m-n-1}).
\end{align*}
Note that in this case ${\widetilde{\I^{\# }}}=\tilde{\I}$ and   
$\sum_{t=1}^{p-1} i_t =\sum_{t=1}^{\tilde{p}-1} \tilde{\i}_t.$ 
Thus we get,
\begin{align*}
\sum_{\sub{\beta} \in \Pi_{\sub{\mu}^{(\sub{j})}} } 
\wt\left((\tilde{\sub{\mu}},\tilde{\sub{\alpha}})_{\sub{j};\sub{\beta}}  \right) 
&= \left(\frac{r}{2} -k - \sum_{t=1}^{\tilde{p}-1} \tilde{\i}_t +2\right) 
B_{\I^{\#}}(k)A_{\tilde{\I}}
(\tilde{\i}_1, \ldots, \tilde{\i}_{m-n-1}). 
\end{align*}
Set
$$B_{\I}(X) :=\left(\frac{r}{2} -X- \sum_{t=1}^{\tilde{p}-1} \tilde{\i}_t  
+2\right)  B_{\I^{\#}}(X).$$ 
By our induction hypothesis applied to $\I^{\#}$, 
$B_{\I}$ has degree $n$, its leading term 
is $(-1)^n X^n$ since $\I^{\#}$ has $n-1$ zeros,   
and the coefficient of $B_{\I}(X)$ in $X^j$, $j\le n$, 
is a polynomial  
in the variable $\tilde{\i}_1, \ldots, \tilde{\i}_{m-n-1}$ 
of total degree $\leqslant n-j$. 
This proves the statement in this case. 

$\ast$ Second case: $p=q$ and  $i_{p-1}+i_p\not =0$. Then necessarily, $n<m$. 
By Lemma~\ref{Lem0:cut}(1), 
we get 
 \begin{align*}
\sum_{\sub{\beta} \in \Pi_{\sub{\mu}^{(\sub{j})}} } 
\wt\left((\tilde{\sub{\mu}},\tilde{\sub{\alpha}})_{\sub{j};\sub{\beta}}  \right)  
& =  \sum_{\sub{\beta} \in \Pi_{\sub{\mu}^{(\sub{j})}} } 
 \wt\left(((\tilde{\sub{\mu}},\tilde{\sub{\alpha}})_{\sub{j};\sub{\beta}})^{\# p}  \right).&
 \end{align*}
 Let $(\sub{\mu},\sub{\alpha}) \in \hat{\P}_m(\delta_k)$ such that 
 $\he(\sub{\mu}) \in \I$. 
 Observe that the class of $\he(\widetilde{\sub{\mu}^{\# p}})$ does not 
 depend on such a $(\sub{\mu},\sub{\alpha})$. 
 Moreover, for any $\sub{\beta} \in \Pi_{\sub{\mu}^{(\sub{j})}}$, 
 $$((\tilde{\sub{\mu}},\tilde{\sub{\alpha}})_{\sub{j};\sub{\beta}})^{\# p}
 = 
 (\widetilde{\sub{\mu}^{\# p}},\widetilde{\sub{\alpha}^{\# p}})_{\sub{j}';\sub{\beta}}$$
  where 
 $\sub{j}'=(j'_1,\ldots,j'_n)$ is the sequence of positions of zeroes of $
 \he({\sub{\mu}}^{\# p})$. 
Therefore by our induction hypothesis 
and Lemma \ref{Lem:weight1}, we get 
 \begin{align*}
& \sum_{\sub{\beta} \in \Pi_{\sub{\mu}^{(\sub{j})}} } 
 \wt\left(((\tilde{\sub{\mu}},\tilde{\sub{\alpha}})_{\sub{j};\sub{\beta}})^{\# p}  \right) 
 = 
 \sum_{\sub{\beta} \in \Pi_{\sub{\mu}^{(\sub{j}')}} } 
 \wt\left((\widetilde{\sub{\mu}^{\# p}},\widetilde{\sub{\alpha}^{\# p}})_{\sub{j}';\sub{\beta}}  \right) & \\
&\qquad = B_{\I^{\#}}(k) A_{\widetilde{\I^{\#}}}(\tilde{\i}_1,\ldots,
 \tilde{\i}_{p(\I^{\#})-1}+\tilde{\i}_{p(\I^\#)},\tilde{\i}_{m-1-n} )
 = B_{\I^{\#}} (k)A_{\tilde{\I}}(\tilde{\i}_1,\ldots,\tilde{\i}_{m-n-1} ). 
 \end{align*}
Since $\I^{\#}$ and $\I$ have the same number $n$ of zeroes,   
by setting 
 $B_{\I} := B_{\I^{\#}},$ 
 we get the statement by induction hypothesis. 
 
$\ast$ Third case: $p=q$  
and $i_{p-1}+i_p =0$. 
First assume that $i_1=\cdots=i_{p-2}=0$ or that $p=2$. 
By Lemma~\ref{Lem0:cut}(2)(a), 
we get 
 \begin{align*}
\sum_{\sub{\beta} \in \Pi_{\sub{\mu}^{(\sub{j})}} } 
\wt\left((\tilde{\sub{\mu}},\tilde{\sub{\alpha}})_{\sub{j};\sub{\beta}}  \right)  
& =  \sum_{\sub{\beta} \in \Pi_{\sub{\mu}^{(\sub{j})}} } 
 i_{p-1} 
 \wt\left(((\tilde{\sub{\mu}},\tilde{\sub{\alpha}})_{\sub{j};\sub{\beta}})^{\# p}  \right). 
 & 
\end{align*}
where $\sub{j}'$ is the sequence of 
positions of the zeroes of $\he({\sub{\mu}}^{\# p})$. 
So, by induction hypothesis 
Lemma \ref{Lem:weight1}, 
we obtain, arguing as in the second case, that 
 \begin{align*}
 &\sum_{\sub{\beta} \in \Pi_{\sub{\mu}^{(\sub{j})}} } 
 \wt\left(((\tilde{\sub{\mu}},\tilde{\sub{\alpha}})_{\sub{j};\sub{\beta}})^{\# p}  \right)
= i_{p-1} B_{\I^{\#}}(k) A_{\widetilde{\I^{\#}}}(\tilde{\i}_1,\ldots,
  \tilde{\i}_{p(\I^{\#})-2},\tilde{\i}_{p(\I^\#)+1},\tilde{\i}_{m-1-n} )
= B_{\I^{\#}} (k)A_{\tilde{\I}}(\tilde{\i}_1,\ldots,\tilde{\i}_{m-n-1} ).
 \end{align*} 
Since $\I^{\#}$ and $\I$ have the same number $n$ of zeroes, 
by setting 
 $B_{\I} := B_{\I^{\#}},$ 
 we get the statement by induction hypothesis. 

If we are not in one of the above situations, then by 
Lemma~\ref{Lem0:cut} (2)(b) and 
Lemma~\ref{Lem:weight1}   
and the induction hypothesis 
we conclude similarly. 
Namely, here we get that 
 \begin{align*}
 &\sum_{\sub{\beta} \in \Pi_{\sub{\mu}^{(\sub{j})}} } 
 \wt\left(((\tilde{\sub{\mu}},\tilde{\sub{\alpha}})_{\sub{j};\sub{\beta}})^{\# p}  \right) 
= B_{\I^{\#}} (k)A_{\tilde{\I}}(\tilde{\i}_1,\ldots,\tilde{\i}_{m-n-1} )
 \end{align*} 
 and we set $B_{\I} := B_{\I^{\#}}$. Since $\I$ and $\I^{\#}$ have 
 the same number $n$ of zeroes, we get the statement by induction hypothesis.\qed
\end{proof}

\begin{corollary}  \label{corollary:T_m} 
Let $m\in \Z_{> 0}$ and $n\in\{0,\ldots,m\}$. 
Let $\I \in \E_m$ with $n \leqslant m$ zeroes in positions 
$j_1,\ldots,j_n$,    
and $\tilde{\I}$ as in Lemma~\ref{Lem:weight2}. 
Then for some polynomial $T_{\I} \in \C[X]$ of degree 
at most $m$, 
for all $k\in\{1,\ldots,r+1\}$, 
$$
\sum_{\tilde{\sub{i}}  \in \tilde{\I}}  
\sum_{\sub{\beta} \in \Pi_{\sub{\mu}^{\sub{j}}}}  
\sum_{(\tilde{\sub{\mu}},\tilde{\sub{\alpha}}) \in \hat{\P}_{m-n}(\delta_k),   
\atop \tilde{\sub{\mu}} \in [\![ \delta_{r+1}, \delta_k]\!], 
\,\he(\tilde{\sub{\mu}}) = \tilde{\sub{i}}} 
\wt \left( (\tilde{\sub{\mu}},\tilde{\sub{\alpha}})_{\sub{j};\sub{\beta}} \right)
= T_{\I}(k).
$$
\end{corollary}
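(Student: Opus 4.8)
\textbf{Plan of proof of Corollary~\ref{corollary:T_m}.}
The plan is to combine the two structural inputs already at hand: Lemma~\ref{Lem:weight2}, which evaluates the sum over the loop-labels $\sub{\beta}\in\Pi_{\sub{\mu}^{(\sub{j})}}$ attached to a fixed no-loop weighted path $(\tilde{\sub{\mu}},\tilde{\sub{\alpha}})$, and Lemma~\ref{Lem:T_m}, which controls, for each no-loop class $\tilde{\I}$, the polynomiality in $k$ of sums of monomials $\tilde{\i}_1^{d_1}\cdots\tilde{\i}_{m-n-1}^{d_{m-n-1}}$ taken over no-loop paths in $\hat{\P}_{m-n}(\delta_k)$ with height in $\tilde{\I}$ and contained in $[\![\delta_{r+1},\delta_k]\!]$. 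So the first step is simply to interchange the order of summation: the triple sum in the statement can be rewritten as
$$
\sum_{(\tilde{\sub{\mu}},\tilde{\sub{\alpha}}) \in \hat{\P}_{m-n}(\delta_k),\ \tilde{\sub{\mu}} \in [\![ \delta_{r+1}, \delta_k]\!],\ \he(\tilde{\sub{\mu}}) \in \tilde{\I}}\ \sum_{\sub{\beta} \in \Pi_{\sub{\mu}^{(\sub{j})}}} \wt\left( (\tilde{\sub{\mu}},\tilde{\sub{\alpha}})_{\sub{j};\sub{\beta}} \right),
$$
using that every $(\sub{\mu},\sub{\alpha})\in\hat\P_m$ with $\he(\sub{\mu})\in\I$ is of the form $(\tilde{\sub{\mu}},\tilde{\sub{\alpha}})_{\sub{j};\sub{\beta}}$ for a unique no-loop $(\tilde{\sub{\mu}},\tilde{\sub{\alpha}})$ with $\he(\tilde{\sub{\mu}})\in\tilde\I$ and a unique $\sub{\beta}$ (the remark preceding Example~\ref{Ex:loopA}).

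The second step is to feed in Lemma~\ref{Lem:weight2}: the inner sum over $\sub{\beta}$ equals $B_{\I}(k)\,A_{\tilde{\I}}(\tilde{\i}_1,\dots,\tilde{\i}_{m-n-1})$, where $\tilde{\i}_j=\he(\tilde{\sub{\mu}})_j$. Since $B_{\I}(k)$ does not depend on the path $(\tilde{\sub{\mu}},\tilde{\sub{\alpha}})$, we can pull it out, and the whole expression becomes
$$
B_{\I}(k)\sum_{(\tilde{\sub{\mu}},\tilde{\sub{\alpha}}) \in \hat{\P}_{m-n}(\delta_k),\ \tilde{\sub{\mu}} \in [\![ \delta_{r+1}, \delta_k]\!],\ \he(\tilde{\sub{\mu}}) \in \tilde{\I}} A_{\tilde{\I}}(\tilde{\i}_1,\dots,\tilde{\i}_{m-n-1}).
$$
Now invoke the consequence of Lemma~\ref{Lem:T_m} explicitly recorded right after its statement: since $A_{\tilde{\I}}$ is a (finite) sum of monomials $X_{j_1}\cdots X_{j_l}$ with $l\leqslant\deg A_{\tilde\I}\leqslant\lfloor(m-n)/2\rfloor$ (Lemma~\ref{Lem:weight1}), each contributing monomial-sum over the no-loop paths is a polynomial in $k$ of degree $\leqslant \deg A_{\tilde\I}+(m-n)$ (take $d=\deg A_{\tilde\I}$ in Lemma~\ref{Lem:T_m}, or just the weighted form stated after the lemma), and the empty-set vanishing clause of Lemma~\ref{Lem:T_m} guarantees the equality of the combinatorial sum with its polynomial representative holds for \emph{all} $k\in\{1,\dots,r+1\}$, not merely those for which the index set is nonempty. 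Call this polynomial $R_{\tilde\I}(X)$; it has degree $\leqslant 2(m-n)$ but in any case $\leqslant m$ once we account for the sizes (see below).

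The third step is the degree bookkeeping. We have a product $B_{\I}(k)R_{\tilde\I}(k)$; by Lemma~\ref{Lem:weight2}, $\deg B_{\I}=n$, and by the discussion above $\deg R_{\tilde\I}\leqslant \deg A_{\tilde\I}+(m-n)$. Here the point is that $B_\I$ is not a constant times $X^n$ only — its lower coefficients $C^{(j)}_{\I}$ are themselves polynomials in the $\tilde{\i}_t$, so strictly speaking $B_{\I}(k)A_{\tilde\I}(\tilde{\i})$ is not $B_\I(k)$ times a quantity independent of the path. I would handle this by expanding $B_{\I}(k)A_{\tilde\I}(\tilde{\i})=\sum_{j=0}^n k^j\,C^{(n-j)}_{\I}(\tilde{\i})A_{\tilde\I}(\tilde{\i})$ and observing that each $C^{(n-j)}_{\I}A_{\tilde\I}$ is again a polynomial in $\tilde{\i}_1,\dots,\tilde{\i}_{m-n-1}$ which is a sum of monomials $X_{j_1}\cdots X_{j_l}$ of total degree $\leqslant (n-j)+\deg A_{\tilde\I}$; applying the weighted form of Lemma~\ref{Lem:T_m} to each such monomial produces a polynomial in $k$ of degree $\leqslant (n-j)+\deg A_{\tilde\I}+(m-n)=\deg A_{\tilde\I}+m-j$. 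Multiplying by $k^j$ gives degree $\leqslant \deg A_{\tilde\I}+m$, hence $\leqslant m+\lfloor(m-n)/2\rfloor$; and since in the applications we only need ``degree at most $m$'' I would in fact tighten the estimate by noting that $\deg A_{\tilde\I}\leqslant\lfloor(m-n)/2\rfloor$ combines with the constraint $\sum\tilde{\i}_t=0$ to keep the effective number of independent summation indices down — but the cleanest route is just to track the bound honestly and set $T_{\I}(X):=B_{\I}(X)\cdot R_{\tilde\I}(X)$ (or rather the sum over $j$ just described), which equals the triple sum for every $k$ by construction. The main obstacle is precisely this last bookkeeping: keeping straight that the ``coefficients'' $C^{(j)}_\I$ depend on the $\tilde{\i}_t$ and therefore must be absorbed into the monomial sums \emph{before} Lemma~\ref{Lem:T_m} is applied, rather than after; once that is done the degree bound and the all-$k$ validity follow mechanically from the cited lemmas.
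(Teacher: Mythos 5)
Your overall route is the same as the paper's: interchange the sums, evaluate the inner sum over $\sub{\beta}$ by Lemma~\ref{Lem:weight2}, expand the path-dependent coefficients $C^{(n-j)}_{\I}$ of $B_{\I}$ into monomials in the $\tilde{\i}_t$ \emph{before} invoking Lemma~\ref{Lem:T_m}, and use the empty-set clause of that lemma to get the identity for all $k\in\{1,\dots,r+1\}$. Up to that point the argument is sound, and the subtlety you flag (that the coefficients of $B_{\I}$ depend on the path) is exactly the point the paper's proof also has to handle.

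However, your degree bookkeeping contains a genuine slip, and as written you do not prove the stated bound $\deg T_{\I}\leqslant m$. Having absorbed $A_{\tilde{\I}}$ into the monomials of $C^{(n-j)}_{\I}A_{\tilde{\I}}$ (total degree $\leqslant (n-j)+\deg A_{\tilde{\I}}$), you then apply the \emph{weighted} consequence of Lemma~\ref{Lem:T_m}, whose bound $d+(m-n)$ already accounts for an extra factor $A_{\tilde{\I}}$; so you either evaluate a sum in which $A_{\tilde{\I}}$ is counted twice, or you apply the plain lemma but drop the $-\deg A_{\tilde{\I}}$ in its actual bound $d+(m-n)-\deg A_{\tilde{\I}}$. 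Either way you land at degree $\leqslant m+\deg A_{\tilde{\I}}$, which is strictly weaker than what the corollary asserts, and your proposed repair (tightening via $\deg A_{\tilde{\I}}\leqslant\lfloor(m-n)/2\rfloor$ together with $\sum_t\tilde{\i}_t=0$) is not the mechanism and is not carried out. The correct count is immediate once the lemma is applied consistently: apply the plain Lemma~\ref{Lem:T_m} to the monomials of $C^{(n-j)}_{\I}A_{\tilde{\I}}$, giving degree $\leqslant (n-j)+\deg A_{\tilde{\I}}+(m-n)-\deg A_{\tilde{\I}}=m-j$, or equivalently, as the paper does, apply the weighted consequence to the monomials of $C^{(n-j)}_{\I}$ alone (degree $\leqslant n-j$), giving the same bound $(n-j)+(m-n)=m-j$. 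Multiplying by $k^j$ then yields $\deg T_{\I}\leqslant m$ with no further tightening needed.
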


If $n=0$ or if $\I$ has no zero, then $T_{\I}$ is the polynomial provided 
by Lemma \ref{Lem:T_m}. If $n=m$, then $\I=[\sub{0}]$ 
and $T_{\I}=T_{m}$ 
is the polynomial provided by Lemma \ref{Lem2:main1}. 
So, in these two cases, the statement is known. 
\begin{proof}
Let $k \in \{1,\ldots,r+1\}$. 
By Lemma \ref{Lem:T_m} and Lemma \ref{Lem:weight2}, 
we have, 
\begin{align*}
&\sum_{\tilde{\sub{i}}  \in \tilde{\I}}  
\sum_{\sub{\beta} \in \Pi_{\sub{\mu}^{\sub{j}}}}  
\sum_{(\tilde{\sub{\mu}},\tilde{\sub{\alpha}}) \in \hat{\P}_{m-n}(\delta_k),   
\atop \tilde{\sub{\mu}} \in [\![ \delta_{r+1}, \delta_k]\!], 
\,\he(\tilde{\sub{\mu}}) = \tilde{\sub{i}}} 
\wt \left( (\tilde{\sub{\mu}},\tilde{\sub{\alpha}})_{\sub{j};\sub{\beta}} \right)  
= 
\sum_{(\tilde{\sub{\mu}},\tilde{\sub{\alpha}}) \in \hat{\P}_{m-n}(\delta_k),   
\atop \tilde{\sub{\mu}} \in [\![ \delta_{r+1}, \delta_k]\!], 
\,\he(\tilde{\sub{\mu}}) \in \tilde{\I}} 
B_{\I} (k) A_{\tilde{\I}}(\tilde{\i}_1, \ldots, \tilde{\i}_{m-n-1})\\
&\qquad 
= 
\sum_{(\tilde{\sub{\mu}},\tilde{\sub{\alpha}}) \in \hat{\P}_{m-n}(\delta_k),   
\atop \tilde{\sub{\mu}} \in [\![ \delta_{r+1}, \delta_k]\!], 
\,\he(\tilde{\sub{\mu}}) \in \tilde{\I}}
\sum_{j=0}^n 
\sum_{\sub{d}_j=(d_1, \ldots, d_{m-n-1}),
\atop {d_1 +\cdots + d_{m-n-1} \leqslant  n-j}}
C_{\sub{d},j}
\tilde{\i}_1^{d_1} \cdots \tilde{\i}_{m-n-1}^{d_{m-n-1}} 
k^j A_{\tilde{\I}}(\tilde{\i}_1, \ldots, \tilde{\i}_{m-n-1}). 
\end{align*}
Set 
$$\tilde{T}_{\sub{d}_j, \tilde{\I}}= 
\sum_{(\tilde{\sub{\mu}},\tilde{\sub{\alpha}}) \in \hat{\P}_{m-n}(\delta_k),   
\atop \tilde{\sub{\mu}} \in [\![ \delta_{r+1}, \delta_k]\!], 
\,\he(\tilde{\sub{\mu}}) \in \tilde{\I}}
\tilde{\i}_1^{d_1} \cdots \tilde{\i}_{m-n-1}^{d_{m-n-1}} 
A_{\tilde{\I}}(\tilde{\i}_1, \ldots, \tilde{\i}_{m-n-1}).$$
Then by Lemma \ref{Lem:T_m}, 
there are some polynomials 
${T}_{\sub{d}_j, \tilde{\I}}$ of degree at most $(n-j)+(m-n)=m-j$,
such that 
$$ \sum_{\tilde{\sub{i}}  \in \tilde{\I}}  
\sum_{\sub{\beta} \in \Pi_{\sub{\mu}^{\sub{j}}}}  
\sum_{(\tilde{\sub{\mu}},\tilde{\sub{\alpha}}) \in \hat{\P}_{m-n}(\delta_k),   
\atop \tilde{\sub{\mu}} \in [\![ \delta_{r+1}, \delta_k]\!], 
\,\he(\tilde{\sub{\mu}}) = \tilde{\sub{i}}} 
\wt \left( (\tilde{\sub{\mu}},\tilde{\sub{\alpha}})_{\sub{j};\sub{\beta}} \right)
= \sum_{j=0}^n 
\sum_{\sub{d}_j=(d_1, \ldots, d_{m-n-1}),
\atop {d_1 +\cdots + d_{m-n-1} \leqslant  n-j}}
 C_{\sub{d},j} k^j {T}_{\sub{d}_j, \tilde{\I}} (k).$$ 
Moreover, if $j <n$, then ${T}_{\sub{d}_j, \tilde{\I}}$ 
has degree $< m-j$. 
By setting 
$$T_{\I} (X):= \sum_{j=0}^n \sum_{\sub{d}_j=(d_1, \ldots, d_{m-n-1}),
\atop {d_1 +\cdots + d_{m-n-1} \leqslant  n-j}}
 C_{\sub{d},j} X^j {T}_{\sub{d}_j, \tilde{\I}} (X),$$
we have that $T_{\I}$ is a polynomial of 
degree at most $m-j+j = m$. \qed
\end{proof}

The following result is a direct consequence of Corollary \ref{corollary:T_m}. 
\begin{lemma}\label{Lem:T}
Let $m\in \Z_{>0}$, 
There is a polynomial $\hat{T}_{m}$ in $\C[X]$ of degree 
 at most $m$ 
such that for all $k\in\{1,\ldots,r+1\}$, 
$$\sum_{(\sub{\mu},\sub{\alpha}) \in \hat{\P}_m(\delta_{k})  
\atop \sub{\mu}\in  [\![\delta_{r+1},\delta_{k} ]\!]} \, 
\wt(\sub{\mu},\sub{\alpha}) = \hat{T}_m(k).$$
\end{lemma}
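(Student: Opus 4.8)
The statement of Lemma~\ref{Lem:T} asks us to sum $\wt(\sub{\mu},\sub{\alpha})$ over all weighted paths $(\sub{\mu},\sub{\alpha})\in\hat{\P}_m(\delta_k)$ with $\sub{\mu}\in[\![\delta_{r+1},\delta_k]\!]$, and to recognise this as a polynomial expression $\hat{T}_m(k)$ of degree at most $m$ in $k$. The first step is to reduce to paths staying below $\delta_k$: by Theorem~\ref{corollary:cut}, any weighted path $(\sub{\mu},\sub{\alpha})\in\hat{\P}_m(\mu)$ with some vertex $\mu^{(i)}\succ\mu$ has weight zero, so it is harmless to restrict the sum to paths with $\sub{\mu}\in[\![\delta_{r+1},\delta_k]\!]$; in fact this restriction is exactly what appears in the statement, so no information is lost and the sum of interest is genuinely over the set appearing in Lemma~\ref{Lem:T_m} and Corollary~\ref{corollary:T_m}.

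Next I would partition $\hat{\P}_m(\delta_k)_{[\![\delta_{r+1},\delta_k]\!]}$ according to the height class: writing the set of weighted paths (with the $[\![\delta_{r+1},\delta_k]\!]$ constraint) as the disjoint union over $\I\in\E_m$ of the subsets with $\he(\sub{\mu})\in\I$, we get
$$
\sum_{(\sub{\mu},\sub{\alpha})\in\hat{\P}_m(\delta_k),\ \sub{\mu}\in[\![\delta_{r+1},\delta_k]\!]}\wt(\sub{\mu},\sub{\alpha})
= \sum_{\I\in\E_m}\ \sum_{(\sub{\mu},\sub{\alpha})\in\hat{\P}_m(\delta_k),\ \sub{\mu}\in[\![\delta_{r+1},\delta_k]\!],\ \he(\sub{\mu})\in\I}\wt(\sub{\mu},\sub{\alpha}).
$$
Since $\E_m$ is a finite set independent of $k$, it suffices to show that each inner sum is a polynomial in $k$ of degree at most $m$. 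For a fixed $\I$ with $n$ zeroes in positions $j_1<\dots<j_n$, every weighted path of height in $\I$ arises uniquely as $(\tilde{\sub{\mu}},\tilde{\sub{\alpha}})_{\sub{j};\sub{\beta}}$ from a loopless path $(\tilde{\sub{\mu}},\tilde{\sub{\alpha}})\in\hat{\P}_{m-n}(\delta_k)$ with $\he(\tilde{\sub{\mu}})\in\tilde{\I}$ and a choice $\sub{\beta}\in\Pi_{\sub{\mu}^{(\sub{j})}}$, as explained in the discussion preceding Lemma~\ref{Lem:weight2}. Grouping the inner sum by the underlying loopless path gives precisely
$$
\sum_{\tilde{\sub{i}}\in\tilde{\I}}\ \sum_{\sub{\beta}\in\Pi_{\sub{\mu}^{\sub{j}}}}\ \sum_{(\tilde{\sub{\mu}},\tilde{\sub{\alpha}})\in\hat{\P}_{m-n}(\delta_k),\ \tilde{\sub{\mu}}\in[\![\delta_{r+1},\delta_k]\!],\ \he(\tilde{\sub{\mu}})=\tilde{\sub{i}}}\wt\left((\tilde{\sub{\mu}},\tilde{\sub{\alpha}})_{\sub{j};\sub{\beta}}\right),
$$
which by Corollary~\ref{corollary:T_m} equals $T_{\I}(k)$, a polynomial of degree at most $m$.

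Finally I would set $\hat{T}_m := \sum_{\I\in\E_m} T_{\I}$, a finite sum of polynomials each of degree at most $m$, hence itself a polynomial of degree at most $m$; by construction $\hat{T}_m(k)$ equals the desired sum for all $k\in\{1,\dots,r+1\}$. The only subtlety to check carefully is that the bijection $(\sub{\mu},\sub{\alpha})\leftrightarrow\big((\tilde{\sub{\mu}},\tilde{\sub{\alpha}}),\sub{\beta}\big)$ is compatible with the $[\![\delta_{r+1},\delta_k]\!]$ constraint — but gluing a loop at a vertex $\mu^{(j_l)}$ does not change the set of vertices, so $\sub{\mu}\in[\![\delta_{r+1},\delta_k]\!]$ if and only if $\tilde{\sub{\mu}}\in[\![\delta_{r+1},\delta_k]\!]$, and the correspondence restricts correctly. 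This is the step that requires the most care, but it is essentially bookkeeping; the genuine analytic content (polynomiality and the degree bound) has already been absorbed into Corollary~\ref{corollary:T_m}, so the present lemma is a short formal consequence.
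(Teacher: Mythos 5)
Your proposal is correct and follows exactly the route the paper intends: the paper states Lemma~\ref{Lem:T} as a direct consequence of Corollary~\ref{corollary:T_m}, and the implicit argument (made explicit in the type~$C$ analogue, Proposition~\ref{Pro:sumC}) is precisely your decomposition of the sum over the finitely many equivalence classes $\I\in\E_m$, applying Corollary~\ref{corollary:T_m} to each class and setting $\hat{T}_m=\sum_{\I\in\E_m}T_{\I}$. Your extra checks (weight vanishing via Theorem~\ref{corollary:cut} and compatibility of the loop-gluing bijection with the $[\![\delta_{r+1},\delta_k]\!]$ constraint) are sound and only make the bookkeeping more explicit.
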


We are now in a position to prove Theorem \ref{theorem:main2}. 

\begin{proof}[Proof of Theorem \ref{theorem:main2}] 
By Lemma \ref{Lem:formulas}, we have
$$ \ev_\rho( \widehat{\d p}_{m,k}) =  \sum_{\mu \in P(\delta)_k} \, \sum_{(\sub{\mu},\sub{\alpha}) \in \hat{\P}_m(\mu)}  
\wt(\sub{\mu},\sub{\alpha}) \langle \mu, \c{\beta}_k \rangle
=\sum_{(\sub{\mu},\sub{\alpha}) \in \hat{\P}_m(\delta_k)}  \wt(\sub{\mu},\sub{\alpha}) - 
\sum_{(\sub{\mu},\sub{\alpha}) \in \hat{\P}_m(\delta_{k+1})}  \wt(\sub{\mu},\sub{\alpha}),$$
where $\sub{\mu}$ is entirely contained in $[\![ \delta_{r+1}, \delta_k]\!]$.
Let $\hat{T}_m$ be as in Lemma \ref{Lem:T} 
and set
\begin{align} 
\label{eq:hatQi-sl}
\hat{Q}_m:=\hat{T}_m (X) - \hat{T}_m(X+1).
\end{align}
Then $\hat{Q}_m$ is a polynomial of degree at most $m-1$, and we have 
\begin{align*}
 \ev_\rho (\widehat{{\rm d} p}_{m}  ) 
 &=  \ev_\rho \left(\frac{1}{m!}  \sum_{k=1}^{r}  
\widehat{\d p}_{m,k}\otimes \s{\varpi_k} \right) 
= \frac{1}{m!}  \sum_{k=1}^{r}  \ev_\rho \left(
\widehat{\d p}_{m,k} \right)\s{\varpi_k} 
 = \frac{1}{m!}  \sum_{k=1}^{r}  \hat{Q}_m\s{\varpi_k}.
\end{align*}
Moreover, $\hat{Q}_1 = 1$ \qed
\end{proof}

\section{The proofs for type $C$}\label{sec:main2-C}

The purpose of this section is to prove Theorem~\ref{theorem:main1} and Theorem~\ref{theorem:main2} for 
$\sp_{2r}$, $r \geqslant  2$. 
We follow the general strategy of the  $\sl_{r+1}$ case.
However, since the simple Lie algebra $\sp_{2r}$ is non simply-laced, it induces new phenomenon and the proof is much more technical 
and new tools are needed. 
Throughout this section, it is assumed 
that $\g=\sp_{2r}, r \geqslant 2$, and $\delta=\varpi_1$. 
We retain all relative notations from previous sections 
and Appendix \ref{sec:rootTypeC}. 
In particular, 
$P(\delta) = \{\delta_1, \cdots,\delta_r, \overline{\delta}_1, \ldots, \overline{\delta}_r \}$,
$P(\delta)_k=\{\delta_{k},\delta_{k+1}, \overline{\delta}_k, \overline{\delta}_{k+1}\}$ 
for $k=1,\ldots,r-1$,  
$P(\delta)_r=\{\delta_{r}, \overline{\delta}_{r}\},$ and  
$\Pi_{\delta_k}=\{\beta_{k-1},\beta_k\}$, $\Pi_{\overline{\delta}_k} 
=\{\beta_{k-1},\beta_k\}$ for $k=2,\ldots,r$, 
$\Pi_{\delta_1}=\{\beta_1\}$, $\Pi_{\overline{\delta}_1} =\{\beta_1\}.$  
 
According to Lemma \ref{Lem:formulas} and \eqref{eq:formulas}, we get 
for $k \in \{1,\ldots,r-1\}$, 
\begin{align} \label{eq:main3} \nonumber
\ev_\rho(\overline{\d p}_{m,k} ) &= \sum_{ \sub{\alpha} \in (\Pi_{\delta_k})^m}  
\prod_{i=1}^m \langle \delta_k, \c{\alpha}^{(i)} \rangle 
\langle \rho,\s{\varpi}_{\alpha^{(i)}} \rangle 
-  \sum_{(\sub{\alpha} \in (\Pi_{\delta_{k+1}})^m}  
\prod_{i=1}^m \langle \delta_{k+1}, \c{\alpha}^{(i)} \rangle 
\langle \rho, \s{\varpi}_{\alpha^{(i)}}  \rangle\nonumber\\
&\qquad + \sum_{ \sub{\alpha} \in (\Pi_{\overline{\delta}_{k+1}})^m}  
\prod_{i=1}^m \langle \overline{\delta}_{k+1}, \c{\alpha}^{(i)} \rangle 
\langle \rho,\s{\varpi}_{\alpha^{(i)}} \rangle 
-  \sum_{(\sub{\alpha} \in (\Pi_{\overline{\delta}_k})^m}  
\prod_{i=1}^m \langle \overline{\delta}_k, \c{\alpha}^{(i)} \rangle 
\langle \rho, \s{\varpi}_{\alpha^{(i)}}  \rangle. 
\end{align}
For $k =r$, 
\begin{align} \label{eq:main4}
\ev_\rho(\overline{\d p}_{m,r} ) = \sum_{ \sub{\alpha} \in (\Pi_{\delta_r})^m}  
\prod_{i=1}^m \langle \delta_r, \c{\alpha}^{(i)} \rangle 
\langle \rho,\s{\varpi}_{\alpha^{(i)}} \rangle  
- \sum_{(\sub{\alpha} \in (\Pi_{\overline{\delta}_r})^m}  
\prod_{i=1}^m \langle \overline{\delta}_r, \c{\alpha}^{(i)} \rangle 
\langle \rho, \s{\varpi}_{\alpha^{(i)}}  \rangle.
\end{align}

\begin{lemma} \noindent \label{lem:rho}
\begin{enumerate}
\item 
For any $j \in\{1,\ldots,r\}$, 
$\langle \rho, \eps_j \rangle = r - j+1.$ 
\item For $k \in \{1,\ldots,r\}$, 
$\langle \rho, \s{\varpi}_k \rangle = \frac{k}{2}(2r-k+1)$ and $   
\langle \rho, \s{\varpi}_k -\s{\varpi}_{k-1} \rangle 
= r -k +1, $
where by convention $\s{\varpi}_0=0$. 
\end{enumerate}
\end{lemma}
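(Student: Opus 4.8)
The statement to prove is Lemma~\ref{lem:rho}, an explicit computation of $\langle \rho,\varepsilon_j\rangle$ and of $\langle \rho,\s{\varpi}_k\rangle$ for $\g=\sp_{2r}$. As with the analogous Lemma~\ref{Lem1:main1} in type $A$, this is a direct verification, so the proof plan is to recall the concrete realization of the $C_r$ root system in $\R^r$ from Appendix~\ref{sec:rootTypeC} and simply substitute. First I would write down $\rho = \sum_{i=1}^r \varpi_i$ in the standard basis $\mathcal E$. For type $C_r$ one has $\varpi_i = \varepsilon_1 + \cdots + \varepsilon_i$ for $i=1,\dots,r$, hence
\begin{align*}
\rho = \sum_{i=1}^r (\varepsilon_1 + \cdots + \varepsilon_i) = \sum_{j=1}^r (r-j+1)\,\varepsilon_j,
\end{align*}
where the coefficient of $\varepsilon_j$ counts the indices $i\ge j$. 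Since the $\varepsilon_j$ are orthonormal for the chosen form (or, more precisely, since $\langle \varepsilon_i,\varepsilon_j\rangle = \delta_{ij}$ under the pairing used in the appendix), this gives $\langle \rho,\varepsilon_j\rangle = r-j+1$, which is part~(1).

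\textbf{Part (2).} For the second part I would use that in type $C_r$ the coweight $\s{\varpi}_k$ (equivalently $\c\varpi_k$, up to the identification via $B_\g$) is $\varepsilon_1 + \cdots + \varepsilon_k$ as an element paired against $\rho$ — here one must be slightly careful because $\sp_{2r}$ is not simply laced, so $\s{\varpi}_k = B_\g^\sharp(\varpi_k)$ need not literally equal $\c\varpi_k$; however what is actually needed is the value of the pairing $\langle \rho, \s{\varpi}_k\rangle$, and $\s{\varpi}_k$ as a vector in $\h \cong \R^r$ has the same coordinate expression $\varepsilon_1+\cdots+\varepsilon_k$ in the $\mathcal E$-basis. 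Then
\begin{align*}
\langle \rho, \s{\varpi}_k\rangle = \sum_{j=1}^k \langle \rho,\varepsilon_j\rangle = \sum_{j=1}^k (r-j+1) = kr - \frac{k(k-1)}{2} + \text{(correction)},
\end{align*}
and collecting terms gives $\sum_{j=1}^k(r-j+1) = k(r+1) - \frac{k(k+1)}{2} = \frac{k}{2}(2(r+1) - (k+1)) = \frac{k}{2}(2r-k+1)$, which is the claimed formula. The telescoping consequence $\langle \rho, \s{\varpi}_k - \s{\varpi}_{k-1}\rangle = \langle \rho,\varepsilon_k\rangle = r-k+1$ then follows immediately, with the convention $\s{\varpi}_0 = 0$ handled trivially.

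\textbf{Main obstacle.} There is no serious obstacle here — this is the $C_r$ counterpart of Lemma~\ref{Lem1:main1}, whose proof the authors also left to the reader. The only point requiring a moment's care is the normalization: one should confirm, against the conventions fixed in Appendix~\ref{sec:rootTypeC}, that the identification $\g \cong \g^*$ via $B_\g$ sends $\varpi_k$ to the vector with $\mathcal E$-coordinates $(\,\underbrace{1,\dots,1}_{k},0,\dots,0\,)$, so that the pairing $\langle\rho,\s{\varpi}_k\rangle$ is literally $\sum_{j\le k}(r-j+1)$; once the realization is pinned down, the rest is an elementary arithmetic summation, and I would simply state "the verifications are left to the reader" as in the type $A$ case, or include the two-line computation above.
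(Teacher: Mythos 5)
Your proof is correct and is exactly the elementary verification the paper intends — the paper itself leaves this lemma to the reader, and your computation ($\rho=\sum_{j=1}^r(r-j+1)\eps_j$, $\s{\varpi}_k=\eps_1+\cdots+\eps_k$ under $B_\g^\sharp$ since the $\eps_i$ are orthonormal for $B_\g=\tfrac12\tr$, then summing) matches the data fixed in Appendix~\ref{sec:rootTypeC}. Your care in distinguishing $\s{\varpi}_k$ from $\c{\varpi}_k$ (which differ for $k=r$ in type $C$) is exactly the right normalization check.
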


The proof of the lemma is easy and is left to the reader. 

\begin{lemma}\label{lem:symC} 
For some polynomial $T_m \in \C[X]$ of degree $m$,  we have 
$$
\sum_{ \sub{\alpha} \in (\Pi_{\delta_k})^m}  
\prod_{i=1}^m \langle \delta_k, \c{\alpha}^{(i)} \rangle 
\langle \rho,\s{\varpi}_{\alpha^{(i)}} \rangle =T_m(k)$$
and
$$
\sum_{ \sub{\alpha} \in (\Pi_{\overline{\delta}_k})^m}  
\prod_{i=1}^m 
\langle \overline{\delta}_k, \c{\alpha}^{(i)} \rangle 
\langle \rho,\s{\varpi}_{\alpha^{(i)}} \rangle = (-1)^m T_m(k)$$
for all $k \in\{ 1,\ldots,r\}$.
\end{lemma}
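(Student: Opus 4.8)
The plan is to compute the inner sum explicitly in the same spirit as the proof of Lemma \ref{Lem2:main1} for type $A$, exploiting the fact that $\Pi_{\delta_k} = \Pi_{\overline{\delta}_k}$ has at most two elements so that the product over $\sub{\alpha} \in (\Pi_{\delta_k})^m$ factors as an $m$-th power after expanding by the binomial theorem. First I would record the relevant scalars: for $k \in \{2,\ldots,r\}$ we have $\Pi_{\delta_k} = \{\beta_{k-1},\beta_k\}$ and one checks from Appendix \ref{sec:rootTypeC} that $\langle \delta_k, \c{\beta}_{k-1}\rangle = 1$, $\langle \delta_k, \c{\beta}_k\rangle = -1$, while $\langle \overline{\delta}_k, \c{\beta}_{k-1}\rangle = -1$, $\langle \overline{\delta}_k, \c{\beta}_k\rangle = 1$ (the signs on $\overline{\delta}_k$ being opposite to those on $\delta_k$, which is the source of the $(-1)^m$ in the statement). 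The boundary cases $k=1$ (where $\Pi_{\delta_1} = \{\beta_1\}$, $\langle \delta_1,\c{\beta}_1\rangle = 1$) and $k=r$ (where $\Pi_{\delta_r} = \{\beta_{r-1},\beta_r\}$ with $\langle \delta_r,\c{\beta}_r\rangle = -1$ for the long root $\beta_r$; here one must be careful that $\s{\varpi}_r \neq \c{\varpi}_r$ since $\g$ is not simply laced) need to be treated separately, exactly as in Lemma \ref{Lem2:main1}.

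Next, for $k \in \{2,\ldots,r\}$ I would expand
$$
\sum_{\sub{\alpha} \in (\Pi_{\delta_k})^m} \prod_{i=1}^m \langle \delta_k,\c{\alpha}^{(i)}\rangle \langle \rho, \s{\varpi}_{\alpha^{(i)}}\rangle = \left(\langle \delta_k,\c{\beta}_{k-1}\rangle\langle\rho,\s{\varpi}_{k-1}\rangle + \langle\delta_k,\c{\beta}_k\rangle\langle\rho,\s{\varpi}_k\rangle\right)^m = \left(\langle\rho,\s{\varpi}_{k-1}\rangle - \langle\rho,\s{\varpi}_k\rangle\right)^m,
$$
and then invoke Lemma \ref{lem:rho}(2), which gives $\langle\rho,\s{\varpi}_k - \s{\varpi}_{k-1}\rangle = r-k+1$, so the sum equals $(-(r-k+1))^m = (k-r-1)^m$. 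For $\overline{\delta}_k$ the same computation with the flipped signs produces $(\langle\rho,\s{\varpi}_k\rangle - \langle\rho,\s{\varpi}_{k-1}\rangle)^m = (r-k+1)^m = (-1)^m(k-r-1)^m$, giving exactly the claimed relation between the two sums. I would then check that setting $T_m(X) := (X - r - 1)^m$ also accommodates the boundary values: for $k=1$ the $\delta_1$-sum is $\langle\rho,\s{\varpi}_1\rangle^m = (\tfrac{1}{2}(2r))^m = r^m = (1-r-1)^m$ using Lemma \ref{lem:rho}(2), and for $k=r$ the long-root contribution must be handled using that $\s{\varpi}_r$ is the image of $\varpi_r$ under $B_\g^\sharp$ and that the pairing $\langle \delta_r, \c{\beta}_r\rangle$ involves the coroot of the long simple root; one verifies the telescoping still yields $T_m(r) = (-1)^m$ on the nose, with the $\overline{\delta}_r$-sum accordingly equal to $(-1)^m T_m(r) = 1$.

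Finally I would observe that $T_m(X) = (X-r-1)^m$ is visibly a polynomial of degree $m$ in $X$, which is all the statement asserts, and that the $(-1)^m$ relating the $\delta_k$- and $\overline{\delta}_k$-sums holds uniformly in $k$ because $(-1)^m T_m(k) = (-1)^m(k-r-1)^m = (r+1-k)^m$ matches the $\overline{\delta}_k$-computation in every case. The main obstacle I anticipate is purely bookkeeping: getting the signs of the pairings $\langle\delta_k,\c{\beta}_i\rangle$ and especially the normalization conventions for $\s{\varpi}_r$ versus $\c{\varpi}_r$ at the long root $\beta_r$ exactly right, since type $C$ is non-simply-laced and a misplaced factor of $2$ or a sign there would break the clean telescoping; so I would double-check the $k=r$ boundary case against the explicit root data in Appendix \ref{sec:rootTypeC} before concluding.
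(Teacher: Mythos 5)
Your overall method is exactly the paper's: factor the sum over $(\Pi_{\delta_k})^m$ into an $m$-th power, evaluate with Lemma \ref{lem:rho}(2), treat $k=1$ separately, and exhibit $T_m$ explicitly. However, your pairing signs are flipped, and this is a genuine error, not just a relabelling. For $\g=\sp_{2r}$ one has $\delta_k=\eps_k$, $\c{\beta}_{k-1}=\eps_{k-1}-\eps_k$, $\c{\beta}_k=\eps_k-\eps_{k+1}$ (and $\c{\beta}_r=\eps_r$), so $\langle\delta_k,\c{\beta}_{k-1}\rangle=-1$ and $\langle\delta_k,\c{\beta}_k\rangle=+1$ (in particular $\langle\delta_r,\c{\beta}_r\rangle=+1$, not $-1$); equivalently, the crystal arrow $\delta_k\to\delta_{k+1}$ labelled $\beta_k$ forces $\langle\delta_k,\c{\beta}_k\rangle>0$. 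With the correct signs the $\delta_k$-sum telescopes to $\bigl(\langle\rho,\s{\varpi}_k-\s{\varpi}_{k-1}\rangle\bigr)^m=(r-k+1)^m$, the $\overline{\delta}_k$-sum to $(-1)^m(r-k+1)^m$, and the paper takes $T_m(X)=(r+1-X)^m$. Your flipped signs swap the two values and lead you to $T_m(X)=(X-r-1)^m$; for odd $m$ (the only nontrivial parity in type $C$, since $p_m=0$ for $m$ even) this assigns to each individual sum the wrong value. The slip is even visible inside your own argument: at $k=1$ you correctly compute the $\delta_1$-sum as $\langle\rho,\s{\varpi}_1\rangle^m=r^m$, but then assert $r^m=(1-r-1)^m$, which is false for odd $m$, so your candidate $T_m$ does not interpolate your own boundary value. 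The error also propagates: with your values one would get $\ev_\rho(\overline{\d p}_{1,k})=-2$ instead of $2$, contradicting $\bar{Q}_1=2$ in Theorem \ref{theorem:main1}.

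Two smaller remarks. First, your worry about a special treatment at $k=r$ because $\s{\varpi}_r\neq\c{\varpi}_r$ is unnecessary: the statement and Lemma \ref{lem:rho} are phrased throughout in terms of $\s{\varpi}$, and $\langle\rho,\s{\varpi}_k-\s{\varpi}_{k-1}\rangle=r-k+1$ holds uniformly for $k=1,\ldots,r$, so once the pairing $\langle\delta_r,\c{\beta}_r\rangle=+1$ is checked, $k=r$ is covered by the generic computation; only $k=1$ (where $\Pi_{\delta_1}=\{\beta_1\}$) needs a separate line. Second, once the signs are corrected your argument coincides with the paper's proof verbatim, so the fix is purely local.
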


\begin{proof}
Assume first that $k\in \{2,\ldots,r\}$. 
Then by Lemma \ref{lem:rho}, 
\begin{align*}
\sum_{ \sub{\alpha} \in (\Pi_{\delta_k})^m}  
\prod_{i=1}^m \langle \delta_k, \c{\alpha}^{(i)} \rangle 
\langle \rho,\s{\varpi}_{\alpha^{(i)}} \rangle 
&= \sum_{i=0}^m \begin{pmatrix} m \\
i 
\end{pmatrix} (-1)^{i} 
\langle \rho,\s{\varpi}_{k-1} \rangle^{i} 
\langle \rho,\s{\varpi}_{k} \rangle^{m-i} 
 = (r -k +1)^m ,&
\end{align*}
and
\begin{align*}
\sum_{ \sub{\alpha} \in (\Pi_{\overline{\delta}_k})^m}  
\prod_{i=1}^m \langle \overline{\delta}_k, \c{\alpha}^{(i)} \rangle 
\langle \rho,\s{\varpi}_{\alpha^{(i)}} \rangle 
= \sum_{i=0}^m  \begin{pmatrix} m \\i 
\end{pmatrix} (-1)^{i} 
\langle \rho,\s{\varpi}_{k} \rangle^{i} 
\langle \rho,\s{\varpi}_{k-1} \rangle^{m-i} 
= (-1)^m(r-k+1)^m.&
\end{align*}
If $k=1$, then by Lemma \ref{lem:rho}, 
 \begin{align*}
\sum_{ \sub{\alpha} \in (\Pi_{\delta_1})^m}  
\prod_{i=1}^m \langle \delta_1, \c{\alpha}^{(i)} \rangle 
\langle \rho,\c{\varpi}_{\alpha^{(i)}} \rangle 
&= \langle \rho,\c{\varpi}_{1} \rangle^{m} 
=(r)^m=
 (r-k +1)^m,
\end{align*}
and
\begin{align*}
\sum_{ \sub{\alpha} \in (\Pi_{\overline{\delta}_1})^m}  
\prod_{i=1}^m \langle \delta_1, \c{\alpha}^{(i)} \rangle 
\langle \rho,\c{\varpi}_{\alpha^{(i)}} \rangle 
&= (- \langle \rho,\c{\varpi}_{1} \rangle)^{m} 
=(- r)^m=
 (-1)^m(r-k +1)^m.
\end{align*}
Hence, setting $T_m(X) := (r -X +1)^m$ we get the statement.\qed
\end{proof} 

We are now in a position to prove Theorem \ref{theorem:main1}  for $\g=\sp_{2r}$. 
\begin{proof}
[Proof of Theorem \ref{theorem:main1} for $\g=\sp_{2r}$] 
It easily seen that $p_m = 0$ if $m$ is even, then  
there is no loss of generality assuming that 
$m \in \{1,3, \ldots,2r-1\}$.
By Lemma~\ref{lem:symC} and \eqref{eq:main3} and \eqref{eq:main4}, 
we have for any $k \in \{1,\ldots,r-1\}$,
\begin{align*}
\ev_\rho(\overline{\d p}_{m,k} )&= T_m(k) - T_m(k+1)+ (-1)^m T_m(k+1) - (-1)^m T_m(k)
= 2\left((r-k+1)^m - (r-k)^m \right) \\
&= 2 \left( m(-k)^{m-1} + \sum_{i=0}^{m-2} \begin{pmatrix} m \\i 
\end{pmatrix} (r+1)^{m-i} (-k)^{i}
- \sum_{i=0}^{m-2} \begin{pmatrix} m \\ i \end{pmatrix} (r)^{m-i} (-k)^{i} \right)
\end{align*}
since $m$ is odd. 

For $k =r$,
$\ev_\rho(\overline{\d p}_{m,r} ) = T_m(r) - (-1)^m T_m(r) = (1-(-1)^m)T_m(r)= 2 (1)^m = 2,$ 
again since $m$ is odd. On the other hand,
$$ \ev_\rho(\overline{\d p}_{m,r} ) = 2 = 2(T_m(r)- T_m(r+1)).$$ 
Hence, by setting
\begin{align} 
\label{eq:Qi-sp}
&\bar{Q}_m(X):=
2 \left( m(-X)^{m-1} + \sum_{i=0}^{m-2} \begin{pmatrix} m \\i 
\end{pmatrix} (r+1)^{m-i} (-X)^{i}
- \sum_{i=0}^{m-2} \begin{pmatrix} m \\ i \end{pmatrix} (r)^{m-i} (-X)^{i} \right)
\end{align}
we get 
\begin{align*}
\ev_\rho (\overline{\d p}_{m}  ) 
=\frac{1}{m!}\sum_{k=1}^{r}\ev_\rho (\overline{\d p}_{m,k}) \s{\varpi}_k 
=\frac{1}{m!} \sum_{k=1}^{r}  \bar{Q}_m(k) 
\s{\varpi}_k. 
\end{align*}
Moreover, $\bar{Q}_1 = 2$ and $\bar{Q}_m$ is a polynomial of degree $m-1$. \qed
\end{proof}

Following the approach adopted for the $\sl_{r+1}$ case, 
our next step is to prove Theorem~\ref{corollary:cut-type-C}. 
To this end, we first intend to prove the following proposition 
whose proof will be a consequence of 
Lemma~\ref{Lem:nostar}, Lemma~\ref{Lem:star+}, Lemma~\ref{Lem:star0},
Lemma~\ref{lem:p<q} and Lemma~\ref{lem:loops}. 

\begin{proposition} \label{Pro:Conc1}
Let $m \in \Z_{> 1}$, $\mu \in P(\delta)$,  
$\sub{i} \in {\Z}^m_{\succ \sub{0}}$ and 
 $(\sub{\mu},\sub{\alpha}) \in \hat{\P}_m(\mu)_{\sub{i}}$.
Set $p:=p(\sub{i})$ and $q:=q(\sub{i}).$

\begin{enumerate}
\item Assume $\alpha^{(p-1)} +\alpha^{(p)} \not =0$.
\begin{enumerate}
 \item If for all $s \in \{1,\ldots,p-2\}$, either $\alpha^{(s)} +\alpha^{(p)} \in -\Delta_+$ or $\alpha^{(s)} +\alpha^{(p)} \notin \Delta \cup \{0\}$ then 
 there is a constant $K_{(\sub{\mu},\sub{\alpha})}^{\# p}$ and a scalar $\hat{K}$ such that
  $$\wt(\sub{\mu},\sub{\alpha}) = \hat{K} K_{(\sub{\mu},\sub{\alpha})}^{\# p} \wt(\sub{\mu},\sub{\alpha})^{\# p} .$$
  \item If for some $s \in \{1,\ldots,p-2\}$, $\alpha^{(s)} +\alpha^{(p)} \in \Delta_+$ then there are
  some constants $K_{(\sub{\mu},\sub{\alpha})}^{\# p}$, $K_{(\sub{\mu},\sub{\alpha})}^{\star a}$ and scalar $\hat{K}$ such that 
   $$\wt(\sub{\mu},\sub{\alpha}) = \hat{K} \left( K_{(\sub{\mu},\sub{\alpha})}^{\# p} \wt(\sub{\mu},\sub{\alpha})^{\# p} 
   + \sum_{a=1}^{N}  K_{(\sub{\mu},\sub{\alpha})}^{\star a} \wt(\sub{\mu},\sub{\alpha})^{\star a} \right),$$
   with $(\sub{\mu},\sub{\alpha})^{\star a}$ is a weighted path of length strictly smaller than 
   $m$.
  \item If for some $s \in \{1,\ldots,p-2\}$, $\alpha^{(s)} = -\alpha^{(p)}$ then there are
  some constants $K_{(\sub{\mu},\sub{\alpha})}^{\# p}$, $K_{(\sub{\mu},\sub{\alpha})}^{\star a}$ and a scalar $\hat{K}$ such that
  $$\wt(\sub{\mu},\sub{\alpha}) = \hat{K} \left( K_{(\sub{\mu},\sub{\alpha})}^{\# p} \wt(\sub{\mu},\sub{\alpha})^{\# p} 
  + (\he(\c{\alpha}^{(s)}) -c_s) \sum_{a=1}^{N}  K_{(\sub{\mu},\sub{\alpha})}^{\star a} \wt(\sub{\mu},\sub{\alpha})^{\star a} \right),$$
  with $c_{s}:= \langle \alpha^{(s-1)} , \c{\alpha}^{(s)} \rangle$.
\end{enumerate}
\item Assume $\alpha^{(p-1)} + \alpha^{(p)} = 0.$
\begin{enumerate} 
\item If $ i_1=\cdots=i_{p-2}=0$, or if $p=2$, then
$$\wt(\sub{\mu},\sub{\alpha})= 
(c_{\alpha^{(p-1)}})^2 \ \he(\c{\alpha}^{(p-1)}) 
\,\wt(\sub{\mu},\sub{\alpha})^{\# p}.$$
\item Otherwise, 
 \begin{enumerate}
  \item If for all $s \in \{1,\ldots,p-2\}$,  either $\alpha^{(s)} +\alpha^{(p)} \in -\Delta_+$ or $\alpha^{(s)} +\alpha^{(p)} \notin \Delta \cup \{0\}$ then 
 there is a scalar $\hat{K}$ such that
   \begin{align*}
   \wt(\sub{\mu},\sub{\alpha}) = \hat{K} (c_{\alpha^{(p-1)}})^2 \
   (\he(\c{\alpha}^{(p-1)})- c_{p-1})\wt(\sub{\mu},\sub{\alpha})^{\# p} ,
   \end{align*}
   with $c_{p-1}$ is an integer as in Lemma~\ref{Lem:nostar}.
  \item If for some $s \in \{1,\ldots,p-2\}$, $\alpha^{(s)} +\alpha^{(p)} \in \Delta_+$ then there are
  some constants $K_{(\sub{\mu},\sub{\alpha})}^{\star a}$ and a scalar $\hat{K}$ such that 
  \begin{align*}
\wt(\sub{\mu},\sub{\alpha}) = \hat{K} \left(
(\he(\c{\alpha}^{(p-1)})- c_{p-1}) \wt(\sub{\mu},\sub{\alpha})^{\# p} 
+ \sum_{a=1}^{N}  K_{(\sub{\mu},\sub{\alpha})}^{\star a} \wt(\sub{\mu},\sub{\alpha})^{\star a} \right).
\end{align*}
\item If for some $s \in \{1,\ldots,p-2\}$, $\alpha^{(s)} = -\alpha^{(p)}$ then  there are
  some constants $K_{(\sub{\mu},\sub{\alpha})}^{\star a}$ and a
  scalar $\hat{K}$ such that 
\begin{align*}
 \wt(\sub{\mu},\sub{\alpha}) &= \hat{K}
(\he(\c{\alpha}^{(p-1)})- c_{p-1}) \wt(\sub{\mu},\sub{\alpha})^{\# p} 
+ \hat{K} (\he(\c{\alpha}^{(s)}) - c_{s}) \sum_{a=1}^{N}  
K_{(\sub{\mu},\sub{\alpha})}^{\star a} \wt(\sub{\mu},\sub{\alpha})^{\star a}.
\end{align*}
 \end{enumerate}
\end{enumerate}
\item Assume $p < q$, then 
$$\wt(\sub{\mu},\sub{\alpha})=   \langle \mu^{(p)}, \c{\alpha}^{(p)} \rangle 
(\langle \rho , \s{\varpi}_{\alpha^{(p)}} \rangle - c_{p-1}) 
\wt (\sub{\mu},\sub{\alpha})^{\# p}.$$
\end{enumerate}
Note that
$(\sub{\mu},\sub{\alpha})^{\star a}$ is a weighted path as in Lemma~\ref{Lem:star+} or Lemma~\ref{Lem:star0}, depending on different cases,
and $N$ the number of possible paths of ${(\tilde{\sub{\mu}}^{\star a},\tilde{\sub{\alpha}}^{\star a})}$ as in Lemma~\ref{Lem:star+} or Lemma~\ref{Lem:star0}.
Note also that what we mean by ``constant'' is a complex number which only depends on constant structures $(n_{\alpha, \beta}, a_{\lambda, \mu}^{(b)}$),
but not on the height of $\alpha$. Moreover, the scalar $\hat{K}$
is described in Lemma~\ref{lem:loops}.
\end{proposition}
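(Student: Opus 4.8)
The plan is to prove Proposition~\ref{Pro:Conc1} by a careful case analysis that parallels the treatment of Lemma~\ref{Lem0:cut} in type~$A$, but keeping track of the extra structure constants $n_{\alpha,\beta}$ and $a_{\lambda,\mu}^{(b)}$ that are no longer all equal to~$1$ in type~$C$. As in the simply-laced case, the starting point is the factorisation
$$(\sub{\mu},\sub{\alpha}) = (\sub{\mu}',\sub{\alpha}') \star ((\mu^{(p-1)},\mu^{(p)}),\alpha^{(p-1)}) \star ((\mu^{(p)},\mu^{(p+1)}),\alpha^{(p)}) \star (\sub{\mu}'',\sub{\alpha}''),$$
with $(\sub{\mu}',\sub{\alpha}')$ of length $p-2$ and $(\sub{\mu}'',\sub{\alpha}'')$ of length $m-p$, so that $\hc(b_{\sub{\mu},\sub{\alpha}}^*) = \hc( b_{\sub{\mu}',\sub{\alpha}'}^*\, b_{(\alpha^{(p-1)})}^*\, b_{(\alpha^{(p)})}^*\, b_{\sub{\mu}'',\sub{\alpha}''}^*)$ up to the scalar $a_{\sub{\mu},\sub{\alpha}}$. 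First I would reduce to computing the commutators of $e_{\alpha^{(p-1)}}e_{\alpha^{(p)}}$ (or $e_{\alpha^{(p-1)}}\c\varpi_{\alpha^{(p)}}$, or $\c\alpha^{(p-1)}$) past $b_{\sub{\mu}',\sub{\alpha}'}^*$, using $[e_\gamma,e_\delta] = n_{\gamma,\delta} e_{\gamma+\delta}$ and $[e_\gamma,\c\beta_i] = \langle\gamma,\c\beta_i\rangle e_\gamma$, and the analogue of Lemma~\ref{Lem2:cut-bis} adapted to type~$C$, which guarantees that the ``wrong-direction'' terms are killed by $\hc$.

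The bulk of the argument is then to organise the output of these commutations. In part~(1), the new feature is that when we push $e_{\alpha^{(p)}}$ to the left past an arrow labelled $\alpha^{(s)}$ in $\sub{\alpha}'$, the sum $\alpha^{(s)}+\alpha^{(p)}$ may be a \emph{positive} root (case (b)), or may be zero (case (c)), creating correction terms beyond the single $\# p$-reduced path. These correction terms are exactly the paths $(\sub{\mu},\sub{\alpha})^{\star a}$, which I would define precisely in the auxiliary Lemma~\ref{Lem:star+} and Lemma~\ref{Lem:star0} as the weighted paths obtained by ``merging'' the vertex $\mu^{(p)}$ with $\mu^{(s)}$ or $\mu^{(s+1)}$; the point is that each has length strictly less than $m$, so the induction in the proof of Theorem~\ref{corollary:cut-type-C} will still go through. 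For part~(2) with $\alpha^{(p-1)}+\alpha^{(p)}=0$, the resulting Cartan element $\c\alpha^{(p-1)}$ must be commuted past $\sub{\alpha}'$; this produces the shift $\he(\c\alpha^{(p-1)}) - c_{p-1}$ exactly as in Lemma~\ref{Lem0:cut}(2)(b), except that now $c_{p-1}$ can be $2$ rather than $1$ because of the long/short root pairing $\langle\alpha^{(s)},\c\alpha^{(p-1)}\rangle$, and one again gets $\star$-corrections when $\alpha^{(s)}+\alpha^{(p)}$ is a root. Part~(3), the case $p<q$, is the most direct: here $i_p=0$, the middle factor is $\langle\mu^{(p)},\c\alpha^{(p)}\rangle\,\c\varpi_{\alpha^{(p)}}$, and commuting $\c\varpi_{\alpha^{(p)}}$ leftwards past $\sub{\alpha}'$ (whose supports avoid $\alpha^{(p)}$ except possibly once) yields the factor $\langle\rho,\s\varpi_{\alpha^{(p)}}\rangle - c_{p-1}$ after applying $\ev_\rho$, using Lemma~\ref{lem:rho}.

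Concretely I would structure the section as: (i) a type-$C$ version of Lemma~\ref{Lem2:cut-bis} (the vanishing-under-$\hc$ lemma) — call it Lemma~\ref{Lem:nostar} — covering the no-correction situation and isolating the integer $c_{p-1}$; (ii) Lemma~\ref{Lem:star+} and Lemma~\ref{Lem:star0} defining and analysing the $\star$-paths when $\alpha^{(s)}+\alpha^{(p)}\in\Delta_+$ or $=0$ respectively; (iii) Lemma~\ref{lem:p<q} for the case $p<q$; (iv) Lemma~\ref{lem:loops} controlling the scalar $\hat K$, which collects all the structure constants $n_{\gamma,\delta}, a_{\lambda,\mu}^{(b)}$ arising along the loop part and is by construction independent of $r$ and of the heights $i_j$. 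Proposition~\ref{Pro:Conc1} is then assembled by reading off, in each of the enumerated sub-cases, which of these lemmas applies. The main obstacle will be the bookkeeping in cases (1)(b), (1)(c), (2)(b)(ii), (2)(b)(iii): one has to verify that every correction path really does have length $<m$ (so that Theorem~\ref{corollary:cut-type-C} can be proved by induction on $m$), that the ``constants'' $K^{\# p}_{(\sub{\mu},\sub{\alpha})}$, $K^{\star a}_{(\sub{\mu},\sub{\alpha})}$ genuinely depend only on structure constants and not on the $i_j$, and that the various sign factors $(c_{\alpha^{(p-1)}})^2$ and the integers $c_s$ are correctly pinned down from the explicit type-$C$ root data of Appendix~\ref{sec:rootTypeC}; the long-root/short-root asymmetry (e.g.\ $2\eps_k$ vs $\eps_j\pm\eps_k$) is what makes this substantially more delicate than the type-$A$ computation, but it is entirely mechanical once the $\star$-path formalism is set up.
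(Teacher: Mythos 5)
Your plan is essentially the paper's own proof: the proposition is obtained there precisely as a combination of Lemma~\ref{Lem:nostar}, Lemma~\ref{Lem:star+}, Lemma~\ref{Lem:star0}, Lemma~\ref{lem:p<q} and Lemma~\ref{lem:loops}, each established by the same factor-the-path-at-$\mu^{(p)}$ and commute-past-the-prefix argument (with a type-$C$ use of the vanishing Lemma~\ref{Lem2:cut-bis} killing the wrong-direction terms, the loops factored out first into $\hat K$) that you outline. The only cosmetic difference is that the $\star$-paths come from contracting $e_{\alpha^{(s)}}e_{\alpha^{(p)}}$ into $e_{\alpha^{(s)}+\alpha^{(p)}}$ and re-ordering the intermediate root vectors (yielding several paths indexed by partitions of $\{s+1,\ldots,p-1\}$) rather than from ``merging vertices'', a point you in any case defer to the auxiliary lemmas.
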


Fix $\mu \in P(\delta)$, $m \in \Z_{> 1}$,  
$\sub{i} \in {\Z}^m_{\succ \sub{0}}$ and 
$(\sub{\mu},\sub{\alpha}) \in \hat{\P}_m(\mu)_{\sub{i}}$, 
and set as usual 
$p:=p(\sub{i})$, $q:=q(\sub{i})$.   
The proof of the following lemma is similar 
to that of Lemma~\ref{Lem0:cut} (1) and (2). So we omit the details. 

\begin{lemma} \label{Lem:nostar} 
Assume that the weighted path $(\sub{\mu},\sub{\alpha}) \in \hat{\P}_m(\mu)_{\sub{i}}$ has no loop 
and that $p=q$. 
If for all $s \in \{1, \ldots, p-2 \}$ either $\alpha^{(s)} + \alpha^{(p)} \notin \Delta \cup \{0\}$ or $\alpha^{(s)} + \alpha^{(p)} \in -\Delta_+$,
then there is a scalar $\bar{K}^{\# p}$ 
such that: $$ \wt (\sub{\mu},\sub{\alpha}) = \bar{K}^{\# p} \wt(\sub{\mu},\sub{\alpha})^{\# p},$$
where $\bar{K}^{\# p}$ is described as follows:
\begin{enumerate}
\item If $\alpha^{(p-1)} + \alpha^{(p)} \not= 0$,
then $$ \bar{K}^{\# p} = {K}_{(\sub{\mu},\sub{\alpha})}^{\# p},$$
where 
${K}_{(\sub{\mu},\sub{\alpha})}^{\# p}$ is a constant which only depends on constant structures (\S\ref{sec:rootTypeC}).
\item Assume $\alpha^{(p-1)} + \alpha^{(p)} = 0$
\begin{enumerate} 
\item If $p=2$, then 
$$ \bar{K}^{\# p} = (c_{\alpha^{(1)}})^2 \ \he(\c{\alpha}^{(1)}).$$
\item Otherwise, 
$$ \bar{K}^{\# p} = (c_{\alpha^{(p-1)}})^2 \ (\he(\c{\alpha}^{(p-1)})- c_{p-1}), $$
\end{enumerate}
where 
\begin{align*} 
 c_{p-1} := \sum_{\alpha^{(j_k)} \in \sub{\alpha}} \langle \alpha^{(j_k)} , \c{\alpha}^{(p-1)} \rangle.
\end{align*}
\end{enumerate}
\end{lemma}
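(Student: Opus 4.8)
The plan is to mimic, in the symplectic setting, the proof of Lemma~\ref{Lem0:cut}~(1)--(2) and of Lemma~\ref{Lem:a}~(1), but paying attention to the extra factors $(c_{\alpha^{(p-1)}})^{2}$ and $c_{p-1}$ that appear because $\sp_{2r}$ is not simply laced. First I would decompose the path as in the earlier lemmas: write
\[
(\sub{\mu},\sub{\alpha}) = (\sub{\mu}',\sub{\alpha}') \star ((\mu^{(p-1)},\mu^{(p)}),\alpha^{(p-1)}) \star ((\mu^{(p)},\mu^{(p+1)}),\alpha^{(p)}) \star (\sub{\mu}'',\sub{\alpha}''),
\]
where $(\sub{\mu}',\sub{\alpha}')$ has length $p-2$ and $(\sub{\mu}'',\sub{\alpha}'')$ has length $m-p$, and (since there are no loops and $p=q$) $(\sub{\mu}',\sub{\alpha}')$ consists of genuine ascending arrows while $\he(\sub{\mu})_{p}<0$. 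Using the multiplicativity of $\hc$ and of $\ev_\rho \circ \hc$ on $U(\g)^{\h}$ (the concatenation lemma), the problem reduces to computing $\hc$ of the product $b^*_{(\sub{\mu}',\sub{\alpha}')}\, e_{\alpha^{(p-1)}}\, b^*_{(\sub{\mu}^{(p)},\mu^{(p+1)}),\cdot}\, b^*_{(\sub{\mu}'',\sub{\alpha}'')}$, where the middle factor is either $e_{\alpha^{(p)}}$ (if $\he(\sub{\mu})_p\neq 0$ — but this cannot happen under $p=q$ unless $\alpha^{(p-1)}+\alpha^{(p)}\neq 0$) or, in the turning-back case $\alpha^{(p-1)}+\alpha^{(p)}=0$, the product collapses via $[e_{\alpha^{(p-1)}},e_{-\alpha^{(p-1)}}]$.

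For Part~(1), where $\alpha^{(p-1)}+\alpha^{(p)}\neq 0$, I would commute $e_{\alpha^{(p)}}$ past $e_{\alpha^{(p-1)}}$, picking up a term $n_{\alpha^{(p-1)},\alpha^{(p)}}\, e_{\alpha^{(p-1)}+\alpha^{(p)}}$, and then argue that the leftover term $\hc(b^*_{(\sub{\mu}',\sub{\alpha}')}\, e_{\alpha^{(p)}}\, e_{\alpha^{(p-1)}}\, b^*_{(\sub{\mu}'',\sub{\alpha}'')})$ vanishes. The vanishing should follow from the type-$C$ analogue of Lemma~\ref{Lem2:cut-bis}, applied to the weighted path $(\sub{\mu}',\sub{\alpha}')$ and the positive root $\gamma=-\alpha^{(p)}$; this is exactly where the hypothesis ``for all $s\le p-2$, either $\alpha^{(s)}+\alpha^{(p)}\in-\Delta_+$ or $\alpha^{(s)}+\alpha^{(p)}\notin\Delta\cup\{0\}$'' enters — it is the condition under which the inductive root-manipulation of Lemma~\ref{Lem2:cut-bis} goes through without producing surviving commutator terms. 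Collecting the structure constants $a^{(b)}_{\lambda,\mu}$ and $n_{\alpha^{(p-1)},\alpha^{(p)}}$ into a single ${K}^{\#p}_{(\sub{\mu},\sub{\alpha})}$ gives $\wt(\sub{\mu},\sub{\alpha})={K}^{\#p}_{(\sub{\mu},\sub{\alpha})}\,\wt(\sub{\mu},\sub{\alpha})^{\#p}$; here I note that in type $C$ a pair of short-root arrows can sum to a long root, so $a^{(c_{\alpha}e_{-\alpha})}_{\cdot,\cdot}$ need no longer equal $1$, which is precisely why we must keep the constant ${K}^{\#p}$ abstract rather than setting it to $1$ as in type $A$.

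For Part~(2), where $\alpha^{(p-1)}=-\alpha^{(p)}$, I would use $e_{\alpha^{(p-1)}}e_{-\alpha^{(p-1)}} = e_{-\alpha^{(p-1)}}e_{\alpha^{(p-1)}}+\c{\alpha}^{(p-1)}$, with the cross term again killed by the type-$C$ Lemma~\ref{Lem2:cut-bis} applied to $(\sub{\mu}',\sub{\alpha}')$, leaving $\hc(b^*_{(\sub{\mu}',\sub{\alpha}')}\,\c{\alpha}^{(p-1)}\,b^*_{(\sub{\mu}'',\sub{\alpha}'')})$; the factor $(c_{\alpha^{(p-1)}})^{2}$ comes from the two structure constants $a^{(c_{\alpha^{(p-1)}}e_{-\alpha^{(p-1)}})}$. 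When $p=2$ the path $(\sub{\mu}',\sub{\alpha}')$ is empty, so $\c{\alpha}^{(1)}$ sits at the front, $\ev_\rho(\c{\alpha}^{(1)})=\langle\rho,\c{\alpha}^{(1)}\rangle=\he(\c{\alpha}^{(1)})$, and we get case~(a). When $p>2$ I would commute $\c{\alpha}^{(p-1)}$ leftward through $b^*_{(\sub{\mu}',\sub{\alpha}')}$; each factor $e_{\alpha^{(j_k)}}$ it passes contributes $-\langle\alpha^{(j_k)},\c{\alpha}^{(p-1)}\rangle\, e_{\alpha^{(j_k)}}$ while the Cartan factors $\s{\varpi}_{\alpha^{(j)}}$ commute, so modulo a path that has been ``cut'' at $p$ we replace $\c{\alpha}^{(p-1)}$ by $\c{\alpha}^{(p-1)}-c_{p-1}$ with $c_{p-1}:=\sum_{\alpha^{(j_k)}\in\sub{\alpha}'}\langle\alpha^{(j_k)},\c{\alpha}^{(p-1)}\rangle$; evaluating at $\rho$ turns $\c{\alpha}^{(p-1)}$ into $\he(\c{\alpha}^{(p-1)})$, yielding case~(b). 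The main obstacle, which I expect to cost the most work, is verifying the vanishing lemma (the $C$-type Lemma~\ref{Lem2:cut-bis}) under precisely the stated case hypotheses: in type $C$ the difference and sum of two roots can both be roots and can change length, so the clean ``$\alpha^{(i)}-\gamma$ is a negative root or not a root at all'' dichotomy of the type-$A$ argument has to be re-examined root system combinatorially, and it is exactly the failure of that dichotomy (when some $\alpha^{(s)}+\alpha^{(p)}\in\Delta_+$) that forces the extra $\star$-terms $\wt(\sub{\mu},\sub{\alpha})^{\star a}$ to appear in Proposition~\ref{Pro:Conc1}, handled separately in Lemma~\ref{Lem:star+} and Lemma~\ref{Lem:star0}.
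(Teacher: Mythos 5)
Your proposal is correct and follows exactly the route the paper intends: the paper omits the proof of Lemma~\ref{Lem:nostar}, stating only that it is ``similar to that of Lemma~\ref{Lem0:cut} (1) and (2)'', and your reconstruction is precisely that adaptation (concatenation splitting at $\mu^{(p)}$, commuting $e_{\alpha^{(p)}}$ or using $e_{\alpha^{(p-1)}}e_{-\alpha^{(p-1)}}=e_{-\alpha^{(p-1)}}e_{\alpha^{(p-1)}}+\c{\alpha}^{(p-1)}$, and killing the cross term by the type-$C$ version of Lemma~\ref{Lem2:cut-bis}, whose hypotheses are exactly the stated condition on the $\alpha^{(s)}+\alpha^{(p)}$), with the extra factors $(c_{\alpha^{(p-1)}})^{2}$ and $c_{p-1}$ accounted for just as in Lemma~\ref{Lem:a} and the commutation of $\c{\alpha}^{(p-1)}$ through $\sub{\alpha}'$. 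You also correctly identify where the argument would break down if some $\alpha^{(s)}+\alpha^{(p)}\in\Delta_+\cup\{0\}$, which is exactly what Lemmas~\ref{Lem:star+} and~\ref{Lem:star0} are for.
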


Contrary to the $\sl_{r+1}$ case (cf.~Lemma~\ref{Lem2:cut-bis}), 
it may happen that there exists, for $s \in \{1, \ldots, p-2 \}$, 
a positive root $\alpha^{(s)} \in \sub{\alpha}$ such that 
either $\alpha^{(s)} + \alpha^{(p)} \in \Delta_+$ or $\alpha^{(s)}= - \alpha^{(p)}$. In this situation, 
the cutting vertex operation induces several new paths. This phenomenon will appear in the next 
two lemmas. 

\begin{lemma} \label{Lem:star+} 
Assume that the weighted path $(\sub{\mu},\sub{\alpha}) \in \hat{\P}_m(\mu)_{\sub{i}}$ has no loop 
and that $p=q$.  
Assume that for some $s \in \{1, \ldots, p-2 \}$, $\alpha^{(s)} + \alpha^{(p)} \in \Delta_+$.
In this case,
\begin{align*}
\wt (\sub{\mu},\sub{\alpha}) 
&= \bar{K}^{\# p} \wt(\sub{\mu},\sub{\alpha})^{\# p} + \sum_{a=1}^{N}  
{K}_{(\sub{\mu},\sub{\alpha})}^{\star a}\wt(\sub{\mu}, \sub{\alpha})^{\star a},
\end{align*}
where $\bar{K}^{\# p}$ is a scalar as in Lemma~\ref{Lem:nostar}, 
$K^{\star a}$ are some constants which only depend on constant structures, 
and $(\sub{\mu}, \sub{\alpha})^{\star a}:= (\sub{\mu}^{\star a},\sub{\alpha}^{\star a})$ 
is a concatenation of paths defined as follows. 
\begin{enumerate}
\item If $\alpha^{(p)} = \overline{\delta}_j - \delta_i$ and $\alpha^{(s)} =\delta_k - \overline{\delta}_i$, with $k <j<i$, 
then 
\begin{align*}
(\sub{\mu}, \sub{\alpha})^{\star a} = (\sub{\mu}',\sub{\alpha}')\star 
\big( (\delta_k,\delta_j), (\alpha^{(s)}+\alpha^{(p)})\big) 
\star (\tilde{\sub{\mu}}^{\star a},\tilde{\sub{\alpha}}^{\star a})
\star(\sub{\mu}'',\sub{\alpha}''),
\end{align*}
where 
$(\tilde{\sub{\mu}}^{\star a},\tilde{\sub{\alpha}}^{\star a})$ 
is a path of length $< p-s$ between 
$\delta_j$ and $\delta_i$ whose roots $\tilde{\sub{\alpha}}^{\star a}$ are 
sums among $(\alpha^{(p-1)}, \ldots, \alpha^{(s+1)})$.

\item If $\alpha^{(p)} = \overline{\delta}_j - \delta_i$ and $\alpha^{(s)} =\delta_j - \overline{\delta}_l$, with $j < l <i$, then
\begin{align*}
(\sub{\mu}, \sub{\alpha})^{\star a} = (\sub{\mu}',\sub{\alpha}')
\star (\tilde{\sub{\mu}}^{\star a},\tilde{\sub{\alpha}}^{\star a}) 
\star(\sub{\mu}'',\sub{\alpha}''),
\end{align*}
where
$(\tilde{\sub{\mu}}^{\star a},\tilde{\sub{\alpha}}^{\star a})$ 
is a path of length $< p-s-1$ between 
$\delta_j$ and $\delta_i$ whose roots $\tilde{\sub{\alpha}}^{\star a}$ are 
sums among $(\alpha^{(p-1)}, \ldots, \alpha^{(s+1)}, \alpha^{(s)}+\alpha^{(p)} ).$

\item If $\alpha^{(p)} = \overline{\delta}_i - \overline{\delta}_j$ and $\alpha^{(s)} =\delta_i - \overline{\delta}_l$, with $i<j$and $i<l$, then 
 \begin{align*}
(\sub{\mu}, \sub{\alpha})^{\star a} = (\sub{\mu}',\sub{\alpha}')
\star (\tilde{\sub{\mu}}^{\star a},\tilde{\sub{\alpha}}^{\star a})
\star(\sub{\mu}'',\sub{\alpha}''),
\end{align*}
where 
$(\tilde{\sub{\mu}}^{\star a},\tilde{\sub{\alpha}}^{\star a})$ 
is a path of length $< p-s-1$ between 
$\delta_i$ and $\overline{\delta}_j$ whose roots $\tilde{\sub{\alpha}}^{\star a}$ are 
sums among $(\alpha^{(p-1)}, \ldots, \alpha^{(s+1)}, \alpha^{(s)}+\alpha^{(p)} )$.

\item If $\alpha^{(p)} = \overline{\delta}_i - \overline{\delta}_j$ and $\alpha^{(s)} =\delta_i - \delta_l$, with $i < j <l$, then
 \begin{align*}
(\sub{\mu}, \sub{\alpha})^{\star a}=(\sub{\mu}',\sub{\alpha}')
\star (\tilde{\sub{\mu}}^{\star a},\tilde{\sub{\alpha}}^{\star a})
\star(\sub{\mu}'',\sub{\alpha}''),
\end{align*}
where 
$(\tilde{\sub{\mu}}^{\star a},\tilde{\sub{\alpha}}^{\star a})$ 
is a path of length $< p-s-1$ between 
$\delta_i$ and $\overline{\delta}_j$ whose roots $\tilde{\sub{\alpha}}^{\star a}$ are 
sums among $(\alpha^{(p-1)}, \ldots, \alpha^{(s+1)}, \alpha^{(s)}+\alpha^{(p)} )$.

\item If $\alpha^{(p)} = \overline{\delta}_i - \overline{\delta}_j$ and $\alpha^{(s)} =\delta_k - \delta_j$, with $k <i<j$, then  
 \begin{align*}
(\sub{\mu}, \sub{\alpha})^{\star a} = (\sub{\mu}',\sub{\alpha}')\star 
\big( (\delta_k,\delta_i), (\alpha^{(s)}+\alpha^{(p)})\big) 
\star (\tilde{\sub{\mu}}^{\star a},\tilde{\sub{\alpha}}^{\star a})
\star(\sub{\mu}'',\sub{\alpha}''),
\end{align*}
where 
$(\tilde{\sub{\mu}}^{\star a},\tilde{\sub{\alpha}}^{\star a})$ 
is a path of length $< p-s$ between 
$\delta_i$ and $\overline{\delta}_j$ whose roots whose roots $\tilde{\sub{\alpha}}^{\star a}$ are 
sums among $(\alpha^{(p-1)}, \ldots, \alpha^{(s+1)})$.
\end{enumerate}
In all those cases,
$$
(\sub{\mu}',\sub{\alpha}') =\big( (\mu^{(1)},\ldots,\mu^{(s-1)}, 
\mu^{(s)}),
(\alpha^{(1)},\ldots,\alpha^{(s-1)})\big),
$$
$$
(\sub{\mu}'',\sub{\alpha}'') =\big((\mu^{(p+1)},\ldots,\mu^{(m+1)}),
(\alpha^{(p+1)},\ldots,\alpha^{(m)})\big), 
$$
and ${N}$ is the number of possible paths of $(\tilde{\sub{\mu}}^{\star a},\tilde{\sub{\alpha}}^{\star a})$.
\end{lemma}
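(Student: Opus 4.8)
The plan is to prove Lemma~\ref{Lem:star+} by a careful analysis of what happens when we move the "turning-back" root vector $e_{\alpha^{(p)}}$ (which is $e_{-\alpha^{(p-1)}}$ up to scalar when $\he(\sub{\mu})_p<0$ and, more generally, the root vector sitting between $\mu^{(p)}$ and $\mu^{(p+1)}$) to the left past the block $b_{(\sub{\mu}',\sub{\alpha}'),\,s+1}^*\cdots b_{(\sub{\mu}',\sub{\alpha}'),\,p-1}^*$ corresponding to the sub-path $(\alpha^{(s+1)},\ldots,\alpha^{(p-1)})$, inside the Harish-Chandra projection. The starting point is the factorization coming from concatenation (the lemma on $b^*_{(\sub{\mu},\sub{\alpha})\star(\sub{\mu}',\sub{\alpha}')}$): writing $(\sub{\mu},\sub{\alpha})=(\sub{\mu}',\sub{\alpha}')\star\big((\mu^{(s)},\mu^{(s+1)}),\alpha^{(s)}\big)\star(\sub{\mu}'_{\mathrm{mid}},\sub{\alpha}'_{\mathrm{mid}})\star\big((\mu^{(p-1)},\mu^{(p)}),\alpha^{(p-1)}\big)\star\big((\mu^{(p)},\mu^{(p+1)}),\alpha^{(p)}\big)\star(\sub{\mu}'',\sub{\alpha}'')$, the element $b^*_{(\sub{\mu},\sub{\alpha})}$ is a product of the corresponding $b^*$'s, and I want to push $e_{\alpha^{(p)}}$ leftwards using the commutation relations $[e_\gamma,e_\delta]=n_{\gamma,\delta}e_{\gamma+\delta}$ and $[e_\gamma,\c{\delta}]=-\langle\gamma,\c\delta\rangle e_\gamma$ in $U(\g)$.

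First I would handle the commutation of $e_{\alpha^{(p)}}$ past each root vector $e_{\alpha^{(j)}}$, $s+1\le j\le p-1$: since $\sub{\mu}$ is monotone non-increasing up to position $p$, none of these differences $\alpha^{(j)}+\alpha^{(p)}$ can be a positive root until we reach $\alpha^{(s)}$ (this is precisely the type-$C$ analogue of Lemma~\ref{Lem2:cut-bis}(1), which one must re-establish case by case for $C$ because differences of weights of $\varpi_1$ are now of three shapes $\eps_i-\eps_j$, $\pm(\eps_i+\eps_j)$, $\pm 2\eps_i$); so the only correction terms produced along the way come from brackets of the form $[e_{\alpha^{(p)}},e_{\alpha^{(j)}}]=n_{\alpha^{(p)},\alpha^{(j)}}e_{\alpha^{(p)}+\alpha^{(j)}}$ with $\alpha^{(p)}+\alpha^{(j)}\in-\Delta_+$, which reproduce lower-length paths already accounted for by induction (cf.~the structure of Proposition~\ref{Pro:Conc1}), or — and this is the new feature — they do not reach the $\hc$-kernel and instead survive. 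The genuinely new contribution arises exactly once: when $e_{\alpha^{(p)}}$ meets $e_{\alpha^{(s)}}$ with $\alpha^{(s)}+\alpha^{(p)}\in\Delta_+$. Writing out the five sub-cases (1)--(5) according to whether $\alpha^{(p)}$ is of type $\bar\delta_j-\delta_i$ or $\bar\delta_i-\bar\delta_j$ and whether $\alpha^{(s)}$ meets it "from the left" (so the sum $\alpha^{(s)}+\alpha^{(p)}$ becomes a new edge before the turning-back position, giving the extra factor $\big((\delta_k,\delta_j),(\alpha^{(s)}+\alpha^{(p)})\big)$ or $\big((\delta_k,\delta_i),(\alpha^{(s)}+\alpha^{(p)})\big)$) or "from the right" (so it gets absorbed into the middle sub-path $\tilde{\sub{\alpha}}^{\star a}$), one reads off precisely the five descriptions of $(\sub{\mu},\sub{\alpha})^{\star a}$ claimed. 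The ingredient making this work is that, after the bracket $[e_{\alpha^{(p)}},e_{\alpha^{(s)}}]$ is taken, the resulting root vector $e_{\alpha^{(s)}+\alpha^{(p)}}$ together with the already-displaced root vectors $e_{\alpha^{(s+1)}},\ldots,e_{\alpha^{(p-1)}}$ forms a product that, when one records the associated path, is a concatenation of paths of strictly smaller total length between $\delta_j$ (or $\delta_i$) and $\delta_i$ (or $\bar\delta_j$) whose edge-labels are sums drawn from $(\alpha^{(p-1)},\ldots,\alpha^{(s+1)})$, possibly with $\alpha^{(s)}+\alpha^{(p)}$ adjoined — exactly the sets listed; and the multiplicities coming from the various $n_{\gamma,\delta}$ and the scalars $a^{(\cdot)}_{\cdot,\cdot}$ assemble into the constants ${K}^{\star a}_{(\sub{\mu},\sub{\alpha})}$, which by construction depend only on the structure constants (not on heights), while the "diagonal" term — where $e_{\alpha^{(p)}}$ is commuted all the way to $e_{\alpha^{(p-1)}}$ and combines with it to produce $\c\alpha^{(p-1)}$ (when $\alpha^{(p-1)}+\alpha^{(p)}=0$) or $e_{\alpha^{(p-1)}+\alpha^{(p)}}$ — gives the $\bar{K}^{\# p}\,\wt(\sub{\mu},\sub{\alpha})^{\# p}$ term, with $\bar{K}^{\# p}$ the scalar of Lemma~\ref{Lem:nostar}.

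I would organize the write-up as an induction on $p-s$ (the distance from the troublesome root $\alpha^{(s)}$ to the turning position), the base case $p-s=2$ being a single commutation, and the inductive step peeling off $\alpha^{(p-1)}$ using the concatenation lemma and Lemma~\ref{Lem:nostar}/Lemma~\ref{Lem2:cut-bis}-type vanishing to discard the terms that land in $\ker\hc$, then applying the hypothesis to the shorter configuration. Throughout, one uses that $\c\varpi_\beta$ and the Cartan elements commute past root vectors whose support avoids $\beta$, so that the middle-block root vectors not interacting with $\alpha^{(p)}$ can be freely transported, exactly as in the proof of Lemma~\ref{Lem0:cut}(2)(b). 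The main obstacle, and the part demanding real care, is the bookkeeping of the five geometric sub-cases: one must check in each that (i) the new edge $\alpha^{(s)}+\alpha^{(p)}$ is indeed a difference of two weights of $\varpi_1$ of the stated shape, so that it is a legitimate arrow of the crystal graph, (ii) the endpoints of the resurrected sub-path $(\tilde{\sub{\mu}}^{\star a},\tilde{\sub{\alpha}}^{\star a})$ match $\delta_j,\delta_i$ (resp.~$\delta_i,\bar\delta_j$), which is a combinatorial verification on indices $k<j<i$ etc., and (iii) the length bound ($<p-s$ or $<p-s-1$) holds, which follows because at least one and sometimes two of the original $p-s$ steps get merged. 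Once these three points are verified sub-case by sub-case, the formula for $\wt(\sub{\mu},\sub{\alpha})$ assembles by linearity of $\ev_\rho\circ\hc$ on $U(\g)^\h$, giving the stated decomposition; the constants $\bar{K}^{\# p}$, ${K}^{\star a}_{(\sub{\mu},\sub{\alpha})}$ are then read off as the products of structure constants that appeared, and I would only spell these out explicitly in one representative sub-case, indicating that the remaining four are identical in form.
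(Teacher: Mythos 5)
Your plan is correct and is essentially the paper's own argument: factor $b^*_{(\sub{\mu},\sub{\alpha})}$ via concatenation, push $e_{\alpha^{(p)}}$ leftward so that the fully transported term dies under $\hc$ (the type-$C$ version of Lemma~\ref{Lem2:cut-bis}), the bracket with $e_{\alpha^{(p-1)}}$ yields $\bar{K}^{\# p}\wt(\sub{\mu},\sub{\alpha})^{\# p}$, and the unique bracket with $e_{\alpha^{(s)}}$ produces $e_{\alpha^{(s)}+\alpha^{(p)}}$, after which reordering the block $e_{\alpha^{(s+1)}}\cdots e_{\alpha^{(p-1)}}$ generates exactly the star paths, with constants coming only from structure constants, checked across the same five sub-cases via Lemma~\ref{lem:types}. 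Your packaging as an induction on $p-s$ is only a cosmetic variation on the paper's direct computation.
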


\begin{proof}
Let $-\alpha^{(p)}, \alpha^{(s)} \in \Delta_+$ such that $\alpha^{(s)} 
+ \alpha^{(p)}  \in \Delta_+$. By Lemma~\ref{lem:types} 
the only possibilities for $\alpha^{(p)}$ and $\alpha^{(s)}$ are:
\begin{itemize} 
\item 
$\alpha^{(p)} = \overline{\delta}_j - \delta_i$ and $\alpha^{(s)}=\delta_k - \overline{\delta}_i$, with $k < j < i$,  
\item  
$\alpha^{(p)} = \overline{\delta}_j - \delta_i$ and $\alpha^{(s)}=\delta_j - \overline{\delta}_l$, with $j < l < i$,  
\item 
$\alpha^{(p)} = \overline{\delta}_i - \overline{\delta}_j$ and $\alpha^{(s)} =\delta_i - \overline{\delta}_l$, with $i<l$,
\item
$\alpha^{(p)} = \overline{\delta}_i - \overline{\delta}_j$ and $\alpha^{(s)} =\delta_i - \delta_l$, with $i < j <l$,
\item
$\alpha^{(p)} = \overline{\delta}_i - \overline{\delta}_j$ and $\alpha^{(s)} =\delta_k - \delta_j$, with $k <i<j$.
\end{itemize}

\noindent
(1) Assume that $\alpha^{(p)} = \overline{\delta}_j - \delta_i$ 
and $\alpha^{(s)}=\delta_k - \overline{\delta_i}$, with $k < j < i$. 
Write 
 \begin{align*}
  (\sub{\mu},\sub{\alpha}) = &(\sub{\mu}',\sub{\alpha}') \star 
\big(\delta_k,\overline{\delta}_i, \ldots, 
\mu^{(p-1)},\overline{\delta}_j, \delta_i),
(\alpha^{(s)}, \alpha^{(s+1)}, \ldots ,\alpha^{(p-1)},\alpha^{(p)})\big) 
\star (\sub{\mu}'',\sub{\alpha}''),
 \end{align*}
where 
$$(\sub{\mu}',\sub{\alpha}')=\big((\mu^{(1)},\ldots,\mu^{(s-1)}, 
\mu^{(s)}=\delta_{k}), (\alpha^{(1)},\ldots,\alpha^{(s-1)})\big),$$
$$(\sub{\mu}'',\sub{\alpha}'')=\big((\mu^{(p+1)}=\delta_i,\ldots,\mu^{(m+1)}),
(\alpha^{(p+1)},\ldots,\alpha^{(m)})\big)$$
have length $s-1$ and $m - p$, respectively. 
In this case $\mu^{(p+1)} \succ \mu^{(s+1)} \succcurlyeq \mu^{(p-1)}$ 
and so $\alpha^{(p-1)}+ \alpha^{(p)} < 0$.
Note that 
$\alpha^{(p)}$ commutes with all roots 
$\alpha^{(t)}$ for $t< p$, except with $\alpha^{(p-1)}$ 
and $\alpha^{(s)}$. 
Set 
\begin{align}\label{eq:a}
a := a_{\mu^{(p+1)},\mu^{(p)}}^{(c_{\alpha^{(p)}} e_{-\alpha^{(p)}})}
a_{\mu^{(p)},\mu^{(p-1)}}^{(c_{\alpha^{(p-1)}} e_{-\alpha^{(p-1)}})}
\cdots 
a_{\mu^{(s+2)},\mu^{(s+1)}}^{(c_{\alpha^{(s+1)}}e_{-\alpha^{(s+1)}})}
a_{\mu^{(s+1)},\mu^{(s)}}^{(c_{\alpha^{(s)}}e_{-\alpha^{(s)}})}. 
\end{align} 
We have,
\begin{align}\label{eq:hc-type-II} \nonumber
\hc(b_{(\sub{\mu},\sub{\alpha})}^*)
 &= a  
 \hc(b_{(\sub{\mu}',\sub{\alpha}')}^* 
 e_{\alpha^{(s)}}
 e_{\alpha^{(s+1)}} \cdots 
 e_{\alpha^{(p-2)}}
 e_{\alpha^{(p-1)}} e_{\alpha^{(p)}} 
 b_{(\sub{\mu}'',\sub{\alpha}'')}^*) \\ 
&= {K}_{(\sub{\mu},\sub{\alpha})}^{\# p} \hc (b_{(\sub{\mu},\sub{\alpha})^{\# p}}^*)
+ a n_{\alpha^{(s)},\alpha^{(p)}}
\hc(b_{(\sub{\mu}',\sub{\alpha}')}^* 
 e_{\alpha^{(s)}+\alpha^{(p)}}
  e_{\alpha^{(s+1)}} \cdots 
 e_{\alpha^{(p-2)}}
 e_{\alpha^{(p-1)}} 
 b_{(\sub{\mu}'',\sub{\alpha}'')}^*) 
\end{align}
since $(\sub{\mu}',\sub{\alpha}')$ 
 and the positive root $-\alpha^{(p)}$ verify the conditions of 
 Lemma~\ref{Lem2:cut-bis} and 
${K}_{(\sub{\mu},\sub{\alpha})}^{\# p}$ is as in Lemma~\ref{Lem:nostar}~(1).
$\alpha^{(s)}+\alpha^{(p)} =\eps_k  - \eps_j = \delta_k -\delta_j$.
Because of the configuration, for all $t \in \{1,\ldots,s-1\}$, 
$\alpha^{(t)}=\delta_{j_t}-\delta_{j_{t+1}}
= \eps_{j_t}-\eps_{j_{t+1}}$, with $j_t <j_{t+1}\leqslant k<i$, 
and 
for all $t \in \{s+1,\ldots,p-1\}$, 
$\alpha^{(t)}=\overline{\delta}_{j_t}-\overline{\delta}_{j_{t+1}}
= \eps_{j_{t+1}}-\eps_{j_{t}}$, with $j \leqslant j_{t+1} <j_{t}\leqslant i$. 
Hence 
$\big((\delta_j,\delta_{p-1},\delta_{p-2},
\ldots,\delta_{s+2},\delta_{i}), \ (\alpha^{(p-1)},\ldots,\alpha^{(s+1)})\big)$ 
is a path from $\delta_j$ to $\delta_i$ (see Figure \ref{fig:typeII1} for an illustration).

\begin{figure}[h]
\begin{center}
\begin{tikzpicture}[scale=0.85]
  \draw[ thick,->](2,0) -- (2.3,0);
  \draw[ thick,decoration={markings, mark=at position 0.5 with {\arrow{>}}},
        postaction={decorate}](2.7,0) -- (3,0);
  \draw[ thick, ->](3,0) -- (3.8,0);
  \draw[ thick,decoration={markings, mark=at position 0.5 with {\arrow{>}}},
        postaction={decorate}](4.2,0) -- (5,0);
  \draw[ thick,->](5,0) -- (5.8,0);
  \draw[ thick, decoration={markings, mark=at position 0.5 with {\arrow{>}}},
        postaction={decorate}](6.2,0) -- (7,0);
  \draw[ thick,->](7,0) -- (7.8,0);
  \draw[ thick,decoration={markings, mark=at position 0.5 with {\arrow{>}}},
        postaction={decorate}](8.2,0) -- (9,0);
 \draw[ thick,decoration={markings, mark=at position 0.5 with {\arrow{>}}},
        postaction={decorate}](9,0) -- (10,0);
 \draw[ thick, ->](10,0) -- (10.8,0);
 \draw[ thick,decoration={markings, mark=at position 0.5 with {\arrow{>}}},
        postaction={decorate}](11.2,0) -- (12,0);
 \draw[ thick,->](12,0) -- (12.8,0);
 \draw[ thick,decoration={markings, mark=at position 0.5 with {\arrow{>}}},
        postaction={decorate}](13.2,0) -- (14,0);
 \draw[ ->, thick](14,0) -- (14.5,0);
     \draw[thick, dotted] (2.3,0) -- (2.7,0);
      \draw[thick, dotted] (3.8,0) -- (4.2,0);
      \draw[thick, dotted] (5.8,0) -- (6.2,0);
      \draw[thick, dotted] (7.8,0) -- (8.2,0);
      \draw[thick, dotted] (10.8,0) -- (11.2,0);
      \draw[thick, dotted] (12.8,0) -- (13.2,0);
      \draw[thick, dotted] (14.5,0) -- (15,0);
    \fill (2,0) circle (0.07)node(xline)[below] {{\small $\delta_1$}}; 
    \fill (3,0) circle (0.07)node(xline)[below] {{\small $\delta_k$}};
    \fill (5,0) circle (0.07)node(xline)[below] {{\small $\delta_j$}};
    \fill (7,0) circle (0.07)node(xline)[below] {{\small $\delta_{i}$}}; 
    \fill (9,0) circle (0.07)node(xline)[below] {{\small $\delta_{r}$}};
    \fill (10,0) circle (0.07)node(xline)[below] {{\small $\bar{\delta}_{r}$}};
    \fill (12,0) circle (0.07)node(xline)[below] {{\small $\bar{\delta}_{i}$}}; 
    \fill (14,0) circle (0.07)node(xline)[below] {{\small $\bar{\delta}_{j}$}};
\fill[blue] (3,-1.2) circle (0.05);
 \draw[dashed, blue, decoration={markings, mark=at position 0.5 with {\arrow{>}}},
        postaction={decorate}](2,-1.2) -- (3,-1.2);
  \node[text=blue] at (2.5,-0.9) {{\footnotesize $\sub{\alpha}'$}} ;
  \draw[thick, blue, decoration={markings, mark=at position 0.5 with {\arrow{>}}},
        postaction={decorate}](3,-1.2) -- (12,-1.2);
  \node[text=blue] at (7.5,-0.9) {{\footnotesize $\alpha^{(s)}$}} ;
\fill[blue] (12,-1.2) circle (0.05);
  \draw[thick, blue, ->](12,-1.2) -- (12.8,-1.2);
  \node[text=blue] at (12.6,-0.9) {{\footnotesize $\alpha^{(s+1)}$}} ;
\draw[thick, dotted,blue] (12.8,-1.2) -- (13.2,-1.2);
  \draw[thick, blue, decoration={markings, mark=at position 0.5 with {\arrow{>}}},
        postaction={decorate}](13.2,-1.2) -- (14,-1.2);
  \node[text=blue] at (13.8,-0.9) {{\footnotesize $\alpha^{(p-1)}$}} ;
  \fill[blue] (14,-1.2) circle (0.05);
  \draw[thick, blue, decoration={markings, mark=at position 0.5 with {\arrow{>}}},
        postaction={decorate}](14,-1.35) -- (7,-1.35);
  \node[text=blue] at (10.5,-1.65) {{\footnotesize $\alpha^{(p)}$}} ;
\fill[blue] (7,-1.35) circle (0.05);
  \draw[dashed, blue, decoration={markings, mark=at position 0.5 with {\arrow{>}}},
        postaction={decorate}](7,-1.35) -- (5,-1.35);
  \node[text=blue] at (6,-1.65) {{\footnotesize $\sub{\alpha}''$}} ;
\fill[red] (3,-2.5) circle (0.05);
\draw[dashed, red, decoration={markings, mark=at position 0.5 with {\arrow{>}}},
        postaction={decorate}](2,-2.5) -- (3,-2.5);
  \node[text=red] at (2.5,-2.2) {{\footnotesize $\sub{\alpha}'$}} ;
 \draw[thick, red,decoration={markings, mark=at position 0.5 with {\arrow{>}}},
        postaction={decorate}](3,-2.5) -- (5,-2.5);
  \node[text=red] at (4,-2.2) {{\footnotesize $\alpha^{(s)}+\alpha^{(p)}$}} ;
\fill[red] (5,-2.5) circle (0.05);
  \draw[thick, red, ->](5,-2.5) -- (5.8,-2.5);
  \node[text=red] at (5.6,-2.2) {{\footnotesize $\alpha^{(p-1)}$}} ;
 \draw[thick, dotted,red] (5.8,-2.5) -- (6.2,-2.5);
 \draw[thick, red, , decoration={markings, mark=at position 0.5 with {\arrow{>}}},
        postaction={decorate}](6.2,-2.5) -- (7,-2.5);
  \node[text=red] at (6.8,-2.2) {{\footnotesize $\alpha^{(s+1)}$}} ;
\fill[red] (7,-2.5) circle (0.05);
 \draw[dashed, red, decoration={markings, mark=at position 0.5 with {\arrow{>}}},
        postaction={decorate}](7,-2.65) -- (5,-2.65);
  \node[text=red] at (6,-2.95) {{\footnotesize $\sub{\alpha}''$}} ;
\end{tikzpicture}
 \caption{The case for $\alpha^{(p)} = \overline{\delta}_j - \delta_i$ and $\alpha^{(s)} = \delta_k - \overline{\delta}_i$.}
  \label{fig:typeII1}
\end{center}
\end{figure}
We have to ^^ ^^ reverse the order'' of 
$ e_{\alpha^{(s+1)}} \cdots 
 e_{\alpha^{(p-1)}}$ in \eqref{eq:hc-type-II}, 
in order to get $e_{\alpha^{(p-1)}} \cdots e_{\alpha^{(s+1)}}$.
Doing this, it induces several new paths. 
Let $(\tilde{\sub{\mu}}^{\star a},\tilde{\sub{\alpha}}^{\star a})$, for $a = 1, \ldots, N$, denote these 
new paths from
$\delta_j$ to $\delta_i$ whose roots are sums among the roots $(\alpha^{(p-1)},\ldots,\alpha^{(s+1)})$.
More precisely, for each $a \in \{1, \ldots, N \}$ there exists a partition 
$(P_1, \ldots, P_{n_a})$ of the set $\{p-1, \ldots, s+1 \}$ such that  
$$(\tilde{\alpha}^{\star a})^{(s+j)} = \sum_{t \in P_j} \alpha^{(t)}, $$
for $j = 1, \ldots, n_a$. 
Furthermore, by setting 
$(\sub{\mu}, \sub{\alpha})^{\star a} := (\sub{\mu}',\sub{\alpha}')\star 
\big( (\delta_k,\delta_j), (\alpha^{(s)}+\alpha^{(p)})\big) 
\star (\tilde{\sub{\mu}}^{\star a},\tilde{\sub{\alpha}}^{\star a})
\star(\sub{\mu}'',\sub{\alpha}''), $
we get that 
$$
\wt (\sub{\mu},\sub{\alpha})= {K}_{(\sub{\mu},\sub{\alpha})}^{\# p} \wt(\sub{\mu},\sub{\alpha})^{\# p} + \sum_{a=1}^{N}    
{K}_{(\sub{\mu},\sub{\alpha})}^{\star a}\wt(\sub{\mu}, \sub{\alpha})^{\star a},$$
where ${K}_{(\sub{\mu},\sub{\alpha})}^{\star a}$ are some constants.

\noindent
(2) Assume that $\alpha^{(p)} = \overline{\delta}_j - \delta_i$ and
$\alpha^{(s)}=\delta_j - \overline{\delta_l}$, with $j < l < i$.  
Write 
 \begin{align*}
  (\sub{\mu},\sub{\alpha}) = &(\sub{\mu}',\sub{\alpha}') \star 
\big(\delta_j,\overline{\delta}_l, \ldots, 
\mu^{(p-1)},\overline{\delta}_j,\delta_i),
(\alpha^{(s)}, \alpha^{(s+1)}, \ldots ,\alpha^{(p-1)},\alpha^{(p)})\big) 
\star (\sub{\mu}'',\sub{\alpha}''),
 \end{align*}
where
$$
(\sub{\mu}',\sub{\alpha}') =\big((\mu^{(1)},\ldots,\mu^{(s-1)}, 
\mu^{(s)}=\delta_{j}), \  
(\alpha^{(1)},\ldots,\alpha^{(s-1)})\big), 
$$
$$
(\sub{\mu}'',\sub{\alpha}'') =\big((\mu^{(p+1)}=\delta_i,\ldots,\mu^{(m+1)}), \
(\alpha^{(p+1)},\ldots,\alpha^{(m)})\big)
$$
have length $s-1$ and $m - p$, respectively. 
Observe that in this 
 case $ \alpha^{(p-1)} + \alpha^{(p)} < 0$ and
$\alpha^{(p)}$ commutes with all roots 
$\alpha^{(t)}, t< p$, except with $\alpha^{(p-1)}$ 
and $\alpha^{(s)}$. 
With  $a$ as in \eqref{eq:a}, 
we obtain that 
 \begin{align} \label{eq:hc-type-II2}
\hc( b_{(\sub{\mu},\sub{\alpha})}^*)
= {K}_{(\sub{\mu},\sub{\alpha})}^{\# p} \hc (b_{(\sub{\mu},\sub{\alpha})^{\# p}}^*)
+ a n_{\alpha^{(s)},\alpha^{(p)}}
\hc(b_{(\sub{\mu}',\sub{\alpha}')}^* 
 e_{\alpha^{(s)}+\alpha^{(p)}}
 e_{\alpha^{(s+1)}} \cdots 
 e_{\alpha^{(p-2)}}
 e_{\alpha^{(p-1)}} 
 b_{(\sub{\mu}'',\sub{\alpha}'')}^*),  
 \end{align}
where ${K}_{(\sub{\mu},\sub{\alpha})}^{\# p}$ is as in Lemma~\ref{Lem:nostar}(1).

We easily verify that 
$\big((\delta_j,\delta_{p-1},\delta_{p-2},
\ldots,\delta_{s+2},\delta_{l}, \delta_i), (\alpha^{(p-1)},\ldots,\alpha^{(s+1)}, \alpha^{(s)}+\alpha^{(p)})\big)$ 
is a path from $\delta_j$ to $\delta_i$ (see Figure~\ref{fig:typeII2} for an illustration).

\begin{figure}[h]
\begin{center}
\begin{tikzpicture} [scale=0.8]
\draw[ thick,decoration={markings, mark=at position 0.5 with {\arrow{>}}},
        postaction={decorate}](1.2,0) -- (2,0);
\draw[ thick, ->](2,0) -- (2.8,0);
\draw[ thick,decoration={markings, mark=at position 0.5 with {\arrow{>}}},
        postaction={decorate}](3.2,0) -- (4,0);
\draw[ thick,->](4,0) -- (4.8,0);
\draw[ thick,decoration={markings, mark=at position 0.5 with {\arrow{>}}},
        postaction={decorate}](5.2,0) -- (6,0);
\draw[ thick,->](6,0) -- (6.8,0);
\draw[ thick,decoration={markings, mark=at position 0.5 with {\arrow{>}}},
        postaction={decorate}](7.2,0) -- (8,0);
\draw[ thick,decoration={markings, mark=at position 0.5 with {\arrow{>}}},
        postaction={decorate}](8,0) -- (9,0);
\draw[ thick, ->](9,0) -- (9.8,0); 
\draw[ thick,decoration={markings, mark=at position 0.5 with {\arrow{>}}},
        postaction={decorate}](10.2,0) -- (11,0);
\draw[ thick,->](11,0) -- (11.8,0);
\draw[ thick,decoration={markings, mark=at position 0.5 with {\arrow{>}}},
        postaction={decorate}](12.2,0) -- (13,0);
\draw[ ->, thick](13,0) -- (13.8,0);
\draw[ thick,decoration={markings, mark=at position 0.5 with {\arrow{>}}},
        postaction={decorate}](14.2,0) -- (15,0);
     \draw[thick, dotted] (0.8,0) -- (1.2,0);
      \draw[thick, dotted] (2.8,0) -- (3.2,0);
      \draw[thick, dotted] (4.8,0) -- (5.2,0);
      \draw[thick, dotted] (6.8,0) -- (7.2,0);
      \draw[thick, dotted] (9.8,0) -- (10.2,0);
      \draw[thick, dotted] (11.8,0) -- (12.2,0);
      \draw[thick, dotted] (13.8,0) -- (14.2,0);
      \draw[thick, dotted] (15,0) -- (15.3,0);
    \fill (0.8,0) circle (0.07)node(xline)[below] {{\small $\delta_1$}}; 
    \fill (2,0) circle (0.07)node(xline)[below] {{\small $\delta_j$}};
    \fill (4,0) circle (0.07)node(xline)[below] {{\small $\delta_l$}};
    \fill (6,0) circle (0.07)node(xline)[below] {{\small $\delta_{i}$}}; 
    \fill (8,0) circle (0.07)node(xline)[below] {{\small $\delta_{r}$}};
    \fill (9,0) circle (0.07)node(xline)[below] {{\small $\bar{\delta}_{r}$}};
    \fill (11,0) circle (0.07)node(xline)[below] {{\small $\bar{\delta}_{i}$}}; 
    \fill (13,0) circle (0.07)node(xline)[below] {{\small $\bar{\delta}_{l}$}};
     \fill (15,0) circle (0.07)node(xline)[below] {{\small $\bar{\delta}_{j}$}};
\fill[blue] (2,-1.2) circle (0.05);
 \draw[dashed, blue, decoration={markings, mark=at position 0.5 with {\arrow{>}}},
        postaction={decorate}](1,-1.2) -- (2,-1.2);
  \node[text=blue] at (1.5,-0.9) {{\footnotesize $\sub{\alpha}'$}} ;
  \draw[thick, blue, decoration={markings, mark=at position 0.5 with {\arrow{>}}},
        postaction={decorate}](2,-1.2) -- (13,-1.2);
  \node[text=blue] at (7.5,-0.9) {{\footnotesize $\alpha^{(s)}$}} ;
\fill[blue] (13,-1.2) circle (0.05);
  \draw[thick, blue, ->](13,-1.2) -- (13.8,-1.2);
  \node[text=blue] at (13.6,-0.9) {{\footnotesize $\alpha^{(s+1)}$}} ;
\draw[thick, dotted,blue] (13.8,-1.2) -- (14.2,-1.2);
  \draw[thick, blue, decoration={markings, mark=at position 0.5 with {\arrow{>}}},
        postaction={decorate}](14.2,-1.2) -- (15,-1.2);
  \node[text=blue] at (14.8,-0.9) {{\footnotesize $\alpha^{(p-1)}$}} ;
  \fill[blue] (15,-1.2) circle (0.05);
  \draw[thick, blue, decoration={markings, mark=at position 0.5 with {\arrow{>}}},
        postaction={decorate}](15,-1.35) -- (6,-1.35);
  \node[text=blue] at (10.5,-1.7) {{\footnotesize $\alpha^{(p)}$}} ;
\fill[blue] (6,-1.35) circle (0.05);
  \draw[dashed, blue, decoration={markings, mark=at position 0.5 with {\arrow{>}}},
        postaction={decorate}](6,-1.35) -- (4,-1.35);
  \node[text=blue] at (5,-1.65) {{\footnotesize $\sub{\alpha}''$}} ;
\fill[red] (2,-2.5) circle (0.05);
\draw[dashed, red, decoration={markings, mark=at position 0.5 with {\arrow{>}}},
        postaction={decorate}](1,-2.5) -- (2,-2.5);
  \node[text=red] at (1.5,-2.2) {{\footnotesize $\sub{\alpha}'$}} ;
  \draw[thick, red, ->](2,-2.5) -- (2.8,-2.5);
  \node[text=red] at (2.6,-2.2) {{\footnotesize $\alpha^{(p-1)}$}} ;
 \draw[thick, dotted,red] (2.8,-2.5) -- (3.2,-2.5);
 \draw[thick, red, , decoration={markings, mark=at position 0.5 with {\arrow{>}}},
        postaction={decorate}](3.2,-2.5) -- (4,-2.5);
  \node[text=red] at (3.8,-2.2) {{\footnotesize $\alpha^{(s+1)}$}} ;
\fill[red] (4,-2.5) circle (0.05);
  \draw[thick, red,decoration={markings, mark=at position 0.5 with {\arrow{>}}},
        postaction={decorate}](4,-2.5) -- (6,-2.5);
  \node[text=red] at (5.2,-2.2) {{\footnotesize $\alpha^{(s)}+\alpha^{(p)}$}} ;
\fill[red] (6,-2.5) circle (0.05);
 \draw[dashed, red, decoration={markings, mark=at position 0.5 with {\arrow{>}}},
        postaction={decorate}](6,-2.7) -- (4,-2.7);
  \node[text=red] at (5,-3) {{\footnotesize $\sub{\alpha}''$}} ;
\end{tikzpicture}
 \caption{The case for $\alpha^{(p)} = \overline{\delta}_j - \delta_i$ and $\alpha^{(s)} = \delta_j - \overline{\delta}_l$.}
  \label{fig:typeII2}
\end{center}
\end{figure}

\noindent 
As in case (1), reversing the order of 
$e_{\alpha^{(s)}+\alpha^{(p)}} e_{\alpha^{(s+1)}} \cdots e_{\alpha^{(p-1)}}$ in \eqref{eq:hc-type-II2} 
induces several new paths.
Let $(\tilde{\sub{\mu}}^{\star a},\tilde{\sub{\alpha}}^{\star a})$, for $a = 1, \ldots, N$, denote these 
new paths from
$\delta_j$ to $\delta_i$ whose roots are sums among the roots $(\alpha^{(p-1)},\ldots,\alpha^{(s+1)}, \alpha^{(s)}+\alpha^{(p)})$.
More precisely, for each $a \in \{1, \ldots, N \}$ there exists a partition 
$(P_1, \ldots, P_{n_a})$ of the set $\{p-1, \ldots, s+1, s, p \}$, where $s$ and $p$ are always in the same partition, such that  
$$(\tilde{\alpha}^{\star a})^{(s+j-1)} = \sum_{t \in P_j} \alpha^{(t)}, $$
for $j = 1, \ldots, n_a$. 
Furthermore, by setting
$(\sub{\mu}, \sub{\alpha})^{\star a} := (\sub{\mu}',\sub{\alpha}')
\star (\tilde{\sub{\mu}}^{\star a},\tilde{\sub{\alpha}}^{\star a}) 
\star(\sub{\mu}'',\sub{\alpha}''),$
we get
$$
\wt (\sub{\mu},\sub{\alpha})= {K}_{(\sub{\mu},\sub{\alpha})}^{\# p} \wt(\sub{\mu},\sub{\alpha})^{\# p} + \sum_{a=1}^{N}  
{K}_{(\sub{\mu},\sub{\alpha})}^{\star a}\wt(\sub{\mu}, \sub{\alpha})^{\star a},
$$
where ${K}_{(\sub{\mu},\sub{\alpha})}^{\star a}$ are some constants.

\smallskip
\noindent
(3) Assume that $\alpha^{(p)}= \overline{\delta}_i - \overline{\delta}_j$ 
and $\alpha^{(s)}=\delta_i - \overline{\delta_l}$, with $i < l $. 
 Write 
 \begin{align*}
  (\sub{\mu},\sub{\alpha}) = &(\sub{\mu}',\sub{\alpha}') \star 
\big((\delta_i,\overline{\delta}_l, \ldots, 
\mu^{(p-1)},\overline{\delta}_i, \overline{\delta}_j),
(\alpha^{(s)}, \alpha^{(s+1)}, \ldots ,\alpha^{(p-1)},\alpha^{(p)})\big) 
\star (\sub{\mu}'',\sub{\alpha}''),
 \end{align*}
where
$$
(\sub{\mu}',\sub{\alpha}') =\big((\mu^{(1)},\ldots,\mu^{(s-1)}, 
\mu^{(s)}={\delta}_i) ; 
(\alpha^{(1)},\ldots,\alpha^{(s-1)})\big), 
$$
$$
(\sub{\mu}'',\sub{\alpha}'') =\big((\mu^{(p+1)}=\overline{\delta}_j,\ldots,\mu^{(m+1)}) ;
(\alpha^{(p+1)},\ldots,\alpha^{(m)})\big) 
$$
have length $s-1$ and $m - p$, respectively. 
With $a$ as in \eqref{eq:a}, 
we have
\begin{align*}
 \hc(b_{(\sub{\mu},\sub{\alpha})}^*)
= \bar{K}^{\# p} \hc(b_{(\sub{\mu},\sub{\alpha})^{\# p}}^*)
+ a \ \hc(b_{(\sub{\mu}',\sub{\alpha}')}^* 
 e_{\alpha^{(s)}} 
  \cdots e_{\alpha^{(p)}} e_{\alpha^{(p-1)}} 
 b_{(\sub{\mu}'',\sub{\alpha}'')}^*),
\end{align*}
where $\bar{K}^{\# p}$ is as in Lemma~\ref{Lem:nostar} depending on the different cases for $\alpha^{(p-1)}$
and $\alpha^{(p)}$. 
Assume that there exists a positive root $\alpha^{(t)}$, with $t<p-1$ and $\alpha^{(t)} \neq \alpha^{(s)}$,
such that $\alpha^{(t)} + \alpha^{(p)} \in \Delta$. By Lemma~\ref{lem:types} we observe that there is at most one root
$\alpha^{(t)}$ that satisfies such condition (see Figure~\ref{fig:type3a}).
In this case,
$\alpha^{(t)} + \alpha^{(p)} \in -\Delta_+$ and $ s < t < p-1$. 
Hence,
 \begin{align} \label{eq:hc-type-III2a}
\hc( b_{(\sub{\mu},\sub{\alpha})}^*)
= \bar{K}^{\# p} \hc (b_{(\sub{\mu},\sub{\alpha})^{\# p}}^*)
 + a n_{\alpha^{(s)},\alpha^{(p)}}
\hc(b_{(\sub{\mu}',\sub{\alpha}')}^* 
 e_{\alpha^{(s)}+\alpha^{(p)}}
e_{\alpha^{(s+1)}}\cdots
 e_{\alpha^{(p-1)}} 
 b_{(\sub{\mu}'',\sub{\alpha}'')}^*).  
 \end{align}

We verify easily that 
$\big((\delta_i, \overline{\mu}^{(p-1)}, 
\ldots, \overline{\mu}^{(s+2)}, \delta_{l}, \overline{\delta}_j)
; (\alpha^{(p-1)}, \ldots, \alpha^{(s+1)}, \alpha^{(s)}+\alpha^{(p)}) \big)$ 
is a path from $\delta_i$ to $\overline{\delta}_j$ 
and that 
$\big((\delta_i, \overline{\mu}^{(p-1)}, 
\ldots, \overline{\delta}_j)
; (\alpha^{(p-1)}, \alpha^{(s)}+\alpha^{(p)}, \ldots, \alpha^{(p-2)}) \big)$ 
is a path from $\delta_i$ to $\overline{\delta}_j$ as well
(see Figure~\ref{fig:type3a} for an illustration).
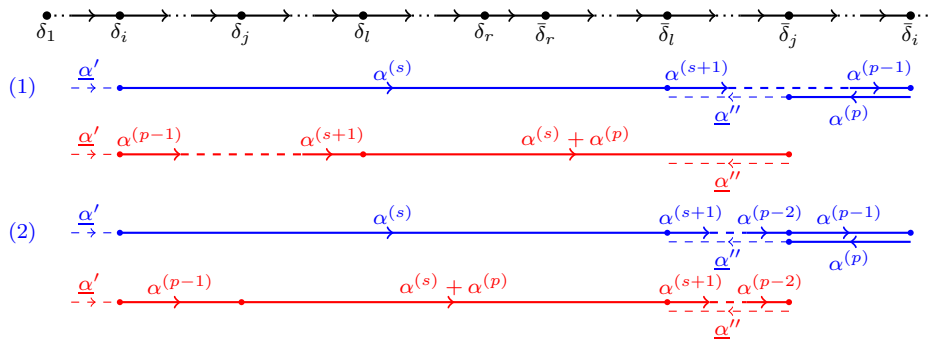
\begin{figure}[h]
\begin{center}
\begin{tikzpicture} [scale=0.8]
\draw[ thick,decoration={markings, mark=at position 0.5 with {\arrow{>}}},
        postaction={decorate}](1.2,0) -- (2,0);
\draw[ thick, ->](2,0) -- (2.8,0);
\draw[ thick,decoration={markings, mark=at position 0.5 with {\arrow{>}}},
        postaction={decorate}](3.2,0) -- (4,0);
\draw[ thick,->](4,0) -- (4.8,0);
\draw[ thick,decoration={markings, mark=at position 0.5 with {\arrow{>}}},
        postaction={decorate}](5.2,0) -- (6,0);
\draw[ thick,->](6,0) -- (6.8,0);
\draw[ thick,decoration={markings, mark=at position 0.5 with {\arrow{>}}},
        postaction={decorate}](7.2,0) -- (8,0);
\draw[ thick,decoration={markings, mark=at position 0.5 with {\arrow{>}}},
        postaction={decorate}](8,0) -- (9,0);
\draw[ thick, ->](9,0) -- (9.8,0);
\draw[ thick,decoration={markings, mark=at position 0.5 with {\arrow{>}}},
        postaction={decorate}](10.2,0) -- (11,0);
\draw[ thick,->](11,0) -- (11.8,0);
\draw[ thick,decoration={markings, mark=at position 0.5 with {\arrow{>}}},
        postaction={decorate}](12.2,0) -- (13,0);
\draw[ ->, thick](13,0) -- (13.8,0);
\draw[ thick,decoration={markings, mark=at position 0.5 with {\arrow{>}}},
        postaction={decorate}](14.2,0) -- (15,0);
     \draw[thick, dotted] (0.8,0) -- (1.2,0);
      \draw[thick, dotted] (2.8,0) -- (3.2,0);
      \draw[thick, dotted] (4.8,0) -- (5.2,0);
      \draw[thick, dotted] (6.8,0) -- (7.2,0);
      \draw[thick, dotted] (9.8,0) -- (10.2,0);
      \draw[thick, dotted] (11.8,0) -- (12.2,0);
      \draw[thick, dotted] (13.8,0) -- (14.2,0);
      \draw[thick, dotted] (15,0) -- (15.3,0);
    \fill (0.8,0) circle (0.07)node(xline)[below] {{\small $\delta_1$}}; 
    \fill (2,0) circle (0.07)node(xline)[below] {{\small $\delta_i$}};
    \fill (4,0) circle (0.07)node(xline)[below] {{\small $\delta_j$}};
    \fill (6,0) circle (0.07)node(xline)[below] {{\small $\delta_{l}$}}; 
    \fill (8,0) circle (0.07)node(xline)[below] {{\small $\delta_{r}$}};
    \fill (9,0) circle (0.07)node(xline)[below] {{\small $\bar{\delta}_{r}$}};
    \fill (11,0) circle (0.07)node(xline)[below] {{\small $\bar{\delta}_{l}$}}; 
    \fill (13,0) circle (0.07)node(xline)[below] {{\small $\bar{\delta}_{j}$}};
     \fill (15,0) circle (0.07)node(xline)[below] {{\small $\bar{\delta}_{i}$}};
\node[text=blue] at (0.4,-1.2) {(1)};
\fill[blue] (2,-1.2) circle (0.05);
 \draw[dashed, blue, decoration={markings, mark=at position 0.5 with {\arrow{>}}},
        postaction={decorate}](1.2,-1.2) -- (2,-1.2);
  \node[text=blue] at (1.5,-0.9) {{\footnotesize $\sub{\alpha}'$}} ;
  \draw[thick, blue, decoration={markings, mark=at position 0.5 with {\arrow{>}}},
        postaction={decorate}](2,-1.2) -- (11,-1.2);
  \node[text=blue] at (6.5,-0.9) {{\footnotesize $\alpha^{(s)}$}} ;
\fill[blue] (11,-1.2) circle (0.05);
  \draw[thick, blue, ->](11,-1.2) -- (12,-1.2);
  \node[text=blue] at (11.5,-0.9) {{\footnotesize $\alpha^{(s+1)}$}} ;
\draw[thick, dashed,blue] (12,-1.2) -- (14,-1.2);
  \draw[thick, blue, decoration={markings, mark=at position 0.5 with {\arrow{>}}},
        postaction={decorate}](14,-1.2) -- (15,-1.2);
  \node[text=blue] at (14.5,-0.9) {{\footnotesize $\alpha^{(p-1)}$}} ;
  \fill[blue] (15,-1.2) circle (0.05);
  \draw[thick, blue, decoration={markings, mark=at position 0.5 with {\arrow{>}}},
        postaction={decorate}](15,-1.35) -- (13,-1.35);
  \node[text=blue] at (14,-1.65) {{\footnotesize $\alpha^{(p)}$}} ;
\fill[blue] (13,-1.35) circle (0.05);
  \draw[dashed, blue, decoration={markings, mark=at position 0.5 with {\arrow{>}}},
        postaction={decorate}](13,-1.35) -- (11,-1.35);
  \node[text=blue] at (12,-1.65) {{\footnotesize $\sub{\alpha}''$}} ;
 
\fill[red] (2,-2.3) circle (0.05);
\draw[dashed, red, decoration={markings, mark=at position 0.5 with {\arrow{>}}},
        postaction={decorate}](1.2,-2.3) -- (2,-2.3);
  \node[text=red] at (1.5,-2) {{\footnotesize $\sub{\alpha}'$}} ;
  \draw[thick, red, ->](2,-2.3) -- (3,-2.3);
  \node[text=red] at (2.5,-2) {{\footnotesize $\alpha^{(p-1)}$}} ;
 \draw[thick, dashed,red] (3,-2.3) -- (5,-2.3);
 \draw[thick, red, , decoration={markings, mark=at position 0.5 with {\arrow{>}}},
        postaction={decorate}](5,-2.3) -- (6,-2.3);
  \node[text=red] at (5.5,-2) {{\footnotesize $\alpha^{(s+1)}$}} ;
\fill[red] (6,-2.3) circle (0.05);
  \draw[thick, red,decoration={markings, mark=at position 0.5 with {\arrow{>}}},
        postaction={decorate}](6,-2.3) -- (13,-2.3);
  \node[text=red] at (9.5,-2) {{\footnotesize $\alpha^{(s)}+\alpha^{(p)}$}} ;
\fill[red] (13,-2.3) circle (0.05);
 \draw[dashed, red, decoration={markings, mark=at position 0.5 with {\arrow{>}}},
        postaction={decorate}](13,-2.45) -- (11,-2.45);
  \node[text=red] at (12,-2.75) {{\footnotesize $\sub{\alpha}''$}} ;
  
\node[text=blue] at (0.4,-3.6) {(2)};
\fill[blue] (2,-3.6) circle (0.05);
 \draw[dashed, blue, decoration={markings, mark=at position 0.5 with {\arrow{>}}},
        postaction={decorate}](1.2,-3.6) -- (2,-3.6);
  \node[text=blue] at (1.5,-3.3) {{\footnotesize $\sub{\alpha}'$}} ;
  \draw[thick, blue, decoration={markings, mark=at position 0.5 with {\arrow{>}}},
        postaction={decorate}](2,-3.6) -- (11,-3.6);
  \node[text=blue] at (6.5,-3.3) {{\footnotesize $\alpha^{(s)}$}} ;
\fill[blue] (11,-3.6) circle (0.05);
  \draw[thick, blue, ->](11,-3.6) -- (11.7,-3.6);
  \node[text=blue] at (11.4,-3.3) {{\footnotesize $\alpha^{(s+1)}$}} ;
\draw[thick, dashed,blue] (11.7,-3.6) -- (12.3,-3.6);
  \draw[thick, blue, decoration={markings, mark=at position 0.5 with {\arrow{>}}},
        postaction={decorate}](12.3,-3.6) -- (13,-3.6);
  \node[text=blue] at (12.7,-3.3) {{\footnotesize $\alpha^{(p-2)}$}} ; 
    \fill[blue] (13,-3.6) circle (0.05);
  \draw[thick, blue, decoration={markings, mark=at position 0.5 with {\arrow{>}}},
        postaction={decorate}](13,-3.6) -- (15,-3.6);
  \node[text=blue] at (14,-3.3) {{\footnotesize $\alpha^{(p-1)}$}} ;
  \fill[blue] (15,-3.6) circle (0.05);
  \draw[thick, blue, decoration={markings, mark=at position 0.5 with {\arrow{>}}},
        postaction={decorate}](15,-3.75) -- (13,-3.75);
  \node[text=blue] at (14,-4.05) {{\footnotesize $\alpha^{(p)}$}} ;
\fill[blue] (13,-3.75) circle (0.05);
  \draw[dashed, blue, decoration={markings, mark=at position 0.5 with {\arrow{>}}},
        postaction={decorate}](13,-3.75) -- (11,-3.75);
  \node[text=blue] at (12,-4.05) {{\footnotesize $\sub{\alpha}''$}} ;
 
\fill[red] (2,-4.75) circle (0.05);
\draw[dashed, red, decoration={markings, mark=at position 0.5 with {\arrow{>}}},
        postaction={decorate}](1.2,-4.75) -- (2,-4.75);
  \node[text=red] at (1.5,-4.45) {{\footnotesize $\sub{\alpha}'$}} ;
  \draw[thick, red, decoration={markings, mark=at position 0.5 with {\arrow{>}}},
        postaction={decorate}](2,-4.75) -- (4,-4.75);
  \node[text=red] at (3,-4.45) {{\footnotesize $\alpha^{(p-1)}$}} ;
  \fill[red] (4,-4.75) circle (0.05);
  \draw[thick, red,decoration={markings, mark=at position 0.5 with {\arrow{>}}},
        postaction={decorate}](4,-4.75) -- (11,-4.75);
  \node[text=red] at (7.5,-4.45) {{\footnotesize $\alpha^{(s)}+\alpha^{(p)}$}} ;
    \fill[red] (11,-4.75) circle (0.05);
\draw[thick, red, ->](11,-4.75) -- (11.7,-4.75);
  \node[text=red] at (11.4,-4.45) {{\footnotesize $\alpha^{(s+1)}$}} ;
 \draw[thick, dashed,red] (11.7,-4.75) -- (12.3,-4.75);
 \draw[thick, red, , decoration={markings, mark=at position 0.5 with {\arrow{>}}},
        postaction={decorate}](12.3,-4.75) -- (13,-4.75);
  \node[text=red] at (12.7,-4.45) {{\footnotesize $\alpha^{(p-2)}$}} ;
\fill[red] (13,-4.75) circle (0.05);
 \draw[dashed, red, decoration={markings, mark=at position 0.5 with {\arrow{>}}},
        postaction={decorate}](13,-4.9) -- (11,-4.9);
  \node[text=red] at (12,-5.2) {{\footnotesize $\sub{\alpha}''$}} ;
\end{tikzpicture}
 \caption{The case for $\alpha^{(p)}= \overline{\delta}_i - \overline{\delta}_j$ and
$\alpha^{(s)} = \delta_i - \overline{\delta}_l$.}
  \label{fig:type3a}
\end{center}
\end{figure}
To conclude, we copy word for word the end of case (2). 

\smallskip
\noindent
(4) Assume that $\alpha^{(p)}= \overline{\delta}_i - \overline{\delta}_j$ 
and $\alpha^{(s)}=\delta_i - \delta_l$, with $i <j< l $. 
Write 
 \begin{align*}
  (\sub{\mu},\sub{\alpha}) = &(\sub{\mu}',\sub{\alpha}') \star 
\big((\delta_i,\delta_l, \ldots, 
\mu^{(p-1)},\overline{\delta}_i, \overline{\delta}_j),
(\alpha^{(s)}, \alpha^{(s+1)}, \ldots ,\alpha^{(p-1)},\alpha^{(p)})\big) 
\star (\sub{\mu}'',\sub{\alpha}''),
 \end{align*}
where
$$
(\sub{\mu}',\sub{\alpha}') =\big((\mu^{(1)},\ldots,\mu^{(s-1)}, 
\mu^{(s)}=\delta_i) ; 
(\alpha^{(1)},\ldots,\alpha^{(s-1)})\big), 
$$
$$
(\sub{\mu}'',\sub{\alpha}'') =\big((\mu^{(p+1)}=\overline{\delta}_j,\ldots,\mu^{(m+1)}) ;
(\alpha^{(p+1)},\ldots,\alpha^{(m)})\big), 
$$
have length $s-1$ and $m - p$ respectively. 
Set $a$ as in \eqref{eq:a}. 
In the same manner as in case (3), we get
 \begin{align} \label{eq:hc-type-III2b}
\hc( b_{(\sub{\mu},\sub{\alpha})}^*)
\bar{K}^{\# p} \hc (b_{(\sub{\mu},\sub{\alpha})^{\# p}}^*)
+ a n_{\alpha^{(s)},\alpha^{(p)}}
\hc(b_{(\sub{\mu}',\sub{\alpha}')}^* 
 e_{\alpha^{(s)}+\alpha^{(p)}}
 e_{\alpha^{(s+1)}}\cdots
 e_{\alpha^{(p-1)}} 
 b_{(\sub{\mu}'',\sub{\alpha}'')}^*),
 \end{align}
 where $\bar{K}^{\# p}$ is as in Lemma~\ref{Lem:nostar} depending on the different cases for $\alpha^{(p-1)}$
and $\alpha^{(p)}$.
We verify easily that 
$\big((\delta_i,\overline{\mu}^{(p-1)},\overline{\mu}^{(p-2)},
\ldots,\overline{\mu}^{(s+2)},\overline{\delta}_{l}, \overline{\delta}_j );(\alpha^{(p-1)},\ldots,\alpha^{(s+1)},
\alpha^{(s)}+\alpha^{(p)})\big)$ 
is a path from $\delta_i$ to $\overline{\delta}_j$. 
Observe that this case is similar as the case (3) and so there exist
different sequences of roots in a path from $\delta_i$ to $\overline{\delta}_j$ as well
(see Figure \ref{fig:type3b} for an illustration).
\begin{figure}[h]
\begin{center}
\begin{tikzpicture}[scale=0.8]
\draw[ thick,decoration={markings, mark=at position 0.5 with {\arrow{>}}},
        postaction={decorate}](1.2,0) -- (2,0);
\draw[ thick, ->](2,0) -- (2.8,0);
\draw[ thick,decoration={markings, mark=at position 0.5 with {\arrow{>}}},
        postaction={decorate}](3.2,0) -- (4,0);
\draw[ thick,->](4,0) -- (4.8,0);
\draw[ thick,decoration={markings, mark=at position 0.5 with {\arrow{>}}},
        postaction={decorate}](5.2,0) -- (6,0);
\draw[ thick,->](6,0) -- (6.8,0);
\draw[ thick,decoration={markings, mark=at position 0.5 with {\arrow{>}}},
        postaction={decorate}](7.2,0) -- (8,0);
\draw[ thick,decoration={markings, mark=at position 0.5 with {\arrow{>}}},
        postaction={decorate}](8,0) -- (9,0);
\draw[ thick, ->](9,0) -- (9.8,0);
\draw[ thick,decoration={markings, mark=at position 0.5 with {\arrow{>}}},
        postaction={decorate}](10.2,0) -- (11,0);
\draw[ thick,->](11,0) -- (11.8,0);
\draw[ thick,decoration={markings, mark=at position 0.5 with {\arrow{>}}},
        postaction={decorate}](12.2,0) -- (13,0);
\draw[ ->, thick](13,0) -- (13.8,0);
\draw[ thick,decoration={markings, mark=at position 0.5 with {\arrow{>}}},
        postaction={decorate}](14.2,0) -- (15,0);
      \draw[thick, dotted] (0.8,0) -- (1.2,0);
      \draw[thick, dotted] (2.8,0) -- (3.2,0);
      \draw[thick, dotted] (4.8,0) -- (5.2,0);
      \draw[thick, dotted] (6.8,0) -- (7.2,0);
      \draw[thick, dotted] (9.8,0) -- (10.2,0);
      \draw[thick, dotted] (11.8,0) -- (12.2,0);
      \draw[thick, dotted] (13.8,0) -- (14.2,0);
      \draw[thick, dotted] (15,0) -- (15.3,0);
    \fill (0.8,0) circle (0.07)node(xline)[below] {{\small $\delta_1$}}; 
    \fill (2,0) circle (0.07)node(xline)[below] {{\small $\delta_i$}};
    \fill (4,0) circle (0.07)node(xline)[below] {{\small $\delta_j$}};
    \fill (6,0) circle (0.07)node(xline)[below] {{\small $\delta_{l}$}}; 
    \fill (8,0) circle (0.07)node(xline)[below] {{\small $\delta_{r}$}};
    \fill (9,0) circle (0.07)node(xline)[below] {{\small $\bar{\delta}_{r}$}};
    \fill (11,0) circle (0.07)node(xline)[below] {{\small $\bar{\delta}_{l}$}}; 
    \fill (13,0) circle (0.07)node(xline)[below] {{\small $\bar{\delta}_{j}$}};
     \fill (15,0) circle (0.07)node(xline)[below] {{\small $\bar{\delta}_{i}$}};
\node[text=blue] at (0.4,-1.2) {(1)};
\fill[blue] (2,-1.2) circle (0.05);
 \draw[dashed, blue, decoration={markings, mark=at position 0.5 with {\arrow{>}}},
        postaction={decorate}](1.2,-1.2) -- (2,-1.2);
  \node[text=blue] at (1.5,-0.9) {{\footnotesize $\sub{\alpha}'$}} ;
  \draw[thick, blue, decoration={markings, mark=at position 0.5 with {\arrow{>}}},
        postaction={decorate}](2,-1.2) -- (6,-1.2);
  \node[text=blue] at (4,-0.9) {{\footnotesize $\alpha^{(s)}$}} ;
\fill[blue] (6,-1.2) circle (0.05);
  \draw[thick, blue, ->](6,-1.2) -- (7,-1.2);
  \node[text=blue] at (6.5,-0.9) {{\footnotesize $\alpha^{(s+1)}$}} ;
\draw[thick, dashed,blue] (7,-1.2) -- (14,-1.2);
  \draw[thick, blue, decoration={markings, mark=at position 0.5 with {\arrow{>}}},
        postaction={decorate}](14,-1.2) -- (15,-1.2);
  \node[text=blue] at (14.5,-0.9) {{\footnotesize $\alpha^{(p-1)}$}} ;
  \fill[blue] (15,-1.2) circle (0.05);
  \draw[thick, blue, decoration={markings, mark=at position 0.5 with {\arrow{>}}},
        postaction={decorate}](15,-1.35) -- (13,-1.35);
  \node[text=blue] at (14,-1.65) {{\footnotesize $\alpha^{(p)}$}} ;
\fill[blue] (13,-1.35) circle (0.05);
  \draw[dashed, blue, decoration={markings, mark=at position 0.5 with {\arrow{>}}},
        postaction={decorate}](13,-1.35) -- (11,-1.35);
  \node[text=blue] at (12,-1.65) {{\footnotesize $\sub{\alpha}''$}} ;
\fill[red] (2,-2.3) circle (0.05);
\draw[dashed, red, decoration={markings, mark=at position 0.5 with {\arrow{>}}},
        postaction={decorate}](1.2,-2.3) -- (2,-2.3);
  \node[text=red] at (1.5,-2) {{\footnotesize $\sub{\alpha}'$}} ;
  \draw[thick, red, ->](2,-2.3) -- (3,-2.3);
  \node[text=red] at (2.5,-2) {{\footnotesize $\alpha^{(p-1)}$}} ;
 \draw[thick, dashed,red] (3,-2.3) -- (10,-2.3);
 \draw[thick, red, , decoration={markings, mark=at position 0.5 with {\arrow{>}}},
        postaction={decorate}](10,-2.3) -- (11,-2.3);
  \node[text=red] at (10.5,-2) {{\footnotesize $\alpha^{(s+1)}$}} ;
\fill[red] (11,-2.3) circle (0.05);
  \draw[thick, red,decoration={markings, mark=at position 0.5 with {\arrow{>}}},
        postaction={decorate}](11,-2.3) -- (13,-2.3);
  \node[text=red] at (12,-2) {{\footnotesize $\alpha^{(s)}+\alpha^{(p)}$}} ;
\fill[red] (13,-2.3) circle (0.05);
 \draw[dashed, red, decoration={markings, mark=at position 0.5 with {\arrow{>}}},
        postaction={decorate}](13,-2.45) -- (11,-2.45);
  \node[text=red] at (12,-2.75) {{\footnotesize $\sub{\alpha}''$}} ;
\node[text=blue] at (0.4,-3.6) {(2)};
\fill[blue] (2,-3.6) circle (0.05);
 \draw[dashed, blue, decoration={markings, mark=at position 0.5 with {\arrow{>}}},
        postaction={decorate}](1.2,-3.6) -- (2,-3.6);
  \node[text=blue] at (1.5,-3.3) {{\footnotesize $\sub{\alpha}'$}} ;
  \draw[thick, blue, decoration={markings, mark=at position 0.5 with {\arrow{>}}},
        postaction={decorate}](2,-3.6) -- (6,-3.6);
  \node[text=blue] at (4,-3.3) {{\footnotesize $\alpha^{(s)}$}} ;
\fill[blue] (6,-3.6) circle (0.05);
  \draw[thick, blue, ->](6,-3.6) -- (7,-3.6);
  \node[text=blue] at (6.5,-3.3) {{\footnotesize $\alpha^{(s+1)}$}} ;
\draw[thick, dashed,blue] (7,-3.6) -- (12,-3.6);
  \draw[thick, blue, decoration={markings, mark=at position 0.5 with {\arrow{>}}},
        postaction={decorate}](12,-3.6) -- (13,-3.6);
  \node[text=blue] at (12.5,-3.3) {{\footnotesize $\alpha^{(p-2)}$}} ; 
    \fill[blue] (13,-3.6) circle (0.05);
  \draw[thick, blue, decoration={markings, mark=at position 0.5 with {\arrow{>}}},
        postaction={decorate}](13,-3.6) -- (15,-3.6);
  \node[text=blue] at (14,-3.3) {{\footnotesize $\alpha^{(p-1)}$}} ;
  \fill[blue] (15,-3.6) circle (0.05);
  \draw[thick, blue, decoration={markings, mark=at position 0.5 with {\arrow{>}}},
        postaction={decorate}](15,-3.75) -- (13,-3.75);
  \node[text=blue] at (14,-4.05) {{\footnotesize $\alpha^{(p)}$}} ;
\fill[blue] (13,-3.75) circle (0.05);
  \draw[dashed, blue, decoration={markings, mark=at position 0.5 with {\arrow{>}}},
        postaction={decorate}](13,-3.75) -- (11,-3.75);
  \node[text=blue] at (12,-4.05) {{\footnotesize $\sub{\alpha}''$}} ;
\fill[red] (2,-4.75) circle (0.05);
\draw[dashed, red, decoration={markings, mark=at position 0.5 with {\arrow{>}}},
        postaction={decorate}](1.2,-4.75) -- (2,-4.75);
  \node[text=red] at (1.5,-4.45) {{\footnotesize $\sub{\alpha}'$}} ;
  \draw[thick, red, decoration={markings, mark=at position 0.5 with {\arrow{>}}},
        postaction={decorate}](2,-4.75) -- (4,-4.75);
  \node[text=red] at (3,-4.45) {{\footnotesize $\alpha^{(p-1)}$}} ;
  \fill[red] (4,-4.75) circle (0.05);
  \draw[thick, red,decoration={markings, mark=at position 0.5 with {\arrow{>}}},
        postaction={decorate}](4,-4.75) -- (6,-4.75);
  \node[text=red] at (5,-4.45) {{\footnotesize $\alpha^{(s)}+\alpha^{(p)}$}} ;
    \fill[red] (6,-4.75) circle (0.05);
\draw[thick, red, ->](6,-4.75) -- (7,-4.75);
  \node[text=red] at (6.5,-4.45) {{\footnotesize $\alpha^{(s+1)}$}} ;
 \draw[thick, dashed,red] (7,-4.75) -- (12,-4.75);
 \draw[thick, red, , decoration={markings, mark=at position 0.5 with {\arrow{>}}},
        postaction={decorate}](12,-4.75) -- (13,-4.75);
  \node[text=red] at (12.5,-4.45) {{\footnotesize $\alpha^{(p-2)}$}} ;

\fill[red] (13,-4.75) circle (0.05);
 \draw[dashed, red, decoration={markings, mark=at position 0.5 with {\arrow{>}}},
        postaction={decorate}](13,-4.9) -- (11,-4.9);
  \node[text=red] at (12,-5.2) {{\footnotesize $\sub{\alpha}''$}} ;
\end{tikzpicture}
 \caption{The case for $\alpha^{(p)}= \overline{\delta}_i - \overline{\delta}_j$ and
$\alpha^{(s)} = \delta_j - {\delta}_l$.}
  \label{fig:type3b}
\end{center}
\end{figure}
We conclude exactly as in case (3). 

\smallskip
\noindent
(5) Assume that $\alpha^{(p)}= \overline{\delta}_i - \overline{\delta}_j$ 
and $\alpha^{(s)}=\delta_k - \delta_j$, with $k <i< j $.
 Write  
 \begin{align*}
  (\sub{\mu},\sub{\alpha}) = &(\sub{\mu}',\sub{\alpha}') \star 
\big((\delta_k,\delta_j, \ldots, 
\mu^{(p-1)},\overline{\delta}_i, \overline{\delta}_j),
(\alpha^{(s)}, \alpha^{(s+1)}, \ldots ,\alpha^{(p-1)},\alpha^{(p)})\big) 
\star (\sub{\mu}'',\sub{\alpha}''),
 \end{align*}
where
$$
(\sub{\mu}',\sub{\alpha}') =\big((\mu^{(1)},\ldots,\mu^{(s-1)}, 
\mu^{(s)}=\delta_k) ; 
(\alpha^{(1)},\ldots,\alpha^{(s-1)})\big), 
$$
$$
(\sub{\mu}'',\sub{\alpha}'') =\big((\mu^{(p+1)}=\overline{\delta}_j,\ldots,\mu^{(m+1)}) ;
(\alpha^{(p+1)},\ldots,\alpha^{(m)})\big), 
$$
have length $s-1$ and $m - p$, respectively.
With $a$ as in \eqref{eq:a}, 
we obtain that 
 \begin{align} \label{eq:hc-type-III2c}
\hc( b_{(\sub{\mu},\sub{\alpha})}^*)
\bar{K}^{\# p} \hc (b_{(\sub{\mu},\sub{\alpha})^{\# p}}^*) 
 + a n_{\alpha^{(s)},\alpha^{(p)}}
\hc(b_{(\sub{\mu}',\sub{\alpha}')}^* 
 e_{\alpha^{(s)}+\alpha^{(p)}}
 e_{\alpha^{(s+1)}} \cdots 
 e_{\alpha^{(p-2)}}
 e_{\alpha^{(p-1)}} 
 b_{(\sub{\mu}'',\sub{\alpha}'')}^*), 
 \end{align}
 where $\bar{K}^{\# p}$ is as in Lemma~\ref{Lem:nostar} depending on the different cases for $\alpha^{(p-1)}$
and $\alpha^{(p)}$.
We verify easily that 
$\big((\delta_i,\overline{\mu}^{(p-1)},\overline{\mu}^{(p-2)},
\ldots,\overline{\mu}^{(s+2)},\overline{\delta}_{j});(\alpha^{(p-1)},\ldots,\alpha^{(s+1)})\big)$ 
is a path from $\delta_i$ to $\overline{\delta}_j$ (see Figure \ref{fig:type3c} for an illustration).
\begin{figure}[h]
\begin{center}
\begin{tikzpicture}[scale=0.8]
\draw[ thick,->](2,0) -- (2.3,0);
\draw[ thick,decoration={markings, mark=at position 0.5 with {\arrow{>}}},
        postaction={decorate}](2.7,0) -- (3,0);
\draw[ thick, ->](3,0) -- (3.8,0);
\draw[ thick,decoration={markings, mark=at position 0.5 with {\arrow{>}}},
        postaction={decorate}](4.2,0) -- (5,0);
\draw[ thick,->](5,0) -- (5.8,0);
\draw[ thick,decoration={markings, mark=at position 0.5 with {\arrow{>}}},
        postaction={decorate}](6.2,0) -- (7,0);
\draw[ thick,->](7,0) -- (7.8,0);
\draw[ thick,decoration={markings, mark=at position 0.5 with {\arrow{>}}},
        postaction={decorate}](8.2,0) -- (9,0);
\draw[ thick,decoration={markings, mark=at position 0.5 with {\arrow{>}}},
        postaction={decorate}](9,0) -- (10,0);
\draw[ thick, ->](10,0) -- (10.8,0);
\draw[ thick,decoration={markings, mark=at position 0.5 with {\arrow{>}}},
        postaction={decorate}](11.2,0) -- (12,0);
\draw[ thick,->](12,0) -- (12.8,0);
\draw[ thick,decoration={markings, mark=at position 0.5 with {\arrow{>}}},
        postaction={decorate}](13.2,0) -- (14,0);
\draw[ ->, thick](14,0) -- (14.5,0);
     \draw[thick, dotted] (2.3,0) -- (2.7,0);
      \draw[thick, dotted] (3.8,0) -- (4.2,0);
      \draw[thick, dotted] (5.8,0) -- (6.2,0);
      \draw[thick, dotted] (7.8,0) -- (8.2,0);
      \draw[thick, dotted] (10.8,0) -- (11.2,0);
      \draw[thick, dotted] (12.8,0) -- (13.2,0);
      \draw[thick, dotted] (14.5,0) -- (15,0);
    \fill (2,0) circle (0.07)node(xline)[below] {{\small $\delta_1$}}; 
    \fill (3,0) circle (0.07)node(xline)[below] {{\small $\delta_k$}};
    \fill (5,0) circle (0.07)node(xline)[below] {{\small $\delta_i$}};
    \fill (7,0) circle (0.07)node(xline)[below] {{\small $\delta_{j}$}}; 
    \fill (9,0) circle (0.07)node(xline)[below] {{\small $\delta_{r}$}};
    \fill (10,0) circle (0.07)node(xline)[below] {{\small $\bar{\delta}_{r}$}};
    \fill (12,0) circle (0.07)node(xline)[below] {{\small $\bar{\delta}_{j}$}}; 
    \fill (14,0) circle (0.07)node(xline)[below] {{\small $\bar{\delta}_{i}$}};
\fill[blue] (3,-1.2) circle (0.05);
 \draw[dashed, blue, decoration={markings, mark=at position 0.5 with {\arrow{>}}},
        postaction={decorate}](2,-1.2) -- (3,-1.2);
  \node[text=blue] at (2.5,-0.9) {{\footnotesize $\sub{\alpha}'$}} ;
  \draw[thick, blue, decoration={markings, mark=at position 0.5 with {\arrow{>}}},
        postaction={decorate}](3,-1.2) -- (7,-1.2);
  \node[text=blue] at (5.5,-0.9) {{\footnotesize $\alpha^{(s)}$}} ;
\fill[blue] (7,-1.2) circle (0.05);
  \draw[thick, blue, ->](7,-1.2) -- (8,-1.2);
  \node[text=blue] at (7.5,-0.9) {{\footnotesize $\alpha^{(s+1)}$}} ;
\draw[thick, dashed,blue] (8,-1.2) -- (13,-1.2);
  \draw[thick, blue, decoration={markings, mark=at position 0.5 with {\arrow{>}}},
        postaction={decorate}](13,-1.2) -- (14,-1.2);
  \node[text=blue] at (13.5,-0.9) {{\footnotesize $\alpha^{(p-1)}$}} ;
  \fill[blue] (14,-1.2) circle (0.05);
  \draw[thick, blue, decoration={markings, mark=at position 0.5 with {\arrow{>}}},
        postaction={decorate}](14,-1.4) -- (12,-1.4);
  \node[text=blue] at (13,-1.7) {{\footnotesize $\alpha^{(p)}$}} ;
\fill[blue] (12,-1.4) circle (0.05);
  \draw[dashed, blue, decoration={markings, mark=at position 0.5 with {\arrow{>}}},
        postaction={decorate}](12,-1.4) -- (10,-1.4);
  \node[text=blue] at (11,-1.7) {{\footnotesize $\sub{\alpha}''$}} ; 
\fill[red] (3,-2.5) circle (0.05);
\draw[dashed, red, decoration={markings, mark=at position 0.5 with {\arrow{>}}},
        postaction={decorate}](2,-2.5) -- (3,-2.5);
  \node[text=red] at (2.5,-2.2) {{\footnotesize $\sub{\alpha}'$}} ;
 \draw[thick, red,decoration={markings, mark=at position 0.5 with {\arrow{>}}},
        postaction={decorate}](3,-2.5) -- (5,-2.5);
  \node[text=red] at (4,-2.2) {{\footnotesize $\alpha^{(s)}+\alpha^{(p)}$}} ;
\fill[red] (5,-2.5) circle (0.05);
  \draw[thick, red, ->](5,-2.5) -- (6,-2.5);
  \node[text=red] at (5.5,-2.2) {{\footnotesize $\alpha^{(p-1)}$}} ;
 \draw[thick, dashed,red] (6,-2.5) -- (11,-2.5);
 \draw[thick, red, , decoration={markings, mark=at position 0.5 with {\arrow{>}}},
        postaction={decorate}](11,-2.5) -- (12,-2.5);
  \node[text=red] at (11.5,-2.2) {{\footnotesize $\alpha^{(s+1)}$}} ;
\fill[red] (12,-2.5) circle (0.05);
 \draw[dashed, red, decoration={markings, mark=at position 0.5 with {\arrow{>}}},
        postaction={decorate}](12,-2.7) -- (10,-2.7);
  \node[text=red] at (11,-3) {{\footnotesize $\sub{\alpha}''$}} ;
\end{tikzpicture}
 \caption{The case for $\alpha^{(p)}= \overline{\delta}_i - \overline{\delta}_j$ and
$\alpha^{(s)} = \delta_k - {\delta}_j$.}
  \label{fig:type3c}
\end{center}
\end{figure}
\noindent 
To conclude, we copy word for word the end of case (2). \qed
 \end{proof}

\begin{lemma}
\label{Lem:star0}
Assume that the weighted path $(\sub{\mu},\sub{\alpha}) \in \hat{\P}_m(\mu)_{\sub{i}}$ has no loop 
and that $p=q$.  
If for some $s \in \{1, \ldots, p-2 \}$, $\alpha^{(s)} + \alpha^{(p)} =0$, then 
\begin{align*}
\wt (\sub{\mu},\sub{\alpha}) 
&= \bar{K}^{\# p} \wt(\sub{\mu},\sub{\alpha})^{\# p} + 
(\he(\c{\alpha}^{(s)})- \langle \alpha^{(s-1)}, \c{\alpha}^{(s)} \rangle)\ 
\sum_{a=1}^{N} {K}_{(\sub{\mu},\sub{\alpha})}^{\star a} \wt(\sub{\mu}, \sub{\alpha})^{\star a},
\end{align*}
where $\bar{K}^{\# p}$ is a scalar as in Lemma~\ref{Lem:nostar}, 
${K}_{(\sub{\mu},\sub{\alpha})}^{\star a}$ are some constants which only depend on constant structures,
and $(\sub{\mu}, \sub{\alpha})^{\star a}:= (\sub{\mu}^{\star a},\sub{\alpha}^{\star a})$
is a concatenation of paths defined as follows. 
\begin{enumerate}
\item If $\alpha^{(p)} = \overline{\delta}_j - \delta_i$ 
and $\alpha^{(s)} =\delta_j - \overline{\delta}_i$, with $i>j$, 
then 
\begin{align*}
(\sub{\mu}, \sub{\alpha})^{\star a} = (\sub{\mu}',\sub{\alpha}')
\star (\tilde{\sub{\mu}}^{\star a},\tilde{\sub{\alpha}}^{\star a})
\star(\sub{\mu}'',\sub{\alpha}''),
\end{align*}
where 
$(\tilde{\sub{\mu}}^{\star a},\tilde{\sub{\alpha}}^{\star a})$ 
is a path of length $< p-s$ between 
$\delta_j$ and $\delta_i$ whose roots $\tilde{\sub{\alpha}}^{\star a}$ are 
sums among $(\alpha^{(p-1)}, \ldots, \alpha^{(s+1)})$.

\item If $\alpha^{(p)} = \overline{\delta}_i - \overline{\delta}_j$ 
and $\alpha^{(s)} =\delta_i - \delta_j = \gamma$, with $i < j$, 
then 
\begin{align*}
(\sub{\mu}, \sub{\alpha})^{\star a} = (\sub{\mu}',\sub{\alpha}')
\star (\tilde{\sub{\mu}}^{\star a},\tilde{\sub{\alpha}}^{\star a})
\star(\sub{\mu}'',\sub{\alpha}''),
\end{align*}
where 
$(\tilde{\sub{\mu}}^{\star a},\tilde{\sub{\alpha}}^{\star a})$ 
is a path of length $< p-s$ between 
$\delta_i$ and $\overline{\delta}_j$ whose roots $\tilde{\sub{\alpha}}^{\star a}$ are 
sums among $(\alpha^{(p-1)}, \ldots, \alpha^{(s+1)})$.
\end{enumerate}
In all those cases,
$$
(\sub{\mu}',\sub{\alpha}') =\big( (\mu^{(1)},\ldots,\mu^{(s-1)}, 
\mu^{(s)}),
(\alpha^{(1)},\ldots,\alpha^{(s-1)})\big),
$$
$$
(\sub{\mu}'',\sub{\alpha}'') =\big((\mu^{(p+1)},\ldots,\mu^{(m+1)}),
(\alpha^{(p+1)},\ldots,\alpha^{(m)})\big), 
$$
and ${N}$ is the number of possible paths of $(\tilde{\sub{\mu}}^{\star a},\tilde{\sub{\alpha}}^{\star a})$.
\end{lemma}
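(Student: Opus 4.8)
The statement to be proved is Lemma~\ref{Lem:star0}, which is the companion of Lemma~\ref{Lem:star+} for the degenerate case $\alpha^{(s)}+\alpha^{(p)}=0$. My plan is to follow exactly the scheme of the proof of Lemma~\ref{Lem:star+}, the only genuinely new ingredient being the bookkeeping of the extra scalar factor $\he(\c{\alpha}^{(s)})-\langle\alpha^{(s-1)},\c{\alpha}^{(s)}\rangle$ that arises because the two root vectors $e_{\alpha^{(s)}}$ and $e_{\alpha^{(p)}}=e_{-\alpha^{(s)}}$ collide into a coroot rather than into a new root vector. First I would invoke Lemma~\ref{lem:types} (the type-$C$ classification of pairs of positive roots whose sum of opposites is relevant) to see that $\alpha^{(s)}=-\alpha^{(p)}$ forces exactly the two configurations listed in the statement: either $\alpha^{(p)}=\overline\delta_j-\delta_i$ with $\alpha^{(s)}=\delta_j-\overline\delta_i$ and $i>j$, or $\alpha^{(p)}=\overline\delta_i-\overline\delta_j$ with $\alpha^{(s)}=\delta_i-\delta_j$ and $i<j$. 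In both cases one records that $\alpha^{(p-1)}+\alpha^{(p)}\in-\Delta_+$ (so the ``$\bar K^{\#p}$'' of Lemma~\ref{Lem:nostar} governs the main term) and that $\alpha^{(p)}$ commutes with every $\alpha^{(t)}$, $t<p$, except with $\alpha^{(p-1)}$ and $\alpha^{(s)}$.

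Next I would write the decomposition
$$(\sub{\mu},\sub{\alpha}) = (\sub{\mu}',\sub{\alpha}') \star \big((\mu^{(s)},\ldots,\mu^{(p+1)}),(\alpha^{(s)},\ldots,\alpha^{(p)})\big)\star(\sub{\mu}'',\sub{\alpha}''),$$
with $(\sub{\mu}',\sub{\alpha}')$ of length $s-1$ and $(\sub{\mu}'',\sub{\alpha}'')$ of length $m-p$, exactly as in the proof of Lemma~\ref{Lem:star+}. Applying $\hc$ to $b_{(\sub{\mu},\sub{\alpha})}^*$ and commuting $e_{\alpha^{(p)}}$ leftwards past $e_{\alpha^{(p-1)}},\ldots,e_{\alpha^{(s+1)}}$, one first picks up the term $\bar K^{\#p}\,\hc(b_{(\sub{\mu},\sub{\alpha})^{\#p}}^*)$ — here one uses that $(\sub{\mu}',\sub{\alpha}')$ together with the positive root $-\alpha^{(p)}$ satisfies the hypotheses of Lemma~\ref{Lem2:cut-bis}, so all the intermediate ``straight past'' contributions die after projection by $\hc$. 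When $e_{\alpha^{(p)}}$ finally meets $e_{\alpha^{(s)}}$, instead of a new root vector one obtains $[e_{\alpha^{(s)}},e_{\alpha^{(p)}}]=\pm\c{\alpha}^{(s)}$ (up to the nonzero Chevalley constant), and this coroot must then be commuted across the remaining $b^*_{(\sub{\mu}'_2,\sub{\alpha}'_2)}$ and $b^*_{(\sub{\mu}'',\sub{\alpha}'')}$; each crossing produces a scalar $\langle\cdot,\c{\alpha}^{(s)}\rangle$, and the $\ev_\rho\circ\hc$ evaluation of the surviving $\c{\alpha}^{(s)}$ gives $\langle\rho,\c{\alpha}^{(s)}\rangle=\he(\c{\alpha}^{(s)})$. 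Collecting these, the coefficient of $\wt((\sub{\mu},\sub{\alpha})^{\star a})$ is $\he(\c{\alpha}^{(s)})-c_s$ with $c_s=\langle\alpha^{(s-1)},\c{\alpha}^{(s)}\rangle$ (the correction coming from the single root $\alpha^{(s-1)}$ to the left of $\alpha^{(s)}$ with which $\c{\alpha}^{(s)}$ fails to commute), precisely as in case (2)(b)(iii) of Lemma~\ref{Lem0:cut} and in Proposition~\ref{Pro:Conc1}(2).

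Then, exactly as in the proof of Lemma~\ref{Lem:star+}, ``reversing the order'' of $e_{\alpha^{(s+1)}}\cdots e_{\alpha^{(p-1)}}$ (which are no longer pairwise commuting in type $C$) generates the family of auxiliary weighted paths $(\sub{\mu},\sub{\alpha})^{\star a}$, $a=1,\ldots,N$: for each $a$ a partition $(P_1,\ldots,P_{n_a})$ of $\{p-1,\ldots,s+1\}$ gives $(\tilde\alpha^{\star a})^{(s+j)}=\sum_{t\in P_j}\alpha^{(t)}$, producing a path of length $<p-s$ between $\delta_j$ and $\delta_i$ (configuration (1)) or between $\delta_i$ and $\overline\delta_j$ (configuration (2)). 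One checks using \S\ref{sec:rootTypeC} that these partial sums are indeed roots or zero, so the $(\tilde{\sub{\mu}}^{\star a},\tilde{\sub{\alpha}}^{\star a})$ are legitimate weighted paths, and setting $(\sub{\mu},\sub{\alpha})^{\star a}:=(\sub{\mu}',\sub{\alpha}')\star(\tilde{\sub{\mu}}^{\star a},\tilde{\sub{\alpha}}^{\star a})\star(\sub{\mu}'',\sub{\alpha}'')$ gives the claimed formula. I expect the main obstacle to be the combinatorial verification that, in each of the two configurations, the roots $\alpha^{(s+1)},\ldots,\alpha^{(p-1)}$ connecting $\delta_j$ (resp.~$\delta_i$) to the relevant endpoint are of the right type to make all the partial sums $\sum_{t\in P_j}\alpha^{(t)}$ elements of $\Delta\cup\{0\}$ — this is where the explicit shape of the crystal graph of $\varpi_1$ for $\sp_{2r}$ (Figure~\ref{fig:crystal}) and the list in Lemma~\ref{lem:types} have to be used carefully; the algebra of the commutators themselves is routine once that is settled.
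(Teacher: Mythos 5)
Your proposal matches the paper's proof essentially step for step: the classification of the two configurations via Lemma~\ref{lem:types}, the same concatenation $(\sub{\mu}',\sub{\alpha}')\star\cdots\star(\sub{\mu}'',\sub{\alpha}'')$, the leftward commutation of $e_{\alpha^{(p)}}$ producing the $\bar{K}^{\# p}$-term from the collision with $e_{\alpha^{(p-1)}}$ while the pass-through term dies by the Lemma~\ref{Lem2:cut-bis} mechanism, the coroot $\c{\alpha}^{(s)}$ from the collision with $e_{\alpha^{(s)}}$ yielding the factor $\he(\c{\alpha}^{(s)})-\langle\alpha^{(s-1)},\c{\alpha}^{(s)}\rangle$, and the order-reversal/partition construction of the star paths. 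Only two details should be tightened to agree with the paper: the coroot is commuted to the \emph{left} past $b_{(\sub{\mu}',\sub{\alpha}')}^*$ (not to the right across the remaining factors, and there is no $(\sub{\mu}'_2,\sub{\alpha}'_2)$ in this setting), with $\alpha^{(s-1)}$ the only root of $\sub{\alpha}'$ failing to commute; and in configuration (2) there may be one further root $\alpha^{(t)}$, $s<t<p-1$, with $\alpha^{(t)}+\alpha^{(p)}\in-\Delta_+$, whose extra commutator contribution must also be seen to vanish under $\hc$.
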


\begin{proof} 
Let $-\alpha^{(p)}, \alpha^{(s)} \in \Delta_+$ such that $\alpha^{(s)} 
+ \alpha^{(p)} =0$. By Lemma~\ref{lem:types} 
the only possibilities for $\alpha^{(p)}$ and $\alpha^{(s)}$ are:
\begin{itemize} 
\item 
$\alpha^{(p)}= \overline{\delta}_j - {\delta}_i$ 
and $\alpha^{(s)}=\delta_j - \overline{\delta}_i$, with $j <i $. 
\item  
$\alpha^{(p)}= \overline{\delta}_i - \overline{\delta}_j$
and $\alpha^{(s)} =\delta_i -\delta_j$, with $i <j$.
\end{itemize} 

\noindent
(1) Assume that $\alpha^{(s)}=\delta_j - \overline{\delta_i}= -\alpha^{(p)}$, with $ j < i$. 
 Write 
 \begin{align*}
  (\sub{\mu},\sub{\alpha}) = &(\sub{\mu}',\sub{\alpha}') \star 
\big((\delta_j,\overline{\delta}_i, \ldots, 
\mu^{(p-1)},\overline{\delta}_j, \delta_i)),
(\alpha^{(s)}, \alpha^{(s+1)}, \ldots ,\alpha^{(p-1)},\alpha^{(p)})\big) 
\star (\sub{\mu}'',\sub{\alpha}''),
 \end{align*}
where
$$
(\sub{\mu}',\sub{\alpha}') =\big((\mu^{(1)},\ldots,\mu^{(s-1)}, 
\mu^{(s)}=\delta_{j}) ; 
(\alpha^{(1)},\ldots,\alpha^{(s-1)})\big), 
$$
$$
(\sub{\mu}'',\sub{\alpha}'') =\big((\mu^{(p+1)}=\delta_i,\ldots,\mu^{(m+1)}) ;
(\alpha^{(p+1)},\ldots,\alpha^{(m)})\big) 
$$
have length $s-1$ and $m - p$, respectively. 
Observe that in this case $ \alpha^{(p-1)} + \alpha^{(p)} < 0$ and  
$\alpha^{(p)}$ commutes with all roots 
$\alpha^{(t)}$ for $t< p$, except with $\alpha^{(p-1)}$ 
and $\alpha^{(s)}$.

\begin{figure}[h]
\begin{center}
\begin{tikzpicture}
\draw[ thick,->](0.5,0) -- (1,0);
\draw[ thick,decoration={markings, mark=at position 0.5 with {\arrow{>}}},
        postaction={decorate}](1.5,0) -- (2,0);
\draw[ thick, ->](2,0) -- (2.8,0);
\draw[ thick,decoration={markings, mark=at position 0.5 with {\arrow{>}}},
        postaction={decorate}](3.2,0) -- (4,0);
\draw[ thick,->](4,0) -- (4.8,0);
\draw[ thick,decoration={markings, mark=at position 0.5 with {\arrow{>}}},
        postaction={decorate}](5.2,0) -- (6,0);
\draw[ thick,decoration={markings, mark=at position 0.5 with {\arrow{>}}},
        postaction={decorate}](6,0) -- (7,0);
\draw[ thick, ->](7,0) -- (7.8,0);
\draw[ thick,decoration={markings, mark=at position 0.5 with {\arrow{>}}},
        postaction={decorate}](8.2,0) -- (9,0);
\draw[ thick,->](9,0) -- (9.8,0);
\draw[ thick,decoration={markings, mark=at position 0.5 with {\arrow{>}}},
        postaction={decorate}](10.2,0) -- (11,0);
      \draw[thick, dotted] (1,0) -- (1.5,0);
      \draw[thick, dotted] (2.8,0) -- (3.2,0);
      \draw[thick, dotted] (4.8,0) -- (5.2,0);
      \draw[thick, dotted] (7.8,0) -- (8.2,0);
      \draw[thick, dotted] (9.8,0) -- (10.2,0);
      \draw[thick, dotted] (11,0) -- (11.5,0);
    \fill (0.5,0) circle (0.07)node(xline)[below] {{\small $\delta_1$}}; 
    \fill (2,0) circle (0.07)node(xline)[below] {{\small $\delta_j$}};
    \fill (4,0) circle (0.07)node(xline)[below] {{\small $\delta_i$}};
    \fill (6,0) circle (0.07)node(xline)[below] {{\small $\delta_{r}$}};
    \fill (7,0) circle (0.07)node(xline)[below] {{\small $\bar{\delta}_{r}$}};
    \fill (9,0) circle (0.07)node(xline)[below] {{\small $\bar{\delta}_{i}$}}; 
    \fill (11,0) circle (0.07)node(xline)[below] {{\small $\bar{\delta}_{j}$}};
\fill[blue] (2,-1.2) circle (0.05);
 \draw[dashed, blue, decoration={markings, mark=at position 0.5 with {\arrow{>}}},
        postaction={decorate}](1,-1.2) -- (2,-1.2);
  \node[text=blue] at (1.5,-0.9) {{\footnotesize $\sub{\alpha}'$}} ;
  \draw[thick, blue, decoration={markings, mark=at position 0.5 with {\arrow{>}}},
        postaction={decorate}](2,-1.2) -- (9,-1.2);
  \node[text=blue] at (5.5,-0.9) {{\footnotesize $\alpha^{(s)}$}} ;
\fill[blue] (9,-1.2) circle (0.05);
  \draw[thick, blue, ->](9,-1.2) -- (9.8,-1.2);
  \node[text=blue] at (9.6,-0.9) {{\footnotesize $\alpha^{(s+1)}$}} ;
\draw[thick, dotted,blue] (9.8,-1.2) -- (10.2,-1.2);
  \draw[thick, blue, decoration={markings, mark=at position 0.5 with {\arrow{>}}},
        postaction={decorate}](10.2,-1.2) -- (11,-1.2);
  \node[text=blue] at (10.8,-0.9) {{\footnotesize $\alpha^{(p-1)}$}} ;
  \fill[blue] (11,-1.2) circle (0.05);
  \draw[thick, blue, decoration={markings, mark=at position 0.5 with {\arrow{>}}},
        postaction={decorate}](11,-1.4) -- (4,-1.4);
  \node[text=blue] at (7.5,-1.7) {{\footnotesize $\alpha^{(p)}$}} ;
\fill[blue] (4,-1.4) circle (0.05);
  \draw[dashed, blue, decoration={markings, mark=at position 0.5 with {\arrow{>}}},
        postaction={decorate}](4,-1.4) -- (2,-1.4);
  \node[text=blue] at (3,-1.7) {{\footnotesize $\sub{\alpha}''$}} ;
\fill[red] (2,-2.5) circle (0.05);
\draw[dashed, red, decoration={markings, mark=at position 0.5 with {\arrow{>}}},
        postaction={decorate}](1,-2.5) -- (2,-2.5);
  \node[text=red] at (1.5,-2.2) {{\footnotesize $\sub{\alpha}'$}} ;
  \draw[thick, red, ->](2,-2.5) -- (2.8,-2.5);
  \node[text=red] at (2.6,-2.2) {{\footnotesize $\alpha^{(p-1)}$}} ;
 \draw[thick, dotted,red] (2.8,-2.5) -- (3.2,-2.5);
 \draw[thick, red, , decoration={markings, mark=at position 0.5 with {\arrow{>}}},
        postaction={decorate}](3.2,-2.5) -- (4,-2.5);
  \node[text=red] at (3.8,-2.2) {{\footnotesize $\alpha^{(s+1)}$}} ;
\fill[red] (4,-2.5) circle (0.05);
 \draw[dashed, red, decoration={markings, mark=at position 0.5 with {\arrow{>}}},
        postaction={decorate}](4,-2.7) -- (2,-2.7);
  \node[text=red] at (3,-3) {{\footnotesize $\sub{\alpha}''$}} ;
\end{tikzpicture}
 \caption{The case for $\alpha^{(p)} = \overline{\delta}_j - \delta_i$ and $\alpha^{(s)} = \delta_j - \overline{\delta}_i$.}
  \label{fig:typeII3}
\end{center}
\end{figure}

Note that $\c{\alpha}^{(s)}$ commutes with all roots in $\sub{\alpha}',$ except with $\alpha^{(s-1)}$.
With $a$ as in \eqref{eq:a}, we obtain that 
 \begin{align} \label{eq:hc-type-II3}
\hc(b_{(\sub{\mu},\sub{\alpha})}^*) 
= {K}_{(\sub{\mu},\sub{\alpha})}^{\# p} \hc (b_{(\sub{\mu},\sub{\alpha})^{\# p}}^*) 
+ a \  (\c{\alpha}^{(s)} - \langle \alpha^{(s-1)} ,\c{\alpha}^{(s)} \rangle)
\hc(b_{(\sub{\mu}',\sub{\alpha}')}^* 
 e_{\alpha^{(s+1)}} \cdots 
 e_{\alpha^{(p-1)}} 
 b_{(\sub{\mu}'',\sub{\alpha}'')}^*),  
 \end{align}
where ${K}_{(\sub{\mu},\sub{\alpha})}^{\# p}$ is as in Lemma~\ref{Lem:nostar} (1). 
 
We easily verify that 
$\big((\delta_j,\delta_{p-1},\delta_{p-2},
\ldots,\delta_{s+2},\delta_{i}), (\alpha^{(p-1)},\ldots,\alpha^{(s+1)})\big)$ 
is a path from $\delta_j$ to $\delta_i$ (see Figure \ref{fig:typeII3} for an illustration).
By reversing the order in the roots $\alpha^{(s+1)},\ldots, \alpha^{(p-1)}$ 
we obtain a path from $\delta_j$ to $\delta_i$. The operations of permuting all roots 
induce
several new paths. 
Let $(\tilde{\sub{\mu}}^{\star a},\tilde{\sub{\alpha}}^{\star a})$, for $a = 1, \ldots, N$, 
denote these new paths from
$\delta_j$ to ${\delta}_i$ whose roots are sums among the roots 
$(\alpha^{(p-1)},\ldots,\alpha^{(s+1)})$.
More precisely, for each $a \in \{1, \ldots, N \}$ there exists a partition 
$(P_1, \ldots, P_{n_a})$ of the set $\{p-1, \ldots, s+1\}$ such that  
$$(\tilde{\alpha}^{\star a})^{(s+j-1)} = \sum_{t \in P_j} \alpha^{(t)}, $$
for $j = 1, \ldots, n_a$. Furthermore, we set
$ (\sub{\mu},\sub{\alpha})^{\star a} :=
(\sub{\mu}',\sub{\alpha}') 
\star (\tilde{\sub{\mu}}^{\star a},\tilde{\sub{\alpha}}^{\star a})
\star(\sub{\mu}'',\sub{\alpha}'').$
Hence, 
$$\wt (\sub{\mu},\sub{\alpha}) = {K}_{(\sub{\mu},\sub{\alpha})}^{\# p} \wt((\sub{\mu},\sub{\alpha})^{\# p}) 
+ (\he(\c{\alpha}^{(s)})- \langle \alpha^{(s-1)} ,\c{\alpha}^{(s)} \rangle) 
\sum_{a=1}^{N} {K}_{(\sub{\mu},\sub{\alpha})}^{\star a} \wt (\sub{\mu},\sub{\alpha})^{\star a},
$$
where ${K}_{(\sub{\mu},\sub{\alpha})}^{\star a}$ are some constants.

(2) Assume that $\alpha^{(p)}= \overline{\delta}_i - \overline{\delta}_j$ and
$\alpha^{(s)}=\delta_i - \delta_j$, with $ i < j$.  
 Write 
 \begin{align*}
  (\sub{\mu},\sub{\alpha}) = &(\sub{\mu}',\sub{\alpha}') \star 
\big((\delta_i,\delta_j, \ldots, 
\mu^{(p-1)},\overline{\delta}_i, \overline{\delta}_j)),
(\alpha^{(s)}, \alpha^{(s+1)}, \ldots ,\alpha^{(p-1)},\alpha^{(p)})\big) 
\star (\sub{\mu}'',\sub{\alpha}''),
 \end{align*}
where
$$
(\sub{\mu}',\sub{\alpha}') =\big((\mu^{(1)},\ldots,\mu^{(s-1)}, 
\mu^{(s)}=\delta_{i}) ; 
(\alpha^{(1)},\ldots,\alpha^{(s-1)})\big), 
$$
$$
(\sub{\mu}'',\sub{\alpha}'') =\big((\mu^{(p+1)}=\overline{\delta}_j,\ldots,\mu^{(m+1)}) ;
(\alpha^{(p+1)},\ldots,\alpha^{(m)})\big) 
$$
have length $s-1$ and $m - p$, respectively. 
Assume that there exists a positive root $\alpha^{(t)}$, with $t<p-1$ and $\alpha^{(t)} \neq \alpha^{(s)}$,
such that $\alpha^{(t)} + \alpha^{(p)} \in \Delta$. By Lemma~\ref{lem:types} we observe that there is at most one root
$\alpha^{(t)}$ that satisfies such condition (see Figure \ref{fig:type3d}).
\begin{figure}[h]
\begin{center}
\begin{tikzpicture} [scale=0.8]
\draw[ thick,->](0.5,0) -- (1,0);
\draw[ thick,decoration={markings, mark=at position 0.5 with {\arrow{>}}},
        postaction={decorate}](1.5,0) -- (2,0);
\draw[ thick, ->](2,0) -- (2.8,0);
\draw[ thick,decoration={markings, mark=at position 0.5 with {\arrow{>}}},
        postaction={decorate}](3.2,0) -- (4,0);
\draw[ thick,->](4,0) -- (4.8,0);
\draw[ thick,decoration={markings, mark=at position 0.5 with {\arrow{>}}},
        postaction={decorate}](5.2,0) -- (6,0);
\draw[ thick,decoration={markings, mark=at position 0.5 with {\arrow{>}}},
        postaction={decorate}](6,0) -- (7,0);
\draw[ thick, ->](7,0) -- (7.8,0);
\draw[ thick,decoration={markings, mark=at position 0.5 with {\arrow{>}}},
        postaction={decorate}](8.2,0) -- (9,0);
\draw[ thick,->](9,0) -- (9.8,0);
\draw[ thick,decoration={markings, mark=at position 0.5 with {\arrow{>}}},
        postaction={decorate}](10.2,0) -- (11,0);
     \draw[thick, dotted] (1,0) -- (1.5,0);
      \draw[thick, dotted] (2.8,0) -- (3.2,0);
      \draw[thick, dotted] (4.8,0) -- (5.2,0);
      \draw[thick, dotted] (7.8,0) -- (8.2,0);
      \draw[thick, dotted] (9.8,0) -- (10.2,0);
      \draw[thick, dotted] (11,0) -- (11.5,0);
    \fill (0.5,0) circle (0.07)node(xline)[below] {{\small $\delta_1$}}; 
    \fill (2,0) circle (0.07)node(xline)[below] {{\small $\delta_i$}};
    \fill (4,0) circle (0.07)node(xline)[below] {{\small $\delta_j$}};
    \fill (6,0) circle (0.07)node(xline)[below] {{\small $\delta_{r}$}};
    \fill (7,0) circle (0.07)node(xline)[below] {{\small $\bar{\delta}_{r}$}};
    \fill (9,0) circle (0.07)node(xline)[below] {{\small $\bar{\delta}_{j}$}}; 
    \fill (11,0) circle (0.07)node(xline)[below] {{\small $\bar{\delta}_{i}$}};
\fill[blue] (2,-1.2) circle (0.05);
 \draw[dashed, blue, decoration={markings, mark=at position 0.5 with {\arrow{>}}},
        postaction={decorate}](1,-1.2) -- (2,-1.2);
  \node[text=blue] at (1.5,-0.9) {{\footnotesize $\sub{\alpha}'$}} ;
  \draw[thick, blue, decoration={markings, mark=at position 0.5 with {\arrow{>}}},
        postaction={decorate}](2,-1.2) -- (4,-1.2);
  \node[text=blue] at (3,-0.9) {{\footnotesize $\alpha^{(s)}$}} ;
\fill[blue] (4,-1.2) circle (0.05);
  \draw[thick, blue, ->](4,-1.2) -- (5,-1.2);
  \node[text=blue] at (4.5,-0.9) {{\footnotesize $\alpha^{(s+1)}$}} ;
\draw[thick, dashed,blue] (5,-1.2) -- (10,-1.2);
  \draw[thick, blue, decoration={markings, mark=at position 0.5 with {\arrow{>}}},
        postaction={decorate}](10,-1.2) -- (11,-1.2);
  \node[text=blue] at (10.5,-0.9) {{\footnotesize $\alpha^{(p-1)}$}} ;
  \fill[blue] (11,-1.2) circle (0.05);
  \draw[thick, blue, decoration={markings, mark=at position 0.5 with {\arrow{>}}},
        postaction={decorate}](11,-1.4) -- (9,-1.4);
  \node[text=blue] at (10,-1.7) {{\footnotesize $\alpha^{(p)}$}} ;
\fill[blue] (9,-1.4) circle (0.05);
  \draw[dashed, blue, decoration={markings, mark=at position 0.5 with {\arrow{>}}},
        postaction={decorate}](9,-1.4) -- (7,-1.4);
  \node[text=blue] at (8,-1.7) {{\footnotesize $\sub{\alpha}''$}} ;
\fill[red] (2,-2.5) circle (0.05);
\draw[dashed, red, decoration={markings, mark=at position 0.5 with {\arrow{>}}},
        postaction={decorate}](1,-2.5) -- (2,-2.5);
  \node[text=red] at (1.5,-2.2) {{\footnotesize $\sub{\alpha}'$}} ;
  \draw[thick, red, ->](2,-2.5) -- (3,-2.5);
  \node[text=red] at (2.5,-2.2) {{\footnotesize $\alpha^{(p-1)}$}} ;
 \draw[thick, dashed,red] (3,-2.5) -- (8,-2.5);
 \draw[thick, red, , decoration={markings, mark=at position 0.5 with {\arrow{>}}},
        postaction={decorate}](8,-2.5) -- (9,-2.5);
  \node[text=red] at (8.5,-2.2) {{\footnotesize $\alpha^{(s+1)}$}} ;
\fill[red] (9,-2.5) circle (0.05);
 \draw[dashed, red, decoration={markings, mark=at position 0.5 with {\arrow{>}}},
        postaction={decorate}](9,-2.7) -- (7,-2.7);
  \node[text=red] at (8,-3) {{\footnotesize $\sub{\alpha}''$}} ;
\end{tikzpicture}
 \caption{The case for $\alpha^{(p)}= \overline{\delta}_i - \overline{\delta}_j$ and
$\alpha^{(s)}=\delta_i - \delta_j$.}
  \label{fig:type3d}
\end{center}
\end{figure}
In this case,
$\alpha^{(t)} + \alpha^{(p)} \in -\Delta_+$ and $ s < t < p-1$. 

With $a$ as in \eqref{eq:a}, we get
 \begin{align} \label{eq:hc-type-III4}
\hc( b_{(\sub{\mu},\sub{\alpha})}^*) 
 \bar{K}^{\# p} \hc (b_{(\sub{\mu},\sub{\alpha})^{\# p}}^*)
 + a  \ (\c{\alpha}^{(s)} -\langle \alpha^{(s-1)} , \c{\alpha}^{(s)} \rangle)
\hc \big( b_{(\sub{\mu}',\sub{\alpha}')}^* 
 e_{\alpha^{(s+1)}} \cdots 
 e_{\alpha^{(p-1)}} 
 b_{(\sub{\mu}'',\sub{\alpha}'')}^* \big), 
 \end{align}
where $\bar{K}^{\# p}$ as in Lemma~\ref{Lem:nostar} depending on the different cases for $\alpha^{(p-1)}$
and $\alpha^{(p)}$. 
We easily verify that 
$\big((\delta_i,\overline{\mu}^{(p-1)},\overline{\mu}^{(p-2)},
\ldots,\overline{\mu}^{(s+2)},\overline{\delta}_{j});(\alpha^{(p-1)},\ldots,\alpha^{(s+1)})\big)$ 
is a path from $\delta_i$ to $\overline{\delta}_j$ (see Figure \ref{fig:type3d} for an illustration). 
We conclude exactly as in case (1). \qed
\end{proof}

We now turn to weighted path with loops. 
Recall that for $\alpha \in \Delta$, the {\em support} of $\alpha$  is the set ${\rm supp} (\alpha)$ of 
$\beta \in \Pi$ such that $\langle \alpha, \c{\varpi}_\beta \rangle \not= 0$.

\begin{lemma} \label{lem:loops} 
Fix $(\sub{\mu},\sub{\alpha}) \in \hat{\P}_m(\mu)_{\sub{i}}$ a weighted path 
with $n$ loops in the positions $j_1, \ldots , j_n,$, with $0 \leqslant n \leqslant m$, and 
assume that $p=q$. 
Let $j'_{l,1}, \ldots, j'_{l,n'}$ be integers of $\{1, \ldots, j_l-1 \}$, for $l = 1, \ldots, n$,
such that ${\rm supp}(\alpha^{(j'_{l,t})})$ contains the simple root $\alpha^{(j_l)}$.
Then, 
$$ \wt(\sub{\mu},\sub{\alpha}) = \hat{K} \wt(\tilde{\sub{\mu}},\tilde{\sub{\alpha}}),$$
where $$ \hat{K} := \prod_{j_l= 1}^{n} \langle \mu^{(j_l)}, \c{\alpha}^{(j_l)} \rangle \Big(\langle \rho, \s{\varpi}_{\alpha^{(j_l)}} \rangle
 - \sum_{{\tiny \substack{j'_{l,t} \in \{1, \ldots, j_l-1\}, \\  \alpha^{(j_l)} \in {\rm supp}(\alpha^{(j'_{l,t})})}}} 
\langle \alpha^{(j'_{l,t})}, \s{\varpi}_{\alpha^{(j_l)}} \rangle \Big) $$
and 
$(\tilde{\sub{\mu}},\tilde{\sub{\alpha}})$ is the weighted path of length $m-n$ obtained from $(\sub{\mu},\sub{\alpha})$
by ``removing all loops'' from the path. In particular, if $n=0$, we have 
$\hat{K} = 1$.
\end{lemma}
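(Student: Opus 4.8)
The strategy is to reduce the computation of $\wt(\sub{\mu},\sub{\alpha})$ to that of $\wt(\tilde{\sub{\mu}},\tilde{\sub{\alpha}})$ by removing the $n$ loops one at a time, say in decreasing order of position $j_n > j_{n-1} > \cdots > j_1$. Since $\hc$ restricted to $U(\g)^\h$ is an algebra morphism and since by definition $\wt(\sub{\mu},\sub{\alpha}) = (\ev_\rho \circ \hc)(b^*_{\sub{\mu},\sub{\alpha}})$, it suffices to track what happens to $b^*_{\sub{\mu},\sub{\alpha}}$ inside $U(\g)$. The loop at position $j_l$ contributes a factor $a^{(\c{\alpha}^{(j_l)})}_{\mu^{(j_l)},\mu^{(j_l)}} = \langle \mu^{(j_l)}, \c{\alpha}^{(j_l)} \rangle$ to $a_{\sub{\mu},\sub{\alpha}}$ (from the definition of $b_{(\sub{\mu},\sub{\alpha}),j_l}$ in the height-zero case, and Lemma~\ref{Lem:a}(2)) and inserts the letter $b^*_{(\sub{\mu},\sub{\alpha}),j_l} = \s{\varpi}_{\alpha^{(j_l)}} \in \h$ into the product defining $b^*_{\sub{\mu},\sub{\alpha}}$. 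So I would write, for one fixed loop at position $j := j_n$,
\begin{align*}
b^*_{\sub{\mu},\sub{\alpha}} = \langle \mu^{(j)}, \c{\alpha}^{(j)} \rangle \cdot u_1 \, \s{\varpi}_{\alpha^{(j)}} \, u_2,
\end{align*}
where $u_1 = b^*_{(\sub{\mu},\sub{\alpha}),1}\cdots b^*_{(\sub{\mu},\sub{\alpha}),j-1}$ and $u_2 = b^*_{(\sub{\mu},\sub{\alpha}),j+1}\cdots b^*_{(\sub{\mu},\sub{\alpha}),m}$.

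\textbf{Commuting the Cartan letter to the right.}
The next step is to move $\s{\varpi}_{\alpha^{(j)}}$ past $u_1$ to the position right before $u_2$, using the adjoint action. For each letter $b^*_{(\sub{\mu},\sub{\alpha}),t}$ with $t < j$: if it is a root vector $e_{\alpha^{(t)}}$ (height-nonzero case), then $[\s{\varpi}_{\alpha^{(j)}}, e_{\alpha^{(t)}}] = \langle \alpha^{(t)}, \s{\varpi}_{\alpha^{(j)}}\rangle e_{\alpha^{(t)}}$, which is nonzero precisely when $\alpha^{(j)} \in {\rm supp}(\alpha^{(t)})$; if it is a coroot $\c{\alpha}^{(t)} \in \h$ (another loop, $t = j_l$ for some $l < n$), then it commutes with $\s{\varpi}_{\alpha^{(j)}}$. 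Hence, expanding $u_1 \s{\varpi}_{\alpha^{(j)}} = (\s{\varpi}_{\alpha^{(j)}} - c) u_1 + (\text{terms where some } e_{\alpha^{(t)}} \text{ is "used up"})$—but in fact since each commutator only rescales $e_{\alpha^{(t)}}$ without changing the word, the cleanest bookkeeping is: $u_1 \s{\varpi}_{\alpha^{(j)}} = \big(\s{\varpi}_{\alpha^{(j)}} + \sum_{t < j} \langle \alpha^{(t)}, \s{\varpi}_{\alpha^{(j)}}\rangle\big) u_1$ after repeatedly applying $e_{\alpha^{(t)}}\, x = x\, e_{\alpha^{(t)}} + [e_{\alpha^{(t)}}, x]$—wait, the sign: moving $\s{\varpi}_{\alpha^{(j)}}$ leftward through $e_{\alpha^{(t)}}$ gives $e_{\alpha^{(t)}} \s{\varpi}_{\alpha^{(j)}} = \s{\varpi}_{\alpha^{(j)}} e_{\alpha^{(t)}} - \langle \alpha^{(t)}, \s{\varpi}_{\alpha^{(j)}}\rangle e_{\alpha^{(t)}}$, so one collects $u_1 \s{\varpi}_{\alpha^{(j)}} = \big(\s{\varpi}_{\alpha^{(j)}} - \sum_{t: \alpha^{(j)}\in{\rm supp}(\alpha^{(t)})} \langle \alpha^{(t)}, \s{\varpi}_{\alpha^{(j)}}\rangle\big)u_1$. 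Then $b^*_{\sub{\mu},\sub{\alpha}} = \langle \mu^{(j)}, \c{\alpha}^{(j)}\rangle\big(\s{\varpi}_{\alpha^{(j)}} - \sum \langle \alpha^{(j'_{l,t})}, \s{\varpi}_{\alpha^{(j)}}\rangle\big) \cdot b^*_{(\sub{\mu},\sub{\alpha})^{\#j}}$ plus we must also commute $\s{\varpi}_{\alpha^{(j)}}$ past $u_2$; since $\hc$ and $\ev_\rho$ kill contributions of strictly smaller filtration and since $\s{\varpi}_{\alpha^{(j)}} \in \h \subset U(\h)$, applying $\ev_\rho \circ \hc$ replaces $\s{\varpi}_{\alpha^{(j)}}$ by the scalar $\langle \rho, \s{\varpi}_{\alpha^{(j)}}\rangle$. (Care is needed with $u_2$: commuting $\s{\varpi}_{\alpha^{(j)}}$ rightward through $u_2$ produces lower-order terms annihilated by $\hc$, so only the $\langle \rho, \s{\varpi}_{\alpha^{(j)}}\rangle$ term survives—this is exactly the mechanism already used in Lemma~\ref{Lem0:cut}(3) and in the $\hat{K}$-computations inside Lemma~\ref{Lem:star+}.) This gives the factor $\langle \mu^{(j_n)}, \c{\alpha}^{(j_n)}\rangle\big(\langle \rho, \s{\varpi}_{\alpha^{(j_n)}}\rangle - \sum_{j'_{n,t}} \langle \alpha^{(j'_{n,t})}, \s{\varpi}_{\alpha^{(j_n)}}\rangle\big)$, which is precisely the $l = n$ factor in $\hat K$.

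\textbf{Induction and the main obstacle.}
After removing the loop at $j_n$ we obtain a weighted path $(\sub{\mu}',\sub{\alpha}')$ of length $m-1$ with $n-1$ loops, still satisfying $p(\he(\sub{\mu}'))=q(\he(\sub{\mu}'))$ (removing a zero entry at a position $> p$ does not disturb $p$ and $q$; if some $j_l < p$ the positions shift but the relation $p=q$ is about the first turning back, which is unaffected by deleting a loop). Then I would argue by induction on $n$: apply the single-loop step to $(\sub{\mu}',\sub{\alpha}')$, noting that the integers $j'_{l,t}$ relevant to the remaining loops are the same as before (the root $\alpha^{(j_n)} = \s{\varpi}$-type Cartan letter has been removed, but the support condition concerns only roots $\alpha^{(t)}$ with $t < j_l \le j_n$, which are unchanged), so the product telescopes to the stated $\hat K$. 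The main obstacle—and the reason one must be slightly careful rather than purely formal—is the order of loop removal and making sure no double-counting occurs: when two loops at positions $j_{l_1} < j_{l_2}$ both have $\alpha^{(j_{l_1})}$ in the support of a common root, or when $\alpha^{(j_{l_2})} \in {\rm supp}(\alpha^{(j_{l_1})})$ is itself one of the $e_{\alpha^{(t)}}$ letters being commuted past; but since a loop letter $\s{\varpi}_{\alpha^{(j_l)}}$ lies in $\h$ and commutes with every other $\s{\varpi}$, the correction term for loop $j_{l}$ only ever involves the genuine root vectors $e_{\alpha^{(t)}}$ with $t < j_l$ and $t$ not a loop position, which is exactly what the index set $\{j'_{l,1},\ldots,j'_{l,n'}\}$ in the statement records. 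Removing loops from the rightmost inward guarantees that at each stage the letters to the left of the loop being removed are untouched, so the induction goes through cleanly. The $n=0$ case is the empty product $\hat K = 1$, which is the trivial base.
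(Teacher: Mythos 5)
Your proof is correct and follows essentially the same route as the paper: both extract the factor $\langle \mu^{(j_l)},\c{\alpha}^{(j_l)}\rangle$ from $a_{\sub{\mu},\sub{\alpha}}$, commute each Cartan letter $\s{\varpi}_{\alpha^{(j_l)}}$ past the root vectors to its left, picking up exactly the correction $-\langle \alpha^{(j'_{l,t})},\s{\varpi}_{\alpha^{(j_l)}}\rangle$ for those roots whose support contains $\alpha^{(j_l)}$, and then apply $\ev_\rho\circ\hc$ to replace each $\s{\varpi}_{\alpha^{(j_l)}}$ by $\langle\rho,\s{\varpi}_{\alpha^{(j_l)}}\rangle$. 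The only difference is bookkeeping — you remove loops one at a time by induction on $n$, while the paper pulls all the Cartan factors out in a single formula — and your side remark about lower-order terms from $u_2$ is unnecessary since $\hc(hu)=h\,\hc(u)$ for $h\in U(\h)$ already suffices.
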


\begin{proof}
First of all, if $n=0$, then the results are known by Lemma~\ref{Lem:nostar}, Lemma~\ref{Lem:star+} and Lemma~\ref{Lem:star0}, 
and if $n=m$, then the result is known by Lemma~\ref{lem:symC}. 
So there is no loss of generality by assuming that $0 < n < m$. 

 Write
$$(\sub{\mu},\sub{\alpha}) = \left((\mu^{(1)}, \ldots, \mu^{(j_1)}, \mu^{(j_1+1)} \ldots, \mu^{(j_n+1)}),( \alpha^{(1)}, \ldots, \alpha^{(j_1)}, \alpha^{(j_1 +1)}, \ldots, \alpha^{(j_n)}) \right) \star 
(\sub{\mu}'',\sub{\alpha}''),$$
where $$
(\sub{\mu}'',\sub{\alpha}'') =\left( (\mu^{(j_n+1)},\ldots,\mu^{(m+1)}), 
(\alpha^{(j_n+1)},\ldots,\alpha^{(m)})\right)),
$$
and $(\sub{\mu}'',\sub{\alpha}'')$ 
has length $m - j_n$.
We have $i_{j_l} = 0$ and $\alpha^{(j_l)} \in \Pi$. 
For each $j_l, l \in {1, \ldots, n}$, denote by $j'_{l,1}, \ldots, j'_{l,n'}$ the integers of $\{1, \ldots, j_l-1 \}$ 
such that simple root $\alpha^{(j_l)}$ appears in the support of $(\alpha^{(j'_{l,t})})$, 
 thus $[e_{\alpha^{(j'_{l,t})}}, \s{\varpi}_{\alpha^{(j_l)}}] =
  -\langle \alpha^{(j'_{l,t})}, \s{\varpi}_{\alpha^{(j_l)}} \rangle e_{\alpha^{(j'_{l,t})}} \not=0$ 
  and, 
hence,  $e_{\alpha^{(j'_{l,t})}}\s{\varpi}_{\alpha^{(j_l)}} \not=  \s{\varpi}_{\alpha^{(j_l)}}e_{\alpha^{(j'_{l,t})}}$.
In other words, $\s{\varpi}_{\alpha^{(j_l)}}$
commutes with $e_{\alpha^{(s)}}$, where $s \neq j'_{l,t}$ for $l \in \{1, \ldots, n \}, t \in \{1, \ldots, n' \}$. 
With $$ a := a_{\mu^{(j_n+1)},\mu^{(j_n)}}^{(b_{(\sub{\mu},\sub{\alpha}),j_n})}
a_{\mu^{(j_n)},\mu^{(j_n-1)}}^{(b_{(\sub{\mu},\sub{\alpha}),j_n-1})}
\cdots 
a_{\mu^{(j_1+1)},\mu^{(j_1)}}^{(b_{(\sub{\mu},\sub{\alpha}),j_1})}
\cdots
a_{\mu^{(2)},\mu^{(1)}}^{(b_{(\sub{\mu},\sub{\alpha}),1})},$$ 
we get that 
\begin{align*}
 b_{(\sub{\mu},\sub{\alpha})}^*
& = 
\prod_{j_k=1}^{n} \langle \mu^{(j_k)}, \c{\alpha}^{(j_k)} \rangle
 \Big( \s{\varpi}_{\alpha^{(j_k)}} - 
 \sum_{{\tiny \substack{j'_{k,t} \in \{1, \ldots, j_k-1\}, \\ \alpha^{(j_k)} \in {\rm supp}(\alpha^{(j'_{k,t})})}}} 
\langle \alpha^{(j'_{k,t})}, \s{\varpi}_{\alpha^{(j_k)}} \rangle \Big) b_{(\tilde{\sub{\mu}},\tilde{\sub{\alpha}})}^*,
\end{align*}
whence we get the statement. \qed
\end{proof}

\begin{lemma} \noindent \label{lem:p<q} 
Fix $(\sub{\mu},\sub{\alpha}) \in \hat{\P}_m(\mu)_{\sub{i}}$ 
and assume that $p<q$. 
\begin{enumerate} 
 \item Assume $\mu^{(p)} \in \{ \delta_1, \ldots, \delta_r \}.$
  \begin{enumerate}
 \item If 
$\langle \mu^{(p)},\c{\alpha}^{(p)} \rangle =1,$ 
then 
$\wt(\sub{\mu},\sub{\alpha})=   
\langle \rho, \s{\varpi}_{\alpha^{(p)}} \rangle 
\wt(\sub{\mu},\sub{\alpha})^{\# p}.$
\item If $\langle \mu^{(p)},\c{\alpha}^{(p)} \rangle =-1,$
then  
$\wt(\sub{\mu},\sub{\alpha})=  
(-\langle \rho, \s{\varpi}_{\alpha^{(p)}}  \rangle 
+ 1)
\wt(\sub{\mu},\sub{\alpha})^{\# p}.$
\end{enumerate}
 \item Assume $\mu^{(p)} \in \{ \overline{\delta}_1, \ldots, \overline{\delta}_r \}.$
 \begin{enumerate}
 \item If 
$\langle \mu^{(p)},\c{\alpha}^{(p)} \rangle =1,$
then 
$\wt(\sub{\mu},\sub{\alpha})=   
(\langle \rho, \c{\varpi}_{\alpha^{(p)}} \rangle -1)
\wt(\sub{\mu},\sub{\alpha})^{\# p}.$
\item If $\langle \mu^{(p)},\c{\alpha}^{(p)} \rangle =-1,$
then  
$\wt(\sub{\mu},\sub{\alpha})=  
(2 - \langle \rho, \s{\varpi}_{\alpha^{(p)}}  \rangle)
\wt(\sub{\mu},\sub{\alpha})^{\# p}.$
\end{enumerate}
\end{enumerate}
\end{lemma}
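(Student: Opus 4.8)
The statement is the type-$C$ analogue of Lemma~\ref{Lem0:cut}(3), so the plan is to mimic that argument, splitting according to whether the turning-back vertex $\mu^{(p)}$ lies in the ``upper half'' $\{\delta_1,\dots,\delta_r\}$ or the ``lower half'' $\{\overline\delta_1,\dots,\overline\delta_r\}$ of the crystal graph of $\delta=\varpi_1$. First I would record the basic setup: since $p<q$ we have $i_p=\he(\sub{\mu})_p=0$, hence $\mu^{(p)}=\mu^{(p+1)}$ and $\alpha^{(p)}\in\Pi_{\mu^{(p)}}$, i.e. the $p$-th step is a loop labelled by a simple root $\alpha^{(p)}$ with $\langle\mu^{(p)},\c\alpha^{(p)}\rangle=\pm1$ (the value being $\pm1$ because all weight spaces of $\delta$ are one-dimensional and $\delta$ is a minuscule-type weight). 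I would then write the decomposition
$$(\sub{\mu},\sub{\alpha}) = (\sub{\mu}',\sub{\alpha}')
 \star ((\mu^{(p-1)},\mu^{(p)}),\alpha^{(p-1)})  \star
((\mu^{(p)},\mu^{(p+1)}),\alpha^{(p)})
\star (\sub{\mu}'',\sub{\alpha}''),$$
exactly as in the proof of Lemma~\ref{Lem0:cut}(3), where $(\sub{\mu}',\sub{\alpha}')$ is the initial piece of length $p-2$ and $(\sub{\mu}'',\sub{\alpha}'')$ the final piece of length $m-p$, and note that $p>1$ so $\alpha^{(p-1)}$ genuinely occurs.

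\textbf{Key computation.} The heart of the argument is the same commutation move as in type~$A$: in the word $b^*_{(\sub{\mu},\sub{\alpha})}$ the factor $\s\varpi_{\alpha^{(p)}}$ corresponding to the loop must be pushed to the left past $e_{\alpha^{(p-1)}}$ and past all the $e_{\alpha'^{(j)}}$ occurring in $b^*_{(\sub{\mu}',\sub{\alpha}')}$. The crucial point, which I would prove using the $C_r$ root data of Appendix~\ref{sec:rootTypeC} and the definition of $p=p(\sub{i})$, is that among the roots $\alpha'^{(1)},\dots,\alpha'^{(p-2)},\alpha^{(p-1)}$ the simple root $\alpha^{(p)}$ lies in the support of exactly one, namely $\alpha^{(p-1)}$ — this is the analogue of the ``$\c\varpi_{\alpha^{(p)}}$ commutes with all roots in $\sub{\alpha}'$'' observation in the type-$A$ proof, combined with the fact that the support of $\alpha^{(p-1)}$ meets $\{\alpha^{(p)}\}$ because the path turns back at $\mu^{(p)}$ via a loop in the $\alpha^{(p)}$-direction. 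Writing $c_{p-1}:=\langle\alpha^{(p-1)},\s\varpi_{\alpha^{(p)}}\rangle$, the commutator contributes a scalar $-c_{p-1}$, so
$$b^*_{(\sub{\mu},\sub{\alpha})} = \langle\mu^{(p)},\c\alpha^{(p)}\rangle\,
\big(\s\varpi_{\alpha^{(p)}}-c_{p-1}\big)\,b^*_{(\sub{\mu},\sub{\alpha})^{\#p}},$$
using Lemma~\ref{Lem:a}(2) to account for the factor $\langle\mu^{(p)},\c\alpha^{(p)}\rangle$ in $a_{\sub{\mu},\sub{\alpha}}$. Applying $\ev_\rho\circ\hc$ and using that $\hc$ restricted to $U(\g)^\h$ is an algebra morphism then gives
$$\wt(\sub{\mu},\sub{\alpha}) = \langle\mu^{(p)},\c\alpha^{(p)}\rangle\,
\big(\langle\rho,\s\varpi_{\alpha^{(p)}}\rangle-c_{p-1}\big)\,\wt(\sub{\mu},\sub{\alpha})^{\#p}.$$
It remains to evaluate $c_{p-1}$ in each of the four cases. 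Using the explicit weights $\delta_k=\eps_k$, $\overline\delta_k=-\eps_k$ and the simple roots of $C_r$, together with the description $\Pi_{\mu^{(p)}}$ from the start of Section~\ref{sec:main2-C}, one checks: if $\mu^{(p)}\in\{\delta_1,\dots,\delta_r\}$ then $c_{p-1}=0$ when $\langle\mu^{(p)},\c\alpha^{(p)}\rangle=1$ and $c_{p-1}=1$ when $\langle\mu^{(p)},\c\alpha^{(p)}\rangle=-1$, which yields (1)(a) and (1)(b); and if $\mu^{(p)}\in\{\overline\delta_1,\dots,\overline\delta_r\}$ the signs are reversed, giving $c_{p-1}=1$ when $\langle\mu^{(p)},\c\alpha^{(p)}\rangle=1$ and $c_{p-1}=-1$ when $\langle\mu^{(p)},\c\alpha^{(p)}\rangle=-1$, whence (2)(a) and (2)(b).

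\textbf{Main obstacle.} The only genuinely delicate point is the combinatorial claim that $\alpha^{(p)}\notin\mathrm{supp}(\alpha'^{(j)})$ for $j=1,\dots,p-2$ — equivalently, that $\s\varpi_{\alpha^{(p)}}$ commutes with every $e_{\alpha'^{(j)}}$ in $b^*_{(\sub{\mu}',\sub{\alpha}')}$ — so that the turning-back vertex really produces only the single correction term $-c_{p-1}$ and not the cascade of extra paths one sees in Lemma~\ref{Lem:star+} and Lemma~\ref{Lem:star0}. The reason this works here, in contrast to the $p=q$ case, is that $p<q$ forces $\sub{i}$ to have a zero in position $p$ with all earlier coordinates $\geqslant 0$, so the initial segment $\mu^{(1)},\dots,\mu^{(p)}$ is weakly monotone in the $\preccurlyeq$ order and the supports of the $\alpha'^{(j)}$ are ``upstream'' of $\alpha^{(p)}$; I would spell this out by a short case analysis on the $C_r$ root types as in Lemma~\ref{lem:types}, tracking which simple roots can appear in $\mathrm{supp}(\alpha'^{(j)})$ given the monotonicity. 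Once that is in place, the rest is the bookkeeping of the four sign cases above, which is routine.
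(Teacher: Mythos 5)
Your case (1) is fine and matches the paper, which reduces it to the type-$A$ argument of Lemma~\ref{Lem0:cut}(3). The gap is in case (2), and it is exactly the point you flag as the "only genuinely delicate point": your key combinatorial claim — that among $\alpha^{(1)},\dots,\alpha^{(p-2)},\alpha^{(p-1)}$ the simple root $\alpha^{(p)}$ lies in the support of $\alpha^{(p-1)}$ only — is false when $\mu^{(p)}=\overline{\delta}_j$. To reach the lower half of the crystal the monotone initial segment must earlier have left the vertex $\delta_j$ (or $\delta_{j-1}$) and crossed over the top, and the corresponding step $\alpha^{(s)}$ (with $i_s\neq 0$, $s<p-1$) is of the form $\eps_j-\eps_l$, $\eps_j+\eps_l$ or $2\eps_j$ (resp.\ with $j-1$ in place of $j$), whose support does contain $\alpha^{(p)}\in\{\beta_{j-1},\beta_j\}$ even though the path is weakly increasing: in type $C$ the supports of such roots are not "upstream" of $\alpha^{(p)}$. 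Consequently, pushing $\s{\varpi_{\alpha^{(p)}}}$ to the left produces \emph{two} correction terms, and the paper's proof establishes
\begin{align*}
b_{(\sub{\mu},\sub{\alpha})}^{*}=\langle \mu^{(p)},\c{\alpha}^{(p)}\rangle
\bigl(\s{\varpi_{\alpha^{(p)}}}-\langle \alpha^{(s)},\s{\varpi_{\alpha^{(p)}}}\rangle
-\langle \alpha^{(p-1)},\s{\varpi_{\alpha^{(p)}}}\rangle\bigr)\,b_{(\sub{\mu},\sub{\alpha})^{\# p}}^{*},
\end{align*}
together with the case analysis showing that the \emph{sum} of the two pairings equals $1$ when $\langle\mu^{(p)},\c{\alpha}^{(p)}\rangle=1$ (one of them is $1$, the other $0$) and equals $2$ when $\langle\mu^{(p)},\c{\alpha}^{(p)}\rangle=-1$ (either $2+0$ or $1+1$). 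With only the $\alpha^{(p-1)}$-term, your formula is off by $1$ precisely in the configurations where the correction comes (partly or wholly) from $\alpha^{(s)}$, e.g.\ when $\langle\alpha^{(p-1)},\s{\varpi_{\alpha^{(p)}}}\rangle=0$ in case (2)(a).

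There is also an internal inconsistency in your evaluation of the constants: with your displayed identity $\wt(\sub{\mu},\sub{\alpha})=\langle\mu^{(p)},\c{\alpha}^{(p)}\rangle\bigl(\langle\rho,\s{\varpi_{\alpha^{(p)}}}\rangle-c_{p-1}\bigr)\wt(\sub{\mu},\sub{\alpha})^{\# p}$, reproducing (2)(b), i.e.\ $(2-\langle\rho,\s{\varpi_{\alpha^{(p)}}}\rangle)\wt(\sub{\mu},\sub{\alpha})^{\# p}$, forces the total correction to be $2$, whereas you assert $c_{p-1}=-1$, which would give $-\langle\rho,\s{\varpi_{\alpha^{(p)}}}\rangle-1$. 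So even granting your structural claim, the stated constants in case (2) do not follow from your computation. The repair is the paper's: identify the (at most one) extra root $\alpha^{(s)}$ with $\alpha^{(p)}\in{\rm supp}(\alpha^{(s)})$ and $i_s\neq0$, keep both commutator contributions, and check via the $C_r$ root data that their sum is $1$ or $2$ according to the sign of $\langle\mu^{(p)},\c{\alpha}^{(p)}\rangle$.
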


Note that the hypothesis $p< q$ implies that $i_p=0$ and so $\alpha^{(p)} \in \Pi_{\mu^{(p)}}
=\{\beta \in \Pi \; | \; \langle \mu^{(p)},\c{\beta} \rangle\not=0\}$. 

\begin{proof}
Write 
$$(\sub{\mu},\sub{\alpha}) = (\sub{\mu}',\sub{\alpha}') 
 \star ((\mu^{(p-1)},\mu^{(p)}),\alpha^{(p-1)})  \star 
((\mu^{(p)},\mu^{(p+1)}),\alpha^{(p)}) 
\star (\sub{\mu}'',\sub{\alpha}''),$$
where $(\sub{\mu}',\sub{\alpha}')$  
and $(\sub{\mu}'',\sub{\alpha}'')$ 
have length $p - 2$ and $m - p$, respectively. 
Since $p < q$, then $i_p=0$ and $\alpha^{(p)} \in \Pi_{\mu^{(p)}}
=\{\beta \in \Pi \; | \; \langle \mu^{(p)},\c{\beta} \rangle\not=0\}$.

\noindent
(1)  We have $\mu^{(p)} = \delta_j$ for some $j = 1, \ldots, r$ and 
 $\alpha^{(p)} = \beta_j$ or $\beta_{j-1}$.
\begin{center} 
\begin{tikzpicture} [scale=0.8]
 \draw[dashed,
        decoration={markings, mark=at position 0.5 with {\arrow{>}}},
        postaction={decorate}
        ]
        (0,0) -- (4,0);\node at (1.5,0.3) {{\footnotesize $\sub{\alpha}'$}}; 
  \draw[ thick,
        decoration={markings, mark=at position 0.5 with {\arrow{>}}},
        postaction={decorate}
        ]
        (4,0) -- (7,0); \node at (6,0.3) {{\footnotesize ${\alpha}^{(p-1)}$}};
\draw[thick] (7,0) to [out=90,in=150] (7.6,0.6);
 \draw[thick, decoration={markings, mark=at position 0.5 with {\arrow{>}}},
         postaction={decorate}] (7.6,0.6) to [out=310,in=330] (7,0);        
 \node at (7.4,0.8) {{\scriptsize ${\alpha}^{(p)}$}};
   \draw[ dashed,
        decoration={markings, mark=at position 0.5 with {\arrow{>}}},
        postaction={decorate}
        ]
        (7,-0.1) -- (3,-0.1);\node at (5,-0.5) {{\footnotesize $\sub{\alpha}''$}};
 \fill (4,0) circle (0.07)node(xline)[above] {{\small $\mu^{(p-1)}$}}; 
    \fill (7,0) circle (0.07); \node at (7.2,-0.3) {{\small $\delta_j$}}; 
\end{tikzpicture}      
\end{center} 

This case is similar as the $\sl_{r+1}$ case (Lemma~\ref{Lem0:cut} (3)), 
so we omit the end of the arguments. 

(2)  We have $\mu^{(p)} = \overline{\delta}_j$ for some $j = 1, \ldots, r$ and 
 $\alpha^{(p)} = \beta_j$ or $\beta_{j-1}$. 
Let $s$ be an integer in $\{ 1, \ldots, p-2 \}$ such that the support of $\alpha^{(s)}$ contains the simple root $\alpha^{(p)}$,
then either $i_s = 0$ or $i_s \not= 0$.
\begin{figure}[h]
\centering
\begin{tikzpicture}[scale=0.8]

\draw[dashed,decoration={markings, mark=at position 0.5 with {\arrow{>}}},
        postaction={decorate}]
        (-1,0) -- (1,0);\node at (-0.1,0.3) {{\scriptsize $\sub{\alpha}'$}}; 
\draw[ thick,->](1,0) to (2.5,0);\node at (2,0.3) {{\scriptsize ${\alpha}^{(s)}$}}; 
\draw[dotted,decoration={markings, mark=at position 0.5 with {\arrow{>}}},
        postaction={decorate}](2,0) -- (3,0);
\draw[ thick,decoration={markings, mark=at position 0.5 with {\arrow{>}}},
        postaction={decorate}](3,0) -- (4,0); \node at (3.5,0.3) {{\scriptsize ${\alpha}^{(p-1)}$}};
 \draw[thick] (4,0) to [out=90,in=150] (4.6,0.6);
 \draw[thick, decoration={markings, mark=at position 0.5 with {\arrow{>}}},
         postaction={decorate}] (4.6,0.6) to [out=310,in=330] (4,0);        
 \node at (4.4,0.8) {{\scriptsize ${\alpha}^{(p)}= \beta_j$}};
 \draw[dashed,decoration={markings, mark=at position 0.5 with {\arrow{>}}},
        postaction={decorate}]
        (3.9,-0.2) -- (2.5,-0.2);\node at (3,-0.5) {{\scriptsize $\sub{\alpha}''$}};
 \fill (1,0) circle (0.07)node(xline)[below] {{\footnotesize $\delta_j$}}; 
\fill (4,0) circle (0.07); \node at (4.2,-0.3) {{\footnotesize $\overline{\delta}_j$}};
 \draw[thick] (1,0) to [out=180,in=250] (0.2,0.4);
 \draw[thick, decoration={markings, mark=at position 0.625 with {\arrow{>}}},
         postaction={decorate}] (0.2,0.4) to [out=50,in=105] (1,0);
 \draw[thick] (1,0) to [out=100,in=165] (1.4,0.8);
 \draw[thick, decoration={markings, mark=at position 0.5 with {\arrow{>}}},
         postaction={decorate}] (1.4,0.8) to [out=335,in=20] (1,0);        
 \node at (1.4,1) {{\scriptsize ${\alpha}^{(s-1)}$}};
\draw[dotted, thick] (0.7,0.5) to (1,0.6);
\draw[dashed,decoration={markings, mark=at position 0.5 with {\arrow{>}}},
        postaction={decorate}]
        (6,0) -- (8,0);\node at (6.9,0.3) {{\scriptsize $\sub{\alpha}'$}};
\draw[ thick,->](8,0) to (9.5,0);\node at (9,0.3) {{\scriptsize ${\alpha}^{(s)}$}}; 
\draw[dotted](9.5,0) -- (10,0);
\draw[ thick,decoration={markings, mark=at position 0.5 with {\arrow{>}}},
        postaction={decorate}]
        (10,0) -- (11,0); \node at (10.5,0.3) {{\scriptsize ${\alpha}^{(p-1)}$}};
 \draw[thick] (11,0) to [out=90,in=150] (11.6,0.6);
 \draw[thick, decoration={markings, mark=at position 0.5 with {\arrow{>}}},
         postaction={decorate}] (11.6,0.6) to [out=310,in=330] (11,0);        
 \node at (11.6,0.8) {{\scriptsize ${\alpha}^{(p)}= \beta_{j-1}$}};
   \draw[dashed,decoration={markings, mark=at position 0.5 with {\arrow{>}}},
        postaction={decorate}]
        (10.9,-0.2) -- (9.5,-0.2);\node at (10,-0.5) {{\scriptsize $\sub{\alpha}''$}};
 \fill (8,0) circle (0.07)node(xline)[below] {{\footnotesize $\delta_{j-1}$}}; 
\fill (11,0) circle (0.07); \node at (11.2,-0.3) {{\footnotesize $\overline{\delta}_{j}$}};
\draw[thick] (8,0) to [out=180,in=250] (7.2,0.4);
 \draw[thick, decoration={markings, mark=at position 0.625 with {\arrow{>}}},
         postaction={decorate}] (7.2,0.4) to [out=50,in=105] (8,0);
 \draw[thick] (8,0) to [out=100,in=165] (8.4,0.8);
 \draw[thick, decoration={markings, mark=at position 0.5 with {\arrow{>}}},
         postaction={decorate}] (8.4,0.8) to [out=335,in=20] (8,0);        
 \node at (8.4,1) {{\scriptsize ${\alpha}^{(s-1)}$}};
\draw[dotted, thick] (7.7,0.5) to (8,0.6);
\end{tikzpicture}
  \caption{Path in case $p < q$ and $\mu^{(p)} = \overline{\delta}_j$ }
  \label{fig:p<q-b}
\end{figure}
If $i_s = 0$, then $b_{(\sub{\mu},\sub{\alpha}),s}^* = \s{\varpi}_{\alpha^{(s)}} \in U(\h)$, and so
$\s{\varpi}_{\alpha^{(p)}}$ commutes with $b_{(\sub{\mu},\sub{\alpha}),s}^*$.
Otherwise,
there is at most one root $\alpha^{(s)}$ with $i_s \not= 0$ such that $\alpha^{(p)} \in {\rm supp} (\alpha^{(s)})$ 
(see Figure \ref{fig:p<q-b}), and so $\s{\varpi}_{\alpha^{(p)}} e_{\alpha^{(s)}} \neq e_{\alpha^{(s)}} \s{\varpi}_{\alpha^{(p)}}$.  
Hence $\s{\varpi}_{\alpha^{(p)}}$ commutes 
with $b_{(\sub{\mu}',\sub{\alpha}')}^*$ except with ${\alpha^{(s)}}$. 
Writing $(\sub{\mu}',\sub{\alpha}') = (\sub{\mu}'_1,\sub{\alpha}'_1) 
\star ((\mu^{(s)},\mu^{(s+1)}),\alpha^{(s)}) 
\star (\sub{\mu}'_2,\sub{\alpha}'_2),$
we get that 
\begin{align*}
b_{(\sub{\mu},\sub{\alpha})}^*  
&= \langle \mu^{(p)}, \c{\alpha}^{(p)} \rangle 
\left( \s{\varpi}_{\alpha^{(p)}} - \langle \alpha^{(s)} , \s{\varpi}_{\alpha^{(p)}} \rangle
- \langle \alpha^{(p-1)} , \s{\varpi}_{\alpha^{(p)}} \rangle \right) b_{(\sub{\mu},\sub{\alpha})^{\#}}^*,
\end{align*}
since $\s{\varpi}_{\alpha^{(p)}}$ commutes with all roots of $\sub{\alpha}'_1$ and $\sub{\alpha}'_2$
and $$ a_{\mu^{(p)},\mu^{(p-1)}}^{(c_{\alpha^{(p-1)}}e_{-\alpha^{(p-1)}})} 
a_{\mu^{(s+1)},\mu^{(s)}}^{(c_{\alpha^{(s)}}e_{-\alpha^{(s)}})} 
b_{(\sub{\mu}'_1,\sub{\alpha}'_1)}^*
 e_{\alpha^{(s)}}b_{(\sub{\mu}'_2,\sub{\alpha}'_2)}^* e_{\alpha^{(p-1)}} b_{(\sub{\mu}'',\sub{\alpha}'')}^*
 = b_{(\sub{\mu},\sub{\alpha})^{\#}}^*.$$ 
Hence,
\begin{align*}
\wt(\sub{\mu},\sub{\alpha})  
&=
\langle \mu^{(p)}, \c{\alpha}^{(p)} \rangle
\left( 
\langle \rho, \s{\varpi}_{\alpha^{(p)}} \rangle - \langle \alpha^{(s)} , \s{\varpi}_{\alpha^{(p)}} \rangle
- \langle \alpha^{(p-1)} , \s{\varpi}_{\alpha^{(p)}} \rangle
\right) \wt(\sub{\mu},\sub{\alpha})^{\# p}.
\end{align*}

(a)   If $\langle \mu^{(p)}, \c{\alpha}^{(p)} \rangle =1$ then $\alpha^{(p)} = \beta_{j-1}$,
   for some $j= 2, \ldots, r$.
   Observe that if $\langle \alpha^{(p-1)} , \s{\varpi}_{\alpha^{(p)}} \rangle =1$ then 
   $\langle \alpha^{(s)} , \s{\varpi}_{\alpha^{(p)}} \rangle = 0$, and vice versa, 
   whence the expected equality. 
   
(b)
   If $\langle \mu^{(p)}, \c{\alpha}^{(p)} \rangle =-1$ then $\alpha^{(p)} = \beta_{j}$,
   for some $ i= 1, \ldots, r$.   
   Observe that $\langle \alpha^{(p-1)} , \s{\varpi}_{\alpha^{(p)}} \rangle = 1 \text{ or } 2$. 
   If $\langle \alpha^{(p-1)} , \s{\varpi}_{\alpha^{(p)}} \rangle =2$, it means that
   $\alpha^{(p-1)} = \eps_k + \eps_j, k = 1, \ldots, j$, whence  
   $\langle \alpha^{(s)} , \s{\varpi}_{\alpha^{(p)}} \rangle = 0$.
Otherwise  $\langle \alpha^{(p-1)} , \s{\varpi}_{\alpha^{(p)}} \rangle =1$, and so
   $\alpha^{(p-1)} = \eps_k + \eps_j, k > j$, whence  
   $\langle \alpha^{(s)} , \s{\varpi}_{\alpha^{(p)}} \rangle = 1$, 
   whence the expected equality. \qed
\end{proof}

Next theorem is the analog of Theorem \ref{corollary:cut}. 

\begin{theorem}   \label{corollary:cut-type-C}
Let $m \in \Z_{> 0}$, $\mu \in P(\delta)$ 
and $(\sub{\mu},\sub{\alpha}) \in \hat{\P}_m(\mu)$. 
Assume that for some $i \in \{ 1,\ldots, m\}$, 
$\mu^{(i)} \succ  \mu$. 
Then $\wt(\sub{\mu},\sub{\alpha}) = 0$. 
\end{theorem}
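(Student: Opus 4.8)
The strategy is to mimic closely the proof of Theorem~\ref{corollary:cut} given in the type $A$ section, replacing the single reduction Proposition~\ref{Pro0:cut} by its type $C$ counterpart, Proposition~\ref{Pro:Conc1}, together with the refined cutting lemmas (Lemma~\ref{Lem:nostar}, Lemma~\ref{Lem:star+}, Lemma~\ref{Lem:star0}, Lemma~\ref{lem:loops}, Lemma~\ref{lem:p<q}). The argument is an induction on $m$. As in type $A$, set $\sub{i} := \he(\sub{\mu})$; if $\sub{i} \in \Z^m_{\prec \sub{0}}$ then $\hc(b_{\sub{\mu},\sub{\alpha}}^*) = 0$ and there is nothing to prove, so one may assume $\sub{i} \in \Z^m_{\succcurlyeq \sub{0}}$, and then in fact $\sub{i} \in \Z^m_{\succ \sub{0}}$ because the hypothesis $\mu^{(i)} \succ \mu$ for some $i$ forces some strictly positive coordinate. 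The base cases $m = 1, 2$ are immediate: for $m = 2$ the hypothesis again forces $\sub{i} \in \Z^m_{\prec \sub{0}}$.

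For the inductive step with $m \geqslant 3$, I would let $p := p(\sub{i})$ and $q := q(\sub{i})$ as in Proposition~\ref{Pro:Conc1}, and apply that proposition to express $\wt(\sub{\mu},\sub{\alpha})$ as a linear combination, with coefficients that are scalars (products of structure constants times integer factors coming from heights), of weights of weighted paths of strictly smaller length: namely $\wt(\sub{\mu},\sub{\alpha})^{\# p}$ and finitely many $\wt(\sub{\mu},\sub{\alpha})^{\star a}$. The key point to check is that each of these shorter paths still satisfies the hypothesis of the theorem, i.e. still has a vertex strictly above its starting vertex $\mu$. For $(\sub{\mu},\sub{\alpha})^{\# p}$ this is clear since cutting the vertex $\mu^{(p)}$ (the position of the first turning back) removes only a ``peak'' and keeps the remaining vertices, among which the required one survives (it is the same argument as in type $A$; one checks, as there, that for $m = 3$ the cut path has length $2$ and the inductive hypothesis still applies). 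For the auxiliary paths $(\sub{\mu},\sub{\alpha})^{\star a}$ arising in cases (1)(b),(1)(c),(2)(b)(ii),(2)(b)(iii) of Proposition~\ref{Pro:Conc1} — these are the genuinely new phenomenon of type $C$ — one inspects the explicit descriptions in Lemma~\ref{Lem:star+} and Lemma~\ref{Lem:star0}: each $(\sub{\mu},\sub{\alpha})^{\star a}$ is a concatenation $(\sub{\mu}',\sub{\alpha}') \star (\tilde{\sub{\mu}}^{\star a},\tilde{\sub{\alpha}}^{\star a}) \star (\sub{\mu}'',\sub{\alpha}'')$, and since $(\sub{\mu}',\sub{\alpha}')$ and $(\sub{\mu}'',\sub{\alpha}'')$ are sub-paths of the original one with unchanged vertices, and the middle portion connects two weights that are $\succcurlyeq \mu$ by construction (the turning back happens ``above'' $\mu$), such a path again contains a vertex $\succ \mu$.

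Once this is established, the induction hypothesis applies to every path appearing on the right-hand side of the identity from Proposition~\ref{Pro:Conc1}, so every term vanishes and hence $\wt(\sub{\mu},\sub{\alpha}) = 0$. \textbf{The main obstacle} is the bookkeeping in the previous paragraph: one must verify, case by case through the rather long list in Lemma~\ref{Lem:star+} and Lemma~\ref{Lem:star0}, that the new paths $(\sub{\mu},\sub{\alpha})^{\star a}$ genuinely have length $< m$ \emph{and} retain a vertex strictly above $\mu$. The length drop is guaranteed by the stated bounds (``a path of length $< p - s$'' etc.), and the ``vertex above $\mu$'' condition follows because the portion of the path that is being rearranged lies entirely in the region $\succcurlyeq \mu^{(p-1)} \succcurlyeq \mu$ preceding the first turning back — but spelling this out requires care about which endpoints $\delta_j, \delta_i, \overline{\delta}_j$ occur and the inequalities relating their indices, exactly the data recorded in each case of those two lemmas. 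After that verification, the argument is a routine transcription of the type $A$ proof.
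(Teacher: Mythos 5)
Your overall strategy is exactly the paper's: induct on $m$, dispose of the case $\he(\sub{\mu})\in\Z^m_{\prec\sub{0}}$ (where $\hc(b_{\sub{\mu},\sub{\alpha}}^*)=0$), reduce to $\he(\sub{\mu})\in\Z^m_{\succ\sub{0}}$, apply Proposition~\ref{Pro:Conc1} to write $\wt(\sub{\mu},\sub{\alpha})$ as a combination of the weights of the shorter closed paths $(\sub{\mu},\sub{\alpha})^{\# p}$ and $(\sub{\mu},\sub{\alpha})^{\star a}$, and check that these still contain a vertex strictly above $\mu$ so that the induction hypothesis kills every term. But your justification of that last check is backwards, and as written the step fails. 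Since $p=p(\sub{i})$ is the position of the first turning back, the heights $i_1,\ldots,i_{p-1}$ are all $\geqslant 0$, so the path \emph{descends} from its starting point: $\mu=\mu^{(1)}\succcurlyeq\mu^{(2)}\succcurlyeq\cdots\succcurlyeq\mu^{(p)}$. Hence the turning back happens at or \emph{below} $\mu$, and the rearranged middle portions of Lemma~\ref{Lem:star+} and Lemma~\ref{Lem:star0} (paths between weights such as $\delta_j$ and $\delta_i$ with $k<j<i$, all $\preccurlyeq\mu^{(s)}\preccurlyeq\mu$) also lie weakly below $\mu$. So your claims that ``the middle portion connects two weights that are $\succcurlyeq\mu$'' and that the rearranged part lies in the region ``$\succcurlyeq\mu^{(p-1)}\succcurlyeq\mu$'' are false, and they cannot be what produces a vertex $\succ\mu$ in the star paths; nor is a case-by-case inspection of the endpoint indices in those two lemmas the right tool.

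The correct observation, which is the one the paper uses and which is uniform over all cases, is the opposite one: because every vertex up to position $p$ is $\preccurlyeq\mu$, the smallest index $t$ with $\mu^{(t)}\succ\mu$ satisfies $t>p$, so the offending vertex lies in the tail $(\sub{\mu}'',\sub{\alpha}'')=\big((\mu^{(p+1)},\ldots,\mu^{(m+1)}),(\alpha^{(p+1)},\ldots,\alpha^{(m)})\big)$. This tail is kept verbatim both in $(\sub{\mu},\sub{\alpha})^{\# p}$ and in every $(\sub{\mu},\sub{\alpha})^{\star a}$ of Lemmas~\ref{Lem:star+} and~\ref{Lem:star0}, since only the segment between positions $s$ and $p$ is modified; consequently each of these shorter closed paths at $\mu$ again has a vertex $\succ\mu$, the induction hypothesis applies to each, and all terms in the identity of Proposition~\ref{Pro:Conc1} vanish. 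With this one remark replacing your flawed localization, your plan goes through and coincides with the paper's proof; your remaining reductions (the negative-height case, the base cases $m=1,2$, and the length drop of the derived paths) are fine.
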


\begin{proof} 
First of all, we observe that for all $\sub{i} \in {\Z}^m_{\prec \sub{0}}$ there exists $ 1 \leqslant  p \leqslant  m$ such that
$i_1 = i_2 = \ldots = i_{p-1} = 0, i_p <0$.
So 
$$b_{(\sub{\mu},\sub{\alpha})}^* = a_{\sub{\mu},\sub{\alpha}} 
b_{(\sub{\mu},\sub{\alpha}),1}^* \ldots b_{(\sub{\mu},\sub{\alpha}),p-1}^* b_{(\sub{\mu},\sub{\alpha}),p}^* 
\ldots b_{(\sub{\mu},\sub{\alpha}),m}^* \in n_- U(\g).$$
Hence $\hc({b}_{\sub{\mu},\sub{\alpha}}^*)=0$ and so 
the theorem is clear for $\sub{i} \in {\Z}^m_{\prec \sub{0}}$. 
We prove the statement by induction on $m$. 
Necessarily, $m \geqslant 2$.

$\ast$ If $m=2$, then the hypothesis 
implies that $\sub{i} \in {\Z}^m_{\prec \sub{0}}$ and so the statement 
is true.

$\ast$ Assume $m \geqslant 3$ and 
that for all weighted paths $(\sub{\mu'},\sub{\alpha'}) \in \hat{\P}_{m'}(\mu)$, 
with $m' < m$, such that for some $i' \in \{ 1,\ldots, m'\}$, 
$\mu'^{(i')} \geqslant  \mu$, we have 
$\wt(\sub{\mu'},\sub{\alpha'}) = 0$. 
If $\sub{i} \in {\Z}^m_{\prec \sub{0}}$ 
the statement is true. 
So we can assume that  
$\sub{i} \in {\Z}^m_{\succcurlyeq \sub{0}}$,
and by the assumption, necessarily, $\sub{i} \in {\Z}^m_{\succ \sub{0}}$.
By Proposition~\ref{Pro:Conc1}, 
there are some scalars $K^{\# p}$ and $K^{\star a}$ such that 
$$\wt(\sub{\mu},\sub{\alpha}) = K^{\# p} \wt(\sub{\mu},\sub{\alpha})^{\# p}
\quad \text{ or } \quad 
\wt(\sub{\mu},\sub{\alpha}) = K^{\# p} \wt(\sub{\mu},\sub{\alpha})^{\# p}
+ \sum_{a=1}^{N}  K^{\star a} \wt(\sub{\mu},\sub{\alpha})^{\star a}.$$

If $\wt(\sub{\mu},\sub{\alpha}) = K^{\# p} \wt(\sub{\mu},\sub{\alpha})^{\# p}$, 
then the weighted path $(\sub{\mu},\sub{\alpha})^{\# p}$ 
satisfies the hypothesis of the theorem and it is not empty.  
Hence by our induction hypothesis and Proposition~\ref{Pro:Conc1} 
we get the statement. 

If $$\wt(\sub{\mu},\sub{\alpha}) = K^{\# p} \wt(\sub{\mu},\sub{\alpha})^{\# p} 
+ \sum_{a=1}^{N}  K^{\star a} \wt(\sub{\mu},\sub{\alpha})^{\star a},$$
then are several cases to consider. 
Let us first consider 
the path $(\sub{\mu},\sub{\alpha})^{\star a}$ as in Lemma~\ref{Lem:star+} (1)
as follows :
$$(\sub{\mu},\sub{\alpha})^{\star a} :=(\sub{\mu}',\sub{\alpha}')\star 
\big( (\delta_k,\delta_j) ; (\alpha^{(s)}+\alpha^{(p)})\big) 
\star (\tilde{\sub{\mu}}^{\star a},\tilde{\sub{\alpha}}^{\star a})
\star(\sub{\mu}'',\sub{\alpha}''),$$
where 
$$
(\sub{\mu}',\sub{\alpha}') =\big( (\mu^{(1)},\ldots,\mu^{(s-1)}, 
\mu^{(s)}=\delta_{k}) ; 
(\alpha^{(1)},\ldots,\alpha^{(s-1)})\big),
$$
$$
(\sub{\mu}'',\sub{\alpha}'') =\big((\delta_i=\mu^{(p+1)},\ldots,\mu^{(m+1)}) ;
(\alpha^{(p+1)},\ldots,\alpha^{(m)})\big), 
$$
and $(\tilde{\sub{\mu}}^{\star a},\tilde{\sub{\alpha}}^{\star a})$ 
is a path of length $< p-s$ between 
$\delta_j$ and $\delta_i$ whose roots $(\tilde{\alpha}^{\star a})^{(l)}$ 
have height 0 or strictly positive height. We argue similarly for the other cases. 

Let $t$ be the smallest integer such that $\mu^{(t)} > \mu$. 
Since the root $\alpha^{(s)} + \alpha^{(p)}$ and
all roots in path $(\sub{\mu'},\sub{\alpha'}), (\tilde{\sub{\mu}}^{\star a},\tilde{\sub{\alpha}}^{\star a})$ have height 0 or strictly positive, then
$t> p$ and
$\alpha^{(t)}$ is belongs to $\sub{\alpha}''$. Observe that 
the weighted paths $(\sub{\mu},\sub{\alpha})^{\# p}$  and $(\sub{\mu}'',\sub{\alpha}'')$
satisfy the hypothesis of the theorem and it is not empty. Note that the path $(\sub{\mu}'',\sub{\alpha}'')$ have length $m-p$ for each case.
Hence by our induction hypothesis we have 
$$\wt(\sub{\mu},\sub{\alpha})^{\# p} = 0 \quad \text { and } \quad \wt(\sub{\mu}'',\sub{\alpha}'') =0. $$
By Proposition \ref{Pro:Conc1} we get the statement. \qed
\end{proof}

Our next target is to introduce and study an equivalence relation on the set of weighted paths. 

Let $\alpha \in \Delta_+$ and $\alpha = \mu - \nu$. We say that $\alpha$ {\em has type I} 
(respectively, {\em has type II}, {\em type III}) 
if the admissible triple $(\alpha, \mu, \nu)$ (see Appendix \ref{app:Admissible}) 
has type I (respectively, {has type II}, {type III (a) or III (b)}). 

Recall that for $\lambda,\mu \in P(\delta)$, 
$[\![\lambda,\mu ]\!]$ denote the set of 
$\nu \in P(\delta)$ such that $\lambda \leqslant \nu \leqslant \mu$.

This definition is a generalization of Definition \ref{definition:equivalenceA}.  
Since we cannot here argue only on the heights of roots for the $\sp_{2r}$ case, 
as for the $\sl_{r+1}$ case,  
we introduce an equivalence relation directly on such paths as follows. 

\begin{definition} [equivalence relation on the paths for $\sp_{2r}$]
\label{definition:equivalent-class-typeC}
We define an equivalence relation $\sim$  
on  $\hat{\P}_m$ by induction on $m$ as follows. 
\begin{enumerate}
\item If $m=1$, there is only one equivalence 
class represented by the trivial path of length 0.
\item  If $m=2$, then two paths $(\sub{\mu},\sub{\alpha}), 
(\sub{\mu'},\sub{\alpha'})$ in $\hat{\P}_m$ 
are {\em equivalent}
if the following condition holds:
\begin{enumerate}
 \item there is $(\eps_1, \eps_2) \in \{ 0,1 \}^2$ such that 
$\he(\sub{\mu})\in \eps_1\Z_{\succcurlyeq \sub{0}} \times \eps_2\Z^m_{\succcurlyeq \sub{0}} $ and 
$\he(\sub{\mu}') \in \eps_1\Z^m_{\succcurlyeq \sub{0}} \times \eps_2\Z^m_{\succcurlyeq \sub{0}} $
\item the roots $\alpha^{(1)} \in \sub{\alpha}$ and $\alpha'^{(1)}  \in \sub{\alpha}'$ have same types.  
\end{enumerate}
\item  If $m >2$, then two paths $(\sub{\mu},\sub{\alpha}), (\sub{\mu}',\sub{\alpha}') $ in 
$\hat{\P}_m$ are equivalent if the following conditions hold, 
in the notations of Proposition~\ref{Pro:Conc1}: 
\begin{enumerate}
\item for all $i \in \{1,\ldots,m\}$, there is $(\eps_1,\ldots,\eps_m) \in \{ -1,0,1\}^m$ 
such that $\he({\sub{\mu}})_i \in \prod_{i=1}^m \eps_i \Z^m_{\succcurlyeq \sub{0}}$ and $\he({\sub{\mu'}})_i \in \prod_{i=1}^m \eps_i \Z^m_{\succcurlyeq \sub{0}}$;  
\item we have $p(\sub{i})=p(\sub{i'})=:p$, with 
$\sub{i}=\he(\sub{\mu})$, $\sub{i'}=\he(\sub{\mu'})$, and 
the weighted paths $(\sub{\mu},\sub{\alpha})^{\# p}$ and 
$(\sub{\mu}',\sub{\alpha}')^{\# p}$ are equivalent,
\item if there is an $s \in \{1,\ldots,p-2\}$ such that $\alpha^{(s)}+\alpha^{(q)} 
\in \Delta_+ \cup \{0\}$, with $q:=q(\sub{i})=q(\sub{i}')$, then $s$ is the unique 
integer of $\{1,\ldots,p-2\}$ such that $\alpha'^{(s)}+\alpha'^{(q)} 
\in \Delta_+ \cup \{0\}$. 
Moreover, 
all paths $(\sub{\mu},\sub{\alpha})^{\star a}$ and  $(\sub{\mu}',\sub{\alpha}')^{\star a}$  
are equivalent,
\item we have $K_{(\sub{\mu},\sub{\alpha})}^{\# p} = 
K_{(\sub{\mu}',\sub{\alpha}')}^{\# p}$, 
and if
an $s$ as in (c) exists, then 
for all the $N$ possible paths $(\sub{\mu},\sub{\alpha})^{\star a}$ , $K_{(\sub{\mu},\sub{\alpha})}^{\star a} = 
K_{(\sub{\mu}',\sub{\alpha}')}^{\star a}$. 
\end{enumerate}
\end{enumerate}
\end{definition}

\begin{remark}
 If $(\sub{\mu},\sub{\alpha}) \in \hat{\P}_m$ is a weighted path starting at $\delta_k$, 
 for some $k\in\{1,\ldots,r\}$, and contained in $[\![ \delta_r, \delta_k ]\!]$
 or starting at $\overline{\delta}_k$ and contained in $[\![ \overline{\delta}_1, \overline{\delta}_k ]\!]$, then Definition~\ref{definition:equivalenceA} and Definition~\ref{definition:equivalent-class-typeC} are equivalent.
\end{remark}
By the above remark, the paths as above can be dealt as in $\sl_{r+1}$.
Thus, one can use the results for the weights of the paths as for $\sl_{r+1}$.
We denote by 
$[(\sub{\mu},\sub{\alpha})]$ 
the class of a weighted 
paths of length $m$, 
by ${\E}_m$ the set of equivalence classes $[(\sub{\mu},\sub{\alpha})]$ 
and by $\bar{\E}_m$ the set of elements of $\E_m$ whose representative are not contained in 
$[\![ \delta_r, \delta_1 ]\!]$. 
We observe that an equivalent class in $\E_m$ can simply be described 
by the sequence $\sub{\mu}$.
Hence, we will often write $[\mu^{(1)}, \ldots, \mu^{(m)}]$ or simply by $\m$ 
for the class $[(\sub{\mu},\sub{\alpha})]$. 

\begin{example} \noindent \label{Ex:eqclassC}
For $m =2$, there are four equivalence 
classes represented by: $[\delta_k, \delta_k]$ for some $k$, $[\delta_k, \overline{\delta}_k]$ 
for some $k$, $[\delta_k, \delta_j]$ for $k < j$, and $[\delta_k, \overline{\delta}_j]$ for $k \neq j$. 
There are two elements of $\bar{\E}_m$. 

For $m=3$, there are 16 elements of $\bar{\E}_m$, with 6 elements with loops as follows: for $k\not= j$
\begin{align*}
[\delta_k, \delta_k, \overline{\delta}_k]; \quad [\delta_k, \delta_k, \overline{\delta}_j] ;
 \quad [\delta_k, \overline{\delta}_k, \delta_k]; \quad
 [\delta_k, \overline{\delta}_j,\delta_k]; \quad [\delta_k, \overline{\delta}_k, \overline{\delta}_k]; 
 \quad [\delta_k, \overline{\delta}_j, \overline{\delta}_j],
 \end{align*}
 and 10 elements without loops as follows:
\begin{align*}
&[\delta_k, \overline{\delta}_k, \delta_j],k<j ; 
\quad [\delta_k, \overline{\delta}_k, \overline{\delta}_j], k<j ; 
\quad [\delta_k, \overline{\delta}_j, \overline{\delta}_k], j < k; 
\quad [\delta_k, \overline{\delta}_j, \overline{\delta}_l], j \not=k \not=l, j < l; \\
&[\delta_k, \overline{\delta}_j, {\delta}_l], j \not=k \not=l, k < l;
\quad [\delta_k, \overline{\delta}_j, \overline{\delta}_l], j \not=k\not=l, l < j;
\quad [\delta_k, \overline{\delta}_j, \overline{\delta}_l], j \not= k \not=l, k < j; \\
&[\delta_k, {\delta}_j, \overline{\delta}_k], k < j;
\quad [\delta_k, \overline{\delta}_j, \overline{\delta}_k], k < j; 
\quad [\delta_k, \overline{\delta}_5, \overline{\delta}_j], k < j.
\end{align*}
\end{example}

Let $\m := [(\sub{\mu},\sub{\alpha})]\in  \E_m$.  
The number $n$ of zero values of $\sub{i}:=\he(\sub{\mu})$ 
does not depend on 
$(\sub{\mu'},\sub{\alpha'})$ in $\m$.  
We adopt the terminology of paths with zeroes and without zero as in $\sl_{r+1}$ case.
By definition, the position $p(\sub{i})$ of the first returning 
back does not depend on $(\sub{\mu},\sub{\alpha}) \in \m$.  
Similarly, the integers $q(\sub{i})$ and $s \in \{1,\ldots,p-2\}$ (if such an $s$ exists) such that 
$\alpha^{(s)}+\alpha^{(q)} 
\in \Delta_+ \cup \{0\}$ 
do not depend on $(\sub{\mu},\sub{\alpha}) \in \m$. 
Furthermore, the class of $(\sub{\mu},\sub{\alpha})^{\# p}$ and
the class of $(\sub{\mu},\sub{\alpha})^{\star a}$ only 
depend on $\m$. 
We denote by $\m^{\# }$ and $\m^{\star a}$ these equivalence classes, respectively.
Moreover, we denote by $K_{\m}^{\#}$ the scalar $K_{(\sub{\mu},\sub{\alpha})}^{\# p}$ and, if
an $s$ as in (3) (c) exists, we denote by $K_{\m}^{\star a}$ the scalar $K_{(\sub{\mu},\sub{\alpha})}^{\star a}$. 
We denote by $\ell(\m'):=m'$ the length of $\m'$ for some equivalence class 
$\m\in \E_{m'}$, $m' \in \Z_{>0}$. 
We have $\ell(\m)=m$, $\ell(\m^\#)=m-1$ if $i_p+i_{p-1}\not=0$, 
$\ell(\m^\#)=m-2$ if $i_p+i_{p-1}=0$, and $\ell(\m^{\star a}) < m$ for all $a \in \{1, \ldots, N \}$.
At last, note that if $\m$ has no zero then 
$\m^{\#}$ and $\m^{\star a}$ has no zero, too.

\begin{lemma} \label{Lem:weightC}
 Let $\m \in {\E}_m$ without zero, and set $p := p(\m)$.
\begin{enumerate}
\item There is a polynomial $A_{\m} \in  \C[X_1,\ldots,X_{m-1}]$ 
of total degree $\leqslant \lfloor \frac{m}{2}\rfloor $ 
such that 
for all weighted paths $(\sub{\mu},\sub{\alpha})$ such that 
$[(\sub{\mu},\sub{\alpha})] = \m$, 
$$\wt(\sub{\mu},\sub{\alpha}) = A_{\m} (i_1, \ldots,i_{m-1}).$$
Here, the integer $i_j$ denote the heights of $\alpha^{(j)}$.
Moreover, $A_{\m}$ is a sum of monomials of the 
form $X_{j_1}\cdots X_{j_l}$, with $1 \leqslant j_1 < \cdots < j_{l} < m$. 
\item The polynomial $A_{\m}$ is defined by 
induction as follows.  

{\rm (a)} Assume m = 2. Then 
$$A_{[\delta_k, \overline{\delta}_k]}(X_1) = 2(X_1 + 1), \quad 
A_{[\delta_k, \delta_j], k < j}(X_1) = X_1, \quad  
A_{[\delta_k, \overline{\delta}_j]} (X_1) = X_1 + 1.$$

{\rm (b)} Assume $m \geqslant 3.$  Set $P_\alpha (X)$ a polynomial of degree 1 by :
 \begin{align} \label{eq:p_alpha}
 & P_\alpha (X) :=
  \begin{cases} \frac{X + 1}{2} & \text{if} \quad \alpha \text { has type I},\\ 
 X+1 & \text{if} \quad \alpha \text{ has type II},\\
 X & \text{if} \quad \alpha \text { has type III},
 \end{cases}
 \end{align}
Let 
$m^{\star a}:= \ell (\m^{\star a})$ and $\sub{\alpha}^{\star a}$ be a sequence of roots as in Lemma~\ref{Lem:star+} and Lemma~\ref{Lem:star0}. 
We will denote by $\sub{X}^{\star a}$ 
the sequence of variables 
associated with $({\alpha}^{\star a})^{(j)} \in \sub{\alpha}^{\star a}$, $j = 1, \ldots, m^{\star a}-1$, where $({\alpha}^{\star a})^{(j)}$ is replaced by $X_{j}$.
Let $N$ denote the number of possible paths $\m^{\star a}$.
\begin{enumerate}
 \item[{\rm (i)}] Assume that $\alpha^{(p)} + \alpha^{(p-1)} \not = 0$.

{$\ast$} If for all $s\in \{1, \ldots, p-2 \}$, either $\alpha^{(s)} + \alpha^{(p)} \in -\Delta_+$ or $\alpha^{(s)} + \alpha^{(p)} \notin \Delta \cup \{0\}$, then 
 $$ A_{\m} (X_1, \ldots, X_{m-1}) = K_{\m}^{\#} A_{\m}(X_1, \ldots, X_{p-2}, X_{p-1}+X_{p}, \ldots, X_{m-1}).$$
 
 {$\ast$} If for some $s \in \{1, \ldots, p-2 \}$, $\alpha^{(s)} + \alpha^{(p)} \in \Delta_+$, then
 \begin{align*}
 A_{\m} (X_1, \ldots, X_{m-1})&= K_{\m}^{\#} A_{\m^\#} (X_1, \ldots, X_{p-2}, X_{p-1}+X_{p}, \ldots, X_{m-1})\\
  &\quad + \sum_{a=1}^{N}  K_{\m}^{\star a} A_{\m^{\star a}} (\sub{X}^{\star a}).
 \end{align*}
 
{$\ast$} If for some $s\in \{1, \ldots, p-2 \}$, $\alpha^{(s)} = -\alpha^{(p)} \in \Delta_+$, then
\begin{align*}
A_{\m} (X_1, \ldots, X_{m-1})&= K_{\m}^{\#} A_{\m^\#} (X_1, \ldots, X_{p-2}, X_{p-1}+X_{p}, \ldots, X_{m-1})\\
& \quad + (P_{\alpha^{(s)}} (X_s) - c_s) \sum_{a=1}^{N}  K_{\m}^{\star a} 
 A_{\m^{\star a}} (\sub{X}^{\star a}).
\end{align*} 
\item[{\rm (ii)}] Assume $\alpha^{(p)} + \alpha^{(p-1)} = 0$. 
 
 $\ast$ If for all $s\in \{1, \ldots, p-2 \}$, either $\alpha^{(s)} + \alpha^{(p)} \in -\Delta_+$ or $\alpha^{(s)} + \alpha^{(p)} \notin \Delta \cup \{0\}$, then 
\begin{align*}
A_{\m} (X_1, \ldots, X_{m-1})
= (c_{\alpha^{(p-1)}})^2 (P_{\alpha^{(p-1)}} (X_{p-1})- c_{p-1}) 
A_{\m^\#} (X_1, \ldots, X_{p-2}, X_{p+1}, \ldots, X_{m-1}).
\end{align*}
 
 $\ast$ If for some $s \in \{1, \ldots, p-2 \}$, $\alpha^{(s)} + \alpha^{(p)} \in \Delta_+$, then
\begin{align*}
A_{\m} (X_1, \ldots, X_{m-1})  & =  (P_{\alpha^{(p-1)}} (X_{p-1})- c_{p-1}) A_{\m^\#} (X_1, \ldots, X_{p-2}, X_{p+1}, \ldots, X_{m-1})\\
& \quad + \sum_{a=1}^{N}  K_{\m}^{\star a} A_{\m^{\star a}} (\sub{X}^{\star a}).
\end{align*}

$\ast$ If for some $s \in \{1, \ldots, p-2 \}$, $\alpha^{(s)} = -\alpha^{(p)} \in \Delta_+$, then
\begin{align*}
A_{\m} (X_1, \ldots, X_{m-1}) & = (P_{\alpha^{(p-1)}} (X_{p-1})- c_{p-1}) A_{\m^\#} (X_1, \ldots, X_{p-2}, X_{p+1}, \ldots, X_{m-1})\\
& \quad + (P_{\alpha^{(s)}} (X_s) - c_s) \sum_{a=1}^{N} K_{\m}^{\star a} A_{\m^{\star a}} (\sub{X}^{\star a}),
\end{align*}
 where  $K_{\m}^{\#}$, $K_{\m}^{\star a}$ are some constants, $c_{\alpha^{(p-1)}}$ is a constant,
$c_{p-1}$ is an integer as in Lemma~\ref{Lem:nostar} and $c_{s}:= \langle \alpha^{(s-1)} , \c{\alpha}^{(s)} \rangle$. 
\end{enumerate}
\end{enumerate}
\end{lemma}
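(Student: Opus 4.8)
The plan is to prove Lemma~\ref{Lem:weightC} by induction on $m$, exactly mirroring the structure of the proof of Lemma~\ref{Lem:weight1} in the type $A$ case, but invoking Proposition~\ref{Pro:Conc1} (and its supporting Lemmas~\ref{Lem:nostar}, \ref{Lem:star+}, \ref{Lem:star0}, \ref{lem:p<q}, \ref{lem:loops}) in place of Lemma~\ref{Lem0:cut}. Since $\m$ has no zero, we have $p=p(\m)=q(\m)$ and $\m^\#$ and each $\m^{\star a}$ also have no zero, so the induction hypothesis applies to the strictly shorter classes $\m^\#$ (of length $m-1$ or $m-2$) and $\m^{\star a}$ (of length $<m$). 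First I would settle the base cases $m=2$: here the weighted path consists of a single non-trivial arrow followed by a single arrow back, and the value $\wt(\sub{\mu},\sub{\alpha})$ is computed directly from Lemma~\ref{Lem:nostar}~(2)(a) together with Lemma~\ref{lem:rho}; the three formulas $A_{[\delta_k,\overline{\delta}_k]}(X_1)=2(X_1+1)$, $A_{[\delta_k,\delta_j]}(X_1)=X_1$, $A_{[\delta_k,\overline{\delta}_j]}(X_1)=X_1+1$ are read off from the possible types of $\alpha^{(1)}$ (type I, II, or III), using the explicit description of $\he(\c{\alpha}^{(1)})$ in terms of the type and the shape of $P_\alpha$ in \eqref{eq:p_alpha}. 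The key point at this stage is to verify that $\langle\rho,\s{\varpi}_{\alpha^{(1)}}\rangle$ and the factor $(c_{\alpha^{(1)}})^2$ combine to produce precisely $2P_\alpha(\he(\c{\alpha}^{(1)}))$ or the stated normalization in each type; this is a short finite verification from the root data in Appendix~\ref{sec:rootTypeC}.

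For the inductive step $m\geqslant 3$, I would fix $\m\in\E_m$ without zero, pick any $(\sub{\mu},\sub{\alpha})$ with $[(\sub{\mu},\sub{\alpha})]=\m$, set $p:=p(\m)$, and split into the cases of Proposition~\ref{Pro:Conc1}~(1)(a), (1)(b), (1)(c), (2)(b)(i), (2)(b)(ii), (2)(b)(iii) according to whether $\alpha^{(p-1)}+\alpha^{(p)}=0$ and whether some $\alpha^{(s)}+\alpha^{(p)}\in\Delta_+\cup\{0\}$ for $s\in\{1,\dots,p-2\}$. (The sub-case $i_1=\cdots=i_{p-2}=0$ or $p=2$ of Proposition~\ref{Pro:Conc1}~(2)(a) does not arise here, because $\m$ has no zero, except when $p=2$, and in that case one uses the $p=2$ branch of Lemma~\ref{Lem:nostar} just as in type $A$.) In each case Proposition~\ref{Pro:Conc1} expresses $\wt(\sub{\mu},\sub{\alpha})$ as $K_{\m}^\#\,\wt(\sub{\mu},\sub{\alpha})^{\# p}$ plus (possibly) a factor $(\he(\c{\alpha}^{(\bullet)})-c_\bullet)$ times $\sum_a K_{\m}^{\star a}\wt(\sub{\mu},\sub{\alpha})^{\star a}$. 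Applying the induction hypothesis to $\m^\#$ and to each $\m^{\star a}$ gives polynomials $A_{\m^\#}$ and $A_{\m^{\star a}}$ of total degree $\leqslant\lfloor(m-1)/2\rfloor$ (resp. $\leqslant\lfloor m^{\star a}/2\rfloor$); I then define $A_{\m}$ by the substitution rules listed in part (b) of the statement, replacing $X_{p-1}+X_p$ for the concatenated arrows in the $\#p$ term, the variable sequence $\sub{X}^{\star a}$ for the $\star a$ terms, and $\he(\c{\alpha}^{(s)})$ (resp. $\he(\c{\alpha}^{(p-1)})$) by the linear polynomial $P_{\alpha^{(s)}}(X_s)$ (resp. $P_{\alpha^{(p-1)}}(X_{p-1})$) — this last substitution is the crucial device that makes the answer a \emph{polynomial} in the heights rather than merely a function of them, and it is justified because in each type the quantity $\he(\c{\alpha})$ is an affine function of the height $i$ of $\alpha$ with slope $1,\tfrac12$ or $1$, matching \eqref{eq:p_alpha}. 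Finally I would check the degree bound: $\deg A_{\m^\#}=\deg A_{\m}$ (no loss under the $\#p$ substitution, cf. the analogous fact in Lemma~\ref{Lem:weight1}), each $\star a$-term contributes degree $<\lfloor m/2\rfloor$ since $m^{\star a}<m$, and multiplication by the degree-$1$ factor $P_{\alpha}(X)-c$ raises the degree by at most one, so the worst contribution $1+\lfloor(m-2)/2\rfloor\leqslant\lfloor m/2\rfloor$ stays within the claimed bound; the "sum of squarefree monomials $X_{j_1}\cdots X_{j_l}$" shape is preserved by all these substitutions because distinct arrows get distinct variables.

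The main obstacle I anticipate is \emph{bookkeeping of the variable renaming under the $\star a$ operation}: in Lemmas~\ref{Lem:star+} and~\ref{Lem:star0} the new path $(\sub{\mu},\sub{\alpha})^{\star a}$ is built by reversing and amalgamating a block of roots $\alpha^{(p-1)},\dots,\alpha^{(s+1)}$ (possibly together with $\alpha^{(s)}+\alpha^{(p)}$) according to a set partition $(P_1,\dots,P_{n_a})$, so the variables $\sub{X}^{\star a}$ are sums of subsets of the original $X_j$'s in a way that depends on $a$. I must check carefully that each resulting $A_{\m^{\star a}}(\sub{X}^{\star a})$ is still a polynomial in $X_1,\dots,X_{m-1}$ of total degree $\leqslant\lfloor m/2\rfloor$ and of squarefree-monomial shape — squarefreeness could a priori be destroyed if two blocks $P_i,P_j$ overlap, but the partition structure guarantees they do not, so substituting $\sum_{t\in P_i}X_t$ for the $i$-th variable of $A_{\m^{\star a}}$ sends a squarefree monomial to a sum of squarefree monomials. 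A secondary but genuine difficulty is verifying that the scalars $K_{\m}^\#$, $K_{\m}^{\star a}$, $c_{\alpha^{(p-1)}}$ depend only on the equivalence class $\m$ and not on the particular representative: this is exactly condition (3)(d) of Definition~\ref{definition:equivalent-class-typeC}, so it is built into the setup, but one must still confirm at each application of Proposition~\ref{Pro:Conc1} that the constants produced there are indeed the class-invariant ones; I would handle this by a remark at the start of the proof recording that all constants appearing below are, by Definition~\ref{definition:equivalent-class-typeC}, functions of $\m$ alone, and that the integers $c_{p-1}$, $c_s$ likewise depend only on $\m$ (they are sums of pairings $\langle\alpha^{(j_k)},\c{\alpha}^{(\bullet)}\rangle$ over indices determined by the combinatorial type of $\sub{\mu}$), so the substitution $c_\bullet\mapsto c_\bullet$ is legitimate and the resulting $A_{\m}$ is well-defined.
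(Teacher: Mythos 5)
Your proposal follows essentially the same route as the paper's proof: induction on $m$, with the three classes for $m=2$ computed directly from the root data of Appendix~\ref{sec:rootTypeC}, and the inductive step carried out case-by-case via Proposition~\ref{Pro:Conc1}, applying the induction hypothesis to $\m^{\#}$ and the $\m^{\star a}$, substituting $P_{\alpha}(X)$ for $\he(\c{\alpha})$, and checking the bound $1+\lfloor\frac{m-2}{2}\rfloor\leqslant\lfloor\frac{m}{2}\rfloor$ exactly as in the paper. Your additional remarks on the squarefree-monomial shape under the $\star a$ variable renaming and on the class-invariance of the constants via Definition~\ref{definition:equivalent-class-typeC} are consistent with (and slightly more explicit than) what the paper leaves implicit, so the argument is sound.
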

\begin{proof}
Let $\alpha \in \Delta$. First, notice that if $\alpha = 2\eps_i$ has type I, 
then $\he(\alpha) = 2(r+1 - i) - 1$ and $\he(\c{\alpha}) = \he((2\eps_i)^\vee) = \he(\eps_i) = r+1-i = \frac{\he(\alpha) + 1}{2}$;  
if $\alpha = \eps_i+{\eps}_j$ has type II, 
then $\he(\alpha) = 2r-i-j+1$ and $\he(\c{\alpha}) = 2r - i - j + 2 = \he(\alpha) + 1$;  
if $\alpha = \eps_i - {\eps}_j$ has type III, then $\he(\alpha) = j-i$ and $\he(\c{\alpha}) = j-i = \he(\alpha).$ 

We prove the statements by induction on $m$.

(a) Assume $m=2$. 
By Example \ref{Ex:eqclassC}, there are three equivalence classes without zero. 
For $\m = [\delta_k, \overline{\delta}_k]$, 
 $$\hc(b_{\sub{\mu},\sub{\alpha}}^*) = a_{\eps_k,-\eps_k}^{(2e_{2\eps_k})} a_{-\eps_k,\eps_k}^{2(e_{-2\eps_k})} e_{2\eps_k}e_{-2\eps_k}
 = 4 e_{-2\eps_k}e_{2\eps_k} + 4 ((2\eps_k)^\vee) = 4 ((2\eps_k)^\vee). $$
 Hence $\wt(\sub{\mu},\sub{\alpha}) = 4 \he ((2\eps_k)^\vee) = 4(\frac{{i}_1 +1}{2}) = 2({i}_1 +1)$,
 and so $A_{[\delta_k, \overline{\delta}_k]}(X_1) = 2(X_1 + 1).$
For $\m = [\delta_k, {\delta}_j], k<j$, since $\m$ is entirely contained in $[\![ \delta_r, \delta_k ]\!]$, then by $\sl_{r+1}$ case
 we have $A_{[\delta_k, {\delta}_j], k<j}(X_1) = X_1.$
For $\m = [\delta_k, \overline{\delta}_j]$, 
 $$\hc(b_{\sub{\mu},\sub{\alpha}}^*) = a_{\eps_k,-\eps_j}^{(e_{\eps_k+\eps_j})} a_{-\eps_j,\eps_k}^{(e_{-\eps_k-\eps_j})} e_{\eps_k+\eps_j}e_{-\eps_k-\eps_j}
 = ((\eps_k+\eps_j)^\vee). $$
 Hence $\wt(\sub{\mu},\sub{\alpha}) = \he ((\eps_k+\eps_j)^\vee) = \he (\eps_k+\eps_j) +1 = {i}_1 +1$,
 and so $A_{[\delta_k, \overline{\delta}_j]}(X_1) = X_1 + 1.$
 Thus for all $\m \in \E_2$ without zero, $A_{\m}$ is polynomial of degree 1 in $\C[X_1]$.

(b) Let $m \geqslant 3$ and assume the proposition true for any 
$m' \in \{ 2,\ldots,m-1\}$ and any $\m' \in {\E}_{m'}$. 
Let $\m \in {\E}_m$ and 
$(\sub{\mu},\sub{\alpha})  \in \m$.
For $\alpha^{(j)} \in \sub{\alpha}$, let $P_{\alpha^{(j)}}$
be a polynomial as in \eqref{eq:p_alpha}.
Observe that $P_{\alpha^{(j)}}$ is a polynomial of degree 1.

Let $\hat{\sub{\i}}^{\star a}$ denote the sequence of heights 
$((\alpha^{\star a})^{(1)}, \ldots, (\alpha^{\star a})^{(m^{\star a}-1)})$.

(i) 

{$\ast$} If for all $s \in \{1, \ldots, p-2 \}$, either $\alpha^{(s)} + \alpha^{(p)} \in -\Delta_+$ or $\alpha^{(s)} + \alpha^{(p)} \notin \Delta \cup \{0\}$, then 
 by Proposition \ref{Pro:Conc1} 
there is a constant $K_{(\sub{\mu},\sub{\alpha})}^{\# p}$ such that
$$\wt(\sub{\mu},\sub{\alpha}) = K^{\# p} \wt(\sub{\mu},\sub{\alpha})^{\# p} .$$
We have $(\sub{\mu},\sub{\alpha})^{\# p}  \in \m^{\#}$ and by our induction hypothesis, 
there exists a polynomial $A_{\m^\# } \in  \C[Y_1,\ldots,Y_{m-2}]$
of total degree $\leqslant \lfloor \frac{m-1}{2}\rfloor$
such that  $$\wt(\sub{\mu},\sub{\alpha}) = 
K_{(\sub{\mu},\sub{\alpha})}^{\# p} \wt(\sub{\mu},\sub{\alpha})^{\# p}=
K_{(\sub{\mu},\sub{\alpha})}^{\# p} 
A_{\m^\# }(i_1,\ldots,i_{p-1} + i_{p},\ldots,i_{m-1}).$$
Hence the polynomial
$$A_{\m} (X_1,\ldots,X_{m-1}) 
:= K_{\m}^{\#} A_{\m^\# }(X_1, \ldots, X_{p-2}, X_{p-1}+X_p, \ldots, X_{m-1})$$
satisfies the conditions of the lemma.
\\{$\ast$} If for some $s \in \{1, \ldots, p-2 \}$, $\alpha^{(s)} + \alpha^{(p)} \in \Delta_+$, then by Proposition \ref{Pro:Conc1}
there are some constants $K_{(\sub{\mu},\sub{\alpha})}^{\# p}$ and $K_{(\sub{\mu},\sub{\alpha})}^{\star a}$ such that
$$\wt(\sub{\mu},\sub{\alpha}) = K_{(\sub{\mu},\sub{\alpha})}^{\# p} \wt(\sub{\mu},\sub{\alpha})^{\# p} 
+ \sum_{a=1}^{N} K_{(\sub{\mu},\sub{\alpha})}^{\star a} \wt(\sub{\mu},\sub{\alpha})^{\star a}.$$
We have $(\sub{\mu},\sub{\alpha})^{\# p}  \in \m^{\#}$ and $(\sub{\mu},\sub{\alpha})^{\star a}  \in \m^{\star a}$.
By our induction hypothesis, 
there are polynomials $A_{\m^\# } \in  \C[Y_1,\ldots,Y_{m-2}]$, and $A_{\m^{\star a} } \in  \C[Y_1,\ldots,Y_{m-2}]$
of total degree $\leqslant \lfloor \frac{m-1}{2}\rfloor$
such that  
\begin{align*}
\wt(\sub{\mu},\sub{\alpha}) &= K_{(\sub{\mu},\sub{\alpha})}^{\# p} 
A_{\m^\# }(i_1,\ldots,i_{p-1} + i_{p},\ldots,i_{m-1}) + \sum_{a=1}^{N} K_{(\sub{\mu},\sub{\alpha})}^{\star a}
A_{\m^{\star a}}(\hat{\sub{\i}}^{\star a}), 
\end{align*}
where $K_{(\sub{\mu},\sub{\alpha})}^{\# p}$ and $K_{(\sub{\mu},\sub{\alpha})}^{\star a}$ are some constants.
Set 
\begin{align*}
A_{\m} (X_1,\ldots,X_{m-1})&:= K_{\m}^{\#} A_{\m^\# }(X_1, \ldots, X_{p-2}, X_{p-1}+X_p, \ldots, X_{m-1})
+ \sum_{a=1}^{N} K_{\m}^{\star a} A_{\m^{\star a}}(\sub{X}^{\star a}).
\end{align*}
By our computation above, the polynomial $A_{\m}$ 
satisfies the condition of the lemma.

{$\ast$} If for some $s \in \{1, \ldots, p-2 \}$, $\alpha^{(s)} = -\alpha^{(p)}$ then 
$$\wt(\sub{\mu},\sub{\alpha}) = K_{(\sub{\mu},\sub{\alpha})}^{\# p} \wt(\sub{\mu},\sub{\alpha})^{\# p} 
+ (\he(\c{\alpha}^{(s)}) -c_s) \sum_{a=1}^{N} K_{(\sub{\mu},\sub{\alpha})}^{\star a} \wt(\sub{\mu},\sub{\alpha})^{\star a}.$$
We have $(\sub{\mu},\sub{\alpha})^{\# p}  \in \m^{\#}$ and $(\sub{\mu},\sub{\alpha})^{\star a}  \in \m^{\star a}$.
By our induction hypothesis, 
there are polynomials $A_{\m^\# } \in  \C[Y_1,\ldots,Y_{m-2}]$ of total degree $\leqslant \lfloor \frac{m-1}{2}\rfloor$
and $A_{\m^{\star a} } \in  \C[Y_1,\ldots,Y_{m-3}]$ of total
degree $\leqslant \lfloor \frac{m-2}{2}\rfloor$
such that  
\begin{align*}
\wt(\sub{\mu},\sub{\alpha}) &= K_{(\sub{\mu},\sub{\alpha})}^{\# p} 
A_{\m^\# }(i_1,\ldots,i_{p-1} + i_{p},\ldots,i_{m-1}) 
 + (P_{\alpha^{(s)}} (i_s) - c_s) \sum_{a=1}^{N} K_{(\sub{\mu},\sub{\alpha})}^{\star a}
A_{\m^{\star a} }(\hat{\sub{\i}}^{\star a}),  
\end{align*}
where $K_{(\sub{\mu},\sub{\alpha})}^{\# p}$, $K_{(\sub{\mu},\sub{\alpha})}^{\star a}$, and $c_s$ are some constants, whereas $P_{\alpha^{(s)}}$ is a polynomial of degree 1.

Set 
\begin{align*}
A_{\m} (X_1,\ldots,X_{m-1})
&:= K_{\m}^{\#} A_{\m^\# }(X_1, \ldots, X_{p-2}, X_{p-1}+X_p, \ldots, X_{m-1})
+ (P_{\alpha^{(s)}} (X_s) - c_s) \sum_{a=1}^{N} K_{\m}^{\star a}
A_{\m^{\star a}}(\sub{X}^{\star a}).
\end{align*}
Thus $A_{\m}$ has total degree $\leqslant 1 + \lfloor \frac{m-2}{2}\rfloor = \lfloor \frac{m}{2} \rfloor$. 
Hence this polynomial satisfies the conditions of the lemma.

(ii) 

{$\ast$} If for all $s\in \{1, \ldots, p-2 \}$, either $\alpha^{(s)} + \alpha^{(p)} \in -\Delta_+$ or $\alpha^{(s)} + \alpha^{(p)} \notin \Delta \cup \{0\}$, then 
by Proposition~\ref{Pro:Conc1} we have
$$\wt(\sub{\mu},\sub{\alpha}) = (c_{\alpha^{(p-1)}})^2 \
(\he(\c{\alpha}^{(p-1)})- c_{p-1})\wt(\sub{\mu},\sub{\alpha})^{\# p}.$$
We have $(\sub{\mu},\sub{\alpha})^{\# p}  \in \m^{\#}$ and by our induction hypothesis, 
there is a polynomial $A_{\m^\# } \in  \C[Y_1,\ldots,Y_{m-3}]$ of total degree $\leqslant \lfloor \frac{m-2}{2}\rfloor$ 
such that   
$$\wt(\sub{\mu},\sub{\alpha}) = 
(c_{\alpha^{(p-1)}})^2 \
(P_{\alpha^{(p-1)}} (i_{p-1})- c_{p-1}) 
A_{\m^\# }(i_1,\ldots,i_{p-2}, i_{p+1},\ldots,i_{m-1}),$$  
where $c_{p-1}$ is a constant and $P_{\alpha^{(p-1)}}$ is a polynomial of degree 1.
Set
$$A_{\m} (X_1,\ldots,X_{m-1}) 
:= (P_{\alpha^{(p-1)}} (X_{p-1})- c_{p-1}) A_{\m^\# }(X_1,\ldots,X_{p-2}, X_{p+1},\ldots,X_{m-1}).$$
Thus $A_{\m}$ has total degree $\leqslant 1 + \lfloor \frac{m-2}{2} \rfloor = \lfloor \frac{m}{2} \rfloor$.
Hence the polynomial $A_{\m}$ satisfies the condition of the lemma. 

{$\ast$} If for some $s\in \{1, \ldots, p-2 \}$, $\alpha^{(s)} + \alpha^{(p)} \in \Delta_+$, then by Proposition~\ref{Pro:Conc1}
there are some constants $K_{(\sub{\mu},\sub{\alpha})}^{\# p}$ and $K_{(\sub{\mu},\sub{\alpha})}^{\star a}$ such that
$$\wt(\sub{\mu},\sub{\alpha}) =(\he(\c{\alpha}^{(p-1)})- c_{p-1}) \wt(\sub{\mu},\sub{\alpha})^{\# p} 
+ \sum_{a=1}^{N}  K_{(\sub{\mu},\sub{\alpha})}^{\star a} \wt(\sub{\mu},\sub{\alpha})^{\star a}.$$
We have $(\sub{\mu},\sub{\alpha})^{\# p}  \in \m^{\#}$ and $(\sub{\mu},\sub{\alpha})^{\star a}  \in \m^{\star a}$.
By our induction hypothesis, 
there are polynomials $A_{\m^\# } \in  \C[Y_1,\ldots,Y_{m-3}]$  of total degree $\leqslant \lfloor \frac{m-2}{2}\rfloor$
and $A_{\m^{\star a} } \in  \C[Y_1,\ldots,Y_{m-2}]$
of total degree $\leqslant \lfloor \frac{m-1}{2}\rfloor$ 
such that  
\begin{align*}
\wt(\sub{\mu},\sub{\alpha}) &= (P_{\alpha^{(p-1)}} (i_{p-1})- c_{p-1})
A_{\m^\# }(i_1,\ldots,i_{p-2}, i_{p+1},\ldots,i_{m-1})
+ \sum_{a=1}^{N} K_{(\sub{\mu},\sub{\alpha})}^{\star a}
A_{\m^{\star a}}(\hat{\sub{\i}}^{\star a}), 
\end{align*}
where $K_{(\sub{\mu},\sub{\alpha})}^{\star a}$, $c_{p-1}$ are some constants and $P_{\alpha^{(p-1)}}$
is a polynomial of degree 1.
Set 
\begin{align*}
A_{\m} (X_1,\ldots,X_{m-1}) 
&= (P_{\alpha^{(p-1)}} (X_{p-1})- c_{p-1}) A_{\m^\# }(X_1,\ldots,X_{p-2}, X_{p+1},\ldots,X_{m-1})
+ \sum_{a=1}^{N} K_{\m}^{\star a}
A_{\m^{\star a}}(\sub{X}^{\star a}).
\end{align*}
By our computation above, $A_{\m}$ has total degree $\leqslant 1 + \lfloor \frac{m-2}{2} \rfloor = \lfloor \frac{m}{2} \rfloor$,
hence it satisfies the conditions of the lemma.

{$\ast$} If for some $s \in \{1, \ldots, p-2 \}$, $\alpha^{(s)} = -\alpha^{(p)}$ then 
$$ \wt(\sub{\mu},\sub{\alpha})=
(\he(\c{\alpha}^{(p-1)})- c_{p-1}) \wt(\sub{\mu},\sub{\alpha})^{\# p}
+ (\he(\c{\alpha}^{(s)}) - c_{s}) \sum_{a=1}^{N}  
K_{(\sub{\mu},\sub{\alpha})}^{\star a} \wt(\sub{\mu},\sub{\alpha})^{\star a}.$$
By our induction hypothesis, 
there are polynomials $A_{\m^\# } \in  \C[Y_1,\ldots,Y_{m-3}]$
and $A_{\m^{\star a} } \in  \C[Y_1,\ldots,Y_{m-3}]$
of total degree $\leqslant \lfloor \frac{m-2}{2}\rfloor$ 
such that  
\begin{align*}
\wt(\sub{\mu},\sub{\alpha})& = (P_{\alpha^{(p-1)}} (i_{p-1})- c_{p-1}) A_{\m^\# }(i_1,\ldots,i_{p-1} + i_{p},\ldots,i_{m-1})
+ (P_{\alpha^{(s)}} (i_s) - c_s) \sum_{a=1}^{N} K_{(\sub{\mu},\sub{\alpha})}^{\star a}
A_{\m^{\star a} }(\hat{\sub{\i}}^{\star a}),  
\end{align*}
where $c_{p-1}$ and $c_s$  are some constants, whereas $P_{\alpha^{(p-1)}}$ and $P_{\alpha^{(s)}}$ 
are polynomials of degree 1.
Hence the polynomial 
\begin{align*}
A_{\m} (X_1,\ldots,X_{m-1})
&:= (P_{\alpha^{(p-1)}} (X_{p-1})- c_{p-1})
A_{\m^\# } (X_1,\ldots,X_{p-2}, X_{p+1},\ldots,X_{m-1}) \\
& \quad + (P_{\alpha^{(s)}} (X_s) - c_s) \sum_{a=1}^{N} K_{\m}^{\star a}
A_{\m^{\star a}}(\sub{X}^{\star a})
\end{align*}
satisfies the conditions of the lemma.\qed
\end{proof}

\begin{example} \label{ex:polynomial}
Let $\m \in {\E}_m$ without zero.
\begin{enumerate}
 \item Assume $m=4$, and 
$\m = [\delta_k, \overline{\delta}_j, \overline{\delta}_l, \delta_j]$, with $k <l<j$.

\begin{center}
 \begin{tikzpicture}[scale=0.9]
\fill (0,0) circle (0.07)node(xline)[above] {$\delta_k$}; 
\draw[thick,
        decoration={markings, mark=at position 0.5 with {\arrow{>}}},
        postaction={decorate}
        ]
        (0,0) -- (3,0);\node at (1.5,0.3) {{\small $\alpha^{(1)}$}};
\fill (3,0) circle (0.07)node(xline)[above] {$\overline{\delta}_j$};
 \draw[ thick,
        decoration={markings, mark=at position 0.5 with {\arrow{>}}},
        postaction={decorate}
        ]
        (3,0) -- (5,0); \node at (4,0.3) {{\small $\alpha^{(2)}$}};
\fill (5,0) circle (0.07)node(xline)[above] {$\overline{\delta}_l$};
 \draw[ thick,
        decoration={markings, mark=at position 0.5 with {\arrow{>}}},
        postaction={decorate}
        ]
        (5,-0.15) -- (2,-0.15); \node at (3.5,-0.5) {{\small $\alpha^{(3)}$}};
\fill (2,-0.15) circle (0.07)node(xline)[below] {${\delta}_j$};
 \draw[ thick,
        decoration={markings, mark=at position 0.5 with {\arrow{>}}},
        postaction={decorate}
        ]
        (2,-0.15) -- (0,-0.15); \node at (1,-0.5) {{\small $\alpha^{(4)}$}};
\end{tikzpicture}
\end{center}
 Thus $p=3$ and there is an integer $s = 1$ such that $\alpha^{(p)} + \alpha^{(s)} \in \Delta_+$.
 By Lemma~\ref{Lem:weightC} (2) (c) (i) we get
 \begin{align*}
  A_{\m}(X_1, X_2, X_3) &= K_{\m}^{\#} A_{\mu^{\#}}(X_1, X_2+X_3) + K_{\m}^{\star a} A_{\mu^{\star a}}(X_1+X_3, X_2)
  = X_1 + 2X_2 + X_3.
 \end{align*}
\item Assume $m=6$ and 
$\m = [\delta_k, \delta_i, \overline{\delta}_l, \overline{\delta}_j, \overline{\delta}_i, \overline{\delta}_j]$, 
with $k<i<j<l$.
\\Thus $p=5$ and there is an integer $s = 2$ such that $\alpha^{s} + \alpha^{p} \in \Delta_+$. 
The root $\alpha^{(p-1)}$ has type III. So $P_{\alpha^{(p-1)}}(X_{p-1}) = X_4$ and
$c_{p-1} = \langle \alpha^{(3)}, \c{\alpha}^{(4)} \rangle + \langle \alpha^{(2)}, \c{\alpha}^{(4)} \rangle 
           + \langle \alpha^{(1)}, \c{\alpha}^{(4)} \rangle = -1$.
\\By Lemma~\ref{Lem:weightC}, we have
\begin{align*}
A_{\m} (X_1, \ldots, X_5)
 &= (c_{\alpha^{(p-1)}})^2 (P_{\alpha^{(p-1)}} (X_{p-1})- c_{p-1}) A_{\m^{\#}} (X_1,X_2, X_3)
            + \sum_{a=1}^{5}  K_{\m}^{\star a} A_{\m^{\star a}} (\sub{X}^{\star a})\\
 &= X_1X_4 + 3X_1.
 \end{align*}
\end{enumerate}
\end{example}

\begin{remark}\label{rem:wtC}
 Let $m \in \Z_{>0}$ and $\m \in \E_m$ without zero. There are some classes ${\m}_1,\ldots,{\m}_N$ without zero 
of length $\ell_h:=\ell({\m}_h)< m$
such that 
$$
\wt(\sub{\mu},\sub{\alpha})
= \sum_{h=1}^N K_{\m_h} A_{{\m}_h}(i'_1, \ldots, i'_{\ell_h-1})
.$$ 
where $\he({\sub{\mu}_h})=(i'_1, \ldots, i'_{\ell_h})$. 
\end{remark}

The following useful result will be needed to prove Lemma~\ref{Lem:T_mC}.

\begin{lemma} \label{lem:aij}
 Let $S \in \Z_{\geqslant 1}$, $\ell_1, \ldots, \ell_S$ be a family of stricty positive integers.
For each $1 \leqslant j \leqslant \ell_i$, let $d_{i,j} \in \Z_{\ge0}$. Set $\sub{\ell} = (\ell_1, \ldots, \ell_S)$ and $\sub{d} = (d_{ij})$. 
 Then there is a polynomial $Q_{S, \sub{\ell}, \sub{d}} \in \C[X_1, \ldots, X_S]$ of degree  
$\leqslant \sum_{i,j} d_{ij} 
+ \sum_{i=1}^{S} (\ell_i-1)$ such that 
$$ \sum_{{\tiny \substack{ 
a_{ij} \in \Z{>0} \\ 
1 \leqslant i \leqslant S, \, 1 \leqslant j \leqslant \ell_i, \\ 
\sum_{j=1}^{\ell_i} a_{ij} = n_i}}}
\prod_{i,j} \, (a_{ij})^{d_{ij}} = Q_{S, \sub{\ell}, \sub{d}} (n_1, \ldots, n_S).$$
 \end{lemma}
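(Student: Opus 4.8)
The statement is a purely combinatorial fact about summing products of monomials in the parts of a composition, and it is proved by induction on $S$. The base case $S=1$ is exactly Lemma~\ref{Lem:Newton} applied iteratively: the sum $\sum_{\sum_j a_{1j}=n_1}\prod_j (a_{1j})^{d_{1j}}$ is, after fixing $a_{11},\ldots,a_{1,\ell_1-1}$ and writing $a_{1\ell_1}=n_1-a_{11}-\cdots-a_{1,\ell_1-1}$, a nested sum of power functions, each of which contributes, via Lemma~\ref{Lem:Newton}, a polynomial of degree one higher than the exponent summed over. Tracking degrees, one obtains a polynomial in $n_1$ of degree $\le \sum_j d_{1j} + (\ell_1-1)$. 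I will carry this out carefully in the base case since it sets the bookkeeping pattern: expanding $(n_1-a_{11}-\cdots-a_{1,\ell_1-1})^{d_{1\ell_1}}$ by the multinomial theorem produces terms $n_1^{q}\cdot(\text{monomial in }a_{11},\ldots,a_{1,\ell_1-1})$ with $q$ plus the total degree of the monomial equal to $d_{1\ell_1}$, then summing each $a_{1j}$ from $1$ to (roughly) $n_1$ via $S_{d}$ raises the degree appropriately and keeps everything polynomial in $n_1$.

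\textbf{Inductive step.} Assume the result for $S-1$. Given the data $(\ell_1,\ldots,\ell_S)$, $(d_{ij})$, I single out the last block $i=S$ and sum over its entries first: for fixed $n_S$,
\[
\sum_{\substack{a_{Sj}\in\Z_{>0}\\ \sum_j a_{Sj}=n_S}} \prod_{j=1}^{\ell_S} (a_{Sj})^{d_{Sj}}
\]
is, by the base case argument, a polynomial $R(n_S)$ of degree $\le \sum_j d_{Sj}+(\ell_S-1)$. Since this inner sum is independent of all the other $a_{ij}$, the full sum factors as
\[
\sum_{\substack{a_{ij},\,1\le i\le S-1\\ \sum_j a_{ij}=n_i}} \Big(\prod_{i=1}^{S-1}\prod_j (a_{ij})^{d_{ij}}\Big)\cdot R(n_S)
= R(n_S)\cdot\!\!\sum_{\substack{a_{ij},\,1\le i\le S-1\\ \sum_j a_{ij}=n_i}} \prod_{i=1}^{S-1}\prod_j (a_{ij})^{d_{ij}},
\]
and the remaining sum is exactly the $(S-1)$-variable quantity, which by the induction hypothesis equals a polynomial $Q_{S-1,\sub{\ell}',\sub{d}'}(n_1,\ldots,n_{S-1})$ of degree $\le \sum_{i\le S-1,j} d_{ij}+\sum_{i\le S-1}(\ell_i-1)$. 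Thus the product
\[
Q_{S,\sub{\ell},\sub{d}}(X_1,\ldots,X_S):= Q_{S-1,\sub{\ell}',\sub{d}'}(X_1,\ldots,X_{S-1})\cdot R(X_S)
\]
is a polynomial, and its degree is bounded by
\[
\sum_{i\le S-1,j} d_{ij}+\sum_{i\le S-1}(\ell_i-1) \;+\; \sum_j d_{Sj}+(\ell_S-1) \;=\; \sum_{i,j} d_{ij}+\sum_{i=1}^S(\ell_i-1),
\]
as required.

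\textbf{Main obstacle.} There is essentially no serious obstacle; the only point demanding care is a uniformity issue about the range of summation in the base case. When one writes $a_{1\ell_1}=n_1-\sum_{j<\ell_1}a_{1j}$, the constraint $a_{1\ell_1}\ge 1$ means the inner variables $a_{1j}$ range over a simplex depending on $n_1$, not a box; one must check that iterating Lemma~\ref{Lem:Newton} over such nested ranges still yields a \emph{single} polynomial valid for all $n_1$ (including, if one wishes, small $n_1$ where the sum is empty and the polynomial must vanish — which it does, since each $S_d$ satisfies $S_d(0)=0$). This is handled exactly as in the proof of Lemma~\ref{Lem:T_m}, where the same ``sum over a moving simplex produces a polynomial'' phenomenon occurs; I would simply invoke that the composition of the $S_d$'s with affine substitutions and the multinomial expansion keeps everything polynomial, and verify the degree count. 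No other step requires more than routine algebra.
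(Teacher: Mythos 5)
Your proposal is correct and follows essentially the same route as the paper: the one-block case ($S=1$) is handled by iterated use of Lemma~\ref{Lem:Newton} (your unrolled nested-sum bookkeeping is just the paper's induction on $\ell_1$, which sums the newly added part against the polynomial produced by the induction hypothesis), and the multi-block case is the observation that the constraints are independent so the sum factors into a product of the one-block polynomials (the paper states this directly rather than as an induction on $S$, but the content is identical). The simplex-range issue you flag is exactly what the $\ell$-induction with $S_d(0)=0$ takes care of, so there is no gap.
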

\begin{proof}
First of all we consider the case where $S=1$. We prove the lemma by induction on $\ell$. For $\ell = 1$, the result is clear.
We assume that the lemma is true for some $\ell \geqslant 1$ and we prove the statement 
for $\ell+1$.
By the induction hypothesis there exists a polynomial $Q_{1, \ell, \sub{d}} \in \C[X]$ of degree $\leqslant d_1+ \cdots + \d_{\ell} + \ell -1$ such that 
$$ Q_{1, \ell, \sub{d}} (n) = \sum_{{\tiny \substack{a_i \in \Z_{>0} \\  \sum_{i=1}^{\ell}a_i = n}}} (a_1)^{d_1} \ldots (a_{\ell})^{d_{\ell}}. $$ 
Write $Q_{1, \ell, \sub{d}} (X) = \sum_{j=0}^{{\tiny d_1+\cdots + d_\ell + \ell-1}} C_j (X)^{j}$. Thus,
\begin{align*}
&\sum_{{\tiny \substack{a_i \in \Z_{>0} \\  \sum_{i=1}^{\ell+1}a_i = n}}} (a_1)^{d_1} \ldots (a_{\ell})^{d_{\ell}}(a_{\ell+1})^{d_{\ell+1}} 
= \sum_{k=1}^{n-\ell} \sum_{{\tiny \substack{a_i \in \Z_{>0} \\  \sum_{i=1}^{l}a_i = n-\i}}} (k)^{d_{\ell+1}} (a_1)^{d_1} \ldots (a_{\ell})^{d_{\ell}} \\
& \qquad = \sum_{k=1}^{n-\ell} (k)^{d_{\ell+1}} \sum_{j=0}^{{\tiny d_1+\cdots + d_\ell + \ell-1}} C_j (n - k)^{j}
=  \sum_{j=0}^{{\tiny d_1+\cdots + d_\ell + \ell-1}} C_j 
\sum_{t=0}^{j} \binom{j}{t} (n)^{j-t} (-1)^t S_{d_{\ell+1} + t}(n-\ell).
\end{align*}
$S_{d_{\ell+1} + t}$ is a polynomial of degree
$d_{\ell+1}+t+1$ with leading term $ (d_{\ell+1}+t+1)^{-1} \ n^{d_{\ell+1}+t+1}$. 
Set $\sub{d}'=(d_1, \ldots, d_{\ell+1})$,
thus there exists a polynomial $Q_{1, \ell+1, \sub{d}'} \in \C[X]$ of degree 
$\leqslant d_1+\ldots+d_\ell+\ell-1+d_{\ell+1}+1 = d_1+\ldots+d_\ell+d_{\ell+1}+(\ell+1)-1$ 
such that  $$Q_{1, \ell+1, \sub{d}'}(n) = \sum_{{\tiny \substack{a_i \in \Z_{>0} \\  \sum_{i=1}^{\ell+1}a_i = n}}} (a_1)^{d_1} \ldots (a_{\ell})^{d_{\ell}}(a_{\ell+1})^{d_{\ell+1}}.$$
And so the lemma is true for $\ell +1$. By induction, it is true for all $\ell \geqslant 2$

Now we consider the case when $S \geqslant 2$. We get 
\begin{align*}
& \sum_{{\tiny \substack{ 
a_{ij} \in \Z{>0} \\ 
1 \leqslant i \leqslant S, \, 1 \leqslant j \leqslant \ell_i, \\ 
\sum_{j=1}^{\ell_i} a_{ij} = n_i}}}
\prod_{i,j} \, (a_{ij})^{d_{ij}} 
 = \sum_{{\tiny \substack{ 
a_{1j} \in \Z{>0} \\ 
1 \leqslant j \leqslant \ell_1, \\ 
\sum_{j=1}^{\ell_1} a_{1j} = n_1}}}
(a_{11})^{d_{11}} \cdots (a_{1\ell_1})^{d_{1\ell_1}} \cdots 
\sum_{{\tiny \substack{ 
a_{Sj} \in \Z{>0} \\ 
1 \leqslant j \leqslant \ell_S, \\ 
\sum_{j=1}^{\ell_S} a_{Sj} = n_S}}}
(a_{S1})^{d_{S1}} \cdots (a_{S\ell_S})^{d_{S\ell_S}}\\
& \qquad  = Q_{1, \ell_1, \sub{d}_1}(n_1) \, Q_{1, \ell_2, \sub{d}_2}(n_2) \ \cdots \ Q_{1, \ell_S, \sub{d}_S}(n_S) = Q_{S, \sub{\ell}, \sub{d}}(n_1, n_2, \ldots , n_S),
\end{align*}
where $\sub{\ell} = (\ell_1, \ldots, \ell_S )$ and $\sub{d}=(d_{ij})$.
Hence,  $Q_{S, \sub{\ell}, \sub{d}}$ is a polynomial 
of degree $\leqslant \sum_{i,j} d_{ij} 
+ \sum_{i=1}^{S} (\ell_i-1)$. \qed
\end{proof}

\begin{remark}
The polynomial $Q_{S, \sub{\ell}, \sub{d}}$ has degree $\sum_{i,j} d_{ij} 
+ \sum_{i=1}^{S} (\ell_i-1)$, for $1 \leqslant i \leqslant S, \, 1 \leqslant j \leqslant \ell_i$, with leading term
$$ \frac{\prod_{ij} d_{ij}!}{\prod_{i=1}^{S} \big( \sum_{j=1}^{\ell_i} d_{ij} + \ell_i -1 \big)!} \, X^{\sum_{i,j} d_{ij} 
+ \sum_{i=1}^{S} (\ell_i-1)}. $$
\end{remark}

The following lemma is the analog to Lemma \ref{Lem:T_m}. 

\begin{lemma}    \label{Lem:T_mC} 
Let $d\in\Z_{\geqslant 0}$, and 
$\m \in {\E}_m$ without zero. 
Let $\sub{d}=(d_1,\ldots,d_{m-1})$ with $d_1+\cdots+d_{m-1}=d$. 
Then for some polynomial $T_{\sub{d},\m} \in \C[X]$ of degree  
$\leqslant d+m- \deg A_{\m}$, we have 
$\widetilde{T}_{\sub{d},{\m}}(k) = T_{\sub{d},\m}(k)$ 
for all $k \in \{1, \ldots, r\}$, where 
$$\widetilde{T}_{\sub{d},{\m}}(k) :=
\sum_{(\sub{\mu},\sub{\alpha}) \in \hat{\P}_m(\delta_k), 
\atop \sub{\mu} \in [\![ \overline{\delta}_{1}, \delta_k ]\!], 
\, (\sub{\mu},\sub{\alpha}) \in \m} i_1^{d_1}\ldots i_{m-1}^{d_{m-1}} ,$$ 
if the integer $i_j$ denote the height 
 of ${\alpha}^{(j)}$ for $j=1,\ldots,m-1$. 
In particular, if for some $k \in \{1,\ldots,r\}$, the set  
$\{ (\sub{\mu},\sub{\alpha}) \in \hat{\P}_m(\delta_k) \; | \; 
\sub{\mu} \in [\![ \overline{\delta}_{1}, \delta_k ]\!] ,\, (\sub{\mu},\sub{\alpha}) \in \m \}$ 
is empty, we have $T_{\sub{d},\m}(k) =0$. 
\end{lemma}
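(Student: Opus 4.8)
The plan is to prove Lemma~\ref{Lem:T_mC} by induction on $m$, following closely the structure of the proof of Lemma~\ref{Lem:T_m} in the type $A$ case, but using the type $C$ combinatorial machinery (Proposition~\ref{Pro:Conc1}, Lemma~\ref{Lem:weightC}) in place of Lemma~\ref{Lem0:cut} and Lemma~\ref{Lem:weight1}. First I would treat the base cases $m=2$ (and $m=3$): here $\m$ without zero is one of the classes $[\delta_k,\delta_j]$ with $k<j$ or $[\delta_k,\overline{\delta}_j]$ (the class $[\delta_k,\delta_k]$ has a zero), so the summation over paths $(\sub{\mu},\sub{\alpha})\in\hat{\P}_2(\delta_k)$ contained in $[\![\overline{\delta}_1,\delta_k]\!]$ with $\he(\sub{\mu})\in\m$ reduces to a sum of $i_1^{d_1}$ over a range of the form $\{1,\dots,f(k)\}$ for an affine function $f$ (coming from how far one can travel along the crystal graph inside $[\![\overline{\delta}_1,\delta_k]\!]$), which is $S_{d_1}(f(k))$ by Lemma~\ref{Lem:Newton}; then $T_{\sub{d},\m}(X):=S_{d_1}(f(X))$ works, and it vanishes exactly when the path set is empty because $S_d(0)=0$.

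For the inductive step, I would set $p:=p(\m)$, $q:=q(\m)$, fix $k$ such that the path set is nonempty, and analyze the position of the first turning back exactly as in type $A$: every $(\sub{\mu},\sub{\alpha})\in\hat{\P}_m(\delta_k)$ contained in $[\![\overline{\delta}_1,\delta_k]\!]$ with class $\m$ restricts to a path of class $\m^\#$ (and, in the ``$\star$'' situations, to paths of classes $\m^{\star a}$ of strictly smaller length), and the fibre of this restriction map is parametrised by one extra integer $i$ running over an interval whose length is affine in $k$ minus a partial sum of the heights of the restricted path — this is where the pictures \ref{fig:keyA1}--\ref{fig:keyA3} of the type $A$ proof have their analogues. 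The cases to run through are: $i_{p-1}+i_p>0$, $i_{p-1}+i_p<0$, and $i_{p-1}+i_p=0$, and within each I would need to track how $i_{p-1},i_p$ are expressed in terms of the restricted heights and the free parameter $i$. Expanding the monomials $i_1^{d_1}\cdots i_{m-1}^{d_{m-1}}$ via binomial expansion and summing the geometric-like sums over $i$ using Lemma~\ref{Lem:Newton} produces, as in type $A$, a finite linear combination of $\widetilde{T}_{\sub{d}',\m^\#}(k)$ (and $\widetilde{T}_{\sub{d}',\m^{\star a}}(k)$) with coefficients polynomial in $k$; applying the induction hypothesis to each of these — using $\deg A_{\m^\#}=\deg A_{\m}$ and $\deg A_{\m^{\star a}}\ge \deg A_\m - (\text{length drop})$, which follows from Lemma~\ref{Lem:weightC} — gives a polynomial $T_{\sub{d},\m}(X)$ of the asserted degree. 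The degree bookkeeping is: raising the free parameter $i$ to powers up to $d_{p-1}+d_p$ and summing contributes at most $d_{p-1}+d_p+1$ to the degree, and this exactly compensates for the drop $m-\ell(\m^\#)$ together with the shift in the $d'$-multi-index, just as in the type $A$ argument.

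The vanishing statement when the path set \eqref{eq:setT}-analogue is empty is handled as in type $A$: either the restricted path set (of class $\m^\#$) is already empty, in which case the induction hypothesis forces $T_{\sub{d}',\m^\#}(k)=0$ and hence $T_{\sub{d},\m}(k)=0$; or the restricted set is nonempty but the free parameter $i$ has empty range, which means the relevant partial sum equals the full ``budget'' and makes the constructed polynomial vanish at $k$ by its explicit form. The main obstacle I expect is the $\star$-paths bookkeeping peculiar to type $C$: unlike type $A$, cutting the first turning-back vertex can spawn several new paths $(\sub{\mu},\sub{\alpha})^{\star a}$ of various smaller lengths (Lemma~\ref{Lem:star+}, Lemma~\ref{Lem:star0}), and one must check that each such class $\m^{\star a}$ is again without zero, that its contribution $\widetilde{T}_{\sub{d}',\m^{\star a}}$ falls under the induction hypothesis with the right degree bound, and that the ``constant'' coefficients $K_{\m}^{\star a}$ genuinely depend only on the structure constants and not on the heights — so that they can be pulled out of the sums over $i$. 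A secondary technical point is that, because of Lemma~\ref{Lem:weightC}(2)(c)(i) and (ii), the coefficient multiplying $\wt(\sub{\mu},\sub{\alpha})^{\star a}$ may itself be of the form $P_{\alpha^{(s)}}(i_s)-c_s$, i.e.\ affine in one of the heights $i_s$; this extra affine factor must be absorbed into the multi-index $\sub{d}'$ before applying the induction hypothesis, slightly complicating the degree count but not changing the final bound $d+m-\deg A_\m$. I would organize the case analysis along the trichotomy of Proposition~\ref{Pro:Conc1}, and for each branch reduce the computation verbatim to the corresponding branch of the type $A$ proof, inserting the $\star$-terms where Proposition~\ref{Pro:Conc1} produces them.
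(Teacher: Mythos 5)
Your overall scheme (induction on $m$, Newton sums, reduction along the first turning back) is the right skeleton, and indeed the paper argues exactly as in type $A$ whenever $\mu^{(p)}$ stays in $[\![\delta_r,\delta_k]\!]$ or whenever $\deg A_{\m}\leqslant\deg A_{\m^{\#}}$ (resp.\ $\deg A_{\m^{\#}}+1$). But your degree bookkeeping rests on the claim that $\deg A_{\m^{\#}}=\deg A_{\m}$ ``follows from Lemma~\ref{Lem:weightC}'', and this is precisely what fails in type $C$: Lemma~\ref{Lem:weightC} only expresses $A_{\m}$ as $K^{\#}_{\m}A_{\m^{\#}}+\sum_a K^{\star a}_{\m}A_{\m^{\star a}}$ (possibly with an extra affine factor), so the degree of $A_{\m}$ may be carried entirely by a star class, with $\deg A_{\m}>\deg A_{\m^{\#}}$. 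In that situation your reduction to $\m^{\#}$ with a free parameter $i$ only yields, via the induction hypothesis, a polynomial of degree $\leqslant d+m-\deg A_{\m^{\#}}$, which is strictly weaker than the required bound $d+m-\deg A_{\m}$. Also note that the star classes do not enter $\widetilde{T}_{\sub{d},\m}$ as separate fibre contributions the way you describe: $\widetilde{T}_{\sub{d},\m}$ is a pure path sum over $\m$, and the inequality you invoke, $\deg A_{\m^{\star a}}\geqslant\deg A_{\m}-(\text{length drop})$, is neither a consequence of Lemma~\ref{Lem:weightC} nor in the direction that helps.

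The paper's resolution of exactly this case is different in structure. When $\deg A_{\m}$ exceeds $\deg A_{\m^{\#}}$ (so some $\m^{\star a'}$ satisfies $\deg A_{\m^{\star a'}}\geqslant\deg A_{\m}$, up to the affine-factor shift), one does \emph{not} fibre over $\m^{\#}$ at all: for each configuration of $(\alpha^{(s)},\alpha^{(p)})$ listed in Lemma~\ref{lem:types}, the paths of class $\m$ are in bijection with the paths of the \emph{longest} star class $\m^{\star 1}$, the heights $i_s$ and $i_p$ being determined affinely by the heights $\sub{i}'$ of the star path and by $2r+1-2k$ (there is no free summation parameter, so Lemma~\ref{Lem:Newton} plays no role here). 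One then needs a genuinely new combinatorial input, Lemma~\ref{lem:aij} (the partition-sum polynomials $Q_{S,\sub{\ell},\sub{d}}$), to rewrite the resulting sum over $\m^{\star 1}$ as a combination of $\widetilde{T}_{\sub{d}',\m^{\star a'}}$ for the degree-maximizing class $\m^{\star a'}$, to which the induction hypothesis is applied; the vanishing statement is likewise checked by reconstructing the $\m$-path from the $\m^{\star 1}$-path rather than from $\m^{\#}$ and an ``empty range''. Your proposal contains neither this change of parametrization nor any substitute for Lemma~\ref{lem:aij}, so the critical case is left unproved. (A smaller slip: for $m=2$ the classes $[\delta_k,\overline{\delta}_k]$ and $[\delta_k,\overline{\delta}_j]$ are without zero and their path sums are not of the form $\sum_{i_1=1}^{f(k)}i_1^{d_1}$; they are a single term, resp.\ $S_{d_1}(2r-k)-S_{d_1}(r-k+1)-(2(r-k)+1)^{d_1}$, though this does not affect the polynomiality.)
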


The lemma implies 
that for all $k \in \{1, \ldots, r \}$,
$$ \sum_{{\tiny \substack{(\sub{\mu},\sub{\alpha}) \in \hat{\P}_m(\delta_k), 
\\ \sub{\mu} \in [\![ \overline{\delta}_{1}, \delta_k ]\!],  (\sub{\mu},\sub{\alpha}) \in \m}}} i_1^{d_1}\ldots i_{m-1}^{d_{m-1}} {A}_{\m}(i_1, \ldots, i_{m-1})$$
is a polynomial of degree $d+m$.
\begin{proof}
 We prove the lemma by induction on $m$. 
More precisely, we prove by induction on $m$ the following: 

\smallskip

{\em For all 
$\m \in {\E}_m$ without zero and all  
$\sub{d}=(d_1,\ldots,d_{m-1})$  
with $d_1+\cdots+d_{p-1}=d$, $d \in \Z_{\geqslant 0}$,
Then there exists a polynomial $T_{\sub{d},\m} \in \C[X]$ of degree 
$\leqslant d+m-\deg A_{\m}$ such that for all  $k \in \{1, \ldots, r\},$
$T_{\sub{d},\m}(k)=\widetilde{T}_{\sub{d},{\m}}(k).$ 
In particular, if for some $k \in \{1,\ldots,r\}$, the set  
$\{ (\sub{\mu},\sub{\alpha}) \in \hat{\P}_m(\delta_k) \; | \; 
\sub{\mu} \in [\![ \overline{\delta}_{1}, \delta_k ]\!] ,\, (\sub{\mu},\sub{\alpha}) \in \m \}$ 
is empty, we have $T_{\sub{d},\m}(k) =0$.}

The case $m=1$ is empty. 
Assume $m = 2$, 
and let $\m \in {\E}_2$ without zero. 
The equivalence classes in ${\E}_2$ without zero 
are $[\delta_k, \overline{\delta}_k], [\delta_k, \overline{\delta}_j], [\delta_k, {\delta}_j]$ with $ j \neq k$.
Let $d=d_1 \in \Z_{\geqslant 0}$.

$\bullet$ We first compute the case $\m = [\delta_k, \overline{\delta}_k]$.
\\Let $k \in \{1,\ldots,r\}$. Then the set $\{ (\sub{\mu},\sub{\alpha}) \in \hat{\P}_m(\delta_k) \; | \; 
\sub{\mu} \in [\![ \overline{\delta}_{1}, \delta_k ]\!] ,\, (\sub{\mu},\sub{\alpha}) \in \m \}$ is nonempty.
Observe that there is only one path in this class. Thus, we get
$$\sum_{(\sub{\mu},\sub{\alpha}) \in \hat{\P}_m(\delta_k), 
\atop \sub{\mu} \in [\![ \overline{\delta}_{1}, \delta_k ]\!], 
\, (\sub{\mu},\sub{\alpha}) \in \m} i_1^{d_1} = i_1^{d_1} = (2(r-k) +1)^{d_1}.$$
Hence, the polynomial  $T_{\sub{d},\m}(k) := (2r +1 -2X)^{d_1}$ has degree $d_1 = d \leqslant d+2 - \deg A_{\m} = d+ 1$, 
since $A_{\m}(X_1) = 2(X_1)+1$ has degree 1 by Lemma~\ref{Lem:weightC}.

$\bullet$ Assume $\m = [\delta_k, \overline{\delta}_j], j \neq k$.
For $k \in \{1,\ldots,r\}$, the set $\{ (\sub{\mu},\sub{\alpha}) \in \hat{\P}_m(\delta_k) \; | \; 
\sub{\mu} \in [\![ \overline{\delta}_{1}, \delta_k ]\!] ,\, (\sub{\mu},\sub{\alpha}) \in \m \}$ is nonempty.
Observe that the height of $\alpha^{(1)}$ run through $\{ r-k+1, \ldots 2r-k \}$. Thus, we get
\begin{align*}
\sum_{{\tiny \substack{(\sub{\mu},\sub{\alpha}) \in \hat{\P}_m(\delta_k), 
\\ \sub{\mu} \in [\![ \overline{\delta}_{1}, \delta_k ]\!],\\ 
(\sub{\mu},\sub{\alpha}) \in \m}}} i_1^{d_1} 
= S_{d_1}(2r-k) - S_{d_1}(r-k+1) - (2(r -k) +1)^{d_1}. 
\end{align*}
By Lemma~\ref{Lem:Newton} (1), the polynomial  $T_{\sub{d},\m}(k) := S_{d_1}(2r-k) - S_{d_1}(r-k+1) - (2(r -k) +1)^{d_1}$ has degree $d_1 = d \leqslant d+2 - \deg A_{\m} = d+ 1$, 
since $A_{\m}(X_1) = X_1+1$ has degree 1 by Lemma~\ref{Lem:weightC}.

$\bullet$ Assume $\m = [\delta_k, {\delta}_j], k <j$.
Let $k \in \{1,\ldots,r-1\}$. Then the set $\{ (\sub{\mu},\sub{\alpha}) \in \hat{\P}_m(\delta_k) \; | \; 
\sub{\mu} \in [\![ \overline{\delta}_{1}, \delta_k ]\!] ,\, (\sub{\mu},\sub{\alpha}) \in \m \}$ is nonempty. 
It is empty for $k=r$.
We get  
$$ \sum_{(\sub{\mu},\sub{\alpha}) \in \hat{\P}_m(\delta_k), 
\atop \sub{\mu} \in [\![ \overline{\delta}_{1}, \delta_k ]\!], 
\, (\sub{\mu},\sub{\alpha}) \in \m} i_1^{d_1}
=  \sum_{i_1 =1}^{r +1- k}   i_1^{d_1}  = S_{d_1}(r+1 -  k). 
$$
By Lemma~\ref{Lem:Newton} (1), the polynomial 
$$T_{d,\m}(X) := S_{d_1}(r+1 -  X),$$ 
has degree $d_1+1=d+2- \deg A_{\m}$ 
since $A_{\m}(X_1)=X_1$ has degree 1 by Lemma~\ref{Lem:weightC}. 
For $k = r$, the equality still holds since 
$T_{d,\m} (r) = S_{d}(0) =0$.   
This proves the claim for $m=2$.

Assume $m \geqslant 3$ and the formula holds for any $m' \in \{ 1,\ldots,m-1\}.$
Let $\m \in {\E}_m$ without zero, set $p:=p(\m)$, and 
let $\sub{d}=(d_1,\ldots,d_{m-1})$ with 
$d_1 + \cdots+ d_{m-1} =d$. 
Let $k \in \{1,\ldots,r\}$ be such that 
the set $\{ (\sub{\mu},\sub{\alpha}) \in \hat{\P}_m(\delta_k) \; | \; 
\sub{\mu} \in [\![ \overline{\delta}_{1}, \delta_k ]\!] ,\, (\sub{\mu},\sub{\alpha}) \in \m \}$
is nonempty. Then the sets
$\{ (\sub{\mu}',\sub{\alpha}') \in \hat{\P}_{m-1}(\delta_k) \; | \; 
\sub{\mu}' \in [\![ \overline{\delta}_{1}, \delta_k ]\!] ,\, (\sub{\mu}',\sub{\alpha})' \in \m^{\#} \}$ 
and 
$\{ (\sub{\mu}',\sub{\alpha}') \in \hat{\P}_{m-1}(\delta_k) \; | \; 
\sub{\mu}' \in [\![ \overline{\delta}_{1}, \delta_k ]\!] ,\, (\sub{\mu}',\sub{\alpha})' \in \m^{\star a} \}$ 
are nonempty, too.

Let $\sub{\mu} \in [\![ \overline{\delta}_{1}, \delta_k ]\!]$ such that $(\sub{\mu},\sub{\alpha}) \in \m$ and let 
$\sub{i} := \he({\m})$. 

We first assume that $\mu^{(p)} \in [\![ {\delta}_{r}, \delta_k ]\!]$. By Lemma~\ref{lem:types} the 
admissible triple of $-\alpha^{(p)}$ has type III (a) and so this case can be dealt similarly as in
$\sl_{r+1}$ 
(cf.~Lemma~\ref{Lem:T_m}).

Assume $\mu^{(p)} \in [\![ \overline{\delta}_{1}, \overline{\delta}_r ]\!]$ and let $s \in \{ 1, \ldots, p-2 \}$
be a positive integer. 
If $p=2$ or if for all $s$, either $\alpha^{(s)} + \alpha^{(p)} \in -\Delta_+$ 
or $\alpha^{(s)} + \alpha^{(p)} \notin \Delta \cup \{0\}$, then by Lemma~\ref{Lem:weightC}, the
degree of polynomial $A_{\m}$ only depends on the degree of polynomial $A_{\m^{\#}}$.
Let $\sub{i}^{\#} := \he({\m}^{\#})$. Then one can argue as for the $\sl_{r+1}$ case (cf. Lemma~\ref{Lem:T_m}).

Hence it remains to verify the case when there is a positive integer  $s \in \{ 1, \ldots, p-2 \}$
such that either $\alpha^{(s)} + \alpha^{(p)} \in \Delta_+$ or $\alpha^{(s)} = - \alpha^{(p)}$.

\smallskip

$\ast$ Case $\alpha^{(s)} + \alpha^{(p)} \in \Delta_+$.
Lemma~\ref{Lem:weightC} shows that
\begin{align}\label{eq:key1}
 A_{\m} (X_1, \ldots, X_{m-1})&= \bar{K}_{\m}^{\#} A_{\m^\#} (X_1, \ldots, X_{p-2}, X_{p-1}+ X_p, X_{p+1}, \ldots, X_{m-1}) 
  + \sum_{a=1}^{N}  K_{\m}^{\star a} 
 A_{\m^{\star a}} (\sub{X}^{\star a}),
 \end{align}
where $\bar{K}_{\m}^{\#} =
\begin{cases}
{K}_{\m}^{\#} &\text{ if }  \alpha^{(p-1)} + \alpha^{(p)} \neq 0, \\
(P_{\alpha^{(p-1)}} (X_{p-1})- c_{p-1}) &\text{ if }  \alpha^{(p-1)} + \alpha^{(p)} = 0.
\end{cases}$

\noindent
If $\deg A_{\m} \leqslant \deg A_{\m^{\#}}$ (respectively, $\deg A_{\m} \leqslant \deg A_{\m^{\#}}+1$) 
for $ \alpha^{(p-1)} + \alpha^{(p)} \neq 0$ (respectively, $ \alpha^{(p-1)} + \alpha^{(p)} = 0$)
then using the same strategy as in Lemma~\ref{Lem:T_m} 
we get the statement. 
Therefore we can assume that $\deg A_{\m} > \deg A_{\m^{\#}}$ (respectively,
$\deg A_{\m} > \deg A_{\m^{\#}}+1$) if $ \alpha^{(p-1)} + \alpha^{(p)} \neq 0$
(respectively, $ \alpha^{(p-1)} + \alpha^{(p)} = 0$). 
This assumption means that for some $a \in \{1, \ldots, N \}$,
$\deg A_{\m^{\star a}} \geqslant \deg A_{\m}$.  

By Lemma~\ref{lem:types}, the possibilities for  $-\alpha^{(p)}$ and $\alpha^{(s)}$
 which satisfy the assumption $\alpha^{(s)} + \alpha^{(p)} \in \Delta_+$
 are the following:
\begin{itemize} 
\item[(1)]
$\alpha^{(p)} = \overline{\delta}_j - \delta_i$ and $\alpha^{(s)}=\delta_l - \overline{\delta}_i$, with $l < j < i$,  
\item[(2)]  
$\alpha^{(p)} = \overline{\delta}_j - \delta_i$ and $\alpha^{(s)}=\delta_j - \overline{\delta}_l$, with $j < l < i$,  
\item[(3)] 
$\alpha^{(p)} = \overline{\delta}_i - \overline{\delta}_j$ and $\alpha^{(s)} =\delta_i - \overline{\delta}_l$, with $i<j$ and $i<l$,
\item[(4)]
$\alpha^{(p)} = \overline{\delta}_i - \overline{\delta}_j$ and $\alpha^{(s)} =\delta_i - \delta_l$, with $i < j <l$,
\item[(5)]
$\alpha^{(p)} = \overline{\delta}_i - \overline{\delta}_j$ and $\alpha^{(s)} =\delta_l - \delta_j$, with $l <i<j$.
\end{itemize} 
Note that in all those cases, for all $a \in \{1, \ldots, N \}$, 
$m-p+s \leqslant \he(\m^{\star a}) \leqslant m-1$. Possibly changing the numbering of the equivalence class $\m^{\star a}$, one can assume throughout this proof that $\m^{\star 1} = [(\sub{\mu}^{\star 1},\sub{\alpha}^{\star 1})]$ is the equivalence class of the star paths with the longest length and set $\sub{i} := \he(\m)$ and $\sub{i}' := \he(\m^{\star 1})$. 

(1) 
Recall from Lemma~\ref{Lem:star+} that for  all $(\sub{\mu}, \sub{\alpha})^{\star a} \in \m^{\star a}$,
$a \in \{1, \ldots, N \},$
\begin{align*}
(\sub{\mu}, \sub{\alpha})^{\star a} = (\sub{\mu}',\sub{\alpha}')\star 
\big( (\delta_l,\delta_j), (\alpha^{(s)}+\alpha^{(p)})\big) 
\star (\tilde{\sub{\mu}}^{\star a},\tilde{\sub{\alpha}}^{\star a})
\star(\sub{\mu}'',\sub{\alpha}''),
\end{align*}
where 
$(\tilde{\sub{\mu}}^{\star a},\tilde{\sub{\alpha}}^{\star a})$ 
is a path of length $\leqslant p-s-1$ between 
$\delta_j$ and $\delta_i$ whose roots $\tilde{\sub{\alpha}}^{\star a}$ are 
sums among $(\alpha^{(p-1)}, \ldots, \alpha^{(s+1)})$, $\sub{\alpha}'= (\alpha^{(1)}, \ldots, \alpha^{(s-1)})$ and 
$\sub{\alpha}''= (\alpha^{(p+1)}, \ldots, \alpha^{(m)})$.
We have $\he	(\sub{\mu}^{\star 1}) = (i_1, \ldots, i_{s-1}, i_s+i_p, i_{p-1}, \ldots, i_{s+1}, i_{p+1}, \ldots, i_m)$. 

Set $\sub{i}' = (i'_1, \ldots, i'_{m-1})$, so $i'_s = i_s + i_p$.  Note that $\alpha^{(p)} = \overline{\delta}_j - \delta_i$ and $\alpha^{(s)}=\delta_l - \overline{\delta}_i$, with $l < j < i$. Thus,
$ l = k+ i'_1+ \cdots+ i'_{s-1}, j = l + i'_s  \text{ and } \ i = k+ i'_1+ \cdots+ i'_{p-1}.$
Hence, for all $(\sub{\mu}, \sub{\alpha}) \in \m$ the heights in $\sub{i}$  
can be expressed in term of $\sub{i}'$ as follows 
\begin{align*}
& i_1 = i'_1, \  \ldots, \  i_{s-1} = i'_{s-1}, \
 i_{s+1} = i'_{p-1}, \  \ldots,  \  i_{p-1} = i'_{s+1}, \
 i_{p+1} = i'_{p}, \  \ldots,  \  i_{m-1} = i'_{m-2}, & \\
& i_{s} = 2r - 2k -(i+l) + 1 = 2r +1 - 2k-2(i'_1+ \cdots+ i'_{s-1}) - (i'_{s}+ \cdots+ i'_{p-1}),  &\\
& i_{p} = i'_s - i_s = 2k - 2r -1 +2(i'_1+ \cdots+ i'_{s}) + (i'_{s+1}+ \cdots+ i'_{p-1}).
\end{align*}
We get,
\begin{align} 
\label{eq:tildeTmk} \nonumber
\widetilde{T}_{\sub{d},\m}(k) 
&= 
  \sum_{{\tiny \substack{\sub{q} \in \N^{p} 
    \\ |\sub{q}|=d_s}}}
    \sum_{{\tiny \substack{\sub{q'} \in \N^{p} 
    \\ |\sub{q'}|=d_p}}} \frac{(d_s)!}{q_1!\ldots q_{p}!}  \frac{(d_p)!}{q'_1!\ldots q'_{p}!}  
    (-1)^{d_s} (2)^{q_1 + \cdots +q_{s-1}+q'_1 + \cdots +q'_{s}}
    \ (2k-2r-1)^{q_p+q'_{p}} \\ 
 &  \qquad \times \sum_{{\tiny \substack{(\sub{\mu},\sub{\alpha})^{\star 1} \in \hat{\P}_{m-1}(\delta_k), 
\\ \sub{\mu}^{\star 1} \in [\![ \overline{\delta}_{1}, \delta_k ]\!], 
\\ (\sub{\mu},\sub{\alpha})^{\star 1} \in \m^{\star 1}}}} 
  (i'_1)^{d_1+q_1+q'_1} 
\cdots ({i'}_{s-1})^{d_{s-1}+q_{s-1}+q'_{s-1}}
 \ ({i'}_{s})^{q_{s}+q'_{s}} \\ 
 & \qquad \times  ({i'}_{s+1})^{d_{p-1}+q_{s+1}+q'_{s+1}} \cdots ({i'}_{p-1})^{d_{s+1}+q_{p-1}+q'_{p-1}}
 ({i'}_{p})^{d_{p+1}} \cdots ({i'}_{m-2})^{d_{m-1}}. \nonumber
\end{align}
Assume that for some $a' \in \{1, \ldots, N \}$ the equivalence class $\m^{\star a'} = [(\sub{\mu}^{\star a'},\sub{\alpha}^{\star a'})]$ satisfies 
$\deg A_{\m^{\star a'}} \geqslant \deg A_{\m}$. According to the proof of Lemma~\ref{Lem:star+}, there exists a partition $(t_1, \ldots, t_{n'}), t_i >0$,  of $p-s-1$ such that
\begin{align*} 
i^{\star}_{s+1} = i'_{s+1} +
 \cdots + i'_{s+t_1}, \quad 
i^{\star}_{s+2} = i'_{s+t_1+1} + \cdots + i'_{(s+t_1+t_2)}, 
\ldots,
i^{\star}_{s+n'} = i'_{s+t_1+\cdots + t_{n'-1}+1)} + \cdots + i'_{(p-1)}.
\end{align*}
Hence $\he(\sub{\mu}^{\star a'})= (i'_1, \ldots, i'_{s-1}, i'_{s}, i^{\star}_{s+1}, \ldots, i^{\star}_{s+n'}, i'_{p}, \ldots, i'_{m-1})$ with the length of $\m^{\star a'}$, denoted by $m^{\star}$, is equal to $m-p+s+n'$. Here $i^{\star}_{s+j}$ denote the height of the root $(\tilde{\sub{\alpha}}^{\star a'} )^{(s+j)}$.

By Lemma~\ref{lem:aij}, for each $\sub{d}'=(d'_{s+1}, \ldots, d'_{p-1})$ there is a polynomial 
$Q_{n',\sub{t}, \sub{d}'} \in \C[X_1, \ldots, X_{n'}]$, $\sub{t} = (t_1, \ldots, t_{n'})$, of degree 
$d'_{s+1}+ \cdots+ d'_{p-1}+(p-s-1) -n' $ such that 
$$  \sum_{{\tiny \substack{1 \leqslant j \leqslant n' 
\\ 1+t_1+\cdots+t_{j-1} \leqslant l \leqslant t_1+\cdots+t_j
\\ \sum_{l} i'_{s+l} = i^{\star}_{s+j}}}} (i'_{s+1})^{d'_{s+1}} \cdots (i'_{p-1})^{d'_{p-1}} =
Q_{n',\sub{t}, \sub{d}'} (i^{\star}_{s+1}, \ldots, i^{\star}_{s+n'}).$$
\noindent
Set $\sub{d}'_{\sub{q}, \sub{q}'} =(d_{p-1}+q_{s+1}+q'_{s+1}, \ldots, d_{s+1}+q_{p-1}+q'_{p-1})$, ${d}'_{\sub{q}, \sub{q}'}= 
|\sub{d}'_{\sub{q}, \sub{q}' }|$
and write
$$ Q_{n',\sub{t}, \sub{d}'_{\sub{q}, \sub{q}'}} (X_1, \ldots, X_{n'}) 
= \sum_{\tiny {\substack{\sub{j}=(j_1, \ldots, j_{n'})\\ 
 |\sub{j}| \leqslant {d}'_{\sub{q}, \sub{q}' }+p-s-1-n'}}} C_{\sub{j}} X^{j_1}_1 \cdots X^{j_{n'}}_{n'}.$$
It is a polynomial of degree $d_{p-1}+q_{s+1}+q'_{s+1}+ \cdots+ d_{s+1}+q_{p-1}+q'_{p-1}+ p-s-1 - n'$.
Hence,
\begin{align*}
\widetilde{T}_{\sub{d},\m}(k) 
&=   \sum_{{\tiny \substack{\sub{q} \in \N^{p} 
    \\ |\sub{q}|=d_s}}}
    \sum_{{\tiny \substack{\sub{q'} \in \N^{p} 
    \\ |\sub{q'}|=d_p}}} \frac{(d_s)!}{q_1!\ldots q_{p}!}  \frac{(d_p)!}{q'_1!\ldots q'_{p}!}  
    (-1)^{d_s} (2)^{q_1 + \cdots +q_{s-1}+q'_1 + \cdots +q'_{s}}
    \ (2k-2r-1)^{q_p+q'_{p}} \\ 
 & \quad  \times \sum_{{\tiny \substack{(\sub{\mu},\sub{\alpha})^{\star a'} \in \hat{\P}_{m^{\star}}(\delta_k), 
\\ \sub{\mu}^{\star a'} \in [\![ \overline{\delta}_{1}, \delta_k ]\!], 
\\ (\sub{\mu},\sub{\alpha})^{\star a'} \in \m^{\star a'}}}}  
  (i'_1)^{d_1+q_1+q'_1} 
\cdots ({i'}_{s-1})^{d_{s-1}+q_{s-1}+q'_{s-1}}
 \ ({i'}_{s})^{q_{s}+q'_{s}} \\ 
 & \quad \times  \sum_{\tiny {\substack{\sub{j}=(j_1, \ldots, j_{n'})\\ 
 |\sub{j}| \leqslant {d}'_{\sub{q}, \sub{q}' }+p-s-1-n'}}} C_{\sub{j}} \, (i^{\star}_{s+1})^{j_1} \cdots (i^{\star}_{s+n'})^{j_{n'}} \ 
 ({i'}_{p})^{d_{p+1}} \cdots ({i'}_{m-2})^{d_{m-1}}.
\end{align*}
Set  
$\sub{d}_{\sub{q}, \sub{q}'}=(d_1+q_1+q'_1, \ldots, d_{s-1}+q_{s-1}+q'_{s-1}, q_s+q'_s, j_{1}, \ldots, j_{n'}, d_{p+1}, \ldots, d_{m-1}),$
with $|\sub{d}_{\sub{q}, \sub{q}'}|= d+(p-s-1)-n'-q_{p} -q'_p$, and
\begin{align*}
\widetilde{T}_{\sub{d}_{\sub{q}, \sub{q}'},{\m}^{\star a'}}(k) &=
 \sum_{{\tiny \substack{(\sub{\mu},\sub{\alpha})^{\star a'} \in \hat{\P}_{m^{\star}}(\delta_k), 
         \\ \sub{\mu}^{\star a'} \in [\![ \overline{\delta}_{1}, \delta_k ]\!], 
          \\ (\sub{\mu},\sub{\alpha})^{\star a} \in \m^{\star a'}}}}
   (i'_1)^{d_1+q_1+q'_1} 
\cdots ({i'}_{s-1})^{d_{s-1}+q_{s-1}+q'_{s-1}}
 \ ({i'}_{s})^{q_{s}+q'_{s}} \\ 
 & \quad \times  (i^{\star}_{s+1})^{j_1} \cdots (i^{\star}_{s+n'})^{j_{n'}} \ 
 ({i'}_{p})^{d_{p+1}} \cdots ({i'}_{m-2})^{d_{m-1}}.
\end{align*}
Then 
\begin{align} \label{eq:TmK-C-1} \nonumber
\widetilde{T}_{\sub{d},\m}(k) 
&= 
\sum_{{\tiny \substack{\sub{q} \in \N^{p} 
    \\ |\sub{q}|=d_s}}}
    \sum_{{\tiny \substack{\sub{q'} \in \N^{p} 
    \\ |\sub{q'}|=d_p}}} 
   \sum_{\tiny {\substack{\sub{j}=(j_1, \ldots, j_{n'})\\ 
 |\sub{j}| \leqslant {d}'_{\sub{q}, \sub{q}' }+p-s-1-n'}}}    
    \frac{(d_s)!}{q_1!\ldots q_{p}!} 
  \frac{(d_p)!}{q'_1!\ldots q'_{p}!}  
   C_{\sub{j}} (-1)^{d_s} (2)^{q_1 + \cdots +q_{s-1}+q'_1 + \cdots +q'_{s}}
\\ & \quad \times     \ (2k-2r-1)^{q_p+q'_{p}}
 \widetilde{T}_{\sub{d}_{\sub{q}, \sub{q}'},{\m}^{\star a'}}(k).
\end{align}
 
By the induction hypothesis applied to $m^{\star}$ and ${\m}^{\star a'}$, 
we have $T_{\sub{d}, {\m}}(k) = \widetilde{T}_{\sub{d},{\m}}(k)$, where  
$T_{\sub{d}, {\m}}$ is a polynomial of degree 
$< d+m- \deg A_{{\m}}$ for all $k$ 
such that $\{ (\sub{\mu},\sub{\alpha}) \in \hat{\P}_{m}(\delta_k) \; | \; 
\sub{\mu} \in [\![ \overline{\delta}_{1}, \delta_k ]\!] ,\, (\sub{\mu},\sub{\alpha}) \in \m \}$  
is nonempty.

It remains to verify that $T_{\sub{d}, \m} (k) =0$  
 when
 $$\{ (\sub{\mu},\sub{\alpha}) \in \hat{\P}_{m}(\delta_k) \; | \;  
 \sub{\mu} \in [\![ \overline{\delta}_{1}, \delta_k]\!],  (\sub{\mu},\sub{\alpha}) \in \m \} =\varnothing.$$
Observe that this set is never empty if the set 
 $\{ (\sub{\mu},\sub{\alpha})^{\star 1} \in \hat{\P}_{m-1}(\delta_k) \; | \;  
 \sub{\mu}^{\star 1} \in [\![ \overline{\delta}_{1}, \delta_k]\!], (\sub{\mu},\sub{\alpha})^{\star 1} \in \m^{\star 1}  \}$ is nonempty.
For any $(\sub{\mu},\sub{\alpha})^{\star 1} \in \m^{\star 1}$ where $\he(\sub{\mu}^{\star 1}) = \sub{i}'$, we have 
 $i'_s= \delta_l - \delta_j = \eps_l-\eps_j$, the source of $i'_{s+1}$ is $\delta_j= \eps_j$ and the target of $i'_{p-1}$ is $\delta_i= \eps_i$.  Thus we can always reconstruct the initial path $(\sub{\mu},\sub{\alpha})$ by taking 
 $i_s = \delta_l - \overline{\delta}_i= \eps_l+\eps_i$ and $i_p = \overline{\delta}_j-\delta_i= -\eps_j-\eps_i$. This is possible since $(\alpha^{\star 1})^{(1)}, \ldots, (\alpha^{\star 1})^{(p-1)} $ is entirely contained in $[\![ \delta_r, \delta_k ]\!]$.  
Hence, the set 
$\{ (\sub{\mu},\sub{\alpha}) \in \hat{\P}_{m}(\delta_k) \; | \;  
 \sub{\mu} \in [\![ \overline{\delta}_{1}, \delta_k]\!],  (\sub{\mu},\sub{\alpha}) \in \m \}$ is empty if the set $\{ (\sub{\mu},\sub{\alpha})^{\star 1} \in \hat{\P}_{m-1}(\delta_k) \; | \;  
 \sub{\mu}^{\star 1} \in [\![ \overline{\delta}_{1}, \delta_k]\!], (\sub{\mu},\sub{\alpha})^{\star 1} \in \m^{\star 1}  \} = \varnothing$. Furthermore, the set  $\{ (\sub{\mu},\sub{\alpha})^{\star a'} \in \hat{\P}_{m^{\star}}(\delta_k) \; | \;  
 \sub{\mu}^{\star a'} \in [\![ \overline{\delta}_{1}, \delta_k]\!], (\sub{\mu},\sub{\alpha})^{\star a'} \in \m^{\star a'}  \}$ is empty too. 
 Our induction hypothesis 
 says that, in this case, $T_{\sub{d}, \m} (k)
 = {T}_{\sub{d}',{\m}^{\star a'}}(k) = 0$
 for any $\sub{d}' \in \Z_{\geqslant 0}^{m^{\star}}.$  
This proves the lemma for case I(1). 

Observe that our above strategy works for case (5) as well, since by Lemma~\ref{Lem:star+} we also have 
\begin{align*}
(\sub{\mu}, \sub{\alpha})^{\star a} = (\sub{\mu}',\sub{\alpha}')\star 
\big( (\delta_l,\delta_j), (\alpha^{(s)}+\alpha^{(p)})\big) 
\star (\tilde{\sub{\mu}}^{\star a},\tilde{\sub{\alpha}}^{\star a})
\star(\sub{\mu}'',\sub{\alpha}''),
\end{align*}
and so the arguments are quite similar. 
Since the calculations are similar, we omit the detail. Hence we conclude 
the case (5) as in the first case.

(2) Assume now that
$\alpha^{(p)} = \overline{\delta}_j - \delta_i$ and $\alpha^{(s)}=\delta_j - \overline{\delta}_l$, with $j < l < i$.
Recall from Lemma~\ref{Lem:star+}, for  all $(\sub{\mu}, \sub{\alpha})^{\star a} \in \m^{\star a}$,
$a \in \{1, \ldots, N \},$
\begin{align*}
(\sub{\mu}, \sub{\alpha})^{\star a} = (\sub{\mu}',\sub{\alpha}')\star 
 (\tilde{\sub{\mu}}^{\star a},\tilde{\sub{\alpha}}^{\star a})
\star(\sub{\mu}'',\sub{\alpha}''),
\end{align*}
where 
$(\tilde{\sub{\mu}}^{\star a},\tilde{\sub{\alpha}}^{\star a})$ 
is a path of length $\leqslant p-s$ between 
$\delta_j$ and $\delta_i$ whose roots $\tilde{\sub{\alpha}}^{\star a}$ are 
sums among $(\alpha^{(p-1)}, \ldots, \alpha^{(s+1)}, \alpha^{(s)}+\alpha^{(p)})$, 
$\sub{\alpha}'= (\alpha^{(1)}, \ldots, \alpha^{(s-1)})$ and 
$\sub{\alpha}''= (\alpha^{(p+1)}, \ldots, \alpha^{(m)})$.
We have $\he	(\sub{\mu}^{\star 1}) = (i_1, \ldots, i_{s-1}, i_{p-1}, \ldots, i_{s+1},  i_s+i_p, i_{p+1}, \ldots, i_m)$. 

Set $\sub{i}' = (i'_1, \ldots, i'_{m-1})$, so $i'_{p-1} = i_s + i_p$.
Note that $\alpha^{(p)} = \overline{\delta}_j - \delta_i$ and $\alpha^{(s)}=\delta_j - \overline{\delta}_l$, 
with $j < l < i$. Thus,
$ j = k+ i'_1+ \cdots+ i'_{s-1}, i = l + i'_{p-1}  \text{ and }  l = k+ i'_1+ \cdots+ i'_{p-2}.$
Hence, for all $(\sub{\mu}, \sub{\alpha}) \in \m$ the heights $\sub{i} := \he(\sub{\mu})$  
can be expressed in term of $\sub{i}'$ as follows:
\begin{align*}
& i_1 = i'_1, \, \ldots, \, i_{s-1} = i'_{s-1}, 
\ i_{s+1} = i'_{p-1}, \, \ldots, \,  i_{p-1} = i'_{s+1}, 
\ i_{p+1} = i'_{p}, \, \ldots, \, i_{m-1} = i'_{m-2}, & \\
& i_{s} = 2r - 2k -(j+l) + 1 = 2r +1 - 2k-2(i'_1+ \cdots+ i'_{s-1}) - (i'_{s}+ \cdots+ i'_{p-2}),  &\\
& i_{p} = i'_{p-1} - i_s = 2k - 2r -1 +2(i'_1+ \cdots+ i'_{s-1}) + (i'_{s}+ \cdots+ i'_{p-1}).
\end{align*}
In the same manner as in \eqref{eq:tildeTmk} we get,
\begin{align} \label{eq:tildeTmk_2} \nonumber
\widetilde{T}_{\sub{d},\m}(k) 
&=   \sum_{{\tiny \substack{\sub{q} \in \N^{p-1} 
    \\ |\sub{q}|=d_s}}}
    \sum_{{\tiny \substack{\sub{q'} \in \N^{p} 
    \\ |\sub{q'}|=d_p}}} C_{\sub{q},\sub{q}'}
    \ (2k-2r-1)^{q_{p-1}+q'_{p}} \sum_{{\tiny \substack{(\sub{\mu},\sub{\alpha})^{\star 1} \in \hat{\P}_{m-1}(\delta_k), 
\\ \sub{\mu}^{\star 1} \in [\![ \overline{\delta}_{1}, \delta_k ]\!], 
\\ (\sub{\mu},\sub{\alpha})^{\star 1} \in \m^{\star 1}}}} 
  (i'_1)^{d_1+q_1+q'_1} \\ 
 & \quad \times 
\cdots \times \ ({i'}_{s})^{d_{p-1}+q_{s}+q'_{s}}  \cdots ({i'}_{p-1})^{q'_{p-1}}
 ({i'}_{p})^{d_{p+1}} \cdots ({i'}_{m-2})^{d_{m-1}}.
\end{align}

Assume that the equivalence class $\m^{\star a'} = [(\sub{\mu}^{\star a'},\sub{\alpha}^{\star a'})]$ satisfies 
$\deg A_{\m^{\star a'}} \geqslant \deg A_{\m}$, for some $a' \in \{1, \ldots, N \}$. 
By Lemma~\ref{Lem:star+}, there exists a partition
$(t_1, \ldots, t_{n'} ), t_i >0$, of $p-s$ such that
\begin{align*} 
i^{\star}_{s} &= i'_{s} + \cdots + i'_{s+t_1}, \\ 
\vdots \\
i^{\star}_{s+n'-2} &= i'_{s+t_1+\cdots+t_{(n'-2)}+1} + \cdots + i'_{s+t_1+\ldots+t_{n'-1}}, \\
i^{\star}_{s+n'-1} &= i'_{s+t_1+\ldots+t_{n'-1}+1} + \cdots + i'_{p-1}.
\end{align*}
Hence $\he(\sub{\mu}^{\star a'})= (i'_1, \ldots, i'_{s-1}, i^{\star}_{s}, i^{\star}_{s+1}, \ldots, i^{\star}_{s+n'-1}, i'_{p}, \ldots, i'_{m-1})$ and the length of $\m^{\star a'}$, denoted by $m^{\star}$, is equal to $m-(p-s)+n'-2$. Here $i^{\star}_{s+j}$ denote the height of the root $(\tilde{\sub{\alpha}}^{\star a'} )^{(s+j)}$.

By Lemma~\ref{lem:aij} and \eqref{eq:tildeTmk_2} we get,
\begin{align*}
\widetilde{T}_{\sub{d},\m}(k) 
&=   \sum_{{\tiny \substack{\sub{q} \in \N^{p-1} 
    \\ |\sub{q}|=d_s}}}
    \sum_{{\tiny \substack{\sub{q'} \in \N^{p} 
    \\ |\sub{q'}|=d_p}}} C_{\sub{q},\sub{q}'}
    \ (2k-2r-1)^{q_{p-1}+q'_{p}} 
     \sum_{{\tiny \substack{(\sub{\mu},\sub{\alpha})^{\star a'} \in \hat{\P}_{m^{\star}}(\delta_k), 
\\ \sub{\mu}^{\star a'} \in [\![ \overline{\delta}_{1}, \delta_k ]\!], 
\\ (\sub{\mu},\sub{\alpha})^{\star a'} \in \m^{\star a'}}}} 
  (i'_1)^{d_1+q_1+q'_1} \cdots  \\ 
 & \quad \times (i'_{s-1})^{d_{s-1}+q_{s-1}+q'_{s-1}} 
\ Q_{n',\sub{t}, \sub{d}'_{\sub{q}, \sub{q}'}} (i^{\star}_{s}, \ldots, i^{\star}_{s+n'-1})
 ({i'}_{p})^{d_{p+1}} \cdots ({i'}_{m-2})^{d_{m-1}},
\end{align*}
where $Q_{n',\sub{t}, \sub{d}'_{\sub{q}, \sub{q}'}}$ is a polynomial of degree 
$d_{s+1}+\cdots+d_{p-1}+q_s+\cdots+q_{p-2}+q'_s+\cdots+q'_{p-1}+p-s-n'$.
Repeated steps as in (1) enables us to write
\begin{align*}
{T}_{\sub{d},\m}(k) 
&= 
\sum_{{\tiny \substack{\sub{q} \in \N^{p-1} 
    \\ |\sub{q}|=d_s}}}
    \sum_{{\tiny \substack{\sub{q'} \in \N^{p} 
    \\ |\sub{q'}|=d_p}}} \sum_{\tiny {\substack{\sub{j}=(j_1, \ldots, j_{n'})\\ 
 |\sub{j}| \leqslant {d}'_{\sub{q}, \sub{q}' }+p-s-1-n'}}} C_{\sub{j}} C_{\sub{q},\sub{q}'} \, (2k-2r-1)^{q_{p-1}+q'_{p}} 
{T}_{\sub{d}'_{\sub{q}, \sub{q}' },{\m}^{\star a'}}(k).
\end{align*}
As in the first case, 
we get that 
$T_{\sub{d}, {\m}}(k) = \widetilde{T}_{\sub{d},{\m}}(k)$ and 
$T_{\sub{d}, {\m}}$ is a polynomial of degree 
$< d+m- \deg A_{{\m}}$ for all $k$ 
such that $\{ (\sub{\mu},\sub{\alpha}) \in \hat{\P}_{m}(\delta_k) \; | \; 
\sub{\mu} \in [\![ \overline{\delta}_{1}, \delta_k ]\!] ,\, (\sub{\mu},\sub{\alpha}) \in \m \}$  
is nonempty.

It remains to verify that $T_{\sub{d}, \m} (k) =0$  
 when
 $$\{ (\sub{\mu},\sub{\alpha}) \in \hat{\P}_{m}(\delta_k) \; | \;  
 \sub{\mu} \in [\![ \overline{\delta}_{1}, \delta_k]\!],  (\sub{\mu},\sub{\alpha}) \in \m \} =\varnothing.$$
Observe that this set is never empty if 
 $\{ (\sub{\mu},\sub{\alpha})^{\star 1} \in \hat{\P}_{m-1}(\delta_k) \; | \;  
 \sub{\mu}^{\star 1} \in [\![ \overline{\delta}_{1}, \delta_k]\!],(\sub{\mu},\sub{\alpha})^{\star 1} \in \m^{\star 1}  \}$ is nonempty.
For any $(\sub{\mu},\sub{\alpha})^{\star 1} \in \m^{\star 1}$ where $\he(\sub{\mu}^{\star 1}) = \sub{i}'$, we have 
 $i'_{p-1}= \delta_l - \delta_j = \eps_l-\eps_j$, the source of $i'_{s}$ is $\delta_j= \eps_j$. Thus we can always reconstruct the initial path $(\sub{\mu},\sub{\alpha})$ by taking 
 $i_s = \delta_j - \overline{\delta}_l= \eps_j+\eps_l$ and $i_p = \overline{\delta}_j-\delta_i= -\eps_j-\eps_i$. This is possible since $(\alpha^{\star 1})^{(1)}, \ldots, (\alpha^{\star 1})^{(p-1)} $ is entirely contained in $[\![ \delta_r, \delta_k ]\!]$. 
Hence, the set 
$\{ (\sub{\mu},\sub{\alpha}) \in \hat{\P}_{m}(\delta_k) \; | \;  
 \sub{\mu} \in [\![ \overline{\delta}_{1}, \delta_k]\!],  (\sub{\mu},\sub{\alpha}) \in \m \}$ is empty if $\{ (\sub{\mu},\sub{\alpha})^{\star 1} \in \hat{\P}_{m-1}(\delta_k) \; | \;  
 \sub{\mu}^{\star 1} \in [\![ \overline{\delta}_{1}, \delta_k]\!], (\sub{\mu},\sub{\alpha})^{\star 1} \in \m^{\star 1}  \} = \varnothing$. Furthermore, the set  $\{ (\sub{\mu},\sub{\alpha})^{\star a'} \in \hat{\P}_{m^{\star}}(\delta_k) \; | \;  
 \sub{\mu}^{\star a'} \in [\![ \overline{\delta}_{1}, \delta_k]\!], (\sub{\mu},\sub{\alpha})^{\star a'} \in \m^{\star a'}  \}$ is empty too. 
 Our induction hypothesis 
 says that, in this case, $$T_{\sub{d}, \m} (k)
 = {T}_{\sub{d}',{\m}^{\star a'}}(k) = 0,$$
 for any $\sub{d}' \in \Z_{\geqslant 0}^{m^{\star}}.$  
This proves the lemma for case (2). 

(3) Assume that
$\alpha^{(p)} = \overline{\delta}_i - \overline{\delta}_j$ and $\alpha^{(s)} =\delta_i - \overline{\delta}_l$, with $i<j$ and $i<l$.
\\Recall from Lemma~\ref{Lem:star+}, for  all $(\sub{\mu}, \sub{\alpha})^{\star a} \in \m^{\star a}$,
$a \in \{1, \ldots, N \},$
\begin{align*}
(\sub{\mu}, \sub{\alpha})^{\star a} = (\sub{\mu}',\sub{\alpha}')\star 
 (\tilde{\sub{\mu}}^{\star a},\tilde{\sub{\alpha}}^{\star a})
\star(\sub{\mu}'',\sub{\alpha}''),
\end{align*}
where 
$(\tilde{\sub{\mu}}^{\star a},\tilde{\sub{\alpha}}^{\star a})$ 
is a path of length $\leqslant p-s$ between 
$\delta_j$ and $\delta_i$ whose roots $\tilde{\sub{\alpha}}^{\star a}$ are 
sums among $(\alpha^{(p-1)}, \ldots, \alpha^{(s+1)}, \alpha^{(s)}+\alpha^{(p)})$, 
$\sub{\alpha}'= (\alpha^{(1)}, \ldots, \alpha^{(s-1)})$ and 
$\sub{\alpha}''= (\alpha^{(p+1)}, \ldots, \alpha^{(m)})$.

Note that the order of roots in $\sub{\alpha}^{\star 1}$ in this case may differ than in the case (2), depending on the situation of $\alpha^{(p-1)}$ and $\alpha^{(p)}$ as described in the proof of Lemma~\ref{Lem:star+}(3) and (4).
But in all that events 
$\{ {\alpha}'_{s}, \ldots, {\alpha}'_{p-1} \} 
= \{ \alpha^{(p-1)}, \ldots, \alpha^{(s+1)}, \alpha^{(s)}+\alpha^{(p)} \}$. 
So we can always express the heights in $\sub{i}$ in term of $\sub{i}'$. With the same arguments as in  \eqref{eq:tildeTmk_2} we get similar equation and so we omit the detail.

Assume that for some $a' \in \{1, \ldots, N \}$ the equivalence class $\m^{\star a'} = [(\sub{\mu}^{\star a'},\sub{\alpha}^{\star a'})]$ satisfies 
$\deg A_{\m^{\star a'}} \geqslant \deg A_{\m}$. 
Note also that in this case the roots in $\tilde{\sub{\alpha}}^{\star a'}$ is a sum amongs $({\alpha}'_{s}, \ldots, {\alpha}'_{p-1} )$, but not necessary in the sequential order. 
Then we are doing the similar calculation and so we omit the detail. Hence we conclude as in (2). 

Observe that our method above will work for case (4) as well and so we omit the detail. 
Hence we conclude for this case as in (2).
 
It remains to verify that $T_{\sub{d}, \m} (k) =0$  
 when
 $$\{ (\sub{\mu},\sub{\alpha}) \in \hat{\P}_{m}(\delta_k) \; | \;  
 \sub{\mu} \in [\![ \overline{\delta}_{1}, \delta_k]\!],  (\sub{\mu},\sub{\alpha}) \in \m \} =\varnothing.$$
First, we consider the case where $\he(\mu^{\star 1}) = (i_1, \ldots, i_{s-1}, i_{p-1}, \ldots, i_{s+1}, i_{s}+i_{p}, i_{p+1}, \ldots, i_{m}).$ The other case will work similarly and so we omit the detail.
Observe that the set in the statement is never empty if the set 
 $\{ (\sub{\mu},\sub{\alpha})^{\star 1} \in \hat{\P}_{m-1}(\delta_k) \; | \;  
 \sub{\mu}^{\star 1} \in [\![ \overline{\delta}_{1}, \delta_k]\!], (\sub{\mu},\sub{\alpha})^{\star 1} \in \m^{\star 1}  \}$ is nonempty.
For any $(\sub{\mu},\sub{\alpha})^{\star 1} \in \m^{\star 1}$ where $\he(\sub{\mu}^{\star 1}) = \sub{i}'$, we have 
 $i'_{p-1}= \overline{\delta}_l - \overline{\delta}_j = \eps_j-\eps_l$, the source of $i'_{s}$ is $\delta_i= \eps_i$ and the target of $i'_{p-2}$ is $\overline{\delta}_l= -\eps_l$.  Thus we can always reconstruct the initial path $(\sub{\mu},\sub{\alpha})$ by taking 
 $i_s = \delta_i - \delta_l= \eps_i-\eps_l$ and $i_p = \overline{\delta}_i-\overline{\delta}_j= \eps_j-\eps_i$. 
Hence, the set 
$\{ (\sub{\mu},\sub{\alpha}) \in \hat{\P}_{m}(\delta_k) \; | \;  
 \sub{\mu} \in [\![ \overline{\delta}_{1}, \delta_k]\!],  (\sub{\mu},\sub{\alpha}) \in \m \}$ is empty if $\{ (\sub{\mu},\sub{\alpha})^{\star 1} \in \hat{\P}_{m-1}(\delta_k) \; | \;  
 \sub{\mu}^{\star 1} \in [\![ \overline{\delta}_{1}, \delta_k]\!], (\sub{\mu},\sub{\alpha})^{\star 1} \in \m^{\star 1}  \} = \varnothing$. Furthermore, the set  $\{ (\sub{\mu},\sub{\alpha})^{\star a'} \in \hat{\P}_{m^{\star}}(\delta_k) \; | \;  
 \sub{\mu}^{\star a'} \in [\![ \overline{\delta}_{1}, \delta_k]\!], (\sub{\mu},\sub{\alpha})^{\star a'} \in \m^{\star a'}  \}$ is empty too. 
 Our induction hypothesis 
 says that, in this case, $$T_{\sub{d}, \m} (k)
 = {T}_{\sub{d}',{\m}^{\star a'}}(k) = 0,$$
 for any $\sub{d}' \in \Z_{\geqslant 0}^{m^{\star}}.$

$\ast$ Case $\alpha^{(s)} =- \alpha^{(p)}$. 

 Lemma~\ref{Lem:weightC} shows that
{\small \begin{align} \label{eq:key3}
   A_{\m} (X_1, \ldots, X_{m-1}) = \bar{K}_{\m}^{\#} A_{\m^\#} (X_1, \ldots, X_{p-2}, X_{p-1}+ X_p, \ldots, X_{m-1}) 
 + (P_{\alpha^{(s)}} (X_s) - c_s) \sum_{a=1}^{N}  K_{\m}^{\star a} 
 A_{\m^{\star a}} (\sub{X}^{\star a}),
 \end{align}}
where $\bar{K}_{\m}^{\#} =
\begin{cases}
{K}_{\m}^{\#} &\text{ if }  \alpha^{(p-1)} + \alpha^{(p)} \neq 0, \\
(P_{\alpha^{(p-1)}} (X_{p-1})- c_{p-1}) &\text{ if }  \alpha^{(p-1)} + \alpha^{(p)} = 0.
\end{cases}$

If $\deg A_{\m} \leqslant \deg A_{\m^{\#}}$ (respectively, $\deg A_{\m} \leqslant \deg A_{\m^{\#}}+1$) 
for $ \alpha^{(p-1)} + \alpha^{(p)} \neq 0$ (respectively, $ \alpha^{(p-1)} + \alpha^{(p)} = 0$)
then using the same strategy as in Lemma~\ref{Lem:T_m}, which is by expressing 
the heights of $\sub{i}$ in term of $\sub{i}^{\#}$, we get the statement. 
Therefore we can assume that $\deg A_{\m} > \deg A_{\m^{\#}}$ (respectively,
$\deg A_{\m} > \deg A_{\m^{\#}}+1$) if $ \alpha^{(p-1)} + \alpha^{(p)} \neq 0$
(respectively, $ \alpha^{(p-1)} + \alpha^{(p)} = 0$). 
This assumption means that for some $a \in \{1, \ldots, N \}$,
$\deg A_{\m^{\star a}}+1 \geqslant \deg A_{\m}$. 
 By Lemma~\ref{lem:types}, the possibilities for $\alpha^{(p)}$ and $\alpha^{(s)}$
 which satisfying the assumptions $\alpha^{(s)} + \alpha^{(p)} =0$
 are the following:
 \begin{itemize}
  \item[(1)] $\alpha^{(p)} =  \overline{\delta}_j-\delta_i$ 
  and $\alpha^{(s)}= \delta_j-\overline{\delta}_i$, with $j<i$,
  \item[(2)] $\alpha^{(p)} =  \overline{\delta}_i-\overline{\delta}_j$ 
  and $\alpha^{(s)} = \delta_i -{\delta}_j$, with $i<j$.
 \end{itemize}
\noindent

Note that in all those cases, for all $a \in \{1, \ldots, N \}$, 
$m-p+s \leqslant \he(\m^{\star a}) \leqslant m-2$. Possibly changing the numbering of the equivalence class $\m^{\star a}$, one can assume throughout this proof that $\m^{\star 1} = [(\sub{\mu}^{\star 1},\sub{\alpha}^{\star 1})]$ is the equivalence class of the star paths with the longest length and set $\sub{i} := \he(\m)$ and $\sub{i}' := \he(\m^{\star 1})$. 

\emergencystretch 3em
(1) We first consider the case where $\alpha^{(p)} =  \overline{\delta}_j-\delta_i$ and $\alpha^{(s)}= \delta_j-\overline{\delta}_i$, with $j<i$. 
Recall from Lemma~\ref{Lem:star0}, for  all $(\sub{\mu}, \sub{\alpha})^{\star a} \in \m^{\star a}$,
$a \in \{1, \ldots, N \},$
\begin{align*}
(\sub{\mu}, \sub{\alpha})^{\star a} = (\sub{\mu}',\sub{\alpha}')\star
(\tilde{\sub{\mu}}^{\star a},\tilde{\sub{\alpha}}^{\star a})
\star(\sub{\mu}'',\sub{\alpha}''),
\end{align*}
where 
$(\tilde{\sub{\mu}}^{\star a},\tilde{\sub{\alpha}}^{\star a})$ 
is a path of length $\leqslant p-s-1$ between 
$\delta_j$ and $\delta_i$ whose roots $\tilde{\sub{\alpha}}^{\star a}$ are 
sums among $(\alpha^{(p-1)}, \ldots, \alpha^{(s+1)})$, $\sub{\alpha}'= (\alpha^{(1)}, \ldots, \alpha^{(s-1)})$ and 
$\sub{\alpha}''= (\alpha^{(p+1)}, \ldots, \alpha^{(m)})$.
We have $\he(\sub{\mu}^{\star 1}) = (i_1, \ldots, i_{s-1}, i_{p-1}, \ldots, i_{s+1}, i_{p+1}, \ldots, i_m)$. 

Set $\sub{i}' = (i'_1, \ldots, i'_{m-2})$.  Note that $\alpha^{(p)} = \overline{\delta}_j - \delta_i$ and $\alpha^{(s)}=\delta_j - \overline{\delta}_i$, with $j < i$. Thus,
$$ j = k+ i'_1+ \cdots+ i'_{s-1} \ \text{ and } \ i = k+ i'_1+ \cdots+ i'_{p-2}.$$
Hence, for all $(\sub{\mu}, \sub{\alpha}) \in \m$ the heights in $\sub{i}$  
can be expressed in term of $\sub{i}'$ as follows:
\begin{align*}
& i_1 = i'_1, \  \ldots, \  i_{s-1} = i'_{s-1}, \
 i_{s+1} = i'_{p-2}, \  \ldots,  \  i_{p-1} = i'_{s}, \
 i_{p+1} = i'_{p-1}, \  \ldots,  \  i_{m-1} = i'_{m-3}, & \\
& i_{s} = 2r +1 - 2k-2(i'_1+ \cdots+ i'_{s-1}) - (i'_{s}+ \cdots+ i'_{p-2}),  &\\
& i_{p} = - i_s = 2k - 2r -1 +2(i'_1+ \cdots+ i'_{s-1}) + (i'_{s+1}+ \cdots+ i'_{p-2}).
\end{align*}
With the same arguments as in \eqref{eq:tildeTmk} we get,
\begin{align} \label{eq:tildeTmk_3} \nonumber
\widetilde{T}_{\sub{d},\m}(k) 
&=   \sum_{{\tiny \substack{\sub{q} \in \N^{p-1} 
    \\ |\sub{q}|=d_s}}}
    \sum_{{\tiny \substack{\sub{q'} \in \N^{p-1} 
    \\ |\sub{q'}|=d_p}}} C_{\sub{q},\sub{q}'}
    \ (2k-2r-1)^{q_{p-1}+q'_{p-1}} \sum_{{\tiny \substack{(\sub{\mu},\sub{\alpha})^{\star 1} \in \hat{\P}_{m-2}(\delta_k), 
\\ \sub{\mu}^{\star 1} \in [\![ \overline{\delta}_{1}, \delta_k ]\!], 
\\ (\sub{\mu},\sub{\alpha})^{\star 1} \in \m^{\star 1}}}} 
  (i'_1)^{d_1+q_1+q'_1} \\ 
 & \quad \times 
\cdots \times \ ({i'}_{s})^{d_{p-1}+q_{s}+q'_{s}}  \cdots ({i'}_{p-2})^{d_{s+1}+q_{p-2}+q'_{p-2}}
 ({i'}_{p-1})^{d_{p+1}} \cdots ({i'}_{m-3})^{d_{m-1}}.
\end{align}
Assume that the equivalence class $\m^{\star a'} = [(\sub{\mu}^{\star a'},\sub{\alpha}^{\star a'})]$ satisfies 
$\deg A_{\m^{\star a'}} +1\ge \deg A_{\m}$, for some $a' \in \{1, \ldots, N \}$. 
According to the proof of Lemma~\ref{Lem:star0}, there exists a partition
$(t_1, \ldots, t_{n'} ), t_i >0$, of $p-s-1$ such that
\begin{align*} 
i^{\star}_{s} &= i'_{s} + \cdots + i'_{s+t_1}, \\ 
\vdots \\
i^{\star}_{s+n'-2} &= i'_{s+t_1+\cdots+t_{(n'-2)}+1} + \cdots + i'_{s+t_1+\ldots+t_{n'-1}}, \\
i^{\star}_{s+n'-1} &= i'_{s+t_1+\ldots+t_{n'-1}+1} + \cdots + i'_{p-2}.
\end{align*}
Hence $\he(\sub{\mu}^{\star a'})= (i'_1, \ldots, i'_{s-1}, i^{\star}_{s}, i^{\star}_{s+1}, \ldots, i^{\star}_{s+n'-1}, i'_{p-1}, \ldots, i'_{m-3})$ and the length of $\m^{\star a'}$, denoted by $m^{\star}$, is equal to $m-p+s+n'-1$. Here $i^{\star}_{s+j}$ denote the height of the root $(\tilde{\sub{\alpha}}^{\star a'} )^{(s+j)}$.

According to Lemma~\ref{lem:aij} and \eqref{eq:tildeTmk_3} and from the arguments of above cases it follows that
\begin{align*}
&\widetilde{T}_{\sub{d},\m}(k) 
=   \sum_{{\tiny \substack{\sub{q} \in \N^{p-1} 
    \\ |\sub{q}|=d_s}}}
    \sum_{{\tiny \substack{\sub{q'} \in \N^{p-1} 
    \\ |\sub{q'}|=d_p}}} C_{\sub{q},\sub{q}'}
    \ (2k-2r-1)^{q_{p-1}+q'_{p-1}} 
     \sum_{{\tiny \substack{(\sub{\mu},\sub{\alpha})^{\star a'} \in \hat{\P}_{m^{\star}}(\delta_k), 
\\ \sub{\mu}^{\star a'} \in [\![ \overline{\delta}_{1}, \delta_k ]\!], 
\\ (\sub{\mu},\sub{\alpha})^{\star a'} \in \m^{\star a'}}}} 
  (i'_1)^{d_1+q_1+q'_1} \cdots  \\ 
 & \quad \times (i'_{s-1})^{d_{s-1}+q_{s-1}+q'_{s-1}} 
\ \sum_{\tiny {\substack{\sub{j}=(j_1, \ldots, j_{n'})\\ 
 |\sub{j}| \leqslant {d}'_{\sub{q}, \sub{q}' }+p-s-1-n'}}} C_{\sub{j}} (i^{\star}_{s})^{j_{1}}\cdots (i^{\star}_{s+n'-1})^{j_{n'}}
 ({i'}_{p-1})^{d_{p+1}} \cdots ({i'}_{m-3})^{d_{m-1}},
\end{align*}
where ${d}'_{\sub{q}, \sub{q}' }= d_{s+1}+q_{p-2}+q'_{p-2}+\cdots+d_{p-1}+q_s+q'_{s}$.
Set 
$\sub{d}_{\sub{q}, \sub{q}'}=(d_1+q_1+q'_1, \ldots, d_{s-1}+q_{s-1}+q'_{s-1}, j_{1},\ldots, j_{n'}, d_{p+1}, \ldots, d_{m-1}),$
with $|\sub{d}_{\sub{q}, \sub{q}'}|= d+(p-s-1)-n'-q_{p-1} -q'_{p-1}$, and
\begin{align*}
&\widetilde{T}_{\sub{d}_{\sub{q}, \sub{q}'},{\m}^{\star a'}}(k)=
 \sum_{{\tiny \substack{(\sub{\mu},\sub{\alpha})^{\star a'} \in \hat{\P}_{m^{\star}}(\delta_k), 
\\ \sub{\mu}^{\star a'} \in [\![ \overline{\delta}_{1}, \delta_k ]\!], 
\\ (\sub{\mu},\sub{\alpha})^{\star a'} \in \m^{\star a'}}}} 
 (i'_1)^{d_1+q_1+q'_1} 
\cdots (i^{\star}_{s})^{j_{1}}\cdots (i^{\star}_{s+n'-1})^{j_{n'}}
 ({i'}_{p-1})^{d_{p+1}} \cdots ({i'}_{m-3})^{d_{m-1}}.
\end{align*}
Then 
\begin{align} \label{eq:TmK-C-3}
\widetilde{T}_{\sub{d},\m}(k) 
&= 
\sum_{{\tiny \substack{\sub{q} \in \N^{p-1} 
    \\ |\sub{q}|=d_s}}}
    \sum_{{\tiny \substack{\sub{q'} \in \N^{p-1} 
    \\ |\sub{q'}|=d_p}}} \sum_{\tiny {\substack{\sub{j}=(j_1, \ldots, j_{n'})\\ 
 |\sub{j}| \leqslant {d}'_{\sub{q}, \sub{q}' }+p-s-1-n'}}} C_{\sub{j}} C_{\sub{q},\sub{q}'} 
  (2k-2r-1)^{q_{p-1}+q'_{p-1}} 
 \widetilde{T}_{\sub{d}_{\sub{q}, \sub{q}'},{\m}^{\star a'}}(k).
\end{align}
By the induction hypothesis applied to $m^{\star}$ and ${\m}^{\star a'}$, 
we have
$T_{\sub{d}, {\m}}(k) = \widetilde{T}_{\sub{d},{\m}}(k)$ and 
$T_{\sub{d}, {\m}}$ is a polynomial of degree 
$< d+m- \deg A_{{\m}}$ for all $k$ 
such that $\{ (\sub{\mu},\sub{\alpha}) \in \hat{\P}_{m}(\delta_k) \; | \; 
\sub{\mu} \in [\![ \overline{\delta}_{1}, \delta_k ]\!] ,\, (\sub{\mu},\sub{\alpha}) \in \m \}$  
is nonempty.

If 
 $\{ (\sub{\mu},\sub{\alpha}) \in \hat{\P}_{m}(\delta_k) \; | \;  
 \sub{\mu} \in [\![ \overline{\delta}_{1}, \delta_k]\!],  (\sub{\mu},\sub{\alpha}) \in \m \} =\varnothing$ then 
 by the same kind of reasoning as before, we get 
 ${T}_{\sub{d}, \m} (k) =0$.
This proves the lemma for the case (1). 
Observe that our strategy above works for the case (2) as well. 
So we omit the detail. \qed
\end{proof}

\begin{lemma}       \label{Lem:wtloopC} 
Let $m \in \Z_{>0} $ 
and $\m \in {\E}_m$ with $n$ zeroes  
in positions $j_1,\ldots,j_n$ ($n\le m$). 
There are some classes $\tilde{\m}_1,\ldots,\tilde{\m}_N$ without zero 
of length $\ell_h:=\ell(\tilde{\m}_h)\leqslant m-n$ and a polynomial 
$B_{\m}$ of degree $\leqslant n$
such that 
$$\sum_{\sub{\beta} \in \Pi_{\sub{\mu}^{(\sub{j})}} } 
\wt\left((\tilde{\sub{\mu}},\tilde{\sub{\alpha}})_{\sub{j};\sub{\beta}}  \right) 
= B_{\m}(k) \sum_{h=1}^N K_{\tilde{\m}_h} A_{\tilde{\m}_h}(\tilde{\i}_1, \ldots, \tilde{\i}_{\ell_h-1})
,$$ 
where $\he(\tilde{\sub{\mu}_h})=(\tilde{i}_1, \ldots, \tilde{i}_{\ell_h})$. 
Moreover, we have 
$$B_{\m}(X) = \sum_{j=0}^n C_{\m}^{(n-j)}
(\tilde{\i}_1, \ldots, \tilde{\i}_{m-n-1})
X^j,$$ 
where 
$C_{\m}^{(0)}=(-1)^n$ 
and $C_{\m}^{(l)}\in \C[X_1,\ldots,X_{m-n-1}]$ has total degree 
$\leqslant l$ for $l=1,\ldots,n$.  
In particular, if $n=0$, we have $B_{\m}(X)=1$. 
\end{lemma}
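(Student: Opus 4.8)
The proof goes by induction on $m$, following closely the strategy of Lemma~\ref{Lem:weight2} (the analogous statement for type $A$) but using the combinatorial toolkit developed for $\sp_{2r}$: Lemma~\ref{Lem0:cut}'s analog Proposition~\ref{Pro:Conc1}, together with Lemma~\ref{lem:loops}, Lemma~\ref{lem:symC} and Lemma~\ref{Lem:weightC}. The base cases $n = m$ and $n = 0$ are already known: if $n = m$ the weighted path is a pure concatenation of loops and the statement follows from Lemma~\ref{lem:symC} (with the right normalization, $B_{\m}$ being the relevant power of $(r - X + 1)$), while if $n = 0$ the assertion is exactly Lemma~\ref{Lem:weightC} combined with Remark~\ref{rem:wtC}, so $B_{\m} = 1$. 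Hence we may assume $0 < n < m$ and that the lemma holds for all lengths $m' < m$.

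The inductive step splits according to the structure of $p := p(\m)$ and $q := q(\m)$, mirroring the case analysis of Proposition~\ref{Pro:Conc1} and of Lemma~\ref{Lem:weight2}. First, by Lemma~\ref{lem:loops} one can factor out the loops that are \emph{not} at the first turning-back position, reducing the bookkeeping to the effect of cutting the vertex $\mu^{(p)}$. The main case is $i_p = 0$ (so $p = j_l$ for some $l$, i.e.\ the first turning back is itself a loop): here I would use Lemma~\ref{lem:p<q} to write $\wt((\tilde{\sub{\mu}},\tilde{\sub{\alpha}})_{\sub{j};\sub{\beta}})$ as a product of a degree-one factor in $k$ — coming from $\langle \mu^{(p)}, \c{\alpha}^{(p)}\rangle(\langle\rho,\s{\varpi}_{\alpha^{(p)}}\rangle - c_{p-1})$ and evaluated via Lemma~\ref{lem:rho} as an affine function of $k$ — times $\wt$ of the cut path, whose class is $\m^{\#}$ with $n - 1$ zeroes; then apply the induction hypothesis to $\m^{\#}$ and absorb the affine factor into $B_{\m}$, noting that $\widetilde{\m^{\#}} = \tilde{\m}$ and the relevant partial sums of heights agree so that the $A_{\tilde{\m}_h}$ are unchanged. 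In the remaining cases ($p = q$ with $i_{p-1} + i_p \neq 0$, or $p = q$ with $i_{p-1} + i_p = 0$, or $p < q$ but $i_p \neq 0$ which cannot happen when $p < q$), the cut operation does not touch a loop, so the number of zeroes is preserved: one uses Proposition~\ref{Pro:Conc1} (the various subcases (1)(a)--(c), (2)(a)--(c)) to express $\wt$ as a linear combination of $\wt$ of cut paths $(\sub{\mu},\sub{\alpha})^{\# p}$ and star paths $(\sub{\mu},\sub{\alpha})^{\star a}$, all of which have the same number $n$ of zeroes and strictly smaller length, and one sets $B_{\m} := B_{\m^{\#}}$ (which has degree $\le n$ by induction), checking via Lemma~\ref{Lem:weightC}/Remark~\ref{rem:wtC} that the sum over $\sub{\beta}$ commutes with cutting, since $((\tilde{\sub{\mu}},\tilde{\sub{\alpha}})_{\sub{j};\sub{\beta}})^{\# p} = (\widetilde{\sub{\mu}^{\# p}},\widetilde{\sub{\alpha}^{\# p}})_{\sub{j}';\sub{\beta}}$ with $\sub{j}'$ the updated list of loop positions.

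The degree and leading-coefficient claims for $B_{\m}$ follow by tracking these operations: in the loop case one multiplies $B_{\m^{\#}}$ (degree $\le n-1$, leading term $(-1)^{n-1}X^{n-1}$ by induction) by the affine factor whose leading term in $X$ is $-1$ (coming from the $-k$ in $\langle\rho,\s{\varpi}_{\alpha^{(p)}}\rangle$ expressions of Lemma~\ref{lem:rho}), giving $(-1)^n X^n$, and the coefficient of $X^j$ remains a polynomial in $\tilde{\i}_1,\dots,\tilde{\i}_{m-n-1}$ of total degree $\le n-j$; in the non-loop cases $B_{\m} = B_{\m^{\#}}$ keeps degree $\le n$ with the same leading structure since $n$ is unchanged. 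The main obstacle I anticipate is the non-loop $p = q$ subcases where star paths appear (Proposition~\ref{Pro:Conc1}(1)(b),(1)(c),(2)(b),(2)(c)): there the cut produces several weighted paths $(\sub{\mu},\sub{\alpha})^{\star a}$ of varying lengths, and I must argue that after summing over $\sub{\beta}$ the whole expression still has the claimed shape $B_{\m}(k)\sum_h K_{\tilde{\m}_h}A_{\tilde{\m}_h}$ with a \emph{single} polynomial $B_{\m}$ of degree $\le n$ independent of the star-path index — this requires observing that the loops glued at positions away from $p$ pass through the cut operation unchanged (Lemma~\ref{lem:loops} again, applied to each $(\sub{\mu},\sub{\alpha})^{\star a}$), that all star paths inherit the same $n$ zeroes, and that the factor $(P_{\alpha^{(s)}}(X_s) - c_s)$ from Proposition~\ref{Pro:Conc1} does not involve $k$ (being built from structure constants and a height that is one of the $\tilde{\i}_t$), so it is absorbed into the $A$-part via Lemma~\ref{Lem:weightC} rather than into $B_{\m}$. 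Once this is checked, setting $B_{\m} := B_{\m^{\#}}$ and invoking the induction hypothesis on each $\m^{\star a}$ and on $\m^{\#}$ completes the proof.
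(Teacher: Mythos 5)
Your base cases agree with the paper, and your extraction of the affine factor in $k$ when the turning back is itself a loop is sound. But the overall organization is not the paper's, and the step you yourself flag as the ``main obstacle'' is where the argument genuinely breaks. The paper does \emph{not} redo any cut/star analysis inside this lemma and does \emph{not} induct on $m$: it inducts on the number of loops $n$. One peels off a single loop at a time by the computation of Lemma~\ref{lem:loops}: summing over the (at most two) labels of the loop at $\mu^{(j_1)}$ produces, via Lemma~\ref{lem:rho}, an explicit affine polynomial in $k$ (the $n=1$ case gives $r+1-X$, or $r-X-\sum_t\tilde{\i}_t+2$, or $r-X-\sum_t\tilde{\i}_t+1$, or $0$ when the loop sits at $\overline{\delta}_r$), times the same sum for the path with the remaining $n-1$ loops; the product of these $n$ affine factors is $B_{\m}$, and the loop-free remainder is handled once and for all by Lemma~\ref{Lem:weightC} together with Remark~\ref{rem:wtC}. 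This is why the coefficients of $B_{\m}$ have the stated degrees and why a \emph{single} polynomial $B_{\m}$ factors out.

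The gap in your route is the claim that in the non-loop cases ``the loops glued at positions away from $p$ pass through the cut operation unchanged'' and that consequently $B_{\m}:=B_{\m^{\#}}$ also serves for every star class. The star operations of Lemma~\ref{Lem:star+} and Lemma~\ref{Lem:star0} reroute the whole segment between $\mu^{(s)}$ and $\mu^{(p+1)}$ through \emph{different} vertices (for instance from the $\overline{\delta}$-side to the $\delta$-side of the crystal), and loops are allowed at zero-height positions strictly between $s$ and $p$; such loops cannot be re-glued at ``the same'' vertices of $(\sub{\mu},\sub{\alpha})^{\star a}$, so your commutation identity fails there, the induction hypothesis applied to $\m^{\#}$ and to the $\m^{\star a}$ need not return the same $B$-polynomial, and the factorization by a single $B_{\m}(k)$ is unjustified. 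Note that Proposition~\ref{Pro:Conc1} is not applied to paths carrying their loops either: its scalar $\hat{K}$ (from Lemma~\ref{lem:loops}) strips \emph{all} loops from the original path, at their original vertices, before any cut/star analysis, which is exactly what dissolves this difficulty. The repair is therefore to follow that order of operations — factor out every loop of the original path first, sum over $\sub{\beta}$ to obtain $B_{\m}(k)$, then apply Remark~\ref{rem:wtC} to the loop-free skeleton — at which point your induction on $m$ and the turning-back case analysis become unnecessary, and you recover the paper's proof.
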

\begin{proof}
We prove the statement by induction on the number of the loops $n$.
Set $\sub{j} = (j_1, \ldots, j_n)$
and let $\sub{\beta}:=(\beta^{(1)},\ldots,\beta^{(n)})$ be in 
$\Pi_{\sub{\mu}^{(\sub{j})}}.$

First of all, observe that 
if $n=0$, then the result is known by Lemma~\ref{Lem:weightC} and Remark~\ref{rem:wtC}.
\\If $n=m$ then either $(\sub{\mu}, \sub{\alpha})= \left( ({\delta_k}),  (\Pi_{\delta_k})^m \right)$ or 
$(\sub{\mu}, \sub{\alpha}) = \left( ({\overline{\delta}_k}), (\Pi_{\overline{\delta}_k})^m \right)$. 
By Lemma~\ref{lem:symC}, there is a polynomial $B_{\m}$ of degree $m$ such that $$\wt(\sub{\mu}, \sub{\alpha}) = B_{\m}(k),$$
and so the lemma is true for $n=m$. 
Hence it remains to prove for $n = 1, \ldots, m-1$.
\\For $n=1$,  we have  $\sub{j} = (j_1)$ for some $j_1 = 1, \ldots, m.$

$\ast$ Assume $j_1= 1$ then $\mu^{(j_1)} = \delta_k$.
 \\For some $k \in  \{2, \ldots, r \}$, we have 
 $ \Pi_{\sub{\mu}^{(\sub{j})}} = \{\beta_{k-1}, \beta_k\}. $
 Hence by Lemma~\ref{lem:loops}, Lemma~\ref{lem:rho} and Remark~\ref{rem:wtC}, 
 \begin{align*}
\sum_{\sub{\beta} \in \Pi_{\sub{\mu}^{(\sub{j})}} } 
\wt\left((\tilde{\sub{\mu}},\tilde{\sub{\alpha}})_{\sub{j};\sub{\beta}}\right)
  = \wt \left( (\tilde{\sub{\mu}},\tilde{\sub{\alpha}})_{(1);\{{\beta}_{k-1} \}} \right) 
 + \wt \left( (\tilde{\sub{\mu}},\tilde{\sub{\alpha}})_{(1);\{{\beta}_k \}}\right)
 = (r+1-k) \ \sum_{h=1}^N K_{\tilde{\m}_h} A_{\tilde{\m}_h}(\tilde{\i}_1, \ldots, \tilde{\i}_{\ell_h-1}).
 \end{align*}
 \\For $k = 1$, we have 
 $ \Pi_{\sub{\mu}^{(\sub{j})}} = \{\beta_{1}\}. $
 Hence by Lemma~\ref{lem:loops}, Lemma~\ref{lem:rho} and Remark~\ref{rem:wtC},
 \begin{align*}
 \sum_{\sub{\beta} \in \Pi_{\sub{\mu}^{(\sub{j})}} } 
  &= \wt \left((\tilde{\sub{\mu}},\tilde{\sub{\alpha}})_{(1);\{{\beta}_{1} \}} \right) 
  =(\langle \rho, \s{\varpi}_{1}  \rangle) \ \wt (\tilde{\sub{\mu}},\tilde{\sub{\alpha}})
 = (r+1-k) \ \sum_{h=1}^N K_{\tilde{\m}_h} A_{\tilde{\m}_h}(\tilde{\i}_1, \ldots, \tilde{\i}_{\ell_h-1}).
 \end{align*}
 Hence by setting $B_{\m}(X) : = r+1-X,$ we get the statement.

$\ast$ Assume $\mu^{(j_1)} = \delta_s$ for some $s \in  \{2, \ldots, r \}$ and $s \neq k$.
Note that $s = k + \sum_{t=1}^{j_1-1}i_t$ and $ \Pi_{\sub{\mu}^{(\sub{j})}} = \{\beta_{s-1}, \beta_s\}. $
By Lemma~\ref{lem:loops}, Lemma~\ref{lem:rho} and Remark~\ref{rem:wtC},
 \begin{align*}
&\sum_{\sub{\beta} \in \Pi_{\sub{\mu}^{(\sub{j})}} } 
\wt\left((\tilde{\sub{\mu}},\tilde{\sub{\alpha}})_{\sub{j};\sub{\beta}}\right)
 = \left(\langle \rho, \s{\varpi}_{s}-\s{\varpi}_{s-1}  \rangle +1 \right)
      \ \wt (\tilde{\sub{\mu}},\tilde{\sub{\alpha}})
 = \left(r-k-\sum_{t=1}^{j_1-1}i_t +2\right) \ \sum_{h=1}^N K_{\tilde{\m}_h} A_{\tilde{\m}_h}(\tilde{\i}_1, \ldots, \tilde{\i}_{\ell_h-1}).
 \end{align*}
 Note that $ \sum_{t=1}^{j_1-1}i_t = \sum_{t=1}^{\tilde{j}}\tilde{\i}_t,$
 where $\tilde{j} = j_1 -1$.
 Set $$B_{\m}(X) := r-X-\sum_{t=1}^{\tilde{j}}\tilde{\i}_t +2. $$
 Then we prove the statement in this case.

$\ast$ Assume $\mu^{(j_1)} = \overline{\delta}_s$. 
\\For $s \in  \{2, \ldots, r-1 \}$, 
note that $s = 2r - k -\sum_{t=1}^{j_1-1}i_t+1$ and $\Pi_{\sub{\mu}^{(\sub{j})}} = \{\beta_{s-1}, \beta_s\}.$
By Lemma~\ref{lem:loops}, Lemma~\ref{lem:rho} and Remark \ref{rem:wtC}, we have
 \begin{align*}
&\sum_{\sub{\beta} \in \Pi_{\sub{\mu}^{(\sub{j})}} } 
\wt\left((\tilde{\sub{\mu}},\tilde{\sub{\alpha}})_{\sub{j};\sub{\beta}}\right)
 = \left(\langle \rho, \s{\varpi}_{s-1}-\s{\varpi}_{s}  \rangle +1 \right)
      \ \wt (\tilde{\sub{\mu}},\tilde{\sub{\alpha}})
  = \left(r - k -\sum_{t=1}^{j_1-1}i_t+1\right) \ \sum_{h=1}^N K_{\tilde{\m}_h} A_{\tilde{\m}_h}(\tilde{\i}_1, \ldots, \tilde{\i}_{\ell_h-1}).
 \end{align*}
\\For $ s=1$, $\Pi_{\sub{\mu}^{(\sub{j})}} = \{\beta_{1}\}$, thus
\begin{align*}
 \sum_{\sub{\beta} \in \Pi_{\sub{\mu}^{(\sub{j})}} } 
\wt\left((\tilde{\sub{\mu}},\tilde{\sub{\alpha}})_{\sub{j};\sub{\beta}}\right)
 =(\langle -\rho, \s{\varpi}_{1}  \rangle) \ \wt (\tilde{\sub{\mu}},\tilde{\sub{\alpha}})
= \left(r - k -\sum_{t=1}^{j_1-1}i_t+1\right) \ \sum_{h=1}^N K_{\tilde{\m}_h} A_{\tilde{\m}_h}(\tilde{\i}_1, \ldots, \tilde{\i}_{\ell_h-1}).
 \end{align*}
 Note that $\sum_{t=1}^{j_1-1}i_t = \sum_{t=1}^{\tilde{j}}\tilde{\i}_t,$
 where $\tilde{j} = j_1 -1$.
 By setting $$B_{\m}(X) := r-X-\sum_{t=1}^{\tilde{j}}\tilde{\i}_t +1, $$
 we prove the statement in this case.

$\ast$ Assume $\mu^{(j_1)} = \overline{\delta}_r$. 
\\Note that $\Pi_{\sub{\mu}^{(\sub{j})}} = \{\beta_{r-1}, \beta_r\}. $
By Lemma~\ref{lem:loops}, Lemma~\ref{lem:rho} and Remark \ref{rem:wtC}, we have
 \begin{align*}
&\sum_{\sub{\beta} \in \Pi_{\sub{\mu}^{(\sub{j})}} } 
\wt\left((\tilde{\sub{\mu}},\tilde{\sub{\alpha}})_{\sub{j};\sub{\beta}}\right)
 = \left(\langle \rho, \s{\varpi}_{r-1}-\s{\varpi}_{r}  \rangle +1 \right)
      \ \wt (\tilde{\sub{\mu}},\tilde{\sub{\alpha}})
= 0.
 \end{align*}
 Since 0 is a polynomial of degree $ \leqslant n$ then we get the statement.
By the above observation, the lemma is true for $n=1$. 
Let $n \geqslant 2$ and assume the lemma is true for any
$n' \in \{ 1, \ldots, n-1 \}$.
Set $\sub{j}:= (j_1,\ldots,j_n)$ the position of loops and let $\sub{\beta}:=(\beta^{(1)},\ldots,\beta^{(n)})$ be in 
$\Pi_{\sub{\mu}^{(\sub{j})}}$.
We are doing the similar calculation as Lemma~\ref{lem:loops}, thus
\begin{align*}
 &\sum_{\sub{\beta} \in \Pi_{\sub{\mu}^{(\sub{j})}} } 
\wt\left((\tilde{\sub{\mu}},\tilde{\sub{\alpha}})_{\sub{j};\sub{\beta}}\right) = 
\left(\sum_{\alpha^{(j_1)} \in \Pi_{\mu^{(j_1)}}} \langle \mu^{j_1}, \alpha^{(j_1)} \rangle 
\left( \langle \rho, \s{\varpi}_{\alpha^{(j_1)}} \rangle - c_{\alpha^{(j_1)}} \right)  \right)
\sum_{{\tiny \sub{\beta}' \in \Pi_{\sub{\mu}^{(\sub{j}')}}} } 
\wt\left((\tilde{\sub{\mu}},\tilde{\sub{\alpha}})_{\sub{j}';\sub{\beta}'} 
\right),
\end{align*}
where $\sub{j}' = (j_2,\ldots,j_n)$, $c_{\alpha^{(j_1)}} := \sum_{i=1}^{j_1-1} 
\langle \alpha^{(i)}, \s{\varpi}_{\alpha^{(j_1)}} \rangle$ and $(\tilde{\sub{\mu}},\tilde{\sub{\alpha}})_{\sub{j}';\sub{\beta}'}$
is a weighted path with $n-1$ loops. Set $\m' := [(\tilde{\sub{\mu}},\tilde{\sub{\alpha}})_{\sub{j}';\sub{\beta}'}]$.
So, the induction hypothesis applied to $\m'$ gives,
\begin{align*}
\sum_{\sub{\beta} \in \Pi_{\sub{\mu}^{(\sub{j})}} } 
\wt\left((\tilde{\sub{\mu}},\tilde{\sub{\alpha}})_{\sub{j};\sub{\beta}}\right)
&= \left(\sum_{\alpha^{(j_1)} \in \Pi_{\mu^{(j_1)}}} \langle \mu^{j_1}, \alpha^{(j_1)} \rangle 
\left( \langle \rho, \s{\varpi}_{\alpha^{(j_1)}} \rangle - c_{\alpha^{(j_1)}} \right)  \right)
B_{\m'}(k) \sum_{h=1}^N K_{\tilde{\m}_h} A_{\tilde{\m}_h}(\tilde{\i}_1, \ldots, \tilde{\i}_{\ell_h-1})
\end{align*}
since $\tilde{\m}'_h = \tilde{\m}_h, h = 1, \ldots, N$.
Set $$ \tilde{B}_{\m}(k) := \left(\sum_{\alpha^{(j_1)} \in \Pi_{\mu^{(j_1)}}} \langle \mu^{j_1}, \alpha^{(j_1)} \rangle 
\left( \langle \rho, \s{\varpi}_{\alpha^{(j_1)}} \rangle - c_{\alpha^{(j_1)}} \right)  \right)B_{\m'}(k).$$
With the same arguments as for the case $n=1$ and the induction hypothesis applied to $\m'$, we see that there exists a polynomial
$B_{\m}$ of degree $\leqslant n$ with leading term 
is $(-1)^n X^n$,   
and the coefficient of $B_{\m}(X)$ in $X^j$, $j\le n$, 
is a polynomial  
in the variables $\tilde{\i}_1, \ldots, \tilde{\i}_{m-n-1}$, 
of total degree $\leqslant n-j$ such that $\tilde{B}_{\m}(k) = {B}_{\m}(k)$. \qed
\end{proof}

\begin{corollary}  \label{corollary:T_mC} 
Let $m\in \Z_{> 0}$ and $n\in\{0,\ldots,m\}$. 
Let $\m \in {\E}_m$ with $n \leqslant m$ zeroes in positions 
$j_1,\ldots,j_n$,    
and $\tilde{\m}$ as in Lemma~\ref{Lem:wtloopC}. 
Then for some polynomial $T_{\m} \in \C[X]$ of degree 
at most $m$, 
for all $k\in\{1,\ldots,r\}$, 
$$
\sum_{(\tilde{\sub{\mu}},\tilde{\sub{\alpha}})  \in \tilde{\m}}  
\sum_{\sub{\beta} \in \Pi_{\sub{\mu}^{\sub{j}}}}  
\sum_{(\tilde{\sub{\mu}},\tilde{\sub{\alpha}}) \in \hat{\P}_{m-n}(\delta_k),   
\atop \tilde{\sub{\mu}} \in [\![\bar{\delta}_{1},{\delta}_{k} ]\!], 
\,(\tilde{\sub{\mu}},\tilde{\sub{\alpha}}) \in \tilde{\m}} 
\wt \left( (\tilde{\sub{\mu}},\tilde{\sub{\alpha}})_{\sub{j};\sub{\beta}} \right)
= T_{\m}(k).
$$
\end{corollary}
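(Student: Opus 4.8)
The plan is to imitate the proof of Corollary \ref{corollary:T_m}, using the type $C$ substitutes for the two ingredients that entered there: Lemma \ref{Lem:wtloopC} in place of Lemma \ref{Lem:weight2} (to carry out the summation over the loop labelings $\sub{\beta}$), and Lemma \ref{Lem:T_mC} in place of Lemma \ref{Lem:T_m} (polynomiality in $k$ of the power sums of heights over paths without zero). Fix $k\in\{1,\ldots,r\}$ and let $n$ be the number of zeroes of $\m$. First I would dispose of the base cases, exactly as in the type $A$ text: if $n=0$ or $\m$ has no zero then $T_{\m}$ is already the polynomial provided by Lemma \ref{Lem:T_mC}, and if $n=m$ then $\tilde{\m}$ is the class of the trivial path and the triple sum reduces to $B_{\m}(k)$, where $B_{\m}$ is the degree-$m$ polynomial of Lemma \ref{Lem:wtloopC} (which itself comes from Lemma \ref{lem:symC}). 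So we may assume $0<n<m$.

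Then I would apply Lemma \ref{Lem:wtloopC}: for a loopless reduced path $(\tilde{\sub{\mu}},\tilde{\sub{\alpha}})\in\hat{\P}_{m-n}(\delta_k)$ with $(\tilde{\sub{\mu}},\tilde{\sub{\alpha}})\in\tilde{\m}$, the inner sum over $\sub{\beta}\in\Pi_{\sub{\mu}^{(\sub{j})}}$ equals $B_{\m}(k)\sum_{h=1}^{N}K_{\tilde{\m}_h}A_{\tilde{\m}_h}(\tilde{\i}_1,\ldots,\tilde{\i}_{\ell_h-1})$, with the auxiliary classes $\tilde{\m}_h$ (without zero, of length $\ell_h\leqslant m-n$), the constants $K_{\tilde{\m}_h}$ and the polynomial $B_{\m}$ as in that lemma, the heights $\tilde{\i}_j$ of the $\tilde{\m}_h$-paths being obtained from those of $(\tilde{\sub{\mu}},\tilde{\sub{\alpha}})$ through the cutting and $\star$-operations. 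Next I would expand $B_{\m}(X)=\sum_{j=0}^{n}C_{\m}^{(n-j)}(\tilde{\i}_1,\ldots,\tilde{\i}_{m-n-1})X^{j}$ and write each coefficient $C_{\m}^{(n-j)}$ as a sum of monomials $\tilde{\i}_1^{d_1}\cdots\tilde{\i}_{m-n-1}^{d_{m-n-1}}$ of total degree $\leqslant n-j$, then interchange the summations. After this, the quantity to evaluate is a finite linear combination, over $j\in\{0,\ldots,n\}$, over $h$, and over the monomial multi-indices $\sub{d}$ with $|\sub{d}|\leqslant n-j$, of terms $k^{j}\,K_{\tilde{\m}_h}\cdot\big(\sum_{(\tilde{\sub{\mu}},\tilde{\sub{\alpha}})}\tilde{\i}_1^{d_1}\cdots\tilde{\i}_{m-n-1}^{d_{m-n-1}}A_{\tilde{\m}_h}(\cdots)\big)$, the last sum running over $(\tilde{\sub{\mu}},\tilde{\sub{\alpha}})\in\hat{\P}_{m-n}(\delta_k)$ with $\tilde{\sub{\mu}}\in[\![\overline{\delta}_1,\delta_k]\!]$ and $(\tilde{\sub{\mu}},\tilde{\sub{\alpha}})\in\tilde{\m}$.

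The degree bookkeeping then proceeds as in Corollary \ref{corollary:T_m}: after expressing the (possibly glued) height variables entering $A_{\tilde{\m}_h}$ as partial sums of $\tilde{\i}_1,\ldots,\tilde{\i}_{m-n-1}$ — which does not raise the total degree — Lemma \ref{Lem:T_mC}, together with the combined-degree statement recorded immediately after it, shows that each such inner sum is a polynomial in $k$ of degree $\leqslant|\sub{d}|+(m-n)\leqslant(n-j)+(m-n)=m-j$. Multiplying by $k^{j}$ gives degree $\leqslant m$, and adding the finitely many contributions produces a polynomial $T_{\m}\in\C[X]$ of degree at most $m$ with $T_{\m}(k)$ equal to the left-hand side for every $k$; the vanishing when the relevant path set is empty is inherited term by term from Lemma \ref{Lem:T_mC}.

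The main obstacle, relative to the type $A$ situation, is exactly the feature flagged in the previous paragraph: Lemma \ref{Lem:wtloopC} produces a \emph{sum} $\sum_h K_{\tilde{\m}_h}A_{\tilde{\m}_h}$ over several auxiliary classes rather than the single polynomial $A_{\tilde{\I}}$ appearing in Lemma \ref{Lem:weight2}, and these $\tilde{\m}_h$ may be strictly shorter than $\tilde{\m}$ with heights that are sums of consecutive heights of $(\tilde{\sub{\mu}},\tilde{\sub{\alpha}})$. The delicate point is therefore to verify that after substituting these glued height variables into $A_{\tilde{\m}_h}$ and summing over $(\tilde{\sub{\mu}},\tilde{\sub{\alpha}})\in\tilde{\m}$, Lemma \ref{Lem:T_mC} still applies to $\tilde{\m}$ (of length $m-n$) with a monomial multi-index whose total degree is controlled by $|\sub{d}|$, so that the degree bound $m-j$ survives multiplication by $k^{j}$. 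Once this compatibility is checked, the remaining steps are the routine algebra already carried out in the type $A$ proof.
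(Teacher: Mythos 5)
Your proposal is correct and follows essentially the same route as the paper's own proof: apply Lemma~\ref{Lem:wtloopC} to evaluate the sum over loop labelings, expand $B_{\m}$ into monomials in the reduced heights times powers of $k$, interchange sums, and invoke Lemma~\ref{Lem:T_mC} (with the combined-degree remark following it) to get inner polynomials of degree at most $m-j$, so that multiplication by $k^{j}$ and summation give $T_{\m}$ of degree at most $m$. The compatibility point you flag about the shorter classes $\tilde{\m}_h$ with glued heights is handled in the paper the same way, via Lemma~\ref{Lem:weightC} and the remark after Lemma~\ref{Lem:T_mC}, so it is not a gap.
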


If $n=0$ has no zero, then $T_{\m}$ is the polynomial provided 
by Lemma~\ref{Lem:T_mC}. If $n=m$, then $\he(\m)=(\sub{0})$ 
and $T_{\m}=T_{m}$ 
is the polynomial provided by Lemma~\ref{lem:symC}. 
So, in these two cases, the statement is known. 
\begin{proof}
Let $k \in \{1,\ldots,r\}$. 
By Lemma~\ref{Lem:weightC}, Remark~\ref{rem:wtC},  Lemma~\ref{Lem:T_mC} and Lemma~\ref{Lem:wtloopC}, 
we have, 
\begin{align*}
&\sum_{(\tilde{\sub{\mu}},\tilde{\sub{\alpha}})  \in \tilde{\m}}  
\sum_{\sub{\beta} \in \Pi_{\sub{\mu}^{\sub{j}}}}  
\sum_{(\tilde{\sub{\mu}},\tilde{\sub{\alpha}}) \in \hat{\P}_{m-n}(\delta_k),   
\atop \tilde{\sub{\mu}} \in [\![\bar{\delta}_{1},{\delta}_{k} ]\!], 
\,(\tilde{\sub{\mu}},\tilde{\sub{\alpha}}) \in \tilde{\m}} 
\wt \left( (\tilde{\sub{\mu}},\tilde{\sub{\alpha}})_{\sub{j};\sub{\beta}} \right) \\
&\qquad 
= 
\sum_{(\tilde{\sub{\mu}},\tilde{\sub{\alpha}}) \in \hat{\P}_{m-n}(\delta_k),   
\atop \tilde{\sub{\mu}} \in [\![\bar{\delta}_{1},{\delta}_{k} ]\!], 
\,(\tilde{\sub{\mu}},\tilde{\sub{\alpha}}) \in \tilde{\m}}
\sum_{j=0}^n 
\sum_{\sub{d}_j=(d_1, \ldots, d_{m-n-1}),
\atop {d_1 +\cdots + d_{m-n-1} \leqslant  n-j}}
C_{\sub{d},j}
\tilde{\i}_1^{d_1} \cdots \tilde{\i}_{m-n-1}^{d_{m-n-1}} 
k^j A_{\tilde{\m}}(\tilde{\i}_1, \ldots, \tilde{\i}_{m-n-1}). 
\end{align*}
Set 
$$\tilde{T}_{\sub{d}_j, \tilde{\m}}= 
\sum_{(\tilde{\sub{\mu}},\tilde{\sub{\alpha}}) \in \hat{\P}_{m-n}(\delta_k),   
\atop \tilde{\sub{\mu}} \in [\![\bar{\delta}_{1},{\delta}_{k} ]\!], 
\,(\tilde{\sub{\mu}},\tilde{\sub{\alpha}}) \in \tilde{\m}}
\tilde{\i}_1^{d_1} \cdots \tilde{\i}_{m-n-1}^{d_{m-n-1}} 
A_{\tilde{\m}}(\tilde{\i}_1, \ldots, \tilde{\i}_{m-n-1}).$$
Then by Lemma~\ref{Lem:T_mC}, 
there are some polynomials 
${T}_{\sub{d}_j, \tilde{\m}}$ of degree at most $(n-j)+(m-n)=m-j$,
such that 
$$ \sum_{(\tilde{\sub{\mu}},\tilde{\sub{\alpha}})  \in \tilde{\m}}  
\sum_{\sub{\beta} \in \Pi_{\sub{\mu}^{\sub{j}}}}  
\sum_{(\tilde{\sub{\mu}},\tilde{\sub{\alpha}}) \in \hat{\P}_{m-n}(\delta_k),   
\atop \tilde{\sub{\mu}} \in [\![\bar{\delta}_{1},{\delta}_{k} ]\!], 
\,(\tilde{\sub{\mu}},\tilde{\sub{\alpha}}) \in \tilde{\m}} 
\wt \left( (\tilde{\sub{\mu}},\tilde{\sub{\alpha}})_{\sub{j};\sub{\beta}} \right)
= \sum_{j=0}^n 
\sum_{\sub{d}_j=(d_1, \ldots, d_{m-n-1}),
\atop {d_1 +\cdots + d_{m-n-1} \leqslant  n-j}}
 C_{\sub{d},j} k^j {T}_{\sub{d}_j, \tilde{\m}} (k).$$ 
Moreover, if $j <n$, then ${T}_{\sub{d}_j, \tilde{\m}}$ 
has degree $< m-j$, and 
$$T_{\m} (X):= \sum_{j=0}^n \sum_{\sub{d}_j=(d_1, \ldots, d_{m-n-1}),
\atop {d_1 +\cdots + d_{m-n-1} \leqslant  n-j}}
 C_{\sub{d},j} X^j {T}_{\sub{d}_j, \tilde{\m}} (X),$$
is a polynomial of 
degree at most $m-j+j = m$. \qed
\end{proof}

As a consequence of Corollary \ref{corollary:T_mC}, we obtain the following 
crucial result. 

\begin{proposition} \label{Pro:sumC}
Let $m\in \Z_{> 0}$. 
There are polynomials $\hat{T}_{1,m}$ and $\hat{T}_{2,m}$ in $\C[X]$ of degree at most $m$ 
such that for all $k\in\{1,\ldots,r\}$, 
$$\sum_{(\sub{\mu},\sub{\alpha}) \in \hat{\P}_m(\delta_{k})  
\atop \sub{\mu}\in  [\![ \overline{\delta_{1}},\delta_{k} ]\!]} \, 
\wt(\sub{\mu},\sub{\alpha}) =\hat{T}_{1,m}(k),\quad \text{and}\quad 
\sum_{(\sub{\mu},\sub{\alpha}) \in \hat{\P}_m(\overline{\delta}_{k})  
\atop \sub{\mu}\in  [\![ \overline{\delta}_{1},\overline{\delta}_{k} ]\!]} \, 
\wt(\sub{\mu},\sub{\alpha}) =\hat{T}_{2,m}(k).$$
\end{proposition}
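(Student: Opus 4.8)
The plan is to deduce the proposition from Corollary~\ref{corollary:T_mC} by decomposing each of the two sums according to the equivalence classes of weighted paths introduced in Definition~\ref{definition:equivalent-class-typeC}.

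First I would establish the identity for paths starting at $\delta_k$. Fix $k\in\{1,\ldots,r\}$. Every weighted path $(\sub{\mu},\sub{\alpha})\in\hat{\P}_m(\delta_k)$ with $\sub{\mu}\in[\![\overline{\delta}_1,\delta_k]\!]$ lies in exactly one equivalence class $\m\in\E_m$, and $\E_m$ is a finite set: a class is determined, inductively on $m$, by a sign vector in $\{-1,0,1\}^m$, by the types of the finitely many roots whose type is recorded in Definition~\ref{definition:equivalent-class-typeC}, by the resulting cut positions, and by the (finitely many possible) structure constants $K^{\#}$, $K^{\star a}$, none of which depends on $r$ (for $r$ large; compare the counts of $\bar{\E}_m$ in Example~\ref{Ex:eqclassC}, and the type~$A$ case for the remaining classes). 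Grouping the paths of a fixed class $\m$ via the bijection $(\tilde{\sub{\mu}},\tilde{\sub{\alpha}})\mapsto(\tilde{\sub{\mu}},\tilde{\sub{\alpha}})_{\sub{j};\sub{\beta}}$ between loopless paths of the associated zero-free class (together with a choice of admissible loop labels $\sub{\beta}\in\Pi_{\sub{\mu}^{(\sub{j})}}$) and paths of class $\m$, Corollary~\ref{corollary:T_mC} supplies a polynomial $T_{\m}\in\C[X]$ of degree at most $m$ such that $T_{\m}(k)$ equals $\sum\wt(\sub{\mu},\sub{\alpha})$ over all length-$m$ paths of class $\m$ starting at $\delta_k$ and contained in $[\![\overline{\delta}_1,\delta_k]\!]$; moreover $T_{\m}(k)=0$ whenever no such path exists, by the vanishing clause of Lemma~\ref{Lem:T_mC}. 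Since $\E_m$ is finite, the polynomial $\hat{T}_{1,m}:=\sum_{\m\in\E_m}T_{\m}$ has degree at most $m$ and realizes the first sum for every $k$.

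For the identity for paths starting at $\overline{\delta}_k$, I would first record that, for the total order on $P(\delta)$ determined by $\succcurlyeq$, one has $\delta_1\succ\cdots\succ\delta_r\succ\overline{\delta}_r\succ\cdots\succ\overline{\delta}_1$, hence $[\![\overline{\delta}_1,\overline{\delta}_k]\!]=\{\overline{\delta}_1,\ldots,\overline{\delta}_k\}$. On this subset the crystal arrows form a single chain $\overline{\delta}_k\to\overline{\delta}_{k-1}\to\cdots\to\overline{\delta}_1$ with $\Pi_{\overline{\delta}_j}=\{\beta_{j-1},\beta_j\}$, so the weighted paths in question are exactly those on which the two equivalence relations of Definition~\ref{definition:equivalenceA} and Definition~\ref{definition:equivalent-class-typeC} agree, and they are governed by the combinatorics of Section~\ref{sec:main2-A}. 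Re-running the argument that leads to Lemma~\ref{Lem:T}, with the type~$C$ values of $\langle\rho,\cdot\rangle$ from Lemma~\ref{lem:rho} substituted for those of Lemma~\ref{Lem1:main1}, and applying the same class decomposition, then produces a polynomial $\hat{T}_{2,m}$ of degree at most $m$ realizing the second sum for all $k$.

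The substantive work underlying this proposition has already been done in Lemma~\ref{Lem:T_mC}, Lemma~\ref{Lem:wtloopC} and Corollary~\ref{corollary:T_mC}, where the ``star'' paths coming from the non-simply-laced structure are controlled; here the only delicate points are the finiteness of $\E_m$, the uniform bookkeeping of which classes are realizable for a given $k$ (absorbed into the statement $T_{\m}(k)=0$ on empty index sets), and the reduction of the barred case to type~$A$. I expect this last point, checking that the weighted-path calculus on $[\![\overline{\delta}_1,\overline{\delta}_k]\!]$, in particular the contribution of loops labelled by the $\beta_j$, genuinely reduces to the type~$A$ analysis, to be the main, though essentially routine, obstacle.
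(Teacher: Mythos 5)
Your proposal is correct and follows essentially the same route as the paper: the sum over paths starting at $\delta_k$ is decomposed over the (finitely many) equivalence classes $\m\in\E_m$ and handled by Corollary~\ref{corollary:T_mC}, setting $\hat{T}_{1,m}=\sum_{\m}T_{\m}$, while the sum over paths starting at $\overline{\delta}_k$, being confined to $[\![\overline{\delta}_1,\overline{\delta}_k]\!]$ where the combinatorics coincides with the type $A$ chain, is treated by the type $A$ argument (Lemma~\ref{Lem:T}), exactly as in the paper (which also invokes Theorem~\ref{corollary:cut-type-C} to justify this reduction). Your extra care about re-running the type $A$ proof with the type $C$ values of $\langle\rho,\cdot\rangle$ is a harmless refinement of the paper's citation of Lemma~\ref{Lem:T}.
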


\begin{proof} 
According to Theorem \ref{corollary:cut-type-C}, the paths starting 
from $\overline{\delta}_{k}$ have weights entirely contained in 
$[\![ \overline{\delta}_{1},\overline{\delta}_{k} ]\!]$. 
So the sum 
$$\sum_{(\sub{\mu},\sub{\alpha}) \in \hat{\P}_m(\overline{\delta}_{k})  
\atop \sub{\mu}\in  [\![ \overline{\delta}_{1},\overline{\delta}_{k} ]\!]} \, 
\wt(\sub{\mu},\sub{\alpha})$$ can be computed 
exactly as in the $\sl_{r+1}$ case, and the result are known by Lemma~\ref{Lem:T}.
Hence there is a polynomial $\hat{T}_{2,m}$ of degree at most $m$ 
such that for all $k\in\{1,\ldots,r\}$, 
$$\sum_{(\sub{\mu},\sub{\alpha}) \in \hat{\P}_m(\bar{\delta}_k)  
\atop \sub{\mu}\in  [\![ \overline{\delta_{1}},\bar{\delta}_{k} ]\!]} \, 
\wt(\sub{\mu},\sub{\alpha}) =\hat{T}_{2,m}(k).$$
So it remains to consider the paths starting from $\delta_k$ 
and contained in $[\![\bar{\delta}_{1},{\delta}_{k} ]\!]$. 

By Corollary \ref{corollary:T_mC}, 
we have for all $k\in\{1,\ldots,r\}$, 
$$\sum_{(\sub{\mu},\sub{\alpha}) \in \hat{\P}_m(\delta_{k})  
\atop \sub{\mu}\in  [\![\bar{\delta}_{1},{\delta}_{k} ]\!]} \, 
\wt(\sub{\mu},\sub{\alpha})
=\sum_{\m \in {\E}_m} 
\sum_{(\sub{\mu},\sub{\alpha}) \in \hat{\P}_m(\delta_{k}),  
\atop \sub{\mu}\in  [\![\bar{\delta}_{1},{\delta}_{k} ]\!], \, (\sub{\mu},\sub{\alpha}) \in \m} \, 
\wt(\sub{\mu},\sub{\alpha}) 
=\sum_{\m \in {\E}_m}  T_{\m}(k).
$$
Set 
$$\hat{T}_{1,m} := \sum_{\m \in {\E}_m} T_{\m} \in \C[X].$$
Still by Corollary \ref{corollary:T_mC}, 
$T_{\m}$ has degree at most $m$. 
Therefore $\hat{T}_{1,m}$
has degree at most 
$m$ and satisfies the condition of the lemma. \qed
\end{proof}

We are now in a position to prove Theorem \ref{theorem:main2}

\begin{proof}[Proof of Theorem \ref{theorem:main2}] 
By Lemma~\ref{Lem:formulas}, we have
$$ \ev_\rho( \widehat{\d p}_{m,k}) =  \sum_{\mu \in P(\delta)_k} \, \sum_{(\sub{\mu},\sub{\alpha}) \in \hat{\P}_m(\mu)}  
\wt(\sub{\mu},\sub{\alpha}) \langle \mu, \c{\beta}_k \rangle.$$
Recall that 
$P(\delta)_k=\{\delta_{k},\delta_{k+1}, \overline{\delta}_k, \overline{\delta}_{k+1}\}, 
\, k=1,\ldots,r-1$ and $P(\delta)_r=\{\delta_{r}, \overline{\delta}_{r}\}.$
Hence, for $k=1,\ldots,r-1$
\begin{align*}
 \ev_\rho( \widehat{\d p}_{m,k}) &=  \sum_{{\tiny(\sub{\mu},\sub{\alpha}) \in \hat{\P}_m(\delta_k)}}  \wt(\sub{\mu},\sub{\alpha}) - 
\sum_{{\tiny(\sub{\mu},\sub{\alpha}) \in \hat{\P}_m(\delta_{k+1})}}  \wt(\sub{\mu},\sub{\alpha}) 
+ \sum_{{\tiny(\sub{\mu},\sub{\alpha}) \in \hat{\P}_m(\bar{\delta}_{k+1})}}  \wt(\sub{\mu},\sub{\alpha}) - 
\sum_{{\tiny(\sub{\mu},\sub{\alpha}) \in \hat{\P}_m(\bar{\delta}_{k})}}  \wt(\sub{\mu},\sub{\alpha}),
\end{align*}
and
\begin{align*}
 \ev_\rho( \widehat{\d p}_{m,r}) &=  \sum_{{\tiny(\sub{\mu},\sub{\alpha}) \in \hat{\P}_m(\delta_r)}}  \wt(\sub{\mu},\sub{\alpha}) -  
                              \sum_{{\tiny(\sub{\mu},\sub{\alpha}) \in \hat{\P}_m(\bar{\delta}_{r})}}  \wt(\sub{\mu},\sub{\alpha}).
\end{align*}
Let $\hat{T}_{1,m}$ and $\hat{T}_{2,m}$ be as in Proposition~\ref{Pro:sumC} 
and set 
\begin{align} 
\label{eq:hatQi-sp}
\hat{Q}_{m}:=\hat{T}_{1,m} (X) - \hat{T}_{1,m}(X+1)+\hat{T}_{2,m} (X+1) - \hat{T}_{2,m}(X)
\end{align}
(Note that $ \hat{T}_{1,m}(r+1) = \hat{T}_{2,m} (r+1)=0$.)
Then $\hat{Q}_{m}$ is a polynomial of degree at most $m-1$, and
we have 
\begin{align*}
 \ev_\rho (\widehat{{\rm d} p}_{m}  ) 
 &=  \ev_\rho \left(\frac{1}{m!}  \sum_{k=1}^{r}  
\widehat{\d p}_{m,k}\otimes \s{\varpi_k} \right) 
= \frac{1}{m!}  \sum_{k=1}^{r}  \ev_\rho \left(
\widehat{\d p}_{m,k} \right)\s{\varpi_k} 
 = \frac{1}{m!}  \sum_{k=1}^{r}  \hat{Q}_m\s{\varpi_k}.
\end{align*}
Moreover, $\hat{Q}_1 = 2$. \qed
\end{proof}

\begin{acknowledgements}
Both authors are greatly indebted to Abderrazak Bouaziz 
for his substantial support in this work, as well as  
to Cédric Lecouvey, Maxim Nazarov and Pierre Torasso for the valuable
remarks and comments.
The text is taken from the first named author's PhD thesis in the LMA of University of Poitiers.
The second named author wishes to express her thanks to Anton Alekseev  
for making her discovering this intriguing topic, and to Jean-Yves Charbonnel 
for useful discussions. 
\end{acknowledgements}

\bibliographystyle{abbrv}
\bibliography{Nilam.bib}

\appendix

\section{Roots and weights in type $A$} 
\label{sec:rootTypeA}
Assume that $\g=\sl_{r+1}$. 
We may realize $\g$ as the set of $(r+1)$-size square traceless matrices. 
We choose as a Cartan subalgebra $\h$ the set of all diagonal matrices 
of $\g$.
Define a linear map $\eps_i \in \h^*$ 
by $\eps_i(h)= h_i$ if $h = {\rm diag}(h_1,\ldots,h_{r+1})$. 
Then the root system of $(\g,\h)$ 
is $\Delta=\{ \eps_i - \eps_j \mid i \not=j \}$.
We make the standard choice of 
$\Delta_+=\{ \eps_i - \eps_j \mid i <j \}$  
for the set of positive roots. The set of simple roots is 
$\Pi=\{\beta_1,\ldots,
\beta_{r}\},$ 
where  $\beta_1=\eps_{1}-\eps_{2} ,\ldots,
\beta_{r}=\eps_{r}-\eps_{r+1}$.
The fundamental weights 
are 
$$\varpi_i=(\eps_1+\cdots +\eps_i) 
- \frac{i}{r+1}(\eps_1+\cdots+\eps_{r+1}), 
\qquad i=1,\ldots,r.$$ 
The $(\eps_i - \eps_j)$-root space of $\g$ 
is spanned by the root vector $e_{\eps_i - \eps_j}:=E_{i,j}$, 
where $E_{i,j}$ denotes the 
elementary matrix associated with the coefficient $(i,j)$. 
Take for $B_\g$ the invariant non-degenerate bilinear form $(X,Y) \mapsto \tr(XY)$. 
Since $\tr(E_{i,j}E_{k,l}) = \delta_{j,k} \delta_{i,l}$, 
in the notation of Section \ref{sec:gen} (after Lemma \ref{lem:dec2}), we have
$c_{\eps_i - \eps_j} =1$, $i \not= j.$  
The nonzero weights of the standard representation $V(\varpi_1)$ are 
$\{\delta_1,\ldots,\delta_{r+1}\}$ with 
$$\delta_i = \eps_i - \frac{1}{r+1}(\eps_1+\cdots+\eps_{r+1}), 
\qquad i=1,\ldots,r+1.$$
Moreover,  $V_{\delta_i}= \C v_i$, for $i=1,\ldots, r+1$,  
where $(v_1,\ldots,v_{r+1})$ is the canonical basis of $\C^{r+1}$. 
We have 
$e_{\eps_i - \eps_j} v_k = E_{i,j} v_k = \delta_{j,k} v_i.$
In other words, 
$e_{\eps_i - \eps_j} v_k = a_{\eps_i,\eps_k}^{(e_{\eps_i - \eps_j})} 
v_i = \delta_{j,k} v_i.$ 
Hence 
$$a_{\eps_i,\eps_k}^{(e_{\eps_l - \eps_j})}  =\delta_{i,l}\delta_{j,k} 
\quad \text{ and so }\quad a_{\eps_i,\eps_k}^{(e_{\eps_i - \eps_k})}  =1.$$

\section{Roots and weights in type $C$} 
\label{sec:rootTypeC}
Assume now that 
$\g=\sp_{2r}$. 
We may realize $\g$ as 
the set of of $2r$-size square matrices$ A$ of $\mathscr{M}_{2r}(\C)$ such that 
$JA + A^tJ = 0$, where 
$J=\left(\begin{array}{cc}
0 & I_r \\
-I_r & 0 \end{array}
\right).$ 
We choose as the Cartan subalgebra $\h$ of $\g$ the set of all diagonal matrices 
of $\g$.
For any $i = 1, \ldots, r$, we define a linear
map $\eps_i \in \h^*$ by $\eps_i(h) = h_i$, 
if $h= {\rm diag}(h_1,\ldots, h_r, - h_1,\ldots, - h_r) \in \h$. 
The set of roots of $(\g,\h)$ 
is $\Delta=\{\pm \eps_i \pm \eps_j, \pm 2 \eps_k
\, |\,  1 \leqslant i < j \leqslant r,  \, 1\le k \leqslant r \}$. 
We make the standard choice of 
$\Delta_+=\{\eps_i \pm \eps_j, 2 \eps_k \mid 1 \leqslant i < j \leqslant r, \, 
1 \leqslant k \leqslant r \}$, with basis 
$\Pi = \{ \beta_1, \ldots, \beta_r\},$
where $\beta_i := \eps_i - \eps_{i+1}$ for $i = 1, \ldots, r-1$ 
and $\beta_r = 2\eps_r$. 
Denote by $\c{\beta}_i $ the coroot of the simple root $\beta_i$.
Take for $B_\g$ the non-degenerate invariant bilinear form $(X,Y) \mapsto \dfrac{1}{2}\tr(XY)$.
Using the non-degenerate invariant bilinear form $B_\g$, 
one may identify the Cartan subalgebra $\h$ of $\g$ with $\h^*$ and we get 
$$ \c{\beta}_1=\eps_1-\eps_2, \quad \ldots, \quad \c{\beta}_{r-1} 
 =\eps_{r-1}-\eps_r,  \quad \c{\beta}_r=\eps_r.$$
The fundamental weights are: 
$$\varpi_i=\eps_1+\cdots +\eps_i,  
\qquad i=1,\ldots,r.$$
The fundamental co-weights are:
$$\c{ \varpi}_i = \eps_1+\cdots +\eps_i, \quad \text{ for }\quad  i=1,\ldots,r-1$$
$$\text{and } \quad 
\c{\varpi}_r = \frac{1}{2}(\eps_1+\cdots +\eps_r).$$
Note that the half-sum of positive roots is 
$\rho = \varpi_1+\cdots+\varpi_r
 = \sum_{i=1}^r (\eps_1+\cdots+\eps_i) 
= \sum_{i=1}^r (r-i+1) \eps_i.$
For $\alpha \in \Delta$, the $\alpha$-root space of $\g$ 
is spanned by the root vector $e_\alpha$ as defined below:

\begin{tabular}{lll}
$e_{\eps_i - \eps_j} =E_{i,j} -E_{j+r,i+r}, \quad 1 \leqslant i \not= j \leqslant r,$ & 
&  $e_{2\eps_i} =E_{i,i+r} , \quad 1 \leqslant i \leqslant r,$ \\
 $e_{\eps_i + \eps_j} =E_{i,j+r} + E_{j,i+r}, \quad 1 \leqslant i < j \leqslant r,$  &
&$e_{- 2\eps_i} =E_{i+r,i} , \quad 1 \leqslant i \leqslant r.$\\
$e_{- \eps_i - \eps_j} =E_{i+r,j} +E_{j+r,i}, \quad 1 \leqslant i < j \leqslant r.$ & & \\
\end{tabular} 

\noindent
The constant structures are the following (we write only the nonzero ones): 
for $ i \not= j \not=k$,

\begin{tabular}{lll}
$[e_{\eps_i - \eps_j},e_{\eps_j - \eps_k}] = 
e_{\eps_i - \eps_k},$
&$[e_{\eps_i - \eps_j},e_{- \eps_i - \eps_j}] 
= - 2 e_{-2\eps_j},$
& $[e_{\eps_i + \eps_j},e_{- \eps_j - \eps_k}] = e_{\eps_i - \eps_k},$ \\
$[e_{\eps_i - \eps_j},e_{\eps_k - \eps_i}] = 
- e_{\eps_k - \eps_j},$
 & $[e_{\eps_i - \eps_j},e_{-\eps_i - \eps_k}] = 
 - e_{- \eps_j- \eps_k},$
& $[e_{\eps_i + \eps_j},e_{- 2\eps_j}] = e_{\eps_i-\eps_j},$\\
$[e_{\eps_i - \eps_j},e_{\eps_j + \eps_k}] = 
e_{\eps_i +\eps_k},$
& $[e_{\eps_i - \eps_j},e_{2 \eps_j }] =  e_{\eps_i+ \eps_j},$
& $[e_{2\eps_i},e_{-\eps_i - \eps_j}] = 
e_{\eps_i -\eps_j}.$\\
$[e_{\eps_i - \eps_j},e_{\eps_j + \eps_i}] 
= 2 e_{2\eps_i},$
 & $[e_{\eps_i - \eps_j},e_{- 2 \eps_i }] =  - e_{- \eps_i- \eps_j},$\\
\end{tabular} 

\noindent
and 
$$ c_{\eps_i - \eps_j} = 1, \qquad c_{\eps_i + \eps_j} = 1, \qquad 
c_{- \eps_i - \eps_j} =1, \qquad c_{2\eps_i } = 2, \qquad c_{-2\eps_i }=2.$$ 
The nonzero weights of the standard representation $V(\varpi_1)$ are 
$\{\delta_1,\ldots,\delta_{r},\overline{\delta}_1,\ldots,\overline{\delta}_r\}$, where 
$\delta_i = \eps_i$, $\overline{\delta}_i = -\eps_i$, $i = 1, \ldots, r.$
Moreover,
$V_{\eps_i}= \C v_i ,\quad i=1,\ldots, r,$ and $ 
V_{-\eps_i}= \C v_{i+r} ,\quad i=1,\ldots, r$,
where $(v_1,\ldots,v_{r},v_{1+r},\ldots,v_{2r})$ 
is the canonical basis of $\C^{2r}$.
We have that
\begin{align} \label{eq:coef-a}
 a_{\eps_i,\eps_k}^{(e_{\eps_i - \eps_k})}  =1, \ 
 a_{-\eps_j,-\eps_i}^{(e_{\eps_i - \eps_j})}  =-1, \ 
 a_{\eps_i,-\eps_j}^{(e_{\eps_i + \eps_j})}  = 1, \
 a_{-\eps_i,\eps_j}^{(e_{-\eps_i - \eps_j})}  = 1, \ 
 a_{\eps_i,-\eps_i}^{(e_{2\eps_i})}  =1, \ 
 a_{-\eps_i,\eps_i}^{(e_{-2\eps_i})}  =1.
\end{align}

\section{Admissible triples} 
\label{app:Admissible}
As noted in Remark \ref{Rem:difference}, the difference 
of two difference weights is always a root for $\g=\sl_{r+1}$ of $\g=\sp_{2r}$ 
with $\delta=\varpi_1$. 
For the type $C$ we need to distinguish the different {\em types} of possible configurations.  
A triple $(\alpha, \mu, \nu)$ is called {\em admissible} if 
 $(\mu, \nu) \in (P(\delta))^2$, $\alpha \in \Delta_+$ and $\alpha = \mu - \nu$. 
Such triples are classified by different types   
 as follows:
\begin{description}
\item[Type I]: for some $i \in \{1,\ldots,r\}$, $(\alpha, \mu, \nu) = (2 \eps_i, \delta_i, \bar{\delta}_i),$  
\item[Type II]: for some $i,j \in \{1,\ldots,r\}$, $i \not=j$, 
$ (\alpha, \mu, \nu) = (\eps_i +\eps_j,\delta_i, \overline{\delta}_j),$
\item[Type III a]: for some $i,j \in \{1,\ldots,r\}$, $i \not=j$, 
$ (\alpha, \mu, \nu) = (\eps_i -\eps_j,\delta_i ,\delta_j),$
\item[Type III b]: for some $i,j \in \{1,\ldots,r\}$, $i \not=j$, 
$ (\alpha, \mu, \nu) = (\eps_i -\eps_j, \overline{\delta}_j, \overline{\delta}_i).$
\end{description}
\begin{remark}
 If $\alpha$ is a long root then there is unique $\mu \in P(\delta)$ such that $\mu - \alpha \in P(\delta)$,
 and if $\alpha$ is a short root then there are exactly
 two weights $\mu \in P(\delta)$ such that $\mu - \alpha \in P(\delta)$. In other words, if $\alpha$ is short root then 
there are two admissible triples $(\alpha, \mu, \nu)$ containing $\alpha$.
\end{remark}

For $\gamma$ in the root lattice $Q$, 
we say that $\gamma$ has sign $+$ (respectively, $-$, $0$) if $\gamma \succ 0$
(respectively, $\gamma \prec 0$, $\gamma = 0$). The following lemma will be particularly useful in the proof of Theorem~\ref{corollary:cut-type-C}. 
Its proof of this lemma does not present major difficulties 
and is left to the reader. 
\begin{lemma} \label{lem:types}
Let $(\gamma, \mu, \nu)$ be an admissible triple. 
The admissible triple $(\alpha, \mu', \nu')$ 
such that either $\alpha -\gamma \in \Delta$ or $\alpha =\gamma$,  
and $\nu' \succ \nu$ are the following,
depending on the type of $(\gamma, \mu, \nu)$: 
\begin{enumerate}
\item (Type I)\; If $(\gamma, \mu, \nu) = (2\eps_i, \delta_i, \overline{\delta}_i)$ 
with $i \in \{1,\ldots,r\}$.
Then the triples $(\alpha, \mu', \nu')$ satisfying the conditions 
(i) and (ii) are: 
\begin{center}
\begin{tabular}{|l|l|c|c|}
\hline
$(\alpha, \mu', \nu')$ & condition & $\alpha - \gamma $ & sign of $\alpha - \gamma $ \\
\hline 
$(\eps_i+\eps_k, \delta_i, \overline{\delta}_k)$ & $i<k$ & $\eps_k - \eps_i$ & 
$-$ \\
$(\eps_i-\eps_k, \delta_i, {\delta_k})$ & $i<k$ & $-(\eps_i + \eps_k)$ & 
$-$ \\
 \hline
\end{tabular} 
\end{center}
\item (Type II)\; If 
$(\gamma, \mu, \nu) =( \eps_i+\eps_j, \delta_i, \overline{\delta}_j)$, 
with $i,j \in \{1,\ldots,r\}$, $i \not=j$. 
Then the triples $(\alpha, \mu', \nu')$ satisfying the conditions 
(i) and (ii) are: 
\begin{center}
\begin{tabular}{|l|l|c|c|}
\hline
$(\alpha, \mu', \nu')$ & condition & $\alpha - \gamma $ & sign of $\alpha - \gamma $ \\
\hline 
$(2\eps_i, \delta_i, \overline{\delta}_i)$ & $j<i$ & $\eps_i - \eps_j$ & 
$-$ \\
$(\eps_i+\eps_k, \delta_i, \overline{\delta}_k)$ & $j<k, k \neq i$ & $\eps_k - \eps_j$ & 
$-$ \\
$(\eps_j+\eps_k, \delta_j, \overline{\delta}_k)$ & $j<k<i$ & $\eps_k - \eps_i$ & 
$+$ \\
$(\eps_j+\eps_k, \delta_j, \overline{\delta}_k)$ & $i<k, j<k$ & $\eps_k - \eps_i$ & 
$-$ \\
$(\eps_j+\eps_i, \delta_j, \overline{\delta}_i)$ & $j<i$ & $0$ & 
$0$ \\
$(\eps_k + \eps_i, \delta_k, \overline{\delta}_i)$ & $k <j<i$ & $\eps_k - \eps_j$ & 
$+$ \\
$(\eps_k + \eps_i, \delta_k, \overline{\delta}_i)$ & $k \neq i, j<i, j<k$ & $\eps_k - \eps_j$ & 
$-$ \\
$(\eps_i-\eps_k, \delta_i, {\delta}_k)$ & $i<k$ & $-(\eps_k + \eps_j)$ & 
$-$ \\
$(\eps_j-\eps_k, \delta_j, {\delta}_k)$ & $j<k$ & $-(\eps_k + \eps_j)$ & 
$-$ \\
 \hline
\end{tabular} 
\end{center}
\item (Type III a)\; If 
$(\gamma, \mu, \nu) =( \eps_i - \eps_j, \delta_i, {\delta}_j)$, 
with $i,j \in \{1,\ldots,r\}$, $i < j$. 
Then the only  triple $(\alpha, \mu', \nu')$ satisfying the conditions 
(i) and (ii) is:
\begin{center}
\begin{tabular}{|l|l|c|c|}
\hline
$(\alpha, \mu', \nu')$ & condition & $\alpha - \gamma $ & sign of $\alpha - \gamma $ \\
\hline 
$(\eps_i-\eps_k,\delta_i,\delta_k)$ & $i < k < j$ & $\eps_j - \eps_k$ & 
$-$ \\
 \hline
\end{tabular} 
\end{center}
\item (Type III b)\; If 
$(\gamma, \mu, \nu) =( \eps_i - \eps_j, \overline{\delta}_j, \overline{\delta}_i)$, 
with $i,j \in \{1,\ldots,r\}$, $i < j$.
Then the triples $(\alpha, \mu', \nu')$ satisfying the conditions 
(i) and (ii) are:
\begin{center}
\begin{tabular}{|l|l|c|c|}
\hline
$(\alpha, \mu', \nu')$ & condition & $\alpha - \gamma $ & sign of $\alpha - \gamma $ \\
\hline 
$(\eps_i+\eps_k, \delta_i, \overline{\delta}_k)$ & $i<k$ & $\eps_k + \eps_j$ & 
$+$ \\
$(\eps_i-\eps_k, \delta_i, {\delta}_k)$ & $i<j<k$ & $\eps_j - \eps_k$ & 
$+$ \\
$(\eps_i-\eps_k, \delta_i, {\delta}_k)$ & $i<k<j$ & $\eps_j - \eps_k$ & 
$-$ \\
$(\eps_i-\eps_j, \delta_i, {\delta}_j)$ & $i<j$ & $0$ & 
$0$ \\
$(\eps_k-\eps_j, \delta_k, {\delta}_j)$ & $k<i<j$ & $\eps_k - \eps_i$ & 
$+$ \\
$(\eps_k-\eps_j, \delta_k, {\delta}_j)$ & $i<k<j$ & $\eps_k - \eps_i$ & 
$-$ \\
$(\eps_k - \eps_i, \overline{\delta}_j, \overline{\delta}_k)$ & $i<k<j$ & $\eps_k - \eps_i$ & 
$-$ \\
 \hline
\end{tabular} 
\end{center} 
\end{enumerate}
\end{lemma}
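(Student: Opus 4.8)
The plan is to verify the classification by a direct, exhaustive computation in the explicit root and weight data of Appendix~\ref{sec:rootTypeC}, organised so that the enumeration stays finite and transparent. First I would record two structural facts. Recall that $P(\delta)=\{\delta_1,\ldots,\delta_r,\overline{\delta}_1,\ldots,\overline{\delta}_r\}$ with $\delta_i=\eps_i$ and $\overline{\delta}_i=-\eps_i$, and that any two distinct weights of $P(\delta)$ differ by a root (Remark~\ref{Rem:difference}); since the consecutive differences $\delta_i-\delta_{i+1}=\beta_i$, $\delta_r-\overline{\delta}_r=\beta_r$ and $\overline{\delta}_{i+1}-\overline{\delta}_i=\beta_i$ all lie in $Q_+$, the set $P(\delta)$ is in fact totally ordered by $\succcurlyeq$, namely $\delta_1\succ\cdots\succ\delta_r\succ\overline{\delta}_r\succ\cdots\succ\overline{\delta}_1$. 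Consequently, for an admissible triple $(\gamma,\mu,\nu)$ the condition $\gamma=\mu-\nu\in\Delta_+$ simply says that $\mu$ lies strictly above $\nu$ in this chain, and writing $\mu-\nu$ in the $\eps$-coordinates reproduces exactly the four types listed before the lemma (Types~I, II, IIIa, IIIb), together with the two admissible triples attached to each short root $\eps_i\pm\eps_j$ and the single one attached to each long root $2\eps_i$, as in the remark preceding the statement. For the target triple $(\alpha,\mu',\nu')$, the hypothesis $\alpha\in\Delta_+$ already forces $\mu'\succ\nu'$, so the only extra constraint to check is that $\nu'$ lies strictly above $\nu$ in the chain, which is immediate to read off once $\nu'$ is known.

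The core of the argument is then, for each of the four possible shapes of $\gamma$, to list all $\alpha\in\Delta_+$ with $\alpha-\gamma\in\Delta\cup\{0\}$. Since $\gamma$ is one of $2\eps_i$, $\eps_i+\eps_j$, $\eps_i-\eps_j$, the requirement that $\alpha-\gamma$ be a root or $0$ pins $\alpha$ down to a handful of one-parameter families $\eps_a\pm\eps_k$, $2\eps_a$ with $a\in\{i,j\}$ and $k$ subject to inequalities coming from the shape of $\gamma$ and from the inequality $i<j$ built into the type. For each such $\alpha$ one writes down the admissible triple(s) $(\alpha,\mu',\nu')$ containing it — there are two when $\alpha$ is a short root — discards those for which $\nu'$ does not lie strictly above $\nu$, and for the survivors records the root $\alpha-\gamma$ and its sign, the sign being determined by the index inequalities via the chain order on $P(\delta)$ (equivalently, by expanding $\alpha-\gamma$ in the simple roots). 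Carrying this out type by type reproduces verbatim the four tables in the statement; since the computations are entirely mechanical, I would present Type~I in full as a model and leave the remaining three to the reader, exactly as the sentence preceding the lemma suggests.

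The only genuine obstacle is bookkeeping rather than ideas: one must be careful to be exhaustive — not to overlook any family of $\alpha$, and, crucially, to list \emph{both} admissible triples attached to each short root $\alpha$ (the remark before the lemma is precisely the tool guaranteeing there are exactly two) — and to get every sign right, which amounts to correctly translating the inequalities among the indices $i,j,k$ into the position of $\nu'$ within the chain $\delta_1\succ\cdots\succ\overline{\delta}_1$. No representation-theoretic input beyond the explicit data of Appendix~\ref{sec:rootTypeC} is needed.
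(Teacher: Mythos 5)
Your proposal is correct and coincides with what the paper intends: the paper supplies no argument (the proof is explicitly ``left to the reader''), and your procedure — the total order $\delta_1\succ\cdots\succ\delta_r\succ\overline{\delta}_r\succ\cdots\succ\overline{\delta}_1$ on $P(\delta)$, the enumeration of all $\alpha\in\Delta_+$ with $\alpha-\gamma\in\Delta\cup\{0\}$, the two admissible triples attached to each short root and one to each long root, then discarding those with $\nu'\not\succ\nu$ and reading off the sign of $\alpha-\gamma$ — is precisely the routine exhaustive check that reproduces the four tables. Your fully worked Type~I case matches the stated table, and the same mechanical procedure settles Types~II, IIIa and IIIb, so there is no gap.
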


\end{document}